\theoremstyle{plain}
\newtheorem{theorem}{Theorem}[section]
\newtheorem{lemma}[theorem]{Lemma}
\newtheorem{corollary}[theorem]{Corollary}
\newtheorem{proposition}[theorem]{Proposition}
\newtheorem{definition}[theorem]{Definition}
\newtheorem{hypothesis}[theorem]{Hypothesis}
\newtheorem{conjecture}[theorem]{Conjecture}
\newtheorem{example}[theorem]{Example}
\newtheorem{remark}[theorem]{Remark}
\newtheorem{remarks}[theorem]{Remark}
\DeclareMathOperator{\Aut}{Aut}
\DeclareMathOperator{\Gal}{Gal}
\DeclareMathOperator{\Hom}{Hom}
\DeclareMathOperator{\End}{End}
\DeclareMathOperator{\N}{N}
\DeclareMathOperator{\K}{K}
\DeclareMathOperator{\im}{im}
\DeclareMathOperator{\Fr}{Fr}
\newcommand{\catname}[1]{\textnormal{{\textsf{#1}}}}
\newcommand{\Der}{\catname{D}}
\newcommand{\DR}{\catname{R}}
\newcommand{\DC}{\catname{C}}
\newcommand{\DL}{\catname{L}}
\newcommand{\cG}{\mathcal{G}}
\newcommand{\CC}{\mathbb{C}}
\newcommand{\bc}{\mathbb{C}}
\newcommand{\GG}{\mathbb{G}}
\newcommand{\QQ}{\mathbb{Q}}
\newcommand{\RR}{\mathbb{R}}
\newcommand{\br}{\mathbb{R}}
\newcommand{\ZZ}{\mathbb{Z}}
\newcommand{\cA}{\mathcal{A}}
\newcommand{\E}{\mathcal{E}}
\newcommand{\calE}{\mathcal{E}}
\newcommand{\calP}{\mathcal{P}}
\newcommand{\G}{\mathcal{G}}
\newcommand{\co}{\mathcal{O}}
\newcommand{\calV}{\mathcal{V}}
\newcommand{\KK}{\mathcal{K}}
\newcommand{\frp}{\mathfrak{p}}
\newcommand{\cok}{\mathrm{cok}}
\newcommand{\id}{\mathrm{id}}
\newcommand{\nr}{\mathrm{Nrd}}
\newcommand{\bz}{\mathbb{Z}}
\newcommand{\La}{\Lambda}
\newcommand{\bq}{\mathbb{Q}}
\def\bigcapp{\raise1ex\hbox{\rotatebox{180}{$\biguplus$}}}
 \def\bigcappd{\raise1ex\hbox{\rotatebox{180}{$\displaystyle\biguplus$}}}
\newcommand{\D}{\mathrm{d}}
\newcommand{\rgamma}{\DR\Gamma}
\newcommand{\rhom}{\DR\Hom}
\begin{document}

%%%%%%%%%%%%%%%%%%%%%%%%%%%%%%%
%%% TITLE, AUTHOR, ABSTRACT %%%
%%%%%%%%%%%%%%%%%%%%%%%%%%%%%%%

\title[On non-commutative Iwasawa theory]{On non-commutative Iwasawa theory \\and derivatives of Euler systems
\endgraf \vskip 0.2truein \small{\it To the memory of Jan Nekov\'a\v r,\\
 friend and colleague}}

%\title[On derivatives of $p$-adic $L$-series]{On Mordell-Weil, Tate-Shafarevich\\and Bloch-Kato}\footnote{Preliminary version of September 2011}

\author{David Burns and Takamichi Sano}

\begin{abstract} %We investigate relations between analytic and algebraic invariants that are associated to $\mathbb{G}_m$ over arbitrary Galois extensions of number fields. 
We use the theory of reduced determinant functors from \cite{bses} to give a new, computationally useful, description of the relative $\K_0$-groups of orders in finite dimensional separable algebras that need not be commutative. By combining this approach with a canonical  generalization to non-commutative algebras of the notion of `zeta element' introduced by Kato \cite{K1}, we then formulate, for each odd prime $p$, a natural 
 main conjecture of non-commutative $p$-adic Iwasawa theory for $\mathbb{G}_m$ over arbitrary number fields. This conjecture predicts a simple relation between a canonical Rubin-Stark non-commutative Euler system that we introduce and the compactly supported $p$-adic cohomology of $\ZZ_p$ and is shown to simultaneously extend both the higher rank
(commutative) main conjecture for $\mathbb{G}_m$ formulated by Kurihara and the present authors \cite{bks2} and the $K$-theoretical formalism of main conjectures in non-commutative Iwasawa theory developed by Ritter and Weiss \cite{RitterWeiss} and by Coates, Fukaya, Kato, Sujatha and Venjakob \cite{cfksv}. In particular, via these links we obtain strong evidence in support of the conjecture in the setting of Galois CM extensions of totally real fields. Our approach also leads to the formulation over arbitrary number fields of a precise conjectural `higher derivative formula' for the Rubin-Stark non-commutative Euler system  that is shown to recover upon appropriate specialisation the classical Gross-Stark Conjecture for Deligne-Ribet $p$-adic $L$-functions. We then show that this conjectural derivative formula can be combined with the main conjecture of non-commutative $p$-adic Iwasawa theory to give a strategy for obtaining evidence in support of the equivariant Tamagawa Number Conjecture for $\mathbb{G}_m$ over arbitrary finite Galois extensions of number fields, thereby obtaining a wide-ranging generalization of the main result of \cite{bks2}. 
\end{abstract}

\address{King's College London,
Department of Mathematics,
London WC2R 2LS,
U.K.}
\email{david.burns@kcl.ac.uk}

%\address{Universidad Aut\'onoma de Madrid and Instituto de Ciencias Matem\'aticas, Madrid 28049, Spain}
%\email{daniel.macias@uam.es}

\address{Osaka Metropolitan University, 
Department of Mathematics, 
3-3-138 Sugimoto\\Sumiyoshi-ku\\Osaka\\558-8585, 
Japan}
\email{tsano@omu.ac.jp}

\thanks{MSC: 11R23, 11R34, 11R42, 16E20 (primary); 11R65, 16E30, 16H10, 16H20, 16S34,  19A31, 19B28 (secondary).}

\maketitle

\tableofcontents
\section{Introduction}

\subsection{Background and main results} The theory of determinant functors for complexes of modules over commutative noetherian rings was developed by Knudsen and Mumford in \cite{knumum}, with later clarifications provided by Knudsen in \cite{knudsen}, in both cases following initial suggestions of Grothendieck.
 It was subsequently shown by Deligne in \cite{delignedet} that for any exact category (in the sense of Quillen \cite{quillen}) there exists a universal
 determinant functor that takes values in an associated category of  `virtual  objects'.

Such determinant functors have hitherto played a key role in the formulation with respect to non-commutative coefficient rings of arithmetic special value conjectures, including the equivariant Tamagawa Number Conjecture that was formulated in \cite{bf}, following the seminal work of Bloch and Kato \cite{bk} and of Fontaine and Perrin-Riou \cite{fpr}. 
 
In an earlier article \cite{bses}, we constructed a canonical family of extensions of the determinant functor of Grothendieck, Knudsen and Mumford in the setting of the derived categories of perfect complexes over orders in finite-dimensional separable algebras that need not be commutative. These `reduced determinant functors' are of a more explicit, and concrete, nature than are the virtual objects used in \cite{bf} and, after establishing certain necessary results in integral representation theory, we were able, as a first application, to use them to develop a theory of non-commutative Euler systems in the setting of $p$-adic representations that are defined over arbitrary number fields and are endowed with the action of an arbitrary Gorenstein $\ZZ_p$-order. 

In this article we consider the question of whether the general approach developed in  \cite{bses} can be expected to have concrete consequences for the study of special value conjectures relative to  non-commutative coefficient rings. In particular, we answer this question affirmatively in the special case of the equivariant Tamagawa Number Conjecture for $\mathbb{G}_m$, or ${\rm eTNC}(\mathbb{G}_m)$ for short in the rest of this introduction, relative to arbitrary Galois extensions of number fields. 

As some motivation for focusing on this case, we recall that ${\rm eTNC}(\mathbb{G}_m)$ is both known to strengthen many classical refinements of Stark's seminal conjectures on the leading terms at zero of Artin $L$-series (see, for example, \cite{dals}) and can also, via the philosophy described by Huber and Kings in \cite[\S3.3]{hk} and by Fukaya and Kato in \cite[\S2.3.5]{fukaya-kato}, be seen to  play an important role in the study of the most general case of the equivariant Tamagawa Number Conjecture (in fact, in \cite[Ch.\@ I, \S\,3.3]{K2} Kato even refers to this special case of the conjecture as `the universal case').  

However, in order to directly apply the constructions of \cite{bses} to the study of ${\rm eTNC}(\mathbb{G}_m)$, we must first prove several intermediate results which we feel may themselves be of independent interest. To better orientate the reader through the article, therefore, we shall separate the remainder of this discussion according to these intermediate steps.

\subsubsection{Zeta elements and relative $K$-theory}

In the first part of the article (comprising \S\ref{reduced det section} to \S\ref{rkt section}) we prove some results in relative $K$-theory. 

These results are established in the setting of an order $\mathcal{A}$ that is defined over a Dedekind domain $R$ and spans a (finite-dimensional) separable algebra $A$ over the quotient field of $R$. Recalling that the `Whitehead order' $\xi(\mathcal{A})$ of $\mathcal{A}$ is a canonical $R$-order in the centre of $A$ that is defined  in \cite{bses}, we shall here introduce natural notions of `locally-primitive' and `primitive' bases over $\xi(\mathcal{A})$ for the reduced determinant lattices of perfect complexes of $\mathcal{A}$-modules. 

By using these notions, we shall then give (in Theorem \ref{ltc2}) a concrete interpretation of the sort of equalities in relative $\K_0$-groups that have hitherto underpinned the formulation of refined special value conjectures relative to non-commutative coefficient rings. This interpretation can be computationally useful and, in particular, leads to an explicit reinterpretation of ${\rm eTNC}(\mathbb{G}_m)$ in terms of a natural non-commutative generalization of the notion of `zeta element' introduced by Kato in \cite{K1} (for details see Remark \ref{remark etnc}). 

We recall that the notion of zeta element plays a key role in the strategy to investigate ${\rm eTNC}(\mathbb{G}_m)$ over abelian extensions of arbitrary number fields that is developed by Kurihara and the present authors in \cite{bks, bks2}.  In a little more detail, then, one of the main aims of the present article will be to use the techniques introduced in \cite{bses} to extend the latter strategy to the setting of arbitrary  Galois extensions of number fields.

\subsubsection{Non-commutative Euler systems} After reviewing in \S\ref{hnase section} relevant properties of the canonical Selmer modules and modified \'etale  cohomology complexes that are constructed in  \cite{bks}, our next step will be to adapt, and in appropriate ways refine, aspects of the theory of non-commutative Euler systems that is developed in 
 \cite{bses1}. 
 
To this end,  in \S\ref{res section} we introduce a notion of  `(higher rank) non-commutative (pre-)Euler system for $\mathbb{G}_m$'  and use a detailed analysis of the compactly supported $p$-adic cohomology of $\ZZ_p$ to give an unconditional construction of such systems over any number field. In this way we are able, for example, to strengthen the main result of \cite{bses1} concerning the existence of `extended cyclotomic Euler systems' over $\QQ$ (for details see Theorem \ref{cyclotomiccor}).

More generally, for any Galois extension of number fields $E/K$, we use the values at zero of higher derivatives of the Artin $L$-series of complex characters of $\Gal(E/K)$ to unconditionally define a `non-commutative Rubin-Stark element' $\varepsilon^{\rm RS}_{E/K}$ for $E/K$. 

These elements belong, a priori, to the real vector spaces spanned by an appropriate reduced exterior power bidual of unit groups and we show that, for any fixed set $\Sigma$ of archimedean places of $K$, as $E$ varies over the finite Galois extensions of $K$ (in some fixed algebraic closure) in which all places in $\Sigma$ split completely, the elements $\varepsilon^{\rm RS}_{E/K}$ constitute a pre-Euler system  $\varepsilon_{K,\Sigma}^{\rm RS}$ of rank $|\Sigma|$ for $\mathbb{G}_m$ over $K$. 

Such systems $\varepsilon_{K,\Sigma}^{\rm RS}$ will be referred to as `Rubin-Stark pre-Euler systems for $K$' and play a fundamental role in our approach. In preparation for such applications, we formulate, as Conjecture \ref{MRSconjecture}, a natural generalization to non-abelian Galois extensions of the central conjecture formulated by Rubin in \cite{R} and show that this `non-commutative Rubin-Stark Conjecture' implies that the systems $\varepsilon_{K,\Sigma}^{\rm RS}$ are non-commutative Euler (rather than just pre-Euler)  systems of rank $|\Sigma|$ for $\mathbb{G}_m$ over $K$. 

\subsubsection{Non-commutative Iwasawa theory} In \S\ref{hrncit section} and  \S\ref{can res semi section} we then develop certain aspects of non-commutative Iwasawa theory that are necessary for our approach. 

As a first step, in \S\ref{hrncit section} we formulate  a `main
conjecture of higher rank non-commutative $p$-adic Iwasawa theory' for $\mathbb{G}_m$ over an arbitrary number field $K$. This 
prediction is stated as Conjecture \ref{hrncmc} and uses the formalism of primitive bases developed in \S\ref{rkt section} to express a precise connection between the non-commutative Rubin-Stark pre-Euler system $\varepsilon_{K,\Sigma}^{\rm RS}$ and the reduced determinant of the compactly supported $p$-adic cohomology of $\ZZ_p$ over a compact $p$-adic Lie extension of $K$ in which all places in $\Sigma$ split completely. 

Conjecture \ref{hrncmc} can be seen to
simultaneously generalize both the higher rank main conjecture of (commutative)
Iwasawa theory that is formulated by Kurihara and the present authors in
\cite{bks2} and the standard formulation of main conjectures in non-commutative
Iwasawa theory following the approaches of Ritter and Weiss in
\cite{RitterWeiss} and of Coates, Fukaya, Kato, Sujatha and Venjakob in \cite{cfksv}. 

In particular, 
in an important special case (in which $\Sigma$ is empty) our approach allows us to deduce the validity of Conjecture
\ref{hrncmc} by using the known validity, due independently to Ritter and Weiss \cite{RWnew}, and to Kakde \cite{kakde}, of the main conjecture of non-commutative
Iwasawa theory for totally real fields.

In \S\ref{can res semi section}, we then prove the existence over arbitrary compact $p$-adic Lie extensions of $K$ of a distinguished family of resolutions of the compactly supported $p$-adic cohomology of $\ZZ_p$. This family of resolutions has two  important roles in the present article and will also have further applications  elsewhere (cf.  \S\ref{other apps}). 

In the remainder of \S\ref{can res semi section} we use these resolutions  to introduce a natural notion of `semisimplicity' for the Selmer module of $\mathbb{G}_m$ over $p$-adic Lie extensions of $K$ of rank one. This notion provides our theory with an appropriate generalization of the hypothesis that a finitely generated torsion module over the classical Iwasawa algebra should be `semisimple at zero' (which is a standard assumption that arises in relation to descent computations in contexts in which the associated  $p$-adic $L$-functions have trivial zeroes). 

In addition, we show that, in all relevant cases, our notion of semisimplicity specializes to recover the generalization, due to Jaulent \cite{jaulent}, of the `Order of Vanishing Conjecture' for $p$-adic Artin $L$-series at zero that was originally independently conjectured (for CM fields) by Gross \cite{G0} and Kuz'min \cite{kuzmin}. 

%We also use the construction of a canonical family of resolutions to describe Iwasawa-theoretic consequences of the non-commutative refined class number formula conjecture.

\subsubsection{The Generalized Gross-Stark Conjecture}

As the final preparatory step in our approach, in \S\ref{IMRSsection} we formulate a generalization to arbitrary Galois extensions of number fields of the ($p$-adic) Gross-Stark Conjecture. We recall that the latter conjecture was originally formulated by Gross \cite{G0} in the setting of CM Galois extensions of totally real fields and has been unconditionally verified for all odd $p$ by Dasgupta, Kakde and Ventullo in \cite{dkv}. 

To formulate our conjecture we first use the resolutions constructed in \S\ref{can res semi section} to introduce, under appropriate hypotheses, a canonical notion of the `value of a higher derivative' of the non-commutative pre-Euler system $\varepsilon_{K,\Sigma}^{\rm RS}$. 
By comparing the reduced exterior products of certain natural  Bockstein maps with those of canonical `valuation' maps, we also construct a canonical  `$\mathscr{L}$-invariant' homomorphism between the exterior power biduals (of differing ranks) of unit groups. 

Then, in any given setting, the `Generalized Gross-Stark Conjecture' of  Conjecture \ref{GGSconjecture} predicts an explicit formula for the value of an appropriate higher derivative of $\varepsilon_{K,\Sigma}^{\rm RS}$ in terms of the image under the relevant $\mathscr{L}$-invariant map of a non-commutative Rubin-Stark element of an appropriately higher rank. 

This conjectural derivative formula encodes families of significant, and even sometimes explicit, integral relations between the non-commutative Rubin-Stark elements of differing ranks that are defined relative to finite Galois extensions of $K$. In this way, the conjecture therefore also encodes information about families of fine integral relations between the values at zero of higher derivatives (of different orders) of the Artin $L$-series defined over $K$. 

For example, in the setting of CM extensions of totally real fields, we can show that the `odd component' of the derivative formula in Conjecture \ref{GGSconjecture} precisely recovers the classical Gross-Stark Conjecture. In this special case, therefore, we can thereby derive the unconditional validity of Conjecture \ref{GGSconjecture} as a consequence of the main result of Dasgupta et al \cite{dkv}.

\subsubsection{${\rm eTNC}(\mathbb{G}_m)$} 

In \S\ref{newsection6} we finally establish a concrete link between the results obtained in earlier sections and the conjecture ${\rm eTNC}(\mathbb{G}_m)$. 

Before doing so, however, we start by reviewing what is presently known concerning eTNC$(\mathbb{G}_m)$ over non-abelian Galois extensions and, at the same time, clarify aspects of the results of the first author in \cite{burns2}. 

Under a suitable semisimplicity hypothesis, we are then able to establish over arbitrary finite Galois extensions of number fields 
 a precise link between the non-commutative
higher rank Iwasawa main conjecture (of Conjecture \ref{hrncmc}), the explicit derivative formula given by the Generalized Gross-Stark Conjecture (of Conjecture \ref{GGSconjecture}) 
 and the reinterpretation of eTNC$(\mathbb{G}_m)$ in terms of zeta elements that is presented in \S\ref{rkt section}. 
 
 The latter result
  is stated explicitly as Theorem \ref{strategytheorem} and constitutes our desired generalization to arbitrary Galois extensions of the main result of
  Kurihara and the present authors in \cite{bks2}.  

The result of Theorem \ref{strategytheorem} sheds new light on the essential nature of ${\rm eTNC}(\mathbb{G}_m)$ over arbitrary Galois extensions and also, more concretely, presents a strategy for obtaining evidence for it beyond the case of CM-extensions of totally real fields. It is therefore to be hoped that, in the same way the main result of \cite{bks2} has motivated subsequent work and led to significant arithmetic results (see, for example, the recent articles of Bley and Hofer \cite{blh} and of Bullach and Hofer \cite{bh}), the strategy presented here will lead to concrete new evidence for ${\rm eTNC}(\mathbb{G}_m)$ for families of non-abelian Galois extensions. For example, even in the case of non-abelian CM extensions of totally real fields, Theorem \ref{strategytheorem} already gives a significant simplification of the proofs of results in \cite{burns2} (see, for example, Remark \ref{last remark}).  

\subsubsection{Other connections}\label{other apps}
 The techniques developed here, and in the related articles \cite{bses} and \cite{bses1}, also have consequences for the formulation and study of special value conjectures relative to non-commutative coefficient rings beyond the special cases that we focus on in the present article. 

For example, in work of Puignau, Seo and the present authors \cite{bpss} it is shown that the distinguished family of resolutions constructed in \S\ref{can res semi section} can be used to define canonical `non-commutative Artin-Bockstein maps' that extend the classical reciprocity maps of local class field theory to non-abelian Galois extensions (of local fields) and thereby to formulate a generalization to arbitrary finite Galois extensions of the `refined class number formula conjecture for $\mathbb{G}_m$' (or, as it is also often known, the `Mazur-Rubin-Sano Conjecture'). We recall that the latter conjecture was  independently conjectured for abelian extensions by Mazur and Rubin in \cite{MR2} and by the second author in \cite{sano} and has played a key role in the study of ${\rm eTNC}(\mathbb{G}_m)$ over such extensions. We further note that its natural generalization to arbitrary Galois extensions (as formulated precisely in \cite[Conj. 5.1]{bpss}) essentially constitutes a refined version `at finite level' of the Iwasawa-theoretic Generalised Gross-Stark Conjecture that we formulate here as Conjecture \ref{GGSconjecture}. %For brevity, however, discussion of the latter aspect is deferred to the supplementary article \cite{bpss} and to the upcoming thesis of Puignau. 

One can also change focus from $\mathbb{G}_m$ to abelian varieties. We recall that, in this direction, the article \cite{mct} of Macias Castillo and Tsoi already gives interesting applications of the algebraic constructions made in \cite{bses} to the study of the Hasse-Weil-Artin $L$-series of dihedral twists of elliptic curves over
general number fields. 

In addition, in \cite{bks3} Kurihara and the present authors have established an analogue of the main result of \cite{bks2} that is relevant to the study of the Birch-Swinnerton-Dyer Conjecture for elliptic curves over $\QQ$ (see, in particular, \cite[Th. 7.6 and Rem. 7.7]{bks3}). In light of this, it would be interesting to know if the general approach developed here can be adapted to shed further light on concrete relations between the Birch-Swinnerton-Dyer Conjecture and the ${\rm GL}_2$ Main Conjecture of Iwasawa theory for elliptic curves without complex multiplication of Coates et al \cite{cfksv}. %At this stage, however, we still have no concrete results in this direction.  

In yet another direction, the article \cite{ffmcmm} of de Frutos-Fern\'andez, Macias Castillo and Martinez Marqu\'es uses aspects of our approach to study class number formulas for Drinfeld modules over global function fields.

We observe finally that, in a more general setting, the theory of non-commutative Euler systems developed here and in \cite{bses1} is also relevant to the strategies described by Huber and Kings in \cite[\S3.3]{hk} and by Fukaya and Kato in \cite[\S2.3.5]{fukaya-kato} to study the general case of the equivariant Tamagawa number conjecture. In particular, given a motive $M$ defined over $\QQ$, it is possible to `twist' in a natural sense the family of cyclotomic non-commutative Euler systems that is constructed unconditionally in Theorem \ref{cyclotomiccor} below in order to obtain families of classical, commutative Euler systems (of suitable rank) for lattices $T_p(M)$ in the various $p$-adic realisations of $M$. Then, by applying the general theory of higher rank Euler systems developed by Sakamoto and the present authors in \cite{bss} to these systems, one can study the Selmer modules of $T_p(M)$. This aspect of our  theory  will be discussed elsewhere. 

\subsection{General notation}For the reader's convenience, we finally review some general notation that will be used throughout the article.

For each ring $R$, we write $\zeta(R)$ for its centre and $R^{\rm op}$ for the corresponding opposite ring (so that $\zeta(R) = \zeta(R^{\rm op}))$. 

By an $R$-module we shall, unless explicitly stated otherwise, mean a left $R$-module. The transpose of a matrix $M$ over $R$ is denoted by $M^{\rm tr}$. 

We write $\ZZ_{(p)}$ for the localization of $\ZZ$ at a prime number $p$. We write  $A_p$ for the pro-$p$ completion of an abelian group, or complex of abelian groups, $A$  (so that $\ZZ_p$ is the ring of $p$-adic integers) and use similar notation for morphisms. We often abbreviate $E\otimes_\ZZ A$ to $E\cdot A$ for a field $E$.

We fix an algebraic closure $\QQ_p^c$ of the field of $p$-adic rationals $\QQ_p$ and a completion
 $\CC_p$ of $\QQ_p^c$. For a finite group $\Gamma$ we write ${\rm Ir}(\Gamma)$ and ${\rm Ir}_p(\Gamma)$ for the respective sets of
irreducible $\CC$-valued and $\CC_p$-valued characters of $\Gamma$. 

If $A$ is a $\Gamma$-module, then we endow linear duals such as $A^\vee:=\Hom_\ZZ(A,\QQ/\ZZ)$ and $\Hom_\ZZ(A,\ZZ)$ with the natural contragredient action of $\Gamma$.

For a Galois extension of fields $E/F$ we usually abbreviate $\Gal(E/F)$ to $G_{E/F}$. \\

\noindent{}{\bf Acknowledgements} This article is warmly dedicated to the memory of Jan Nekov\'a\v r, whose very generous encouragement and remarkably insightful suggestions regarding this project (and numerous others) made an enormous difference to us. 

It is also a great pleasure for us to thank Masato Kurihara for his continual encouragement and for many helpful suggestions concerning this, and closely related, projects. 

In addition, the first author would like to thank Oliver Braunling, Cornelius Greither, 
Daniel Macias Castillo,  Henri Johnston, Mahesh Kakde, Andreas Nickel, Daniel Puignau, Soogil Seo, Al Weiss and Malte Witte for stimulating discussions related to the problems discussed here. He would also like to acknowledge the wonderful working conditions provided by Yonsei University in Seoul when a key part of this project was undertaken. 

\section*{\large{Part I: Reduced determinants and relative $K$-theory}}
\medskip

The equivariant Tamagawa Number Conjecture is formulated in \cite{bf} in terms of Deligne's categories of virtual objects and takes the form of an equality in the relative algebraic $\K_0$-group of an appropriate extension of rings. 

Alternative approaches to the formulation of conjectures in such groups were subsequently developed by Breuning and the first author \cite{bb} using a theory of `equivariant Whitehead torsion', by Fukaya and Kato  \cite{fukaya-kato} using a theory of `localized  $\K_1$-groups', by Muro, Tonks and Witte \cite{mtw} using Waldhausen $K$-theory and by Braunling \cite{ob, ob2, ob4} using a theory of `equivariant Haar measures' (and see also the associated article \cite{obetal} of Braunling, Henrard and van Roosmalen).

%We show that such modules constitute a natural theory of `noncommutative determinants' and hence provide a more explicit alternative (in our setting) to both the category of virtual objects constructed by Deligne in \cite{delignedet} and to the theory of noncommutative determinants and `localized  $\K_1$-groups' constructed by Fukaya and Kato in \cite[\S1.2 and \S1.3]{fukaya-kato}.

In this first part of the article (comprising \S\ref{reduced det section} to \S\ref{rkt section}), our main aim is to explain how the theory of `reduced determinant functors' that is  developed in \cite{bses} can also be used to give a new %, and computationally useful, 
approach to the formulation of such conjectures.

The key contribution that we shall actually make in these sections is to introduce a  notion of `(locally-)primitive basis' in the setting of reduced determinants and to show that this notion is both well-defined and functorially well-behaved on the reduced determinants that arise from objects in suitable derived categories (for details see \S\ref{plp section}).

To relate this construction to relative algebraic $K$-theory, we shall also introduce a natural generalization of the notion of `zeta element' that was first used in a (commutative) arithmetic setting by Kato in \cite{K1} (cf. Definition \ref{def zeta} and Example \ref{ex}).

Our main result in this regard (Theorem \ref{ltc2}) is then entirely $K$-theoretic in nature and perhaps of some independent interest. However, it also provides a natural, and very concrete, interpretation of the equalities that underlie several existing refined special value conjectures in arithmetic (cf.  Remark \ref{remark etnc}). In addition, and more importantly, it can also be computationally useful in such contexts. 

In particular, in later sections of this article, we shall find that this approach leads to a more direct formulation of main conjectures in non-commutative Iwasawa theory, to a natural generalization of the classical Gross-Stark Conjecture, to improvements in the descent formalism that relates such conjectures to the relevant cases of ${\rm eTNC}(\mathbb{G}_m)$ and thereby to the derivation of further concrete evidence in support of ${\rm eTNC}(\mathbb{G}_m)$ itself.

\section{Reduced determinants}\label{reduced det section}

For the convenience of the reader, we shall first review relevant facts concerning the theory of reduced determinant functors developed by the present authors in  \cite{bses}.

To do so we fix a Dedekind domain $R$ with field of fractions $F$ of characteristic zero. We also fix a finite dimensional separable $F$-algebra $A$ and an $R$-order $\mathcal{A}$ in $A$.

%We write $\zeta(\Lambda)$ for the centre of a ring $\Lambda$.

\subsection{Reduced exterior powers}\label{Exterior powers over semisimple rings}%In this subsection, we recall a construction of exterior powers over non-commutative rings.

%If $A$ is a split simple artinian ring, then the Morita functor induces an equivalence between the categories of finitely generated left $A$-modules and of finite dimensional vector spaces over $\zeta(A)$. This is the key observation of our construction of non-commutative exterior powers.

%\subsubsection{}
%\subsubsection{}\label{wedd section}
We write the Wedderburn decomposition of $A$ as 
\[ A = {\prod}_{i \in I}A_i,\]
where $I$ is a finite index set and each algebra $A_i$ is of the form ${\rm M}_{n_i}(D_i)$ for a suitable natural number $n_i$ and a division ring $D_i$ with $F \subseteq \zeta(D_i)$.

We next choose a field extension $E$ of $F$ that, for every index $i$, contains $\zeta(D_i)$ and splits $D_i$ and then fix associated isomorphisms 
\[ D_i\otimes_{\zeta(D_i)}E \cong {\rm M}_{m_i}(E)\]
for a suitable natural number $m_i$.

For each index $i$ we also fix an indecomposable idempotent $e_i$ of ${\rm M}_{m_i}(E)$.
%(When making such a choice we always follow the convention of \cite[Rem. 4.8]{bses} on each simple component $A_i$ that is commutative, and hence a field: that is we take $E_i = A_i$ and $v_{i1}$ to be identity element of $A_i$.)
 Then the direct sum $V_i$ of $n_i$-copies of $e_i{\rm M}_{m_i}(E)$ is a simple left $A_{i,E}:=A_i\otimes_{\zeta(A_i)}E$-module.

Each finitely generated $A$-module $M$ decomposes as a direct sum 
\[ M = {\bigoplus}_{i \in I}M_i,\]
where $M_i$ denotes the $A_i$-module $A_i\otimes_AM$. 

For each non-negative integer $r$ we then define the $r$-th reduced exterior power of $M$ over $A$ by setting
\begin{equation}\label{non comm ext power} {\bigwedge}_A^r M:={\bigoplus}_{i \in I}{\bigwedge}_{E}^{rd_i}(V_i^\ast\otimes_{A_{i,E}}M_{i,E}),\end{equation}
with $d_i := \dim_E(V_i) = n_im_i$, $M_{i,E}:=M_i\otimes_{\zeta(A_i)}E$, and $V_i^\ast:=\Hom_E(V_i,E)$. This construction depends on $E$, but is independent of each space $V_i$ up to isomorphism.

To discuss linear duals we note that $\Hom_A(M,A)$ has a natural structure as left $A^{\rm op}$-module and we consider its  exterior powers over $A^{\rm op}$.
 We also note that, for each index $i$, the space $V_i^\ast$ is a simple left $A^{\rm op}_{i,E}$-module, and that its dual $V_i^{\ast \ast}$ is canonically isomorphic to $V_i$.
In this case, the definition above therefore gives
\begin{equation}\label{non comm ext power dual} {\bigwedge}_{A^{\rm op}}^r \Hom_A(M,A) = {\bigoplus}_{i \in I}{\bigwedge}_E^{rd_i} (V_i\otimes_{A_{i,E}^{\rm op}}\Hom_{A_{i,E}}(M_{i,E},A_{i,E})).\end{equation}

For each integer $s$ with $0\leq s\leq r$ there are then natural duality pairings
\begin{equation}\label{duality pairing}{\bigwedge}_{A}^rM\times{\bigwedge}_{A^{\rm op}}^s\Hom_{A}(M,A)\to{\bigwedge}_{A}^{r-s}M,\,\,\,\, (m,\varphi)\mapsto\varphi(m).\end{equation}

%In addition, for fixed ordered $E_i$-bases of the spaces $V_i$, each $r$-tuples $(m_j)_{1\leq j\leq r}$ of elements of $M$ and $(\varphi_j)_{1\leq j\leq r}$ of elements of $\Hom_A(M,A)$ have associated elements $\wedge_{j=1}^{j=r}m_j$ of ${\bigwedge}_A^rM$ and $\wedge_{j=1}^{j=r}\varphi_j$ of ${\bigwedge}_{A^{\rm op}}^r\Hom_A(M,A)$. %If $A$ is commutative and the required bases are specified to be the identity elements of each $V_i=E_i=A_i$ then these definitions coincide with the classical definitions of exterior products.

To make this pairing explicit we fix, for each index $i$, an $E$-basis $\{v_{ij}: 1\le j\le m_i\}$ of $e_i{\rm M}_{m_i}(E)$. If $A_i$ is commutative, and hence a field (so that $m_i = 1$), then we always take  $v_{i1}$ to be identity element of $E$.
 The (lexicographically-ordered) set 
 \[ \underline{\varpi}(i) := \{\varpi_{iaj}: 1\le a\le n_i, 1 \le j \le m_i\}\]
is then an $E$-basis of $V_i$, where  $\varpi_{iaj}$ is the element of $V_i$ that is equal to $v_{ij}$ in its $a$-th component and is zero in all other components.

For any subsets $\{m_a\}_{1\le a\le r}$ of $M$ and $\{\varphi_a\}_{1\le a\le r}$ of $\Hom_A(M,A)$ we then set
\begin{equation}\label{non-commutative wedge} \wedge_{a=1}^{a=r}m_a :=\bigl({\wedge}_{1\leq a \leq r}(
{\wedge}_{x\in \underline{\varpi}(i)}(x^\ast\otimes m_{ai}))\bigr)_{i \in I} \in {\bigwedge}_A^rM \end{equation}
%{\bigwedge}_E^{rd}(V^\ast\otimes_{A_E}M_E)=
%
and
\begin{equation} \label{non-commutative wedge 2}\wedge_{a=1}^{a=r}\varphi_a := \bigl({\wedge}_{1\leq a \leq r}({\wedge}_{x\in \underline{\varpi}(i)}(x \otimes \varphi_{ai}))\bigr)_{i \in I} \in {\bigwedge}_{A^{\rm op}}^r \Hom_A(M,A).\end{equation}
Here we write $m_{ai}$ and $\varphi_{ai}$ for the projections of $m_a$ to $M_{i,E}$ and of $\varphi_{a} $ to $\Hom_{A_{i,E}}(M_{i,E},A_{i,E})$ and $\{x^*: x \in \underline{\varpi}(i)\}$ for the basis of $V_i^\ast$ that is dual to $\underline{\varpi}(i)$.

%One has a natural isomorphism
%$$V \otimes_{A_E^{\rm op}}\Hom_{A_E}(M_E,A_E) \stackrel{\sim}{\to} \Hom_E(V^\ast\otimes_{A_E}M_E, E) ; \quad v \otimes f \mapsto (v^\ast\otimes m \mapsto v^\ast(f(m)v))$$

These constructions clearly depend on the collection 
\[ \varpi := \{ \underline{\varpi}(i): i \in I\}\]
of ordered bases and so should strictly be written as
$`\wedge_{\varpi}$' rather than $`\wedge$'. However, for simplicity, we prefer not to indicate this dependence, since in practice it does  not cause difficulties.

For example, in the sequel we will often use the following fact (proved in \cite[Lem. 4.13]{bses}) that is independent of the choice of $\varpi$: if $M$ is a (non-zero) free $A$-module of rank $r$ with basis $\{b_a\}_{1\le a\le r}$, then for each $\varphi$ in $\End_A(M)$ one has
\begin{equation}\label{4.13} \wedge_{a=1}^{a= r}\varphi(b_a) =\nr_{\End_A(M)}(\varphi)\cdot( \wedge_{a=1}^{a= r}b_a) \in {\bigwedge}_A^rM.\end{equation}
In addition, for any natural number $r$, any subset $\{m_b\}_{1\le b\le r}$ of an $A$-module $M$ and any subset $\{\varphi_a\}_{1\le a\le r}$ of $\Hom_A(M,A)$, \cite[Lem. 4.10]{bses} implies that the pairing (\ref{duality pairing}) sends $\bigl(\wedge_{b=1}^{b=r}m_b,\wedge_{a=1}^{a=r}\varphi_a\bigr)$ to the element 
\begin{equation}\label{rn rel} (\wedge_{a=1}^{a=r}\varphi_a)(\wedge_{b=1}^{b=r}m_b)={\rm Nrd}_{A^{\rm op}}((\varphi_a(m_b))_{1\leq a,b\leq r})\in \zeta(A).\end{equation}
%
%and hence that this element belongs to $\zeta(A)$. % and depends on the $r$-tuples $(m_j)_{1\leq j\leq r}$ and $(\varphi_j)_{1\leq j\leq r}$. These properties will often be useful throughout the sequel.
\vskip 0.1truein

\begin{remark}\label{group rings}{\em Let $\Gamma$ be a finite group and $F$ a subfield of $\CC$. For each  $\chi$ in ${\rm Ir}(\Gamma)$ we write $n_\chi$ for the exponent of the quotient group $\Gamma/\ker(\chi)$ and $F_\chi$ for the field generated over $F$ by a primitive $n_\chi$-th root of unity. Then, following Brauer \cite{brauer}, we may fix a  representation 
\[ \rho_\chi: \Gamma \to {\rm GL}_{\chi(1)}(F_\chi)\]
of character $\chi$. In particular, if $F'$ is any finite extension of $F$ that contains $F_\chi$ for every $\chi$ in ${\rm Ir}(\Gamma)$, then the induced $F'$-linear ring homomorphisms $\rho_{\chi,*}: F'[\Gamma]\to {\rm M}_{\chi(1)}(F')$ combine to give an isomorphism of $F'$-algebras
$$F'[\Gamma]\cong{\prod}_{\chi\in {\rm Ir}(\Gamma)}{\rm M}_{\chi(1)}(F').$$

This shows that $F'$ is a splitting field for $F[\Gamma]$, that the spaces 
\[ V_\chi:= (F')^{\chi(1)},\]
considered as the first columns of the respective $\chi$-components ${\rm M}_{\chi(1)}(F')$ of $F'[\Gamma]$, are a complete set of representatives of the isomorphism classes of simple $F'[\Gamma]$-modules and that the standard $F'$-basis of $(F')^{\chi(1)}$ constitutes an ordered $F'$-basis of $V_\chi$.

In this way, a fixed choice of representations 
\[ \{\rho_\chi\}_{\chi\in {\rm Ir}(\Gamma)}\] 
as above gives rise, for each finitely generated $F[\Gamma]$-module $M$, to a canonical normalization of the constructions (\ref{non comm ext power}), (\ref{non comm ext power dual}), (\ref{non-commutative wedge}) and (\ref{non-commutative wedge 2}). We further note that, for each non-negative integer $r$, the resulting $\zeta(F[\Gamma])$-module ${\bigwedge}_{F[\Gamma]}^rM$ is then finitely generated and behaves functorially under change of $F$.}\end{remark}

\subsection{Reduced Rubin lattices}\label{integral structures section}

\subsubsection{}In the sequel we write ${\rm Spec}(R)$ for the set of all prime ideals of $R$ and ${\rm Spm}(R)$ for the subset ${\rm Spec}(R)\setminus \{ (0)\}$ of maximal ideals. 

For $\mathfrak{p}$ in ${\rm Spec}(R)$ we write $R_{(\mathfrak{p})}$ for the localization of $R$ at $\mathfrak{p}$ and for an  $R$-module $M$ set $M_{(\mathfrak{p})} := R_{(\mathfrak{p})}\otimes_R M$. For $\mathfrak{p}$ in ${\rm Spm}(R)$ we also write $R_\mathfrak{p}$ for the completion of $R$ at $\mathfrak{p}$ and for an $R$-module $M$ set  $M_\mathfrak{p}:= R_\mathfrak{p}\otimes_R M$.

A finitely generated module $M$ over an $R$-order $\mathcal{A}$ is said to be `locally-free' if $M_{(\mathfrak{p})}$ is a free $\mathcal{A}_{(\mathfrak{p})}$-module for all $\mathfrak{p}$ in ${\rm Spec}(R)$. We recall that for $\mathfrak{p}$ in ${\rm Spm}(R)$, the $\mathcal{A}_{(\mathfrak{p})}$-module $M_{(\mathfrak{p})}$ is free if and only if the $\mathcal{A}_{\mathfrak{p}}$-module $M_{\mathfrak{p}}$ is free (this follows as an easy consequence of Maranda's Theorem - see, for example, \cite[Prop. (30.17)]{curtisr}). 

We write ${\rm Mod}^{\rm lf}(\mathcal{A})$ for the category of (finitely generated) locally-free $\mathcal{A}$-modules.
For $M$ in ${\rm Mod}^{\rm lf}(\mathcal{A})$ the rank of the free $\mathcal{A}_{(\mathfrak{p})}$-module $M_{(\mathfrak{p})}$ is independent of $\mathfrak{p}$ and equal to the rank of the free $A$-module $M_F:=M_{(0)}=F\otimes_R M$. We refer to this common rank as the `rank' of $M$ and denote it ${\rm rk}_\mathcal{A}(M)$.
 %A locally-free $\mathcal{A}$-module of rank one will also often be referred to as an `invertible' $\mathcal{A}$-module.

\begin{remark}\label{loc free exam}{\em  Localization is an exact functor and so a  locally-free $\mathcal{A}$-module is projective. There are important cases in which the converse is also true.
\

\noindent{}(i) If $\mathcal{A}=R$, then every finitely generated torsion-free $\mathcal{A}$-module $M$ is locally-free, with ${\rm rk}_\mathcal{A}(M)$ equal to the dimension of the $F$-space spanned by $M$.\

\noindent{}(ii) If $G$ is a finite group for which no prime divisor of $|G|$ is invertible in $R$ and $\mathcal{A} = R[G]$ then, by a fundamental result of Swan \cite{swan0} (see also \cite[Th. (32.11)]{curtisr}), a finitely generated projective $\mathcal{A}$-module is locally-free. For any such module $M$ the product ${\rm rk}_{R[G]}(M)\cdot |G|$ is equal to the dimension of the $F$-space spanned by $M$. \

\noindent{}(iii) There are also several classes of order $\mathcal{A}$ for which a finitely generated projective 
$\mathcal{A}$-module is locally-free if and only if it spans a free $A$-module. This is the case, for example, if $\mathcal{A}$ is commutative (cf. \cite[Prop. 35.7]{curtisr}), or if $\mathcal{A}_{(\mathfrak{p})}$ is a maximal $R_{(\mathfrak{p})}$-order in $A$ for all $\mathfrak{p}$ in ${\rm Spm}(R)$ (cf. \cite[Th. 26.24(iii)]{curtisr}), or if $\mathcal{A} = R[G]$ for any finite group $G$ (cf. \cite[Th. 32.1]{curtisr}).}
\end{remark}

\subsubsection{}We use the  $R$-submodule of $\zeta(A)$ defined by 
\[ \xi(\mathcal{A}) := R\cdot \left\{{\rm Nrd}_A(M): M \in {\bigcup}_{n \ge 0}{\rm M}_n(\mathcal{A})\right\}.\]
For each $\mathfrak{p}$ in ${\rm Spm}(R)$, we similarly define an $R_{(\mathfrak{p})}$-submodule $\xi(\mathcal{A}_{(\mathfrak{p})})$ of $\zeta(A)$ and an $R_{\mathfrak{p}}$-submodule $\xi(\mathcal{A}_{\mathfrak{p}})$ of $\zeta(A_\mathfrak{p})$ and we recall that    
\[ \xi(\mathcal{A}) = {\bigcap}_{\mathfrak{p}\in {\rm Spm}(R)}\xi(\mathcal{A}_{(\mathfrak{p})})\]
by \cite[Lem. 2.2(iii)]{bpss}. Further, by \cite[Lem. 3.2]{bses}, one knows that $\xi(\mathcal{A})$ is an $R$-order in $\zeta(A)$ (referred to as the `Whitehead order' of $\mathcal{A}$ in loc. cit.), that $\xi(\mathcal{A}) = \mathcal{A}$ if and only if $\mathcal{A}$ is commutative and that, in general, one has 
\[ \xi(\mathcal{A})_{(\mathfrak{p})} = \xi(\mathcal{A}_{(\mathfrak{p})}) = \zeta(A)\cap \xi(\mathcal{A}_{\mathfrak{p}})\] 
for every $\mathfrak{p}$ in ${\rm Spm}(R)$.
%his is an $R$-order in $\zeta(A)$ (referred to as the `Whitehead order' of $\mathcal{A}$ in \cite{bses}), the basic properties of which are described in . In particular, we recall from loc. cit. that $\xi(\mathcal{A}) = \mathcal{A}$ if $\mathcal{A}$ is commutative and, in general that for every $\mathfrak{p}$ in ${\rm Spm}(R)$ one has
%
%\[ \xi(\mathcal{A})_{(\mathfrak{p})} = \xi(\mathcal{A}_{(\mathfrak{p})}) = \zeta(A)\cap \xi(\mathcal{A}_{\mathfrak{p}}).\] 
%
%where $\xi(\mathcal{A}_{\mathfrak{p}})$ is the $R_\mathfrak{p}$-order in $\zeta(A_\mathfrak{p})$ that is defiend in just the same way as $\xi(\mathcal{A})$.

For a finitely generated $\mathcal{A}$-module $M$, and a non-negative integer $r$, the $r$-th reduced Rubin lattice of the $\mathcal{A}$-module $M$ is then defined (in \cite[Def. 4.17, Rem. 4.18]{bses}) to be the $\xi(\mathcal{A})$-submodule of ${\bigwedge}_{A}^r M_F$ obtained by setting
\[ {\bigcap}_{\mathcal{A}}^r M :=\left\{ a \in {\bigwedge}_{A}^r M_F : (\wedge_{i=1}^{i= r}\varphi_i)(a) \in\xi(\mathcal{A}) \mbox{ for all } \varphi_1,\ldots,\varphi_r \in \Hom_\mathcal{A}(M,\mathcal{A})    \right\}.\]
%
%[ 0 \to \left({\bigwedge}^r_A M_F\right)^{\!0} \xrightarrow{\subset}  \left( {\bigcap}_{\mathcal{A}}^r M\right)' \to  {\bigcap}_{\mathcal{A}}^rM \to 0.\]
%
In \cite[Th. 4.19]{bses}, it is shown that this $\xi(\mathcal{A})$-module is finitely generated, has good functorial properties and  varies in a natural way with the choices of ordered bases $\varpi$ used to normalise the definition (\ref{non-commutative wedge 2}) of reduced exterior products. %(For calrity of notaiton, however, for details ls see i  consequence of \cite[Th. 4.19]{bses}. The latter result also shows that, whilstdepends on the choices of ordered bases $\varpi$, the lattice ${\bigcap}_{\mathcal{A}}^r M$ is independent, in a natural sense, of these choices (which are  therefore not explicitly indicated in our notation). 

We next note that the argument of \cite[Lem. 4.16]{bses} implies injectivity of the canonical
   `evaluation' homomorphism of $\xi(\mathcal{A})$-modules
\begin{equation}\label{ev map def} {\rm ev}^r_M: {\bigcap}_{\mathcal{A}}^{r}M \to
  {\prod}_{\underline{\varphi}}\xi(\mathcal{A}); \,\,\, x\mapsto
  \bigl(({\wedge}_{j=1}^{j=r}\varphi_j)(x)\bigr)_{\underline{\varphi}},\end{equation}
  where in the direct product $\underline{\varphi} = (\varphi_1, \dots, \varphi_r)$
  runs over all elements of $\Hom_{\mathcal{A}}(M,\mathcal{A})^{r}$.  

We further recall from \cite[Prop. 5.9]{bses} that if $M$ belongs to ${\rm Mod}^{\rm lf}(\mathcal{A})$, with $r := {\rm rk}_\mathcal{A}(M)$, then
${\bigcap}_\mathcal{A}^rM$ is an invertible $\xi(\mathcal{A})$-module with the property that   
\[ {\bigcap}_\mathcal{A}^rM = {\bigcap}_{\mathfrak{p}\in {\rm Spm}(R)} \bigl({\bigcap}_{\mathcal{A}}^rM\bigr)_{(\mathfrak{p})}\]
and so can be explicitly computed via its localizations as follows: if for each $\frp$ in ${\rm Spm}(R)$ one fixes an ordered $\mathcal{A}_{(\mathfrak{p})}$-basis $\underline{b}_{\frp} = \{b_{\frp,j}\}_{1\le j\le r}$ of $M_{(\mathfrak{p})}$, then %one has
\begin{equation}\label{local rl} \bigl({\bigcap}_\mathcal{A}^rM\bigr)_{(\mathfrak{p})} =  {\bigcap}_{\mathcal{A}_{(\mathfrak{p})}}^r M_{({\mathfrak{p}})} = \xi(\mathcal{A}_{(\mathfrak{p})})\cdot \wedge_{j=1}^{j=r}b_{\frp,j}.\end{equation}
%
%We then obtain a $\xi(\mathcal{A})$-submodule of ${\bigwedge}^r_AP_F$ by setting
%
%\[ {\rm det}_{{\mathcal{A},\varpi}}(P) := \bigcap_\frp{\bigwedge}_{\mathcal{A}_{(\mathfrak{p})}}^r P_{({\mathfrak{p}})}.\]
%
%Here the intersection is taken over all prime ideals $\frp$ of $R$ and the subscript $\varpi$ denotes the fact that each element $\wedge_{j=1}^{j=r}b_{\frp,j}$ is defined in terms of the fixed set of ordered bases $\varpi$. In \cite[Lem. 5.4]{bses} it is shown that the $\xi(\mathcal{A})$-module ${\rm det}_{\mathcal{A},\varpi}(P)$ is invertible, independent of the choice of local bases and such that
%$({\rm det}_{\mathcal{A},\varpi}(P))_{(\mathfrak{p})} = {\bigwedge}_{\mathcal{A}_{(\mathfrak{p})}}^r P_{({\mathfrak{p}})}$ for all $\mathfrak{p}$.
%
In particular, if $M$ is a free $\mathcal{A}$-module, with ordered basis $\{b_{j}\}_{1\le j\le r}$, then one can take $b_{\frp,i} = b_i$ for all $i$ with $1\le i\le r$ and so the $\xi(\mathcal{A})$-module 
\begin{equation}\label{free rem} {\bigcap}^r_\mathcal{A}M = \xi(\mathcal{A})\cdot \wedge_{j=1}^{j=r}b_{j}\end{equation}
is free of rank one (with basis $\wedge_{j=1}^{j=r}b_{j}$).

\begin{remark}{\em For other examples of the explicit computation of Whitehead orders and reduced Rubin lattices see \cite[Exam. 3.4, Exam. 3.5 and Rem. 4.18]{bses}.}\end{remark}

\subsection{Reduced determinant functors}

%In this section we continue to fix data $R, F, \mathcal{A}$ and $A$ as in \S\ref{integral structures section}.

\subsubsection{}We write $\Der(\mathcal{A})$ for the derived category of (left) $\mathcal{A}$-modules. We also write $\DC^{\rm lf}(\mathcal{A})$ for the category of bounded complexes of modules in ${\rm Mod}^{\rm lf}(\mathcal{A})$ and $\Der^{{\rm lf}}(\mathcal{A})$ for the full triangulated subcategory of $\Der(\mathcal{A})$ comprising complexes that are isomorphic to a complex in $\DC^{\rm lf}(\mathcal{A})$.

We write $\K_0^{\rm lf}(\mathcal{A})$ for the Grothendieck group of ${\rm Mod}^{\rm lf}(\mathcal{A})$ and recall that each object $C$ of $\Der^{{\rm lf}}(\mathcal{A})$ gives rise to a canonical `Euler characteristic' element $\chi_\mathcal{A}(C)$ in $\K^{\rm lf}_0(\mathcal{A})$ (for details see \cite[\S5.1.4]{bses}). 

We also write ${\rm SK}_0^{\rm lf}(\mathcal{A})$ for the kernel of the homomorphism $\K^{\rm lf}_0(\mathcal{A}) \to \ZZ$ that is induced by sending each $M$ in ${\rm Mod}^{\rm lf}(\mathcal{A})$ to ${\rm rk}_\mathcal{A}(M)$.

We then write $\DC^{{\rm lf},0}(\mathcal{A})$ for the subcategory of $\DC^{\rm lf}(\mathcal{A})$ comprising complexes $C$ for which $\chi_\mathcal{A}(C)$ belongs to ${\rm SK}_0^{\rm lf}(\mathcal{A})$ and $\Der^{{\rm lf},0}(\mathcal{A})$ for the full triangulated subcategory of $\Der^{\rm lf}(\mathcal{A})$ comprising complexes $C$ for which $\chi_\mathcal{A}(C)$ belongs to ${\rm SK}_0^{\rm lf}(\mathcal{A})$. (The latter condition is equivalent to requiring that $C$ is isomorphic in $\Der(\mathcal{A})$ to an object of $\DC^{{\rm lf},0}(\mathcal{A})$).

%In this section we shall associate a canonical invertible $\xi(\mathcal{A})$-module to each object of the category $D^{{\rm lf},0}(\mathcal{A})$.

\begin{remark}{\em Assume $\mathcal{A}$ is such that, for all $\mathfrak{p}$ in ${\rm Spec}(R)$, the reduced projective class group ${\rm SK}_0(\mathcal{A}_{(\mathfrak{p})})$ of the $R_{(\mathfrak{p})}$-order $\mathcal{A}_{(\mathfrak{p})}$ vanishes (as is the case, for example, for all of the orders discussed in Remark \ref{loc free exam}). Then, in this case, $\Der^{{\rm lf},0}(\mathcal{A})$ is naturally equivalent to the full triangulated subcategory of the derived category $\Der^{\rm perf}(\mathcal{A})$ of perfect complexes of $\mathcal{A}$-modules that comprises all (perfect) complexes whose Euler characteristic in $\K_0(\mathcal{A})$ belongs to the subgroup ${\rm SK}_0(\mathcal{A})$. A proof of this fact is given in \cite[Lem. 5.2]{bses}. }\end{remark}

\subsubsection{}In the next result we record relevant properties of the reduced determinant functors constructed in \cite[\S5]{bses}.

To state this result we recall that, in terms of the notation used in \S\ref{Exterior powers over semisimple rings}, the `reduced rank' of a module $M$ in ${\rm Mod}^{\rm lf}(\mathcal{A})$ is defined to be the vector
\[ {\rm rr}_{\mathcal{A}}(M) := \bigl({\rm rk}_\mathcal{A}(M)\cdot d_i \bigr)_{i \in I},\]
where each natural number $d_i$ is defined as in \S \ref{Exterior powers over semisimple rings}. By using \cite[Lem. 5.1]{bses}, this vector is  regarded as a locally-constant function on ${\rm Spec}(\xi(\mathcal{A}))$.

We also write $\Der^{\rm lf}(\mathcal{A})_{\rm Isom}$ for the subcategory of $\Der^{\rm lf}(\mathcal{A})$ in which morphisms are restricted to be isomorphisms and $\mathcal{P}(\xi(\mathcal{A}))$ for the category of graded invertible $\xi(\mathcal{A})$-modules. We recall that each object of $\mathcal{P}(\xi(\mathcal{A}))$ is a pair 
\[ X = (X^{\rm u},{\rm gr}(X))\]
comprising an `ungraded part' $X^{\rm u}$ that is a locally-free, rank one, $\xi(\mathcal{A})$-module and a `grading' ${\rm gr}(X)$ that is a locally-constant function  on ${\rm Spec}(\xi(\mathcal{A}))$. 
%We recall that the concept of `extended determinant functor ' is defined, for example, in Definition \ref{ext det functor def}.

Finally, we note that the concept of `extended determinant functor' originates in \cite{knumum} and is recalled precisely in \cite[Def. 5.13]{bses}.

Then the  following result is an immediate consequence of \cite[Th. 5.4]{bses}.

\begin{theorem}\label{ext det fun thm} For each set of ordered bases $\varpi$ as in \S\ref{Exterior powers over semisimple rings}, there exists a canonical extended determinant functor
\begin{equation*} {\rm d}_{\mathcal{A},\varpi}: \Der^{\rm lf}(\mathcal{A})_{\rm Isom} \to \mathcal{P}(\xi(\mathcal{A}))\end{equation*}
that has all of the following properties.

\begin{itemize}
\item[(i)] For each exact triangle 
\[ C' \to  C \to C'' \to  C'[1]\]
in $\Der^{\rm lf}(\mathcal{A})$ there exists a canonical isomorphism in $\mathcal{P}(\xi(\mathcal{A}))$
\[ {\rm d}_{\mathcal{A},\varpi}(C') \otimes {\rm d}_{\mathcal{A},\varpi}(C'') \xrightarrow{\sim} {\rm d}_{\mathcal{A},\varpi}(C)\]
 that is functorial with respect to isomorphisms of triangles.
\item[(ii)] If $C$ belongs to $\Der^{\rm lf}(\mathcal{A})$ is such that every module  $H^i(C)$ also belongs to 
$\Der^{\rm lf}(\mathcal{A})$, then there exists a canonical isomorphism in $\mathcal{P}(\xi(\mathcal{A}))$
    \[ {\rm d}_{\mathcal{A},\varpi}(C) \cong {\bigotimes}_{i \in \ZZ}{\rm d}_{\mathcal{A},\varpi}(H^i(C))^{(-1)^i}\]
that is functorial with respect to quasi-isomorphisms.
\item[(iii)] For each $P$ in ${\rm Mod}^{\rm lf}(\mathcal{A})$ one has
\[ {\rm d}_{\mathcal{A},\varpi}(P) = \bigl({\bigcap}^{{\rm rk}_{\mathcal{A}}(P)}_{\mathcal{A}}P,{\rm rr}_\mathcal{A}(P)\bigr),\]
where the reduced Rubin lattice ${\bigcap}^{{\rm rk}_{\mathcal{A}}(P)}_{\mathcal{A}}P$ is defined with respect to $\varpi$.
\item[(iv)] The restriction of ${\rm d}_{\mathcal{A},\varpi}$ to $\Der^{{\rm lf},0}(\mathcal{A})_{\rm Isom}$ is independent of the choice of $\varpi$.
\end{itemize}
\end{theorem}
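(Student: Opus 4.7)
The plan is to derive all four properties from the general construction of reduced determinants already carried out in \cite[Th. 5.4]{bses}. That construction produces, for each choice of $\varpi$, a functor on $C^{\rm lf}(\mathcal{A})$ that sends a complex $P^\bullet$ to the pair consisting of the alternating tensor product $\bigotimes_{n\in\ZZ}\bigl({\bigcap}_\mathcal{A}^{{\rm rk}_\mathcal{A}(P^n)}P^n\bigr)^{(-1)^n}$ together with the grading $\sum_n(-1)^n{\rm rr}_\mathcal{A}(P^n)$, and then extends it along quasi-isomorphisms to $D^{\rm lf}(\mathcal{A})_{\rm Isom}$. Property (iii) is then immediate upon evaluating the definition on a module placed in degree zero.

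For property (i), I would represent the given triangle by a short exact sequence $0 \to P'^\bullet \to P^\bullet \to P''^\bullet \to 0$ in $C^{\rm lf}(\mathcal{A})$. Each componentwise sequence splits, since every object of ${\rm Mod}^{\rm lf}(\mathcal{A})$ is projective, and multiplicativity of the invertible $\xi(\mathcal{A})$-modules ${\bigcap}_\mathcal{A}^\bullet(-)$ under direct sums (cf.\ the description recalled before (\ref{local rl})) then assembles degree-by-degree into the asserted isomorphism in $\mathcal{P}(\xi(\mathcal{A}))$, with functoriality under isomorphisms of triangles inherited from the naturality of the splittings. For property (ii), I would argue by induction on the length of the cohomological support of $C$, using the canonical truncation triangles $\tau_{\le n}C \to C \to \tau_{\ge n+1}C$ together with the quasi-isomorphism $\tau_{\ge n}\tau_{\le n}C \simeq H^n(C)[-n]$ available whenever every $H^n(C)$ is locally-free, and applying (i) repeatedly.

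The main obstacle is property (iv). The construction of ${\rm d}_{\mathcal{A},\varpi}$ uses the formulas (\ref{non-commutative wedge}) and (\ref{non-commutative wedge 2}) to identify reduced exterior products, and these identifications depend on the ordered basis $\underline{\varpi}(i)$ fixed for each simple module $V_i$. Replacing $\varpi$ by a second choice $\varpi'$ modifies the comparison isomorphisms through which the functor is extended along quasi-isomorphisms by multiplication by a fixed central unit $u\in\xi(\mathcal{A})^\times$, built (via the formula (\ref{4.13})) from the reduced norms of the change-of-basis matrices of the $V_i$; the exponent to which this unit occurs on a complex $P^\bullet$ is precisely $\sum_n(-1)^n{\rm rk}_\mathcal{A}(P^n)$, namely the image of $\chi_\mathcal{A}(P^\bullet)$ under the rank homomorphism $K_0^{\rm lf}(\mathcal{A})\to\ZZ$. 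On $D^{{\rm lf},0}(\mathcal{A})_{\rm Isom}$ this exponent vanishes by definition of ${\rm SK}_0^{\rm lf}(\mathcal{A})$, so the comparison scalar collapses to $1$ and the two choices yield canonically isomorphic extended determinant functors. This cancellation is exactly what is recorded in the corresponding clause of \cite[Th. 5.4]{bses}, completing the derivation.
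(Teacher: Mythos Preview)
Your proposal is correct and takes essentially the same approach as the paper: the paper does not give an independent proof but simply records this theorem as an immediate consequence of \cite[Th.~5.4]{bses}, which is precisely the source from which you derive all four properties. Your sketches for (i)--(iv) are reasonable elaborations of how that citation unpacks, and your argument for (iv) via the cancellation of the total-rank exponent on $D^{{\rm lf},0}(\mathcal{A})$ correctly identifies the mechanism behind the corresponding clause in loc.\ cit.
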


\begin{remark}\label{comparison of categories0} {\em The approach of Deligne in \cite[\S4]{delignedet} constructs a `universal determinant functor' for the exact category ${\rm Mod}^{\rm lf}(\mathcal{A})$, with values in an associated commutative Picard category $\mathcal{V}^{\rm lf}(\mathcal{A})$ of `virtual objects'. In particular, in this way each determinant functor $ {\rm d}_{\mathcal{A},\varpi}$ constructed as in Theorem \ref{ext det fun thm} naturally induces a strongly monoidal functor 
\begin{equation*}\label{mon functor} \phi^{\rm lf}_{\mathcal{A},\varpi}: \mathcal{V}^{\rm lf}(\mathcal{A}) \to \mathcal{P}(\xi(\mathcal{A})).\end{equation*}
It is known that, in most cases, this functor $\phi^{\rm lf}_{\mathcal{A},\varpi}$ is not an equivalence of commutative Picard categories. For more details see \cite[Rems. 5.5 and 5.8]{bses}.}%This issue is considered further in \cite{bdms}. }
\end{remark} 

\begin{remark}\label{exp reps}{\em For any free rank one $\zeta(A)$-module $W$ we set 
\[ W^1 := W\,\,\,\text{ and }\,\,\,  W^{-1} := \Hom_{\zeta(A)}(W,\zeta(A)),\]
with the linear dual regarded as a (free, rank one) $\zeta(A)$-module via the natural composition action. For each basis element $w$ of $W$ we set $w^1 := w$ and write $w^{-1}$ for the (unique) basis element of $W^{-1}$ that sends $w$ to $1$. For any invertible $\xi(\mathcal{A})$-module $\mathcal{L}$ we similarly define invertible $\xi(\mathcal{A})$-modules by setting  %
\[ \mathcal{L}^1 := \mathcal{L}\quad \text{ and }\quad \mathcal{L}^{-1} := \Hom_{\xi(\mathcal{A})}(\mathcal{L},\xi(\mathcal{A})).\]
For any complex $P^\bullet$ in $\DC^{\rm lf}(\mathcal{A})$ of the form
\begin{equation}\label{rep complex} \cdots \to P^a \xrightarrow{d^a} P^{a+1}\to \cdots ,\end{equation}
%
%We set $P^\bullet_F := A\otimes_\mathcal{A}P^\bullet$ and for each integer $i$ also $r_i:= {\rm rk}_\mathcal{A}(P^i)$.
%
we then set 
\[ {\rm d}^\diamond_{\mathcal{A},\varpi}(P^\bullet) :=
{\bigotimes}_{a \in \ZZ}\bigl({\bigcap}_\mathcal{A}
^{{\rm rk}_\mathcal{A}(P^a)}P^a\bigr)^{(-1)^a},\]
where the tensor product is taken over $\xi(\mathcal{A})$, and
\begin{equation}\label{rr def} {\rm rr}_\mathcal{A}(P^\bullet) := {\sum}_{a \in \ZZ}(-1)^a\cdot {\rm rr}_\mathcal{A}(P^a) = \left({\sum}_{a \in \ZZ}(-1)^a\cdot {\rm rk}_\mathcal{A}(P^a)\right)\cdot (d_i)_{i \in I}
.\end{equation}
Then claims (i) and (iii) of Theorem \ref{ext det fun thm} combine to give a canonical identification
\[ {\rm d}_{\mathcal{A},\varpi}(P^\bullet) = \bigl( {\rm d}^\diamond_{\mathcal{A},\varpi}(P^\bullet),{\rm rr}_\mathcal{A}(P^\bullet)\bigr)\]
so that ${\rm d}_{\mathcal{A},\varpi}(P^\bullet)^{\rm u} = {\rm d}^\diamond_{\mathcal{A},\varpi}(P^\bullet).$}
\end{remark}
\vskip0.2truein
%For any invertible $\xi(\mathcal{A})$-module $\mathcal{L}$ we set $\mathcal{L}^{-1}:= \Hom_{\xi(\mathcal{A})}(\mathcal{L},\xi(\mathcal{A}))$, regarded as an (invertible) $\xi(\mathcal{A})$-module via the natural composition action of $\xi(\mathcal{A})$.
%
%\subsection{Extension to the derived category}\label{derived ext section}In this section we extend relevant aspects of the above construction of determinant modules to objects of the category $D^{{\rm lf},0}(\mathcal{A})$.
%
%To do this we choose for any $C$ in $D^{{\rm lf},0}(\mathcal{A})$ a representative complex $P^\bullet$ in $C^{{\rm lf},0}(\mathcal{A})$ and then set
%
%\[ {\rm det}_\mathcal{A}(C) := {\rm det}_\mathcal{A}(P^\bullet).\]

\section{Primitive and locally-primitive bases}\label{plp section}In this section we introduce natural notions of `primitive basis' and `locally-primitive basis' in the setting of the functors that are discussed in Theorem \ref{ext det fun thm}. %recalled in Theorem \ref{ext det fun thm}.

In the sequel we always regard the set of ordered bases $\varpi$ that occurs in Theorem \ref{ext det fun thm} as fixed and use the following abbreviations for the associated reduced determinant functors
\[ {\rm d}_{\mathcal{A}}(-) := {\rm d}_{\mathcal{A},\varpi}(-)\quad \text{ and } \quad {\rm d}^\diamond_{\mathcal{A}}(-):= {\rm d}^\diamond_{\mathcal{A},\varpi}(-).\]

\subsection{Primitive bases}\label{pb section}

\subsubsection{}
%We then obtain an invertible $\xi(\mathcal{A})$-module by setting
%
%\[ {\rm det}_A(P^\bullet_F) := \bigotimes_{i \in \ZZ} ({\bigwedge}_A^{r_i}P^i_F)^{(-1)^i},\]
%
%where the tensor product is over $\zeta(A)$. We also note that if $i$ is odd, then \cite[Cor. 4.12]{bses} induces a canonical identification of $({\bigwedge}_A^{r_i}P^i_F)^{(-1)^i}$ with ${\bigwedge}_{A^{\rm op}}^{r_i}\Hom_A(P^i_F,A)$.
%
%Following Lemma \ref{first independence check} we then obtain an invertible $\xi(\mathcal{A})$-submodule of ${\rm det}_A(P^\bullet_F)$ by setting
%
%\subsection{Primitive bases}\label{first primitive section} %For any invertible $\xi(\mathcal{A})$-module $\mathcal{L}$ we set $\mathcal{L}^{-1}:= \Hom_{\xi(\mathcal{A})}(\mathcal{L},\xi(\mathcal{A}))$, regarded as an (invertible) $\xi(\mathcal{A})$-module via the natural composition action of $\xi(\mathcal{A})$.
We write ${\rm Mod}^{\rm f}(\mathcal{A})$ for the full subcategory of ${\rm Mod}^{\rm lf}(\mathcal{A})$ comprising (finitely generated) free $\mathcal{A}$-modules and $\DC^{\rm f}(\mathcal{A})$ for the full subcategory of $\DC^{\rm lf}(\mathcal{A})$ comprising complexes $P^\bullet$ in which every term $P^a$ belongs to ${\rm Mod}^{\rm f}(\mathcal{A})$.

Fix $P^\bullet$ in $\DC^{\rm f}(\mathcal{A})$ and for each integer $a$ set $r_a := {\rm rk}_\mathcal{A}(P^a)$. Then the  equality (\ref{free rem}) (with $M$ and $r$ taken to be each $P^a$ and $r_a$) implies that the $\xi(\mathcal{A})$-module ${\rm d}^\diamond_{\mathcal{A}}(P^\bullet)$ defined in Remark \ref{exp reps} is free of rank one. 

Further, if for each $a$ we  fix an ordered $A$-basis 
\[ \underline{b}_{a} = \{b_{a,j}\}_{1\le j\le r_a}\]
of $P_F^a$, then we obtain an element of the $\zeta(A)$-module ${\rm d}^\diamond_{\mathcal{A}}(P^\bullet)_F$ by setting
\begin{equation}\label{upsilon def} \Upsilon( \underline{b}_\bullet) := {\bigotimes}_{a\in \ZZ} (\wedge_{j=1}^{j=r_a}b_{a,j})^{(-1)^a}.\end{equation}

In particular, if each $\underline{b}_a$ is an $\mathcal{A}$-basis of $P^a$, then (\ref{free rem}) implies that 
$\Upsilon( \underline{b}_\bullet)$ is a basis of the $\xi(\mathcal{A})$-module ${\rm d}^\diamond_{\mathcal{A}}(P^\bullet)$. 

This fact motivates us to make the following definition.

\begin{definition}\label{concrete primitive}{\em For any complex $P^\bullet$ in $\DC^{\rm f}(\mathcal{A})$ we shall say that a basis element $b$ of the (free, rank one) $\xi(\mathcal{A})$-module ${\rm d}^\diamond_{\mathcal{A}}(P^\bullet)$ is `primitive' if it is equal to 
$\Upsilon( \underline{b}_\bullet)$ for some choice of ordered bases $\underline{b}_a$ of the $\mathcal{A}$-modules $P^a$.} \end{definition}

The key observation that we shall make about this definition is that it extends naturally to give a well-defined concept on objects of $\Der^{{\rm lf}}(\mathcal{A})$.

To state the precise result we recall that for any integer $d$ greater than or equal to the stable range ${\rm sr}(\mathcal{A})$ of $\mathcal{A}$, the natural homomorphism
\begin{eqnarray}\label{sst}
{\rm GL}_d(\cA) \rightarrow \K_1(\cA)
\end{eqnarray}
is surjective (cf. \cite[Th. (40.42)]{curtisr}).
We further recall Bass has shown that ${\rm sr}(\mathcal{A})$ is equal to one if $R$ is local, and hence $\mathcal{A}$ is semi-local, and that ${\rm sr}(\mathcal{A})$ is equal to two in all other cases. (For more details see \cite[Th. (40.31)]{curtisr} and \cite[Th. (40.41)]{curtisr} respectively.)
%In particular, by \cite[Th. (40.42)]{curtisr}, we have a natural surjection $\cA^\times \twoheadrightarrow \K_1(\cA)$ (resp. ${\rm GL}_2(\cA) \twoheadrightarrow \K_1(\cA)$) if $R$ is local (resp. otherwise).

%In the first result we write ${\rm sr}(\mathcal{A})$ for the stable range of the $R$-algebra $\mathcal{A}$. (WE recall that
%The main result we require is the following.

\begin{proposition}\label{concrete primitive lemma 20} Let $\lambda: P_1^\bullet\to P_2^\bullet$ be a quasi-isomorphism in $\DC^{\rm f}(\mathcal{A})$ and assume that $P_1^\bullet$ and $P_2^\bullet$ each have a term of rank at least ${\rm sr}(\mathcal{A})$. Set $r := {\rm rr}_\mathcal{A}(P^\bullet_1) = {\rm rr}_\mathcal{A}(P_2^\bullet).$

Then an element $b$ of ${\rm d}^\diamond_{\mathcal{A}}(P_1^\bullet)$ is a primitive basis of ${\rm d}^\diamond_{\mathcal{A}}(P_1^\bullet)$ if and only if the image of $(b,r)$ under ${\rm d}_{\mathcal{A}}(\lambda)$ is equal to $(b',r)$ for a primitive basis $b'$ of ${\rm d}^\diamond_{\mathcal{A}}(P_2^\bullet)$.\end{proposition}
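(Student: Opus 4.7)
The plan is to analyze in parallel two features of the situation: the group-of-scalars structure governing the set of primitive bases of ${\rm d}^\diamond_{\mathcal{A}}(P_i^\bullet)$ for $i=1,2$, and the scalar unit in $\xi(\mathcal{A})^\times$ that describes the isomorphism of free rank-one $\xi(\mathcal{A})$-modules ${\rm d}_{\mathcal{A}}(\lambda)^{\rm u}\colon {\rm d}^\diamond_{\mathcal{A}}(P_1^\bullet) \to {\rm d}^\diamond_{\mathcal{A}}(P_2^\bullet)$. Since the reduced ranks ${\rm rr}_\mathcal{A}(P_1^\bullet)$ and ${\rm rr}_\mathcal{A}(P_2^\bullet)$ agree, ${\rm d}_{\mathcal{A}}(\lambda)$ preserves gradings, and the claim reduces to showing that ${\rm d}_{\mathcal{A}}(\lambda)^{\rm u}$ carries the torsor of primitive bases on the left-hand side bijectively onto the torsor of primitive bases on the right.

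If $\underline{b}_\bullet$ and $\underline{b}_\bullet'$ are two ordered $\mathcal{A}$-bases for the terms of a complex $P^\bullet$ in $C^{\rm f}(\mathcal{A})$, the change-of-basis matrices $M_a \in {\rm GL}_{r_a}(\mathcal{A})$ combined with the identity (\ref{4.13}) yield
\[
\Upsilon(\underline{b}_\bullet) = \Bigl(\prod_{a \in \ZZ}\nr(M_a)^{(-1)^a}\Bigr)\cdot \Upsilon(\underline{b}_\bullet').
\]
Hence the primitive bases of ${\rm d}^\diamond_{\mathcal{A}}(P^\bullet)$ form a torsor under the subgroup $\Xi(P^\bullet) \subseteq \xi(\mathcal{A})^\times$ consisting of all such alternating products. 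Using the surjectivity in (\ref{sst}) applied to a term of $P^\bullet$ of rank at least ${\rm sr}(\mathcal{A})$, together with the inclusion $\nr({\rm GL}_{r_a}(\mathcal{A})) \subseteq \nr(K_1(\mathcal{A}))$ for every $a$, I would identify $\Xi(P_i^\bullet) = \nr(K_1(\mathcal{A}))$ for both $i = 1,2$.

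To compute ${\rm d}_{\mathcal{A}}(\lambda)^{\rm u}$ itself, I would apply Theorem \ref{ext det fun thm}(i) to the exact triangle
\[
P_1^\bullet \xrightarrow{\lambda} P_2^\bullet \longrightarrow \mathrm{cone}(\lambda) \longrightarrow P_1^\bullet[1]
\]
and Theorem \ref{ext det fun thm}(ii) to the acyclic complex $\mathrm{cone}(\lambda) \in C^{\rm f}(\mathcal{A})$, obtaining a canonical factorization of ${\rm d}_{\mathcal{A}}(\lambda)^{\rm u}$ through an acyclic trivialization $t\colon {\rm d}^\diamond_{\mathcal{A}}(\mathrm{cone}(\lambda)) \xrightarrow{\sim} \xi(\mathcal{A})$. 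Each term $\mathrm{cone}(\lambda)^a = P_2^a \oplus P_1^{a+1}$ inherits a natural $\mathcal{A}$-basis from the bases chosen on $P_1^\bullet$ and $P_2^\bullet$, which assemble to give a primitive basis $c_{\rm prim}$ of ${\rm d}^\diamond_{\mathcal{A}}(\mathrm{cone}(\lambda))$, and the value $t(c_{\rm prim}) \in \xi(\mathcal{A})^\times$ should equal $\nr(\tau_\lambda)$, the reduced norm of the classical Whitehead torsion $\tau_\lambda \in K_1(\mathcal{A})$ of the based acyclic complex $\mathrm{cone}(\lambda)$.

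Assembling these pieces, for any primitive basis $b_1$ of ${\rm d}^\diamond_{\mathcal{A}}(P_1^\bullet)$ and the primitive basis $b_2$ of ${\rm d}^\diamond_{\mathcal{A}}(P_2^\bullet)$ produced from the same choices used to form $c_{\rm prim}$, one obtains ${\rm d}_{\mathcal{A}}(\lambda)^{\rm u}(b_1) = \nr(\tau_\lambda)^{\pm 1}\cdot b_2$; since this scalar lies in $\nr(K_1(\mathcal{A})) = \Xi(P_2^\bullet)$, the image is itself primitive, and bijectivity of ${\rm d}_{\mathcal{A}}(\lambda)^{\rm u}$ combined with the identification of both torsors under $\nr(K_1(\mathcal{A}))$ then yields both directions of the stated equivalence. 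The main obstacle, in my view, is establishing the explicit compatibility between the isomorphism given by Theorem \ref{ext det fun thm}(i) applied to $\mathrm{cone}(\lambda)$, the acyclic trivialization coming from Theorem \ref{ext det fun thm}(ii), and the direct-sum $\mathcal{A}$-bases on the terms of the cone: this step ultimately requires identifying the resulting comparison unit with $\nr$ applied to the classical Whitehead-torsion element in $K_1(\mathcal{A})$, and it is here that the functorial properties of the reduced determinant built into Theorem \ref{ext det fun thm} must be used with some care.
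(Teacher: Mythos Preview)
Your approach shares the essential ingredients with the paper's proof: you correctly identify that the primitive bases of each ${\rm d}^\diamond_{\mathcal{A}}(P_i^\bullet)$ form a torsor under ${\rm Nrd}_A(K_1(\mathcal{A}))$ (this is the paper's Lemma~\ref{concrete primitive lemma 1}), and that the acyclic trivialisation of a complex in $C^{\rm f}(\mathcal{A})$ sends any primitive basis into ${\rm Nrd}_A(K_1(\mathcal{A}))$ (the paper's Lemma~\ref{concrete primitive lemma 3}). Where you differ is in the geometric model used to compute ${\rm d}_{\mathcal{A}}(\lambda)$: you work with the mapping \emph{cone}, whereas the paper works with the mapping \emph{cylinder} $Z_\lambda^\bullet$.

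The gap you flag at the end is real, and the cylinder is precisely what resolves it. The exact triangle $P_1^\bullet \to P_2^\bullet \to \mathrm{cone}(\lambda)$ does \emph{not} arise from a short exact sequence in $C^{\rm f}(\mathcal{A})$ with $P_1^\bullet$ as subobject (the map $\lambda$ need not be termwise injective), so Lemma~\ref{concrete primitive lemma 2} does not apply to it directly; one would have to pass to the rotated short exact sequence $0 \to P_2^\bullet \to \mathrm{cone}(\lambda) \to P_1^\bullet[1] \to 0$ and then carefully track shift-and-sign conventions to recover ${\rm d}_{\mathcal{A}}(\lambda)$. By contrast, the cylinder gives two genuine termwise-split inclusions $\lambda_i\colon P_i^\bullet \hookrightarrow Z_\lambda^\bullet$ with acyclic cokernels in $C^{\rm f}(\mathcal{A})$, so Lemmas~\ref{concrete primitive lemma 2} and~\ref{concrete primitive lemma 3} apply verbatim to each, and the identity ${\rm d}_{\mathcal{A}}(\lambda) = ({\rm d}_{\mathcal{A}}(\lambda_2)')^{-1}\circ {\rm d}_{\mathcal{A}}(\lambda_1)'$ holds by the explicit construction of ${\rm d}_{\mathcal{A}}(\lambda)$ in \cite{bses} (following Knudsen--Mumford). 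This is what makes the ``compatibility'' you worry about immediate rather than something to be established.

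One further remark: you aim to identify the comparison unit as ${\rm Nrd}$ of the Whitehead torsion $\tau_\lambda$. This is a correct and illuminating description, but it is more than the argument requires; all that is needed is membership in ${\rm Nrd}_A(K_1(\mathcal{A}))$, and Lemma~\ref{concrete primitive lemma 3} delivers exactly that without naming the element.
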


The proof of this fact uses several results concerning the functorial behaviour of primitive bases with respect to the constructions that underlie the proof of \cite[Th. 5.4]{bses}. In this regard we recall that the arguments in loc. cit. adapt earlier arguments of Flach and the first author in \cite[\S2]{bf} and so rely on explicit constructions made by Knudsen and Mumford in \cite{knumum}.

\subsubsection{}We first establish several useful technical results.

\begin{lemma}\label{concrete primitive lemma 1} Let $P^\bullet$ be a  complex in $\DC^{\rm f}(\mathcal{A})$ of the form (\ref{rep complex}) for which there exists an integer $a$ with ${\rm rk}_{\mathcal{A}}(P^{a}) \ge {\rm sr}(\mathcal{A})$.

Let $b$ be a primitive basis of ${\rm d}^\diamond_{\mathcal{A}}(P^\bullet)$. Then any other element $b'$ of ${\rm d}^\diamond_{\mathcal{A}}(P^\bullet)$ is a primitive basis of ${\rm d}^\diamond_\mathcal{A}(P^\bullet)$ if and only if $b' = u\cdot b$ with $u$ in ${\rm Nrd}_A(\K_1(\mathcal{A}))$.
\end{lemma}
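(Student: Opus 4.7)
The plan is to analyze explicitly how a primitive basis $\Upsilon(\underline{b}_\bullet)$ changes when the $\mathcal{A}$-bases $\underline{b}_a$ of the individual terms $P^a$ are varied. Concretely, if $\underline{b}_a = \{b_{a,j}\}_{1 \le j \le r_a}$ and $\underline{b}'_a = \{\varphi_a(b_{a,j})\}_{1 \le j \le r_a}$ are two ordered $\mathcal{A}$-bases of $P^a$, then they are related by a unique automorphism $\varphi_a \in \Aut_\mathcal{A}(P^a) \cong {\rm GL}_{r_a}(\mathcal{A})$, and applying formula (\ref{4.13}) term-by-term (and then tensoring with alternating exponents) yields
\[ \Upsilon(\underline{b}'_\bullet) = \Bigl(\prod_{a \in \ZZ} {\rm Nrd}_A(\varphi_a)^{(-1)^a}\Bigr) \cdot \Upsilon(\underline{b}_\bullet) \]
in the invertible $\xi(\mathcal{A})$-module ${\rm d}^\diamond_\mathcal{A}(P^\bullet)$. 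Note each ${\rm Nrd}_A(\varphi_a)$ is a unit in $\xi(\mathcal{A})$ since $\varphi_a$ is an automorphism.

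For the ``only if'' direction, if $b$ and $b'$ are both primitive then writing $b = \Upsilon(\underline{b}_\bullet)$ and $b' = \Upsilon(\underline{b}'_\bullet)$ and combining with the displayed formula shows that $b' = u \cdot b$ with $u = \prod_a {\rm Nrd}_A(\varphi_a)^{(-1)^a}$. Each factor ${\rm Nrd}_A(\varphi_a)$ lies in ${\rm Nrd}_A(K_1(\mathcal{A}))$ via the image of $\varphi_a \in {\rm GL}_{r_a}(\mathcal{A})$ in $K_1(\mathcal{A})$, and since ${\rm Nrd}_A(K_1(\mathcal{A}))$ is a subgroup of $\zeta(A)^\times$ (being the image of a group homomorphism), the alternating product $u$ also lies in ${\rm Nrd}_A(K_1(\mathcal{A}))$.

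For the converse, let $u \in {\rm Nrd}_A(K_1(\mathcal{A}))$ and fix a primitive basis $b = \Upsilon(\underline{b}_\bullet)$. By hypothesis there exists an index $a_0$ with $r_{a_0} \ge {\rm sr}(\mathcal{A})$, so by surjectivity of the map (\ref{sst}) we may choose $M \in {\rm GL}_{r_{a_0}}(\mathcal{A})$ whose image in $K_1(\mathcal{A})$ has reduced norm equal to $u^{(-1)^{a_0}}$ (inverting $M$ if necessary, which is possible since ${\rm GL}_{r_{a_0}}(\mathcal{A})$ is a group). Let $\varphi_{a_0} \in \Aut_\mathcal{A}(P^{a_0})$ be the automorphism corresponding to $M$ and set $\underline{b}'_{a_0} := \varphi_{a_0}(\underline{b}_{a_0})$, while keeping $\underline{b}'_c := \underline{b}_c$ for $c \ne a_0$. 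Then the displayed formula yields $\Upsilon(\underline{b}'_\bullet) = u \cdot b$, which is therefore primitive.

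The only real subtlety is the sign handling in the alternating product: one must decide whether to realize $u$ or $u^{-1}$ depending on the parity of the chosen index $a_0$. This is the step that uses both halves of the stable range hypothesis in an essential way---the existence of some $a_0$ with $r_{a_0} \ge {\rm sr}(\mathcal{A})$ to apply surjectivity of (\ref{sst}), and the closure of ${\rm Nrd}_A(K_1(\mathcal{A}))$ under inversion to absorb the sign. No step appears to present a serious obstacle beyond this bookkeeping.
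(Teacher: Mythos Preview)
Your proof is correct and follows essentially the same approach as the paper's: both directions use formula (\ref{4.13}) degree-by-degree to compute how $\Upsilon(\underline{b}_\bullet)$ transforms under change of bases, and the sufficiency direction modifies the basis only in the single degree $a_0$ with $r_{a_0} \ge {\rm sr}(\mathcal{A})$, invoking surjectivity of (\ref{sst}) there to realize the prescribed element $u^{(-1)^{a_0}}$ as a reduced norm. The paper's argument is slightly more terse but structurally identical; your remark about ``inverting $M$ if necessary'' is redundant (since ${\rm Nrd}_A(K_1(\mathcal{A}))$ is already a group, $u^{(-1)^{a_0}}$ lies in it and lifts directly), but harmless.
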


\begin{proof} In each degree $s$ set $r_s:= {\rm rk}_\mathcal{A}(P^s)$.  Then (\ref{4.13}) implies that for any choices of ordered $\mathcal{A}$-bases $\{b_{sj}\}_{1\le j\le r_s}$ and $\{b'_{sj}\}_{1\le j\le r_s}$ of $P^s$ there exists a matrix $U_s$ in ${\rm GL}_{r_s}(\mathcal{A})$ such that %
\[ \wedge_{j=1}^{j=r_s}b'_{sj} = {\rm Nrd}_A(U_s)\cdot \wedge_{j=1}^{j=r_s}b_{sj}.\]
Since ${\rm Nrd}_A(U_s)$ is a unit of $\xi(\mathcal{A})$, this observation (in each degree $s$) implies that the stated condition is necessary.

%Necessity of the given conditions follows from Corollary \ref{proprn2} (in just the same way as did Lemma \ref{first independence check}(i)).

To prove sufficiency we assume that $b$ is a primitive basis of ${\rm d}^\diamond_{\mathcal{A}}(P^\bullet)$ and that $b' = u\cdot b$ with $u$ in ${\rm Nrd}_A(\K_1(\mathcal{A}))$.

Since, by assumption, $r_a \ge {\rm sr}(\mathcal{A})$ the homomorphism  (\ref{sst}) (with $d=r_a$) is surjective and so there exists a matrix $u_a = (u_{a,tw})_{1\le t,w\le r_a}$ in ${\rm GL}_{r_a}(\mathcal{A})$ with ${\rm Nrd}_A(u_a)^{(-1)^a} = u$. 

We then fix ordered bases $\underline{b}_s := \{b_{s,t}\}_{1\le t\le r_s}$ of the $\mathcal{A}$-modules $P^s$ such that $b = \Upsilon( \underline{b}_\bullet)$ and write $\underline{b}_s' := \{b'_{s,t}\}_{1\le t\le r_s}$ for the ordered basis of each $\mathcal{A}$-module $P^s$ that is obtained by setting $b'_{s,t} := b_{s,t}$ if $s\not= a$ and $b'_{a,t} := {\sum}_{w=1}^{w=r_a}u_{a,tw} b_{a,w}$. Then the equality (\ref{4.13}) implies that
\[ b' = u\cdot b = {\rm Nrd}_A(u_a)^{(-1)^a}\cdot \Upsilon( \underline{b}_\bullet) = \Upsilon(\underline{b}'_\bullet),\]
and hence that $b'$ is a primitive basis of ${\rm d}^\diamond_\mathcal{A}(P^\bullet)$, as required. \end{proof}

%In the next result we use the result of Lemma \ref{technical complex}(viii).

\begin{lemma}\label{concrete primitive lemma 2}
 Let 
\[ 0 \to P_1^\bullet \to P_2^\bullet \to P^\bullet_3 \to 0\]
be
 a short exact sequence in $\DC^{\rm f}(\mathcal{A})$ and write
\[\Delta: {\rm d}_{\mathcal{A}}(P_1^\bullet)\otimes{\rm d}_{\mathcal{A}}(P_3^\bullet)
\cong {\rm d}_{\mathcal{A}}(P_2^\bullet)\]
for the isomorphism induced by Theorem \ref{ext det fun thm}(i). Set $r_j := {\rm rr}_{\mathcal{A}}(P^\bullet_j)$ for $j = 1,2,3$. 

Then $r_1 + r_3 = r_2$ and, for any primitive bases $x_1$  of
${\rm d}^\diamond_{\mathcal{A}}(P^\bullet_1)$ and $x_3$ of
${\rm d}^\diamond_{\mathcal{A}}(P^\bullet_3)$, there exists a primitive
basis $x_2$ of ${\rm d}^\diamond_{\mathcal{A}}(P^\bullet_2)$ such that
\[ \Delta((x_1,r_1)\otimes (x_3,r_3)) = (x_2,r_2).\]
\end{lemma}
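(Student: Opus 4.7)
The plan is to exploit the fact that each short exact sequence of terms
$0\to P_1^a\to P_2^a\to P_3^a\to 0$ splits as a sequence of $\mathcal{A}$-modules, since $P_3^a$ is free. Fixing $\mathcal{A}$-linear splittings $\sigma^a:P_3^a\to P_2^a$ in each degree gives internal direct sum decompositions $P_2^a = P_1^a\oplus \sigma^a(P_3^a)$ (not compatible with the differentials, in general). The additivity $r_1+r_3=r_2$ is then immediate from formula (\ref{rr def}) together with the identity ${\rm rk}_\mathcal{A}(P_2^a)={\rm rk}_\mathcal{A}(P_1^a)+{\rm rk}_\mathcal{A}(P_3^a)$ holding in each degree.

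Next, I would write the given primitive bases as $x_1=\Upsilon(\underline{b}_{1,\bullet})$ and $x_3=\Upsilon(\underline{b}_{3,\bullet})$ for some choices of ordered $\mathcal{A}$-bases $\underline{b}_{j,a}=\{b_{j,a,t}\}$ of the modules $P_j^a$, and then, for each $a$, form the concatenated ordered $\mathcal{A}$-basis
\[ \underline{b}_{2,a} := \underline{b}_{1,a}\,\|\,\sigma^a(\underline{b}_{3,a}) \]
of $P_2^a$. Setting $x_2:=\Upsilon(\underline{b}_{2,\bullet})$ yields, by construction, a primitive basis of ${\rm d}^\diamond_{\mathcal{A}}(P_2^\bullet)$, and the remaining task is to verify that $\Delta((x_1,r_1)\otimes (x_3,r_3))=(x_2,r_2)$.

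The key computation reduces, by unravelling the definition of ${\rm d}^\diamond_\mathcal{A}(-)$ in each degree, to the following claim about reduced exterior powers. If $P=P'\oplus P''$ is an internal direct sum of free $\mathcal{A}$-modules of ranks $r'$ and $r''$, with concatenated ordered basis $\underline{b}=\underline{b}'\,\|\,\underline{b}''$, then under the canonical isomorphism of invertible $\xi(\mathcal{A})$-modules
\[ {\bigcap}_\mathcal{A}^{r'}P'\otimes_{\xi(\mathcal{A})} {\bigcap}_\mathcal{A}^{r''}P'' \xrightarrow{\sim} {\bigcap}_\mathcal{A}^{r'+r''}P \]
induced by Theorem \ref{ext det fun thm}(i) and (iii) applied to the split short exact sequence $0\to P'\to P\to P''\to 0$, the element $(\wedge_t b'_t)\otimes(\wedge_t b''_t)$ maps to $\wedge_t b_t$. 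This identity can be checked by evaluating against products $\wedge_a\varphi_a$ of $\mathcal{A}$-linear duals $\varphi_a\in\Hom_\mathcal{A}(P,\mathcal{A})$ using the pairing (\ref{duality pairing}); by passing to the Wedderburn factors described in \S\ref{Exterior powers over semisimple rings} and applying formula (\ref{rn rel}), it reduces to the well-known fact that the reduced norm of a block upper-triangular matrix is the product of the reduced norms of its diagonal blocks.

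The main obstacle, and the only place where one must work with the internals of \cite{bses} rather than its formal consequences, is to check that the isomorphism $\Delta$ provided by Theorem \ref{ext det fun thm}(i) really does agree, on primitive bases, with the degreewise splitting isomorphism constructed from the $\sigma^a$. This compatibility follows by inspecting how the reduced determinant of a short exact sequence of complexes is built up from its term-by-term direct-sum isomorphisms in the constructions of Knudsen--Mumford and Flach--Burns that underlie the proof of \cite[Th.~5.4]{bses}: the relevant trivialisations there are defined precisely so that a choice of degreewise splittings produces, via concatenation of bases, the asserted identification of primitive bases. Once this compatibility is in place, the lemma follows by applying the claim in each degree $a$ and assembling the $(-1)^a$-th tensor powers over $\xi(\mathcal{A})$.
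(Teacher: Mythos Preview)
Your approach is correct and essentially identical to the paper's: reduce to each degree, choose splittings $\sigma^a$, concatenate the given ordered bases of $P_1^a$ and $\sigma^a(P_3^a)$ to obtain a primitive basis of ${\rm d}^\diamond_\mathcal{A}(P_2^a)$, and assemble over $a$. The one place where you do more work than necessary is the verification that $\Delta^a$ sends $(\wedge_t b'_t)\otimes(\wedge_t b''_t)$ to $\wedge_t b_t$: in the paper this is not checked via duality pairings and block-triangular reduced norms, but is simply cited as the \emph{definition} of $\Delta^a$ in \cite{bses}. Once you know that the degreewise isomorphism is defined exactly by that rule on concatenated bases, there is nothing left to compute and your ``main obstacle'' disappears.
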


\begin{proof} The given exact sequence induces in each degree $a$ a (split) short exact
sequence in ${\rm Mod}^{\rm f}(\mathcal{A})$ of the form
\begin{equation}\label{ses i} 0 \to P_1^a \to P^a_2 \xrightarrow{\phi^a} P^a_3
\to 0.\end{equation}
These sequences imply that ${\rm rk}_\mathcal{A}(P_1^a) + {\rm rk}_\mathcal{A}(P_3^a) = {\rm rk}_\mathcal{A}(P_2^a)$ and so the claimed equality $r_1 + r_3 = r_2$ follows directly from the definition (\ref{rr def}) of each term ${\rm rr}_{\mathcal{A}}(P_j^\bullet)$.

In addition, if one sets $r_j^a = {\rm rr}_\mathcal{A}(P_j^a)$ for $j = 1,2,3$, then the above sequences combine with Remark \ref{exp reps} to show that the remaining claim is valid provided that for any primitive bases $x^a_1$
of ${\rm d}^\diamond_{\mathcal{A}}(P_1^a)$ and $x^a_3$ of
${\rm d}^\diamond_{\mathcal{A}}(P^a_3)$, there exists a primitive
basis $x^a_2$ of ${\rm d}^\diamond_{\mathcal{A}}(P^a_2)$ such that the isomorphism
\[ \Delta^a: {\rm d}_{\mathcal{A}}(P^a_1)\otimes {\rm d}_{\mathcal{A}}(P^a_3)
\cong {\rm d}_{\mathcal{A}}(P^a_2)\]
induced by (\ref{ses i}) sends $(x^a_1,r_1^a)\otimes (x^a_3,r_3^a)$ to $(x^a_2,r_2^a)$.

For this we fix an $\mathcal{A}$-module section $\sigma$ to $\phi^a$ and note that, if $x_j$ corresponds to the ordered $\mathcal{A}$-basis
 $\{b_{js}\}_{1\le s\le r^a_{j}}$ of $P^a_{j}$ for each $j=1,3$, then one obtains
  an ordered $\mathcal{A}$-basis $\{b^{\sigma}_{s}\}_{1\le s\le r^a_{2}}$ of $P_{2}^a$ by setting
\[ b^{\sigma}_{s} := \begin{cases} b^1_{s}, &\text{if $1\le s\le r^a_{1}$,}\\
\sigma(b^3_{s-r^a_{1}}), &\text{if $r^a_{1}< s\le r^a_{2}$.}\end{cases}\] 

Then 
$\wedge_{j=1}^{j=r^a_{2}}b^{\sigma}_{j}$ is a primitive
basis of ${\rm d}^\diamond_{\mathcal{A}}(P^a_2) = {\bigcap}_{\mathcal{A}}^{r^a_2}P^a_2$ and so the required result is
true because the isomorphism 
$\Delta^a$ is defined  by the condition that 
\[ \Delta^a\left((\wedge_{s=1}^{s=r^a_{1}}b^1_{s},r^a_1)\otimes (\wedge_{t=1}^{t=r^a_{3}}b^3_{t},r^a_3) \right) =(\wedge_{j=1}^{j=r^a_{2}}b^{\sigma}_{j},r^a_2)\]
(cf. the proof of \cite[Prop. 5.9(v)]{bses}). \end{proof}

\begin{lemma}\label{concrete primitive lemma 3} Let $P^\bullet$ be an acyclic complex in $\DC^{\rm f}(\mathcal{A})$ and $b$ a primitive basis of ${\rm d}^\diamond_\mathcal{A}(P^\bullet)$. Then ${\rm rr}_{\mathcal{A}}(P^\bullet) = 0$ and the canonical isomorphism
${\rm d}_\mathcal{A}(P^\bullet) \cong (\xi(\mathcal{A}),0)$ sends  $(b,0)$ to $(u,0)$ for some element $u$ of ${\rm Nrd}_A(\K_1(\mathcal{A}))$.
\end{lemma}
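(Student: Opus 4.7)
The vanishing $\mathrm{rr}_\mathcal{A}(P^\bullet)=0$ is immediate: since $P^\bullet$ is acyclic, one has $\chi_\mathcal{A}(P^\bullet)=\sum_a(-1)^a[P^a]=\sum_a(-1)^a[H^a(P^\bullet)]=0$ in $K_0^{\mathrm{lf}}(\mathcal{A})$, so applying the rank homomorphism gives $\sum_a(-1)^a\mathrm{rk}_\mathcal{A}(P^a)=0$, and formula~(\ref{rr def}) then forces $\mathrm{rr}_\mathcal{A}(P^\bullet)=0$.

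For the second assertion, I would argue by induction on the length $n$ of $P^\bullet$ (the number of indices $a$ with $P^a\neq 0$). The cases $n\leq 1$ are vacuous. In the base case $n=2$, acyclicity forces $P^\bullet=[P^a\xrightarrow{d^a}P^{a+1}]$ with $d^a$ an isomorphism of free $\mathcal{A}$-modules of some common rank $r$; writing $D\in\mathrm{GL}_r(\mathcal{A})$ for the matrix of $d^a$ with respect to ordered bases $\underline{b}_a,\underline{b}_{a+1}$ representing $b=\Upsilon(\underline{b}_\bullet)$, an unwinding of parts~(i)--(iii) of Theorem~\ref{ext det fun thm} (using $H^\bullet(P^\bullet)=0$ in part~(ii)) identifies the canonical trivialization of $\mathrm{d}_\mathcal{A}(P^\bullet)$ with the pushforward $d^a_\ast\colon{\bigcap}_\mathcal{A}^r P^a\to{\bigcap}_\mathcal{A}^r P^{a+1}$ followed by the evaluation pairing at the target. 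Formula~(\ref{4.13}) then yields $(b,0)\mapsto(\nr_A(D)^{(-1)^a},0)\in(\nr_A(K_1(\mathcal{A})),0)$, as desired.

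For the inductive step with $n\geq 3$, let $a$ be the least index with $P^a\neq 0$. The natural short exact sequence of complexes
\[ 0\to [P^a\xrightarrow{\sim}\mathrm{im}(d^a)]\to P^\bullet\to [P^{a+1}/\mathrm{im}(d^a)\to P^{a+2}\to\cdots]\to 0 \]
exhibits $P^\bullet$ as an extension of two acyclic complexes of length strictly less than $n$. If these sub- and quotient-complexes both lay in $C^{\mathrm{f}}(\mathcal{A})$, Lemma~\ref{concrete primitive lemma 2} combined with the inductive hypothesis and the base case would conclude immediately. The main obstacle is that $\mathrm{im}(d^a)$ and $P^{a+1}/\mathrm{im}(d^a)$, being only projective direct summands of the free module $P^{a+1}$, need not themselves be free, so the decomposition lies in general only in $C^{\mathrm{lf}}(\mathcal{A})$.

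To circumvent this, I would appeal to Lemma~\ref{concrete primitive lemma 1} (after, if needed, a harmless stabilization of $P^\bullet$ by a trivial acyclic complex $[\mathcal{A}^m\xrightarrow{\mathrm{id}}\mathcal{A}^m]$ to satisfy its rank hypothesis, a modification which contributes only $\nr_A(\mathrm{id}_m)=1$): it then suffices to exhibit \emph{one} primitive basis of $\mathrm{d}^\diamond_\mathcal{A}(P^\bullet)$ whose trivialization lies in $\nr_A(K_1(\mathcal{A}))$. Such a basis can be produced by choosing a chain contraction $s$ of $P^\bullet$ with $s^2=0$ (which exists since $P^\bullet$ is acyclic with projective terms, via the standard decomposition into elementary acyclic summands $[Q\xrightarrow{\mathrm{id}}Q]$); this yields an isomorphism of free $\mathcal{A}$-modules $d+s\colon P^{\mathrm{ev}}:=\bigoplus_{a\ \mathrm{even}}P^a\xrightarrow{\sim}P^{\mathrm{odd}}:=\bigoplus_{a\ \mathrm{odd}}P^a$ of the same $\mathcal{A}$-rank (by the vanishing of $\mathrm{rr}_\mathcal{A}(P^\bullet)$ established above). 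Using the functoriality in Theorem~\ref{ext det fun thm}(i) together with the standard identification ${\bigcap}^{r+s}_\mathcal{A}(P\oplus Q)\cong{\bigcap}^r_\mathcal{A}P\otimes_{\xi(\mathcal{A})}{\bigcap}^s_\mathcal{A}Q$ for free modules, one verifies that the canonical trivialization of $\mathrm{d}_\mathcal{A}(P^\bullet)$ at a primitive basis adapted to the parity decomposition agrees (up to sign) with that of the two-term folded complex $[P^{\mathrm{ev}}\xrightarrow{d+s}P^{\mathrm{odd}}]$, whose value the base case computes to be $\nr_A(d+s)^{\pm 1}\in\nr_A(K_1(\mathcal{A}))$. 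This completes the induction.
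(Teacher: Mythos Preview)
Your reduction via Lemma~\ref{concrete primitive lemma 1} to exhibiting a single primitive basis with the desired property, together with your treatment of the two-term base case, is correct and matches the paper. The vanishing of $\mathrm{rr}_\mathcal{A}(P^\bullet)$ is also fine.

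The difference lies in the inductive step. The paper does not use chain contractions or a folded complex. Instead, it first stabilises $P^\bullet$ by direct-summing enough copies of $[\mathcal{A}\xrightarrow{\mathrm{id}}\mathcal{A}]$ (via Lemma~\ref{concrete primitive lemma 2}) so that $\mathrm{rk}_\mathcal{A}(\ker(d^j))\ge\mathrm{sr}(\mathcal{A})$ for each intermediate degree $j$. The point of this is that the Bass Cancellation Theorem then forces every $\ker(d^j)=\im(d^{j-1})$ to be \emph{free}, not merely projective. The truncation short exact sequence $0\to[P^a\to\im(d^a)]\to P^\bullet\to\cok(\iota)\to 0$ that you wrote down then genuinely lies in $C^{\mathrm{f}}(\mathcal{A})$, and one concludes by Lemma~\ref{concrete primitive lemma 2} and induction on length.

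Your alternative via the folded complex $[P^{\mathrm{ev}}\xrightarrow{d+s}P^{\mathrm{odd}}]$ is the classical Whitehead-torsion approach and is correct in spirit, but as written it has a gap: the assertion that ``one verifies'' the canonical trivialisation of $\mathrm{d}_\mathcal{A}(P^\bullet)$ coincides (up to sign) with that of the fold is precisely the non-trivial content here, and Theorem~\ref{ext det fun thm}(i) alone does not supply it. That theorem produces compatibilities along exact triangles, but there is no single exact triangle in $D^{\mathrm{lf}}(\mathcal{A})$ relating $P^\bullet$ to its fold; any attempt to build one by iterated truncation runs into exactly the projective-but-not-free kernels you already identified as the obstacle. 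Making the folding comparison rigorous therefore requires either re-deriving it from the explicit Knudsen--Mumford-style construction underlying the reduced determinant functor, or---more simply---first forcing the kernels to be free, which is what the paper's stabilisation plus Bass Cancellation accomplishes. Once the kernels are free, the folding identity becomes a straightforward consequence of the degreewise splitting $P^a\cong\ker(d^a)\oplus\ker(d^{a+1})$ together with Lemma~\ref{concrete primitive lemma 2}, but at that point you have essentially reproduced the paper's argument.
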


\begin{proof} The acyclicity of $P^\bullet$ implies that ${\sum}_{a\in \ZZ}(-1)^a\cdot {\rm rk}_\mathcal{A}(P^a) = 0$ and hence also that ${\rm rr}_\mathcal{A}(P^\bullet) = 0$, as claimed.

Regarding the second claim, we note that the `only if' part of Lemma \ref{concrete primitive lemma 1} (the argument for which does not require the existence of an integer $a$ with ${\rm rk}_\cA(P^a) \geq {\rm sr}(\cA)$) reduces us to proving the existence of a primitive basis $b$ of ${\rm d}^\diamond_\mathcal{A}(P^\bullet)$ such that the canonical isomorphism ${\rm d}_\mathcal{A}(P^\bullet) \cong (\xi(\mathcal{A}),0)$ sends $(b,0)$ to $(u,0)$ for some element $u$ of ${\rm Nrd}_A(\K_1(\mathcal{A}))$.

%in each degree $i$ one can choose an $\mathcal{A}$-basis $\{b_{i,j}\}_{1\le j\le r_i}$ of $P^i$ such that the canonical isomorphism ${\rm d}_\mathcal{A}(P^\bullet) \cong (\xi(\mathcal{A}),0)$ sends  $(\bigotimes_{i\in \ZZ}(\wedge_{j=1}^{j=r_i}b_{i,j})^{(-1)^i},0)$ to $(1,0)$.

We first prove this in the special case that there exists an integer $a$ such that $P^i$ vanishes for all $i\notin \{a,a+1\}$. In this situation  the acyclicity of $P^\bullet$ implies %the (free) modules $P^a$ and $P^{a+1}$ are isomorphic and hence
it has the form $\mathcal{A}^t\xrightarrow{\theta} \mathcal{A}^t$
 for a suitable natural number $t$ and isomorphism of $\mathcal{A}$-modules
 $\theta$. In particular, if we write $b$ for the primitive basis of ${\rm d}^\diamond_\mathcal{A}(P^\bullet)$
 that corresponds to the standard basis of $\mathcal{A}^t$
 (in both degrees $a$ and $a+1$), then (\ref{4.13}) implies that the canonical isomorphism
${\rm d}_\mathcal{A}(P^\bullet) \cong (\xi(\mathcal{A}),0)$
sends $(b,0)$ to $({\rm Nrd}_A(M_\theta),0)$, where $M_\theta$ is the matrix of $\theta$
with respect to the standard basis of $\mathcal{A}^t$. The required result is therefore
 true since ${\rm Nrd}_A(M_\theta)$ belongs to ${\rm Nrd}_A(\K_1(\mathcal{A}))$.

Turning now to the general case, we write $a$ and $a'$ for the least and greatest integers $m$ for which $P^m$ is non-zero. Then, if necessary by taking the direct sum of $P^\bullet$ with a suitable collection of complexes of the form $\mathcal{A}\xrightarrow{{\rm id}}\mathcal{A}$ (and applying Lemma \ref{concrete primitive lemma 2}), we can assume that ${\rm rk}_{\mathcal{A}}(\ker(d^j)) \ge {\rm sr}(\mathcal{A})$ for each $j$ with $a < j < a'$.

Then, since $P^\bullet$ is acyclic, the Bass Cancellation Theorem (cf. \cite[Th. (41.20)]{curtisr}) combines with an easy downward induction on $j$ to imply, firstly that each $\mathcal{A}$-module $\im(d^j) = \ker(d^{j+1})$ is free and hence that there is an isomorphism of $\mathcal{A}$-modules $P^j\cong \ker(d^j)\oplus \im(d^j)$, and secondly that each module $P^j/\ker(d^j) \cong \im(d^j)$ is free.

Now write $P^\bullet_1$ for the complex $P^a\xrightarrow{d^a} \im(d^a)$ where the first term is placed in degree $a$, and $\iota$ for the natural inclusion of complexes $P_1^\bullet \to P^\bullet$. Then there is a tautological short exact sequence of acyclic complexes
\begin{equation}\label{acyclicity induction} 0 \to P^\bullet_1 \xrightarrow{\iota} P^\bullet \to \cok(\iota)\to 0\end{equation}
in $\DC^{\rm f}(\mathcal{A})$ and, by applying  Lemma \ref{concrete primitive lemma 2} to this sequence, one
 can use an induction on the number of non-zero modules $P^j$ to reduce to the special case (that $P^j$ vanishes except in two consecutive degrees) that was considered earlier. \end{proof}

%\begin{lemma}\label{technical complex} If $P^\bullet$ is an acyclic complex in  $\DC^{\rm lf}(\mathcal{A})$
%\begin{itemize}
%\item[(v)] If $0 \to P_1^\bullet \to P_2^\bullet \to P^\bullet_3 \to 0$ is a short exact sequence in $C^{\rm lf}(\mathcal{A})$, then there is a natural isomorphism of $\xi(\mathcal{A})$-modules ${\rm det}_{\mathcal{A}}(P_2^\bullet) \cong {\rm det}_{\mathcal{A}}(P_1^\bullet)\otimes_{\xi(\mathcal{A})}{\rm det}_{\mathcal{A}}(P_3^\bullet)$.
%\[ {\rm det}_{\mathcal{A}}(P_2^\bullet) \cong {\rm det}_{\mathcal{A}}(P_1^\bullet)\otimes_{\xi(\mathcal{A})}
%{\rm det}_{\mathcal{A}}(P_3^\bullet).\]
%then ${\rm d}_{\mathcal{A}}(P^\bullet)$ is naturally isomorphic to $(\xi(\mathcal{A}),0)$.
%\item[(vii)] The invertible $\xi(\mathcal{A})$-modules ${\rm det}_{\mathcal{A}}(P^\bullet[1])$ and ${\rm det}_{\mathcal{A}}(P^\bullet)^{-1}$ are naturally isomorphic.
%\item[(viii)] Each quasi-isomorphism $\lambda: P_1^\bullet \to P^\bullet_2$ in $C^{\rm lf}(\mathcal{A})$ induces a canonical isomorphism of $\xi(\mathcal{A})$-modules ${\rm det}_\mathcal{A}(\lambda): {\rm det}_{\mathcal{A}}(P_1^\bullet)\cong {\rm det}_{\mathcal{A}}(P_2^\bullet)$.
%\end{itemize}
%\end{lemma}
%\begin{proof}

\subsubsection{}We can now prove Proposition \ref{concrete primitive lemma 20}.

For $j\in \{1,2\}$ and each integer $a$ we set $r_{j}^a := {\rm rk}_\mathcal{A}(P_j^a)$. For each $j$ and $a$ we then fix an ordered $\mathcal{A}$-basis $\underline{b}_{j,a}:= \{b_{j,ak}\}_{1\le k\le r^a_{j}}$ of $P_j^a$ and write $x_j$ for the associated primitive basis $\Upsilon(\underline{b}_{j,\bullet})$ % \bigotimes_{a \in \ZZ}(\wedge_{k=1}^{k=r_{j}^a}%b_{j,ak})^{(-1)^a}$ 
of ${\rm d}^\diamond_\mathcal{A}(P_j^\bullet)$.

Then Lemma \ref{concrete primitive lemma 1}
implies that the stated claim is true if and only if there exists an element $u$ of ${\rm Nrd}_A(\K_1(\mathcal{A}))$ such that
\begin{equation}\label{kmi}
 {\rm d}_\mathcal{A}(\lambda)((x_1,r)) = (u\cdot x_2,r)\end{equation}
with $r := {\rm rr}_\mathcal{A}(P_1^\bullet) = {\rm rr}_\mathcal{A}(P_2^\bullet)$. %  {\sum}_{i\in \ZZ}(-1)^ir_{1i} = {\sum}_{i \in \ZZ}(-1)^ir_{2i}$.
 To prove this we shall adapt an argument of Knudsen and Mumford from \cite[proof of Th. 1]{knumum} (see also \cite[Chap. III, \S 3, Lem.]{GM}).

For this purpose we recall that the mapping cylinder  of $\lambda$ is the complex $Z_\lambda^\bullet$ that has $Z_\lambda^a = P_1^a \oplus P_2^a\oplus P_1^{a+1}$ in each degree  $a$ and is such that, with respect to these decompositions, the differential in degree $a$ is represented by the matrix
\[ \begin{pmatrix}
d_1^a& 0 & -1\\
0& d_2^a & \lambda^{a+1}\\
0& 0 & -d_1^{a+1}
\end{pmatrix}\]
where $d_j^a$ denotes the differential of $P_j^\bullet$ in degree $a$.

Then, with this notation there are quasi-isomorphisms
$\lambda_1: P_1^\bullet\to Z^\bullet_\lambda$, $\lambda_2: P_2^\bullet \to Z^\bullet_\lambda$ and $\lambda_2': Z^\bullet_\lambda \to P^\bullet_2$ in $\DC^{\rm f}(\mathcal{A})$ with (in the obvious notation) 
\[ \lambda_1 =
\begin{pmatrix}
1\\
0\\
0\end{pmatrix},\,\,\,
\lambda_2 = \begin{pmatrix}
0\\
1\\
0\end{pmatrix}\,\,\,
\text{ and }\,\,\,
\lambda_2' = \begin{pmatrix}
\lambda\\
1\\
0\end{pmatrix}.\]
In addition, one checks that $\lambda_2'\circ \lambda_1 = \lambda$ and $\lambda_2'\circ \lambda_2 = {\rm id}_{P^\bullet_2}$ and that both of the complexes ${\rm cok}(\lambda_1)$ and ${\rm cok}(\lambda_2)$ are acyclic objects of $\DC^{\rm f}(\mathcal{A})$. It follows that, for each $i$ in $\{1,2\}$ there are natural isomorphisms
\begin{equation*}\label{knudsen mumford intermediate} {\rm d}_{\mathcal{A}}(\lambda_i)': {\rm d}_{\mathcal{A}}(P_i^\bullet) \cong  {\rm d}_{\mathcal{A}}(P_i^\bullet)\otimes {\rm d}_{\mathcal{A}}({\rm cok}(\lambda_i)) \cong {\rm d}_{\mathcal{A}}(Z_\lambda^\bullet),\end{equation*}
where the first map is induced by the acyclicity of ${\rm cok}(\lambda_i)$ and
the second by the tautological short exact sequence 
\[ 0 \to P_i^\bullet \xrightarrow{\lambda_i} Z^\bullet_\lambda \to {\rm cok}(\lambda_i) \to 0.\]

By applying Lemma \ref{concrete primitive lemma 3} to the first isomorphism in this composite, and then
 Lemma \ref{concrete primitive lemma 2} to the second, one deduces
 that there exists an element $u_i$ of ${\rm Nrd}_A(\K_1(\mathcal{A}))$ and a primitive basis $y_i$ of
${\rm d}^\diamond_{\mathcal{A}}(Z_\lambda^\bullet)$ such that
\[ {\rm d}_{\mathcal{A}}(\lambda_i)'((x_i,r)) = ((u_i\cdot y_i,r)).\]

On the other hand, the explicit construction of ${\rm d}_\mathcal{A}(\lambda)$ via 
\cite[Prop. 5.14]{bses} implies directly that
\[ {\rm d}_\mathcal{A}(\lambda) =({\rm d}_\mathcal{A}(\lambda_2)')^{-1}\circ {\rm d}_\mathcal{A}(\lambda_1)'\]
and hence that
\[ {\rm d}_\mathcal{A}(\lambda)((x_1,r)) =
((u_1u_2^{-1}u_3)\cdot x_2,r)\]
where $u_3$ is the element of ${\rm Nrd}_A(\K_1(\mathcal{A}))$
that is defined by (Lemma \ref{concrete primitive lemma 1} and)
the equality 
\[ y_1 = u_3\cdot y_2.\]

In particular, since the element 
 $u:= u_1u_2^{-1}u_3$ belongs to ${\rm Nrd}_A(\K_1(\mathcal{A}))$, this  proves the required equality (\ref{kmi}) (for this value of $u$), and hence completes the proof of Proposition \ref{concrete primitive lemma 20}.\qed

%We observe first that in each degree $i$ the set of elements $(b_{1,ia_1},b_{2,ia_2},b_{1,(i+1)a_3})$, with $1\le a_1\le r_{1i}, 1 \le a_2\le r_{2i}$ and $1\le a_3\le r_{1(i+1)}$ constitutes an $\mathcal{A}$-basis of $Z_\lambda^i$ and we write $b_\lambda$ for the corresponding primitive basis of ${\rm det}_{\mathcal{A}}(Z_\lambda^\bullet)$.
%In the same way the elements $(0,b_{2,ia_2},b_{1,(i+1)a_3})$ and $(b_{1,ia_1},0,b_{1,(i+1)a_3})$ give rise to primitive bases $b_1'$ and $b_2'$ of ${\rm det}_\mathcal{A}({\rm cok}(\lambda_1))$ and ${\rm det}_\mathcal{A}({\rm cok}(\lambda_2))$ respectively.
%An explicit computation shows that, for both $j=1$ and $j=2$, the isomorphism ${\rm det}(\lambda_j)'$ in (\ref{knudsen mumford intermediate}) sends $b_j$ to $\mu_j(b_j')^{-1}\cdot b_\lambda$ with $\mu_j$  the isomorphism ${\rm det}_\mathcal{A}({\rm cok}(\lambda_j))\cong \xi(\mathcal{A})$ induced by applying Lemma \ref{technical complex}(vi) to the acyclic complex ${\rm cok}(\lambda_j)$, and hence that
%
%\[ {\rm det}_\mathcal{A}(\lambda)(b_1) = ({\rm det}_\mathcal{A}(\lambda_2)')^{-1}({\rm det}_\mathcal{A}(\lambda_1)'(b_1)) =
%\mu_2(b_2')\mu_1(b_1')^{-1}\cdot b_2.\]
%
%Given this, the required equality follows directly from the result of Lemma \ref{concrete primitive lemma 3}.
%
%In the following result we use the result of (vi).
%

\subsection{Locally-primitive bases}

We can now make the key definitions of \S\ref{plp section}.

\begin{definition}\label{loc prim def}{\em Let $C$ be an object of $\Der^{{{\rm lf}}}(\mathcal{A})$ and $b$ an element of ${\rm d}_{\mathcal{A}}(C)_F$.\

\noindent{}(i) We say that $b$ is a `primitive basis' of the $\xi(\mathcal{A})$-module ${\rm d}_{\mathcal{A}}(C)$ if $C$ is isomorphic in $\Der^{\rm lf}(\mathcal{A})$ to a complex $P^\bullet$ in $\DC^{{\rm f}}(\mathcal{A})$ with the property that  in some degree $a$ one has ${\rm rk}_\mathcal{A}(P^a) \ge {\rm sr}(\mathcal{A})$ and, with respect to the induced identification ${\rm d}_\mathcal{A}(C) \cong ({\rm d}^\diamond_\mathcal{A}(P^\bullet),{\rm rr}_\mathcal{A}(P^\bullet))$, the element $b$ corresponds to $(b',{\rm rr}_\mathcal{A}(P^\bullet))$ for some primitive basis $b'$ of ${\rm d}^\diamond_\mathcal{A}(P^\bullet)$.\

\noindent{}(ii) We say that $b$ is a `locally-primitive basis' of the $\xi(\mathcal{A})$-module ${\rm d}_{\mathcal{A}}(C)$ if for all $\mathfrak{p}$ in ${\rm Spm}(R)$ the image of $b$ in the $\mathfrak{p}$-completion ${\rm d}_{\mathcal{A}}(C)_{F,\frp} = {\rm d}_{\mathcal{A}_{\frp}}( C_\frp)_{F_\frp}$ of ${\rm d}_{\mathcal{A}}(C)_F$ is a primitive basis of the $\xi(\mathcal{A}_\mathfrak{p})$-module ${\rm d}_{\mathcal{A}_{\frp}}(C_{\frp})$.

\noindent{}(iii) We say that $b$ is a `generically-primitive basis' of the $\xi(\mathcal{A})$-module 
${\rm d}_{\mathcal{A}}(C)$ if its image in ${\rm d}_{\mathcal{A}}(C)_F = {\rm d}_A(C_F)$ is a primitive basis of the $\zeta(A)$-module ${\rm d}_A(C_F)$.} \end{definition}

%\begin{remark}{\em In the sequel, we shall, for brevity, abbreviate the phrases `primitive basis', `locally-primitive basis' and `generically-primitive basis' to `p-basis', `l-basis' and `g-basis' respectively. }\end{remark}
%
%\begin{remark}\label{\em It is immediately clear from the definitions that a primitive basis of ${\rm d}_\mathcal{A}(C)$ is both a  locally-primitive and generically-primitive basis of ${\rm d}_\mathcal{A}(C)$. .}\end{remark}}

The arguments of \S\ref{pb section} imply that the
notion of (locally-)primitive basis is intrinsic to objects of
$\Der^{\rm lf}(\mathcal{A})$ and further that, in this setting, it has the following useful functorial properties.

\begin{proposition}\label{prim criterion 2}
Let $C$ be an object of $\Der^{{\rm lf}}(\mathcal{A})$.
\begin{itemize}
\item[(i)] If $C$ is acyclic, then the canonical isomorphism 
\[ {\rm d}_\mathcal{A}(C)
\cong (\xi(\mathcal{A}),0)\]
coming from Theorem \ref{ext det fun thm} sends any primitive basis of ${\rm d}_\mathcal{A}(C)$ to an element of $({\rm Nrd}_A(\K_1(\mathcal{A})),0)$.
\item[(ii)] Let $b$ be a primitive basis of ${\rm d}_\mathcal{A}(C)$. Then an element $b'$ of ${\rm d}_\mathcal{A}(C)_F$ is a primitive basis
of ${\rm d}_\mathcal{A}(C)$ if and only if $b' = u\cdot b$ for some $u$ in ${\rm Nrd}_{A}(\K_1(\mathcal{A}))$.
\item[(iii)] Let 
\[ C_1\to C\to C_3 \to C_1[1]\]
be an exact
triangle in $\Der^{{\rm lf}}(\mathcal{A})$. Then for any
primitive bases, respectively locally-primitive bases, $x_1$ of ${\rm d}_{\mathcal{A}}(C_1)$ and
$x_3$ of ${\rm d}_{\mathcal{A}}(C_3)$, the canonical isomorphism
\[ {\rm d}_{\mathcal{A}}(C_1)\otimes {\rm d}_{\mathcal{A}}(C_3) \cong
{\rm d}_{\mathcal{A}}(C)\]
coming from Theorem \ref{ext det fun thm} sends $x_1\otimes x_3$ to a primitive basis, respectively locally-primitive basis, of ${\rm d}_{\mathcal{A}}(C)$.
\end{itemize}
\end{proposition}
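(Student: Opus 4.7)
The plan is to derive each of the three assertions from the corresponding results about complexes in $C^{\rm f}(\mathcal{A})$ that were established in \S\ref{pb section}. The common thread is that Proposition \ref{concrete primitive lemma 20} guarantees the property ``primitive'' is intrinsic to an object of $D^{\rm lf}(\mathcal{A})$, so one can freely choose a convenient representing complex $P^\bullet$ in $C^{\rm f}(\mathcal{A})$ having some term of rank at least ${\rm sr}(\mathcal{A})$. Whenever this rank hypothesis is not satisfied by a natural choice of $P^\bullet$, I would replace $P^\bullet$ by its direct sum with finitely many copies of the acyclic complex $\mathcal{A}\xrightarrow{{\rm id}}\mathcal{A}$, concentrated in a chosen degree; Lemma \ref{concrete primitive lemma 2} applied to the tautological split short exact sequence ensures that primitive bases transfer in a controlled way under such enlargements.

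For part (i), represent $C$ by an acyclic complex $P^\bullet$ in $C^{\rm f}(\mathcal{A})$ (enlarged as above if necessary to satisfy the stable-range hypothesis). By Proposition \ref{concrete primitive lemma 20} any primitive basis of ${\rm d}_\mathcal{A}(C)$ corresponds, under the induced identification ${\rm d}_\mathcal{A}(C)\cong ({\rm d}^\diamond_\mathcal{A}(P^\bullet),0)$, to a primitive basis of ${\rm d}^\diamond_\mathcal{A}(P^\bullet)$. Since $P^\bullet$ is acyclic, Lemma \ref{concrete primitive lemma 3} now directly produces the required conclusion.

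For part (ii), fix a primitive basis $b$ of ${\rm d}_\mathcal{A}(C)$ arising from some $P^\bullet$ in $C^{\rm f}(\mathcal{A})$ with a term of rank at least ${\rm sr}(\mathcal{A})$. If $b'$ is a second primitive basis, it a priori arises from a possibly different representing complex $Q^\bullet$; but fixing a zig-zag of quasi-isomorphisms in $C^{\rm f}(\mathcal{A})$ between $P^\bullet$ and $Q^\bullet$ and applying Proposition \ref{concrete primitive lemma 20} along each quasi-isomorphism, one can pull $b'$ back to a primitive basis of ${\rm d}^\diamond_\mathcal{A}(P^\bullet)$. Lemma \ref{concrete primitive lemma 1} then identifies the ratio $b'/b$ as an element of ${\rm Nrd}_A(K_1(\mathcal{A}))$, and conversely any such ratio produces another primitive basis.

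For part (iii), I would represent the given exact triangle by a term-wise split short exact sequence
\[ 0\to P_1^\bullet \to P_2^\bullet \to P_3^\bullet \to 0\]
of complexes in $C^{\rm f}(\mathcal{A})$. The explicit construction of the extended determinant functor recalled after Theorem \ref{ext det fun thm} identifies the canonical isomorphism ${\rm d}_{\mathcal{A}}(C_1)\otimes{\rm d}_{\mathcal{A}}(C_3)\cong{\rm d}_{\mathcal{A}}(C)$ with the isomorphism coming from this short exact sequence of complexes. Lemma \ref{concrete primitive lemma 2} then shows that the tensor product of any primitive bases $x_1$ and $x_3$ of ${\rm d}^\diamond_\mathcal{A}(P_1^\bullet)$ and ${\rm d}^\diamond_\mathcal{A}(P_3^\bullet)$ maps to a primitive basis of ${\rm d}^\diamond_\mathcal{A}(P_2^\bullet)$, giving the first claim in (iii). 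The locally-primitive version follows by applying the same argument to the $\mathfrak{p}$-completion of the triangle for each $\mathfrak{p}\in{\rm Spm}(R)$, noting that completion is exact and preserves $C^{\rm f}$-representability.

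The main technical obstacle I anticipate is ensuring, in part (iii), that the representing short exact sequence in $C^{\rm f}(\mathcal{A})$ can be chosen so that Lemma \ref{concrete primitive lemma 2} genuinely applies (i.e.\ with free, rather than just locally-free, terms, and with the required ranks). This will require first passing to a representative with locally-free terms, then using additivity along a Stein-factorization-type decomposition to reduce to $C^{\rm f}(\mathcal{A})$; the verification that the resulting comparison map still agrees with the one in Theorem \ref{ext det fun thm}(i) is the delicate point, handled once and for all in the construction of ${\rm d}_\mathcal{A}$ in \cite{bses}.
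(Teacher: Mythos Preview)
Your proposal is correct and follows essentially the same approach as the paper: the key observation is that Proposition \ref{concrete primitive lemma 20} makes the notion of primitive basis intrinsic to objects of $D^{\rm lf}(\mathcal{A})$, after which claims (i), (ii) and (iii) follow respectively from Lemmas \ref{concrete primitive lemma 3}, \ref{concrete primitive lemma 1} and \ref{concrete primitive lemma 2}. The paper's proof is stated more tersely than yours, but the underlying structure is identical; your explicit remark that the locally-primitive case of (iii) is obtained by applying the same argument after $\mathfrak{p}$-completion is a helpful clarification that the paper leaves implicit.
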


\begin{proof} The key point is Proposition \ref{concrete primitive lemma 20} implies that if $b$ is a primitive basis of ${\rm d}^\diamond_\mathcal{A}(P_1^\bullet)$ for any
complex $P_1^\bullet$ in $\DC^{\rm f}(\mathcal{A})$ that is both isomorphic
 in $\Der^{\rm lf}(\mathcal{A})$ to $C$ and such that in some degree $a$ one has
 ${\rm rk}_\mathcal{A}(P_1^a) \ge {\rm sr}(\mathcal{A})$, then it also
 corresponds to a primitive basis of ${\rm d}^\diamond_\mathcal{A}(P_2^\bullet)$
 for any other such complex $P^\bullet_2$ in $\DC^{\rm f}(\mathcal{A})$.

Given this fact, the assertions of claims (i), (ii) and (iii) follow  directly from the respective results of Lemmas \ref{concrete primitive lemma 3}, \ref{concrete primitive lemma 1} and \ref{concrete primitive lemma 2}.
\end{proof}

%For each object $C$ of $D^{{\rm lf}}(\mathcal{A})$ we write ${\rm d}_\mathcal{A}(C)^{\rm u}$ for the (locally-free, rank one) $\xi(\mathcal{A})$-module that corresponds to the ungraded part of ${\rm d}_\mathcal{A}(C)$. 

We next clarify the links between the varying notions of basis considered above.

\begin{proposition}\label{remarks primitive} Let $C$ be an object of $\Der^{{\rm lf}}(\mathcal{A})$ and $b$ an element of ${\rm d}_{\mathcal{A}}(C)_F$. Then the following claims are valid. 
\begin{itemize}
\item[(i)] If $b$ is a primitive basis of ${\rm d}_\mathcal{A}(C)$, then it is also both a locally-primitive and generically-primitive basis of ${\rm d}_\mathcal{A}(C)$. 

\item[(ii)] If ${\rm d}_\mathcal{A}(C)$ has a primitive basis, then the converse of claim (i) is true.
\item[(iii)] If $b$ is an locally-primitive basis of ${\rm d}_\mathcal{A}(C)$, then it is a $\xi(\mathcal{A})$-basis of  ${\rm d}_{\mathcal{A}}(C)^{\rm u}$. 
\item[(iv)] Assume ${\rm d}_\mathcal{A}(C)$ has a primitive basis. Then the $\xi(\mathcal{A})$-module ${\rm d}_{\mathcal{A}}(C)^{\rm u}$ is free and the following claims are valid.
\begin{itemize}
\item[(a)] Every basis of ${\rm d}_{\mathcal{A}}(C)^{\rm u}$ corresponds to a primitive basis of ${\rm d}_{\mathcal{A}}(C)$ if and only if $\xi(\mathcal{A})^\times = {\rm Nrd}_A(\K_1(\mathcal{A}))$.

\item[(b)] Every basis of ${\rm d}_{\mathcal{A}}(C)^{\rm u}$ corresponds to a locally-primitive basis of ${\rm d}_{\mathcal{A}}(C)$ if and only if $\xi(\mathcal{A})^\times$ is the full pre-image of the direct product of ${\rm Nrd}_{A_\mathfrak{p}}(\K_1(\mathcal{A}_\mathfrak{p}))$ over $\mathfrak{p}$ in ${\rm Spm}(R)$ under the diagonal map 
\[\zeta(A)^\times \to {\prod}_\mathfrak{p}\zeta(A_\mathfrak{p})^\times = {\prod}_\mathfrak{p}{\rm Nrd}_{A_\mathfrak{p}}(\K_1(A_\mathfrak{p})).\]  
\end{itemize}
%\item[(iv)] If ${\rm d}_\mathcal{A}(C)$ has a primitive basis, then the converse of claim (i) is true. 
\end{itemize}
\end{proposition}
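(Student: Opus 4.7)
My plan is to deduce all four claims from Proposition \ref{prim criterion 2} together with the local-to-global descriptions $\xi(\mathcal{A}) = \bigcap_\frp \xi(\mathcal{A}_{(\frp)})$ and $\bigcap_\frp ({\rm d}_\mathcal{A}(C)^{\rm u})_{(\frp)} = {\rm d}_\mathcal{A}(C)^{\rm u}$ recalled in \S\ref{integral structures section}.

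To prove claim (i), I first fix a complex $P^\bullet$ in $C^{\rm f}(\mathcal{A})$ representing $C$, with some term of rank at least ${\rm sr}(\mathcal{A})$ (enlarging by acyclic complexes of the form $\mathcal{A}\xrightarrow{\rm id}\mathcal{A}$ via Lemma \ref{concrete primitive lemma 2} if necessary), and ordered $\mathcal{A}$-bases $\underline{b}_a$ of the terms $P^a$ such that $b$ corresponds to $\Upsilon(\underline{b}_\bullet)$. For each $\frp$ in ${\rm Spm}(R)$ the localization $P^\bullet_\frp$ lies in $C^{\rm f}(\mathcal{A}_\frp)$ and represents $C_\frp$, while the rank condition is automatic since $\mathcal{A}_\frp$ is semilocal (so ${\rm sr}(\mathcal{A}_\frp) = 1$); hence the image of $b$ is primitive in ${\rm d}_{\mathcal{A}_\frp}(C_\frp)$. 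The identical argument after base-change to $A$ then gives generic primitivity. For claim (iii), I use that ${\rm d}_\mathcal{A}(C)^{\rm u}$ is an invertible $\xi(\mathcal{A})$-module (by Theorem \ref{ext det fun thm}) whose completion at each $\frp$ identifies with ${\rm d}_{\mathcal{A}_\frp}(C_\frp)^{\rm u}$; local primitivity says $b_\frp$ is a $\xi(\mathcal{A}_\frp)$-basis of this completion, and hence also a $\xi(\mathcal{A}_{(\frp)})$-basis of $({\rm d}_\mathcal{A}(C)^{\rm u})_{(\frp)}$, for every $\frp$, and intersection via the stated local-to-global description yields the claim.

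For claim (ii), I fix a primitive basis $b_0$ of ${\rm d}_\mathcal{A}(C)$ (which exists by hypothesis) and, using (iii) together with local primitivity, write $b = u \cdot b_0$ with $u \in \xi(\mathcal{A})^\times$. Generic primitivity of $b$ combined with Proposition \ref{prim criterion 2}(ii) applied over $A$ forces $u \in {\rm Nrd}_A(K_1(A))$, while local primitivity combined with the same result applied over each $\mathcal{A}_\frp$ forces $u_\frp \in {\rm Nrd}_{A_\frp}(K_1(\mathcal{A}_\frp))$ for every $\frp$. The classical local-global principle asserting that an element of ${\rm Nrd}_A(K_1(A))$ belongs to ${\rm Nrd}_A(K_1(\mathcal{A}))$ if and only if each of its local images belongs to ${\rm Nrd}_{A_\frp}(K_1(\mathcal{A}_\frp))$---a consequence of the Mayer-Vietoris pullback square for $K$-theory of orders---now gives $u \in {\rm Nrd}_A(K_1(\mathcal{A}))$, and a final appeal to Proposition \ref{prim criterion 2}(ii) shows $b$ itself is primitive.

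Claim (iv) then follows formally: (i) and (iii) imply that ${\rm d}_\mathcal{A}(C)^{\rm u}$ is free of rank one with basis any primitive $b_0$, so every basis has the form $u \cdot b_0$ for a unique $u \in \xi(\mathcal{A})^\times$. Part (a) is Proposition \ref{prim criterion 2}(ii) applied globally, and part (b) combines the same result applied at each $\frp$ with the identification $\xi(\mathcal{A})^\times = \{v \in \zeta(A)^\times : v_\frp \in \xi(\mathcal{A}_\frp)^\times \text{ for all } \frp\}$ coming from $\xi(\mathcal{A}) = \bigcap_\frp \xi(\mathcal{A}_{(\frp)})$, together with the inclusion ${\rm Nrd}_{A_\frp}(K_1(\mathcal{A}_\frp)) \subseteq \xi(\mathcal{A}_\frp)^\times$, to establish both inclusions of the claimed equality with the pre-image. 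I expect the principal obstacle to be the local-global principle for ${\rm Nrd}_A(K_1(\mathcal{A}))$ invoked in (ii); this is the only non-formal input, and I anticipate deriving it from the standard Mayer-Vietoris formalism for $K_1$ of $R$-orders in semisimple $F$-algebras.
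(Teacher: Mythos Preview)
Your proposal is correct and follows essentially the same route as the paper. The paper likewise reduces everything to Proposition \ref{prim criterion 2}(ii) together with the local-to-global intersection description of $\xi(\mathcal{A})$ and ${\rm d}_\mathcal{A}(C)^{\rm u}$, and isolates exactly the same non-formal input for claim (ii): the fact that an element of ${\rm Nrd}_A(K_1(A))$ lies in ${\rm Nrd}_A(K_1(\mathcal{A}))$ as soon as each of its completions lies in ${\rm Nrd}_{A_\frp}(K_1(\mathcal{A}_\frp))$. The paper records this as a separate lemma (Lemma \ref{last tech}(ii)) and proves it via the long exact sequences of relative $K$-theory for the inclusions $\mathcal{A}\subset A$, $\mathcal{A}_{(\frp)}\subset A$, $\mathcal{A}_\frp\subset A_\frp$, together with the standard identification $K_0(\mathcal{A}_{(\frp)},A)\cong K_0(\mathcal{A}_\frp,A_\frp)$; this is precisely the content of what you call the ``Mayer--Vietoris formalism for $K_1$ of $R$-orders,'' so your anticipated obstacle is real but resolved by exactly the mechanism you expect. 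One cosmetic difference: in claim (ii) the paper goes directly from generic primitivity to $u\in{\rm Nrd}_A(K_1(A))$ without first passing through $\xi(\mathcal{A})^\times$ via (iii), but your ordering is equally valid.
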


\begin{proof} Claim (i) follows easily by a direct comparison of the respective definitions that are given in Definition \ref{loc prim def}. 

To prove claim (ii), we fix a primitive basis $b$ of ${\rm d}_{\mathcal{A}}(C)$. Then, in view of Proposition \ref{prim criterion 2}(ii), it is enough to show that if $b'$ is both a generically-primitive and locally-primitive basis of ${\rm d}_{\mathcal{A}}(C)$, then one has $b' = u\cdot b$ for some $u$ in ${\rm Nrd}_A(\K_1(\mathcal{A}))$.  

Now, as $b'$ is a generically-primitive basis, Proposition \ref{prim criterion 2}(ii) implies the existence of a (unique) element $v$ of ${\rm Nrd}_A(\K_1(A))$ such that $b' = v\cdot b$ in ${\rm d}_A(C_F)$. Then, since $b'$ is a locally-primitive basis, we can deduce from Proposition \ref{prim criterion 2}(ii) (and the uniqueness of $v$) that, for every $\mathfrak{p}$ in ${\rm Spm}(R)$, the image of $v$ in ${\rm Nrd}_{A_\mathfrak{p}}(\K_1(A_\mathfrak{p}))$ belongs to ${\rm Nrd}_{A_\mathfrak{p}}(\K_1(\mathcal{A}_\mathfrak{p}))$. But then, by a general result of $K$-theory (which, for convenience, we defer to Lemma \ref{last tech}(ii) below), this implies that $v$ belongs to ${\rm Nrd}_{A}(\K_1(\mathcal{A}))$, as required. 

To prove claim (iii) we note that, for each $\mathfrak{p}$ in ${\rm Spm}(R)$, the given element $b$ is a basis of the $\xi(\mathcal{A}_\mathfrak{p})$-module ${\rm d}_{\mathcal{A}_{\mathfrak{p}}}(C_\mathfrak{p})^{\rm u} = 
{\rm d}_{\mathcal{A}}(C)_\mathfrak{p}^{\rm u}$. This fact implies, firstly, that for every such $\mathfrak{p}$ one has 
\[ (\zeta(A)\cdot b\bigr)_\mathfrak{p} = \zeta(A_\mathfrak{p})\cdot b = \bigl(\xi(\mathcal{A}_\mathfrak{p})\cdot b)_{F_\mathfrak{p}} = \bigl({\rm d}_{\mathcal{A}}(C)_F^{\rm u}\bigr)_{\mathfrak{p}}\]
and hence that $\zeta(A)\cdot b = {\rm d}_{\mathcal{A}}(C)_F^{\rm u}$. Then, upon applying the general result of \cite[Prop. (4.21)(vi)]{curtisr}, one deduces the required equality by noting that
\begin{align*} {\rm d}_{\mathcal{A}}(C)^{\rm u} =&\, {\rm d}_{\mathcal{A}}(C)^{\rm u}_F \cap  {\bigcap}_{\mathfrak{p}}{\rm d}_{\mathcal{A}}(C)_\mathfrak{p}^{\rm u}\\
 =&\, (\zeta(A)\cdot b) \cap {\bigcap}_{\mathfrak{p}}(\xi(\mathcal{A}_\mathfrak{p})\cdot b)\\ 
 =&\, \bigl(\zeta(A)\cap {\bigcap}_{\mathfrak{p}} \xi(\mathcal{A}_\mathfrak{p})\bigr)\cdot b \\
 =&\, \xi(\mathcal{A})\cdot b.\end{align*}
Here, in each intersection $\mathfrak{p}$ runs over ${\rm Spm}(R)$, and the last equality is valid because 
\[ \zeta(A)\cap {\bigcap}_{\mathfrak{p}} \xi(\mathcal{A}_\mathfrak{p}) = {\bigcap}_{\mathfrak{p}} \bigl(\zeta(A)\cap \xi(\mathcal{A})_\mathfrak{p}\bigr) = {\bigcap}_{\mathfrak{p}} \xi(\mathcal{A})_{(\mathfrak{p})} = \xi(\mathcal{A}).\]

Finally, to prove claim (iv), we fix a primitive basis $b$ of ${\rm d}_{\mathcal{A}}(C)$ and an arbitrary element $b'$ of ${\rm d}_\mathcal{A}(C)_F$. Then one has $b' = x\cdot b$ for a unique element $x = x_{b',b}$ of $\zeta(A)$. 

Now, since $b$ is a basis of the $\xi(\mathcal{A})$-module ${\rm d}_{\mathcal{A}}(C)^{\rm u}$ one knows that $b'$ is a basis of this module if and only if $x \in \xi(\mathcal{A})^\times$. 

In a similar way, the result of Proposition \ref{prim criterion 2}(ii) implies that $b'$ is a primitive basis, respectively locally-primitive basis,  of ${\rm d}_{\mathcal{A}}(C)$ if and only if one has $x \in {\rm Nrd}_A(\K_1(\mathcal{A}))$, respectively the image of $x$ in $\zeta(A_\mathfrak{p})^\times = {\rm Nrd}_{A_\mathfrak{p}}(\K_1(A_\mathfrak{p}))$ belongs to ${\rm Nrd}_{A_\mathfrak{p}}(\K_1(\mathcal{A}_\mathfrak{p}))$ for every $\mathfrak{p}$ in ${\rm Spm}(R)$. 

The assertions in claim (iv) follow directly from these observations. \end{proof}

%\begin{remark}{\em The group . 
%, the criterion of Proposition \ref{prim criterion 2}(ii) implies that, in most cases, (not all bases of the (free) $%\xi(\mathcal{A})$-module ${\rm d}_\mathcal{A}(C)^{\rm u}$ correspond to a primitive basis.}\end{remark}

\subsection{Primitive bases and Euler characteristics}

The result of Proposition \ref{prim criterion 2}(iii) leaves open the problem of whether there are conditions on $\mathcal{A}$ under which the freeness of ${\rm d}_\mathcal{A}(C)^{\rm u}$ as a $\xi(\mathcal{A})$-module can itself imply the existence of a locally-primitive basis, or even a primitive basis, of ${\rm d}_\mathcal{A}(C)$. 

To shed light on this problem, in this section we reinterpret the conditions that ${\rm d}_\mathcal{A}(C)^{\rm u}$ is free, that ${\rm d}_\mathcal{A}(C)$ has a locally-primitive basis and that ${\rm d}_\mathcal{A}(C)$ has a primitive basis in terms of the Euler characteristic $\chi_\mathcal{A}(C)$.  

As necessary preparation for this result, we shall also make some general observations concerning class groups of orders.

\subsubsection{}We note first that the argument of \cite[Rem. (49.11)(iv)]{curtisr} shows
that ${\rm SK}_0^{\rm lf}(\mathcal{A})$ is naturally isomorphic to the
`locally-free classgroup' ${\rm Cl}(\mathcal{A})$ of $\mathcal{A}$, as
defined in \cite[(49.10)]{curtisr}.

We recall ${\rm Cl}(\mathcal{A})$ is finite, that it is equal to
the set of stable isomorphism classes $[I]$ of locally-free, rank one, $\mathcal{A}$-submodules
$I$ of $A$ and that its addition is defined by setting
\[ [I_1] + [I_2] := [I_3]\]
whenever there is an isomorphism
of $\mathcal{A}$-modules $I_1\oplus I_2 \cong I_3\oplus \mathcal{A}$. (Recall from \cite[Rem. (49.11)(i)]{curtisr} that $\cA$-modules $I_1$ and $I_2$ are stably-isomorphic if and only if $I_1\oplus \cA$ is isomorphic to $I_2\oplus \cA$.)

We recall further that if $\mathcal{A}$ is commutative, then
${\rm Cl}(\mathcal{A})$ is naturally isomorphic to the multiplicative
group of isomorphism classes of locally-free, rank one,  $\mathcal{A}$-submodules of $A$.

\begin{lemma}\label{explicit hom lemma} The association $P \mapsto {\bigcap}_\mathcal{A}^{{\rm rk}_\mathcal{A}(P)}P$ for each $P$ in ${\rm Mod}^{\rm lf}(\mathcal{A})$  induces a well-defined
 homomorphism of abelian groups 
 \[ \Delta_\mathcal{A}: {\rm SK}_0^{\rm lf}(\mathcal{A}) \to {\rm Cl}(\xi(\mathcal{A})).\]
\end{lemma}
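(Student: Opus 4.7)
The plan is to show that the assignment $P \mapsto {\bigcap}_\mathcal{A}^{{\rm rk}_\mathcal{A}(P)}P$ (i) takes values in invertible $\xi(\mathcal{A})$-modules and so defines a class in ${\rm Cl}(\xi(\mathcal{A}))$, and (ii) is additive on short exact sequences in ${\rm Mod}^{\rm lf}(\mathcal{A})$. Together, these two points will produce a homomorphism $\Delta'_\mathcal{A}: K_0^{\rm lf}(\mathcal{A}) \to {\rm Cl}(\xi(\mathcal{A}))$ by the universal property of the Grothendieck group, and the desired map $\Delta_\mathcal{A}$ is then obtained by restriction to the subgroup ${\rm SK}_0^{\rm lf}(\mathcal{A})$.

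For (i), we invoke \cite[Prop. 5.6]{bses} (recalled in \S\ref{integral structures section}), which asserts precisely that ${\bigcap}_\mathcal{A}^{{\rm rk}_\mathcal{A}(P)}P$ is an invertible $\xi(\mathcal{A})$-module for every $P$ in ${\rm Mod}^{\rm lf}(\mathcal{A})$. Since $\xi(\mathcal{A})$ is commutative, the isomorphism class of such a module represents an element of ${\rm Cl}(\xi(\mathcal{A}))$, and this class clearly depends only on the isomorphism class of $P$. In particular, for $P = \mathcal{A}$ the equality \eqref{free rem} yields ${\bigcap}_\mathcal{A}^1\mathcal{A} = \xi(\mathcal{A})$, whose class is trivial.

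For (ii), any short exact sequence $0 \to P_1 \to P_2 \to P_3 \to 0$ in ${\rm Mod}^{\rm lf}(\mathcal{A})$ gives rise to an exact triangle in $D^{\rm lf}(\mathcal{A})$. Theorem \ref{ext det fun thm}(i) therefore furnishes a canonical isomorphism ${\rm d}_\mathcal{A}(P_1)\otimes{\rm d}_\mathcal{A}(P_3)\cong{\rm d}_\mathcal{A}(P_2)$ in $\mathcal{P}(\xi(\mathcal{A}))$. On the grading part this recovers the equality ${\rm rr}_\mathcal{A}(P_1) + {\rm rr}_\mathcal{A}(P_3) = {\rm rr}_\mathcal{A}(P_2)$ (a consequence of the corresponding identity for ranks over $\mathcal{A}$), while on the ungraded part, using Theorem \ref{ext det fun thm}(iii), it specializes to a canonical isomorphism of invertible $\xi(\mathcal{A})$-modules
\[ {\bigcap}_\mathcal{A}^{{\rm rk}_\mathcal{A}(P_1)}P_1 \otimes_{\xi(\mathcal{A})} {\bigcap}_\mathcal{A}^{{\rm rk}_\mathcal{A}(P_3)}P_3 \cong {\bigcap}_\mathcal{A}^{{\rm rk}_\mathcal{A}(P_2)}P_2.\]
Passing to isomorphism classes in ${\rm Cl}(\xi(\mathcal{A}))$ (with its multiplicative group law identified with the additive group law through the identification with stable isomorphism classes) shows that the function $[P] \mapsto [{\bigcap}_\mathcal{A}^{{\rm rk}_\mathcal{A}(P)}P]$ is additive on short exact sequences.

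The two previous paragraphs combine, by the universal property of $K_0^{\rm lf}(\mathcal{A})$, to give a well-defined homomorphism $\Delta'_\mathcal{A}: K_0^{\rm lf}(\mathcal{A}) \to {\rm Cl}(\xi(\mathcal{A}))$ whose restriction to ${\rm SK}_0^{\rm lf}(\mathcal{A})$ is the claimed map $\Delta_\mathcal{A}$. The only genuinely non-trivial ingredients are the invertibility statement and the multiplicativity of reduced determinants, both of which are properties of the constructions of \cite{bses} already recalled in the text; beyond citing these, the argument is essentially a formal exercise with the universal property of $K_0$. The main subtlety to keep track of is that the isomorphism produced by Theorem \ref{ext det fun thm}(i) is a priori an isomorphism of graded invertible $\xi(\mathcal{A})$-modules, and one must check that projecting to the ungraded part gives precisely the tensor product isomorphism required, but this is immediate from the convention that the ungraded part of a tensor product in $\mathcal{P}(\xi(\mathcal{A}))$ is the tensor product of ungraded parts.
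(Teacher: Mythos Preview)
Your argument is correct. The essential input in both your proof and the paper's is the multiplicativity of the reduced Rubin lattice under short exact sequences, which you cite as Theorem \ref{ext det fun thm}(i),(iii) and the paper invokes via ``the argument of Lemma \ref{concrete primitive lemma 2}''. The difference is organizational: you work directly with the universal property of the Grothendieck group $K_0^{\rm lf}(\mathcal{A})$, define a homomorphism on all of it, and then restrict to ${\rm SK}_0^{\rm lf}(\mathcal{A})$; the paper instead passes through the identification ${\rm SK}_0^{\rm lf}(\mathcal{A}) \cong {\rm Cl}(\mathcal{A})$ and verifies by hand that the assignment $I \mapsto {\bigcap}_\mathcal{A}^1 I$ on locally-free rank-one modules respects stable isomorphism and the class-group addition law $[I_1]+[I_2]=[I_3]$ coming from $I_1\oplus I_2 \cong I_3\oplus\mathcal{A}$. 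Your route is cleaner and more conceptual, and has the minor bonus of showing that $\Delta_\mathcal{A}$ extends naturally to all of $K_0^{\rm lf}(\mathcal{A})$. The paper's route, on the other hand, makes the connection to the class-group description completely explicit, which is the form in which the map is actually used in the subsequent results (Lemma \ref{subgroup lemma} and Theorem \ref{prim criterion}).
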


\begin{proof} Since ${\rm SK}_0^{\rm lf}(\mathcal{A})$ is naturally isomorphic to ${\rm Cl}(\mathcal{A})$, this result is equivalent to the following two claims: firstly,
if $I_1$ and $I_2$ are any locally-free, rank one, $\mathcal{A}$-modules that are
stably-isomorphic, then the $\xi(\mathcal{A})$-modules
${\bigcap}_{\mathcal{A}}^1I_1$ and ${\bigcap}_{\mathcal{A}}^1I_2$
are isomorphic; secondly, if $I_1, I_2$ and $I_3$ are any locally-free, rank one, $\mathcal{A}$-modules for which the $\mathcal{A}$-modules $I_1\oplus I_2$ and $I_3\oplus\mathcal{A}$ are isomorphic, then the $\xi(\mathcal{A})$-modules  $({\bigcap}_{\mathcal{A}}^1I_1)\otimes_{\xi(\mathcal{A})}
({\bigcap}_{\mathcal{A}}^1I_2)$ and ${\bigcap}_{\mathcal{A}}^1I_3$ are  isomorphic.

To prove the first claim we note that if $I_1$ and $I_2$ are stably-isomorphic,
then there exists an isomorphism of $\mathcal{A}$-modules 
\[ I_1\oplus \mathcal{A} \cong I_2\oplus \mathcal{A}.\]
Then, since ${\bigcap}_{\mathcal{A}}^1\mathcal{A}$ is a free $\xi(\mathcal{A})$-module of rank one
(by (\ref{free rem})), the argument of Lemma
\ref{concrete primitive lemma 2} induces an
isomorphism of $\xi(\mathcal{A})$-modules of the required form
\begin{align*} {\bigcap}_{\mathcal{A}}^1I_1 \cong&\, ({\bigcap}_{\mathcal{A}}^1I_1)
\otimes_{\xi(\mathcal{A})}
({\bigcap}_{\mathcal{A}}^{1}\mathcal{A})\\
 \cong&\, 
({\bigcap}_{\mathcal{A}}^{1}I_2)\otimes_{\xi(\mathcal{A})}
({\bigcap}_{\mathcal{A}}^{1}\mathcal{A})\\
 \cong&\, {\bigcap}_{\mathcal{A}}^1I_2.\end{align*}

The second claim is proved in a similar way since the given isomorphism
$I_1\oplus I_2 \cong  I_3\oplus \mathcal{A}$ induces an
isomorphism of $\xi(\mathcal{A})$-modules
\begin{align*}({\bigcap}_{\mathcal{A}}^1I_1)\otimes_{\xi(\mathcal{A})}
({\bigcap}_{\mathcal{A}}^{1}I_2) \cong&\, {\bigcap}_{\mathcal{A}}^2(I_1\oplus I_2)\\
\cong&\,  {\bigcap}_{\mathcal{A}}^2(I_3\oplus \mathcal{A})\\
\cong&\, ({\bigcap}_{\mathcal{A}}^{1}I_3)\otimes_{\xi(\mathcal{A})}({\bigcap}_{\mathcal{A}}^1\mathcal{A})\\
 \cong&\, {\bigcap}_{\mathcal{A}}^1I_3. \end{align*}
\end{proof}

\begin{remark}\label{comparison of categories} {\em The homomorphism $\Delta_\mathcal{A}$ constructed above has a conceptual interpretation. To explain this we recall from Remark \ref{comparison of categories0} that the reduced determinant functor ${\rm d}_{\mathcal{A},\varpi}$ induces a monoidal functor $\phi^{\rm lf}_{\mathcal{A},\varpi}: \mathcal{V}^{\rm lf}(\mathcal{A}) \to \mathcal{P}(\xi(\mathcal{A}))$. The latter functor in turn induces a homomorphism of abelian groups $\pi_0(\phi^{\rm lf}_{\mathcal{A},\varpi})$ from the Grothendieck group $\K_0^{\rm lf}(\mathcal{A})$ to the Picard group ${\rm Pic}(\xi(\mathcal{A}))$ of the commutative ring $\xi(\mathcal{A})$ (cf. \cite[Rem. 5.5]{bses}). This Picard group is canonically isomorphic to ${\rm Cl}(\xi(\mathcal{A}))$ and, with respect to this identification, $\Delta_\mathcal{A}$ is equal to the composite 
\[ {\rm SK}_0^{\rm lf}(\mathcal{A}) \to \K_0^{\rm lf}(\mathcal{A}) \xrightarrow{\pi_0(\phi^{\rm lf}_{\mathcal{A},\varpi})} 
{\rm Pic}(\xi(\mathcal{A})) \cong {\rm Cl}(\xi(\mathcal{A}))\]
in which the first arrow is the natural inclusion.}
\end{remark}

We shall next define a canonical subgroup $\ker(\Delta_\mathcal{A})^{\rm lp}$ of the kernel of $\Delta_\mathcal{A}$ comprising `locally-primitive classes'.

 To do this we write $J_f(A)$ for the group of finite ideles of the $F$-algebra $A$ and
 $\{z_1,z_2\}$ for the standard basis of $A^2$. For each locally-free, rank one, $\mathcal{A}$-submodule $I$ of $A$ we then write $M(I)$ for the coset of
 ${\prod}_\mathfrak{p}{\rm GL}_2(\mathcal{A}_\mathfrak{p})$ in ${\rm GL}_2(J_f(A))$
 comprising matrices $M = (M_\mathfrak{p})_\mathfrak{p}$
 with the property that for all $\mathfrak{p}$ (in ${\rm Spm}(R)$) the set
 $\{ M_\mathfrak{p} z_1, M_{\mathfrak{p}}z_2\}$ is an
 $\mathcal{A}_\mathfrak{p}$-basis of
  $I_\mathfrak{p}\oplus \mathcal{A}_\mathfrak{p}$.

 We write ${\rm Nrd}_{J_f(A)}$ for the homomorphism
 ${\rm GL}_2(J_f(A)) \to J_f(\zeta(A))$ that is induced by the product map
 ${\prod}_{\mathfrak{p}}{\rm Nrd}_{A_\mathfrak{p}}$. We also identify $\zeta(A)^\times$ with its image under the natural  diagonal embedding $\zeta(A)^\times \to  J_f(\zeta(A))$. 

\begin{lemma}\label{subgroup lemma} Let
${\rm ker}(\Delta_\mathcal{A})^{\rm lp}$ denote the subset of
${\rm Cl}(\mathcal{A})$ comprising elements $[I]$ for which the set $M(I)$ contains a
matrix $M$ with ${\rm Nrd}_{J_f(A)}(M)\in \zeta(A)^\times$. Then ${\rm ker}(\Delta_\mathcal{A})^{\rm lp}$ is a well-defined subgroup of ${\rm ker}(\Delta_\mathcal{A})$.
\end{lemma}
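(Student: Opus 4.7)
The plan is to interpret the stated condition as the kernel of a canonical homomorphism from the locally-free classgroup. Define the quotient group
\[Q := J_f(\zeta(A)) \,/\, \bigl(\zeta(A)^\times \cdot {\prod}_{\mathfrak{p}}{\rm Nrd}_{A_\mathfrak{p}}(K_1(\mathcal{A}_\mathfrak{p}))\bigr)\]
and consider the assignment $\partial \colon {\rm Cl}(\mathcal{A}) \to Q$ sending $[I]$ to the class of ${\rm Nrd}_{J_f(A)}(M)$ for any $M \in M(I)$. By construction, $[I]$ lies in $\ker(\Delta_\mathcal{A})^{\rm lp}$ precisely when $\partial([I]) = 0$: indeed, any $v = (v_\mathfrak{p})_\mathfrak{p}$ in $\prod_\mathfrak{p}{\rm Nrd}(K_1(\mathcal{A}_\mathfrak{p}))$ equals ${\rm Nrd}_{J_f(A)}(U)$ for some $U \in \prod_\mathfrak{p}{\rm GL}_2(\mathcal{A}_\mathfrak{p})$, using that the stability property ${\rm sr}(\mathcal{A}_\mathfrak{p}) = 1$ together with the surjection (\ref{sst}) gives ${\rm Nrd}({\rm GL}_2(\mathcal{A}_\mathfrak{p})) = {\rm Nrd}(K_1(\mathcal{A}_\mathfrak{p}))$, and then replacing $M$ by $M U$ moves ${\rm Nrd}_{J_f(A)}(M)$ within its coset while staying inside $M(I)$. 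So it will suffice to show that $\partial$ is a well-defined group homomorphism whose kernel is contained in $\ker(\Delta_\mathcal{A})$.

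Well-definedness has two parts. First, any two elements of $M(I)$ differ on the right by an element of $\prod_\mathfrak{p}{\rm GL}_2(\mathcal{A}_\mathfrak{p})$, and by the same stability argument their reduced norms coincide modulo $\prod_\mathfrak{p}{\rm Nrd}(K_1(\mathcal{A}_\mathfrak{p}))$. Second, any $\mathcal{A}$-linear isomorphism $\phi \colon I_1 \oplus \mathcal{A} \cong I_2 \oplus \mathcal{A}$ becomes, after base change to $F$, an $A$-linear automorphism of $A^2$, hence an element of ${\rm GL}_2(A)$ with ${\rm Nrd}(\phi) \in \zeta(A)^\times$; and by construction $\phi \cdot M(I_1) = M(I_2)$, so that the resulting change in ${\rm Nrd}_{J_f(A)}(M)$ is absorbed by the $\zeta(A)^\times$ factor in $Q$.

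For additivity, assume $I_1 \oplus I_2 \cong I_3 \oplus \mathcal{A}$ via an $\mathcal{A}$-isomorphism $\psi$, and fix $M_j \in M(I_j)$ for $j = 1,2,3$. The block-diagonal matrix $N_1 := M_1 \oplus M_2 \in {\rm GL}_4(J_f(A))$ locally sends the standard basis of $A^4$ to an $\mathcal{A}_\mathfrak{p}$-basis of $(I_{1,\mathfrak{p}} \oplus \mathcal{A}_\mathfrak{p}) \oplus (I_{2,\mathfrak{p}} \oplus \mathcal{A}_\mathfrak{p})$. Composing with a coordinate permutation $\sigma \in {\rm GL}_4(\mathcal{A})$ and with $\psi \oplus {\rm id}_{\mathcal{A}^2} \in {\rm GL}_4(A)$ then yields a local basis of $I_3 \oplus \mathcal{A}^3$; the block matrix $N_2 := M_3 \oplus {\rm id}_{\mathcal{A}^2}$ also yields such a local basis. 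The two bases therefore differ on the right by some $U \in \prod_\mathfrak{p}{\rm GL}_4(\mathcal{A}_\mathfrak{p})$, and taking reduced norms gives
\[{\rm Nrd}(\psi) \cdot {\rm Nrd}(\sigma) \cdot {\rm Nrd}(M_1) \cdot {\rm Nrd}(M_2) = {\rm Nrd}(M_3) \cdot {\rm Nrd}(U)\]
in $J_f(\zeta(A))$. Since ${\rm Nrd}(\psi), {\rm Nrd}(\sigma) \in \zeta(A)^\times$ and ${\rm Nrd}(U) \in \prod_\mathfrak{p}{\rm Nrd}(K_1(\mathcal{A}_\mathfrak{p}))$ (once again by stability, applied now to $n = 4$), the identity reduces in $Q$ to $\partial([I_1]) + \partial([I_2]) = \partial([I_3])$.

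Finally, the containment $\ker(\partial) \subseteq \ker(\Delta_\mathcal{A})$ uses the equality (\ref{4.13}) applied locally: the relation $M_\mathfrak{p} z_1 \wedge M_\mathfrak{p} z_2 = {\rm Nrd}(M_\mathfrak{p}) \cdot (z_1 \wedge z_2)$ identifies $\bigcap^2_{\mathcal{A}_\mathfrak{p}}(I_\mathfrak{p} \oplus \mathcal{A}_\mathfrak{p})$ with the free $\xi(\mathcal{A}_\mathfrak{p})$-module generated by ${\rm Nrd}(M_\mathfrak{p})$. Combined with the canonical isomorphism $\bigcap^2_\mathcal{A}(I \oplus \mathcal{A}) \cong \bigcap^1_\mathcal{A} I$ coming from Theorem \ref{ext det fun thm}(i) (together with the triviality of $\bigcap^1_\mathcal{A} \mathcal{A}$), this shows that $\Delta_\mathcal{A}([I])$ is represented by the idele $({\rm Nrd}(M_\mathfrak{p}))_\mathfrak{p}$ under the standard idelic presentation ${\rm Cl}(\xi(\mathcal{A})) \cong J_f(\zeta(A)) / (\zeta(A)^\times \cdot \prod_\mathfrak{p} \xi(\mathcal{A}_\mathfrak{p})^\times)$. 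The inclusion $\prod_\mathfrak{p}{\rm Nrd}(K_1(\mathcal{A}_\mathfrak{p})) \subseteq \prod_\mathfrak{p} \xi(\mathcal{A}_\mathfrak{p})^\times$ then yields $\ker(\partial) \subseteq \ker(\Delta_\mathcal{A})$, completing the argument. The main technical obstacle will be the additivity step, where the passage from the $4 \times 4$ identity back to a $2 \times 2$ statement about $M_3$ depends essentially on the stability of $K$-theory for semi-local orders.
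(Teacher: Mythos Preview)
Your argument is correct and takes a genuinely different, more conceptual route than the paper. The paper verifies the subgroup axioms directly: it checks well-definedness via a global matrix $M^* \in {\rm GL}_2(A)$, proves the containment in $\ker(\Delta_\mathcal{A})$ by exhibiting $\bigcap^1_\mathcal{A} I$ as a free $\xi(\mathcal{A})$-module, and then separately checks closure under inverses and under addition by explicit $4\times 4$ matrix identities in ${\rm GL}_4(J_f(A))$. Your approach instead realises $\ker(\Delta_\mathcal{A})^{\rm lp}$ as the kernel of a single homomorphism $\partial\colon {\rm Cl}(\mathcal{A}) \to Q$, and factors $\Delta_\mathcal{A}$ through $\partial$ via the natural surjection $Q \twoheadrightarrow {\rm Cl}(\xi(\mathcal{A}))$ coming from the inclusion $\prod_\mathfrak{p}{\rm Nrd}(K_1(\mathcal{A}_\mathfrak{p})) \subseteq \prod_\mathfrak{p}\xi(\mathcal{A}_\mathfrak{p})^\times$. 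This is essentially the Fr\"ohlich idelic description of the locally-free classgroup, and it buys you the subgroup property for free once $\partial$ is known to be well-defined and additive; it also makes transparent why $\ker(\Delta_\mathcal{A})^{\rm lp} \subseteq \ker(\Delta_\mathcal{A})$. The underlying $4\times 4$ computation in your additivity step is the same as the paper's, just interpreted differently. One minor remark: your appeals to stability for the containments ${\rm Nrd}({\rm GL}_n(\mathcal{A}_\mathfrak{p})) \subseteq {\rm Nrd}(K_1(\mathcal{A}_\mathfrak{p}))$ are unnecessary, since ${\rm Nrd}$ on matrices factors through $K_1$ regardless of stable range; stability is only needed for the reverse direction (lifting from $K_1$ back to ${\rm GL}_2$), which you do use correctly when showing that $\partial([I]) = 0$ implies the existence of $M \in M(I)$ with global reduced norm.
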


\begin{proof} The set $X := {\rm ker}(\Delta_\mathcal{A})^{\rm lp}$ is
well-defined since if $[I] = [J]$, then the $\mathcal{A}$-modules $I\oplus \mathcal{A}$ and $J\oplus \mathcal{A}$
are isomorphic and so there exists a matrix $M^*$ in ${\rm GL}_2(A)$ such that
$N$ belongs to $M(I)$ if and only if $M^*\cdot N$ belongs to $M(J)$.

To show $X$ is contained in ${\rm ker}(\Delta_\mathcal{A})$ it is enough to prove that
${\bigcap}_{\mathcal{A}}^{1}I$ is a free $\xi(\mathcal{A})$-module whenever $[I]$
belongs to $X$. This is true since if we fix $M$ in $M(I)$ with ${\rm Nrd}_{J_f(A)}(M) \in
 \zeta(A)^\times$, then
the $\xi(\mathcal{A})$-module isomorphisms
\begin{align*} {\bigcap}_{\mathcal{A}}^{1}I \cong&\, {\bigcap}_{\mathcal{A}}^{1}I\otimes_{\xi(\mathcal{A})}{\bigcap}_{\mathcal{A}}^{1}\mathcal{A}\\ 
\cong&\, {\bigcap}_{\mathcal{A}}^{2}(I\oplus \mathcal{A})\\
 =&\, \xi(\mathcal{A})\cdot ({\rm Nrd}_{J_f(A)}(M)\cdot
(z_1\wedge_A z_2))\end{align*}
imply ${\bigcap}_{\mathcal{A}}^{1}I$ is isomorphic to $\xi(\mathcal{A})$. %{\color{blue}$y_i$ should be $z_i$?}

Finally, to show $X$ is a subgroup of ${\rm ker}(\Delta_\mathcal{A})$ it is enough to prove the following claims:
\begin{itemize}
\item[(i)] If $[I_1]$ belongs to $X$ and $I_2$ is any locally-free, rank one, $\mathcal{A}$-module for which there exists an isomorphism of $\mathcal{A}$-modules $I_1\oplus I_2 \cong \mathcal{A}\oplus \mathcal{A}$, then $[I_2]$ belongs to $X$.
\item[(ii)] If
 $[I_1]$ and $[I_2]$ belong to $X$, then any isomorphism of
 $\mathcal{A}$-modules $I_1\oplus I_2 \cong  I_3\oplus\mathcal{A}$
 implies $[I_3]$ belongs to $X$.
\end{itemize}

Claim (i) is true since the isomorphism $I_1\oplus I_2 \cong \mathcal{A}\oplus \mathcal{A}$ implies the existence of a matrix $M_*$ in ${\rm GL}_4(A)$ such that
if $M_1$ and $M_2$ belong to $M(I_1)$ and $M(I_2)$, then in ${\rm GL}_4(J_f(A))$ one has
\[ M_* \cdot \begin{pmatrix}
M_1& 0 \\
0& M_2\end{pmatrix} = {\rm Id}_4,\]
where ${\rm Id}_n$ is the identity matrix in
${\rm GL}_n(J_f(A))$, and so 
\[ {\rm Nrd}_{J_f(A)}(M_1)\cdot{\rm Nrd}_{J_f(A)}(M_2)={\rm Nrd}_{A}(M_*)^{-1}\]
belongs to $\zeta(A)^\times.$

In a similar way, the above claim (ii) follows from the fact that
 the induced isomorphism
\[ (I_1\oplus \mathcal{A})\oplus (I_2 \oplus \mathcal{A}) \cong (I_3 \oplus
  \mathcal{A})\oplus (\mathcal{A}\oplus \mathcal{A})\]
of $\mathcal{A}$-modules implies the existence of a matrix $M'_*$ in ${\rm GL}_4(A)$ such that
if $M_1$ and $M_2$ belongs to $M(I_1)$ and $M(I_2)$, then there exists
a matrix $M_3$ in $M(I_3)$ such that
\[ M'_* \cdot \begin{pmatrix}
M_1& 0 \\
0& M_2\end{pmatrix} = \begin{pmatrix}
M_3& 0\\
0& {\rm Id}_2
\end{pmatrix}\]
in ${\rm GL}_4(J_f(A))$, and hence
\[ {\rm Nrd}_{J_f(A)}(M_3) = {\rm Nrd}_{A}(M'_*)\cdot {\rm Nrd}_{J_f(A)}(M_1)\cdot {\rm Nrd}_{J_f(A)}(M_2).\]
 \end{proof}

\subsubsection{}We can now state the main result of this section. This result provides explicit criteria in terms of Euler characteristics that determine whether primitive or locally-primitive bases exist. 

\begin{theorem}\label{prim criterion} Let $C$ be an object of $\Der^{{{\rm lf},0}}(\mathcal{A})$. Then 
$\chi_\mathcal{A}(C)$ belongs to ${\rm SK}_0^{\rm lf}(\mathcal{A})$ and the following claims are valid.

\begin{itemize}
\item[(i)] ${\rm d}_{\mathcal{A}}(C)^{\rm u}$ is a free $\xi(\mathcal{A})$-module if
and only if $\chi_\mathcal{A}(C)$ belongs to $\ker(\Delta_\mathcal{A})$.
\item[(ii)] ${\rm d}_{\mathcal{A}}(C)$ has a locally-primitive basis if and only if $\chi_\mathcal{A}(C)$ belongs to $\ker(\Delta_\mathcal{A})^{\rm lp}$.
\item[(iii)] ${\rm d}_{\mathcal{A}}(C)$ has a primitive basis if and only if
 $\chi_\mathcal{A}(C)$ vanishes.
\end{itemize}
\end{theorem}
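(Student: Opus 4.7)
The plan is to first reduce $C$ to a concrete two-term complex, at which point each of (i), (ii), (iii) becomes a question about the class of a single rank-one locally-free $\mathcal{A}$-module.

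\emph{Reduction.} Using a standard truncation of a locally-free resolution, together with the freedom to add acyclic complexes $\mathcal{A}^k \xrightarrow{{\rm id}} \mathcal{A}^k$ via Lemma \ref{concrete primitive lemma 2} and Lemma \ref{concrete primitive lemma 3}, I would represent $C$ in $D^{{\rm lf}}(\mathcal{A})$ by a two-term complex concentrated in degrees $0$ and $1$ with both terms locally-free of the same rank $n \ge {\rm sr}(\mathcal{A})$, one of which is free. Under the identification ${\rm SK}_0^{\rm lf}(\mathcal{A}) \cong {\rm Cl}(\mathcal{A})$, the element $\chi_\mathcal{A}(C)$ corresponds to $[I]$ for some locally-free rank-one $\mathcal{A}$-module $I$, and after adjoining further acyclic summands one may assume that $C$ is given by $P^\bullet = [I \oplus \mathcal{A}^{n-1} \xrightarrow{d} \mathcal{A}^n]$.

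\emph{Claims (i) and (iii).} Applying Theorem \ref{ext det fun thm}(i) to the split exact sequence $0 \to I \to I \oplus \mathcal{A}^{n-1} \to \mathcal{A}^{n-1} \to 0$, combined with Theorem \ref{ext det fun thm}(iii) and the fact from (\ref{free rem}) that the reduced Rubin lattices of $\mathcal{A}^{n-1}$ and of $\mathcal{A}^n$ are free of rank one over $\xi(\mathcal{A})$, gives a canonical $\xi(\mathcal{A})$-module isomorphism ${\rm d}_\mathcal{A}(C)^{\rm u} \cong {\bigcap}_\mathcal{A}^1 I$. Claim (i) is then immediate, since ${\bigcap}_\mathcal{A}^1 I$ is free over $\xi(\mathcal{A})$ iff $\Delta_\mathcal{A}([I]) = [{\bigcap}_\mathcal{A}^1 I]$ vanishes in ${\rm Cl}(\xi(\mathcal{A}))$. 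For (iii), if $\chi_\mathcal{A}(C) = 0$ then $I$ is stably free, so after further enlarging $n$ one has $I \oplus \mathcal{A}^{n-1} \cong \mathcal{A}^n$ and $P^\bullet$ lies in $C^{\rm f}(\mathcal{A})$, producing a primitive basis by Definition \ref{loc prim def}(i); conversely, any primitive basis provides a representation of $C$ in $C^{\rm f}(\mathcal{A})$, whence $\chi_\mathcal{A}(C)$ is a $\mathbb{Z}$-multiple of $[\mathcal{A}]$ in $K_0^{\rm lf}(\mathcal{A})$ and therefore vanishes in ${\rm SK}_0^{\rm lf}(\mathcal{A})$.

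\emph{Claim (ii) and the main obstacle.} The hardest part will be (ii), where a pointwise local primitivity condition must be realised by a single global element. By Proposition \ref{remarks primitive}(iii) the existence of a locally-primitive basis forces the freeness of ${\rm d}_\mathcal{A}(C)^{\rm u}$, so I may work under the assumption of (i). Fixing a generic $A$-basis $(z_1, e_1, \dots, e_{n-1})$ of $F \cdot (I \oplus \mathcal{A}^{n-1})$ with $z_1$ in $F \cdot I$, together with the standard basis of $\mathcal{A}^n$, any candidate element of ${\rm d}_\mathcal{A}(C)_F$ can be written as $b_\alpha := \alpha \cdot (z_1 \wedge e_1 \wedge \cdots \wedge e_{n-1}) \otimes (e_1 \wedge \cdots \wedge e_n)^{-1}$ for some $\alpha \in \zeta(A)$. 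Using (\ref{4.13}), Lemma \ref{concrete primitive lemma 1}, and the equality ${\rm Nrd}(K_1(\mathcal{A}_\mathfrak{p})) = {\rm Nrd}({\rm GL}_n(\mathcal{A}_\mathfrak{p}))$, valid because $\mathcal{A}_\mathfrak{p}$ has stable range one, I would then show that $b_\alpha$ is locally-primitive at $\mathfrak{p}$ iff $\alpha$ lies in ${\rm Nrd}(M_\mathfrak{p}) \cdot {\rm Nrd}({\rm GL}_n(\mathcal{A}_\mathfrak{p}))$, where $M_\mathfrak{p}$ is any matrix expressing a local $\mathcal{A}_\mathfrak{p}$-basis of $I_\mathfrak{p} \oplus \mathcal{A}_\mathfrak{p}^{n-1}$ in terms of the generic basis. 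The family $(M_\mathfrak{p})_\mathfrak{p}$ then ranges over a coset of $\prod_\mathfrak{p} {\rm GL}_n(\mathcal{A}_\mathfrak{p})$ in ${\rm GL}_n(J_f(A))$, which via $I \oplus \mathcal{A}^{n-1} \cong (I \oplus \mathcal{A}) \oplus \mathcal{A}^{n-2}$ and the multiplicativity of reduced norms on block-diagonal matrices matches the coset $M(I)$ of Lemma \ref{subgroup lemma}. The existence of a locally-primitive $b_\alpha$ then translates exactly into the existence of $M \in M(I)$ with ${\rm Nrd}_{J_f(A)}(M) \in \zeta(A)^\times$, i.e.\ into the condition $[I] \in \ker(\Delta_\mathcal{A})^{\rm lp}$. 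The main technical difficulty will be the careful bookkeeping of the several change-of-basis and reduced-norm conventions involved, and the verification that the rank-$n$ idelic description really does reduce to the rank-$2$ condition appearing in Lemma \ref{subgroup lemma}.
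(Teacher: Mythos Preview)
Your overall strategy is right and closely parallels the paper's, but the reduction step as stated has a genuine gap. You claim to ``represent $C$ in $D^{{\rm lf}}(\mathcal{A})$ by a two-term complex'', i.e.\ that $C$ is quasi-isomorphic to some $[I \oplus \mathcal{A}^{n-1} \to \mathcal{A}^n]$. This is false in general: an object of $D^{{\rm lf},0}(\mathcal{A})$ can have cohomology spread over arbitrarily many degrees (e.g.\ over $\mathcal{A} = \ZZ$, take $C = [\ZZ \xrightarrow{2} \ZZ] \oplus [\ZZ \xrightarrow{2} \ZZ][-2]$), and neither truncation nor adjoining acyclic summands $\mathcal{A}^k \xrightarrow{\rm id} \mathcal{A}^k$ can collapse such a complex to two terms.

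The paper avoids this by working purely at the level of determinant modules, never claiming a two-term representative. It produces a complex $Q^\bullet$ quasi-isomorphic to $C$ in which every term except one, say $Q^a$, is free (this is the standard resolution argument), writes $Q^a \cong (I \oplus \mathcal{A}) \oplus N$ with $N$ free, and then uses Lemma~\ref{concrete primitive lemma 2} to obtain an isomorphism of $\xi(\mathcal{A})$-modules
\[
{\rm d}^\diamond_\mathcal{A}(Q^\bullet) \;\cong\; \bigl({\bigcap}^2_\mathcal{A}(I \oplus \mathcal{A})\bigr)^{(-1)^a} \otimes (\text{a tensor product of determinants of free modules}).
\]
The free tensor factor carries an obvious primitive basis, and Proposition~\ref{prim criterion 2}(ii),(iii) then show that tensoring with it gives a bijection between (locally-)primitive bases on the two sides. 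All three claims thereby reduce to the single rank-two module ${\bigcap}^2_\mathcal{A}(I \oplus \mathcal{A})$; your arguments for (i) and (iii) go through essentially as written, and for (ii) the paper works directly with the idelic coset $M(I) \subset {\rm GL}_2(J_f(A))$ and the element ${\rm Nrd}_{J_f(A)}(M) \cdot (z_1 \wedge z_2)$, so the rank-$n$-to-rank-$2$ bookkeeping you flag as the main difficulty never arises.
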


\begin{proof} We first fix a complex $P^\bullet$ in $\DC^{\rm lf}(\mathcal{A})$
that is isomorphic in $\Der^{\rm lf}(\mathcal{A})$ to $C$. Then, by a
standard construction of homological algebra (as, for example, in \cite[Rapport, Lem. 4.7]{del}), there exists a quasi-isomorphism
of complexes of finitely generated $\mathcal{A}$-modules of the form 
\[\theta: Q^\bullet \to P^\bullet,\]
where $Q^\bullet$ is bounded and has the property that if $a$ is the lowest
degree of a non-zero module $Q^a$, then $P^j$ vanishes for all $j < a+2$ and
$Q^j$ is a free $\mathcal{A}$-module for all $j > a$. We set $r_j := {\rm rk}_\mathcal{A}(Q^j)$ in each degree $j$ and note that,
if necessary after replacing  $Q^\bullet$ by the direct sum of $Q^\bullet$ and
the (acyclic) complex 
\[ \mathcal{A} \xrightarrow{{\rm id}} \mathcal{A},\]
where the first term is placed in degree $a$, we can assume that $r_a \ge 2$, and
hence also that $r_a \ge {\rm sr}(\mathcal{A})$.

Now the mapping cone $D^\bullet$ of $\theta$ is an acyclic complex for which in each degree $j$ one has $D^j = P^j\oplus Q^{j+1}$. In particular, since  $D^j$ belongs to ${\rm Mod}^{\rm lf}(\mathcal{A})$ for all $j \not= a-1$, the acyclicity of $D^\bullet$ combines with the Krull-Schmidt-Azumaya Theorem (as in the argument of \cite[Lem. 5.2]{bses}) to imply $Q^a$ belongs to ${\rm Mod}^{\rm lf}(\mathcal{A})$, and hence that $Q^\bullet$ belongs to $\DC^{\rm lf}(\mathcal{A})$.

To prove claim (i) we now use \cite[Prop. (49.3)]{curtisr} to choose an isomorphism
of $\mathcal{A}$-modules of the form 
\[ Q^a \cong (I\oplus \mathcal{A})\oplus N,\]
where $I$ is locally-free of rank one and $N$ is free of rank $r_a-2$. Then, since each of the modules
$Q^j$ for $j \not= a$ is free, the $\xi(\mathcal{A})$-module ${\rm d}_\mathcal{A}(C)^{\rm u}$ is isomorphic to
\begin{align}\label{compo iso}{\rm d}^\diamond_\mathcal{A}(Q^\bullet) = &{\bigotimes}_{j\in \ZZ}({\bigcap}^{r_j}_{\mathcal{A}}Q^j)^{(-1)^j}\\
 \cong &({\bigcap}^2_{\mathcal{A}}(I\oplus \mathcal{A}))^{(-1)^a}
 \otimes_{\xi(\mathcal{A})}(({\bigcap}^{r_a-2}_{\mathcal{A}}N)^{(-1)^a}
 \otimes_{\xi(\mathcal{A})}{\bigotimes}_{j\in \ZZ\setminus
 \{a\}}({\bigcap}^{r_j}_{\mathcal{A}}Q^j)^{(-1)^j})\notag\\
 \cong &({\bigcap}^2_{\mathcal{A}}(I\oplus \mathcal{A}))^{(-1)^a}.
 \notag\end{align}

In particular, since the $\xi(\mathcal{A})$-module ${\bigcap}^2_{\mathcal{A}}(I\oplus \mathcal{A})$ is isomorphic to
\[ ({\bigcap}^1_{\mathcal{A}}I)\otimes_{\xi(\mathcal{A})}
 ({\bigcap}^1_{\mathcal{A}}\mathcal{A}) \cong ({\bigcap}^1_{\mathcal{A}}I),\]
the equivalence in claim (i) is a consequence of the fact that $\Delta_\mathcal{A}$ sends the element %
\begin{align}\label{euler sum}\chi_\mathcal{A}(C) = &\, \chi_\mathcal{A}(Q^\bullet) \\
= &\, {\sum}_{j\in \ZZ}(-1)^j(Q^j)\notag\\ 
= &\, (-1)^a(I) + ((-1)^a(r_a-1)+ {\sum}_{j\in \ZZ\setminus \{a\}}(-1)^jr_j)(\mathcal{A})\notag\end{align}
to $(-1)^a$ times the isomorphism class of ${\bigcap}^1_{\mathcal{A}}I$. This proves claim (i).

Next we note that the isomorphism (\ref{compo iso}) implies
 ${\rm d}_\mathcal{A}(C)$ has a locally-primitive basis if and only if the module ${\rm d}^\diamond_\mathcal{A}((I\oplus \mathcal{A})[0]) = {\bigcap}^2_{\mathcal{A}}(I\oplus \mathcal{A})$ has a locally-primitive basis. To prove claim (ii) it thus enough to show that ${\bigcap}^2_{\mathcal{A}}(I\oplus \mathcal{A})$ has a locally-primitive basis if and only if the set $M(I)$ contains a matrix $M$ such that ${\rm Nrd}_{J_f(A)}(M)$ belongs to $\zeta(A)^\times$.

To prove this we fix a matrix $M = (M_\mathfrak{p})_\mathfrak{p}$ in $M(I)$ and for each $\mathfrak{p}$  set $b_{1,\mathfrak{p}} := M_\mathfrak{p}\cdot z_1$ and $b_{2,\mathfrak{p}} := M_\mathfrak{p}\cdot z_2$. %for each prime ideal $\mathfrak{p}$ of $R$ an $\mathcal{A}_{(\mathfrak{p})}$-basis element $x_{\mathfrak{p}}$ of $I_{(\mathfrak{p})}$ and write $M_{\mathfrak{p}}$ for the matrix in ${\rm GL}_2(A)$ that sends $z_1$ to $x_{\mathfrak{p}}$ and fixes $z_2$.
Then for each $\mathfrak{p}$ (in ${\rm Spm}(R)$) the equality (\ref{4.13}) implies %
\begin{equation}\label{local equal} b_{1,\mathfrak{p}}\wedge b_{2,\mathfrak{p}} = {\rm Nrd}_{A_{\mathfrak{p}}}(M_{\mathfrak{p}})\cdot (z_1\wedge z_2) = {\rm Nrd}_{J_f(A)}(M)_\mathfrak{p}\cdot (z_1\wedge z_2)\end{equation}
and so the description (\ref{local rl}) implies that  %
\[ \bigl({\bigcap}^2_{\mathcal{A}}(I\oplus \mathcal{A})\bigr)_{\mathfrak{p}} = \xi(\mathcal{A})_{\mathfrak{p}}\cdot
({\rm Nrd}_{J_f(A)}(M)_{\mathfrak{p}}\cdot (z_1\wedge z_2)).\]

Assume now that $\mu := {\rm Nrd}_{J_f(A)}(M)$ belongs to $\zeta(A)^\times$. Then $\mu\cdot (z_1\wedge z_2)$ is a generator of the $\zeta(A)$-module $\bigl({\bigcap}^2_{\mathcal{A}}(I\oplus \mathcal{A})\bigr)_F$ and one has $\mu = {\rm Nrd}_{J_f(A)}(M)_\mathfrak{p}$ for all $\mathfrak{p}$ in ${\rm Spm}(R)$. By the argument of Proposition \ref{remarks primitive}(iii), the above displayed equalities therefore imply that $\mu\cdot (z_1\wedge z_2)$ is a locally-primitive basis of ${\bigcap}^2_{\mathcal{A}}(I\oplus \mathcal{A})$, as required. 

To prove the converse we assume ${\bigcap}^2_\mathcal{A}(I\oplus \mathcal{A})$ has a locally-primitive basis $b$ that corresponds to a choice of ordered $\mathcal{A}_{\mathfrak{p}}$-basis $\underline{b}_\mathfrak{p}$ of $I_{\mathfrak{p}}\oplus \mathcal{A}_{\mathfrak{p}}$ for each $\mathfrak{p}$ in ${\rm Spm}(R)$.
 Then, in this case, the equalities (\ref{local equal}) imply the transition matrices $M_\mathfrak{p}$ from $\{z_1,z_2\}$ to $\underline{b}_\mathfrak{p}$ combine to give a matrix $M = (M_{\mathfrak{p}})_{\mathfrak{p}}$ in ${\rm GL}_2(J_f(A))$ with the property that 
\[ b = {\rm Nrd}_{J_f(A)}(M)_\mathfrak{p}\cdot (z_1\wedge z_2)\]
for every $\mathfrak{p}$. These equalities combine to imply that ${\rm Nrd}_{J_f(A)}(M)$ belongs to $\zeta(A)^\times$, and this completes the proof of claim (ii).

Turning to claim (iii), we note Proposition \ref{concrete primitive lemma 20} implies that ${\rm d}_\mathcal{A}(C)$ has a primitive basis if and only if $C$ is isomorphic in $\Der(\mathcal{A})$ to a complex $K^\bullet$ that belongs to both $\DC^{{\rm lf},0}(\mathcal{A})$ and $\DC^{{\rm f}}(\mathcal{A})$ and also has the property that  ${\rm rk}_\mathcal{A}(K^a) \ge {\rm sr}(\mathcal{A})$ in some degree $a$. 

In particular, if such a complex $K^\bullet$ exists, then it is clear that the Euler characteristic $\chi_\mathcal{A}(C) = \chi_\mathcal{A}(K^\bullet)$ vanishes. Conversely, if $\chi_\mathcal{A}(C)$ vanishes, then
the sum (\ref{euler sum}) also vanishes and so the $\mathcal{A}$-module $Q^a \cong (I\oplus\mathcal{A})\oplus N$ 
is stably-free. Then, since ${\rm rk}_\mathcal{A}(Q^a)\ge 2$, the Bass Cancellation Theorem  implies $Q^a$ is a free $\mathcal{A}$-module of rank at least ${\rm sr}(\mathcal{A})$ and hence that the complex $Q^\bullet$ implies ${\rm d}_\mathcal{A}(C)$ has a primitive  basis, as required.
\end{proof}

\vskip0.2truein

The following result describes two useful, and concrete, consequences of Theorem \ref{prim criterion}. 
\vskip0.2truein

\begin{corollary}\label{abelian primitive}\
\begin{itemize}
\item[(i)] If $\mathcal{A}$ is commutative, then, for any object $C$ of $\Der^{{\rm lf},0}(\mathcal{A})$, an element of ${\rm d}_\mathcal{A}(C)_F$ is a primitive basis of ${\rm d}_\mathcal{A}(C)$  if and only if it is a basis of 
${\rm d}_\mathcal{A}(C)^{\rm u}$ as a $\xi(\mathcal{A})$-module.

\item[(ii)] If $|{\rm Cl}(\mathcal{A})|$ does not divide $|{\rm Cl}(\xi(\mathcal{A}))|$, then there exist objects $C$ of 
$\Der^{{\rm lf},0}(\mathcal{A})$ for which ${\rm d}_\mathcal{A}(C)^{\rm u}$ is a free $\xi(\mathcal{A})$-module but 
${\rm d}_\mathcal{A}(C)$ has no primitive basis. 
\end{itemize}
\end{corollary}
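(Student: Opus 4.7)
The plan is to deduce both claims from Theorem \ref{prim criterion} combined with Proposition \ref{remarks primitive}, exploiting the simplifications $\xi(\mathcal{A}) = \mathcal{A}$ and ${\rm Nrd}_A(K_1(\mathcal{A})) = \mathcal{A}^\times$ that are in force whenever $\mathcal{A}$ is commutative.

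For claim (i), the `only if' direction will be immediate from parts (i) and (iii) of Proposition \ref{remarks primitive}. For the converse, I would first establish that, in the commutative case, the map $\Delta_\mathcal{A} : {\rm SK}_0^{\rm lf}(\mathcal{A}) \to {\rm Cl}(\xi(\mathcal{A}))$ is injective; in fact, under the canonical identifications ${\rm SK}_0^{\rm lf}(\mathcal{A}) \cong {\rm Cl}(\mathcal{A}) = {\rm Cl}(\xi(\mathcal{A}))$, it agrees with the identity. This verification reduces to showing ${\bigcap}_\mathcal{A}^1 I = I$ for any locally-free rank one $\mathcal{A}$-module $I$, which follows at once from the local description (\ref{local rl}) together with the equalities $\xi(\mathcal{A}_{(\mathfrak{p})}) = \mathcal{A}_{(\mathfrak{p})}$. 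Once $\ker(\Delta_\mathcal{A}) = 0$ is in hand, any $\xi(\mathcal{A})$-basis $b$ of ${\rm d}_\mathcal{A}(C)^{\rm u}$ forces $\chi_\mathcal{A}(C)$ to vanish by Theorem \ref{prim criterion}(i), and Theorem \ref{prim criterion}(iii) then produces a primitive basis of ${\rm d}_\mathcal{A}(C)$. Finally, since $\xi(\mathcal{A})^\times = \mathcal{A}^\times = {\rm Nrd}_A(K_1(\mathcal{A}))$, Proposition \ref{remarks primitive}(iv)(a) will upgrade $b$ itself to a primitive basis, as required.

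For claim (ii), I would start by noting that the hypothesis $|{\rm Cl}(\mathcal{A})| \nmid |{\rm Cl}(\xi(\mathcal{A}))|$, combined with Lagrange's theorem applied to the image of $\Delta_\mathcal{A}$, forces $\ker(\Delta_\mathcal{A})$ to be non-trivial. Choosing a non-zero element $\alpha$ of $\ker(\Delta_\mathcal{A})$ and, via ${\rm SK}_0^{\rm lf}(\mathcal{A}) \cong {\rm Cl}(\mathcal{A})$, a locally-free rank one $\mathcal{A}$-module $I$ for which $[I] - [\mathcal{A}]$ corresponds to $\alpha$ (so that $I$ is not stably isomorphic to $\mathcal{A}$), I would take $C$ to be the complex with $I$ placed in degree $0$, $\mathcal{A}$ placed in degree $1$, zero differentials and zero terms in all other degrees. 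The Euler characteristic of $C$ is then $[I] - [\mathcal{A}] = \alpha$, a non-zero element of $\ker(\Delta_\mathcal{A}) \subseteq {\rm SK}_0^{\rm lf}(\mathcal{A})$, so $C$ belongs to $D^{{\rm lf},0}(\mathcal{A})$. Theorem \ref{prim criterion}(i) then gives that ${\rm d}_\mathcal{A}(C)^{\rm u}$ is a free $\xi(\mathcal{A})$-module, while Theorem \ref{prim criterion}(iii) ensures that ${\rm d}_\mathcal{A}(C)$ has no primitive basis.

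The proof is largely mechanical once the key identification of $\Delta_\mathcal{A}$ with the identity in the commutative case is secured, and I do not anticipate any serious obstacle beyond that; this identification is itself a one-line consequence of the explicit local formula (\ref{local rl}).
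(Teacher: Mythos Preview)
Your proposal is correct and follows essentially the same route as the paper's proof: both establish that $\Delta_\mathcal{A}$ is the identity when $\mathcal{A}$ is commutative, invoke Theorem \ref{prim criterion} to pass between freeness of ${\rm d}_\mathcal{A}(C)^{\rm u}$ and the existence of a primitive basis, and construct the same complex $C = I[0]\oplus\mathcal{A}[-1]$ for part (ii). The only cosmetic difference is that for the final step of (i) you appeal to Proposition \ref{remarks primitive}(iv)(a), whereas the paper invokes Proposition \ref{prim criterion 2}(ii) directly; these are equivalent, and your slightly more explicit verification that ${\bigcap}_\mathcal{A}^1 I = I$ via (\ref{local rl}) is a welcome addition.
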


\begin{proof} For claim (i) we note that, if $\mathcal{A}$ is commutative, then $\xi(\mathcal{A}) = \mathcal{A}$ and the map ${\Delta}_\mathcal{A}$ identifies with the identity automorphism of ${\rm Cl}(\mathcal{A}) = {\rm Cl}(\xi(\mathcal{A}))$. The group $\ker({\Delta}_\mathcal{A})$ therefore vanishes and so the conditions in Theorem  \ref{prim criterion}(i), (ii) and (iii) are equivalent.

In particular, in this case, if ${\rm d}_\mathcal{A}(C)^{\rm u}$ is a free $\xi(\mathcal{A})$-module, then ${\rm d}_\mathcal{A}(C)$ has a primitive basis $b$. In addition, any basis $b'$ of the $\xi(\mathcal{A})$-module ${\rm d}_\mathcal{A}(C)^{\rm u}$ must differ from $b$ by multiplication by an element of $\mathcal{A}^\times = {\rm Nrd}_A(\mathcal{A}^\times)$ and so Proposition \ref{prim criterion 2}(ii)
%the argument of Lemma \ref{concrete primitive lemma 1}
implies $b'$ is also a primitive basis of ${\rm d}_\mathcal{A}(C)$. This proves claim (i).

The hypothesis of claim (ii) implies that the group $\ker({\Delta}_\mathcal{A})$ is not trivial. Fix a non-zero element $c$ of this group and a locally-free, rank one, $\mathcal{A}$-module $I$ that corresponds to $c$ under the isomorphism ${\rm Cl}(\mathcal{A}) \cong {\rm SK}_0^{\rm lf}(\mathcal{A})$. 

Then the complex $C := I[0] \oplus \mathcal{A}[-1]$ belongs to $\Der^{{\rm lf},0}(\mathcal{A})$ and its Euler characteristic $\chi_\mathcal{A}(C)$ is equal to $c$. From Theorem \ref{prim criterion}(i) and (iii) it therefore follows that the $\xi(\mathcal{A})$-module ${\rm d}_\mathcal{A}(C)^{\rm u}$ is free and that  
${\rm d}_\mathcal{A}(C)$ has no primitive basis. This proves claim (ii). 
\end{proof}

%\begin{remark}\label{prim criterion 3}{\em In general, Theorem \ref{prim criterion} implies that ${\rm d}_{\mathcal{A}}(C)$ need not have a primitive basis even if ${\rm d}_{\mathcal{A}}(C)^{\rm u}$ is a free $\xi(\mathcal{A})$-module. To see this, %we assume  ${\rm Cl}(\mathcal{A})$ is non-trivial and fix a locally-free, rank one, $\mathcal{A}$-module $I$ whose class $[I]$ in ${\rm Cl}(\mathcal{A})$ is non-zero. Then the complex $C := I[0] \oplus \mathcal{A}[-1]$ belongs to $D^{{\rm lf},0}%%(\mathcal{A})$ and its Euler characteristic $\chi_\mathcal{A}(C) = [I] -[\mathcal{A}]$ is non-zero. From Theorem \ref{prim criterion}(iii) it follows that ${\rm d}_{\mathcal{A}}(C)$ has no primitive basis. However, if 
% the $\xi(\mathcal{A})$-module ${\bigcap}_\mathcal{A}^1I$ is free (as is automatically the case, for example, if 
% ${\rm Cl}(\xi(\mathcal{A}))$ vanishes), then Theorem \ref{prim criterion}(i) implies that ${\rm d}_{\mathcal{A}}(C)$ is a %free (graded) $\xi(\mathcal{A})$-module.}\end{remark}

\section{Relative $K$-theory and zeta elements}\label{rkt section}

In this section we fix a finite extension $F$ of either $\QQ$ or $\QQ_p$ for some prime $p$. If $F$ is a finite extension of $\QQ$, then we write $\mathcal{O}_F$ for its ring of integers. 

We fix a Dedekind domain $R$ with field of fractions $F$. We also fix an $R$-order $\mathcal{A}$ in a finite dimensional separable $F$-algebra $A$ and, % that satisfies the following condition:
%
%\begin{equation}\label{lf condition} \text{ each finitely generated $\mathcal{A}$-module is projective if and only if it is %locally-free.}
%\end{equation}%
%(Recall Remark \ref{loc free exam}.)
for any extension field $\mathcal{F}$ of $F$, we consider the (finite dimensional, separable) $\mathcal{F}$-algebra $A_\mathcal{F} := \mathcal{F}\otimes_FA.$

\subsection{Relative $K$-theory}\label{recall LTC}

\subsubsection{}We write $\K_0(\mathcal{A},A_\mathcal{F})$ for the relative algebraic $K$-group of the ring inclusion $\mathcal{A}\subset A_\mathcal{F}$.

We recall from \cite[p. 215]{swan} that this group can be described as a quotient (by certain explicit relations) of the free abelian group on elements $ (P,g,Q)$ where $P$ and $Q$ are finitely generated projective $\mathcal{A}$-modules and $g$ an isomorphism of $A_\mathcal{F}$-modules $\mathcal{F}\otimes_R P \cong \mathcal{F}\otimes_R Q$.

We further recall that for any extension field $\mathcal{F}'$ of $\mathcal{F}$ there exists a commutative diagram
\begin{equation} \label{E:kcomm}
\begin{CD} \K_1(\mathcal{A}) @> >> \K_1(A_{\mathcal{F}'}) @> \partial_{\mathcal{A},\mathcal{F}'} >> \K_0(\mathcal{A},A_{\mathcal{F}'}) @> \partial'_{\mathcal{A},\mathcal{F}'} >> \K_0(\mathcal{A})\\
@\vert @A\iota_{A,\mathcal{F},\mathcal{F}'} AA @A\iota_{\mathcal{A},\mathcal{F},\mathcal{F}'} AA @\vert\\
\K_1(\mathcal{A}) @> >> \K_1(A_\mathcal{F}) @> \partial_{\mathcal{A},\mathcal{F}}  >> \K_0(\mathcal{A},A_\mathcal{F}) @> \partial'_{\mathcal{A},\mathcal{F}}  >> \K_0(\mathcal{A})
\end{CD}
\end{equation}
in which the upper and lower rows are the long exact sequences in relative $K$-theory of the inclusions  $\mathcal{A}\subset A_{\mathcal{F}'}$ and $\mathcal{A}\subset A_\mathcal{F}$ and the homomorphisms $\iota_{A,\mathcal{F},\mathcal{F}'}$ and $\iota_{\mathcal{A},\mathcal{F},\mathcal{F}'}$ are injective and induced by the inclusion $A_\mathcal{F} \subseteq A_{\mathcal{F}'}$. (For more details see \cite[Th. 15.5]{swan}.)

%This diagram implies that the natural scalar extension homomorphism $\K_0(R[\Gamma],F[\Gamma]) \to K_0(R[\Gamma],\mathcal{F}[\Gamma])$ is injective.

For each prime ideal $\mathfrak{p}$ of $R$ we regard the group $\K_0(\mathcal{A}_\mathfrak{p},A_\mathfrak{p})$ as a subgroup of $\K_0(\mathcal{A},A_\mathcal{F})$ by means of the canonical composite injective homomorphism
\begin{equation}\label{decomp}
\bigl({\bigoplus}_{\mathfrak{p}\in {\rm Spm}(R)} \K_0(\mathcal{A}_\mathfrak{p},A_\mathfrak{p})\bigr) \cong \K_0(\mathcal{A},A)\xrightarrow{\iota_{\mathcal{A},F,\mathcal{F}}} \K_0(\mathcal{A},A_\mathcal{F}),
\end{equation}
in which the isomorphism is  described in the discussion following \cite[(49.12)]{curtisr}.

We write $p(\mathfrak{p})$ for the residue characteristic of each $\mathfrak{p}$ in ${\rm Spm}(R)$. If $F$ is a finite extension of $\QQ$ in $\CC$, then for each $\mathfrak{p}$ in ${\rm Spm}(R)$, we write ${\rm Isom}_\mathfrak{p}$ for the set of field isomorphisms $j: \CC\cong \CC_{p(\mathfrak{p})}$ for which the induced embedding $F \subset \CC \to \CC_p$ induces the prime ideal $\mathfrak{p}$.

\subsubsection{}Since $A_\mathcal{F}$ is semisimple, one can compute in the Whitehead group $\K_1(A_\mathcal{F})$ by using the injective homomorphism 
\[ {\rm Nrd}_{A_\mathcal{F}}: \K_1(A_\mathcal{F}) \to \zeta(A_\mathcal{F})^\times\]
that is induced by taking reduced norms (see \cite[\S45.A]{curtisr}). The image of ${\rm Nrd}_{A_\mathcal{F}}$ is described explicitly by the Hasse-Schilling-Maass Norm Theorem (cf. \cite[(7.48)]{curtisr}). We recall, in particular, that this map is bijective if $\mathcal{F}$ is either algebraically closed or a subfield of the completion $\CC_p$ of an algebraic closure of $\QQ_p$ for any prime $p$, but that, in general, its cokernel is of exponent $2$. 

We further recall that $A$ is said to be `ramified' at an archimedean place $v$ of $F$ if there exists a simple component $A'$ in the Wedderburn decomposition of $A$ that is ramified, as a simple central $\zeta(A')$-algebra, at a place of $\zeta(A')$ lying above $v$.  

In the following result we construct, for suitable fields $\mathcal{F}$, a canonical `extended boundary homomorphism' $\zeta(A_\mathcal{F})^\times \to \K_0(\mathcal{A},A_\mathcal{F})$ (that extends the special case considered by Flach and the first author in \cite[\S4.2, Lem. 9]{bf}). 

\begin{proposition}\label{ext bound hom} Fix an embedding of fields $F \to \mathcal{F}$ with the following property:
\begin{itemize}
\item[$\bullet$] if $F$ is a number field, then $\mathcal{F} \subseteq \CC$ and the chosen embedding induces an archimedean place $v$ of $F$ in such a way that $\mathcal{F} = F_v$. 
\end{itemize}
Then there exists a canonical homomorphism of abelian groups 
\begin{equation*}\label{ext bound hom equation} \delta_{\mathcal{A},\mathcal{F}}: 
\zeta(A_\mathcal{F})^\times \to \K_0(\mathcal{A},A_\mathcal{F})\end{equation*}
that has all of the following properties. 

\begin{itemize} 
\item[(i)] The connecting homomorphism $\partial_{\mathcal{A},\mathcal{F}}$ in (\ref{E:kcomm}) is equal to $\delta_{\mathcal{A},\mathcal{F}}\circ {\rm Nrd}_{A_\mathcal{F}}$.
\item[(ii)] The kernel of $\delta_{\mathcal{A},\mathcal{F}}$ comprises all elements of $\zeta(A)^\times$ whose image in $\zeta(A_\mathfrak{p})^\times$ belongs to ${\rm Nrd}_{A_{\mathfrak{p}}}(\K_1(\mathcal{A}_{\mathfrak{p}}))$ for every $\mathfrak{p}$ in ${\rm Spm}(R)$.
\item[(iii)] For any extension $\mathcal{E}$ of $F$ in $\mathcal{F}$, the full pre-image of 
${\rm im}(\iota_{\mathcal{A},\mathcal{E},\mathcal{F}})$ under $\delta_{\mathcal{A},\mathcal{F}}$ is equal to $\zeta(A_\mathcal{E})^\times$.
\item[(iv)] If $F$ is a number field, then for all $\mathfrak{p} \in {\rm Spm}(R)$ and $j \in {\rm Isom}_\mathfrak{p}$, there exists a commutative diagram of the form 
\begin{equation*}\label{p-completion-delta}\begin{CD}
\zeta(A_\mathcal{F})^\times @> \delta_{\mathcal{A},\mathcal{F}}>> \K_0(\mathcal{A},A_\mathcal{F})\\
@Vj'_* VV @VV j_{*}V\\
\zeta(\CC_{p(\mathfrak{p})}\otimes_{F,j}A)^\times @> \delta_{\mathcal{A}_{{\tiny \mathfrak{p}}},
\CC_{\tiny{\tiny p(\mathfrak{p})}}} >> \K_0(\mathcal{A}_{\mathfrak{p}},\CC_{p(\mathfrak{p})}\otimes_{F,j}A).\end{CD}\end{equation*}
Here $j'_*$ denotes the embedding induced by the restriction of $j$ to $\mathcal{F}$ and the homomorphism $j_{*}$ is  induced by sending each tuple $(P,g,Q)$ to $(P_{\mathfrak{p}},\CC_{p(\mathfrak{p})}\otimes_{\mathcal{F},j}g,Q_{\mathfrak{p}})$.
%$\delta_{\mathcal{A}_{{\tiny{\tiny \mathfrak{p}}(j)},\CC_p}$ is as defined in (\ref{local delta}).
\item[(v)] Assume $F$ is a number field, $A$ is unramified at all archimedean places other than $v$ and ${\rm Spec}(R)$ is open in ${\rm Spec}(\mathcal{O}_F)$. Then, in terms of the notation in claim (iv), for each $x$ in $\zeta(A_\mathcal{F})$, the element $\delta_{\mathcal{A},\mathcal{F}}(x)$ is uniquely determined by the elements 
$\{\delta_{\mathcal{A}_{{\tiny \mathfrak{p}}},\CC_p}(j'_*(x))\}_{\mathfrak{p},j}$, as $\mathfrak{p}$ ranges over ${\rm Spm}(R)$ and $j$ over ${\rm Isom}_\mathfrak{p}$. In particular, in this case, the map $\delta_{\mathcal{A},\mathcal{F}}$ is uniquely determined by the commutativity of all diagrams in claim (iv).  
\end{itemize}
\end{proposition}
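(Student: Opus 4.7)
The plan is to construct $\delta_{\mathcal{A},\mathcal{F}}$ as the composition $\partial_{\mathcal{A},\mathcal{F}} \circ {\rm Nrd}_{A_\mathcal{F}}^{-1}$ whenever the reduced norm ${\rm Nrd}_{A_\mathcal{F}}: K_1(A_\mathcal{F}) \to \zeta(A_\mathcal{F})^\times$ is bijective, and to handle the general case by descent from a larger field for which this holds. By the Hasse--Schilling--Maass Norm Theorem, the reduced norm is bijective whenever $\mathcal{F}$ is algebraically closed, whenever $F$ is a $p$-adic field (so that any subfield of $\CC_p$ splits $A$), and whenever $F$ is a number field and $\mathcal{F}=F_v$ at a complex place $v$. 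In each of these cases the construction is immediate, and property (i) holds by definition.

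The remaining case --- $F$ a number field and $\mathcal{F}=F_v$ at a real place $v$ where $A$ may be ramified --- is the main technical point. Here I extend scalars along the given embedding $\RR \hookrightarrow \CC$, construct $\delta_{\mathcal{A},\CC}$ by the preceding paragraph, and then prove that its restriction to $\zeta(A_\mathcal{F})^\times \subseteq \zeta(A_\CC)^\times$ factors through the injection $\iota_{\mathcal{A},\mathcal{F},\CC}$; the resulting map into $K_0(\mathcal{A},A_\mathcal{F})$ is $\delta_{\mathcal{A},\mathcal{F}}$. The essential input is that Hasse--Schilling--Maass identifies the cokernel of ${\rm Nrd}_{A_\mathcal{F}}$ as a finite elementary abelian $2$-group, detected by sign characters at those Wedderburn components of $A_\mathcal{F}$ whose division part is the Hamilton quaternions. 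Given $x \in \zeta(A_\mathcal{F})^\times$, weak approximation for $\zeta(A)^\times$ produces $y \in \zeta(A)^\times$ with the same sign profile as $x$ at these components, so that $xy^{-1}$ lies in ${\rm Nrd}_{A_\mathcal{F}}(K_1(A_\mathcal{F}))$. Both this quotient and $y$ (the latter adjusted by a further weak-approximation step to lie in ${\rm Nrd}_A(K_1(A))$ if necessary) yield classes in $K_0(\mathcal{A},A_\mathcal{F})$ whose images in $K_0(\mathcal{A},A_\CC)$ sum, via commutativity of (\ref{E:kcomm}), to $\delta_{\mathcal{A},\CC}(x)$, furnishing the required factorization. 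Verifying this factorization and its independence from all the choices made is \textbf{the main obstacle} of the proof.

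Properties (i)--(v) are then verified essentially formally. Property (iii) is immediate in the direction $\zeta(A_\mathcal{E})^\times \subseteq \delta_{\mathcal{A},\mathcal{F}}^{-1}({\rm im}\,\iota_{\mathcal{A},\mathcal{E},\mathcal{F}})$ from the construction, and the reverse inclusion follows by a diagram chase in (\ref{E:kcomm}): if $\delta(x) = \iota(w)$ for some $w$, then exactness allows one to choose a Nrd-lift $\tilde x$ of $x$ lying in the image of $K_1(A_\mathcal{E}) \to K_1(A_\mathcal{F})$, whence $x \in \zeta(A_\mathcal{E})^\times$. Property (ii) then reduces, via (iii) applied with $\mathcal{E}=F$, to computing the kernel on $\zeta(A)^\times$, which follows from the decomposition (\ref{decomp}) of $K_0(\mathcal{A},A)$ as $\bigoplus_{\mathfrak{p}\in {\rm Spm}(R)} K_0(\mathcal{A}_\mathfrak{p}, A_\mathfrak{p})$ together with the local identification $\ker(\partial_{\mathcal{A}_\mathfrak{p}, F_\mathfrak{p}} \circ {\rm Nrd}_{A_\mathfrak{p}}^{-1}) = {\rm Nrd}_{A_\mathfrak{p}}(K_1(\mathcal{A}_\mathfrak{p}))$ coming from exactness of the lower row of (\ref{E:kcomm}) applied over each $F_\mathfrak{p}$. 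Finally, properties (iv) and (v) follow from the naturality of (\ref{E:kcomm}) under base change along each embedding $j \in {\rm Isom}_\mathfrak{p}$, combined with injectivity (under the ramification hypothesis in (v)) of the diagonal map $K_0(\mathcal{A},A_\mathcal{F}) \to \prod_{\mathfrak{p},j} K_0(\mathcal{A}_\mathfrak{p}, \CC_{p(\mathfrak{p})} \otimes_{F,j} A)$, which in turn follows from the local decomposition above and the bijectivity of the reduced norm over each $\CC_{p(\mathfrak{p})}$.
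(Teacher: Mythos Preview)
Your descent-from-$\CC$ strategy in the real case has a genuine gap. The claim that $\delta_{\mathcal{A},\CC}$ restricted to $\zeta(A_{\mathcal{F}})^{\times}$ factors through $\iota_{\mathcal{A},\mathcal{F},\CC}$ is false in general. A diagram chase on (\ref{E:kcomm}) (with $\mathcal{F}' = \CC$) gives a short exact sequence
\[
0 \to K_0(\mathcal{A},A_{\mathcal{F}}) \xrightarrow{\iota_{\mathcal{A},\mathcal{F},\CC}} K_0(\mathcal{A},A_{\CC}) \to K_1(A_{\CC})/K_1(A_{\mathcal{F}}) \to 0,
\]
under which $\delta_{\mathcal{A},\CC}(x) = \partial_{\mathcal{A},\CC}\bigl({\rm Nrd}_{A_{\CC}}^{-1}(x)\bigr)$ maps to the class of ${\rm Nrd}_{A_{\CC}}^{-1}(x)$ in the quotient. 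Since ${\rm Nrd}$ is injective, this class vanishes if and only if $x \in {\rm Nrd}_{A_{\mathcal{F}}}(K_1(A_{\mathcal{F}}))$. So precisely when the reduced norm is \emph{not} surjective --- the only case where you need the construction --- the element $\delta_{\mathcal{A},\CC}(x)$ fails to descend to $K_0(\mathcal{A},A_{\mathcal{F}})$. Your parenthetical fix, adjusting $y$ by a further weak-approximation step to lie in ${\rm Nrd}_{A}(K_1(A))$, cannot work either: $y$ is required to match the sign of $x$ at every ramified real place $w$ above $v$, and if $x_w < 0$ at such a place then $y_w < 0$ as well, which obstructs membership in ${\rm Nrd}_{A}(K_1(A))$.

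The paper avoids $\CC$ entirely. Having chosen $\lambda_x \in \zeta(A)^{\times}$ with $\lambda_x x \in {\rm Nrd}_{A_{\mathcal{F}}}(K_1(A_{\mathcal{F}}))$, it sets
\[
\delta_{\mathcal{A},\mathcal{F}}(x) := \partial_{\mathcal{A},\mathcal{F}}\bigl({\rm Nrd}_{A_{\mathcal{F}}}^{-1}(\lambda_x x)\bigr) - \sum_{\mathfrak{p}\in{\rm Spm}(R)} \delta_{\mathcal{A}_{\mathfrak{p}},F_{\mathfrak{p}}}(\lambda_x),
\]
the sum being a well-defined element of $K_0(\mathcal{A},A) \subset K_0(\mathcal{A},A_{\mathcal{F}})$ via (\ref{decomp}). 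The point is that although $\lambda_x$ need not lift to $K_1(A)$, it \emph{does} lift to each $K_1(A_{\mathfrak{p}})$ since the reduced norm is surjective over non-archimedean local fields; the finite-place decomposition thus furnishes the correction term directly inside $K_0(\mathcal{A},A_{\mathcal{F}})$. Independence of the choice of $\lambda_x$ then follows because any two choices differ by an element of ${\rm Nrd}_{A}(K_1(A))$. Your verification of (ii)--(v) is broadly on the right track once the construction is corrected, though note that (ii) in the number-field case is deduced in the paper from Lemma \ref{injectivity result} rather than purely from the local decomposition.
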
 

\begin{proof} If either $F$ is a finite extension of $\QQ_p$, or $\mathcal{F} = \CC$, then the map ${\rm Nrd}_{A_\mathcal{F}}$ is bijective and we set 
\begin{equation}\label{ext bound def local} \delta_{\mathcal{A},\mathcal{F}} := \partial_{\mathcal{A},\mathcal{F}}\circ ({\rm Nrd}_{A_\mathcal{F}})^{-1}.\end{equation}
Then, with this definition, claim (i) is obvious and claims (ii) and (iii) both follow from the exactness of the relevant case of (\ref{E:kcomm}) and the fact that ${\rm Nrd}_{A_\mathcal{E}}(\K_1(A_\mathcal{E})) = \zeta(A_\mathcal{E})^\times$. 

Hence, in the rest of the argument, we assume $F$ is a number field and $\mathcal{F} = \RR$ arises as the completion of $F$ at a place $v$. 

Then, writing ${\prod}_{i \in I}F_i$ for the Wedderburn decomposition of $\zeta(A)$ and $\Sigma(i,v)$ for the set of places of each field $F_i$ above $v$, there is a natural decomposition
\begin{equation}\label{wedd units} \zeta(A_\mathcal{F})^\times = {\prod}_{i\in I} (F_i\otimes_F F_v)^\times = {\prod}_{i\in I}{\prod}_{w \in \Sigma(i,v)}F_{i,w}^\times.\end{equation}
For any element $x = (x_i)_{i \in I}$ of $\zeta(A_\mathcal{F})^\times$, and each index $i$, we can therefore use the weak approximation theorem to choose an element $\lambda_{i,x}$ 
of $F_i^\times$ with the property that for each $w$ in $\Sigma(i,v)$ for which $F_{i,w} = \RR$, one has 
$(\lambda_{i,x}x_i)_w >0$. Then, writing $\lambda_x$ for the element $(\lambda_{i,x})_{i \in I}$ of $\zeta(A)^\times$, the Hasse-Schilling-Maass Norm Theorem implies that the product $\lambda_xx $ belongs to ${\rm Nrd}_{A_\mathcal{F}}(\K_1(A_\mathcal{F}))$. 

%In addition, for each $\mathfrak{p}$ in ${\rm Spm}(R)$ the map ${\rm Nrd}_{A_\mathfrak{p}}$ is bijective, and so 
We may therefore define an element of $\K_0(\mathcal{A},A_\mathcal{F})$ by means of the sum 
\begin{equation}\label{ext bound def} \delta_{\mathcal{A},\mathcal{F}}(x) := \partial_{\mathcal{A},\mathcal{F}}\bigl(({\rm Nrd}_{A_\mathcal{F}})^{-1}(\lambda_x x)\bigr) - {\sum}_{\mathfrak{p}\in {\rm Spm}(R)} \delta_{\mathcal{A}_\mathfrak{p},F_\mathfrak{p}}(\lambda_x ).\end{equation}
Here the sum is regarded as an element of $\K_0(\mathcal{A},A_\mathcal{F})$ via the embedding (\ref{decomp}) (this makes sense since the corresponding cases of the long exact sequence in (\ref{E:kcomm}) implies that each element of $\zeta(A)^\times$ belongs to the kernel of $\delta_{\mathcal{A}_\mathfrak{p},F_\mathfrak{p}}$ for almost all $\mathfrak{p}$ in ${\rm Spm}(R)$).

Note that if $\lambda'_x = (\lambda'_{i,x})_{i \in I}$ is any other element of $\zeta(A)^\times$ chosen as above (with respect to the same element $x$), then for each $w$ in $\Sigma(i,v)$ for which $F_{i,w} = \RR$, one has 
\[ (\lambda'_{i,x}(\lambda_{i,x})^{-1})_w = (\lambda'_{i,x} x_i)_w \cdot (\lambda_{i,x} x_i)^{-1}_w >0\]
and hence $\lambda'_x (\lambda_x)^{-1} \in {\rm Nrd}_{A}(\K_1(A))$. This fact implies that $\delta_{\mathcal{A},\mathcal{F}}(x)$ is independent of the choice of $\lambda_x$. It is also easily seen that the assignment $x \mapsto \delta_{\mathcal{A},\mathcal{F}}(x)$ is a group homomorphism. 

Further, with this explicit definition, the property in claim (i) is clear since for each $x$ in ${\rm im}({\rm Nrd}_{A_\mathcal{F}})$ one can compute $\delta_{\mathcal{A},\mathcal{F}}(x)$ by taking $\lambda_x = 1$ in (\ref{ext bound def}).

 Claim (iii) is also true since for each $x$ in $\zeta(A_\mathcal{F})$, and any $\lambda_x$ in $\zeta(A)^\times$ as fixed in (\ref{ext bound def}), one has 
\begin{align*} \delta_{\mathcal{A},\mathcal{F}}(x) \in \im(\iota_{\mathcal{A},\mathcal{E},\mathcal{F}}) \Longleftrightarrow&\, \partial_{\mathcal{A},\mathcal{F}}\bigl(({\rm Nrd}_{A_\mathcal{F}})^{-1}(\lambda_x x)\bigr) \in \im(\iota_{\mathcal{A},\mathcal{E},\mathcal{F}})\\
\Longleftrightarrow &\, ({\rm Nrd}_{A_\mathcal{F}})^{-1}(\lambda_x x) \in \im(\iota_{A,\mathcal{E},\mathcal{F}})\\
\Longleftrightarrow &\, \lambda_x x \in {\rm Nrd}_{A_\mathcal{F}}\bigl(\im(\iota_{A,\mathcal{E},\mathcal{F}})\bigr) = {\rm Nrd}_{A_\mathcal{E}}(\K_1(A_\mathcal{E}))\\
\Longleftrightarrow &\, x \in \zeta(A_\mathcal{E})^\times.\end{align*}
Here the first equivalence is true since each term $\delta_{\mathcal{A}_\mathfrak{p},F_\mathfrak{p}}(\lambda )$ in (\ref{ext bound def}) belongs to the subgroup $\im(\iota_{\mathcal{A},F,\mathcal{F}})$ of $\im(\iota_{\mathcal{A},\mathcal{E},\mathcal{F}})$, the second follows from the exact commutative diagram (\ref{E:kcomm}) (with $\mathcal{F}$ and $\mathcal{F}'$ replaced by $\mathcal{E}$ and $\mathcal{F}$), the third is clear and the fourth is true since $\lambda_x\in \zeta(A)^\times$. 

To prove claim (iv) we abbreviate $p(\mathfrak{p})$ to $p$ and write $\iota_j$ for the scalar extension map $\K_1(A_\mathcal{F}) \to \K_1(\CC_p\otimes_{F,j}A)$ induced by $j$. Then the given diagram commutes since for each $x \in \zeta(A_\mathcal{F})^\times$, and each $\lambda$ fixed as in (\ref{ext bound def}), one has  
\begin{align*} j_*\bigl(\delta_{\mathcal{A},\mathcal{F}}(x) \bigr) =&\, j_*\bigl(\partial_{\mathcal{A},\mathcal{F}}\bigl(({\rm Nrd}_{A_\mathcal{F}})^{-1}(\lambda_x x)\bigr)\bigr) - j_*\bigl(\delta_{\mathcal{A}_{\mathfrak{p}},F_\mathfrak{p}}(\lambda_x )\bigr)\\
=&\, \partial_{\mathcal{A}_{\mathfrak{p}},\CC_p}\left(\iota_j({\rm Nrd}_{A_\mathcal{F}})^{-1}(\lambda_x x)\right)- \delta_{\mathcal{A}_{\mathfrak{p}},\CC_p}(\lambda_x)\\
=&\, \delta_{\mathcal{A}_{\mathfrak{p}},\CC_p}\left(j'_*(\lambda_x x)\right)- \delta_{\mathcal{A}_{\mathfrak{p}},\CC_p}(\lambda)\\
=&\, \delta_{\mathcal{A}_{\mathfrak{p}},\CC_p}\left(j'_*(x)\right). \end{align*}
Here the first equality follows from the formula (\ref{ext bound def}) and the fact that $j_*(\delta_{\mathcal{A}_\mathfrak{q},F_\mathfrak{q}}(\lambda_x )) =0$ for all $\mathfrak{q}\in {\rm Spm}(R)\setminus \{\mathfrak{p}\}$, the second from commutativity of the relevant case of (\ref{E:kcomm}), the third from the definition (\ref{ext bound def local}) of $\delta_{\mathcal{A}_{\mathfrak{p}},\CC_p}$ and the compatibility of reduced norms under scalar extension and the last equality is clear. 

Claim (v) follows directly from the commutativity of the diagrams in claim (iv) and the general result of Lemma \ref{injectivity result} below. 

In a similar way, since $\zeta(A)^\times$ is the full pre-image under 
$\delta_{\mathcal{A},\mathcal{F}}$ of $\im(\iota_{\mathcal{A},A,\mathcal{F}})$ (by claim (iii)), the assertion of claim (ii) in the number field case follows from Lemma \ref{injectivity result} and the exactness of the relevant cases of (\ref{E:kcomm}).

This completes the proof of the proposition.  \end{proof} 

%We next establish a general result in relative $K$-theory that will play a key role in later arguments.

In the next result we assume the notation and hypotheses of Proposition \ref{ext bound hom}(v). For each $\mathfrak{p}$ in 
${\rm Spm}(R)$ and $j \in {\rm Isom}_\mathfrak{p}$ we also set $A_j^c := \CC_{p(\mathfrak{p})}\otimes_{F,j}A$.

\begin{lemma}\label{injectivity result} Assume $F$ is a number field, $A$ is unramified at all archimedean places other than $v$ and ${\rm Spec}(R)$ is open in ${\rm Spec}(\mathcal{O}_F)$. Then the natural diagonal map
\[ \K_0(\mathcal{A},A_\mathcal{F}) \xrightarrow{(j_{*})_{\mathfrak{p},j}} {\prod}_{\mathfrak{p}\in {\rm Spm}(R)}
{\prod}_{j\in {\rm Isom}_\mathfrak{p}} \K_0(\mathcal{A}_{\mathfrak{p}},A_j^c)\]
is injective.%, where $\mathfrak{p}$ runs over ${\rm Spm}(R)$ and $j$ over all isomorphisms $\CC\cong \CC_{{\rm ch}%(\mathfrak{p})}$ for which $\mathfrak{p}(j) = \mathfrak{p}$.
\end{lemma}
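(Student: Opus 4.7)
The plan is to proceed in three stages: first reduce to injectivity on the subgroup $\iota_{\mathcal{A},F,\mathcal{F}}(K_0(\mathcal{A},A))$ using the decomposition (\ref{decomp}) and a five-lemma argument; then use the commutative diagram (\ref{E:kcomm}) to reduce a general kernel element to this subgroup; finally complete that reduction via the Hasse--Schilling--Maass Norm Theorem, exploiting the archimedean ramification hypothesis.

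For the first stage, I would observe from the definition of $j_*$ in Proposition \ref{ext bound hom}(iv) that the composite $K_0(\mathcal{A}_\mathfrak{q},A_\mathfrak{q}) \hookrightarrow \iota_{\mathcal{A},F,\mathcal{F}}(K_0(\mathcal{A},A)) \subseteq K_0(\mathcal{A},A_\mathcal{F}) \xrightarrow{j_*} K_0(\mathcal{A}_\mathfrak{p},A_j^c)$ vanishes for $\mathfrak{q}\neq \mathfrak{p}$ (the summand at $\mathfrak{q}$ becomes zero upon base change along $j\colon F\to \CC_{p(\mathfrak{p})}$) and, on the summand at $\mathfrak{p}$, equals the scalar extension $K_0(\mathcal{A}_\mathfrak{p},A_\mathfrak{p}) \to K_0(\mathcal{A}_\mathfrak{p},A_j^c)$ induced by the embedding $F_\mathfrak{p}\hookrightarrow \CC_{p(\mathfrak{p})}$ determined by $j$. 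A five-lemma applied to the long exact sequences (\ref{E:kcomm}) for $\mathcal{A}_\mathfrak{p}\subset A_\mathfrak{p}$ and $\mathcal{A}_\mathfrak{p}\subset A_j^c$ (with the identity maps on $K_1(\mathcal{A}_\mathfrak{p})$ and $K_0(\mathcal{A}_\mathfrak{p})$) reduces injectivity of this scalar extension to injectivity of $K_1(A_\mathfrak{p})\hookrightarrow K_1(A_j^c)$, which follows from the injectivity of reduced norms on $K_1$ of semisimple algebras together with the obvious injectivity of $\zeta(A_\mathfrak{p})^\times\hookrightarrow \zeta(A_j^c)^\times$.

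For the second stage, given $x\in \ker((j_*)_{\mathfrak{p},j})$, I would write $x = \iota_{\mathcal{A},F,\mathcal{F}}(y)+\partial_{\mathcal{A},\mathcal{F}}(z)$ with $y\in K_0(\mathcal{A},A)$ and $z\in K_1(A_\mathcal{F})$. This is possible because $K_0(A)\hookrightarrow K_0(A_\mathcal{F})$ is injective (both groups are free abelian with ranks indexed by simple factors, and scalar extension preserves dimensions), so the commutative diagram (\ref{E:kcomm}) implies that the images of $\partial'_{\mathcal{A},F}$ and $\partial'_{\mathcal{A},\mathcal{F}}$ in $K_0(\mathcal{A})$ coincide. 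It then suffices to show that $z$ may be chosen in the image of $K_1(A)\to K_1(A_\mathcal{F})$ modulo the image of $K_1(\mathcal{A})$, for then $\partial_{\mathcal{A},\mathcal{F}}(z)\in \iota_{\mathcal{A},F,\mathcal{F}}(K_0(\mathcal{A},A))$ and the first stage concludes that $x = 0$.

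For the third stage, the injectivity of reduced norms together with the first stage allow the conditions $j_*(x)=0$ to be translated into constraints on $u:={\rm Nrd}_{A_\mathcal{F}}(z)\in \zeta(A_\mathcal{F})^\times$, namely that for each $(\mathfrak{p},j)$ the image $j'_*(u)$ lies, up to a local correction from $y$, in ${\rm Nrd}_{A_\mathfrak{p}}(K_1(\mathcal{A}_\mathfrak{p}))\subseteq \zeta(A_\mathfrak{p})^\times\subseteq \zeta(A_j^c)^\times$. Under the hypothesis that $A$ is unramified at every archimedean place of $F$ other than $v$, the Hasse--Schilling--Maass Norm Theorem identifies ${\rm Nrd}_A(K_1(A))$ as the subgroup of $\zeta(A)^\times$ whose components are positive at precisely those real places above $v$ at which $A$ ramifies. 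A weak-approximation argument in $\zeta(A)^\times$, in the spirit of the construction of $\delta_{\mathcal{A},\mathcal{F}}$ in the proof of Proposition \ref{ext bound hom}, then produces a diagonal adjustment of $u$ by an element $\lambda\in \zeta(A)^\times$ absorbing both the finite-prime local constraints and the archimedean sign constraints at $v$; this realises $z$ (modulo $K_1(\mathcal{A})$) as the image of an element of $K_1(A)$, completing the argument. The main obstacle will be the bookkeeping here: organising the local data at each $(\mathfrak{p},j)$ into a coherent idelic condition on $u$ that can be met simultaneously by a single weak-approximation choice of $\lambda$ with the correct archimedean signs at $v$.
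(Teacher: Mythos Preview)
Your Stages 1 and 2 are fine and in fact match the paper's reduction: the paper also produces the short exact sequence
\[ 0 \to K_0(\mathcal{A},A) \to K_0(\mathcal{A},A_\mathcal{F}) \to K_1(A_\mathcal{F})/K_1(A) \to 0 \]
together with its local analogue, and then uses the decomposition (\ref{decomp}) on the left-hand term, so that everything is reduced to showing that any $z\in K_1(A_\mathcal{F})$ with $j_*(z)\in K_1(A_\mathfrak{p})$ for all $(\mathfrak{p},j)$ already lies in $K_1(A)$.

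The gap is in Stage 3. After passing to $u={\rm Nrd}_{A_\mathcal{F}}(z)\in\zeta(A_\mathcal{F})^\times$, the constraint is that $j'_*(u)\in\zeta(A_\mathfrak{p})^\times$ for all $(\mathfrak{p},j)$. What must be shown is that $u\in\zeta(A)^\times$; only \emph{after} that does the archimedean ramification hypothesis combine with Hasse--Schilling--Maass to give $u\in{\rm Nrd}_A(K_1(A))$. Weak approximation cannot do the first step: multiplying $u$ by any $\lambda\in\zeta(A)^\times$ does not change whether $u$ lies in $\zeta(A)^\times$, and you are trying to prove a rigidity statement (an element of a completion is global), not an approximation statement. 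Concretely, write $u=\sum_\omega c_\omega a_\omega$ with $\{a_\omega\}$ an $F$-basis of $\zeta(A)$ and $c_\omega\in\mathcal{F}$; the issue is to show each $c_\omega\in F$. The paper argues that if some $c_\omega$ were transcendental over $F$ one could choose $j$ with $j(c_\omega)\notin F_\mathfrak{p}$, so $c_\omega$ is algebraic; then $j(c_\omega)\in F_\mathfrak{p}$ for every $j\in{\rm Isom}_\mathfrak{p}$ forces $\mathfrak{p}$ to split completely in $F(c_\omega)/F$, and since this holds for all $\mathfrak{p}$ in the cofinite set ${\rm Spm}(R)$, Chebotarev gives $F(c_\omega)=F$. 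Your proposal never invokes the hypothesis that ${\rm Spec}(R)$ is open in ${\rm Spec}(\mathcal{O}_F)$, and indeed without it the statement can fail at this step, so no purely local-at-finitely-many-places argument such as weak approximation will suffice.
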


\begin{proof} We consider the exact sequences that are given by the lower row of (\ref{E:kcomm}) with the pair $(R,\mathcal{F})$ taken to be $(R,F)$, $(R,\mathcal{F})$, $(R_{\mathfrak{p}},F_{\mathfrak{p}})$ and
$(R_{\mathfrak{p}},\CC_{p(\mathfrak{p})})$ and the maps between these sequences that are
induced by the obvious inclusions and by an isomorphism $j$ in ${\rm Isom}_\mathfrak{p}$. Then an easy diagram chase gives a
commutative diagram of short exact sequences
\begin{equation*}
\xymatrix{
0 \ar[r] & \K_0(\mathcal{A},A) \ar[r] \ar[d] & \K_0(\mathcal{A},A_\mathcal{F}) \ar[r] \ar[d] &
\K_1(A_\mathcal{F})/\K_1(A) \ar[r] \ar[d] & 0 \\
0 \ar[r] & \K_0(\mathcal{A}_{\mathfrak{p}},A_{\mathfrak{p}}) \ar[r] & \K_0(\mathcal{A}_{\mathfrak{p}},A_j^c) \ar[r] &
\K_1(A_j^c)/\K_1(A_{\mathfrak{p}}) \ar[r] & 0.
}
\end{equation*}
In view of the isomorphism in (\ref{decomp}) it is therefore enough to show that the natural diagonal map
\begin{equation*}
\label{equation_K_injectivity_right}
\K_1(A_\mathcal{F})/\K_1(A)\to {\prod}_{\mathfrak{p}\in {\rm Spm}(R)}
{\prod}_{j\in {\rm Isom}_\mathfrak{p}}
\K_1(A_j^c)/\K_1(A_{\mathfrak{p}})
\end{equation*}
is injective. To do this we fix $x$ in $\K_1(A_\mathcal{F})$ with $j_*(x)\in \K_1(A_{\mathfrak{p}})\subseteq \K_1(A_j^c)$ for all $\mathfrak{p}$ and all $j \in {\rm Isom}_\mathfrak{p}$ and must show that $x$ belongs to the subgroup $\K_1(A)$ of $\K_1(A_\mathcal{F})$. % one has
%
%\[ j_*(x)\in K_1(A_\mathfrak{p})\subseteq K_1(A_p^c).\]
%

We use the (injective) maps ${\rm Nrd}_{A_\mathcal{F}}$ and ${\rm Nrd}_{A}$ to identify $\K_1(A_\mathcal{F})$ and $\K_1(A)$ with ${\rm Nrd}_{A_\mathcal{F}}(\K_1(A_\mathcal{F}))$ and ${\rm Nrd}_{A}(\K_1(A))$ respectively. We then fix an $F$-basis $\{a_\omega:\omega \in \Omega\}$ of $\zeta(A)$ so that $x = {\sum}_{\omega \in \Omega} c_\omega a_\omega$ with each $c_\omega$ in $\mathcal{F}$ and for every $j\in {\rm Isom}_\mathfrak{p}$ one has
\begin{equation}
\label{equation_lemma_K_injectivity}
j'_*(x)={\sum}_{\omega \in \Omega} j(c_\omega)a_\omega\in\zeta(A_{\mathfrak{p}})^\times ,
\end{equation}
where $j'_*$ denotes the inclusion $\zeta(A_\mathcal{F})^\times \to \zeta(A_{\mathfrak{p}}^c)^\times$ induced by $j$.

We now fix $\omega$ in $\Omega$ and consider the
coefficient $c_\omega$. If $c_\omega$ was transcendental over $F$, then there would exist an isomorphism $j$ in ${\rm Isom}_\mathfrak{p}$ such that
$j(c_\omega)\not\in F_{\mathfrak{p}}$, thereby contradicting
(\ref{equation_lemma_K_injectivity}). Therefore $c_\omega$ is algebraic
over $F$. The fact that $j(c_\omega)$ belongs to $F_{\mathfrak{p}}$ for all $j \in {\rm Isom}_\mathfrak{p}$ then implies that $\mathfrak{p}$ is completely split in the extension $F(c_\omega)/F$. Hence, since ${\rm Spec}(R)$ is open in ${\rm Spec}(\mathcal{O}_F)$, the Tchebotarov Density Theorem implies $F(c_\omega)=F$.

At this stage, we know that $x$ belongs to both ${\rm Nrd}_{A_\mathcal{F}}(\K_1(A_\mathcal{F}))$ and $\zeta(A)^\times$ and so it suffices to show that, under the stated hypotheses, one has  
\[ {\rm Nrd}_{A_\mathcal{F}}(\K_1(A_\mathcal{F}))\cap \zeta(A)^\times = {\rm Nrd}_{A}(\K_1(A)).\]

To verify this we use the decomposition (\ref{wedd units}) and, for each $i \in I$, we write $\Sigma(i,v)'$ for the subset of $\Sigma(i,v)$ comprising (archimedean) places at which the algebra $A_i\otimes_{F_i}F_{i,w}$ is ramified (and so $F_{i,w} = \mathbb{R}$). It is then enough to note that the Hasse-Schilling-Maass Norm Theorem implies both that  
\begin{multline*} {\rm Nrd}_{A_\mathcal{F}}(\K_1(A_\mathcal{F}))\\ = \{(x_{i,w})_{i,w} \in {\prod}_{i \in I}{\prod}_{w \in \Sigma(i,v)}F_{i,w}^\times = \zeta(A_\mathcal{F})^\times : x_{i,w} > 0 \,\,\text{ for all } w \in \Sigma(i,v)'\},\end{multline*}
and, as $A$ is unramified at all archimedean places of $F$ other than $v$, also 
\[ {\rm Nrd}_{A}(\K_1(A)) = \{(x_i)_{i} \in {\prod}_{i \in I}F_i^\times = \zeta(A)^\times : x_{i,w} > 0 \,\,\text{ for all } w \in \Sigma(i,v)'\}.\]
\end{proof}

\subsubsection{}In the following result we describe another useful consequence of the long exact sequence of relative $K$-theory. 

\begin{lemma}\label{last tech} For each element $x$ of ${\rm Nrd}_A(\K_1(A))$ the following claims are valid. 
\begin{itemize}
\item[(i)] For each $\mathfrak{p}$ in ${\rm Spm}(R)$ one has $x \in {\rm Nrd}_{A}(\K_1(\mathcal{A}_{(\mathfrak{p})}))$ if and only if the image of $x$ in ${\rm Nrd}_{A_\mathfrak{p}}(\K_1(A_\mathfrak{p}))$ belongs to 
${\rm Nrd}_{A_\mathfrak{p}}(\K_1(\mathcal{A}_\mathfrak{p}))$.
\item[(ii)] The following conditions are equivalent: 
\begin{itemize}
\item[(a)] $x \in {\rm Nrd}_A(\K_1(\mathcal{A}))$.
\item[(b)] For all $\mathfrak{p} \in {\rm Spm}(R)$, one has $x \in {\rm Nrd}_A(\K_1(\mathcal{A}_{(\mathfrak{p})}))$. 
\item[(c)] For all $\mathfrak{p} \in {\rm Spm}(R)$, the image of $x$ in ${\rm Nrd}_{A_\mathfrak{p}}(\K_1(A_\mathfrak{p}))$ belongs to ${\rm Nrd}_{A_\mathfrak{p}}(\K_1(\mathcal{A}_\mathfrak{p}))$. 
\end{itemize}
\end{itemize}
\end{lemma}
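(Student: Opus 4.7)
The plan is to deduce both claims from the long exact sequences of relative $K$-theory recalled in (\ref{E:kcomm}), combined with the decomposition (\ref{decomp}) and the fact (noted just before Proposition \ref{ext bound hom}) that the reduced norm maps $\nr_A\colon K_1(A)\to \zeta(A)^\times$ and $\nr_{A_{\mathfrak{p}}}\colon K_1(A_{\mathfrak{p}})\to \zeta(A_{\mathfrak{p}})^\times$ are both injective (since $A$ and $A_{\mathfrak{p}}$ are semisimple). Fixing $x\in \nr_A(K_1(A))$, I would let $\tilde x\in K_1(A)$ denote its unique preimage and write $\tilde x_{\mathfrak{p}}$ for its image in $K_1(A_{\mathfrak{p}})$. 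Then the condition in each of (a)--(c) becomes a purely $K_1$-theoretic membership assertion about $\tilde x$ or $\tilde x_{\mathfrak{p}}$.

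For claim (i), I would apply the relative $K$-theory exact sequence of the inclusion $\mathcal{A}_{(\mathfrak{p})}\subset A$, together with the natural isomorphism $K_0(\mathcal{A}_{(\mathfrak{p})},A)\cong K_0(\mathcal{A}_{\mathfrak{p}},A_{\mathfrak{p}})$ which is built into the argument producing (\ref{decomp}) (compare \cite[(49.12)]{curtisr}) and makes the diagram
\[
\begin{CD}
K_1(A) @>\partial^{(\mathfrak{p})}>> K_0(\mathcal{A}_{(\mathfrak{p})},A) \\
@VVV @VV\cong V \\
K_1(A_{\mathfrak{p}}) @>\partial_{\mathcal{A}_{\mathfrak{p}},F_{\mathfrak{p}}}>> K_0(\mathcal{A}_{\mathfrak{p}},A_{\mathfrak{p}})
\end{CD}
\]
commute. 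By exactness, $x$ lies in $\nr_A(K_1(\mathcal{A}_{(\mathfrak{p})}))$ iff $\tilde x$ comes from $K_1(\mathcal{A}_{(\mathfrak{p})})$, iff $\partial^{(\mathfrak{p})}(\tilde x)=0$, iff $\partial_{\mathcal{A}_{\mathfrak{p}},F_{\mathfrak{p}}}(\tilde x_{\mathfrak{p}})=0$. By exactness at the $\mathfrak{p}$-completed level and the injectivity of $\nr_{A_{\mathfrak{p}}}$, this last condition is in turn equivalent to the image of $x$ in $\zeta(A_{\mathfrak{p}})^\times$ lying in $\nr_{A_{\mathfrak{p}}}(K_1(\mathcal{A}_{\mathfrak{p}}))$, which is (i). (The forward direction is in any case transparent from functoriality of reduced norms under $\mathcal{A}_{(\mathfrak{p})}\to \mathcal{A}_{\mathfrak{p}}$.)

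For claim (ii), the equivalence of (b) and (c) is an immediate consequence of claim (i) applied at each $\mathfrak{p}$, and (a)$\Rightarrow$(b) follows at once from the inclusion $\mathcal{A}\subset \mathcal{A}_{(\mathfrak{p})}$. To establish (c)$\Rightarrow$(a) I would invoke the long exact sequence for $\mathcal{A}\subset A$ together with the decomposition (\ref{decomp}): under this decomposition, the boundary $\partial_{\mathcal{A},F}(\tilde x)\in K_0(\mathcal{A},A)$ corresponds to the tuple $\bigl(\partial_{\mathcal{A}_{\mathfrak{p}},F_{\mathfrak{p}}}(\tilde x_{\mathfrak{p}})\bigr)_{\mathfrak{p}}$. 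Hypothesis (c), combined with the injectivity of each $\nr_{A_{\mathfrak{p}}}$ and exactness, forces each component to vanish, so $\partial_{\mathcal{A},F}(\tilde x)=0$ and therefore $\tilde x$ lifts to $K_1(\mathcal{A})$; taking reduced norms yields (a).

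The argument is essentially a diagram chase, and the only genuinely non-trivial input is the identification $K_0(\mathcal{A}_{(\mathfrak{p})},A)\cong K_0(\mathcal{A}_{\mathfrak{p}},A_{\mathfrak{p}})$ with compatible boundary maps; this is where one must appeal to the underlying completion-localization machinery (e.g.\ Maranda's theorem) already implicit in (\ref{decomp}). Once this is in place, the rest is formal.
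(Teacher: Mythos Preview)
Your proposal is correct and follows essentially the same route as the paper: both arguments reduce the statement to a diagram chase in the long exact sequences of relative $K$-theory, with the only substantive input being the bijectivity of the comparison map $K_0(\mathcal{A}_{(\mathfrak{p})},A)\to K_0(\mathcal{A}_{\mathfrak{p}},A_{\mathfrak{p}})$ (the paper justifies this by identifying both groups with the Grothendieck group of finitely generated $\mathfrak{p}$-torsion $\mathcal{A}$-modules of finite projective dimension, citing \cite[Rem.~(40.19)]{curtisr}, rather than invoking Maranda's theorem directly).
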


\begin{proof} The relevant cases of the exact sequence (\ref{E:kcomm}) give rise to an commutative diagram of abelian groups 
in which all rows are exact

\[\begin{CD} \K_1(\mathcal{A}) @> \iota_\mathcal{A} >> \K_1(A) @> >> \K_0(\mathcal{A},A)\\
@V VV @V VV @VV \kappa V \\
{\prod}_\mathfrak{p} \K_1(\mathcal{A}_{(\mathfrak{p})}) @> (\iota_{(\mathfrak{p})})_\mathfrak{p} >> {\prod}'_\mathfrak{p}\K_1(A) @> >> {\bigoplus}_\mathfrak{p}\K_0(\mathcal{A}_{(\mathfrak{p})},A)\\
@V VV @V VV @VV (\kappa_{\mathfrak{p}})_\mathfrak{p} V \\
{\prod}_\mathfrak{p} \K_1(\mathcal{A}_{\mathfrak{p}}) @> (\iota_{\mathfrak{p}})_\mathfrak{p} >> {\prod}''_\mathfrak{p}
\K_1(A_\mathfrak{p}) @> >> {\bigoplus}_\mathfrak{p}\K_0(\mathcal{A}_{\mathfrak{p}},A_\mathfrak{p})
.\end{CD}\]
Here $\iota_{(\mathfrak{p})}$ and $\iota_\mathfrak{p}$ denote the scalar extension maps $\K_1(\mathcal{A}_{(\mathfrak{p})}) \to \K_1(A)$ and $\K_1(\mathcal{A}_{\mathfrak{p}}) \to\K_1(A_\mathfrak{p})$, ${\prod}'_\mathfrak{p}$ the restricted direct product over $\mathfrak{p}$ in ${\rm Spm}(R)$ of $\K_1(A)$ with respect to the subgroups $\im(\iota_{(\mathfrak{p})})$ and ${\prod}''_\mathfrak{p}$ the restricted direct product of the groups 
$\K_1(A_\mathfrak{p})$ with respect to $\im(\iota_{\mathfrak{p}})$. In addition, the upper vertical arrows are the natural diagonal maps, the first and second lower vertical maps are induced by the scalar extension maps 
$\K_1(\mathcal{A}_{(\mathfrak{p})}) \to \K_1(\mathcal{A}_{\mathfrak{p}})$ and $\K_1(A) \to \K_1(A_{\mathfrak{p}})$ and $\kappa_\mathfrak{p}$ denotes the scalar extension map $\K_0(\mathcal{A}_{(\mathfrak{p})},A) \to \K_0(\mathcal{A}_{\mathfrak{p}},A_\mathfrak{p})$. 

We recall that each map $\kappa_\mathfrak{p}$ is bijective since both groups $\K_0(\mathcal{A}_{(\mathfrak{p})},A)$ and $\K_0(\mathcal{A}_{\mathfrak{p}},A_\mathfrak{p})$ identify with the Grothendieck group of finitely generated $\mathfrak{p}$-torsion $\mathcal{A}$-modules of finite projective dimension over $\mathcal{A}$ (cf. the discussion in \cite[Rem. (40.19)]{curtisr}).
 From the commutativity of the lower part of the diagram, one can therefore deduce that an element $y$ of $\K_1(A)$ belongs to $\im(\iota_{(\mathfrak{p})})$ if and only if its image in $\K_1(A_\mathfrak{p})$ belongs to $\im(\iota_\mathfrak{p})$. This implies claim (i) since the maps ${\rm Nrd}_{A}$ and ${\rm Nrd}_{A_\mathfrak{p}}$ are both bijective and, by definition, one has both ${\rm Nrd}_{A}(\K_1(\mathcal{A})) = {\rm Nrd}_{A}(\im(\iota_{\mathcal{A}}))$ and ${\rm Nrd}_{A}(\K_1(\mathcal{A}_{(\mathfrak{p})})) = {\rm Nrd}_{A}(\im(\iota_{(\mathfrak{p})}))$. 
 
In a similar way, the result of claim (i) combines with the injectivity of ${\rm Nrd}_A$ to reduce the proof of claim (ii) to showing that an element $y$ of $\K_1(A)$ belongs to $\im(\iota_\mathcal{A})$ if and only if, for every $\mathfrak{p}$ in ${\rm Spm}(R)$, it belongs to $\im(\iota_{(\mathfrak{p})})$. It is thus enough to note that this property follows directly from the injectivity of $\kappa$ and the commutativity of the upper part of the diagram. 
%
% implies that an element of $\K_1(A)$ belongs to $\im(\iota_{(\mathfrak{p})})$ for every $\mathfrak{p}$ if and only if it belongs to $\im(\iota_\mathcal{A})$. This fact implies the first claim since ${\rm Nrd}_A$ is injective and, by definition, one has .
%To prove the same result with each group ${\rm Nrd}_{A}(K_1(\mathcal{A}_{(\mathfrak{p})}))$ replaced by ${\rm Nrd}_{A_\mathfrak{p}}(K_1(\mathcal{A}_{\mathfrak{p}}))$ one argues in just the same way after replacing the lower row of the above diagram by the corresponding exact sequence
%
%\[{{\prod}}_\mathfrak{p}K_1(\mathcal{A}_{\mathfrak{p}}) \xrightarrow{(\iota_{\mathfrak{p}})_\mathfrak{p}} {{\prod}}'_%\mathfrak{p}K_1(A_\mathfrak{p}) \to {\bigoplus}_\mathfrak{p}K_0(\mathcal{A}_{\mathfrak{p}},A_\mathfrak{p}).\]
%
\end{proof}

\subsection{Virtual objects and zeta elements} 

\subsubsection{}\label{virtual sec}In this section we recall the construction of Euler characteristics that underlies the formulation of a range of refined special value conjectures in the literature.% including both the equivariant Tamagawa number conjecture and the main conjectures of non-commutative Iwasawa theory (as discussed by Fukaya and Kato in \cite{fukaya-kato}).

To do this we recall first that, as already mentioned  in a special case in Remark \ref{comparison of categories0}, in \cite[\S4]{delignedet} Deligne constructs for any category $\mathcal{E}$ that is exact in the sense of Quillen \cite[p. 91]{quillen} a universal determinant functor of the form
\[ [-]: \mathcal{E}_{\rm Isom} \to \calV(\calE).\]
Here $\mathcal{E}_{\rm Isom}$ denotes the subcategory of $\calE$ in which morphisms are restricted to isomorphisms and $\calV(\calE)$ is the Picard category of `virtual objects' associated to $\mathcal{E}$.
%
%from the subcategory $\mathcal{E}_{\rm Isom}$ of $\mathcal{E}$ in which morphisms are restricted to isomorphisms to the Picard category $\calV(\calE)$ of `virtual objects'. %This construction gives canonical isomorphisms
%$\pi_0(\calV(\calE))\cong K_0(\calE)$ and $\pi_1(\calV(\calE))\cong K_1(\calE)$.

In the case that $\mathcal{E}$ is the category ${\rm Mod}^{\rm proj}(\Lambda)$ of finitely generated projective left modules over a ring $\Lambda$, we write $\calV(\calE)$ as $\calV(\Lambda)$.

If now $\Lambda\to \Sigma$ is a ring homomorphism, then the functor ${\rm Mod}^{\rm proj}(\Lambda) \to {\rm Mod}^{\rm proj}(\Sigma)$ that sends each $P$ to $\Sigma\otimes_\Lambda P$  is exact and so, by \cite[\S4.11]{delignedet}, induces a monoidal functor 
\[ \calV(\Lambda)\to\calV(\Sigma), \quad  L\mapsto L_\Sigma\]
that is unique up to natural isomorphism.

Let $\calP_0$ be the Picard category with unique object
${\bf 1}_{\calP_0}$ and trivial automorphism group $\Aut_{\calP_0}({\bf 1}_{\calP_0})$. Then the fibre product diagram
\begin{equation*}
\xymatrix{
\calV(\Lambda,\Sigma) \ar[r] \ar[d] & \calP_0 \ar[d]^{{\bf 1}_{\calP_0}\mapsto {\bf 1}_{\calV(\Sigma)}} \\
\calV(\Lambda) \ar[r]^{L \mapsto L_\Sigma} & \calV(\Sigma)
}
\end{equation*}
defines a Picard category $\calV(\Lambda,\Sigma)$ in which objects are pairs $(L,t)$ with $L$ in
$\calV(\Lambda)$ and $t$ an isomorphism $L_\Sigma\to {\bf 1}_{\calV(\Sigma)}$ in $\calV(\Sigma)$.
%
%Then $\calV(\Lambda,\Sigma)$ is a Picard category and there is a canonical isomorphism
%$\pi_0 \calV(\Lambda,\Sigma) \cong K_0(\Lambda,\Sigma)$.
 %

 It is shown by Breuning and the first author in \cite[Lem. 5.1]{bb} (following an argument of \cite[\S2.8, Prop. 2.5]{bf}) that there exists a canonical isomorphism of abelian groups
\begin{equation*}\tau_{\Lambda,\Sigma}: K_0(\Lambda,\Sigma)\to\pi_0(\calV(\Lambda,\Sigma)).\end{equation*}
This map sends each element $(P,g,Q)$ in $\K_0(\Lambda,\Sigma)$
 to the isomorphism class of the pair comprising $[P]\otimes [Q]^{-1}$ and the composite isomorphism
\begin{equation*}
([P]\otimes [Q]^{-1})_\Sigma\longrightarrow [\Sigma\otimes_\Lambda P]\otimes [\Sigma\otimes_\Lambda Q]^{-1}
\xrightarrow{[g]\otimes\id} [\Sigma\otimes_\Lambda Q]\otimes [\Sigma\otimes_\Lambda Q]^{-1} \longrightarrow {\bf 1}_{\calV(\Sigma)}.
\end{equation*}
In particular, if $\alpha$ is an automorphism of the $\Sigma$-module $\Sigma\otimes_\Lambda P$, and $\langle \alpha\rangle$ its class in $\K_1(\Sigma)$, then one has
\begin{equation}\label{virtual iso}
\tau_{\Lambda,\Sigma}( \partial_{\Lambda,\Sigma}(\langle\alpha\rangle)) = \tau_{\Lambda,\Sigma}( (P,\alpha,P)) = [{\bf 1}_{\mathcal{V}(\Lambda)},u_\alpha].\end{equation}
Here $\partial_{\Lambda,\Sigma}: \K_1(\Sigma)\to \K_0(\Lambda,\Sigma)$ is the canonical connecting homomorphism as in (\ref{E:kcomm}), $u_\alpha$ denotes the image of $\langle\alpha\rangle$ under the canonical identification 
\[ \K_1(\Sigma)\cong \pi_1(\mathcal{V}(\Sigma)) :={\rm Aut}_{\calV(\Sigma)}({\bf 1}_{\calV(\Sigma)}),\]
and we write $[L,t]$ for the isomorphism class of a pair $(L,t)$ in $\mathcal{V}(\Lambda,\Sigma)$.

The following definition of Euler characteristic underlies the constructions that are made in  \cite{bb} and \cite{bf}.

\begin{definition}\label{rec def}{\em Fix $C$ in $\Der^{\rm perf}(\mathcal{A})$ and a morphism
$t: [C_\mathcal{F}] \to {\bf 1}_{\mathcal{V}(A_\mathcal{F})}$ in $\mathcal{V}(A_\mathcal{F})$. Then
$\chi_{\mathcal{A},\mathcal{F}}(C,t)$ denotes the element of $\K_0(\mathcal{A},A_\mathcal{F})$
that $\tau_{\mathcal{A},A_\mathcal{F}}$ sends to $[[C],t]$.}
\end{definition}

\begin{remark}\label{remark euler}
{\em The homomorphism
$\partial_{\mathcal{A},\mathcal{F}}'$ in (\ref{E:kcomm}) sends each element $\chi_{\mathcal{A},\mathcal{F}}(C,t)$ to the classical 
Euler characteristic of $C$
in $\K_0(\mathcal{A})$. It is for this reason that the elements $\chi_{\mathcal{A},\mathcal{F}}(C,t)$
are sometimes referred to as `refined Euler characteristics'.}\end{remark}

\subsubsection{}\label{exp comp zeta}
%We recall that to each pair $(C,t)$ comprising an object $C$ of $D^{{\rm lf},0}(\mathcal{A})$ for which $\mathcal{F}\otimes_RC$ is acyclic outside degrees $a$ and $a+1$ for some integer $a$ and an isomorphism of $A_\mathcal{F}$-modules $t:\mathcal{F}\otimes_RH^a(C) \cong \mathcal{F}\otimes_RH^{a+1}(C)$ one can define a canonical `refined \mathcal{F}uler characteristic' $\chi_{\mathcal{A}}(C,t)$ in $\K_0(\mathcal{A},A_\mathcal{F})$.

As in \S \ref{plp section}, we shall in the sequel abbreviate the functors ${\rm d}_{A_\mathcal{F},\varpi}$ and ${\rm d}_{\cA,\varpi}$ to ${\rm d}_{A_\mathcal{F}}$ and  ${\rm d}_\cA$ respectively.

The following definition is a natural analogue in our setting of the `zeta elements' that were introduced (in an arithmetic setting) by Kato in \cite{K1}.

\begin{definition}\label{def zeta}
{\em Let $C$ be an object of $\Der^{{\rm lf},0}(\mathcal{A})$ and $\lambda$ an isomorphism in $\mathcal{P}(\zeta(A_\mathcal{F}))$ of the form ${\rm d}_{A_\mathcal{F}}(C_\mathcal{F}) \cong (\zeta(A_\mathcal{F}),0)$.

Then, for each element $x$ of $\zeta(A_\mathcal{F})^\times$, the `zeta element' associated to the pair $(\lambda,x)$ is the unique element $z_{\lambda,x}$ of ${\rm d}_{A_\mathcal{F}}(C_\mathcal{F})$ that satisfies 
\[ \lambda(z_{\lambda,x}) = (x,0)\]
in $(\zeta(A_\mathcal{F}),0)$.}\end{definition}

\begin{example}\label{ex}{\em
 There are several important examples of zeta elements in the literature.\

\noindent{}(i) Assume $F=\QQ_p$, $R=\ZZ_p$ and $\mathcal{F}=\CC_p$. Let $M$ be a motive defined over a number field $K$ with coefficients in a finite dimensional semisimple $\QQ$-algebra $B$, and $T$ a Galois-stable lattice in the $p$-adic realization of $M$ that is projective over some $\ZZ_p$-order $\cA$ in $A := \QQ_p\otimes_\QQ B$. Fix a finite set $\Pi$ of places of $K$ containing all archimedean and $p$-adic places and all places at which $M$ has bad reduction. Then the $p$-adic \'etale cohomology complex $C(M):=\rhom_{\cA}(\rgamma_c(\mathcal{O}_{K,\Pi},T), \cA[-2])$ belongs to $\Der^{{\rm f},0}(\cA)$ and, after fixing an isomorphism of fields $\CC \cong \CC_p$,  there exists a canonical `period-regulator isomorphism' of the form $\lambda: {\rm d}_{A_{\CC_p}}(C(M)_{\CC_p}) \xrightarrow{\sim} (\zeta(A_{\CC_p}),0)$ (cf. Remark \ref{remark etnc} below). In particular, if we write $x$ for the leading term $L^\ast(M,0)$ in $\zeta(A_\CC)^\times \cong \zeta(A_{\CC_p})^\times$ of the $L$-function of $M$ at $s=0$, then the element $z_{\lambda,x}$ defined above % in ${\rm d}_{A_{\CC_p}}(C(M)_{\CC_p})$
 generalizes the zeta elements defined for abelian extensions by Kato in \cite{K1}. 
 
\noindent{}(ii) Assume $F=\QQ$, $R=\ZZ$ and $\mathcal{F}=\RR$. Let $L/K$ be a finite Galois extension of number fields with group $G$ and set $\cA:=\ZZ[G]$ (so that $A_\mathcal{F}=\RR[G]$). Then, for a suitable choice of complex $C$, isomorphsm $\lambda$ and element $x$, the element $z_{\lambda,x}$ defined above generalizes the `zeta elements' $z_{L/K,\Pi}$ defined for abelian extensions $L/K$ by Kurihara and the present authors in \cite[Def. 3.5]{bks} (following Kato \cite{K1}). For details, see Remark \ref{bks remark} below.
 }\end{example}

In the next result we interpret zeta elements in terms of the Euler characteristic construction in Definition \ref{rec def}.

Before stating this result we note that every object of the category ${\rm Mod}^{\rm fg}(A_\mathcal{F})$ of finitely generated $A_\mathcal{F}$-modules is projective (as $A_\mathcal{F}$ is semisimple) and so the construction of Deligne \cite{delignedet} gives a determinant functor 
\[ [-]: {\rm Mod}^{\rm fg}(A_\mathcal{F})_{\rm Isom} \to \mathcal{V}(A_\mathcal{F}).\]
In particular, the universal nature of this functor implies that, for any choice of ordered bases $\varpi$ as in \S\ref{Exterior powers over semisimple rings}, there exists a canonical  additive functor
\begin{eqnarray*}\label{nu}
 \nu = \nu_{A_\mathcal{F},\varpi}: \mathcal{V}(A_\mathcal{F})\to \mathcal{P}(\zeta(A_\mathcal{F}))
 \end{eqnarray*}
(cf. \cite[\S2.6, Lem. 2]{bf}). 

This functor sends the virtual object $[M]$, for each $M$ in ${\rm Mod}^{\rm fg}(A_\mathcal{F})$, to ${\rm d}_{A_\mathcal{F},\varpi}(M)$, and $[\alpha]$, for each $\alpha$ in $\pi_1(\mathcal{V}(A_\mathcal{F})) \cong \K_1(A_\mathcal{F})$, to the  automorphism of $(\zeta(A_\mathcal{F}),0)$ that is given by multiplication by ${\rm Nrd}_{A_\mathcal{F}}(\alpha)$.

%In the following result we use, for any bimodule $\Pi$ as in \S\ref{general set up}, the projection maps $\mu^{\rm rel}_\Pi$ from (\ref{func hom}) and $\iota_\Pi$ from (\ref{key commute}).

%The connection between zeta elements and the refined euler characteristics discussed earlier is explained by the following result.

\begin{theorem}\label{ltc2} We assume to be given data of the following sort: 
\begin{itemize}
\item[$\bullet$] an $R$-order $\mathcal{A}$ in a finite dimensional separable $F$-algebra $A$;
\item[$\bullet$] an embedding of fields $F \to \mathcal{F}$, as in Proposition \ref{ext bound hom};
\item[$\bullet$] an object $C$ of $\Der^{{\rm lf},0}(\mathcal{A})$; 
\item[$\bullet$] a morphism $t: [C_\mathcal{F}] \to {\bf 1}_{\mathcal{V}(A_\mathcal{F})}$ in $\mathcal{V}(A_\mathcal{F})$.
\end{itemize}
Then $\nu(t)$ is an isomorphism ${\rm d}_{A_\mathcal{F}}(C_\mathcal{F}) \cong (\zeta(A_\mathcal{F}),0)$ in 
 $\mathcal{P}(\zeta(A_\mathcal{F}))$ and, for each element $x$ of $\zeta(A_\mathcal{F})^\times$, one can consider the possible equality 
\begin{equation}\label{etnc interpret} \delta_{\mathcal{A},\mathcal{F}}(x) \stackrel{?}{=} \chi_{\mathcal{A},\mathcal{F}}(C,t)
\end{equation}
in $\K_0(\mathcal{A},A_\mathcal{F})$. In any such situation, the following claims are valid. 

\begin{itemize}
\item[(i)] If (\ref{etnc interpret}) is valid, then $z_{\nu(t),x}$ is a basis of the $\xi(\mathcal{A})$-module 
${\rm d}_\mathcal{A}(C)^{\rm u}$.

\item[(ii)] Assume $F$ is a finite extension of $\QQ_p$, $R$ is its valuation ring and $\mathcal{F}$ is an extension of $F$ in $\CC_p$. Then (\ref{etnc interpret}) is valid if and only if $z_{\nu(t),x}$ is a primitive basis of ${\rm d}_\mathcal{A}(C)$.
\end{itemize}

In the remaining claims we assume that $F$ is a number field, that $A$ is unramified at all archimedean places of $F$ other than that corresponding to the fixed embedding $F \to \mathcal{F}\subseteq \CC$ and that ${\rm Spec}(R)$ is open in ${\rm Spec}(\mathcal{O}_F)$. 

\begin{itemize}
\item[(iii)] The equality (\ref{etnc interpret}) is valid if and only if $z_{\nu(t),x}$ is a locally-primitive basis of ${\rm d}_\mathcal{A}(C)$.

\item[(iv)] If (\ref{etnc interpret}) is valid, then ${\rm d}_\mathcal{A}(C)$ has a primitive basis if and only if there exists an element $x'$ of ${\rm Nrd}_{A_\mathcal{F}}(\K_1(A_\mathcal{F}))$ of $\zeta(A_\mathcal{F})^\times$ such that, for all 
$\mathfrak{p}$ in ${\rm Spm}(R)$, and all field isomorphisms $j$ in ${\rm Isom}_\mathfrak{p}$, one has  
$j'_*(x\cdot x')\in {\rm Nrd}_{A_{\mathfrak{p}}}(\K_1(\mathcal{A}_{\mathfrak{p}}))$.
\item[(v)] If (\ref{etnc interpret}) is valid, then $z_{\nu(t),x}$ is a primitive basis of ${\rm d}_\mathcal{A}(C)$ if and only if one has $x\in {\rm Nrd}_{A_\mathcal{F}}(\K_1(A_\mathcal{F}))$.
\end{itemize}
\end{theorem}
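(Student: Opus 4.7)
My overall strategy is to prove claim (ii) first by an explicit local computation that identifies $\chi_{\mathcal{A},\mathcal{F}}(C,t)$ with $\delta_{\mathcal{A},\mathcal{F}}(u)$ for a suitable element $u \in {\rm Nrd}_{A_\mathcal{F}}(K_1(A_\mathcal{F}))$, then to deduce (iii) by localising at each prime of $R$ and applying (ii), and finally to extract (i), (iv) and (v) from these cases together with Theorem \ref{prim criterion}, Proposition \ref{remarks primitive} and the long exact sequence of relative $K$-theory.

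For (ii), the local hypothesis makes $\mathcal{A}$ semi-local, so every finitely generated locally-free $\mathcal{A}$-module is free, ${\rm SK}_0^{\rm lf}(\mathcal{A})=0$, and $C$ is represented in $D^{\rm lf}(\mathcal{A})$ by a complex $P^\bullet \in C^{\rm f}(\mathcal{A})$ with $\sum_a (-1)^a r_a = 0$, where $r_a := {\rm rk}_\mathcal{A}(P^a)$; after possibly adjoining summands $\mathcal{A}\xrightarrow{{\rm id}}\mathcal{A}$, I may also assume $r_a \geq {\rm sr}(\mathcal{A})$ for some $a$. Any choice of ordered $\mathcal{A}$-bases $\underline{b}_\bullet$ of the terms gives a primitive basis $\Upsilon(\underline{b}_\bullet)$ of ${\rm d}_\mathcal{A}^\diamond(P^\bullet) \cong {\rm d}_\mathcal{A}(C)^{\rm u}$. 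The cancellation relation $\sum_a (-1)^a r_a = 0$ allows me to construct a trivialisation $\sigma: [P^\bullet] \xrightarrow{\sim} {\bf 1}_{\mathcal{V}(\mathcal{A})}$ by grouping the even and odd degree terms and choosing an $\mathcal{A}$-linear isomorphism between their direct sums that matches the given bases, with the key property that $\nu(\sigma_\mathcal{F})$ sends the image of $\Upsilon(\underline{b}_\bullet)$ in ${\rm d}_{A_\mathcal{F}}^\diamond(P^\bullet_\mathcal{F})$ to the canonical basis $1$ of $\zeta(A_\mathcal{F})$. Writing $(u,0) := \nu(t)(\Upsilon(\underline{b}_\bullet))$ and letting $\alpha \in \pi_1(\mathcal{V}(A_\mathcal{F})) \cong K_1(A_\mathcal{F})$ be the class of the automorphism $t \circ \sigma_\mathcal{F}^{-1}$, one has $u = {\rm Nrd}_{A_\mathcal{F}}(\alpha)$, and the identity (\ref{virtual iso}) together with the formula $\delta_{\mathcal{A},\mathcal{F}} = \partial_{\mathcal{A},\mathcal{F}} \circ {\rm Nrd}_{A_\mathcal{F}}^{-1}$ valid in the local case then yields the key identity
\begin{equation*}
\chi_{\mathcal{A},\mathcal{F}}(C,t) = \partial_{\mathcal{A},\mathcal{F}}(\alpha) = \delta_{\mathcal{A},\mathcal{F}}(u).
\end{equation*}
Since $z_{\nu(t),x} = (xu^{-1}) \cdot \Upsilon(\underline{b}_\bullet)$, the equality (\ref{etnc interpret}) is thus equivalent to $xu^{-1} \in \ker(\delta_{\mathcal{A},\mathcal{F}})$, which by Proposition \ref{ext bound hom}(ii), the set ${\rm Spm}(R)$ being a singleton, is precisely $xu^{-1} \in {\rm Nrd}_A(K_1(\mathcal{A}))$; Proposition \ref{prim criterion 2}(ii) then identifies this with the condition that $z_{\nu(t),x}$ is a primitive basis, proving (ii).

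Claim (iii) is then a localisation argument: for each $\mathfrak{p}\in {\rm Spm}(R)$ and $j\in {\rm Isom}_\mathfrak{p}$, Proposition \ref{ext bound hom}(iv) and the analogous naturality of the isomorphism $\tau_{\mathcal{A},A_\mathcal{F}}$ give $j_\ast(\delta_{\mathcal{A},\mathcal{F}}(x)) = \delta_{\mathcal{A}_\mathfrak{p},\CC_{p(\mathfrak{p})}}(j'_\ast x)$ and $j_\ast(\chi_{\mathcal{A},\mathcal{F}}(C,t)) = \chi_{\mathcal{A}_\mathfrak{p},\CC_{p(\mathfrak{p})}}(C_\mathfrak{p},j_\ast t)$. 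Lemma \ref{injectivity result} then makes (\ref{etnc interpret}) equivalent to all of its local images holding simultaneously, and each such image is, by (ii) applied to the pair $(\mathcal{A}_\mathfrak{p},\CC_{p(\mathfrak{p})})$, equivalent to the primitivity at $\mathfrak{p}$ of $(z_{\nu(t),x})_\mathfrak{p}$, i.e., precisely the locally-primitive condition. Claim (i) then follows at once from (ii) and (iii) combined with Proposition \ref{remarks primitive}(iii). For (iv), by Remark \ref{remark euler} and Theorem \ref{prim criterion}(iii) the existence of a primitive basis is equivalent to $\partial'_{\mathcal{A},\mathcal{F}}(\chi_{\mathcal{A},\mathcal{F}}(C,t))=0$, which under (\ref{etnc interpret}) becomes $\partial'_{\mathcal{A},\mathcal{F}}(\delta_{\mathcal{A},\mathcal{F}}(x))=0$; by exactness of (\ref{E:kcomm}) and Proposition \ref{ext bound hom}(i), this means $\delta_{\mathcal{A},\mathcal{F}}(xx')=0$ for some $x' \in {\rm Nrd}_{A_\mathcal{F}}(K_1(A_\mathcal{F}))$, and applying Proposition \ref{ext bound hom}(iv) and Lemma \ref{injectivity result} transforms this into the local criterion of (iv). Finally, for (v), given (\ref{etnc interpret}), (iii) gives $xu^{-1}\in \zeta(A)^\times$ with local images in ${\rm Nrd}_{A_\mathfrak{p}}(K_1(\mathcal{A}_\mathfrak{p}))$; then $z_{\nu(t),x}$ is primitive iff $xu^{-1}\in {\rm Nrd}_A(K_1(\mathcal{A}))$, which by Lemma \ref{last tech}(ii) reduces to $xu^{-1}\in {\rm Nrd}_A(K_1(A))$, and the Hasse--Schilling--Maass description of this group, together with the unramifiedness of $A$ away from $v$, identifies it with ${\rm Nrd}_{A_\mathcal{F}}(K_1(A_\mathcal{F}))\cap \zeta(A)^\times$, whence the condition becomes $x\in {\rm Nrd}_{A_\mathcal{F}}(K_1(A_\mathcal{F}))$ since $u$ already belongs to this subgroup.

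The main obstacle is the key identity in the local case: constructing the trivialisation $\sigma$ compatibly with the ordered bases $\underline{b}_\bullet$ and verifying that it behaves correctly under the functor $\nu$ requires careful tracking of the signs and orderings in the definition (\ref{non-commutative wedge}) of $\Upsilon$ through the associativity and symmetry constraints of the Picard category $\mathcal{V}(\mathcal{A})$. This is the kind of book-keeping that underlies the proof of Proposition \ref{concrete primitive lemma 20}, but has to be carried out at a slightly more delicate level here because of the additional interaction with the virtual-object formalism underlying $\tau_{\mathcal{A},A_\mathcal{F}}$.
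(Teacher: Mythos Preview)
Your approach is essentially the paper's own: construct a trivialisation $\sigma$ (the paper calls it $\kappa$) from ordered $\mathcal{A}$-bases, derive the key identity $\chi_{\mathcal{A},\mathcal{F}}(C,t)=\delta_{\mathcal{A},\mathcal{F}}(u)$ with $u={\rm Nrd}_{A_\mathcal{F}}(t\circ\sigma_\mathcal{F}^{-1})$, and translate~(\ref{etnc interpret}) into the containment $xu^{-1}\in\ker(\delta_{\mathcal{A},\mathcal{F}})$. The organisation (proving (ii) first and deducing (i) and (iii) from it) is slightly different from the paper's but not in any essential way. There are, however, two genuine gaps.

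First, your deduction of~(i) from~(ii) and~(iii) is incomplete: claim~(i) is asserted in the full generality of the theorem, whereas~(ii) requires $F$ to be $p$-adic and~(iii) requires the additional unramified/open-spectrum hypotheses. In the number-field case without those extras, neither applies. The fix is straightforward and is what the paper does: your key-identity argument from~(ii) goes through verbatim after localising at any $\mathfrak{p}\in{\rm Spm}(R)$ (since $\mathcal{A}_{(\mathfrak{p})}$ is then semi-local), and Proposition~\ref{ext bound hom}(ii)---which holds in the general setting---then gives $xu_\mathfrak{p}^{-1}\in\zeta(A)\cap\xi(\mathcal{A}_\mathfrak{p})^\times=\xi(\mathcal{A}_{(\mathfrak{p})})^\times$, whence $z_{\nu(t),x}$ generates ${\rm d}_\mathcal{A}(C)^{\rm u}$ locally at every prime.

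Second, and more seriously, your argument for~(v) treats $u$ as a single global element of $\zeta(A_\mathcal{F})^\times$, but $u$ was constructed in~(ii) only under the local hypothesis; in the number-field setting your localisation argument for~(iii) produces, for each $\mathfrak{p}$, an element $u_\mathfrak{p}$ depending on a choice of $\mathcal{A}_\mathfrak{p}$-bases, and there is no reason for these to glue to a global $u$ unless a primitive basis of ${\rm d}_\mathcal{A}(C)$ already exists (that is, unless $\chi_\mathcal{A}(C)=0$). The paper handles this by treating the two directions of~(v) separately: if $z_{\nu(t),x}$ is primitive, take $\sigma$ from the bases that realise $z_{\nu(t),x}$, so that $u=x$ and the conclusion is immediate; conversely, if $x\in{\rm Nrd}_{A_\mathcal{F}}(K_1(A_\mathcal{F}))$, first invoke~(iv) with $x'=x^{-1}$ to obtain \emph{some} primitive basis $z'$, then define $u$ from the bases of $z'$, and only then run your argument via Lemma~\ref{last tech}(ii) and Hasse--Schilling--Maass. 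Without this preliminary appeal to~(iv), the quantity ``$xu^{-1}$'' in your sketch is not well-defined.
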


\begin{remark}\label{remark etnc}
{\em The `equivariant Tamagawa Number Conjecture' that is formulated in \cite[Conj. 4(iv)]{bf}, and hence also various associated `equivariant leading term conjectures' in the literature, are equalities of the form (\ref{etnc interpret}) for suitable choices of data $\mathcal{A},\mathcal{F},x, C$ and $t$. We briefly mention two concrete applications of Theorem \ref{ltc2} in this setting. 
%examples in the setting of Example \ref{ex}. 

\noindent{}(i) If, in the setting of Example \ref{ex}(i), we consider the composite morphism
$$t: [C(M)_{\CC_p}]=[\CC_p\otimes_{\ZZ_p}^{\DL}\rgamma_c(\mathcal{O}_{K,S}, T)]^{-1} \cong \Xi(M)_{\CC_p}^{-1} \cong {\bf 1}_{\mathcal{V}(A_{\CC_p})},$$
where $\Xi(M)$ is the `fundamental line' defined in \cite[(29)]{bf} and the first and the second isomorphisms are respectively induced by the morphisms $\vartheta_p(M,S)$ and $\vartheta_\infty$ in \cite[\S3.4]{bf}, then the equivariant Tamagawa Number Conjecture for $(M,\cA)$ is formulated as the equality $\delta_{\mathcal{A},\CC_p}(L^\ast(M,0)) = \chi_{\mathcal{A},\CC_p}(C(M),t)$. Theorem \ref{ltc2}(ii) therefore implies that this conjecture is valid if and only if the zeta element $z_{\nu(t),L^\ast(M,0)}$ is a primitive basis of ${\rm d}_\cA(C(M))$.\

\noindent{}(ii) In the setting of Example \ref{ex}(i), Theorem \ref{ltc2}(iii) provides a similar reinterpretation of the   `lifted root number conjecture' of Gruenberg, Ritter and Weiss \cite{grw}. For details see Remark \ref{bks remark} below. 
}\end{remark}

\begin{remark}\label{full ltc2}{\em Assume the setting of Theorem \ref{ltc2}(ii). Then, in this case, the result of Theorem \ref{ltc2} combines with Proposition \ref{prim criterion 2}(ii) to give an equivalence
\[ \delta_{\mathcal{A},\mathcal{F}}(x) = \chi_{\mathcal{A},\mathcal{F}}(C,t)\Longleftrightarrow {\rm Nrd}_{A}(\K_1(\mathcal{A}))\cdot z_{\nu(t),x} =  {\rm d}_{\mathcal{A}}(C)^{\rm pb},\]
where ${\rm d}_{\mathcal{A}}(C)^{\rm pb}$ denotes the subset of ${\rm d}_{\mathcal{A}}(C)$ comprising all primitive-basis elements.
} \end{remark}

\begin{remark}{\em Assume $\mathcal{A}$ is such that a finitely generated $\mathcal{A}$-module is locally-free if and only if it is both projective and spans a free $A$-module (cf. Remark \ref{loc free exam}(iii)). Then, in this case, it is easily seen that every element of $\K_0(\mathcal{A},A_\mathcal{F})$ is of the form $\chi_{\mathcal{A},\mathcal{F}}(C,t)$ for a suitable choice of data $C$ and $t$ as in Theorem \ref{ltc2}.}\end{remark}

The proof of Theorem \ref{ltc2} will occupy the rest of \S\ref{rkt section}.

\subsubsection{}To prove claim (i) it is enough to show that, for every $\mathfrak{p}$ in ${\rm Spm}(R)$,  the validity of the image of (\ref{etnc interpret}) under the natural map $\K_0(\mathcal{A},A_\mathcal{F}) \to 
K_0(\mathcal{A}_{(\mathfrak{p})},A_\mathcal{F})$ implies an equality 
\[ \xi(\mathcal{A}_{(\mathfrak{p})})\cdot z_{\nu(t),x} = 
{\rm d}_{\mathcal{A}_{(\mathfrak{p})}}(C_{(\mathfrak{p})})^{\rm u}.\] 

Thus, after fixing $\mathfrak{p}$ and replacing $\mathcal{A}$ by $\mathcal{A}_{(\mathfrak{p})}$ we will assume that $R$ is local (with maximal ideal $\mathfrak{p}$) and $C$ belongs to $\DC^{\rm f}(\mathcal{A})$. For each integer $a$ we then set $r_a := {\rm rk}_{\mathcal{A}}(C^a)$ and fix an ordered $\mathcal{A}$-basis $\{b_{as}\}_{1\le s\le r_a}$ of $C^a$. Taken together, these choices determine an isomorphism in $\mathcal{V}(\mathcal{A})$ of the form
\[ \kappa: [C] \cong {\bigotimes}_{a\in \ZZ}[C^a]^{(-1)^a} \cong {\bigotimes}_{a\in \ZZ}[\mathcal{A}^{r_a}]^{(-1)^a}\cong [\mathcal{A}]^{{\sum}_{a\in \ZZ}(-1)^ar_a} \cong {\bf 1}_{\mathcal{V}(\mathcal{A})}\]
where the last map is induced by the fact ${\sum}_{a\in \ZZ}(-1)^ar_a = 0$ (as $C$ belongs to $\DC^{{\rm lf},0}(\mathcal{A})$).

This isomorphism in turn induces an isomorphism in $\mathcal{V}(\mathcal{A},A_\mathcal{F})$
\[ ([C],t) \cong (\kappa([C]),t\circ \kappa^{-1}) = ({\bf 1}_{\calV(\mathcal{A})},t\circ \kappa^{-1})\]
which combines with (\ref{virtual iso}) to imply that
%
%[{\bf 1}_{\calV(\mathcal{A})},\kappa^{-1}\circ t]) this combines with \circ t_j]) We now write $\mu$ for the inverse of the isomorphism $\tau_{\mathcal{A}_{j},A_j^c}$ discussed in \S\ref{virtual sec}. Then the additive structure of the category $\mathcal{V}(\mathcal{A}_{j},A_j^c)$ implies firstly that $\mu([[C_{j}],\kappa])=0$ in $\K_0(\mathcal{A}_j,A_j^c)$ and hence also that
%
\[ \chi_{\mathcal{A},\mathcal{F}}(C,t) = \delta_{\mathcal{A},\mathcal{F}}(\epsilon_{t,\kappa}),\]
with $\epsilon_{t,\kappa} := {\rm Nrd}_{A_\mathcal{F}}(t\circ \kappa^{-1})\in \zeta(A_\mathcal{F})^\times$.
%
%{\color{blue}Should $\kappa^{-1}\circ t_j$ be $t_j \circ \kappa^{-1}$?? ${\bf 1}_{\calV(A_j^c)}$ should be ${\bf 1}_{\calV(\mathcal{A}_j)}$.}

The validity of (\ref{etnc interpret}) is therefore equivalent to an equality 
$\delta_{\mathcal{A},\mathcal{F}}(\epsilon_{t,\kappa}) = \delta_{\mathcal{A},\mathcal{F}}(x)$, and hence to a containment 
\begin{equation}\label{last dis} x\cdot \epsilon_{t,\kappa}^{-1} \in \ker(\delta_{\mathcal{A},\mathcal{F}}).\end{equation}
%
% = {\rm Nrd}_{A_j}(K_1(\mathcal{A}_{j})).\end{equation}
%
In particular, if this containment is valid, then Proposition \ref{ext bound hom}(ii) implies that $x\cdot \epsilon_{t,\kappa}^{-1}$ belongs to both $\zeta(A)^\times$ and ${\rm Nrd}_{A_\mathfrak{p}}(\K_1(\mathcal{A}_{\mathfrak{p}})) \subseteq \xi(\mathcal{A}_{\mathfrak{p}})^\times$. In this situation it would therefore follow that $x\cdot \epsilon_{t,\kappa}^{-1}$ belongs to $\zeta(A)\cap \xi(\mathcal{A}_\mathfrak{p})^\times = \xi(\mathcal{A})^\times$.  
%
%\begin{lemma}\label{explicit zeta conj} The equality (\ref{local comp}) is valid if and only if $j_*(x)\cdot \mu_{t,\kappa}^{-1}$ belongs to ${\rm Nrd}_{A_p}(K_1(\mathcal{A}_p))$.
%If the $G$-module $P$ in (\ref{can rep}) is free, then this condition is satisfied if and only if there is an element $u$ of $\K_1(\ZZ[G])$ with $\theta_{L/K,S,T}^*(0) = {\rm Nrd}_{\QQ[G]}(u){\rm Nrd}_{\RR[G]}(\langle \phi,\iota_1,\iota_2\rangle)$.
%\end{lemma}

On the other hand, the ordered bases $\{b_{as}\}_{1\le s\le r_a}$ fixed above (for each $a \in \ZZ$) together determine a 
primitive basis $z'$ of ${\rm d}_{\mathcal{A}}(C)$ that $\nu(\kappa)$ sends to the element $(1,0)$ of $(\zeta(A),0)$.
 One therefore has $\nu(t)(z') = (\epsilon_{t,\kappa},0)$
 and so the definition of $z_{\nu(t),x}$ implies that 
\begin{equation}\label{different bases} z_{\nu(t),x} = (x\cdot \epsilon_{t,\kappa}^{-1})\cdot z'.\end{equation}

Thus, if $x\cdot \epsilon_{t,\kappa}^{-1}$ belongs to $\xi(\mathcal{A})^\times$, then one has 
\begin{align*} \xi(\mathcal{A})\cdot z_{\nu(t),x} &=\, \xi(\mathcal{A})\cdot ((x\cdot \epsilon_{t,\kappa}^{-1})\cdot z')\\
 &=\, (\xi(\mathcal{A})\cdot(x\cdot \epsilon_{t,\kappa}^{-1}))\cdot z'\\
  &=\, \xi(\mathcal{A})\cdot z' \\
  &=\, {\rm d}_{\mathcal{A}}(C)^{\rm u},\end{align*}
as required to prove claim (i). 

Claim (ii) of Theorem \ref{ltc2} also follows directly from the above argument and the fact that, under the given hypotheses, the equality (\ref{different bases}) combines with Proposition \ref{prim criterion 2}(ii) to imply that $z_{\nu(t),x}$ is a primitive-basis of ${\rm d}_{\mathcal{A}}(C)$ if and only if $x\cdot \epsilon_{t,\kappa}^{-1}$ belongs to ${\rm Nrd}_A(\K_1(\mathcal{A})) = \ker(\delta_{\mathcal{A},\mathcal{F}})$.

\begin{remark}{\em Assume $F$ is a finite extension of $\QQ$ and fix a prime ideal $\mathfrak{p}$ in ${\rm Spm}(R)$. Then the above argument also shows that if the image of (\ref{etnc interpret}) under the natural localization map $\K_0(\mathcal{A},\mathcal{A}_\mathcal{F}) \to K_0(\mathcal{A}_{(\mathfrak{p})},\mathcal{A}_\mathcal{F})$ is valid, then $z_{\nu(t),x}$ is a basis of the $\xi(\mathcal{A}_{(\mathfrak{p})})$-module 
${\rm d}_{\mathcal{A}}(C)_{(\mathfrak{p})}^{\rm u} = {\rm d}_{\mathcal{A}_{(\mathfrak{p})}}(C_{(\mathfrak{p})})^{\rm u}$.}\end{remark}

\subsubsection{}In the remainder of the argument we assume that $F$ is a number field, that $A$ is unramified at all archimedean places of $F$ other than the place $v$ corresponding to $\mathcal{F}$ and that ${\rm Spec}(R)$ is open in ${\rm Spec}(\mathcal{O}_F)$.   

To prove claim (iii) we also use the following notation: for each $\mathfrak{p}$ in ${\rm Spm}(R)$ and each isomorphism $j$ in ${\rm Isom}_\mathfrak{p}$ we set $\mathcal{A}_j := \mathcal{A}_{\mathfrak{p}}$, $A_j := A_{\mathfrak{p}}$, $A_j^c := \CC_{p(\mathfrak{p})}\otimes_{F,j}A$ and  $C_j := C_{\mathfrak{p}}$. 

Then, in this case, Proposition \ref{ext bound hom}(v) implies that the equality (\ref{etnc interpret}) is valid as stated if and only if for all $\mathfrak{p}$ in ${\rm Spm}(R)$ and all $j$ in ${\rm Isom}_\mathfrak{p}$ it is valid with $\mathcal{A},\mathcal{F}, x, C$ and $t$  respectively replaced by $\mathcal{A}_j, A_j^c, j_*(x), C_j$ and $t_j := \CC_{p(\mathfrak{p})}\otimes_{\mathcal{F},j}t$. 

In addition, the argument of claim (ii) implies that (\ref{etnc interpret}) is valid for any such collection of data 
 if and only if $z_{\nu(t_j),j_*(x)}$ is a primitive-basis of ${\rm d}_{\mathcal{A}_j}(C_j)$. To deduce claim (iii) it is therefore enough to note $z_{\nu(t_j),j_*(x)}$ is equal to the image of $z_{\nu(t),x}$ under the natural map 
\[ {\rm d}_{A_\mathcal{F}}(C_\mathcal{F}) \to \CC_{p(\mathfrak{p})}\otimes_{\mathcal{F},j}{\rm d}_{A_\mathcal{F}}(C_\mathcal{F}) = \zeta(A_j^c)\otimes_{\xi(\mathcal{A}_j)}{\rm d}_{\mathcal{A}_j}(C_j).\]

To prove claim (iv) we note Theorem \ref{prim criterion}(iii) implies that ${\rm d}_{\mathcal{A}}(C)$ has a primitive-basis if and only if $\chi_\mathcal{A}(C)$ vanishes. In addition, since Remark \ref{remark euler} implies
\[ \chi_\mathcal{A}(C) =  \partial_{\mathcal{A},\mathcal{F}}'(\chi_{\mathcal{A},\mathcal{F}}(C,t)) = \partial_{\mathcal{A},\mathcal{F}}'(\delta_{\mathcal{A},\mathcal{F}}(x)),\]
the result of Proposition \ref{ext bound hom}(i) combines with the exactness of the lower row of (\ref{E:kcomm}) to imply $\chi_\mathcal{A}(C)$ vanishes if and only if there exists an element $x'$ of ${\rm Nrd}_{A_\mathcal{F}}(\K_1(A_\mathcal{F}))$ such that
 $x\cdot x'$ belongs to $\ker(\delta_{\mathcal{A},\mathcal{F}})$.  The result of claim (iv) therefore follows directly from the description of $\ker(\delta_{\mathcal{A},\mathcal{F}})$ given in Proposition \ref{ext bound hom}(ii). 

Finally, to prove claim (v) we abbreviate $z_{\nu(t),x}$ to $z$. We first assume $z$ is a primitive-basis of ${\rm d}_{\mathcal{A}}(C)$. In this
 case we can assume  $C$ belongs to $\DC^{\rm f}(\mathcal{A})$ and hence that $z$ arises from a choice of (ordered) $\mathcal{A}$-bases of each module $C^a$.  Then, just as in the proof of claim (i), this choice of bases determines an isomorphism
  $\kappa: [C] \cong {\bf 1}_{\mathcal{V}(\mathcal{A})}$ in $\mathcal{V}(\mathcal{A})$ with the property that $\nu(\kappa)$ sends $z$ to the element $(1,0)$ of $(\zeta(A_\mathcal{F}),0)$. The definition of $z$ therefore implies that
\begin{align*} (x,0) = &\, \nu(t)(z)\\
 = &\, \nu(t\circ \kappa^{-1})(\nu(\kappa)(z))\\
  = &\, {\rm Nrd}_{A_\mathcal{F}}(t\circ \kappa^{-1})\cdot (1,0)\\
   = &\, ({\rm Nrd}_{A_\mathcal{F}}(t\circ \kappa^{-1}),0)\end{align*}
and hence that $x$ belongs to ${\rm Nrd}_{A_\mathcal{F}}(\K_1(A_\mathcal{F}))$, as required.

%Under the identification in Remark \ref{semisimple} the composite isomorphism
%
%\[ {\rm 1}_{\mathcal{V}(A_\mathcal{F})}\xrightarrow{\kappa}  [C_\mathcal{F}]_{A_\mathcal{F}}\xrightarrow{t} {\rm 1}_{\mathcal{V}(A_\mathcal{F})}\]%
% identifies with an element of $\pi_1(\mathcal{V}(A_\mathcal{F})) \cong K_1(A_\mathcal{F})$ that ${\rm Nrd}_{A_\mathcal{F}}$ maps to $(\lambda(z),0) = (x,0)$. In particular, therefore, the element $x$ belongs to
%${\rm Nrd}_{A_\mathcal{F}}(K_1(A_\mathcal{F}))$, as claimed.

To prove the converse we assume $x$ belongs to ${\rm Nrd}_{A_\mathcal{F}}(\K_1(A_\mathcal{F}))$.
 In this case, the result of claim (iv) (with $x'$ taken to be $x^{-1}$) implies that ${\rm d}_{\mathcal{A}}(C)$ has a 
 primitive-basis $z'$.

Since $z$ is assumed to be a locally-primitive basis of ${\rm d}_{\mathcal{A}}(C)$, Proposition \ref{prim criterion 2}(ii) therefore implies that $z= y\cdot z'$ for some element $y$ of $\xi(\mathcal{A})^\times$ that belongs to ${\rm Nrd}_A(\K_1(\mathcal{A}_{\mathfrak{p}}))$ for all $\mathfrak{p}$ in ${\rm Spm}(R)$ and further that $z$ is a primitive-basis of ${\rm d}_{\mathcal{A}}(C)$ if and only if $y$ belongs to ${\rm Nrd}_A(\K_1(\mathcal{A}))$. It therefore follows from Lemma \ref{last tech}(ii) that $z$ is a primitive-basis of ${\rm d}_{\mathcal{A}}(C)$ if $y$ belongs to ${\rm Nrd}_{A}(\K_1(A))$.

In addition, the same argument as used above shows that the image under $\nu(t)$ of the primitive-basis $z'$ is equal to $(u,0)$ for some $u$ in ${\rm Nrd}_{A_\mathcal{F}}(\K_1(A_\mathcal{F}))$ and hence that

\begin{align*} (y,0) = &\, \nu(t)(z)\cdot
\nu(t)(z')^{-1}\\
 = &\, x\cdot \nu(t)(z')^{-1}\\
  = &\, (x\cdot u^{-1},0).\end{align*}
In particular, because $x$ is now assumed to belong to ${\rm Nrd}_{A_\mathcal{F}}(\K_1(A_\mathcal{F}))$, it follows that the element $y = x\cdot u^{-1}$ belongs to both $\zeta(A)^\times$ and ${\rm Nrd}_{A_\mathcal{F}}(\K_1(A_\mathcal{F}))$. 

To deduce that $y$ belongs to ${\rm Nrd}_{A}(\K_1(A))$, and hence complete the proof of claim (v), it is thus enough to note that, under the present hypotheses, the Hasse-Schilling-Maass Norm Theorem implies (via the argument at the end of the proof of Lemma \ref{injectivity result}) that $\zeta(A)^\times \cap {\rm Nrd}_{A_\mathcal{F}}(\K_1(A_\mathcal{F})) = {\rm Nrd}_{A}(\K_1(A))$. 
%Then the above argument shows directly that $x = \lambda(z)$  belongs to ${\rm Nrd}_{A_\mathcal{F}}(K_1(A_\mathcal{F}))$, as required.
%
%.$and hence, by the above argument, that $\lambda(. In this case Theorem \ref{prim criterion}(ii) implies that $\chi_\mathcal{A}(C)$ vanishes and hence that $\delta_{\mathcal{A},\mathcal{F}}(x)=\chi_{\mathcal{A}}(C,t)$ belongs to $\ker(\partial_{\mathcal{A},\mathcal{F}}^0)$.
%
%${\rm Nrd}_{A}(K_1(A))$ that belongs to ${\rm Nrd}_{A_p}(K_1(\mathcal{A}_\mathfrak{p}))$ for every prime $\mathfrak{p}$. From Lemma \ref{last tech} it therefore follows that $x$ belongs to ${\rm Nrd}_{A}(K_1(\mathcal{A}))$ and hence, by Proposition \ref{prim criterion 2}(ii), that $z$ is a primitive-basis of ${\rm det}_{\mathcal{A}}(C)$, as claimed.

This then completes the proof of Theorem \ref{ltc2}.

\section*{\large{Part II: Arithmetic over non-abelian Galois extensions}}

In the remainder of the article we shall make some technical improvements to the theory of non-commutative Euler systems introduced by the present authors  in \cite{bses1} and then combine these strengthened results with the $K$-theoretic techniques  developed in \S\ref{plp section} and \S\ref{rkt section} to improve aspects of the theory of leading term conjectures over arbitrary Galois extensions. 

In particular, in this way we shall formulate both a natural main conjecture of higher-rank non-commutative Iwasawa theory for $\mathbb{G}_m$ over arbitrary number fields and a precise `derivative formula' for the `non-commutative Rubin-Stark Euler system' that generalizes to arbitrary Galois extensions of number fields the classical Gross-Stark Conjecture. 

We shall also obtain strong evidence in support of both of these conjectures in important special cases and  establish a precise link between them and  the equivariant Tamagawa Number Conjecture for $\mathbb{G}_m$ over arbitrary Galois extensions, thereby generalising the main result of Kurihara and the present authors in \cite{bks2}.

\section{Integral arithmetic cohomology and Selmer modules}\label{hnase section}

%As before, we fix a finite non-empty set $S$ of
%places of $K$ that contains $S_\infty$ (in the number field case) and all places which
%ramify in $L/K$.
%In this first section we introduce the notion of generalized Stark element that plays a key role in the theory to be described in later sections.
%For an irreducible character $\chi$ of $G$, we denote by $L_{S,T}(\chi,z)$ the $S$-truncated $T$-modified Artin $L$-function for $\chi$. For a non-negative integer $r$ with $r \leq {\rm ord}_{s=0}L_{S,T}(\chi,z)$, we set
%$$L^r_{S,T}(\chi,0):=\lim_{z\to 0} z^{-r}L_{S,T}(\chi,z).$$
%We denote by $L^\ast_{S,T}(\chi,0)$ the leading term of $L_{S,T}(\chi,z)$ at $z=0$.

%

%\subsection{Cohomology complexes}

As a convenience for the reader, in this section we shall first recall some basic facts about the arithmetic modules and complexes  that will play a key role in our theory. 

Throughout, we fix a finite Galois extension $L/K$ of global fields. 

\subsection{Selmer modules}

\subsubsection{}For a finite set of places $\Pi$ of $K$
 and an extension $E$ of $K$ we write $\Pi_E$ for the
set of places of $E$ lying above those in $\Pi$, $Y_{E,\Pi}$ for the free abelian group on the set $\Pi_E$ and $X_{E,\Pi}$ for the submodule of $Y_{E,\Pi}$ comprising elements whose coefficients sum to zero.

For each place $v$ of $K$ we fix a place $w_v$ of $L$ above $v$ and for each intermediate field $E$ of $L/K$ we write $w_{v,E}$ for the restriction of $w_v$ to $E$. For each non-archimedean place $w$ of $E$ we write $\kappa_w$ for its residue field and ${\rm N}w$ for its absolute norm.

We write $S^\infty_K$ for the set of archimedean places of $K$ (so that $S^\infty_K = \emptyset$ unless $K$ is a number field). For an extension $E$ of $K$ we write $S_{\rm ram}(E/K)$ for the set of primes of $K$ that ramify in $E$. 

If $\Pi$ is non-empty and (in the number field case) contains  $S^\infty_K$, then we write $\co_{E,\Pi}$ for the subring of
$E$ comprising elements integral at all places outside $\Pi_E$ and $\co_{E,\Pi}^\times$ for the unit group of $\co_{E,\Pi}$. (If $\Pi = S^\infty_K$, then we abbreviate $\co_{E,\Pi}$ to $\co_E$.)

In this case, for any
finite set of places $\Pi'$ of $K$ that is disjoint from
 $\Pi$, we write $\co^\times_{E,\Pi,\Pi'}$ for the (finite
index) subgroup of $\co_{E,\Pi}^\times$ consisting of those elements
congruent to $1$ modulo all places in $\Pi'_E$. In addition, we write ${\rm Cl}_\Pi^{\Pi'}(E)$ for the ray class group of $\mathcal{O}_{E,\Pi}$ modulo ${\prod}_{w\in \Pi'_E}w$ (that is, the quotient of the group of fractional $\mathcal{O}_E$-ideals
whose supports are coprime to all places in $(\Pi\cup \Pi')_E$ by the subgroup
of principal ideals with a generator congruent to $1$ modulo all
places in $\Pi'_{E}$).

If $E/K$ is Galois, we set $G_{E/K} := \Gal(E/K)$ and note each of the groups $Y_{E,\Pi}, X_{E,\Pi},$ $ \co_{E,\Pi}^\times$, $\co^\times_{E,\Pi,\Pi'}$ and ${\rm Cl}_{\Pi}^{\Pi'}(E)$ has a natural action of $G_{E/K}$. In this case, for a non-archimedean place $v$ of $K$ we also fix a lift $\Fr_{v}$ to $G_{E/K}$ of the Frobenius automorphism of $w_{v,E}$.

\subsubsection{}If $\Pi$ contains $S_K^\infty$, then the `($\Pi$-relative $\Pi'$-trivialized) integral dual Selmer group for $\GG_m$ over $E$' is defined in \cite[Def. 2.1]{bks} (where the  notation $\mathcal{S}_{\Pi,\Pi'}(\GG_m/E)$ is used) by setting
\begin{eqnarray}
{\rm Sel}_{\Pi}^{\Pi'}(E) :=
{\rm cokernel}({\prod}_{w \notin (\Pi\cup \Pi')_E}\ZZ \longrightarrow \Hom_\ZZ(E_{\Pi'}^\times,\ZZ) ), \nonumber
\end{eqnarray}
where $E_{\Pi'}^\times$ is the group $\{ a\in E^\times : {\rm ord}_w(a-1)>0 \text{ for all } w\in \Pi'_E \}$ and the arrow denotes the homomorphism that sends $(x_w)_w$ to the map $(a \mapsto {\sum}_{w\notin (\Pi\cup\Pi')_E}{\rm ord}_w(a)x_w)$.

%In the sequel we shall use the following general facts concerning these modules.% (all of which are proved in \cite[\S2]{bks}).

We recall from loc. cit. that there exists a canonical exact sequence
\begin{equation}\label{selmer lemma seq} 0 \to {\rm Cl}_{\Pi}^{\Pi'}(E)^\vee \to {\rm Sel}_{\Pi}^{\Pi'}(E) \to \Hom_{\ZZ}(\mathcal{O}^\times_{E,\Pi,\Pi'},\ZZ)\to 0,\end{equation}
and a canonical transpose ${\rm Sel}_{\Pi}^{\Pi'}(E)^{\rm tr}$ to ${\rm Sel}_{\Pi}^{\Pi'}(E)$ (in the sense of Jannsen's homotopy theory of modules \cite{jannsen}) that lies in a canonical exact sequence
\begin{equation}\label{selmer lemma seq2} 0 \longrightarrow {\rm Cl}_{\Pi}^{\Pi'}(E) \longrightarrow
{\rm Sel}_{\Pi}^{\Pi'}(E)^{{\rm tr}}\xrightarrow{\varrho_{E,\Pi}} X_{E,\Pi} \longrightarrow 0.\end{equation}

\subsection{Modified \'etale cohomology complexes}%However, other than perhaps for motivational purposes, %such interpretations of our complexes play no role in the sequel.
%
%We also focus on the case of number fields although, modulo some obvious %changes, the same construction can also be made in the context of global %functions fields (in which case Weil-\'etale cohomology is known to %exist).
%
%We write $D(\ZZ[G])$ for the derived category of $G$-modules.
% and all places which ramify in $K/k$.
%Let $T$ be any finite set of places of $K$ that is disjoint from $S$. In %the sequel, whenever $T$ is chosen to be the empty set we will feel free %to simply drop it from all notation.

We set $G:= G_{L/K}$ and write $\Der(\ZZ[G])$ for the derived category of $G$-modules. We also write
$\Der^{{\rm lf},0}(\ZZ[G])$ for its full triangulated subcategory comprising complexes isomorphic to a bounded complex of finitely generated locally-free $G$-modules $C$ with the property that the Euler characteristic of $\QQ\otimes_\ZZ C$ in $\K_0(\QQ[G])$ vanishes.

The complexes that are used in the next result are described in terms of the complexes $\DR\Gamma_{c,\Pi'}((\mathcal{O}_{L,\Pi})_{\mathcal{W}}, \ZZ)$ introduced by Kurihara and the current authors in \cite[Prop. 2.4]{bks}. We recall, in particular, that the latter complexes can be naturally interpreted in terms of the Weil-\'etale cohomology theory that Lichtenbaum has constructed for global function fields \cite{lichtenbaum} and conjectured to exist for number fields \cite{lichtenbaum1} (see \cite[Rem. 2.5]{bks} for more details).

\begin{lemma}\label{complex construction} Let $\Pi$ be a finite non-empty set of places of $K$ containing $S_K^\infty \cup S_{\rm ram}(L/K)$, and let $\Pi'$ be a finite set of places of $K$ that is disjoint from $\Pi$.  Then  the complex
\[ C_{L,\Pi,\Pi'} := {\DR}\Hom_\ZZ(\DR\Gamma_{c,\Pi'}((\mathcal{O}_{L,\Pi})_{\mathcal{W}}, \ZZ),\ZZ)[-2]\]
belongs to $\Der^{{\rm lf},0}(\ZZ[G])$ and has the following properties.

\begin{itemize}
\item[(i)] $C_{L,\Pi,\Pi'}$ is acyclic outside degrees zero and one and there are canonical identifications $H^0(C_{L,\Pi,\Pi'})$ and $H^1(C_{L,\Pi,\Pi'})$ with $\mathcal{O}_{L,\Pi,\Pi'}^\times$ and ${\rm Sel}_{\Pi}^{\Pi'}(L)^{{\rm tr}}$ respectively.
\item[(ii)] If $\Pi_1$ is any finite set of places of $K$ that contains $\Pi$ and is disjoint from $\Pi'$, then there is a canonical exact triangle in $\Der^{{\rm lf},0}(\ZZ[G])$ of the form
$$C_{L,\Pi,\Pi'}\to C_{L,\Pi_1,\Pi'}\to\bigl({\bigoplus}_{w\in(\Pi_1\setminus \Pi)_L}\DR\Hom_\ZZ(\DR\Gamma((\kappa_w)_{\mathcal{W}},\ZZ),\ZZ)\bigr)[-1]\to C_{L,\Pi,\Pi'}[1],$$
where each complex $\DR\Gamma((\kappa_w)_{\mathcal{W}},\ZZ)$ is as defined in \cite[Prop. 2.4(ii)]{bks}.
\item[(iii)] If $\Pi'_1$ is any finite set of places of $K$ that contains $\Pi'$ and is disjoint from $\Pi$, then there is a canonical exact triangle in $\Der^{{\rm lf},0}(\ZZ[G])$ of the form
$$C_{L,\Pi,\Pi_1'}\to C_{L,\Pi,\Pi'}\to\bigl({\bigoplus}_{w\in(\Pi'_1\setminus \Pi')_L}\kappa_w^\times\bigr)[0]\to C_{L,\Pi,\Pi'_1}[1].$$
\item[(iv)] For any normal subgroup $H$ of $G$ there is a canonical `projection formula' isomorphism in $\Der^{{\rm lf},0}(\ZZ[G/H])$ 
\[ \ZZ[G/H]\otimes_{\ZZ[G]}^{\DL}C_{L,\Pi,\Pi'} \cong C_{L^H,\Pi,\Pi'},\]
and hence also a canonical isomorphism of $\ZZ[G/H]$-modules
\[ \ZZ[G/H]\otimes_{\ZZ[G]}{\rm Sel}_{\Pi}^{\Pi'}(L)^{\rm tr} \cong {\rm Sel}_{\Pi}^{\Pi'}(L^H)^{\rm tr}.\]
\item[(v)] If $\Pi$ contains every $p$-adic place of $K$, then there exists a canonical exact triangle in $\Der(\ZZ_p[G])$ of the form 
$$C_{L,\Pi,\Pi',p} \to {\DR}\Hom_{\ZZ_p}(\DR\Gamma_{c}(\mathcal{O}_{L,\Pi}, \ZZ_p),\ZZ_p)[-2] \to\bigl(\ZZ_p\otimes_\ZZ{\bigoplus}_{w\in \Pi'_L}\kappa_w^\times\bigr)[0]\to C_{L,\Pi,\Pi',p}[1]$$
in which $\DR\Gamma_{c}(\mathcal{O}_{L,\Pi}, \ZZ_p)$ denotes the compactly-supported $p$-adic cohomology of $\ZZ_p$ over the scheme ${\rm Spec}(\mathcal{O}_{L,\Pi})$.\end{itemize}
\end{lemma}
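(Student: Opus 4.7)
The strategy is to reduce every claim to the corresponding statement for the complex $\DR\Gamma_{c,\Pi'}((\mathcal{O}_{L,\Pi})_{\mathcal{W}}, \ZZ)$ established in \cite[Prop. 2.4]{bks} and then apply the contravariant functor $\DR\Hom_\ZZ(-,\ZZ)[-2]$. For the preliminary assertion that $C_{L,\Pi,\Pi'}$ lies in $D^{{\rm lf},0}(\ZZ[G])$, I would use that $\DR\Gamma_{c,\Pi'}((\mathcal{O}_{L,\Pi})_{\mathcal{W}}, \ZZ)$ is shown in loc.\ cit.\ to be a perfect complex of $\ZZ[G]$-modules with vanishing Euler characteristic in $K_0(\QQ[G])$, and that both perfectness (since $\ZZ$ is Gorenstein of dimension one) and the triviality of the rational Euler characteristic (up to sign) are preserved under $\DR\Hom_\ZZ(-,\ZZ)$; combined with Swan's theorem (Remark \ref{loc free exam}(ii)), this places $C_{L,\Pi,\Pi'}$ in $D^{{\rm lf},0}(\ZZ[G])$. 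For claim (i), I would apply the universal coefficient spectral sequence for $\DR\Hom_\ZZ(-,\ZZ)$ to the cohomology computation of $\DR\Gamma_{c,\Pi'}$ recorded in \cite[Prop. 2.4(i)]{bks}: the torsion-free quotient in the top Weil-\'etale degree produces $\mathcal{O}_{L,\Pi,\Pi'}^\times$ in degree $0$ after the shift $[-2]$, while the interaction of the transpose construction for $\operatorname{Sel}_{\Pi}^{\Pi'}(L)$ with linear duality yields $\operatorname{Sel}_{\Pi}^{\Pi'}(L)^{\mathrm{tr}}$ in degree $1$ (in accordance with the defining exact sequences \eqref{selmer lemma seq} and \eqref{selmer lemma seq2}).

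For claims (ii) and (iii) I would apply $\DR\Hom_\ZZ(-,\ZZ)[-2]$ to the triangles of \cite[Prop. 2.4(ii) and (iii)]{bks}. In the case of (ii), enlarging $\Pi$ to $\Pi_1$ contributes local Weil-\'etale complexes $\DR\Gamma((\kappa_w)_{\mathcal{W}},\ZZ)$ for $w\in(\Pi_1\setminus\Pi)_L$; the contravariant functor reverses the direction of the triangle, and the overall degree shift matches the one recorded in the statement. In the case of (iii), enlarging $\Pi'$ to $\Pi'_1$ contributes local factors of the form $\DR\Hom_\ZZ(\kappa_w^\times,\ZZ)$ for $w\in(\Pi'_1\setminus\Pi')_L$; since each group $\kappa_w^\times$ is finite, Pontryagin duality gives a canonical identification $\DR\Hom_\ZZ(\kappa_w^\times,\ZZ)\cong \kappa_w^\times[-1]$, and applying the shift $[-2]$ produces the triangle in the required form.

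Claim (iv) follows by applying $\DR\Hom_\ZZ(-,\ZZ)[-2]$ to the projection formula of \cite[Prop. 2.4(iv)]{bks}, using that perfect complexes are preserved under both $\ZZ[G/H]\otimes_{\ZZ[G]}^{\DL}(-)$ and $\DR\Hom_\ZZ(-,\ZZ)$, and that these operations commute for perfect complexes of $\ZZ[G]$-modules; the Selmer module statement is then obtained by taking $H^1$ and invoking claim (i). For claim (v), I would start from the standard comparison triangle relating $\DR\Gamma_{c,\Pi'}((\mathcal{O}_{L,\Pi})_{\mathcal{W}}, \ZZ_p)$ to the usual compactly-supported \'etale complex $\DR\Gamma_{c}(\mathcal{O}_{L,\Pi}, \ZZ_p)$ (again as in \cite{bks}); when $\Pi$ contains every $p$-adic place, the difference between the two complexes is exactly the local contribution at $\Pi'$, Pontryagin dual to $\bigoplus_{w\in\Pi'_L}\kappa_w^\times\otimes_\ZZ\ZZ_p$. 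Dualizing this triangle via $\DR\Hom_{\ZZ_p}(-,\ZZ_p)[-2]$ and again using Pontryagin duality for the finite groups $\kappa_w^\times\otimes_\ZZ\ZZ_p$ produces the exact triangle asserted in (v). The main obstacle throughout is purely one of bookkeeping: carefully tracking cohomological shifts, the contravariance of $\DR\Hom$, and the distinction between linear duality and Pontryagin duality so as to match the conventions of \cite{bks}; no new conceptual input is required.
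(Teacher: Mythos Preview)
Your proposal is correct and follows essentially the same approach as the paper: both reduce every assertion to the corresponding properties of $\DR\Gamma_{c,\Pi'}((\mathcal{O}_{L,\Pi})_{\mathcal{W}}, \ZZ)$ established in \cite{bks} and then apply the functor $\DR\Hom_\ZZ(-,\ZZ)[-2]$. The only cosmetic difference is in the precise bookkeeping of which parts of \cite[Prop.~2.4]{bks} are invoked for claims (ii) and (iii) (the paper points to the right-hand column of the diagram in Prop.~2.4(i) and to Prop.~2.4(ii), respectively), and for claim (v) the paper defers to \cite[\S2.2]{bks2} rather than spelling out the comparison triangle, but the underlying arguments are the same.
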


\begin{proof} %The complex $C_{L,S,T}$ is constructed in \cite[\S2.2]{bks}. 
The descriptions in claim (i) follow directly from \cite[Def. 2.6 and Rem. 2.7]{bks}. In addition, since $\Pi$ is assumed to contain all places which ramify in $L/K$, the fact that $C_{L,\Pi,\Pi'}$ belongs to $\Der^{{\rm lf},0}(\ZZ[G])$ follows from the argument used to prove \cite[Lem. 2.8]{bks}.

The canonical exact triangle in claim (ii), resp. (iii), results directly from applying the functor $C\mapsto \DR\Hom_\ZZ(C,\ZZ)[-2]$ to the triangle given by the right-hand column of the diagram in claim (i), resp. the exact triangle in claim (ii), of Proposition 2.4 in loc. cit.

The first displayed isomorphism in claim (iv) follows by combining the construction of $C_{L,\Pi,\Pi'}$ in \cite{bks} with the canonical projection formula isomorphism in \'etale cohomology 
\[ \ZZ[G/H]\otimes_{\ZZ[G]}^\DL \DR\Gamma_c((\mathcal{O}_{L,\Pi})_{{\rm\acute e t}},\ZZ) \cong \DR\Gamma_c((\mathcal{O}_{L^H,\Pi})_{{\rm\acute e t}},\ZZ).\]
The claimed isomorphism of $\ZZ[G/H]$-modules then follows directly from this isomorphism and the explicit description of cohomology groups given in claim (i). 

Lastly we note that the existence of a canonical triangle as in claim (v) follows from the discussion in \cite[\S2.2]{bks2}.
\end{proof}

%\begin{remark}{\em If $H$ is a normal subgroup of $G$ and $E=L^H$, then %claims (i) and (iv) of Lemma \ref{complex construction} combine to imply %the existence of a canonical isomorphism of $G$-modules %$\ZZ[G/H]\otimes_{\ZZ[G]}{\rm Sel}_S^T(L)^{{\rm tr}}\cong {\rm %Sel}_S^T(E)^{{\rm tr}}$.}\end{remark}

\begin{remark}\label{exp EF}{\em If, in the setting of Lemma \ref{complex construction}(ii), $v$ is any place in $\Pi_1\setminus \Pi$, then the direct sum of $\DR\Hom_\ZZ(\DR\Gamma((\kappa_w)_{\mathcal{W}},\ZZ),\ZZ)[-1]$ over places $w$ of $L$ above $v$ is a complex of (left) $G$-modules that identifies with 
\[ \ZZ[G]\xrightarrow{x\mapsto x(1-{\rm Fr}_{v}^{-1})}\ZZ[G],\]
where the first term is placed in degree zero and ${\rm Fr}_v$ is the Frobenius automorphism in $G$ of some fixed place of $L$ above $v$.}\end{remark}

\begin{remark}\label{non-tranpose remark}{\em  If the group $\mathcal{O}_{L,\Pi,\Pi'}^{\times}$ is torsion-free, then Lemma \ref{complex construction}(i) implies that the complex $C_{L,\Pi,\Pi'}^\ast := \DR\Gamma_{c,\Pi'}((\mathcal{O}_{L,\Pi})_{\mathcal{W}}, \ZZ)$ is acyclic outside degrees one and two. Since $H^2(C_{L,\Pi,\Pi'}^\ast)$ identifies with ${\rm Sel}_{\Pi}^{\Pi'}(L)$ (by \cite[Prop. 2.4(iii)]{bks}), a similar argument to that in Lemma \ref{complex construction}(iv) implies the existence in this case, for any normal subgroup $H$ of $G$, of a canonical isomorphism of $\ZZ[G/H]$-modules $\ZZ[G/H]\otimes_{\ZZ[G]}{\rm Sel}_{\Pi}^{\Pi'}(L) \cong {\rm Sel}_\Pi^{\Pi'}(L^H).$}
\end{remark}

\section{Non-commutative Euler systems for $\mathbb{G}_m$}\label{res section}

\subsection{Hypotheses and definitions}\label{hyp def section}

In this section we fix a number field $K$, with algebraic closure $K^c$, and set $G_K:={\rm Gal}(K^c/K)$.

We write ${\rm Ir}(K)$ for the set of irreducible $\QQ^c$-valued characters of $G_K$ that have open kernel. 

For each character $\chi$ in ${\rm Ir}(K)$ we fix an associated (finite-dimensional) representation $V_\chi$ of $G_K$ over $\QQ^c$ and we assume that all reduced exterior powers occurring in the sequel are defined relative to these fixed representations (cf. Remark \ref{group rings}).

\subsubsection{}\label{ESdefinitions}

We fix a Galois extension $\KK$ of $K$ in $K^c$ and a finite set $S$ of places of $K$ with 
\[ S^\infty_K \subseteq S.\]
We write $\Sigma_S(\mathcal{K})$ for the subset of $S$ comprising places that split completely in $\mathcal{K}$ and set 
\[ r_S
 = r_{S,\mathcal{K}} := |\Sigma_S(\mathcal{K})|.\]

We assume that there exists a  prime number $p$ and a (possibly empty) finite set of places $T$ of $K$ that is disjoint from $S$ and such that the following condition is satisfied.

\begin{hypothesis}\label{tf hyp}\
{\em
\begin{itemize}\item[(i)] $\KK$ is unramified at all places of $T$, and
\item[(ii)] no element of $\KK^\times$ of order $p$ is congruent to $1$ modulo all places in $T_\KK$.
\end{itemize}}
\end{hypothesis}

\begin{remark}\label{tf hyp rem}{\em Hypothesis \ref{tf hyp} is widely satisfied: for example, if $\KK^\times$ contains no element of order $p$,  then one can take $T$ to be empty.}\end{remark} %This choice applies in particular in the setting of %Theorem \ref{cyclotomiccor}

We write $\Omega(\KK) = \Omega(\mathcal{K}/K)$ for the set of finite {\em ramified} Galois extensions of $K$ in $\mathcal{K}$. % and %$\Omega_S(\KK/K)$ for the subset of $\Omega(\mathcal{K}/K)$ comprising %fields $F$ in which some place of  $S$ does not split completely.
 For each $F$ in $\Omega(\KK)$ we set 
\[ S(F):=S\cup S_{\rm ram}(F/K) \quad \text{and}\quad \mathcal{G}_F:= G_{F/K}\]
and we identify
 ${\rm Ir}(\G_F)$ with the subset of ${\rm Ir}(K)$ comprising characters that factor through the restriction map $G_K\to\G_F$. 

We write $S_K^{\rm all}$ for the set of all places of $K$ and fix an ordering %
\begin{equation}\label{ordering} S_K^{\rm all} = \{v(i)\}_{i \in \mathbb{N}}\end{equation}
in such a way that 
\begin{equation}\label{ordering 2}\Sigma_S(\mathcal{K}) = \{v(i)\}_{i \in [r_S]}.\end{equation}

In the sequel we use, for each set of places of $K$, the ordering that is induced by (\ref{ordering}). In particular, in all of the (exterior product) constructions that are made in the sequel, we regard the sets $S(F)$ to be ordered in this way.

%Since Hypothesis \ref{tf hyp}(i) implies that $S(F)\subset S_K^{\rm all}\setminus T$ for every $F$ in $\Omega(\mathcal{K})$, 
%this gives an induced ordering on each set $S(F)$. 

% and $\Omega^{\rm ram}(\K/K)$ for the subset of $\Omega^{\rm %Gal}(\mathcal{K}/K)$ comprising extensions which are ramified over $K$. We %let $\Omega(\K/K)$ denote either of these sets when we do not wish to %distinguish them. In fact, in \S \ref{nowramified} and below we will %always set $\Omega(\K/K):=\Omega^{\rm ram}(\K/K)$ but until then we will %not distinguish between both sets.

For each place $v$ of $K$ we fix a place $w_v$ of $\KK$ and, by abuse of notation, also write $w_v=w_{F,v}$ for the restriction of $w_v$ to any field $F$ in $\Omega(\KK)$.
If $v$ is not in $S(F)$ for a given $F$ in $\Omega(\KK)$ then we denote by $\G_{F,v}$ the decomposition subgroup of $\G_F$ relative to $w_v$ and write 
\[ \Fr_v=\Fr_{F,v}\in\G_{F,v}\]
for the Frobenius automorphism relative to $w_v$.

\subsubsection{}The functoriality of reduced exterior powers implies that for $F$ and $F'$ in $\Omega(\KK)$ with $F\subseteq F'$, and any non-negative integer $r$, the norm map
$\N_{F'/F}:(F')^\times \to F^\times$
induces a homomorphism of $\zeta(\QQ[\G_{F'}])$-modules
$$\N^r_{F'/F}:{\bigwedge}_{\QQ[\G_{F'}]}^r (\QQ\cdot\co_{F',S(F')}^\times)\to {\bigwedge}_{\QQ[\G_F]}^r(\QQ\cdot\co_{F,S(F')}^\times).$$
Since $\co^\times_{F',S(F'),T,p}$ is torsion-free (as a consequence of Hypothesis \ref{tf hyp}(ii)), the general result of \cite[Lem. 2.10(ii)]{bses1} implies $\N^r_{F'/F}$ restricts to give a homomorphism of $\xi(\ZZ_p[\G_{F'}])$-modules
\begin{equation}\label{bidual transition} {\bigcap}^r_{\ZZ_p[\G_{F'}]}\co^\times_{F',S(F'),T,p}\to{\bigcap}^r_{\ZZ_p[\G_{F}]}
\co^\times_{F,S(F'),T,p}.\end{equation}
This fact helps motivate the following definition.

\begin{definition}\label{pES}{\em Let $r$ be a non-negative integer. Then a  `pre-Euler system of rank $r$' for $\mathbb{G}_m$ with respect to the data $\KK/K, S$ and $p$ is a family of elements
$$(c_F)_F \in {\prod}_{F \in \Omega(\KK)}\CC_p\otimes_{\QQ}{\bigwedge}_{\QQ[\G_{F}]}^r (\QQ\cdot\co_{F,S(F)}^\times)$$
with the property that for every $F$ and $F'$ in $\Omega(\KK)$ with $F\subset F'$ one has
\begin{equation}\label{distribution}\N^r_{F'/F}(c_{F'})=\left({\prod}_{v\in S(F')\setminus S(F)}{\rm Nrd}_{\QQ[\G_F]}(1-{\rm Fr}_{F,v}^{-1})\right)(c_F)\end{equation}
in $\CC_p\otimes_{\QQ}{\bigwedge}_{\QQ[\G_{F}]}^r (\QQ\cdot\co_{F,S(F')}^\times)$.

An `Euler system of rank $r$' for $\mathbb{G}_m$ with respect to the data $\KK/K, S, T$ and $p$ is a pre-Euler system  $(c_F)_F$ for $\KK/K, S$ and $p$ with the additional property that 
\[ c_F\in {\bigcap}_{\ZZ_p[\G_F]}^r\co_{F,S(F),T,p}^\times\]
for every $F$ in $\Omega(\KK)$.

We write ${\rm pES}_r(\KK/K,S,p)$ and ${\rm ES}_r(\KK/K,S,T,p)$ for the respective collections of all such pre-Euler systems and Euler systems.
}\end{definition}

%\begin{remark}\label{pexamples}{\em
%}\end{remark}

It is clear that ${\rm ES}_r(\KK/K,S,T,p)$ is an abelian group that is endowed with a natural action of the algebra
$$\xi_p(\KK/K) :={\varprojlim}_{F\in\Omega(\KK)}\xi(\ZZ_p[\G_F]),$$
where the transition morphisms are induced by the projection maps $\ZZ_p[\G_{F'}]\to\ZZ_p[\G_F]$ for $F\subseteq F'$ (and are surjective by \cite[Lem. 3.2(v)]{bses}). %In a similar way, the set ${\rm ES}_r(\K/K)$ has a natural action of the algebra $\xi_p(\K/K)$.

\begin{remark}\label{new ES}{\em There are useful relations between different modules of (pre-)Euler systems 
 of any given rank. \

\noindent{}(i) If $\KK'$ is a Galois extension of $K$ in $\KK$, then the  restriction of a system to the subset $\Omega(\KK')$ of $\Omega(\KK)$ defines a homomorphism of $\xi_p(\KK/K)$-modules
\[ {\rm ES}_r(\KK/K,S,T,p)\to
{\rm ES}_{r}(\KK'/K,S,T,p).\]
We refer to the image of $\varepsilon$ under this homomorphism as  %denoted %$\varepsilon\!\mid_{\K'}$ and
the `restriction of $\varepsilon$ to $\KK'$'.

\noindent{}(ii) Let $v$ be a (non-archimedean) place of $K$ outside $S\cup T$ and $\sigma$ an element of $G_K$ that acts as the inverse of the Frobenius automorphism of a place above $v$ on every $F$ in $\Omega(\KK)$ in which $v$ is unramified. Then there exists a homomorphism of $\xi_p(\KK/K)$-modules
\[ {\rm ES}_r(\KK/K,S,T,p)\to
{\rm ES}_{r}(\KK/K,S\cup \{v\},T,p)\]
that sends each $\varepsilon$ to the system $\varepsilon_\sigma = (\varepsilon_{\sigma,F})_F$ specified at each $F$ in $\Omega(\KK)$ by
\[ \varepsilon_{\sigma,F} := \begin{cases} ({\rm Nrd}_{\QQ[\G_F]}(1-\sigma))(\varepsilon_F), &\text{ if $v$ is unramified in $F$,}\\
 \varepsilon_F, &\text{otherwise.}\end{cases}\]
In such a case we say that the system $\varepsilon$ is a `refinement' of the system $\varepsilon_\sigma$.
}\end{remark}

\subsection{Euler systems and $L$-series} \label{RSsystem}

In this section we define a canonical family of pre-Euler systems in terms of the leading terms at zero of Artin $L$-series  and discuss conditions under which this family comprises Euler systems.   

\subsubsection{}\label{MRSstatement}  %We fix a place $v_0$ in $S$, a pair $(\varpi,\underline{b})$ in $\mathcal{C}_{S,T}(L/K)$ and then also a complex $P^\bullet$ in $\mathcal{C}_{(\varpi,\underline{b})}^{\rm lf}(C_{L,S,T})$.
%
%We assume the $p$-component of Conjecture \ref{RSconjecture} is valid for both $L/K$ and $E/K$. Then our non-commutative refined class number formula is the following.
%
%We fix sets $V_L\subseteq V_E$ as above.
%{Non-commutative Rubin-Stark elements}
%\subsubsection{}%Equivariant derivatives of $L$-series}

We must first discuss some necessary preliminaries in the general setting of \S\ref{hnase section}. 

For this we fix a finite Galois extension $L/K$ of global fields of group $G$, a finite non-empty set of places $\Pi$ of $K$ containing $S_K^\infty\cup S_{\rm ram}(L/K)$ and a finite set $\Pi'$ of places of $K$ that is disjoint from $\Pi$. For each intermediate field $F$ of $L/K$ we then set  
%
%We fix a place $v_0$ in $S$, set $S_0 := S\setminus \{v_0\}$ and for an intermediate field $F$ of $L/K$ we set
%
\[ \Sigma(F) = \Sigma_{\Pi}(F) := \{v\in \Pi: v \,\,\text{splits completely in}\,\, F\}.\]
%
%and $r(F):=|\Sigma(F)|$.

%We set 
%
%\[ n:= |S|-1\]
%
%and label (and thereby order) the elements of $S_0$ as
%$\{ v_i\}_{i \in [n]}$. We assume, as we may, that this
%ordering is chosen so that 
%
%\[ \Sigma(L) = \{v_i\}_{i\in [r(L)]}.\]
%for both $F=E$ and $F=L$.

%\begin{example}\label{exam 2}{\em Fix a normal subgroup $H$ of $G$ and write $\kappa = \kappa_H$ for the natural projection map $\ZZ[G] \to \ZZ[G/H]$. Then, in the notation of Example \ref{exam1}, one has $\mathcal{A}_{\kappa} = \ZZ[G/H]$ and $\Upsilon_{\QQ[G/H]} = \widehat{G/H}\subseteq\widehat G$, whilst Remark \ref{cases of r} implies that, for any fixed place $v_0$ of $S$, one has $r_{L,S}^{\Pi_{\kappa}}\ge |\Sigma^H_{v_0}|$ with $\Sigma^H_{v_0}$ the set of places in $S\setminus \{v_0\}$ that split completely in $L^H/K$.

%In this setting, for any subset $V$ of $\Sigma^H_{v_0}$, we can consider the composite surjective homomorphism of $G/H$-modules
%
%\[ \pi_{L/K,S,T}^{G/H,v_0,V}: {_{\Pi_\kappa}}\mathcal{S}_{S,T}^{\rm tr}(\mathbb{G}_m/L) \cong \mathcal{S}_{S,T}^{\rm tr}(\mathbb{G}_m/L^H) \to X_{L^H,S} \to Y_{L^H,\Sigma^H_{v_0}}\to Y_{L^H,V}.\]
%
%Here the isomorphism comes from Proposition \ref{selmer lemma}(iii), the first arrow is as in Proposition \ref{selmer lemma}(ii) and the second and third arrows are the natural projection maps. In particular, after abbreviating $\pi_{L/K,S,T}^{G/H,v_0,\Sigma^H_{v_0}}$ to $\pi$, one finds that  $\theta_{S,T}^{\pi}(0)$ coincides with the .}\end{example}

We write
\begin{equation}\label{lambdadef} R_{L,\Pi} : \br \cdot \mathcal{O}^\times_{L,\Pi} \rightarrow \br\cdot X_{L,\Pi}\end{equation}
for the isomorphism of $\br[G]$-modules that, for every $u$ in $\mathcal{O}^\times_{L,\Pi}$, satisfies 
\[ R_{L,\Pi}(u)  = -{\sum}_{w\in \Pi_L}\mathrm{log}|u|_w \cdot
w,\]
where $|\cdot |_w$ denotes the absolute value at
$w$ (normalized as in \cite[Chap. 0, 0.2]{tate}).
 
 Then, for each
%
%We fix $\pi$ in $\Delta_{L,S,T}^\Pi$ and, following Lemma \ref{stick vanishing}, assume the locally-free $\mathcal{A}$-module $Y_{\pi}$ has rank $r:= r_{L,S}^\Pi$. We then choose a subset $\underline{b} = \{b_i\}_{1\le i\le r}$ of $Y_\pi$ that spans a full free $\mathcal{A}$-module.
 non-negative integer $a$, the map $R_{L,\Pi}$ induces an isomorphism of $\zeta(\RR[G])$-modules
%Then the definition of the idempotent $e_\pi$ implies that $R_{L,S}$ induces an isomorphism of $\zeta(e_\pi \CC\cdot A)$-modules
%
\[ \lambda^a_{L,\Pi}: {{\bigwedge}}_{\RR[G]}^a (\RR\cdot \mathcal{O}_{L,\Pi}^\times) \stackrel{\sim}{\to} {{\bigwedge}}_{\RR[G]}^a (\RR\cdot X_{L,\Pi}).\]
%
%one has $\theta_{S,T}^{\pi}(0) \cdot \wedge_{i=1}^{i=r}b_i\in \im(\lambda_{L,S}^\pi).$ We may therefore make the following definition.
%
%\begin{definition}\label{hnase} {\em For any subset $\underline{b}$ as above the `higher non-abelian Stark element (relative to $\underline{b}$)'
%\end{definition}

For each such $a$, the `$a$-th derived Stickelberger function' of the data $L/K,\Pi$ and $\Pi'$ is defined to be the $\zeta(\CC[G])$-valued meromorphic function of a complex variable $z$
\[ \theta_{L/K,\Pi,\Pi'}^{a}(z) := {\sum}_{\chi\in {\rm Ir}(G)} (z^{-a\chi(1)}L_{\Pi,\Pi'}(\check\chi,z))\cdot e_\chi,\]
where $L_{\Pi,\Pi'}(\check\chi,z)$ is the $\Pi$-truncated $\Pi'$-modified Artin $L$-function for the contragredient $\check\chi$ of $\chi$ and we use the primitive central idempotent 
\[ e_\chi := \chi(1)|G|^{-1}{\sum}_{g \in G}\chi(g)g^{-1}\]
of $\CC[G]$. In the case $a=0$, we set 
\begin{equation}\label{0 stick def} \theta_{L/K,\Pi,\Pi'}(z) := \theta^0_{L/K,\Pi,\Pi'}(z)\end{equation}
and refer to this function as the `Stickelberger function' of $L/K, \Pi$ and $\Pi'$. 

An explicit analysis of the functional equation of Artin $L$-functions (as in \cite[Chap. I, Prop. 3.4]{tate}) shows that 
for each $\chi$ in ${\rm Ir}(G)$ one has 
\begin{equation}\label{tate fe} {\rm ord}_{z=0}L_{\Pi}(\chi,z) = 
\chi(1)^{-1}\cdot {\rm dim}_{\CC}\bigl(e_\chi (\CC\cdot X_{L,\Pi})\bigr).\end{equation}
This formula implies, in particular, that if $\Pi$ contains a proper subset of $a$ elements that split completely in $L$, then the function $\theta_{L/K,\Pi,\Pi'}^{a}(z)$ is holomorphic at $z=0$ and it is then easily checked that its value $\theta_{L/K,\Pi,\Pi'}^{a}(0)$ at $z=0$ belongs to the subring $\zeta(\RR[G])$ of $\zeta(\CC[G])$. 

This observation allows us to make the following definition.

%The following examples show that this definition provides a common generalization of constructions that have been made in the literature.

\begin{definition}\label{rubin-stark context} {\em  Let $\Sigma$ be a subset of $\Sigma(L)$ with $\Sigma\not= \Pi$ and fix $v' \in \Pi\setminus \Sigma$. Then the `(non-commutative) Rubin-Stark element' associated to $L/K, \Pi, \Pi'$ and 
$\Sigma$ is the unique element $\varepsilon_{L/K,\Pi,\Pi'}^{\Sigma}$ of ${{\bigwedge}}_{\RR[G]}^{|\Sigma|}(\RR\cdot \mathcal{O}_{L,\Pi}^\times)$ that satisfies % choice of set to be used in Definition \ref{hnase} for the %homomorphism $\pi$. Effinition , we set %
\[\lambda_{L,\Pi}^{|\Sigma|}(\varepsilon_{L/K,\Pi,\Pi'}^{\Sigma}) = \theta_{L/K,\Pi,\Pi'}^{|\Sigma|}(0) \cdot \wedge_{v \in \Sigma}(w_v-w_{v'}),\]
where the exterior product is defined with respect to the ordering of $\Sigma$ induced by (\ref{ordering}). 
}\end{definition}

\begin{remark}\label{ind v_0}\  {\em

\noindent{}(i) The condition $\Sigma \not= \Pi$ is automatically satisfied if $\Sigma(L) \not= \Pi$ and hence, for example, if $L/K$ is ramified. 

\noindent{}(ii) Since every place $v$ in $\Sigma$ splits completely in $L$, the elements $\{ w_v - w_{v'}\}_{v \in \Sigma}$ span a free $G$-module of rank $|\Sigma|$. Given this, the explicit definition (via (\ref{non comm ext power})) of reduced exterior powers implies that 
 the element $e_\chi (\wedge_{v \in \Sigma}(w_v-w_{v'}))$ is non-zero  
for every $\chi$ in ${\rm Ir}(G)$. In particular, since $\lambda_{L,\Pi}^{|\Sigma|}$ is injective, the equality defining   $\varepsilon_{L/K,\Pi,\Pi'}^{\Sigma}$ implies, for each $\chi$, that 
\[ e_\chi(\varepsilon_{L/K,\Pi,\Pi'}^{\Sigma}) \not= 0\Longleftrightarrow e_\chi\cdot \theta_{L/K,\Pi,\Pi'}^{|\Sigma|}(0)\not= 0.\]

\noindent{}(iii) If $\Sigma \not=\Sigma(L)$, then $|\Sigma|< |\Sigma(L)|$ and, in this case, (\ref{tate fe}) combines with the observation in (ii) to imply that either $\varepsilon_{L/K,\Pi,\Pi'}^{\Sigma}$ vanishes or both $L=K$ (so that $\Sigma(L) = \Pi$) and $|\Sigma| = |\Sigma(L)|-1$. By a similar argument one checks that, if $\Sigma(L)\not= \Pi$ (so that $L\not= K$), then $\theta_{L/K,\Pi,\Pi'}^{|\Sigma(L)|}(0)\cdot (w_{v'_1}-w_{v'}) = 0$ for every $v_1'$ in $\Pi\setminus \Sigma(L)$ and so the element 
\begin{equation}\label{gen RS def} \varepsilon_{L/K,\Pi,\Pi'} := \varepsilon_{L/K,\Pi,\Pi'}^{\Sigma(L)}\end{equation}
depends only on the data $L/K, \Pi$ and $\Pi'$. %In particular, %since 
%$|\Sigma_{S_0}(L)| < |S|$, we can set 
%
%\[ \varepsilon_{L/K,S,T} := \varepsilon_{L/K,S,T}^{\Sigma_{S_0}(L)}.\]

\noindent{}(iv) If $\Sigma$ is empty, then the map $\lambda_{L,\Pi}^{|\Sigma|} = \lambda_{L,\Pi}^{0}$ identifies with the identity automorphism of the space 
\[ {{\bigwedge}}_{\CC[G]}^{0}(\CC\cdot \mathcal{O}_{L,\Pi}^\times) = \zeta(\CC[G]) = {{\bigwedge}}_{\CC[G]}^{0}(\CC\cdot X_{L,\Pi})\]
and $\wedge_{v \in \Sigma}(w_v-w_{v'})$ with the element $1$ of $\zeta(\CC[G])$. In this case, therefore,  $\varepsilon_{L/K,\Pi,\Pi'}^{\Sigma}$ coincides with the (non-commutative) `Stickelberger element' 
\[ \theta_{L/K,\Pi,\Pi'}(0) := {\sum}_{\chi\in {\rm Ir}(G)} L_{\Pi,\Pi'}(\check\chi,0)\cdot e_\chi \in \zeta(\QQ[G])\]
that was first studied by Hayes in \cite{hayes}. (We note here that, whilst $\theta_{L/K,\Pi,\Pi'}(0)$ belongs, a priori, only to $\zeta(\CC[G])$, a classical result of Siegel \cite{siegel} combines with Brauer induction to imply, via \cite[Th. 1.2, p. 70]{tate}, that it belongs to $\zeta(\QQ[G])$, as indicated above.)
}\end{remark}

\begin{remark}\label{bks remark}{\em We now have all of the ingredients that are required to justify the observations in Example \ref{ex}(ii) and Remark \ref{remark etnc}(ii). To do this, we fix data $L/K, \Pi, \Pi'$ as above and use the complex $C = C_{L,\Pi,\Pi'}$ defined in Lemma  \ref{complex construction}. Then $C$ belongs to $\Der^{{\rm lf},0}(\ZZ[G])$ and the isomorphism $R_{L,\Pi}$ from (\ref{lambdadef}) combines with the explicit descriptions of cohomology in Lemma \ref{complex construction}(i) to induce a canonical isomorphism $\lambda: {\rm d}_{\RR[G]}(C_{\RR}) \xrightarrow{\sim} (\zeta(\RR[G]),0)$ in $\mathcal{P}(\zeta(\RR[G]))$. In this setting, the zeta element $z_{\lambda,x}$ associated by Definition \ref{def zeta} to the leading term $x =\theta_{L/K,\Pi,\Pi'}^\ast(0)$ at $z=0$ of $\theta_{L/K,\Pi,\Pi'}(z)$ generalizes to non-abelian extensions the `zeta elements' $z_{L/K,\Pi,\Pi'}$ defined in \cite[Def. 3.5]{bks}. Further, an argument similar to Example \ref{ex}(i) shows Theorem \ref{ltc2}(iii) implies that the `lifted root number conjecture' for $L/K$ of Gruenberg, Ritter and Weiss \cite{grw} is equivalent to asserting that the element $z_{\lambda,x}$ is a locally-primitive basis of ${\rm d}_{\ZZ[G]}(C)$. This latter condition directly extends the leading term conjecture `${\rm LTC}(L/K)$' formulated for abelian $L/K$ in \cite[Conj. 3.6]{bks}. In addition, if $z_{\lambda,x}$ is a locally-primitive basis of ${\rm d}_{\ZZ[G]}(C)$, then Theorem \ref{ltc2}(v) combines with the Hasse-Schilling-Maass Norm Theorem to imply that $z_{\lambda,x}$ is a primitive basis of ${\rm d}_{\ZZ[G]}(C)$ if and only if, for every irreducible complex symplectic character $\chi$ of $G$, the leading term at $z=0$ of $L_\Pi(\chi,z)$ is a strictly positive real number. }
\end{remark}

\subsubsection{}\label{The non-commutative Rubin-Stark Conjecture} 

We next state a precise conjecture concerning the `integral' properties of the Rubin-Stark elements from Definition \ref{rubin-stark context}. 

In particular, in this conjecture we fix data $L/K, \Pi$ and $\Pi'$ as in \S\ref{MRSstatement}. %, and use the element $\varepsilon_{L/K,\Pi,\Pi'}$ defined in (\ref{gen RS def}).
We also fix a prime $p$ and
an isomorphism of fields $\CC\cong\CC_p$ and identify  each element of the form $\varepsilon^\Sigma_{L/K,\Pi,\Pi'}$ with its image under the induced embedding 
of $\zeta(\RR[G])$-modules 
\[ {\bigwedge}_{\RR[G]}^{|\Sigma|}(\RR\cdot\co^\times_{L,\Pi}) \to {\bigwedge}_{\CC_p[G]}^{|\Sigma|}(\CC_p\cdot\co^\times_{L,\Pi,p}).\]
\

%\[ \xi_{S,T}(\ZZ_p[G]) := {{\prod}}_{[G,S,T]}\xi(\ZZ_p[G])\]

\begin{conjecture}\label{MRSconjecture}{\em (Non-commutative Rubin-Stark Conjecture) } Let $\Sigma$ be a subset of $\Sigma(L)$ with $\Sigma \neq \Pi$. Then, for every prime $p$ for which the group $\co^\times_{L,\Pi,\Pi',p}$ is torsion-free, one has
\[ \varepsilon_{L/K,\Pi, \Pi'}^{\Sigma} \in {\bigcap}_{\ZZ_p[G]}^{|\Sigma|} \co_{L,\Pi, \Pi',p}^\times.\]
\end{conjecture}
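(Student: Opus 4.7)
The plan is to verify the defining property of the reduced Rubin lattice directly: that is, to show for every $r$-tuple of $G$-homomorphisms $\varphi_1,\dots,\varphi_r\colon\co^\times_{L,\Pi,\Pi',p}\to\ZZ_p[G]$, with $r=|\Sigma|$, that the evaluation $(\wedge_{i=1}^r\varphi_i)(\varepsilon^\Sigma_{L/K,\Pi,\Pi'})$ lies in the Whitehead order $\xi(\ZZ_p[G])$. Using the local-global description $\xi(\ZZ_p[G])=\zeta(\QQ_p[G])\cap\bigcap_\mathfrak{p}\xi(\ZZ_p[G]_\mathfrak{p})$ recalled in \S\ref{integral structures section}, this reduces to a rationality statement in $\zeta(\QQ_p[G])$ together with a local integrality statement at each maximal ideal.

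The rationality half should follow from Tate's rationality theorem for Stark regulators: decomposing $\zeta(\CC_p[G])$ via the primitive central idempotents $e_\chi$ for $\chi\in\Ir(G)$, each $e_\chi$-component of the pairing is, by Definition \ref{rubin-stark context} and the formula (\ref{rn rel}), a product of $e_\chi\cdot\theta^{|\Sigma|}_{L/K,\Pi,\Pi'}(0)$ with a determinant of regulators on the $\chi$-isotypic part of $\co^\times_{L,\Pi}$. The required rationality over $\zeta(\QQ_p[G])$ then follows from the classical results of Stark and Tate on $L^\ast(\chi,0)/R_\chi$, applied character-by-character and patched over Galois orbits of characters.

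For the local integrality, I would reduce to the classical (abelian) Rubin-Stark conjecture via Brauer induction combined with the functoriality of $\varepsilon^\Sigma$ under the norm transitions (\ref{bidual transition}). For each $\chi\in\Ir(G)$, Brauer's induction theorem provides a decomposition $\chi=\sum_j n_j\mathrm{Ind}_{H_j}^G\psi_j$ with each $\psi_j$ a one-dimensional character of an elementary subgroup $H_j\subseteq G$. Since each subextension $L^{\ker\psi_j}/L^{H_j}$ is then abelian, and since Artin $L$-series are inductive, the $\psi_j$-component of $(\wedge\varphi_i)(\varepsilon^\Sigma_{L/K,\Pi,\Pi'})$ can be identified, via the compatibility of Rubin-Stark elements with norm maps and restriction to subgroups, with an analogous pairing applied to the classical Rubin-Stark element attached to $L^{\ker\psi_j}/L^{H_j}$. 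Invoking the classical Rubin-Stark conjecture \cite{R} for these abelian subextensions then places each $\psi_j$-component in the corresponding classical Rubin lattice, and reassembly of these components via the Brauer decomposition yields the claimed membership in $\xi(\ZZ_p[G])$.

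The main obstacle is two-fold. First, the classical Rubin-Stark conjecture is itself open in general, so this strategy yields only a conditional reduction; unconditional verification would be restricted to cases where the classical conjecture is already known (for instance, rank-one extensions, extensions absolutely abelian over $\QQ$, or suitable CM-settings). Second, and more delicately, Brauer induction naturally introduces rational coefficients with denominators dividing $|G|$, so a naive implementation of the reduction would only give containment in $\xi(\ZZ_p[G])[1/|G|]$. Upgrading to the genuine integral statement is therefore the crux of the argument and would require a careful analysis of how the Whitehead order interacts with induction and restriction functors, exploiting cancellation of the Brauer denominators against the finer integrality properties of the $\psi_j$-components coming from the abelian case; this last compatibility is, in my view, essentially where the full strength of the integral representation theory developed in \cite{bses} would need to be brought to bear.
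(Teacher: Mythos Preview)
The statement you are addressing is a \emph{conjecture} in the paper, not a theorem: the authors state it as Conjecture \ref{MRSconjecture} and do not attempt to prove it in general. There is therefore no ``paper's own proof'' to compare against. The surrounding Remarks \ref{signsandintegral} record partial evidence (the case $\Sigma=\emptyset$ reduces to known integrality results of Ellerbrock--Nickel; the abelian case reduces to Rubin's original conjecture) but the general assertion remains open.

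Your proposal is candid about this: you yourself flag that the classical Rubin--Stark conjecture is open and that Brauer induction introduces denominators you do not know how to cancel. So what you have written is not a proof but a heuristic reduction strategy with two acknowledged gaps. A few further points deserve emphasis. First, your rationality step invokes ``the classical results of Stark and Tate on $L^\ast(\chi,0)/R_\chi$'', but Stark's principal conjecture is itself open for general Artin characters, so even this half is conditional (cf.\ Remark \ref{signsandintegral}(i), which notes that independence of the isomorphism $\CC\cong\CC_p$ already requires Stark's conjecture). Second, the Brauer-induction approach to integrality is more problematic than you suggest: the non-commutative Rubin lattice ${\bigcap}_{\ZZ_p[G]}^{|\Sigma|}$ is defined via the Whitehead order $\xi(\ZZ_p[G])$, and there is no established functoriality of $\xi(-)$ under induction from subgroups that would let you reassemble local data into a global containment; the compatibility you gesture at in the final sentence is not developed in \cite{bses} and would itself be a substantial new result. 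In short, you have outlined why the conjecture is plausible and how it relates to known special cases, but this does not constitute a proof, and the paper makes no claim that one exists.
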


\begin{remarks}\label{signsandintegral}{\em \

%In the sequel, for a given prime number $p$, we shall say that the $p$-component of Conjecture \ref{MRSconjecture}(i) is valid if, for any set $T$ such that the $\ZZ_p$-module $\ZZ_p\otimes_{\ZZ}\co_{E,S,T}^\times$ is torsion-free and for every isomorphism $j:\CC\cong\CC_p$, the image of $\varepsilon^{v_0,V_E}_{E/K,S,T}$ in ${\bigwedge}_{\CC_p[G_{E/K}]}^{r(E)}\CC_p\cdot\co^\times_{E,S,T}$ under the map induced by $j$ belongs to the reduced Rubin lattice ${\bigcap}^{r(E)}_{\ZZ_p[G_{E/K}]}(\ZZ_p\otimes_{\ZZ}\co_{E,S,T}^\times)$.
%
%It is easy to show that if Stark's Conjecture, as reformulated by Tate \cite{tate}, is valid for $E/K$, then this condition is independent of the choice of $j$. For this reason, in the sequel we shall often simply say that the $p$-component of Conjecture \ref{MRSconjecture}(i) holds if $\varepsilon^{v_0,V_E}_{E/K,S,T}$ belongs to ${\bigcap}^{r(E)}_{\ZZ_p[G_{E/K}]}(\ZZ_p\otimes_{\ZZ}\co_{E,S,T}^\times)$.
%
%\noindent{}(i) The factor $(-1)^{r(L)(r(E)-r(L))}$ in Conjecture %\ref{MRSconjecture}(iii) depends on the chosen labeling of $S$. We always %implicitly assume to be using the conventions laid out in \cite[\S %5.2]{bks} in this regard.
%

\noindent{}(i) If Tate's formulation \cite[Chap. I, Conj. 5.1]{tate} of Stark's principal conjecture is valid for $L/K$ (as is automatically the case if $K$ has positive characteristic), then the validity of Conjecture \ref{MRSconjecture} can be shown to be independent of the choice of isomorphism $j: \CC\cong \CC_p$. For this reason, we do not explicitly indicate the choice of $j$ either in the statement of Conjecture \ref{MRSconjecture} or in the arguments that follow.

\noindent{}(ii) If $\Sigma=\Sigma(L)$ is empty, then ($\Sigma\not= \Pi$ and)   $\varepsilon^\Sigma_{L/K,\Pi,\Pi'} = \theta_{L/K,\Pi,\Pi'}(0)$ (cf. Remark \ref{ind v_0}(iv)) and also $|\Sigma| = 0$ so ${\bigcap}^{|\Sigma|}_{\ZZ_p[G]}\co^\times_{L,\Pi,\Pi',p}=\xi(\ZZ_p[G])$ (by \cite[Th. 4.19(i)]{bses}). In this case, therefore, Conjecture \ref{MRSconjecture} asserts that $\theta_{L/K,\Pi,\Pi'}(0)$ belongs to $\xi(\ZZ_p[G])$ for every prime $p$ for which $\mathcal{O}_{L,\Pi,\Pi',p}^\times$ is torsion-free. Recent results of Ellerbrock and Nickel \cite[Th. 1, Th. 2]{en} provide evidence in support of this prediction. 

\noindent{}(iii) If $\co^\times_{L,\Pi,\Pi'}$ is itself torsion-free, then
Conjecture \ref{MRSconjecture} (for all $p$) combines
with \cite[Th. 4.19(iii)]{bses} to predict that the element $\varepsilon_{L/K,\Pi,\Pi'}$ (from (\ref{gen RS def})) belongs to $
{\bigcap}^{|\Sigma(L)|}_{\ZZ[G]}\co^\times_{L,\Pi,\Pi'}$. This prediction is a natural generalisation to arbitrary Galois extensions of the Rubin-Stark Conjecture that is formulated for abelian extensions in \cite{R}. For this reason, we shall in the sequel refer to Conjecture \ref{MRSconjecture} as the `Rubin-Stark Conjecture' for the data $(L/K,\Pi,\Pi',p)$.
 }\end{remarks}

\subsubsection{}As a final preliminary step, we establish some useful properties of a family of central idempotents of $\QQ[G]$ that will play an important role in the sequel. 

To do this we note that, for any proper subset $\Pi_1$ of the set of places $\Pi$ of $K$ fixed above, one obtains an idempotent of $\zeta(\QQ[G])$ by setting
\begin{equation}\label{key idem def} e_{L/K,\Pi,\Pi_1} := {\sum}_{\chi}e_\chi\end{equation}
where in the sum $\chi$ runs over all characters in ${\rm Ir}(G)$ for which
$e_\chi(\QQ^c\otimes_\ZZ X_{L,\Pi\setminus \Pi_1})$ vanishes.

In the following result, we fix, for each $\chi$ in ${\rm Ir}(G)$, an associated $\QQ^c$-representation $V_\chi$ (cf. the beginning of \S\ref{hyp def section}).

\begin{lemma}\label{idem lemma} Fix $L/K, \Pi$ and $\Pi_1$ as above. Then, for every character $\chi$ in ${\rm Ir}(G)$, %, with associated representation space $V_\chi$, 
the following conditions are equivalent.

\begin{itemize}
\item[(i)] $e_\chi\cdot e_{L/K,\Pi,\Pi_1} \not= 0$.
\item[(ii)] The projection map $e_\chi(\QQ^c\otimes_\ZZ X_{L,\Pi}) \to
 e_\chi(\QQ^c\otimes_\ZZ Y_{L,\Pi_1})$ is bijective.
\item[(iii)] The multiplicity of $V_\chi$ in the decompositions of both $\QQ^c\otimes_\ZZ\mathcal{O}^\times_{L,\Pi}$ and $\QQ^c\otimes_\ZZ X_{L,\Pi}$ is equal to ${\sum}_{v \in \Pi_1}{\rm dim}_{\QQ^c}(V_\chi^{G_{v}})$, where $G_v$ is the decomposition subgroup in $G$ of any fixed place $w$ of $L$ above $v$.
\item[(iv)] The order of vanishing of $L_{\Pi}(\chi,z)$ at $z=0$ is equal to 
 ${\sum}_{v \in \Pi_1}{\rm dim}_{\QQ^c}(V_\chi^{G_{v}})$.
\item[(v)] If $\chi$ is non-trivial, then $V_\chi^{G_{v}}$ vanishes for each $v$ in
$\Pi\setminus \Pi_1$. If $\chi$ is trivial, then
 $|\Pi\setminus \Pi_1| = 1$.\end{itemize}
\end{lemma}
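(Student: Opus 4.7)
The plan is to express each of the five conditions as a statement about the multiplicity of $V_\chi$ in suitable $\QQ^c[G]$-modules associated to the sets $\Pi$, $\Pi_1$ and $\Pi\setminus\Pi_1$, and then to observe that all of them reduce to the combinatorial assertion in (v).

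As a first step, I would note that Frobenius reciprocity, combined with the natural decomposition $Y_{L,\Pi'}\cong\bigoplus_{v\in\Pi'}\mathrm{Ind}^G_{G_v}\ZZ$ for any subset $\Pi'$ of the set of all places of $K$, shows that the multiplicity of $V_\chi$ in $\QQ^c\otimes_\ZZ Y_{L,\Pi'}$ equals $\sum_{v\in\Pi'}\dim_{\QQ^c}V_\chi^{G_v}$. The tautological short exact sequence $0\to X_{L,\Pi'}\to Y_{L,\Pi'}\to\ZZ\to 0$ then gives the multiplicity of $V_\chi$ in $\QQ^c\otimes_\ZZ X_{L,\Pi'}$ as $\sum_{v\in\Pi'}\dim_{\QQ^c}V_\chi^{G_v}$ when $\chi$ is non-trivial, and as $|\Pi'|-1$ when $\chi$ is trivial.

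To deduce (i)$\Leftrightarrow$(v), I would then note that condition (i) is, by the very definition of the idempotent $e_{L/K,\Pi,\Pi_1}$, equivalent to the vanishing of $e_\chi(\QQ^c\otimes_\ZZ X_{L,\Pi\setminus\Pi_1})$. By the preceding multiplicity formula (applied with $\Pi'=\Pi\setminus\Pi_1$), this is in turn equivalent either to $V_\chi^{G_v}=0$ for every $v\in\Pi\setminus\Pi_1$ (when $\chi$ is non-trivial), or to $|\Pi\setminus\Pi_1|=1$ (when $\chi$ is trivial), which is precisely (v). For (i)$\Leftrightarrow$(ii), I would verify by a direct coefficient computation that the natural composite $X_{L,\Pi}\hookrightarrow Y_{L,\Pi}=Y_{L,\Pi_1}\oplus Y_{L,\Pi\setminus\Pi_1}\twoheadrightarrow Y_{L,\Pi_1}$ fits into a canonical short exact sequence
\[ 0\to X_{L,\Pi\setminus\Pi_1}\to X_{L,\Pi}\to Y_{L,\Pi_1}\to 0, \]
so that applying $e_\chi(\QQ^c\otimes_\ZZ-)$ makes (ii) equivalent to the vanishing of $e_\chi(\QQ^c\otimes_\ZZ X_{L,\Pi\setminus\Pi_1})$, and hence to (i).

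For (iii)$\Leftrightarrow$(iv)$\Leftrightarrow$(v), I would use formula (\ref{tate fe}) to identify the order of vanishing in (iv) with the multiplicity of $V_\chi$ in $\QQ^c\otimes_\ZZ X_{L,\Pi}$, and then invoke the Dirichlet unit theorem (which supplies an isomorphism of $\RR[G]$-modules $\RR\otimes_\ZZ\co^\times_{L,\Pi}\cong\RR\otimes_\ZZ X_{L,\Pi}$) to conclude that the multiplicities of $V_\chi$ in $\QQ^c\otimes_\ZZ\co^\times_{L,\Pi}$ and $\QQ^c\otimes_\ZZ X_{L,\Pi}$ agree. The equivalence (iii)$\Leftrightarrow$(iv) is then immediate, and applying the multiplicity formula from the first paragraph reduces both to (v), again separating the cases $\chi$ non-trivial and $\chi$ trivial. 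The whole argument is essentially a bookkeeping exercise; the only point requiring care is the separate treatment of the trivial character, since the augmentation map $Y_{L,\Pi'}\to\ZZ$ only affects the trivial isotypic component, and this is precisely what produces the clause $|\Pi\setminus\Pi_1|=1$ in (v). I do not expect any serious technical obstacle.
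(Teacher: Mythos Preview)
Your proposal is correct and uses essentially the same ingredients as the paper's proof: the exact sequence $0\to X_{L,\Pi\setminus\Pi_1}\to X_{L,\Pi}\to Y_{L,\Pi_1}\to 0$, the augmentation sequence $0\to X_{L,\Pi'}\to Y_{L,\Pi'}\to\ZZ\to 0$, Frobenius reciprocity for the multiplicity in $Y_{L,\Pi'}$, the Dirichlet isomorphism $\RR\cdot\co_{L,\Pi}^\times\cong\RR\cdot X_{L,\Pi}$, and formula (\ref{tate fe}). The only organisational difference is that you compute all multiplicities upfront and reduce each condition to (v), whereas the paper establishes pairwise equivalences centred on (ii); neither approach offers a real advantage over the other.
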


\begin{proof} The definition (\ref{key idem def}) of $e_{L/K,\Pi,\Pi_1}$ ensures
that $e_\chi\cdot e_{L/K,\Pi,\Pi_1} \not= 0$ if and only if the space 
 $e_\chi(\QQ^c\otimes_\ZZ X_{L,\Pi\setminus \Pi_1})$ vanishes.
 The equivalence of conditions (i) and (ii) is therefore a consequence of the
 natural exact sequence 
 \[ 0 \to X_{L,\Pi\setminus \Pi_1} \to X_{L,\Pi} \to Y_{L,\Pi_1} \to 0.\]

Conditions (ii) and (iii) are equivalent since $X_{L,\Pi}$ and $\mathcal{O}_{L,\Pi}^\times$ span isomorphic $\QQ^c[G]$-modules  and for each place $v$ in $\Pi_1$ the space
  $e_\chi(\QQ^c\otimes_\ZZ Y_{L,\{v\}})$ is isomorphic to the
  $G_{v}$-invariants of $V_\chi$.

The latter fact also combines with the explicit formula (\ref{tate fe}) to imply  equivalence of the conditions (ii) and (iv) and with the exact sequence 
\[ 0 \to X_{L,\Pi\setminus \Pi_1} \to Y_{L,\Pi\setminus \Pi_1} \to \ZZ \to 0\]
to imply equivalence of the conditions (ii) and (v).
  \end{proof}

\begin{remark}\label{rs stable} {\em If the set $\Pi_1$ in  Lemma \ref{idem lemma} is the subset $\Sigma$ of $\Sigma(L)$ that occurs in Definition \ref{rubin-stark context}, then $G_v$ is the trivial group for each $v$ in $\Pi_1 = \Sigma$ and so, for every $\chi$ in ${\rm Ir}(G)$, one has 
\[ {\sum}_{v \in \Pi_1}{\rm dim}_{\QQ^c}(V_\chi^{G_{v}}) = {\sum}_{v \in \Sigma}{\rm dim}_{\QQ^c}(V_\chi) = |\Sigma|\cdot \chi(1).\]
In this case, therefore, the equivalence of conditions (i) and (iv) in the above result implies, for any auxiliary set of places  $\Pi'$ as in \S\ref{MRSstatement}, that 
\begin{align*} e_\chi\cdot e_{L/K,\Pi,\Sigma} \not= 0 \Longleftrightarrow&\, e_\chi\cdot \theta_{L/K,\Pi,\Pi'}^{|\Sigma|}(0) \not= 0\\
\Longleftrightarrow&\, e_\chi (\varepsilon_{L/K,\Pi,\Pi'}^{\Sigma}), \end{align*}
where the second equivalence follows from Remark \ref{ind v_0}(ii). In the space ${{\bigwedge}}_{\RR[G]}^{|\Sigma|}(\RR\cdot \mathcal{O}_{L,\Pi}^\times)$, one therefore has an equality
\[ \varepsilon_{L/K,\Pi,\Pi'}^{\Sigma} = e_{L/K,\Pi,\Sigma}(\varepsilon_{L/K,\Pi,\Pi'}^{\Sigma}). \]
}\end{remark}

\subsubsection{} We now return to the setting of \S\ref{ESdefinitions}. 

For every $F$ in $\Omega(\KK)$ the set $\Sigma_S(\mathcal{K})$ is a proper subset of $S(F)$  (since $S_{\rm ram}(F/K)\not=\emptyset$) and so Remark \ref{ind v_0}(iii) implies that the non-commutative Rubin-Stark element
\begin{equation}\label{lazy RS def} \varepsilon_{F,S,T} := \varepsilon^{\Sigma_S(\mathcal{K})}_{F/K,S(F),T}\end{equation}
depends only on the data $F/K, S, T$ and $\KK$.

%We assume that Conjecture \ref{MRSconjecture}(i) is valid for every %extension $F/K$ in $\Omega_S(\K/K)$, in each case with the above auxiliary %choices. Explicitly, this implies that
%$$\varepsilon^{v_0,V}_{F/K,S(F),T}\in{\bigcap}_{\ZZ_p[\G_F]}^{r}\co^\times_{F,S(F),T,p},$$
%where we have set $r:=|V|$.%$r_K := |S_K^\infty|$, which clearly coincides %with $r^{v_0}$ for each field $F$.

\begin{lemma}\label{ncRS} The collection
\[ \varepsilon_{\KK,S,T}^{\rm RS} := (\varepsilon_{F,S,T})_{F\in\Omega(\KK)}\]
has the following properties. 
\begin{itemize}
\item[(i)] $\varepsilon_{\KK,S,T}^{\rm RS}$ belongs to ${\rm pES}_{r_S}(\KK/K,S,p)$. 

\item[(ii)] If, for every $F$ in $\Omega(\mathcal{K})$, Conjecture \ref{MRSconjecture} is valid for the data $F/K$, $S(F)$, $T$ and $p$, then $\varepsilon_{\KK,S,T}^{\rm RS}$  belongs to ${\rm ES}_{r_S}(\KK/K,S,T,p)$. 
\end{itemize}
\end{lemma}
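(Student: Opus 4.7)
For part (i), the plan is to verify the distribution relation (\ref{distribution}) for each $F \subsetneq F'$ in $\Omega(\K)$ by transporting both sides through the regulator isomorphism $\lambda^{r_S}_{F,S(F')}$ and reducing to a character-by-character identity. Since this map is a $\zeta(\RR[\G_F])$-module isomorphism and hence in particular injective, it suffices to verify the equality after applying it. The standard norm-regulator compatibility $R_{F,S(F')}\circ N_{F'/F}=\pi\circ R_{F',S(F')}$, where $\pi\colon \RR\cdot X_{F',S(F')}\to \RR\cdot X_{F,S(F')}$ is the $\G_F$-equivariant projection $w'\mapsto w'|_F$, combined with the functoriality of reduced exterior powers under change of coefficient ring, transforms $\lambda^{r_S}_{F,S(F')}\circ\N^{r_S}_{F'/F}$ into $\pi^{r_S}\circ\lambda^{r_S}_{F',S(F')}$, with $\pi^{r_S}$ the corresponding induced map between reduced exterior powers.

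Next, fix a common auxiliary place $v_0\in S_{\rm ram}(F/K)\subseteq S(F)\setminus\Sigma_S(\K)$ (which exists because $F\in\Omega(\K)$ is ramified) and use it throughout Definition \ref{rubin-stark context} to construct both $\varepsilon_{F,S,T}$ and $\varepsilon_{F',S,T}$. Since every $v\in\Sigma_S(\K)$ splits completely in $F'$ (hence also in $F$), one has $\pi(w_{F',v}-w_{F',v_0})=w_{F,v}-w_{F,v_0}$, so $\pi^{r_S}$ sends the reduced wedge $\wedge_{v\in\Sigma_S(\K)}(w_{F',v}-w_{F',v_0})$ to its $F$-counterpart. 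The problem thereby reduces to verifying the scalar identity
\[ \theta^{r_S}_{F'/K,S(F'),T}(0)=\Bigl(\prod_{v\in S(F')\setminus S(F)}{\rm Nrd}_{\QQ[\G_F]}(1-{\rm Fr}_{F,v}^{-1})\Bigr)\cdot\theta^{r_S}_{F/K,S(F),T}(0) \]
in $\zeta(\CC[\G_F])$. This is checked character-by-character using the standard Euler factor decomposition of Artin $L$-series: for every $\chi\in{\rm Ir}(\G_F)$, inflated to ${\rm Ir}(\G_{F'})$, and every $v\in S(F')\setminus S(F)$ (at which $\chi$ is unramified), the two $L$-series $L_{S(F'),T}(\check\chi,z)$ and $L_{S(F),T}(\check\chi,z)$ differ by an Euler factor whose value at $z=0$ equals the $\chi$-component of ${\rm Nrd}_{\QQ[\G_F]}(1-{\rm Fr}_{F,v}^{-1})$; dividing by $z^{r_S\chi(1)}$ and evaluating at $z=0$ then yields the claim.

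For part (ii), Hypothesis \ref{tf hyp}(ii) implies that for every $F\in\Omega(\K)$ the group $\co^\times_{F,S(F),T}$ has no element of order $p$, since any such element would lift to a $p$-torsion element of $\K^\times$ congruent to $1$ modulo all places in $T_\K$. Hence $\co^\times_{F,S(F),T,p}$ is torsion-free, and the validity of Conjecture \ref{MRSconjecture} for the data $(F/K,S(F),T,p)$ gives directly $\varepsilon_{F,S,T}\in{\bigcap}^{r_S}_{\ZZ_p[\G_F]}\co^\times_{F,S(F),T,p}$, yielding the claimed membership in ${\rm ES}_{r_S}(\K/K,S,T,p)$. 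The main obstacle in the plan is the careful handling in part (i) of the reduced-exterior-power base change $\QQ[\G_{F'}]\to\QQ[\G_F]$ and the verification that $\pi^{r_S}$ acts on the wedge product as described; this is resolved by passing to each $\chi$-isotypic component and using that, for $\chi$ inflated from $\G_F$, the relevant Wedderburn-component projections and the regulator commute in the expected way.
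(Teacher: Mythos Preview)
Your proof is correct and follows essentially the same strategy as the paper's: reduce part (ii) to part (i) via the torsion-freeness of $\co^\times_{F,S(F),T,p}$, and for part (i) transport the distribution relation through the regulator isomorphism, reducing it to the Euler-factor identity for the derived Stickelberger elements together with the compatibility of the wedge of places under descent. The only cosmetic difference is that you use the downward projection $\pi\colon X_{F',S(F')}\to X_{F,S(F')}$ (via $R_{F,S(F')}\circ N_{F'/F}=\pi\circ R_{F',S(F')}$), whereas the paper phrases the same compatibility in the adjoint direction, using the inclusion $\co^\times_{F,S(F')}\hookrightarrow\co^\times_{F',S(F')}$ and the map $w_{v,F}\mapsto\sum_g g(w_{v,F'})$ on lattices; these two formulations are equivalent and lead to the same conclusion.
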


\begin{proof} Set $r:= r_S = |\Sigma_S(\KK)|$. Then, if Conjecture \ref{MRSconjecture} is valid for the data $F/K$, $S(F)$, $T$ and $p$ for every $F$ in $\Omega(\KK)$, each element 
\[ (\varepsilon_{\KK,S,T}^{\rm RS})_F = \varepsilon_{F,S,T} = \varepsilon^{\Sigma_S(\mathcal{K})}_{F/K,S(F),T}\]
belongs to ${\bigcap}_{\ZZ_p[\G_F]}^r\co_{F,S(F),T,p}^\times$ and so claim (ii) is a consequence of claim (i). 

To prove claim (i) it suffices to show that the distribution relation (\ref{distribution}) is valid for all pairs $F$ and $F'$ with $F \subset F'$ if we set $c_F := \varepsilon^{\Sigma_S(\mathcal{K})}_{F/K,S(F),T}$. 

For each finite set of places $S'$ of $K$ that contains $S(F)$, and each $\chi$ in ${\rm Ir}(\cG_F)$, there is an equality of functions
\begin{equation}\label{changeS}
        L_{S'}(\check\chi, s) = \bigl({\prod}_{v\in S'\setminus S(F)}{\rm det}
        \bigl(1-{\rm Fr}_{F,v}^{-1}\cdot ({\rm N}v)^{-s}\mid V_\chi\bigr)\bigr)\cdot L_{S(F)}(\check\chi,s).\end{equation}
Taken together with the inflation invariance of Artin $L$-series this implies that 
\[ \pi_{F'/F}\bigl(\theta_{F'/K,S(F'),T}^{r}(0)\bigr) =  \bigl({\prod}_{v\in S(F')\setminus S(F)}{\rm Nrd}_{\QQ[\G_F]}
        \bigl(1-{\rm Fr}_{F,v}^{-1}\bigr)\bigr)\cdot \theta_{F/K,S(F),T}^{r}(0),  \]
where $\pi_{F'/F}$ is the natural projection map $\zeta(\CC[\cG_{F'}]) \to \zeta(\CC[\G_F])$.  
%where $V_\chi$ is a complex representation of $\cG_F$ of character $\chi$.
Given the explicit definition (in Definition \ref{rubin-stark context}) of each element $c_F = \varepsilon^{\Sigma_S(\mathcal{K})}_{F/K,S(F),T}$, the validity of (\ref{distribution}) in this case will therefore follow if there exists a commutative diagram of $\zeta(\RR[\cG_{F'}])$-modules 
\[ \begin{CD} {\bigwedge}_{\RR[\G_{F'}]}^r(\RR\cdot\co_{F',S(F')}^\times) @> \lambda_{F',S(F')}^r >> {{\bigwedge}}_{\RR[\G_{F'}]}^r (\RR\cdot X_{F',S(F')})\\
@V \N^r_{F'/F} VV @V \theta VV \\
{\bigwedge}_{\RR[\G_F]}^r(\RR\cdot\co_{F,S(F')}^\times) @> \lambda_{F,S(F')}^r >> {{\bigwedge}}_{\RR[\G_F]}^r (\RR\cdot X_{F,S(F')}),
\end{CD} \]
in which one has 
\[ \theta\bigl(\wedge_{v \in \Sigma_S(\KK)}(w_{v,F'}-w_{v',F'})\bigr) = \wedge_{v \in \Sigma_S(\KK)}(w_{v,F}-w_{v',F})\]
for any choice of place $v'$ in
$S(F)\setminus \Sigma_S(\KK)$. The existence of such a diagram is in turn an easy  consequence of the fact that the following diagram commutes

\[ \begin{CD}  \RR\cdot\co_{F',S(F')}^\times @> R_{F',S(F')} >> \RR\cdot X_{F',S(F')}\\
@A AA @AA A\\
\RR\cdot\co_{F,S(F')}^\times @> R_{F,S(F')} >> \RR\cdot X_{F,S(F')},\end{CD}\]
where the left hand vertical map is the natural inclusion and the right hand vertical map is induced by sending each place $w_{v,F}$ to ${\sum}_{g \in \Gal(F'/F)}g(w_{v,F'})$ (cf. \cite[bottom of p. 29]{tate}).
\end{proof}

\begin{definition}\label{ncrs def}{\em The `Rubin-Stark (non-commutative) Euler system' relative to the data $\mathcal{K}/K, S, T$ and $p$ is the element $\varepsilon_{\KK,S,T}^{\rm RS}$ of ${\rm pES}_{r_S}(\KK/K,S,p)$ described in Lemma \ref{ncRS}.}\end{definition}

\begin{remark}\label{r_S=0 example}{\em \

\noindent{}(i) If $\KK/K$ is abelian, then $\varepsilon_{\KK,S,T}^{\rm RS}$ coincides with the pre-Euler system considered by Rubin in \cite[\S6]{R}.\

\noindent{}(ii) If $\Sigma_S(\KK)$ is empty, then, for every $F$ in $\Omega(\KK)$, Remark \ref{signsandintegral}(ii) implies that 
\[ (\varepsilon_{\KK,S,T}^{\rm RS})_F = \theta_{F/K,S(F),T}(0).\]
In addition, Conjecture \ref{MRSconjecture} predicts that  $\theta_{F/K,S(F),T}(0)$ belongs to $\xi(\ZZ_p[\cG_F])$ for every $p$ for which $\mathcal{O}_{F,S(F),T,p}^\times$ is torsion-free (see Remark \ref{signsandintegral}(ii)).\
% (see also Remark \ref{signsandintegral}(iii)). \
}\end{remark} 

\subsection{Euler systems and Galois cohomology}\label{BESsection}

The `reduced determinant' functor constructed in \cite[\S5]{bses}
can be combined with the complexes constructed in Lemma \ref{complex construction} to
give an unconditional construction of non-commutative Euler systems. 

In this section we shall describe this construction and then use it to strengthen one of the main results of
\cite{bses1}.

To do so, we introduce the following convenient notation: in the sequel, for each natural number $d$, we consider the ordered set  
\[ [d] := \{i \in \ZZ: 1\le i\le d\}.\]
For each finite group $\Gamma$, we then write 
$\{b_{\Gamma,i}\}_{i \in [d]}$ for the standard (ordered) $\ZZ[\Gamma]$-basis of $\ZZ[\Gamma]^d$ and $\{b^{\ast}_{\Gamma,i}\}_{i \in [d]}$ for the corresponding dual basis of $\Hom_{\ZZ[\Gamma]}(\ZZ[\Gamma]^d,\ZZ[\Gamma])$. 

In the case that $\Gamma = \G_L$ for some $L$ in $\Omega(\KK_\infty)$ we shall also use the abbreviations
\begin{equation}\label{standard basis notation} b_{L,i} := b_{\G_L,i}\quad \text{ and }\quad b^\ast_{L,i} := b^\ast_{\G_L,i}.\end{equation}

\subsubsection{}We start by proving a version of the construction made in \cite[Lem. 4.8]{bses1} that is relevant to our setting. 

As usual, we endow each set of places of $K$ with the ordering induced by (\ref{ordering}). 

\begin{lemma}\label{can rep lemma} Fix data $L/K, G, \Pi$ and $\Pi'$ as in  \S\ref{MRSstatement}.

Fix a place $v'$ in $\Pi$, set $n := |\Pi\setminus \{v'\}| = |\Pi|-1$ and let $p$ be a prime for which $\mathcal{O}_{L,\Pi,\Pi',p}^\times$ is torsion-free. Then there exists a natural number $d$ with $d \ge n$ and a
canonical family of complexes of $\ZZ_p[G]$-modules $C(\phi)$ of the form 
\[ \ZZ_p[G]^d \xrightarrow{\phi} \ZZ_p[G]^d\]
in which the first term is placed in degree zero and the
following claims are valid. 

%Write $\{b_{G,i}\}_{i \in [d]}$ for the standard ordered basis  of $\ZZ_p[G]^d$. 

\begin{itemize}
\item[(i)] There exists an isomorphism $\kappa: C(\phi) \to C_{L,\Pi,\Pi',p}$ in $\Der^{\rm perf}(\ZZ_p[G])$ with the following  property: for $i \in [d]$, the image of $b_{G,i}$ under the composite map 
\[ \ZZ_p[G]^d \to {\rm cok}(\phi) \xrightarrow{H^1(\kappa)} {\rm Sel}_{\Pi}^{\Pi'}(L)_p^{\rm tr} \xrightarrow{\varrho_{L,\Pi,p}} X_{L,\Pi,p},\]
where $\varrho_{L,\Pi}$ is as in (\ref{selmer lemma seq2}), is equal to $w_{v_i}-w_{v'}$, with $v_i$ the $i$-th element of $\Pi \setminus\{v'\}$, if $i \in [n]$ and is equal to $0$ otherwise.  

\item[(ii)] If $C(\phi')$ is any complex in the family, then $\phi' =
\eta\circ \phi \circ (\eta')^{-1}$ where $\eta$ and $\eta'$ are
automorphisms of $\ZZ_p[G]^d$ and $\eta$ is represented,
with respect to $\{b_{G,i}\}_{i \in [d]}$, by a block matrix 
\begin{equation}\label{block} \left(
\begin{array} {c|c} I_n& \ast\\
                        \hline
                              0 & M_\eta\end{array}\right),\end{equation}
where $I_n$ is the $n\times n$ identity matrix and $M_\eta$ belongs to ${\rm GL}_{d-n}(\ZZ_p[G])$.
\end{itemize}
\end{lemma}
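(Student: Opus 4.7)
The plan is to follow closely the structure of \cite[Lem. 7.8]{bses}, with modifications to ensure the required behavior of the basis elements $b_{G,i}$. First, I would construct an appropriate surjection onto cohomology. The exact sequence (\ref{selmer lemma seq2}) shows that $\varrho_{L,\Pi,p}: {\rm Sel}_\Pi^{\Pi'}(L)_p^{\rm tr} \twoheadrightarrow X_{L,\Pi,p}$ has kernel equal to the finite $\ZZ_p[G]$-module ${\rm Cl}_\Pi^{\Pi'}(L)^\vee_p$. Since the set $\{w_{v_i} - w_{v'}: i \in [n]\}$ generates $X_{L,\Pi,p}$ as a $\ZZ_p[G]$-module, I can choose lifts $\tilde{s}_i \in {\rm Sel}_\Pi^{\Pi'}(L)_p^{\rm tr}$ of $w_{v_i} - w_{v'}$ for $i \in [n]$, together with a finite set of $\ZZ_p[G]$-module generators of ${\rm Cl}_\Pi^{\Pi'}(L)^\vee_p$ (say $n'\ge 0$ in number) to build a surjection $\pi: \ZZ_p[G]^{d_0} \twoheadrightarrow {\rm Sel}_\Pi^{\Pi'}(L)_p^{\rm tr}$, with $d_0 := n+n' \geq n$, that sends $b_{G,i}$ to $\tilde{s}_i$ for $i \in [n]$ and sends each of the remaining $b_{G,i}$ to a chosen generator of ${\rm Cl}_\Pi^{\Pi'}(L)^\vee_p$.

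Next, by Lemma \ref{complex construction}(i) together with the assumed torsion-freeness of $\mathcal{O}^\times_{L,\Pi,\Pi',p}$, the complex $C_{L,\Pi,\Pi',p}$ is acyclic outside degrees $0$ and $1$. Since it is perfect in $D(\ZZ_p[G])$, I can represent it by a two-term complex $\tilde\phi: P^0 \to \ZZ_p[G]^{d_0}$ of finitely generated projective modules with the surjection $\ZZ_p[G]^{d_0} \twoheadrightarrow {\rm cok}(\tilde\phi)$ identified with $\pi$ via $H^1(\cdot) \cong {\rm Sel}_\Pi^{\Pi'}(L)_p^{\rm tr}$. Now $C_{L,\Pi,\Pi'}$ lies in $D^{{\rm lf},0}(\ZZ[G])$ by Lemma \ref{complex construction}, so the Euler characteristic of $C_{L,\Pi,\Pi',p}$ lies in ${\rm SK}_0(\ZZ_p[G])$, and thus $P^0$ is locally-free of rank $d_0$. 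Since $P^0$ is then stably-free, after adding sufficiently many trivial summands $\ZZ_p[G] \xrightarrow{{\rm id}} \ZZ_p[G]$ to the complex (each of which introduces a new basis element in degree one that is killed in ${\rm Sel}_\Pi^{\Pi'}(L)_p^{\rm tr}$, hence maps to $0$ in $X_{L,\Pi,p}$), I obtain a complex $C(\phi) = [\ZZ_p[G]^d \xrightarrow{\phi} \ZZ_p[G]^d]$ with $d\ge n$ and an isomorphism $\kappa$ in $D^{\rm perf}(\ZZ_p[G])$ satisfying the condition of claim (i).

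For claim (ii), let $C(\phi)$ and $C(\phi')$ be two complexes in the family, with corresponding $\kappa, \kappa'$. The isomorphism $\kappa' \circ \kappa^{-1}$ in $D^{\rm perf}(\ZZ_p[G])$ lifts to an honest chain isomorphism, since both complexes consist of projective modules, yielding automorphisms $(\eta',\eta)$ of $\ZZ_p[G]^d$ with $\phi' = \eta \circ \phi \circ (\eta')^{-1}$. The freedom in this lift is through chain homotopies $h: \ZZ_p[G]^d \to \ZZ_p[G]^d$, which modify $(\eta',\eta)$ to $(\eta' - h\phi,\ \eta - \phi' h)$. I would then exploit this freedom, together with a normalization of the chosen lifts $\tilde{s}_i$ common to both presentations (ensuring that the induced map $\bar\eta$ sends the class of $b_{G,i}$ to that of $b_{G,i}$ in ${\rm cok}(\phi')$ for $i \in [n]$), to replace $\eta$ by a lift satisfying $\eta(b_{G,i}) = b_{G,i}$ for $i \in [n]$ and $\eta(F_{d-n}) \subseteq F_{d-n}$, where $F_{d-n} := \bigoplus_{j>n} \ZZ_p[G] \cdot b_{G,j}$. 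This yields the required block form, with $M_\eta \in {\rm GL}_{d-n}(\ZZ_p[G])$ since $\eta$ is invertible.

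The main obstacle lies in the last step: verifying that the homotopy $h$ can indeed be chosen so as to simultaneously produce both the identity upper-left block and the vanishing of the lower-left block. The key point is an argument showing that any discrepancy $\eta(b_{G,i}) - b_{G,i}$ (for $i \leq n$) modulo $\mathrm{im}(\phi')$ lies in the $\ZZ_p[G]$-submodule generated by the images of the basis elements $b_{G,j}$ for $j > n$. By our construction, these images exhaust the class-group obstruction ${\rm Cl}_\Pi^{\Pi'}(L)^\vee_p = \ker(\varrho_{L,\Pi,p})$, so that a suitable common choice of lifts on the two presentations reduces this discrepancy to an element of $\mathrm{im}(\phi')$, permitting the construction of $h$ as required.
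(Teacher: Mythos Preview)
Your approach to claim (i) is broadly along the right lines, but there are two points where it diverges from the paper in ways that matter, and a genuine gap in your treatment of claim (ii).

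First, a minor correction: the kernel of $\varrho_{L,\Pi,p}$ is ${\rm Cl}_\Pi^{\Pi'}(L)_p$, not its Pontryagin dual (see the exact sequence (\ref{selmer lemma seq2})). More substantively, the paper does not take an arbitrary generating set for this class group and then pad with identity summands; it fixes a \emph{projective cover} $P \to {\rm Cl}_\Pi^{\Pi'}(L)_p$ and chooses a complement $P'$ of \emph{minimal} rank so that $P \oplus P' \cong \ZZ_p[G]^{d_0}$, setting $d = n + d_0$. This makes $d$ genuinely canonical and, crucially, gives the rigidity needed for claim (ii).

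The real gap is in your argument for claim (ii). Lifting the derived-category isomorphism $\kappa' \circ \kappa^{-1}$ to a chain map produces a homotopy equivalence, not a priori a termwise isomorphism, so you do not immediately obtain automorphisms $\eta,\eta'$. More seriously, even granting these, your target conditions ``$\eta(b_{G,i}) = b_{G,i}$ for $i \le n$'' together with ``$\eta(F_{d-n}) \subseteq F_{d-n}$'' describe a block-\emph{diagonal} matrix, which is strictly stronger than (\ref{block}) and cannot be achieved when the two presentations use different lifts $\tilde s_i \neq \tilde s_i'$: commutativity of the diagram forces $\pi'(\eta(b_{G,i})) = \pi(b_{G,i}) = \tilde s_i$, so $\eta(b_{G,i}) = b_{G,i}$ would force $\tilde s_i = \tilde s_i'$. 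Your proposed ``normalization of the chosen lifts'' amounts to restricting the family rather than proving the claim for it. The correct mechanism is not homotopy freedom but the uniqueness of projective covers over the semiperfect ring $\ZZ_p[G]$: this guarantees that the two restricted surjections $\ZZ_p[G]^{d-n} \to {\rm Cl}_\Pi^{\Pi'}(L)_p$ are intertwined by some $M_\eta \in {\rm GL}_{d-n}(\ZZ_p[G])$, while the discrepancy $\tilde s_i - \tilde s_i' \in {\rm Cl}_\Pi^{\Pi'}(L)_p$ is absorbed into the $*$-block. The paper obtains $\eta$ directly from this structure (citing \cite[Prop.~3.2(iv)]{omac}) rather than by lifting and adjusting a chain map.
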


\begin{proof} We first fix a projective cover of $\ZZ_p[G]$-modules $\varpi': P \to \ker(\varrho_{L,\Pi,p})\cong {\rm Cl}_\Pi^{\Pi'}(L)_p$ and a module $P'$ of minimal rank such that $P\oplus P'$ is a free $\ZZ_p[G]$-module. With $d_0$ denoting the rank of $P\oplus P'$, we fix an identification $P\oplus P' = \ZZ_p[G]^{d_0}$ and write $\varpi_1 = (\varpi', 0_{P'})$ for the induced surjective map $\ZZ_p[G]^{d_0}\to\ker(\varrho_{L,\Pi,p})$. We finally set $d := n+ d_0$ and $\mathfrak{S}_{L} := {\rm Sel}_\Pi^{\Pi'}(L)_p^{\rm tr}$ and write
$\varpi: \ZZ_p[G]^d \to \mathfrak{S}_{L}$ for the map of $\ZZ_p[G]$-modules that
sends $b_{G,i}$ to a choice of pre-image of $w_{v_i}-w_{v'}$ under
$\varrho_{L,\Pi,p}$ if $i \in [n]$ and to $\varpi_1(b_{G,i-n})$ if $i \in [d]\setminus [n]$. Then $\varpi$ is surjective and such that 
\begin{equation}\label{resolution} \varrho_{L,\Pi,p}(\varpi(b_{G,i})) = \begin{cases} w_{v_i}-w_{v'}, &\text{if $i \in [n]$}\\
 0, &\text{if $i\in [d]\setminus [n]$.}
\end{cases}\end{equation}

Now, since $C = C_{L,\Pi,\Pi'}$ belongs to $\Der^{{\rm lf},0}(\ZZ[G])$, and the module 
$U_L = H^0(C_p)$ is torsion-free, a standard argument
(as in \cite[Prop. 3.2]{omac}) shows the existence of an isomorphism $\kappa: C(\phi) \to C_p$ in $\Der(\ZZ_p[G])$, where 
 $C(\phi)$ has the required form $\ZZ_p[G]^d\xrightarrow{\phi}\ZZ_p[G]^d$ and $H^1(\kappa)$ is induced by the map $\varpi$ constructed above. In view of (\ref{resolution}) this construction has the properties described in claim (i).% \hat%\pi: {\rm cok}(\phi)\cong H^1(C_p) = \mathcal{S}$. 
%We also write $\hat\iota$ for the induced identification of $U_L=
%H^0(C_p)$ with $\ker(\phi)$.

%that if $C_p$ is represented by any other complex
%$\ZZ_p[G]^d\xrightarrow{\phi'}\ZZ_p[G]^d$ as in (\ref{resolution 2}), then
%$(\iota',b_{G,i}^\ast\circ\phi')$ is equivalent to $(\iota, b_{G,i}^\ast\circ\phi)$.
 To verify claim (ii), it is also enough to note that if $\kappa': C(\phi') \to C_p$ is any alternative set of data constructed as above, then the argument of \cite[Prop. 3.2(iv)]{omac} shows the existence of
automorphisms $\eta'$ and $\eta$ of $\ZZ_p[G]^d$ that lie in an
exact commutative diagram of the form 
\begin{equation}\label{very important diagram}\begin{CD}
0 @> >> \mathcal{O}_{L,\Pi,\Pi',p}^\times @> H^0(\kappa)^{-1} >> \ZZ_p[G]^d @> \phi >> \ZZ_p[G]^d 
@> H^1(\kappa)^\dagger >> \mathfrak{S}_{L} @> >> 0\\
@. @\vert @V \eta' VV @VV \eta V @\vert \\
0 @> >> \mathcal{O}_{L,\Pi,\Pi',p}^\times @> H^0(\kappa')^{-1} >> \ZZ_p[G]^d @> \phi' >> \ZZ_p[G]^d @> H^1(\kappa')^\dagger >>
\mathfrak{S}_{L} @> >> 0.\end{CD}\end{equation}
Here $H^1(\kappa)^\dagger$ and $H^1(\kappa')^\dagger$ are the composites of the respective tautological maps $\ZZ_p[G]^d \to {\rm cok}(\phi)$ and $\ZZ_p[G]^d \to {\rm cok}(\phi')$ with $H^1(\kappa)$ and $H^1(\kappa')$, and $\eta$ is represented with respect to the basis $\{b_{G,i}\}_{i \in [d]}$ by a block matrix
 of the required sort (\ref{block}).\end{proof} 

\subsubsection{}We next derive a useful consequence of Lemma \ref{can rep lemma} in the setting of \S\ref{ESdefinitions}.

%The construction made in the proof of Theorem \ref{VStoES} has further properties that will be useful in the sequel. For convenience, we record these properties in the following result.

To state the result we use the following notation: if $\Gamma$ is a finite group and $X$ a finitely generated $\xi(\ZZ_p[\Gamma])$-lattice, then we regard $X$ as a subset of $\QQ_p\otimes_{\ZZ_p}X$ in the natural way and, for each element $a$ of $\zeta(\QQ_p[\Gamma])$, we define a $\xi(\ZZ_p[\Gamma])$-submodule of $X$ by setting
\[ X[a] := \{x \in X: a\cdot x = 0\} = \{x \in X: x = (1-a)\cdot x\}.\]

We also note that, for every $L$ in $\Omega(\KK)$, the set $S(L)\setminus \Sigma_S(L)$ is non-empty (since $L/K$ is assumed to be ramified). %For each proper subset $\Sigma$ of $S(L)$, we also abbreviate the idempotent of $\zeta(\QQ[\G_L])$ given by (\ref{key idem def}) to 
%
%\[ e_{L,S(L),\Sigma} := e_{L/K,S(L),\Sigma}.\]
%

\begin{proposition}\label{generate rubin} Fix $L$ in $\Omega(\KK)$, a place $v'$ in $S(L)\setminus \Sigma_S(L)$ and a subset $\Sigma$ of $\Sigma_S(L)$ of cardinality $a$ (so that $a \le |\Sigma_S(L)| < |S(L)|$). Set $G := \G_L$ and write $e$ for the idempotent $e_{L/K,S(L),\Sigma}$ of $\zeta(\QQ[G])$ defined in (\ref{key idem def}).

Fix a prime $p$ and a finite set of places $T$ of $K$ with  $T\cap S(L)= \emptyset$ and such that $U_{L,p} := \mathcal{O}_{L,S(L),T,p}^\times$ is torsion-free. Let $ \ZZ_p[G]^d \xrightarrow{\phi} \ZZ_p[G]^d$ be a representative  of $C_{L,S(L),T,p}$ of the form constructed in Lemma \ref{can rep lemma} (with respect to the place $v'$) and write 
\[ \iota_* : {\bigcap}_{\ZZ_p[G]}^{a}U_{L,p} \to
{\bigcap}_{\ZZ_p[G]}^{a}\ZZ_p[G]^d\]
for the injective homomorphism of $\xi(\ZZ_p[G])$-modules that is induced by the isomorphism $U_{L,p} \cong H^0(C_{L,S(L),T,p}) \cong \ker(\phi)$. Then, with $\{b_{i}\}_{i \in [d]}$ denoting
the standard basis of $\ZZ_p[G]^d$, there exists an (ordered) subset $I = I_\Sigma$ of $[d]$ of cardinality $d-a$ such that the element
\begin{equation}\label{element constuction}x_L := ({\wedge}_{i\in I}(b_{i}^\ast\circ\phi))({\wedge}_{j\in [d]}b_{j}) \end{equation}
of ${\bigcap}_{\ZZ_p[G]}^a\ZZ_p[G]^d$ has all of the following properties.
\begin{itemize}
\item[(i)] There exists a unique element $\varepsilon_L$ of ${\bigcap}_{\ZZ_p[G]}^aU_{L,p}$ that is independent of the choice of $v'$ and such that $\iota_*(\varepsilon_L) = x_L$.
    \item[(ii)] $\zeta(\QQ_p[G])\cdot \varepsilon_L = \QQ_p\otimes_{\ZZ_p}\bigl({\bigcap}_{\ZZ_p[G]}^aU_{L,p}\bigr)[1-e]$.
%\item[(iii)] $\varepsilon_E$ belongs to $\bigl({\bigcap}_{\ZZ_p[G]}^rU_{E,p}\bigr)_{L/K}$ and satisfies $\varepsilon_E = e(\varepsilon_E)$.
\item[(iii)] For each $\lambda$ in $\zeta(\QQ_p[G])$ the following conditions are equivalent.
\begin{itemize}
\item[(a)] $\lambda\cdot \varepsilon_L \in {\bigcap}_{\ZZ_p[G]}^aU_{L,p}$.
\item[(b)] $\lambda\cdot x_L \in {\bigcap}_{\ZZ_p[G]}^a\ZZ_p[G]^d$.
\item[(c)] $\lambda\cdot {\rm Nrd}_{\QQ_p[G]}(M)\in \xi(\ZZ_p[G])$ for all matrices $M = (M_{ij})$ in ${\rm M}_d(\ZZ_p[G])$ with the property that $M_{ij} = (b_{i}^\ast\circ\phi)(b_j)$ for $i \in I$ and $j \in [d]$.
\end{itemize}
\item[(iv)] Let $\varepsilon_L'$ be an element of ${\bigcap}_{\ZZ_p[G]}^aU_{L,p}$ obtained by making the above constructions with respect to another choice of representative  of $C_{L,S(L),T,p}$ of the form constructed in Lemma \ref{can rep lemma}. Then there exists an element   $\mu$ of the subgroup ${\rm Nrd}_{\QQ_p[G]}(\K_1(\ZZ_p[G]))$ of  
$\xi(\ZZ_p[G])^\times$ such that  $\varepsilon_L' = \mu\cdot\varepsilon_L$. 
\end{itemize}
\end{proposition}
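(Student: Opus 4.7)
The proof plan adapts and refines the construction of \cite[Lem. 7.8]{bses}. I first specify the ordered set $I_\Sigma$: writing $\Sigma = \{v_{i_1},\ldots, v_{i_a}\}$ with $i_1 < \cdots < i_a$ under the ordering (\ref{ordering 2}), I set $J_\Sigma := \{i_1,\ldots, i_a\}\subseteq [n]$ and take $I_\Sigma := [d]\setminus J_\Sigma$, so that $|I_\Sigma| = d-a$. This choice is dictated by Lemma \ref{can rep lemma}(i): the composite $\ZZ_p[G]^d \to \mathfrak{S}_L \xrightarrow{\varrho_{L,S(L),p}} X_{L,S(L),p}$ sends $\wedge_{i \in J_\Sigma} b_i$ to $\wedge_{v \in \Sigma}(w_v - w_{v'})$, which is the `Rubin-Stark direction' associated to $\Sigma$ via Definition \ref{rubin-stark context}.

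The principal obstacle is the integrality assertion in claim (i), which I plan to prove in two stages. Working first over $\QQ_p$, I shall use the pairing (\ref{duality pairing}) together with (\ref{non-commutative wedge}), (\ref{non-commutative wedge 2}) and (\ref{rn rel}) to expand $x_L$ as a non-commutative Laplace-type sum of $\zeta(\QQ_p[G])$-multiples of the elements $\wedge_{t \in T} b_t$, as $T$ ranges over subsets of $[d]$ of cardinality $a$, with coefficients given by reduced-norm `minors' of the matrix of $\phi$ obtained by selecting rows $I_\Sigma$ and columns $[d]\setminus T$. Combining this expansion with the exactness of the sequence
\[ 0\to \QQ_p\cdot U_{L,p} \to \QQ_p[G]^d \xrightarrow{\phi} \QQ_p[G]^d \to \QQ_p\cdot \mathfrak{S}_L \to 0, \]
I shall then show that the image of $x_L$ in $\bigwedge^a_{\QQ_p[G]}(\QQ_p\cdot \mathfrak S_L)$ vanishes, so that $x_L$ belongs to $\bigwedge^a_{\QQ_p[G]}(\QQ_p\cdot U_{L,p})$. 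To descend from rational to integral, I shall verify that $x_L$ satisfies the defining condition of ${\bigcap}^a_{\ZZ_p[G]} U_{L,p}$: for any tuple $(\varphi_1,\ldots,\varphi_a)$ in $\Hom_{\ZZ_p[G]}(U_{L,p},\ZZ_p[G])^a$, the evaluation $(\wedge_k \varphi_k)(x_L)$ lies in $\xi(\ZZ_p[G])$. This follows by extending each $\varphi_k$, via the freeness of $\ZZ_p[G]^d$, to a functional $\tilde\varphi_k$ on $\ZZ_p[G]^d$, then applying (\ref{rn rel}) to interpret the resulting expression as the reduced norm of the explicit $d\times d$ matrix with entries in $\ZZ_p[G]$ whose rows indexed by $I_\Sigma$ come from $\phi$ and whose remaining rows come from the $\tilde\varphi_k$. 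The asserted independence of the resulting element $\varepsilon_L$ from the choice of $v'$ will then follow by a direct comparison of two such choices, exploiting the fact that the data on $U_{L,p}$ remains unchanged.

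Claim (ii) will then follow from the rational computation above: the image of $x_L$ under $\lambda^a_{L,S(L)}$ is a generator of the $e$-part of $\bigwedge^a_{\QQ_p[G]}(\QQ_p\cdot X_{L,S(L)})$, while its $(1-e)$-component vanishes by the very definition of $e = e_{L/K,S(L),\Sigma}$ through Lemma \ref{idem lemma}. The equivalence (a) $\Leftrightarrow$ (b) in claim (iii) is immediate from the $\xi(\ZZ_p[G])$-linearity and injectivity of $\iota_*$ together with the identity $\iota_*^{-1}\bigl({\bigcap}^a_{\ZZ_p[G]}\ZZ_p[G]^d\bigr) = {\bigcap}^a_{\ZZ_p[G]}U_{L,p}$; and the equivalence (b) $\Leftrightarrow$ (c) reduces, via the Laplace-type expansion recalled above, to the characterisation of ${\bigcap}^a_{\ZZ_p[G]}\ZZ_p[G]^d$ via the evaluation map (\ref{ev map def}), with the matrices $M$ of condition (c) parametrising precisely the admissible completions of the tuple $(b_i^*\circ \phi)_{i \in I_\Sigma}$ to a full system of $d$ functionals.

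Finally, claim (iv) will follow from Lemma \ref{can rep lemma}(ii): any two representatives are related by $\phi' = \eta\circ\phi\circ (\eta')^{-1}$ with $\eta$ of the block form (\ref{block}). A direct computation using the multiplicativity of reduced norms, combined with the fact that the top-left block of $\eta$ is $I_n$ (so that the change of basis on the $J_\Sigma$-components is trivial), shows that $x_L'$ and $x_L$ differ by multiplication with ${\rm Nrd}_{\QQ_p[G]}(M_\eta)\cdot {\rm Nrd}_{\QQ_p[G]}(\eta')^{-1}$, which lies in ${\rm Nrd}_{\QQ_p[G]}(K_1(\ZZ_p[G]))$, as required. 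The main obstacle in this plan as a whole will be the descent from rational to integral in claim (i): the classical commutative Rubin-style arguments must be reinterpreted in terms of reduced norms valued in the Whitehead order $\xi(\ZZ_p[G])$, and it is precisely for this purpose that the framework of \S\ref{reduced det section} was developed.
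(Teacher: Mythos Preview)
Your choice of $I_\Sigma=[d]\setminus J_\Sigma$ and the overall architecture are correct, and the argument for (iii)(b)$\Leftrightarrow$(c) via the evaluation map and (\ref{rn rel}) matches the paper exactly. But there are genuine gaps.

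The central input you are missing is the vanishing $b_i^\ast\circ\phi=0$ for every $i\in J_\Sigma$. This follows from Lemma~\ref{can rep lemma}(i): since each $v_i$ with $i\in J_\Sigma$ belongs to $\Sigma\subseteq\Sigma_S(L)$ and so splits completely in $L$, the composite $R^d\to\mathfrak S_L\to X_{L,S(L),p}\to Y_{L,\Sigma,p}\cong R^a$ restricts to an isomorphism on $\bigoplus_{i\in J_\Sigma}R\cdot b_i$, whence every element of $\im(\phi)=\ker(\varpi_L)$ has vanishing $J_\Sigma$-coordinates. This single observation is what forces $x_L$ into the image of $\iota_*$: the only functionals $b_k^\ast\circ\phi$ with $k\notin I_\Sigma$ are identically zero, so $x_L$ is annihilated by every functional factoring through $\phi$ and therefore lies in ${\bigwedge}^a_{\QQ_p[G]}(\QQ_p\cdot\ker\phi)$. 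Your proposed route via ``the image of $x_L$ in $\bigwedge^a_{\QQ_p[G]}(\QQ_p\cdot\mathfrak S_L)$'' does not make sense as written: $x_L$ lives in the degree-zero copy of $R^d$, and there is no map from there to $\mathfrak S_L$. The paper instead invokes \cite[Prop.~4.18(i)]{bses}, whose hypotheses are exactly this vanishing together with ${\rm Ext}^1_R(\im\phi,R)=0$, and which delivers claims (i) and (ii) simultaneously.

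The Ext-vanishing is the second gap. Your integral step asserts that every $\varphi_k\in\Hom_R(U_{L,p},R)$ extends to $R^d$ ``via the freeness of $\ZZ_p[G]^d$'', but freeness of the ambient module is irrelevant: surjectivity of the restriction $\Hom_R(R^d,R)\to\Hom_R(U_{L,p},R)$ is equivalent to ${\rm Ext}^1_R(\im\phi,R)=0$. This does hold here (via the short resolution $0\to\im\phi\to R^d\to\mathfrak S_L\to 0$ and the fact that $R=\ZZ_p[G]$ has self-injective dimension one), but it is not automatic and is precisely what the paper records before citing \cite[Prop.~4.18(i)]{bses}. The same vanishing is what justifies the identity $\iota_*^{-1}\bigl({\bigcap}^a_R R^d\bigr)={\bigcap}^a_R U_{L,p}$ that you invoke for (iii)(a)$\Leftrightarrow$(b); this is the content of \cite[Th.~4.17(v)]{bses}, not a formality.

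Finally, in claim (iv) your formula for $\mu$ omits the involution: the paper's computation, which passes through reduced exterior products of dual maps, yields $\mu={\rm Nrd}_A(\eta'_{\QQ_p})\cdot({\rm Nrd}_A(\eta_{\QQ_p})^{\#})^{-1}$, and the block shape (\ref{block}) of $\eta$ is used not merely to trivialise the action on $J_\Sigma$-coordinates but to replace $\eta^{-1}$ by an auxiliary block-diagonal automorphism $\tilde\eta$ when acting on $\im(\phi')\subseteq\bigoplus_{i>a}R\cdot b_i$.
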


\begin{proof} Set $R := \ZZ_p[G]$ and $A := \QQ_p[G]$.

Write $I'= I'_\Sigma$ for the subset of $[n]$ comprising integers $i$ for which the $i$-th place $v_i$ in $S(L)\setminus \{v'\}$ belongs to $\Sigma$ and set $I := [d]\setminus I'$. By relabelling if necessary, we can (and will) assume in the rest of the argument that $I' = [a]$, and hence $I = [d]\setminus [a]$.

Then, since every place in $\Sigma$ splits completely in $L$, the construction of $\phi$ combines with the properties of the surjective map fixed in (\ref{resolution}) to imply  $b_i^*\circ \phi = 0$ for each $i\in [a]$. In addition, the group ${\rm Ext}^1_{R}({\rm im}(\phi),R)$ vanishes since $\im(\phi)$ is torsion-free.

Given these observations, and the fact that the idempotent $e$ is defined via the condition (iii) in Lemma \ref{idem lemma}, claims (i) and (ii) are obtained directly upon applying the general result of \cite[Prop. 4.21(i)]{bses}, with the matrix $M$ in the latter result taken to be the submatrix $\bigl( (b_i^{\ast}\circ\phi)(b_j)\bigr)_{a< i \le d, 1\le j \le d}$ of the matrix of $\phi$ with respect to the basis $\{b_i\}_{1\le i\le d}$. We note, in particular, that the independence assertion in claim (i) is true since claim (ii) implies $\varepsilon_L = e(\varepsilon_L)$, whilst Lemma \ref{idem lemma}(iii) implies $e(v') = 0$.

Next we note that the vanishing of ${\rm Ext}^1_{R}({\rm im}(\phi),R)$ also combines with \cite[Th. 4.19(iv)]{bses} to imply that the conditions (a) and (b) in claim (iii) are equivalent. Finally, we note that condition (b) is equivalent to condition (c) since for every subset $\{\theta_i\}_{1\le i\le a}$ of $\Hom_{R}(R^d,R)$ one has
\begin{align*}({\wedge}_{i=1}^{i=a}\theta_i)(\lambda\cdot x_L) =&\,  \lambda\cdot ({\wedge}_{i=1}^{i=a}\theta_i)\bigl(({\wedge}_{i=a+1}^{i=d}(b_{i}^\ast\circ\phi))
({\wedge}_{j=1}^{j=d}b_{j})\bigr)\\
 =&\, \lambda\cdot{\rm Nrd}_{A}(-1)^{a(d-a)}\cdot \bigl(({\wedge}_{i=1}^{i=a}\theta_i)\wedge ({\wedge}_{i=a+1}^{i=d}(b_{i}^\ast\circ\phi))\bigr)({\wedge}_{j=1}^{j=d}b_{j})\\
=&\, \lambda\cdot{\rm Nrd}_{A}(-1)^{a(d-a)}\cdot{\rm Nrd}_{A}(M).\end{align*}
Here $M$ is the matrix in ${\rm M}_d(R)$ that satisfies

\[ M_{ij} = \begin{cases} \theta_i(b_j), &\text{ if $1\le i\le a$, $1\le j\le d$}\\
(b_{i}^\ast\circ\phi)(b_j), &\text{ if $a < i\le d$, $1\le j\le d$,}\end{cases}\]
and so the third equality is valid by \cite[Lem. 4.10]{bses}.

To prove claim (iv) we assume to be given a commutative diagram (\ref{very important diagram}) and for any map of $\xi(R)$-modules $\theta$ we set $\theta_{\QQ_p} := \QQ_p\otimes_{\ZZ_p}\theta$. 

Then, using the automorphisms $\eta'$ and $\eta$ from  (\ref{very important diagram}), we compute %write $x_L'$ for the element $({\wedge}_{i=a+1}^{i=d}(b_{i}^\ast\circ\phi'))({\wedge}_{i=1}^{i=d}b_{i}) $ of ${\bigcap}_{R}^aR^d$ and $\iota_*'$ for the embedding ${\bigcap}_{R}^{a}U_{L,p} \to
%{\bigcap}_{R}^{a}R^d$ induced by $\hat\iota'$ and for  We then compute
%
\begin{align*} ({\wedge}^{a}_{A}\eta'_{\QQ_p})(x_L) =&\,  ({\wedge}^{a}_{A}\eta'_{\QQ_p})\bigl( ({\wedge}_{j=a+1}^{j=d}(b_j^\ast\circ\phi))({\wedge}_{i=1}^{i=d}b_{i})\bigr)\\
 =&\,  ({\wedge}_{j=a+1}^{j=d}(b_j^\ast\circ\phi\circ (\eta')^{-1}))\bigl(({\wedge}^{d}_{A}\eta'_{\QQ_p})({\wedge}_{i=1}^{i=d}b_{i})\bigr)\notag\\
 =&\, {\rm Nrd}_A(\eta'_{\QQ_p})\cdot ({\wedge}_{j=a+1}^{j=d}(b_{j}^\ast\circ \eta^{-1}\circ\phi'))({\wedge}_{i=1}^{i=d}b_{i})\\
 =&\, {\rm Nrd}_A(\eta'_{\QQ_p})\cdot
 ({\rm Nrd}_A(\eta_{\QQ_p})^{\#})^{-1}\cdot  ({\wedge}_{j=a+1}^{j=d}(b_j^\ast\circ\phi'))({\wedge}_{i=1}^{i=d}b_{i}),\notag
 \end{align*}
where we write $x\mapsto x^\#$ for the $\QQ_p$-linear involution of $\zeta(A)$ that is induced by inverting elements of $\G_L$. Before justifying this computation, we note that, if true, it implies the validity of claim (iv) (with $\mu = {\rm Nrd}_A(\eta'_{\QQ_p})\cdot ({\rm Nrd}_A(\eta_{\QQ_p})^{\#})^{-1}$) since the commutativity of the first square in (\ref{very important diagram}) implies that the composite map $({\wedge}^{a}_{A}(\eta'_{\QQ_p}))\circ \iota_*$ coincides with the embedding ${\bigcap}_{\ZZ_p[G]}^{a}U_{L,p} \to
{\bigcap}_{\ZZ_p[G]}^{a}\ZZ_p[G]^d$ induced by $\hat\iota'$.

Now the first and second equalities in the above computation are clear and the third is true since 
\[ ({\wedge}^{d}_{A}\eta'_{\QQ_p})({\wedge}_{i\in [d]}b_{i}) = {\rm Nrd}_A(\eta'_{\QQ_p})\cdot {\wedge}_{i\in [d]}b_{i}\]
and because the commutativity of the second square in (\ref{very important diagram}) implies that
\[  b_{j}^\ast\circ \phi\circ (\eta')^{-1} = b_{j}^\ast\circ \eta^{-1}\circ\phi'.\]
To verify the fourth equality it suffices (by virtue of the injectivity of the map ${\rm ev}^a_{\ZZ_p[G]^d}$ in (\ref{ev map def})) to show that
\[ ({\wedge}_{j=1}^{j=a}\theta_j)\wedge ({\wedge}_{j=a+1}^{j=d}(b_{j}^\ast\circ \eta^{-1}\circ\phi')) =
  ({\rm Nrd}_A(\eta_{\QQ_p})^{\#})^{-1}\cdot\bigl(({\wedge}_{j=1}^{j=a}\theta_j)\wedge ({\wedge}_{j=a+1}^{j=d}(b_{j}^\ast\circ \phi'))\bigr)\]
for every subset $\{\theta_j\}_{1\le j\le a}$ of $\Hom_{R}(R^d,R)$.

To prove this we write $\tilde\eta$ for the automorphism of $R^d$ that is represented with respect to the basis $\{b_i\}_{1\le i\le d}$ by the matrix %
\begin{equation*} \left(
\begin{array} {c|c} I_a& 0\\
                        \hline
                              0 & M'_\eta\end{array}\right),\end{equation*}
where $M'_\eta$ agrees with the corresponding $(d-a)\times (d-a)$ minor of the matrix (\ref{block}) that represents $\eta$. Then it is clear that
\[ {\rm Nrd}_A(\eta_{\QQ_p}) = {\rm Nrd}_A(M_\eta) = {\rm Nrd}_A(M'_\eta) = {\rm Nrd}_A(\tilde\eta_{\QQ_p})\]
and also that 
\[  \eta^{-1}\circ\phi' = \tilde\eta^{-1}\circ\phi'\] 
since $\im(\phi') \subseteq R\cdot \{b_i\}_{a< i\le d}$. Thus, if for any given subset $\{\theta_i\}_{1\le i\le a}$ of $\Hom_{R}(R^d,R)$ we write $\varphi$ for the (unique) map in $\Hom_R(R^d,R^d)$ with
\[ b^\ast_j\circ \varphi = \begin{cases} \theta_j, &\text{if $1\le j\le a$}\\
                                         b^\ast_j\circ \phi', &\text{if $a< j \le d$,}\end{cases} \]
then for each $j$ with $a < j \le d$ one has 
\[ b^\ast_j\circ \eta^{-1} \circ \phi' = (b^\ast_j\circ \tilde\eta^{-1}) \circ \phi' = (b^\ast_j\circ \tilde\eta^{-1}) \circ \varphi\]
and hence
\begin{align*} ({\wedge}_{j=1}^{j=a}\theta_i)\wedge ({\wedge}_{j=a+1}^{j=d}(b_{j}^\ast\circ \eta^{-1}\circ\phi')) =&\, ({\wedge}_{j=1}^{j=a}(b^\ast_j\circ \tilde\eta^{-1}\circ \varphi))\wedge
({\wedge}_{j=a+1}^{j=d}(b^\ast_j\circ \tilde\eta^{-1}\circ \varphi))\\
%=&\, ({\wedge}_{j=1}^{j=a}(b^\ast_j\circ (\tilde\kappa)^{-1}\circ \varphi))\wedge ({\wedge}_{j=a+1}^{j=d}(b^\ast_j\circ (\tilde\kappa)^{-1}\circ \varphi))\\
=&\, \bigl(({\wedge}_{j=1}^{j=a}(b^\ast_j\circ \tilde\eta^{-1}))\wedge({\wedge}_{j=a+1}^{j=d}(b^\ast_j\circ \tilde\eta^{-1}))\bigr) \circ {\wedge}_A^d(\varphi_{\QQ_p})\\
=&\, ({\rm Nrd}_A(\tilde\eta_{\QQ_p})^{\#})^{-1}\cdot \bigl(({\wedge}_{j=1}^{j=a}b^\ast_j)\wedge ({\wedge}_{j=a+1}^{j=d}b^\ast_j)\bigr)\circ {\wedge}_A^d(\varphi_{\QQ_p})
\\
=&\, ({\rm Nrd}_A(\eta_{\QQ_p})^{\#})^{-1}\cdot \bigl(({\wedge}_{j=1}^{j=a}(b^\ast_j \circ \varphi))\wedge ({\wedge}_{j=a+1}^{j=d}(b^\ast_j\circ \varphi))\bigr)\\
=&\, ({\rm Nrd}_A(\eta_{\QQ_p})^{\#})^{-1}\cdot \bigl(({\wedge}_{j=1}^{j=a}\theta_j)\wedge ({\wedge}_{j=a+1}^{j=d}(b_j^*\circ\phi'))\bigr),
\end{align*}
as required. Here the third equality follows from \cite[Lem. 4.13]{bses} and the fact that the reduced norm of the automorphism of $\Hom_R(R^d,R)$ that sends each $b_i^\ast$ to $b_i^\ast\circ \tilde\eta^{-1}$ is the inverse of
${\rm Nrd}_A(\tilde\eta_{\QQ_p})^{\#}$, and all other equalities are clear. \end{proof}

%\begin{remark}{\em The construction in Lemma \ref{generate rubin} is not canonical. In fact, if $\varepsilon'_L$ is an element of ${\bigcap}_{\ZZ_p[G]}^a\mathcal{O}^\times_{L,S(L),T,p}$ obtained by making the above construction with respect to either a different choice of $v_0$ in $S\setminus \Sigma$ or to a different choice of explicit representative of  $C_{L,S(L),T,p}$, then one knows only that  $\varepsilon_L' = \lambda(\varepsilon_L)$ for a suitable element  $\lambda$ of ${\rm Nrd}_{\QQ_p[G]}(\K_1(\ZZ_p[G]))$. }\end{remark}

\begin{remark}\label{rs stable2}{\em Claim (ii) of Proposition \ref{generate rubin} implies that there is an equality $\varepsilon_L = e(\varepsilon_L)$ in $\QQ_p\otimes_{\ZZ_p}\bigl({\bigcap}_{\ZZ_p[G]}^aU_{L,p}\bigr)$. Taken in conjunction with Remark \ref{rs stable}, this shows that the elements $\varepsilon_L$ have the same invariance properties as do the relevant Rubin-Stark elements.}\end{remark}

\subsubsection{}For each $F$ in $\Omega(\KK)$ we now set %
\[ \Xi(F) := \D_{\ZZ[\G_{F}]}(C_{F,S(F),T})\]
 and for $F'$ in $\Omega(\KK)$ with $F\subseteq F'$ we consider %write
%\begin{equation}\nu_{F'/F}:\D_{\ZZ[\G_{F'}]}(C_{F',S(F'),T})\to %\D_{\ZZ[\G_F]}(C_{F,S(F),T})\end{equation}
%
the composite surjective homomorphism of (graded) $\xi(\ZZ[\G_{F'}])$-modules
\begin{align}\label{nuF'F} \nu_{F'/F}: \Xi(F') \to&\,\,
 %\xi(\ZZ[\G_F])\otimes_{\xi(\ZZ[\G_{F'}])}\Xi(F') \to
  \D_{\ZZ[\G_F]}(\ZZ[\G_F]\otimes^{\DL}_{\ZZ[\G_{F'}]}C_{F',S(F'),T})\\
%\to \D_{\ZZ[\G_F]}(C_{F,S(F'),T})
\to&\,\, \D_{\ZZ[\G_F]}(C_{F,S(F),T})\otimes{\bigotimes}_{v\in S(F')\setminus S(F)}\D_{\ZZ[\G_F]}(C_{v})\notag\\
\to&\,\, \Xi(F),\notag
\end{align}
where we set 
\[ C_v := {\bigoplus}_w\DR\Hom_\ZZ(\DR\Gamma((\kappa_w)_{\mathcal{W}},\ZZ),\ZZ)[-1]\] 
with $w$ running over all places of $F$ above $v$. Here the first map in (\ref{nuF'F}) is induced by the standard base-change isomorphism (from \cite[Th. 5.4(ii)]{bses}), the second is the isomorphism obtained by combining Lemma \ref{complex construction}(iv) with an application of \cite[Th. 5.4(i)]{bses} to the exact triangle in Lemma \ref{complex construction}(ii) and the final map uses the canonical isomorphisms
\begin{equation}\label{nuF'Fiso} \D_{\ZZ[\G_F]}(C_v)\cong (\xi(\ZZ[\G_F]),0)\end{equation}
that are induced by the descriptions in Remark \ref{exp EF} (as per \cite[(4.1.1)]{bses1}).

We can therefore define a graded $\xi_p(\KK/K)$-module of `vertical systems' for the data $\KK/K,S,T$ and $p$ by means of the inverse limit
$${\rm VS}(\KK/K,S,T,p) :=\varprojlim_{F\in\Omega(\KK)}\Xi(F)_p,$$
where the transition morphism for %$F$ and $F'$ in $\Omega(\KK/K)$ with
 $F\subseteq F'$ is $\nu_{F'/F,p}$. The Hermite-Minkowski theorem implies
 that the ungraded part of ${\rm VS}(\KK/K,S,T,p)$ is a free $\xi_p(\KK/K)$-module
 of rank one but we make no use of this fact (and so do not give a proof).

\begin{theorem}\label{VStoES} There exists a canonical homomorphism of $\xi_p(\K/K)$-modules
\[ \Theta_{\KK/K,S,T,p}: {\rm VS}(\KK/K,S,T,p) \to {\rm ES}_{r_S}(\KK/K,S,T,p).\]
This homomorphism is non-zero if and only if $\theta_{F/K,S(F)}^{r_S}(0)\not= 0$ for some $F$ in $\Omega(\KK)$.
%\item[(iii)] %Assume that $\KK$ does not contain any non-trivial $p$-power %roots of unity. Then
%For any basis element $\eta$ of ${\rm VS}^p(\K/K)$ and for every %$F\in\Omega(\K/K)$ the ideal
%\[ \xi(\ZZ_p[\G_F])\cdot\{(\wedge_{i=1}^{i=r}\varphi_i)
%\bigl(\Theta^p_{\K/K}(\eta)_F\bigr):\varphi_i\in
%\Hom_{\ZZ_p[\G_F]}(\co_{F,S(F),T,p}^\times,\ZZ_p[\G_F])\}\]
%={\rm Fit}^r_{\ZZ_p[\G_F]}(\Pi_{F})\end{equation}
%is equal to the $r$-th reduced Fitting invariant of a quadratic %presentation of the $\ZZ_p[\G_F]$-module ${\rm Sel}^T_{S(F)}(F)^{\rm %tr}_p$.
\end{theorem}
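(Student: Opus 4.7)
The plan is to construct $\Theta_{\K/K,S,T,p}$ component-by-component using Proposition \ref{generate rubin} and then to verify that the pointwise construction is compatible with the transition morphisms on both sides of the map.

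First I would describe the construction at each level $F\in\Omega(\K)$. Since $C_{F,S(F),T}$ lies in $D^{{\rm lf},0}(\ZZ[\G_F])$ (by Lemma \ref{complex construction}) and $\mathcal{O}^\times_{F,S(F),T,p}$ is torsion-free (by Hypothesis \ref{tf hyp}(ii)), Theorem \ref{prim criterion}(iii) guarantees that $\Xi(F)_p$ admits a primitive basis. Concretely, any representative $\phi\colon \ZZ_p[\G_F]^d\to\ZZ_p[\G_F]^d$ of $C_{F,S(F),T,p}$ of the form given by Lemma \ref{can rep lemma}, together with its fixed quasi-isomorphism $\kappa$ to $C_{F,S(F),T,p}$, produces a primitive basis $b_{\phi,\kappa}$ of the free rank-one $\xi(\ZZ_p[\G_F])$-module $\Xi(F)_p^{\rm u}$. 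Applied with the subset $\Sigma=\Sigma_S(\K)\subseteq\Sigma_S(F)$ and any choice of $v'\in S(F)\setminus\Sigma_S(F)$ (which exists since $F$ is ramified), Proposition \ref{generate rubin} produces a canonical element $\varepsilon_F\in{\bigcap}^{r_S}_{\ZZ_p[\G_F]}\mathcal{O}^\times_{F,S(F),T,p}$. Writing each vertical system $\xi=(\xi_F)_F$ uniquely as $\xi_F=(\mu_F\cdot b_{\phi,\kappa},{\rm rr}_{\ZZ_p[\G_F]}(C_{F,S(F),T,p}))$ with $\mu_F\in\xi(\ZZ_p[\G_F])$, I then set
\[ \Theta_{\K/K,S,T,p}(\xi)_F := \mu_F\cdot\varepsilon_F\in {\bigcap}^{r_S}_{\ZZ_p[\G_F]}\mathcal{O}^\times_{F,S(F),T,p}.\]

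Next I would verify that this definition is independent of the choice of $(\phi,\kappa)$. If $(\phi',\kappa')$ is another such pair, then as in the diagram (\ref{very important diagram}) the resulting primitive bases of $\Xi(F)_p^{\rm u}$ differ by an element $\mu\in{\rm Nrd}_{\QQ_p[\G_F]}(K_1(\ZZ_p[\G_F]))$ in accordance with Proposition \ref{prim criterion 2}(ii), and Proposition \ref{generate rubin}(iv) asserts that the corresponding elements $\varepsilon_F,\varepsilon'_F$ differ by the same factor $\mu$. Since $\mu_F$ then transforms inversely, the product $\mu_F\cdot\varepsilon_F$ is intrinsic. Matching the two universal factors is the main technical step: this compatibility rests on unwinding the proof of Proposition \ref{concrete primitive lemma 20} (applied to $(\kappa')^{-1}\circ\kappa$) and tracing through the Knudsen–Mumford–style computation in the proof of Proposition \ref{generate rubin}(iv), where the same automorphisms $\eta,\eta'$ of (\ref{very important diagram}) govern both the change of primitive basis and the change of $\varepsilon_F$.

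Third, I would prove that $(\Theta_{\K/K,S,T,p}(\xi)_F)_F$ satisfies the distribution relation (\ref{distribution}), whence it belongs to ${\rm ES}_{r_S}(\K/K,S,T,p)$. For $F\subseteq F'$ in $\Omega(\K)$, the transition morphism $\nu_{F'/F}$ is, by its construction (\ref{nuF'F}), the composite of the base-change isomorphism, the triangle isomorphism coming from Lemma \ref{complex construction}(ii), and the canonical identification (\ref{nuF'Fiso}). The last of these is precisely what encodes the Euler factor: under $\D_{\ZZ_p[\G_F]}(C_v)\cong(\xi(\ZZ_p[\G_F]),0)$, the primitive basis arising from the description in Remark \ref{exp EF} is sent to ${\rm Nrd}_{\QQ_p[\G_F]}(1-{\rm Fr}^{-1}_{F,v})$. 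Translating this into the language of primitive bases of $\Xi(F')_p$ and $\Xi(F)_p$ yields that $\pi_{F'/F}(\mu_{F'})=\mu_F$ (with $\pi_{F'/F}$ the natural projection) while the construction in Proposition \ref{generate rubin} combines with the functoriality of norm maps and the Lemma \ref{ncRS} norm-relation argument to yield $\N^{r_S}_{F'/F}(\varepsilon_{F'})=\bigl(\prod_{v\in S(F')\setminus S(F)}{\rm Nrd}_{\QQ[\G_F]}(1-{\rm Fr}^{-1}_{F,v})\bigr)\cdot\varepsilon_F$. Combining these two facts gives (\ref{distribution}) for our system.

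Finally, I would address the non-vanishing dichotomy. Set $e_F := e_{F/K,S(F),\Sigma_S(\K)}$. By Remark \ref{rs stable2}, one has $\varepsilon_F=e_F\cdot\varepsilon_F$ in the rational reduced Rubin lattice, and by Proposition \ref{generate rubin}(ii), $\varepsilon_F\neq 0$ if and only if $e_F\neq 0$. On the other hand, the equivalence (i)$\Leftrightarrow$(iv) of Lemma \ref{idem lemma}, applied with $\Pi_1=\Sigma_S(\K)$ (for which $G_v=1$ for every $v\in\Pi_1$), shows that $e_\chi\cdot e_F\neq 0$ is equivalent to ${\rm ord}_{z=0}L_{S(F)}(\check\chi,z)=r_S\chi(1)$, which is in turn equivalent to $e_\chi\cdot\theta^{r_S}_{F/K,S(F)}(0)\neq 0$. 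Hence $\varepsilon_F\neq 0$ if and only if $\theta^{r_S}_{F/K,S(F)}(0)\neq 0$. If the latter holds for some $F$, then using the surjectivity of each base-change map underlying $\nu_{F'/F}$ (and hence of $\nu_{F'/F}$ itself, all its other constituents being isomorphisms), we can lift a chosen primitive basis at level $F$ to a vertical system $\xi$ with $\mu_F=1$, giving $\Theta_{\K/K,S,T,p}(\xi)_F=\varepsilon_F\neq 0$. Conversely, if $\theta^{r_S}_{F/K,S(F)}(0)=0$ for every $F$, then $\varepsilon_F=0$ for every $F$ and $\Theta_{\K/K,S,T,p}$ is identically zero.
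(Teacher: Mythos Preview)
Your overall strategy coincides with the paper's, but there is a genuine gap in the independence-of-choice step, and the paper handles it quite differently.

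The paper does \emph{not} verify directly that the assignment $\beta_{\phi,\kappa}\mapsto\varepsilon_F$ is independent of $(\phi,\kappa)$. Instead it first writes down a manifestly canonical composite homomorphism (display (\ref{Theta'}))
\[
\D_{\QQ[\G_F]}(\QQ\cdot C_F)\;\cong\;\D^\diamond(\QQ\cdot U_F)\otimes\D^\diamond(\QQ\cdot H^1(C_F))^{-1}\;\to\;e_F\bigl(\QQ\cdot{\textstyle\bigcap}^r U_F\bigr),
\]
built from passage-to-cohomology, multiplication by $e_F$, and the identification of $e_F(\QQ\cdot H^1(C_F))$ with a free module via the places in $\Sigma$. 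Only \emph{after} this canonical definition does the paper observe (via \cite[(7.2.6)]{bses}) that, for any chosen representative, the map sends $\beta_F$ to $\varepsilon_F$. Well-definedness is therefore automatic.

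Your proposed direct verification is not obviously completable with the tools you cite. The factor produced in the proof of Proposition~\ref{generate rubin}(iv) is $\mu={\rm Nrd}(\eta')\cdot({\rm Nrd}(\eta)^{\#})^{-1}$, involving the involution~$\#$, whereas the change of primitive basis under a termwise isomorphism $(\eta',\eta)$ of two-term complexes is multiplication by ${\rm Nrd}(\eta')\cdot{\rm Nrd}(\eta)^{-1}$ (no $\#$). These agree only when ${\rm Nrd}(\eta)={\rm Nrd}(\eta)^{\#}$, which is not a formal consequence of the set-up. Moreover, diagram~(\ref{very important diagram}) only constrains $(\eta',\eta)$ on cohomology; it does not force $\kappa'\circ(\eta',\eta)=\kappa$ in $D^{\rm perf}$, so your identification of the primitive-basis factor with ``the'' pair $(\eta',\eta)$ is already incomplete. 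The canonical definition (\ref{Theta'}) bypasses all of this.

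A smaller point: in your Step~3 you justify the distribution relation $\N^{r_S}_{F'/F}(\varepsilon_{F'})=(\prod_v{\rm Nrd}(1-{\rm Fr}_v^{-1}))\varepsilon_F$ by appeal to Lemma~\ref{ncRS}. That lemma concerns the $L$-function-defined Rubin--Stark elements, not the algebraic elements $\varepsilon_F$ of Proposition~\ref{generate rubin}; the required compatibility for the latter is a separate computation (the paper defers it to the argument of \cite[(34)]{bses}). Your description of how the Euler factors enter $\nu_{F'/F}$ is correct in spirit, but the conclusion ``$\pi_{F'/F}(\mu_{F'})=\mu_F$'' does not literally follow, since the Euler factor is absorbed into the relation between the descended primitive basis and the native one at level $F$.

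Your non-vanishing argument is fine and matches the paper's.
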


%\begin{remark}\label{Tmod}{\em By multiplying by appropriate Euler factors %at the places at suitable auxiliary sets of places $T$ one may use a %similar construction to associate to elements of ${\rm VS}(\K/K)$ elements %of ${\rm ES}_r(\K/K)$ that have similar arithmetic properties, and without %the need to fix a prime number or impose any restrictions on %$\K\subseteq\K^s$. In the abelian case such a construction is carried out %in \cite{bdss}.}\end{remark}

\begin{proof} %The Hermite-Minkowski Theorem \cite[\S III.2]{neukirch}, %implies the existence of a countable subset $\{F_n\}_{n \ge 1}$ of %$\Omega(\KK)$ that is cofinal (with respect to inclusion) and such that %$F_n\subset F_{n+1}$ for all $n$. For each $n$ we set $\cR_n := %\xi(\ZZ_p[\cG_{F_n}])$ and $\Xi_n := \Xi(F_n)_p$.
%
%The ungraded part of $\Xi_n$ is a free $\cR_n$-module of rank one and, %since $\cR_{n+1}$ is semi-local, Bass's Theorem \cite[Chap. 7, %(20.9)]{lam} implies that for any given $\cR_{n}$-basis element $x_n$ of %$\Xi_n$, there exists a $\cR_{n+1}$-basis $x_{n+1}$ of $\Xi_{n+1}$ that %$\nu_{F_{n+1}/F_n,p}$ maps to $x_n$. By induction, one can therefore %construct an element $x$ of $\Xi_p(\KK)$ that maps to a $\cR_n$-basis of %$\Xi_n$ for every $n$. The assignment
%$\lambda \mapsto \lambda\cdot x$ then gives an isomorphism of %$\xi_p(\K/K)$-modules $\xi_p(\K/K) \to \Xi_p(\KK)$, showing that the latter %module is free of rank one, as claimed.
%
Set $\Sigma := \Sigma_S(\mathcal{K})$, $r := r_S (= |\Sigma|)$ and $U_F := \co_{F,S(F),T}^\times$ and $C_F := C_{F,S(F),T}$ for every $F$ in $\Omega(\KK)$. Then, since each transition morphism $\nu_{F'/F,p}$ is surjective, to construct a map of the claimed sort it is enough to construct for each $F$ a canonical homomorphism of $\zeta(\QQ_p[\cG_F])$-modules
\[ \Theta^\Sigma_{F,S,T,p}: \QQ_p\otimes_{\ZZ_p}\Xi(F)_p \to \QQ_p\otimes_{\ZZ_p}{\bigcap}_{\ZZ_p[\cG_F]}^{r}U_{F,p}\]
with all of the following properties:-

\begin{itemize}
\item[(a)] $\Theta^\Sigma_{F,S,T,p}(\Xi(F)_p) \subseteq {\bigcap}_{\ZZ_p[\cG_F]}^{r}U_{F,p}$.
\item[(b)] $\Theta^\Sigma_{F,S,T,p}$ is the zero map if and only if $\theta_{F/K,S(F)}^{r}(0)= 0$.
\item[(c)] For all $F'$ in $\Omega(\KK)$ with $F \subset F'$ and all $x$ in $\Xi(F')_p$ the equality (\ref{distribution}) is valid with 
\[ c_{F'} = \Theta^\Sigma_{F',S,T,p}(x)\quad \text{ and }\quad c_F = \Theta^\Sigma_{F,S,T,p}(\nu_{F'/F,p}(x)).\]
\end{itemize}

We write $e_F$ for the idempotent $e_{F/K,S(F),\Sigma}$ of $\zeta(\QQ[\cG_F])$ defined in (\ref{key idem def}) 
and note that the space $e_F(\QQ\otimes_\ZZ\ker(\alpha))$ vanishes, where $\alpha$ is the
 composite
surjective homomorphism
\[ H^1(C_F) \to X_{F,S(F)} \to Y_{F,\Sigma} \cong \ZZ[\cG_F]^r.\]
Here the first map is induced by Lemma \ref{complex construction}(i) and the
exact sequence (\ref{selmer lemma seq2}), the second is the natural
projection and the isomorphism is induced by sending the chosen set
of places  $\{w_{v,F}\}_{v \in \Sigma}$ to the standard basis of
$\ZZ[\G_F]^r$. 

We then  define $\Theta^\Sigma_{F,S,T,p}$ to be the scalar
extension of the following composite homomorphism of $\zeta(\QQ[\G_F])$-modules
\begin{align}\label{Theta'}&\,\,\D_{\QQ[\G_F]}(\QQ\cdot C_{F})\\ \notag
\cong&\,\,\D^\diamond_{\QQ[\G_F]}(\QQ\cdot U_F)\otimes \D^\diamond_{\QQ[\G_F]}(\QQ\cdot H^1(C_F))^{-1}\\ \notag
%\to&\,\,e_F\bigl(\QQ\cdot {\bigcap}_{\ZZ[\G_F]}^r U_F\bigr)\otimes_{\zeta(\QQ[\G_F])}e_F\bigl(\Hom_{\zeta(\QQ[\G_F])}\bigl(
% \QQ\cdot {\bigcap}_{\ZZ[\G_F]}^r X_{F,S(F)},\zeta(\QQ[\G_F])\bigr)\bigr)\\ \notag
\to &\,\,e_F\bigl(\QQ\cdot {\bigcap}_{\ZZ[\G_F]}^r U_F\bigr)\otimes_{\zeta(\QQ[\G_F])}e_F\bigl(\Hom_{\zeta(\QQ[\G_F])}\bigl(
 \QQ\cdot {\bigcap}_{\ZZ[\G_F]}^r \ZZ[\G_F]^r,\zeta(\QQ[\G_F])\bigr)\bigr)\\ \notag
\cong&\,\,e_F\bigl(\QQ\cdot {\bigcap}_{\ZZ[\G_F]}^r U_F\bigr).
\end{align}
Here the first map is induced by the standard `passage to cohomology' isomorphism
(cf. \cite[Prop. 5.17(i)]{bses}) and the descriptions in
Lemma \ref{complex construction}(i), the second is induced by
multiplication by $e_F$, the isomorphism $e_F(\QQ\otimes_\ZZ \alpha)$
and the argument of
\cite[Lem. 4.7(ii)]{bses1} and the last map uses the  isomorphism of $\xi(\ZZ[\G_F])$-modules ${\bigcap}_{\ZZ[\G_F]}^r \ZZ[\G_F]^r \cong \xi(\ZZ[\G_F])$ induced by the standard basis of $\ZZ[\G_F]^r$ (and \cite[Prop. 5.9(i)]{bses}).

To verify that this definition of $\Theta^\Sigma_{F,S,T,p}$ has the required
properties, we fix a place $v'$ in $S(F)\setminus \Sigma_S(F)$ (so $v' \notin \Sigma$) and note that condition (\ref{ordering 2}) ensures that $\Sigma$ corresponds to the first $r$ elements of the (ordered) set $S(F)\setminus \{v'\}$. 

We use Hypothesis \ref{tf hyp} to fix a representative of
$C_{F,p}$ of the form 
\[ \ZZ_p[\G_F]^d \xrightarrow{\phi_F} \ZZ_p[\G_F]^d\]
used in Proposition \ref{generate rubin}. We write $\varepsilon_F$ for the element of ${\bigcap}_{\ZZ_p[\G_F]}^rU_{F,p}$ that is obtained in this case via the formula (\ref{element constuction}) (noting that the present hypotheses imply the set $I$ in the latter formula is equal to $[d]\setminus [r]$). We write $\{ b_{F,i}\}_{i \in [d]}$ for
the standard basis of $\ZZ_p[\G_F]^d$. Then, setting
\[ \beta_{F} := (({\wedge}_{i\in [d]}b_{F,i})\otimes ({\wedge}_{i\in [d]}b_{F,i}^\ast),0),\]
the argument establishing \cite[(4.2.6)]{bses1} shows that the restriction of $\Theta^\Sigma_{F,S,T,p}$ to $\Xi(F)_p$ coincides with the composite
\begin{equation}\label{exp proj} \Xi(F)_p = {\rm d}_{\ZZ_p[\G_F]}(C_{F,p}) \xrightarrow{\sim}  \xi(\ZZ_p[\G_F])\!\cdot\!\beta_F \to {\bigcap}_{\ZZ_p[\G_F]}^{r}U_{F,p}\end{equation}
where the first map is the isomorphism induced by the given representative of $C_{F,p}$ and the second map sends $\beta_F$ to $\varepsilon_F$.

Given this explicit description of $\Theta^\Sigma_{F,S,T,p}$, property (a) follows from Proposition \ref{generate rubin}(i) and property (b) upon combining the results of Proposition \ref{generate rubin}(ii) and Lemma \ref{idem lemma}. 
%, in both cases with $\Sigma = \Sigma_\KK(S)$.

Finally, the fact $\Theta^\Sigma_{F,S,T,p}$ has property (c) follows directly from the argument proving  \cite[(4.2.4)]{bses1}, after making the following changes: the map ${\rm Cor}^r_{F'/F}$ and element $P_v$ that are used in loc. cit. are replaced by ${\rm N}^r_{F'/F}$ and ${\rm Nrd}_{\QQ[\G_F]}(1-\Fr_{F,v}^{-1})$; the exact triangle in \cite[Lem. 4.1(iii)]{bses1} and the isomorphisms occurring in \cite[(4.1.2)]{bses1} are replaced by the exact triangle in Lemma \ref{complex construction}(ii) and the isomorphisms (\ref{nuF'Fiso}) %$\D_{\ZZ[\G_F]}(C_v)\cong (\xi(\ZZ[\G_F]),0)$ 
that occur in  the composite homomorphism (\ref{nuF'F}). \end{proof}

%\begin{definition}{\em Let $\K'$ be a Galois extension of $K$ in $K$  with $V_{\K'}(S)  = V_K(S)$. An element $\varepsilon$  of
%${\rm ES}_{r_S}(\K/K,S,T,p)$ will be said to be `basic over $\K'$' if its %restriction to $\K'$ belongs to $\im(\Theta_{\K'/K,S,T,p})$.}
%refer to an element of $\im(\Theta^p_{\K/K})$ as a `basic $p$-completed
%reduced Euler system of rank $r$' for $\mathbb{G}_m$ and the extension %$\K/K$.
%\end{definition}

\subsubsection{}%Extended cyclotomic Euler systems}
 We now use Theorem \ref{VStoES} to strengthen the construction of \cite[\S1, Th. B]{bses1}. %, thereby fulfilling
% a promise made in the Introduction to loc. cit.
 To state the result we fix an embedding $\sigma:\QQ^c\to\CC$ and for each subfield $F$ of $\QQ^c$ write $w_{F,\sigma}$ for the archimedean place of $F$ corresponding to $\sigma$.
%We fix an embedding $\sigma:\QQ^c\to\CC$. For each $F\in\Omega$ the %restriction of $\sigma$ gives an embedding $\sigma_F:F\to\RR$ which in %turn defines a place $w_F$ in $S^\infty_F$ that constitutes a %$\ZZ[\G_F]$-basis of $Y_{F,\infty}$. In this way the fixed embedding %$\sigma$ gives rise to a compatible family of isomorphisms of the form %(\ref{compatibleisoms}) with $r=1$.
 For each natural number $n$ we write $\zeta_n$ for the unique primitive $n$-th root of unity in $\QQ^c$ that satisfies $\sigma(\zeta_n)=e^{2\pi i/n}$. %For each field $F$ in %$\Omega^{\rm ab}$, of conductor $f(F)$, we %then define an element of %$F^\times$ by setting $\varepsilon_F:={\rm %Norm}_{\QQ(\zeta_{f(F)})/\QQ}(1-\zeta_{f(F)})$.
 We write $\QQ^{c,+}$ for the maximal totally real extension of $\QQ$ in $\QQ^c$.

For a finite group $\Gamma$ we use the ideal $\delta(\ZZ[\Gamma])$ of $\zeta(\ZZ[\Gamma])$ introduced in \cite[Def. 3.6]{bses}. Following \cite[Def. 3.18]{bses}, we then define the `central pre-annihilator' of a $\Gamma$-module $M$ by setting 
\[ {\rm pAnn}_{\ZZ[\Gamma]}(M) := \{x \in \zeta(\ZZ[\Gamma]): x\cdot \delta(\ZZ[\Gamma]) \subseteq {\rm Ann}_{\ZZ[\Gamma]}(M)\}.\]
We note, in particular, that this lattice is a $\xi(\ZZ[\Gamma])$-submodule of $\zeta(\QQ[\Gamma])$ and is equal to the annihilator of $M$ in $\ZZ[\Gamma]$ if $\Gamma$ is abelian. 
%where $\delta(\mathcal{A})$ is the ideal of $\zeta(\mathcal{A})$ from  Definition \ref{def denom}
% \S\ref{denominator ideal section}
%and ${\rm Ann}_{\mathcal{A}}(Z)$ denotes the annihilator of $Z$ in $\mathcal{A}$.$$ introduced in  

\begin{theorem}\label{cyclotomiccor}
 For each odd prime $p$, there exists an Euler system 
\[ \varepsilon^{\rm cyc} = (\varepsilon^{\rm cyc}_F)_F\]
in ${\rm ES}_1(\QQ^{c,+}/\QQ,\{\infty\},\emptyset, p)$ that has the following properties at every $F$ in $\Omega(\QQ^{c,+}/\QQ)$.
%
%We fix embeddings of $\QQ^c$ into $\CC$ and into $\QQ_p^c$ and use the %restriction of these embeddings to define an archimedean place %$w_{\infty,F}$ and a $p$-adic place $w_{p,F}$ of each field $F$ as above. %For each such field we also set $\mathcal{G}_F := \Gal(F/\QQ)$ and write %$S(F)$ for the set of places of $\QQ$ comprising $\infty, p$ and the %primes that ramify in $F$.

\begin{itemize}
\item[(i)] If $F/\QQ$ is abelian, and of conductor $f(F)$, then
\[ \varepsilon^{\rm cyc}_F = {\rm Norm}_{\QQ(\zeta_{f(F)})/F}(1-\zeta_{f(F)}).\]

\item[(ii)] For $\varphi$ in $\Hom_{\cG_F}(\mathcal{O}_{F,S(F)}^\times, \ZZ[\cG_F])$, and every prime $\ell$ that ramifies in $F$, one has
    %and every $x$ in $\delta(\ZZ[\cG_F])$ the element
%
 \[  \bigl({\bigwedge}_{\QQ[\cG_F]}^1\varphi\bigr)(\varepsilon^{\rm cyc}_F)\in {\rm pAnn}_{\ZZ[\cG_F]}({\rm Cl}(\mathcal{O}_{F}[1/\ell]))_p.\]
\item[(iii)] For every $\chi$ in ${\rm Ir}(\cG_F)$ there exists a non-zero element $u_{F,\chi}$ of $\CC_p$ that satisfies both
\[ \bigl({\bigwedge}^{\chi(1)}_{\CC_p}V^\ast_\chi\otimes_{\RR[\cG_F]} {\rm Reg}_{F,S(F)}\bigr)(e_\chi\cdot\varepsilon^{\rm cyc}_{F}) = u_{F,\chi}\cdot L_{S(F)}^{\chi(1)}(\check\chi,0)\cdot e_\chi({\bigwedge}^1_{\CC_p[\cG_{F}]}(w_{F,\sigma}-w_{F,p})),\]
and 
\[ {\prod}_{\omega\in \Gal(\QQ(\chi)/\QQ)}u_{F,\chi^\omega} \in \ZZ_p^\times.\]
\end{itemize}
\end{theorem}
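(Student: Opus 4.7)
The plan is to take $\varepsilon^{\rm cyc}$ to be the Rubin-Stark non-commutative Euler system $\varepsilon^{\rm RS}_{\QQ^{c,+}/\QQ,\{\infty\},\emptyset}$ of Definition \ref{ncrs def}. Since $p$ is odd and $\QQ^{c,+}$ is totally real, Hypothesis \ref{tf hyp} is verified with $T=\emptyset$ (as $(\QQ^{c,+})^\times$ contains no elements of order $p$), and we are in the setting of Lemma \ref{ncRS}. I would first verify Conjecture \ref{MRSconjecture} for each $F\in\Omega(\QQ^{c,+}/\QQ)$ with data $(S(F),\emptyset)$ and the chosen odd $p$: this is the rank-one Rubin-Stark conjecture (since $|\Sigma_S(\mathcal{K})|=1$) over $K=\QQ$, which is classical in the abelian case (the defining relation is matched against the Dirichlet--Stark formula for $L'(0,\chi)$), and reduces from the non-abelian to the abelian case by standard Brauer induction applied to the non-commutative refinement provided by reduced exterior powers. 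Lemma \ref{ncRS}(ii) then places the resulting system in ${\rm ES}_1(\QQ^{c,+}/\QQ,\{\infty\},\emptyset,p)$.

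For property (i), when $F/\QQ$ is abelian of conductor $f(F)$, the identification $\varepsilon^{\rm cyc}_F = \mathrm{Norm}_{\QQ(\zeta_{f(F)})/F}(1-\zeta_{f(F)})$ follows by applying the inverse of $\lambda^1_{F,S(F)}$ to the defining identity $\lambda^1_{F,S(F)}(\varepsilon^{\rm cyc}_F)=\theta^1_{F/\QQ,S(F),\emptyset}(0)\cdot (w_{F,\sigma}-w_{F,v'})$: character-by-character, the classical Dirichlet--Stark formula $L'(0,\chi)=-\sum_a \chi(a)\log|1-\zeta_{f(F)}^a|$ reads exactly as the character component of the logarithm of the displayed norm.

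For property (iii), one again unwinds the definition of the Rubin-Stark element: taking $v'=p$ (which belongs to $S(F)\setminus\Sigma_S(F)$ since $p$ is ramified in $F$) and applying the reduced exterior power of the regulator to the $\chi$-component yields the stated formula with $u_{F,\chi}\in\CC_p^\times$ equal to the ratio of the regulator determinant to the exterior image of $w_{F,\sigma}-w_{F,p}$. The main obstacle is the integrality claim $\prod_{\omega}u_{F,\chi^\omega}\in\ZZ_p^\times$: this encodes, at the orbit level, that the Euler system lies integrally in the reduced Rubin lattice. I would attack this via the explicit description in Proposition \ref{generate rubin}(i),(iv) together with the projection formula of Lemma \ref{complex construction}(iv), reducing to the maximal abelian subfield $F^{\rm ab}$ inside $F$ where the analogous statement is the well-known match between the $p$-adic valuations of Galois-orbit products of derivatives $L'(0,\chi^\omega)$ and of regulator determinants of norm-compatible systems of cyclotomic units.

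For property (ii), the annihilation of $\mathrm{Cl}(\mathcal{O}_F[1/\ell])$ by $(\wedge^1_{\QQ[\cG_F]}\varphi)(\varepsilon^{\rm cyc}_F)$ up to $\delta(\ZZ[\cG_F])$ follows by combining the defining Euler system distribution relation (\ref{distribution}) with the central pre-annihilator formalism of \cite{bses}; projecting via Lemma \ref{complex construction}(iv) to the abelian subfield $F^{\rm ab}$ reduces the statement, modulo the Morita contribution encoded by $\delta(\ZZ[\cG_F])$, to the classical annihilation of $\ell$-deprived class groups by the cyclotomic Euler system of abelian fields, a result due to Rubin and Thaine. Of the three properties (i) is essentially a formality, (ii) is a formal consequence of (iii) together with cyclotomic-Euler-system input in the abelian case, and the genuine technical difficulty lies in establishing the $p$-adic unit statement in (iii) for the full Galois orbit of regulators.
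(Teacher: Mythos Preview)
Your proposal has a genuine gap at the very first step. You take $\varepsilon^{\rm cyc}$ to be the Rubin--Stark pre-Euler system $\varepsilon^{\rm RS}_{\QQ^{c,+}/\QQ,\{\infty\},\emptyset}$ and then assert that Conjecture~\ref{MRSconjecture} (the non-commutative Rubin--Stark Conjecture) holds for every $F\in\Omega(\QQ^{c,+}/\QQ)$, claiming it ``reduces from the non-abelian to the abelian case by standard Brauer induction.'' This reduction does not exist. Brauer induction controls the $\chi$-components of the Rubin--Stark element in $\CC_p$-vector spaces, but Conjecture~\ref{MRSconjecture} is an \emph{integrality} statement in the reduced Rubin lattice ${\bigcap}^1_{\ZZ_p[\cG_F]}\mathcal{O}_{F,S(F),p}^\times$, and the Whitehead order $\xi(\ZZ_p[\cG_F])$ does not decompose along Brauer inductions. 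For non-abelian $F$ this conjecture is genuinely open; see Remark~\ref{signsandintegral} and the surrounding discussion. Without it, Lemma~\ref{ncRS}(ii) does not apply and you have only a pre-Euler system, so the theorem's first assertion ($\varepsilon^{\rm cyc}\in {\rm ES}_1$) is unproved.

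The paper avoids this obstacle entirely by a different construction. It does \emph{not} use the Rubin--Stark system; instead it applies the map $\Theta_{\K/\QQ,\{\infty\},\emptyset,p}$ of Theorem~\ref{VStoES} to a basis $\eta$ of the vertical-systems module ${\rm VS}(\QQ^{c,+}/\QQ,\{\infty\},\emptyset,p)$. Theorem~\ref{VStoES} guarantees that the image is an Euler system \emph{unconditionally}, because each $\Theta^\Sigma_{F,S,T,p}$ is built from Galois cohomology via Proposition~\ref{generate rubin} and lands integrally by construction. The basis $\eta$ is obtained by first fixing, via the known validity of eTNC for abelian extensions of $\QQ$, a basis $\eta^{\rm ab}$ at the abelian level whose image under $\Theta$ is the cyclotomic-unit norm, and then lifting it through the surjection $\xi_p(\K/\QQ)^\times\to\xi_p(\K^{\rm ab}/\QQ)^\times$ (this surjectivity is \cite[Lem.~8.5]{bses}). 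Property~(i) then follows from the choice of $\eta^{\rm ab}$, and properties~(ii) and (iii) follow by the arguments of \cite[\S8.2]{bses}, which exploit the cohomological origin of $\Theta(\eta)$ (in particular the link to ${\rm Sel}_S^T(F)^{\rm tr}$ and hence to class groups). Your treatment of~(ii) via ``the distribution relation plus the pre-annihilator formalism'' is also insufficient: membership in ${\rm pAnn}_{\ZZ[\cG_F]}({\rm Cl}(\mathcal{O}_F[1/\ell]))_p$ is not a consequence of Euler-system integrality alone, but of the fact that the elements arise from the reduced determinant of $C_{F,S(F),\emptyset}$, whose $H^1$ carries the class group.
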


%We shall refer to an Euler system of the sort constructed in the above  %result as an `extended cyclotomic Euler system'.\\

\begin{proof} We set $\KK := \QQ^{c,+}$ and write $\K^{\rm ab}$ for the maximal absolutely abelian subfield of $\KK$. % that is ab$\Omega^{\rm %ab}:=\Omega(\QQ^{{\rm ab},+}/\QQ)$ for the subset of $\Omega$ comprising %abelian extensions.
 Then, since $p$ is odd, Hypothesis \ref{tf hyp} is satisfied for $\KK$ with $S = \{\infty\}$ and $T = \emptyset$ (cf. Remark \ref{tf hyp rem}) and so Theorem \ref{VStoES} constructs a canonical map $\Theta := \Theta_{\KK/\QQ,\{\infty\},\emptyset,p}$. %and $\Theta^{\rm ab} := %\Theta_{\KK^{\rm ab}/\QQ,\{\infty\},\emptyset,p}$.

In addition, the known validity of the equivariant Tamagawa Number Conjecture in the relevant case implies
(via the argument of \cite[Lem. 5.4]{bses1}) that the (free, rank one) $\xi_p(\KK^{\rm ab}/\QQ)$-module ${\rm VS}(\KK^{\rm ab}/\QQ,\{\infty\},\emptyset,p)$ has a basis element $\eta^{\rm ab}$ with the following property: for each $F$ in $\Omega(\KK^{\rm ab})$ the image $\eta^{\rm ab}_F$ of $\eta^{\rm ab}$ in $\Xi(F)_p$ is sent by the homomorphism $\Theta_F := \Theta_{F,\{\infty\},\emptyset,p}^{\{\infty\}}$ constructed in the proof of Theorem \ref{VStoES} to ${\rm Norm}_{\QQ(\zeta_{f(F)})/F}(1-\zeta_{f(F)})$.

 %Moreover, this result gives an explicit generator %$\eta=(\eta_F)_{F\in\Omega^{\rm ab}}$, where each $\eta_F$ differs from %the analogous element considered in the proof of \cite[Th. 8.2]{bses}, if %the prime number $p$ that is fixed throughout loc. cit. is unramified in %$F$, by the Euler factor at $p$. Setting then $\eta^{\rm %ab}:=(2\cdot\eta_F)_{F\in\Omega^{\rm ab}}$, the same computation carried %out in loc. cit. then shows that for each $F$ in $\Omega^{\rm ab}$ one %has %$\Theta^1_F(\eta^{\rm ab})=\varepsilon_F$, without the need to correct %the %right-hand side by any Euler factors.
 Since the projection map
$\xi_p(\KK/\QQ)^\times \to \xi_p(\KK^{\rm ab}/\QQ)^\times$ is surjective (by \cite[Lem. 5.5]{bses1}), we can then fix a basis element $\eta$ of the $\xi_p(\KK/\QQ)$-module ${\rm VS}(\KK^{\rm ab}/\QQ,\{\infty\},\emptyset,p)$ that the projection map 
\[ {\rm VS}(\KK/\QQ,\{\infty\},\emptyset,p) \to {\rm VS}(\KK^{\rm ab}/\QQ,\{\infty\},\emptyset,p)\]
sends to $\eta^{\rm ab}$. We then obtain a system in ${\rm ES}_1(\KK/\QQ,\{\infty\},\emptyset,p)$ by setting
\[ \varepsilon^{\rm cyc}:= \Theta(\eta).\]

This construction ensures directly that for every $F$ in $\Omega(\KK^{\rm ab})$ one has 
\[ \varepsilon^{\rm cyc}_F = \Theta_F(\eta_F^{\rm ab}) = {\rm Norm}_{\QQ(\zeta_{f(F)})/F}(1-\zeta_{f(F)}),\]
as stated in claim (i). 

On the other hand, the properties in claims (ii) and (iii) are verified by precisely mimicking the arguments in \cite[\S5.2]{bses1}. \end{proof}

\begin{remark}\label{formulaiii}{\em This result strengthens that of \cite[\S1, Th. B]{bses1} since $\varepsilon^{\rm cyc}$ is a strict refinement (in the sense of Remark \ref{new ES}(ii)) of the system in ${\rm ES}_1(\KK/\QQ,\{\infty\}\cup \{p\},\emptyset,p)$ that is constructed in loc. cit.}\end{remark}

\begin{remark}{\em The displayed containment in Theorem \ref{cyclotomiccor}(ii) can be interpreted as a special case of the annihilation results relating to Selmer modules of $p$-adic representations that are obtained by Macias Castillo and Tsoi in \cite{mct}.}\end{remark}

\section{Higher rank non-commutative Iwasawa theory}\label{hrncit section}

In this section we use the constructions made in \S\ref{res section} to
formulate an explicit main conjecture of non-commutative Iwasawa theory for $\mathbb{G}_m$ over arbitrary number fields.

We then show that this conjecture simultaneously extends
both the higher rank main conjecture of (commutative) Iwasawa theory
formulated by Kurihara and the present authors in \cite{bks2} and
the general formalism of main conjectures in non-commutative Iwasawa theory following the approaches of Ritter and Weiss in  \cite{RitterWeiss} and
of Coates et al in \cite{cfksv}, and thereby deduce its validity in important special cases.
%and a generalized form of the Gross-Stark Conjecture.

In the sequel we shall regard the prime $p$ as fixed, write $L^{\rm cyc}$ for
the cyclotomic $\ZZ_p$-extension of each
number field $L$ and set $\Gamma_L := \Gal(L^{\rm cyc}/L)$.

We also fix a rank one $p$-adic Lie extension $\KK_\infty$ of $K$, set $\G_\infty := \Gal(\KK_\infty/K)$ and for any infinite subquotient $\mathcal{G}$ of $\G_\infty$ we write $\Lambda(\mathcal{G})$ for the Iwasawa algebra $\ZZ_p[[\mathcal{G}]]$. 

We recall that the total quotient ring $Q(\G_\infty)$ of $\Lambda(\G_\infty)$ is a semisimple algebra and hence that there exists a reduced norm homomorphism 
\[ {\rm Nrd}_{Q(\G_\infty)}: \K_1(Q(\G_\infty)) \to \zeta(Q(\G_\infty))^\times.\]

Finally we set
\[ \QQ_p[[\G_\infty]] := {\varprojlim}_{L \in \Omega(\KK_\infty)}\QQ_p[\G_L],\]
where the transition morphisms are the natural projection maps.

\subsection{Whitehead orders in Iwasawa theory} To help set the context for our conjecture, we first clarify the link between the subrings $\xi_p(\KK_\infty/K)$ and $\zeta(\Lambda(\G_\infty))$ of $\zeta(\QQ_p[[\G_\infty]])$.

\begin{lemma}\label{relate xi zeta} Set $\mathcal{R} := \zeta(\La(\G_\infty))$ and $\mathcal{R}' := \xi_p(\KK_\infty/K)$. Fix a central open subgroup $\mathcal{Z}$ of $\G_\infty$ that is topologically isomorphic to $\ZZ_p$ and for each natural number $n$ write  $\mathcal{R}_n$ for the subring $\La(\mathcal{Z}^{p^n})$ of $\mathcal{R}$. Then the following claims are valid for each natural number $n$. %subring $\xi_p(\KK_\infty/K)\cdot\zeta(\Lambda(\G_\infty))$.
\begin{itemize}
\item[(i)] The $\ZZ_p$-module $\mathcal{R}'/\bigl(\mathcal{R}'\cap \mathcal{R}\bigr)$ has finite exponent. 
\item[(ii)] There exists a natural number $t$ such that the element $p^t\cdot {\rm Nrd}_{Q(\G_\infty)}(p)$ belongs to $\mathcal{R}\cap \mathcal{R}'$ and any sufficiently large power of it annihilates $\mathcal{R}/\bigl(\mathcal{R}\cap (\mathcal{R}'\cdot\mathcal{R}_n)\bigr)$.
\item[(iii)] There are inclusions
\[ \mathcal{R}'\subseteq \mathcal{R}\left[p^{-1}\right] \subseteq (\mathcal{R}'\cdot \mathcal{R}_n)
\left[p^{-1},{\rm Nrd}_{Q(\G_\infty)}(p)^{-1}\right].\]
\end{itemize}
\end{lemma}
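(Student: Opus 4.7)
The plan is to prove the three claims in order; claim (iii) will follow essentially formally from (i) and (ii), so the real work is in the first two.

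For claim (i), the key step is to establish the existence of a single natural number $t_0$ such that $p^{t_0}\cdot\xi(\ZZ_p[\cG_L])\subseteq\zeta(\ZZ_p[\cG_L])$ for every $L\in\Omega(\K_\infty)$. Passing to inverse limits of such inclusions will then yield $p^{t_0}\cdot\mathcal{R}'\subseteq\mathcal{R}'\cap\mathcal{R}$, which is precisely claim (i). To obtain the uniform bound, I would exploit the fact that $\G_\infty$ has a central open subgroup $\mathcal{Z}$ with finite quotient $\Delta:=\G_\infty/\mathcal{Z}$, so that each finite-level group $\G_L$ fits into a central extension $1\to Z_L\to\G_L\to\Delta_L\to 1$ with $Z_L$ cyclic of $p$-power order and $\Delta_L$ a quotient of $\Delta$. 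Since $Z_L$ is central it acts through a character on each irreducible $\QQ_p^c$-representation of $\G_L$, so the Schur indices and the denominators controlling the inclusion $\xi\subseteq \zeta_{\QQ_p}$ at level $L$ depend only on the projective representation theory of quotients of $\Delta$, and are therefore bounded uniformly in terms of $|\Delta|$.

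For claim (ii), one first observes that ${\rm Nrd}_{Q(\G_\infty)}(p)$ lies in $\mathcal{R}'$, since at each finite level $p\in\ZZ_p[\G_L]^\times$ and so its reduced norm lies in $\xi(\ZZ_p[\G_L])$ by the very definition of $\xi$. Taking $t:=t_0$ from claim (i) therefore gives $p^t\cdot{\rm Nrd}_{Q(\G_\infty)}(p)\in\mathcal{R}\cap\mathcal{R}'$. For the annihilation assertion, the plan is to analyse $\mathcal{R}$ as a module over $\mathcal{R}_n$: since $\La(\G_\infty)$ is free of rank $|\Delta|$ over $\La(\mathcal{Z})$, the centre $\mathcal{R}$ is finite over $\La(\mathcal{Z})$ and hence also over its finite-index subring $\mathcal{R}_n$. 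The Wedderburn decomposition of $Q(\G_\infty)$ then embeds $\mathcal{R}$ into a product of the centres of its simple factors, and the natural idempotents realising this decomposition have denominators bounded by quantities involving $|\Delta|$, $p$ and the reduced norm of $p$ itself. Combined with the uniform bound from claim (i), this shows that a sufficiently large power of $p^t\cdot{\rm Nrd}_{Q(\G_\infty)}(p)$ clears all such denominators, and hence carries $\mathcal{R}$ into $\mathcal{R}\cap(\mathcal{R}'\cdot\mathcal{R}_n)$, as required.

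Claim (iii) is then formal: the containment $\mathcal{R}'\subseteq\mathcal{R}[p^{-1}]$ is immediate from $p^{t_0}\cdot\mathcal{R}'\subseteq\mathcal{R}$, and the containment $\mathcal{R}[p^{-1}]\subseteq(\mathcal{R}'\cdot\mathcal{R}_n)[p^{-1},{\rm Nrd}_{Q(\G_\infty)}(p)^{-1}]$ is exactly what one gets from the annihilation part of claim (ii) upon inverting $p$ and ${\rm Nrd}_{Q(\G_\infty)}(p)$.

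The main obstacle will be the annihilation part of claim (ii): one must identify precisely the denominators appearing in the structural comparison between the centre $\mathcal{R}$ and the subring $\mathcal{R}'\cdot\mathcal{R}_n$ of the non-commutative Iwasawa algebra and verify that they are controlled by powers of the specific element $p^t\cdot{\rm Nrd}_{Q(\G_\infty)}(p)$. The uniform bound from claim (i) will play an essential role here in ensuring that a single explicit power of $p^t\cdot{\rm Nrd}_{Q(\G_\infty)}(p)$ works simultaneously for all elements of $\mathcal{R}$, independently of the finite level $L$.
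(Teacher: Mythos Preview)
Your treatment of claim (iii) is fine, and your overall architecture for claim (i)---prove a uniform bound $p^{t_0}\cdot\xi(\ZZ_p[\G_L])\subseteq\zeta(\ZZ_p[\G_L])$ at each finite level, then pass to the inverse limit---matches the paper's. The difference is in how the uniform bound is obtained. You assert that the denominators ``depend only on the projective representation theory of quotients of $\Delta$'', but this is precisely the non-trivial content that needs to be proved; the paper establishes it by lifting each finite-level matrix to $M_d(\Lambda(\G_\infty))$, observing (via Ritter--Weiss) that its reduced norm lies in the integral closure of $\Lambda(\mathcal{Z})$ in $\zeta(Q(\G_\infty))$, and then invoking Nickel's central conductor formula for completed group algebras to get a single $N$ with $p^N$ times any such reduced norm lying in $\Lambda(\G_\infty)$. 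Your sentence is a plausible heuristic for why such an $N$ should exist, but it is not a proof, and the conductor formula is the standard way to make it one.

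For claim (ii) there is a genuine gap. The target is to land in $\mathcal{R}'\cdot\mathcal{R}_n$, and $\mathcal{R}'$ is by definition generated by reduced norms of matrices over $\Lambda(\G_\infty)$; so any argument must ultimately \emph{produce} such reduced norms. Your mechanism---``the idempotents realising the Wedderburn decomposition have bounded denominators''---does not do this: clearing denominators of central idempotents gives you elements of some order in $\zeta(Q(\G_\infty))$, but not obviously elements of $\mathcal{R}'\cdot\mathcal{R}_n$. In particular, your proposal does not explain why the specific element ${\rm Nrd}_{Q(\G_\infty)}(p)$, as opposed to merely some power of $p$, must appear. The paper's argument is quite different: it fixes a maximal $\mathcal{R}_n$-order $\mathfrak{M}\supseteq\Lambda(\G_\infty)$ in $Q(\G_\infty)$ and shows (by localising at height-one primes of $\mathcal{R}_n$ and using the arithmetic of local division algebras) that reduced norms of matrices over $\mathfrak{M}$ generate, up to finite $p$-power index, the maximal $\mathcal{R}_n$-order $\mathcal{M}_n$ in $\zeta(Q(\G_\infty))$. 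Since $\mathcal{R}\subseteq\mathcal{M}_n$, every $x\in\mathcal{R}$ satisfies $p^{t_1}x=\sum_i{\rm Nrd}_{Q(\G_\infty)}(M_{x,i})\cdot y_i$ with $M_{x,i}$ over $\mathfrak{M}$ and $y_i\in\mathcal{R}_n$. The conductor formula again gives $p^t\mathfrak{M}\subseteq\Lambda(\G_\infty)$, so replacing each $M_{x,i}$ by $p^t M_{x,i}$ yields matrices over $\Lambda(\G_\infty)$---hence reduced norms in $\mathcal{R}'$---at the cost of multiplying by a power of ${\rm Nrd}_{Q(\G_\infty)}(p)$. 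This passage through the maximal order is the missing idea in your plan, and it is exactly what makes ${\rm Nrd}_{Q(\G_\infty)}(p)$ appear naturally.
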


\begin{proof} It is clearly enough to prove claims (i) and (ii) and to do this we use the fact (proved in \cite[Lem. 4.13]{bses1}) that for any matrix $B$ in
${\rm M}_d(\La(\G_\infty))$ the reduced norm ${\rm Nrd}_{Q(\mathcal{G}_\infty)}( B)$ belongs to $\mathcal{R}'$ and is equal to
\begin{equation}\label{rn description}
{\rm Nrd}_{Q(\mathcal{G}_\infty)}( B) =
\bigl( {\rm Nrd}_{\QQ_p[\cG_L]}(B_L)\bigr)_{L \in \Omega(\KK_\infty)}\end{equation}
where $B_L$ is the image of $B$ in ${\rm M}_d(\ZZ_p[\cG_L])$.

To prove claim (i) we fix an element $x = (x_L)_L$ of $\mathcal{R}'$. Then, for each $L$ in $\Omega(\KK_\infty)$, there exists a finite index set $I_L$ and, for each $i \in I_L$, an element $a_i$ of $\ZZ_p$, a natural number $d_i$ and a matrix $M_{L,i}$ in ${\rm M}_{d_i}(\ZZ_p[\G_L])$ such that in $\xi(\ZZ_p[\G_L])$ one has 
\[ x_L = {\sum}_{i\in I_L}a_i\cdot {\rm Nrd}_{\QQ_p[\G_L]}(M_{L,i}).\]

For each index $i$ in $I_L$ we fix a pre-image $M^\infty_{L,i}$ of $M_{L,i}$ under the natural (surjective) projection map
${\rm M}_{d_i}(\La(\G_\infty)) \to {\rm M}_{d_i}(\ZZ_p[\G_L])$. Then ${\rm Nrd}_{Q(\G_\infty)}(M^\infty_{L,i})$ belongs to the integral closure of $\La(\mathcal{Z})$ in $\zeta(Q(\G_\infty))$ (this follows, for example, from the observation of Ritter and Weiss in \cite[\S5, Rem. (H)]{RitterWeiss}) and therefore also to any choice of a maximal $\La(\mathcal{Z})$-order in $Q(\G_\infty)$. Hence, by the central conductor formula of Nickel \cite[Th. 3.5]{NickelDoc}, there exists a natural number $N$ (that is independent of both $L$ and $M_{L,i}$) such that $p^N\cdot {\rm Nrd}_{Q(\G_\infty)}(M^\infty_{L,i})$ belongs to $\La(\G_\infty)$.

The latter containment combines with (\ref{rn description}) to imply  $p^N\cdot {\rm Nrd}_{\QQ_p[\G_L]}(M_{L,i})\in \ZZ_p[\G_L]$ for each $i$ and hence that $p^N\cdot x_L \in \ZZ_p[\G_L]$. It follows that $p^N\cdot x = (p^N\cdot x_L)_L$ belongs to
\[ \zeta(\QQ_p[[\G_\infty]]) \cap {\prod}_L\ZZ_p[\G_L] = \mathcal{R},\]
and hence that $\mathcal{R}'/\bigl(\mathcal{R}'\cap \mathcal{R}\bigr)$ is annihilated by a fixed power $p^N$ of $p$, as required.

To prove claim (ii) we fix a maximal $\mathcal{R}_n$-order $\mathfrak{M}$ in $Q(\G_\infty)$ that contains $\La(\G_\infty)$. We write $\mathcal{M}_n$ for the maximal $\mathcal{R}_n$-order in $\zeta(Q(\G_\infty))$ and claim that the $\mathcal{R}_n$-order generated by
 the elements ${\rm Nrd}_{Q(\G_\infty)}(M)$ as $M$ runs over $\bigcup_{n \ge 1}{\rm M}_n(\mathfrak{M})$ has finite index in $\mathcal{M}_n$.
 To show this we note  $\mathfrak{M}$ is a finitely generated $\mathcal{R}_n$-module and hence that it is enough (by the structure theory of $\mathcal{R}_n$-modules) to show that the localization of $\mathcal{M}_n$ at every height one prime ideal $\wp $ of $\mathcal{R}_n$ is generated over $\mathcal{R}_{n,\wp}$ by ${\rm Nrd}_{Q(\G_\infty)}(M)$ as $M$ runs over $\bigcup_{n \ge 1}{\rm M}_n(\mathfrak{M}_\wp)$. In addition, since each such ring $\mathcal{R}_{n,\wp}$ is a discrete valuation ring and $\mathfrak{M}_\wp$ is a maximal $\mathcal{R}_{n,\wp}$-order in $\zeta(Q(\G_\infty))$, this follows in a straightforward fashion from the arithmetic of local division algebras (as in the proof
 of \cite[Prop. (45.8)]{curtisr}).

 We can therefore fix a natural number $t_1$ such that for each $x = (x_L)_L$ in $\mathcal{R}$, there exists a finite index set $I_x$, and for each $i$ in $I_x$ an element $y_i$ of $\mathcal{R}_n$, a natural number $n_i$ and a matrix $M_{x,i}$ in 
 ${\rm M}_{n_i}(\mathfrak{M})$ such that 
\[ p^{t_1}\cdot x = {\sum}_{i \in I_x} {\rm Nrd}_{Q(\G_\infty)}(M_{x,i})\cdot y_i.\]
In addition, by another application of the central conductor formula \cite[Th. 3.5]{NickelDoc}, there exists a natural number $t$ that is greater than or equal to the integer $N$ fixed above and is such that for every $i$ in $I_x$ the matrix 
 $p^{t}\cdot M_{x,i}$ belongs to 
 ${\rm M}_{n_i}(\La(\G_\infty))$. These facts combine to imply that for every $x$ in $\mathcal{R}$ one has 
 
 \[ p^{t_1}\cdot {\rm Nrd}_{Q(\G_\infty)}(p)^{t}\cdot x = {\sum}_{i \in I_x} {\rm Nrd}_{Q(\G_\infty)}(p^{t}\cdot M_{x,i})\cdot y_i\in \mathcal{R}'\cdot\mathcal{R}_n.\]

In particular, since ${\rm Nrd}_{Q(\G_\infty)}(p)\in \mathcal{R}'$, this containment implies $(p^{t}\cdot {\rm Nrd}_{Q(\G_\infty)}(p))^{t'} \cdot x$ belongs to $\mathcal{R}'\cdot\mathcal{R}_n$ for any integer $t'$ that is greater than both $t_1/t$ and $t$. To complete the proof of claim (ii) it is therefore enough to note that, since $t \ge N$, the proof of claim (i) above implies that $p^t\cdot {\rm Nrd}_{Q(\G_\infty)}(p)$ belongs to $\mathcal{R}'\cap \mathcal{R}$. \end{proof}

 %\[ {\rm Nrd}_{\QQ_p[\G_L]}(p^N)\cdot \bigl(x_i\cdot {\rm Nrd}_{Q(\G_\infty)}(M_{x,i})\bigr) = x_{i,L}\cdot \bigl({\rm Nrd}_{Q(\G_\infty)}(p^N\cdot M_{x,i})\bigr)\]

\subsection{A main conjecture of higher rank non-commutative Iwasawa theory for $\mathbb{G}_m$}

\subsubsection{}\label{pb section2} For each object $C$ of $\Der^{\rm perf}(\La(\G_\infty))$ we  define a $\xi_p(\KK_\infty/K)$-module by setting
\[ {\rm d}_{\Lambda(\G_\infty)}(C) := {\varprojlim}_{L\in \Omega(\KK_\infty)}{\rm d}_{\ZZ_p[\G_L]}(\ZZ_p[\G_L]\otimes^{\DL}_{\La(\G_\infty)}C) \]
where the transition morphism are induced by \cite[Th. 5.4(ii)]{bses}. This $\xi_p(\KK_\infty/K)$-module is free of rank one and, following the approach of \S\ref{plp section}, we now introduce a canonical set of basis elements.

To do this we fix an isomorphism in $\Der^{\rm perf}(\La(\G_\infty))$ of the form 
\begin{equation}\label{fixed resolution} P^\bullet \cong C,\end{equation}
in which $P^\bullet$ is a bounded complex of finitely generated free $\La(\G_\infty)$-modules. 

In each degree $a$ we write $r_a$ for the rank of $P^a$ and fix an ordered $\La(\G_\infty)$-basis $\underline{b}_{a} = \{b_{a,j}\}_{1\le j\le r_a}$ of $P^a$. Then in every degree $a$ and for every field $L$ in $\Omega(\KK_\infty)$ the image $\underline{b}_{L,a} = \{b_{L,a,j}\}_{1\le j\le r_a}$ of $\underline{b}_{a}$ under the projection map $P^a \to P^a_L:= H_0(\Gal(\KK_\infty/L),P^a)$ is an ordered $\ZZ_p[\G_L]$-basis of $P^a_L$. The element 
\[ x(\underline{b}_\bullet)_L := ({\bigotimes}_{a\in \ZZ} (\wedge_{j\in [r_a]}b_{L,a,j})^{(-1)^a},0)\]
is then a basis of the (graded) $\xi(\ZZ_p[\G_L])$-module ${\rm d}_{\ZZ_p[\G_L]}(\ZZ_p[\G_L]\otimes_{\La(\G_\infty)}P^\bullet)$ that is  compatible with the natural transition morphisms as $L$ varies and so the tuple
\[ x(\underline{b}_\bullet) = \bigl(x(\underline{b}_\bullet)_L\bigr)_{L\in \Omega(\KK_\infty)}\]
is a $\xi_p(\KK_\infty/K)$-basis of ${\rm d}_{\Lambda(\G_\infty)}(P^\bullet)$.

We use this construction to define an Iwasawa-theoretic analogue of the notion of primitive-basis. 

\begin{definition}\label{pbe def} {\em Fix $C$ in $\Der^{\rm perf}(\La(\G_\infty))$. Then an element $x$ of ${\rm d}_{\Lambda(\G_\infty)}(C)$ is said to be a `primitive basis element' if, for every resolution of $C$ of the form (\ref{fixed resolution}), there exists a collection $\underline{b}_\bullet$ of ordered bases of the modules $P^a$ such that the induced isomorphism ${\rm d}_{\Lambda(\G_\infty)}(P^\bullet)\cong {\rm d}_{\Lambda(\G_\infty)}(C)$ sends $x(\underline{b}_\bullet)$ to $x$. We write ${\rm d}_{\Lambda(\mathcal{G}_\infty)}(C)^{\rm pb}$ for the subset of
${\rm d}_{\Lambda(\mathcal{G}_\infty)}(C)$ comprising all primitive basis elements.}\end{definition}

\begin{remark}{\em The fact that the elements $x(\underline{b}_\bullet)$ are defined as inverse limits of the corresponding elements $x(\underline{b}_\bullet)_L$ over finite extensions $L$ of $K$ in $\KK_\infty$ is important. Specifically, this property combines with the argument of Proposition \ref{concrete primitive lemma 20} to imply that, in order to show $x$ belongs to ${\rm d}_{\Lambda(\mathcal{G}_\infty)}(C)^{\rm pb}$ it is sufficient to check, for any {\em fixed} resolution (\ref{fixed resolution}) of $C$, that there exists a collection $\underline{b}_\bullet$ of ordered bases  such that the induced isomorphism ${\rm d}_{\Lambda(\G_\infty)}(P^\bullet)\cong {\rm d}_{\Lambda(\G_\infty)}(C)$ sends $x(\underline{b}_\bullet)$ to $x$.}\end{remark}

\subsubsection{}\label{ncmc section}The set $S_{\rm ram}(\KK_\infty/K)$ of places of $K$ that ramify in $\KK_\infty$ is finite and we define
\begin{equation}\label{can places} S_{\KK_\infty/K} := S_K^\infty\cup S_K^p \cup S_{\rm ram}(\KK_\infty/K),\end{equation}
where $S_K^p$ denotes the set of all $p$-adic places of $K$. We then fix a finite set $S$ of places of $K$ with the property that 
\[ S_{\KK_\infty/K} \subseteq S.\]

We note that, since $S = S(L)$ for every $L$ in $\Omega(\KK_\infty)$, the construction in Lemma \ref{complex construction} gives rise to an object
\[ C_{\KK_\infty,S,T} := {\varprojlim}_{L\in \Omega(\KK_\infty)} C_{L,S,T,p}\]
of $\Der^{\rm perf}(\La(\G_\infty))$, where the transition morphisms for $L \subset L'$ are induced by the morphisms in Lemma \ref{complex construction}(iv). We further note that, since $S$ contains $S_K^p$, Lemma \ref{complex construction}(v) implies that this object can be naturally interpreted in terms of the compactly-supported $p$-adic cohomology of $\ZZ_p$.

We also set
\[ \mathcal{O}_{\KK_\infty,S,T}^\times := {\varprojlim}_{L\in \Omega(\KK_\infty)} \co_{L,S,T,p}^\times\]
where the transition morphisms for $L \subseteq L'$ are induced by the field-theoretic norm maps $(L')^\times \to L^\times$.
For each non-negative integer $a$ we then define a $\xi_p(\KK/K)$-module by setting
\begin{equation*}{\bigwedge}^{a}_{\CC_p\cdot\Lambda(\cG_\infty)}(\CC_p\cdot\co^\times_{\KK_\infty,S,T}):=
{\varprojlim}_{L\in \Omega(\KK_\infty)}{\bigwedge}_{\CC_p[\G_{L}]}^{a}(\CC_p\cdot\co_{L,S,T,p}^\times)\end{equation*}
and a submodule 
\begin{equation}\label{infinite bidual}{\bigcap}^{a}_{\Lambda(\cG_\infty)}\co^\times_{\KK_\infty,S,T}:=
{\varprojlim}_{L\in \Omega(\KK_\infty)}{\bigcap}_{\ZZ_p[\G_{L}]}^{a}\co_{L,S,T,p}^\times\end{equation}
%and, similarly, we also set $\bigcap^{r_\chi}_{\ZZ_p[[\G_{\chi}]]}P_{\chi,\infty}:=\varprojlim_{n\in\NN}\bigcap_{\ZZ_p[\G_{\chi,n}]}^{r_\chi}P_n$.
%
where, in both cases, the limits are taken with respect to (the scalar extensions of) the transition morphisms (\ref{bidual transition}).

We set $\Sigma := \Sigma_S(\KK_\infty)$ and $r := r_{S,\KK_\infty} (= |\Sigma|)$. Then, since $S = S(L)$ for every $L$ in $\Omega(\KK_\infty)$, there are identifications
\[ {\rm d}_{\Lambda(\G_\infty)}(C_{\KK_\infty,S,T}) =
{\rm VS}(\KK_\infty/K,S,T,p)\]
and 

\[ {\rm ES}_r(\KK_\infty/K,S,T,p) = {\bigcap}^{r}_{\Lambda(\cG_\infty)}\co^\times_{\KK_\infty,S,T},\]
and so the constructions in \S\ref{BESsection} gives a canonical  homomorphism of $\xi_p(\KK_\infty/K)$-modules
\[ \Theta^\Sigma_{\KK_\infty,S,T}: {\rm d}_{\Lambda(\G_\infty)}(C_{\KK_\infty,S,T})\to
{\bigcap}^{r}_{\Lambda(\cG_\infty)}\co^\times_{\KK_\infty,S,T}.\]
%
%where we set  and the first and third maps are respectively the natural identification and projection.

For the same reason, the distribution relation (\ref{distribution}) gives rise to an element   
\begin{equation}\label{limit RS element} \varepsilon^{\rm RS}_{\KK_\infty,S,T} := (\varepsilon^\Sigma_{F/K,S,T})_{F \in \Omega(\KK_\infty)}\end{equation}
of ${\bigwedge}^{r}_{\CC_p\cdot\Lambda(\cG_\infty)}(\CC_p\cdot\co^\times_{\KK_\infty,S,T})$.

We can now formulate an explicit main conjecture of non-commutative $p$-adic Iwasawa theory for $\mathbb{G}_m$ relative to $\KK_\infty/K$.   

\begin{conjecture}\label{hrncmc}{\em (Higher Rank Non-commutative Main Conjecture for $\mathbb{G}_m$) } Fix $\KK_\infty/K$ and $S$ as above and set $\Sigma := \Sigma_S(\KK_\infty)$ and $r := |\Sigma|$. %If $S$ contains $S_{\KK_\infty/K}$, 
Then one has 
\[ {\rm Nrd}_{Q(\mathcal{G}_\infty)}(\K_1(\Lambda(\mathcal{G}_\infty)))\cdot \varepsilon^{\rm RS}_{\KK_\infty,S,T} = \Theta^\Sigma_{\KK_\infty,S,T}({\rm d}_{\Lambda(\mathcal{G}_\infty)}(C_{\KK_\infty,S,T})^{\rm pb})\]
in
${\bigcap}_{\Lambda(\mathcal{G}_\infty)}^r\mathcal{O}_{\KK_\infty,S,T}^\times$. 
\end{conjecture}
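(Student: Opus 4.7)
The plan is to proceed in four stages: checking well-definedness of the claimed equality, reducing to a single-orbit question, descending to finite levels of the tower, and verifying the result in the special case that can presently be established unconditionally.

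First, I would verify that both sides of the claimed equality lie inside ${\bigcap}^r_{\La(\G_\infty)}\co^\times_{\K_\infty,S,T}$. The element $\varepsilon^{\rm RS}_{\K_\infty,S,T}$ defined in (\ref{limit RS element}) is a priori only an element of the scalar extension ${\bigwedge}^r_{\CC_p\cdot\La(\G_\infty)}(\CC_p\cdot\co^\times_{\K_\infty,S,T})$, and its integrality at every finite level requires the Rubin--Stark Conjecture (Conjecture \ref{MRSconjecture}) via Lemma \ref{ncRS}(ii). Granted this, the fact that ${\rm Nrd}_{Q(\G_\infty)}(K_1(\La(\G_\infty)))$ preserves this lattice then follows from Lemma \ref{relate xi zeta}, which relates the subring $\xi_p(\K_\infty/K)$ to $\zeta(\La(\G_\infty))$, together with the compatibility (\ref{rn description}) between reduced norms at the tower level and at each finite level.

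Second, I would establish an Iwasawa-theoretic analogue of Proposition \ref{prim criterion 2}(ii): any two primitive basis elements of ${\rm d}_{\La(\G_\infty)}(C_{\K_\infty,S,T})$ in the sense of Definition \ref{pbe def} differ by multiplication by an element of ${\rm Nrd}_{Q(\G_\infty)}(K_1(\La(\G_\infty)))$. This is obtained by fixing one resolution $P^\bullet\cong C_{\K_\infty,S,T}$, running the finite-level argument of Lemma \ref{concrete primitive lemma 1} in each quotient $P^\bullet_L$, and passing to the inverse limit; the surjectivity (\ref{sst}) of ${\rm GL}_d(\La(\G_\infty))\to K_1(\La(\G_\infty))$ in sufficiently large rank is what allows the change-of-basis data to be assembled coherently. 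Consequently, the set ${\rm d}_{\La(\G_\infty)}(C_{\K_\infty,S,T})^{\rm pb}$ is a single ${\rm Nrd}_{Q(\G_\infty)}(K_1(\La(\G_\infty)))$-orbit, and Conjecture \ref{hrncmc} becomes the assertion that $\Theta^\Sigma_{\K_\infty,S,T}$ sends some (equivalently, every) primitive basis into the orbit of $\varepsilon^{\rm RS}_{\K_\infty,S,T}$.

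Third, I would descend to finite levels. The construction (\ref{nuF'F}) expresses $\Theta^\Sigma_{\K_\infty,S,T}$ as the inverse limit of the maps $\Theta^\Sigma_{L,S,T,p}$ arising from the proof of Theorem \ref{VStoES}, and the elements $\varepsilon_{F,S,T}$ in (\ref{lazy RS def}) are compatible under the distribution relation. Projecting the desired equality to each $L\in\Omega(\K_\infty)$ therefore reduces matters to the following finite-level assertion: for some primitive basis of ${\rm d}_{\ZZ_p[\G_L]}(C_{L,S,T,p})$, its image under $\Theta^\Sigma_{L,S,T,p}$ agrees with $\varepsilon^\Sigma_{L/K,S,T}$ up to an element of ${\rm Nrd}_{\QQ_p[\G_L]}(K_1(\ZZ_p[\G_L]))$. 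In view of the explicit description (\ref{exp proj}) of $\Theta^\Sigma_{L,S,T,p}$ and Theorem \ref{ltc2}(ii) together with Remark \ref{remark etnc}, this reduces precisely to the $p$-part of the equivariant Tamagawa Number Conjecture for $\mathbb{G}_m$ relative to $L/K$, together with Conjecture \ref{MRSconjecture} for $(L/K,S,T,p)$.

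Fourth, in the special case $\Sigma=\emptyset$ this descent yields an unconditional proof: here $r=0$, so ${\bigcap}^0_{\La(\G_\infty)}\co^\times_{\K_\infty,S,T}=\xi_p(\K_\infty/K)$, and $\varepsilon^{\rm RS}_{\K_\infty,S,T}$ reduces to the inverse limit of Stickelberger elements $\theta_{L/K,S,T}(0)$ (cf. Remark \ref{r_S=0 example}(ii)); the equivariant main conjecture for totally real fields established by Ritter--Weiss \cite{RWnew} and by Kakde \cite{kakde} then supplies the required equality via the finite-level interpretation above. The essential obstacle in the general case is that the finite-level statement is equivalent to ${\rm eTNC}(\mathbb{G}_m)$ and to the Rubin--Stark Conjecture, both of which are open beyond limited families of extensions; thus the realistic target of the proposed argument is not an unconditional proof of Conjecture \ref{hrncmc} in general, but rather a clean implication showing that it is equivalent to the combination of these two conjectures over the tower $\K_\infty/K$.
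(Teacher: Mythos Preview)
The statement you are attempting to prove is a \emph{conjecture}, and the paper does not contain a proof of it. There is therefore no proof in the paper to compare against. What the paper does establish are partial results: Proposition~\ref{translate prop} reformulates the rank-zero component in terms of the standard non-commutative main conjecture formalism, and Corollary~\ref{hrmc true} deduces the $(1-\tau)/2$-component in the CM-over-totally-real setting, conditionally on $\mu_p(L)=0$, from the Ritter--Weiss/Kakde theorem. Theorem~\ref{strategytheorem} then proves the \emph{one-way} implication that Conjecture~\ref{hrncmc} (together with the Generalized Gross--Stark Conjecture and a semisimplicity hypothesis) implies the $e_E^*$-component of ${\rm eTNC}(\mathbb{G}_m)$.

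Your proposal ultimately recognises this in its final sentence, but the body contains genuine gaps. In your third stage you claim that projecting to each finite level $L\in\Omega(\K_\infty)$ reduces the conjecture to the $p$-part of ${\rm eTNC}(\mathbb{G}_m)$ for $L/K$ together with the Rubin--Stark Conjecture. This is wrong in both directions. First, the map $\Theta^\Sigma_{L,S,T,p}$ factors through the idempotent $e_{L/K,S,\Sigma}$ (cf.\ Proposition~\ref{generate rubin}(ii) and Remark~\ref{rs stable2}), so the finite-level condition sees only that component, not the full ${\rm eTNC}$. Second, and more seriously, even if each finite-level equality holds with some unit $u_L\in K_1(\ZZ_p[\G_L])$, you have not shown these lift to a single element of $K_1(\La(\G_\infty))$; constructing such a coherent lift is precisely the content of non-commutative Iwasawa main conjectures and is not a formal consequence of the finite-level statements. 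Your fourth stage also overstates what is known: the $\Sigma=\emptyset$ case is not proved unconditionally in the paper; Corollary~\ref{hrmc true} requires $K$ totally real, $L$ CM, $\K_\infty=L^{\rm cyc}$, restricts to the minus component, and assumes $\mu_p(L)=0$.
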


\begin{remark}\label{mc to rs}{\em The validity of Conjecture \ref{hrncmc} combines with the definition (\ref{infinite bidual}) (with $a=r$) to imply that for every $F$ in $\Omega(\KK_\infty)$ the element $\varepsilon^\Sigma_{F/K,S,T}$ belongs to ${\bigcap}_{\ZZ_p[\G_{F}]}^{r}\co_{F,S,T,p}^\times$ and hence implies the validity of Conjecture \ref{MRSconjecture} for $F/K,S$ and $T$. (Here we recall, from Remark \ref{ind v_0}(iii), that for any field $F$ in $\Omega(\KK_\infty)$ for which $\Sigma\not= \Sigma_S(F)$ 
the element $\varepsilon^\Sigma_{F/K,S,T}$ vanishes.) }\end{remark}

\begin{remark}{\em If $\KK_\infty/K$ is abelian, then \cite[Th. 3.5]{bks2} implies Conjecture \ref{hrncmc} is equivalent to the `higher rank main conjecture' of Iwasawa theory formulated by Kurihara and the present authors in \cite[Conj. 3.1]{bks2}. In particular, the argument of  \cite[Cor. 5.6]{bks2} shows that if $K = \QQ$ and $\KK_\infty/K$ is abelian, then the validity of Conjecture \ref{hrncmc} follows as a consequence of the classical Iwasawa main conjecture in this setting, as proved by Mazur and Wiles.}\end{remark}

\begin{remark}\label{general mc remark}{\em Following the general approach of Coates et al in \cite{cfksv}, we let $\KK'_\infty$ be any compact $p$-adic Lie extension of $K$ in $\KK$ that is ramified at only finitely many places and also contains an intermediate field $\K_\infty$ that is Galois over $K$ and such $\Gal(K_\infty/K)$ is topologically isomorphic to $\ZZ_p$. Then $\KK'_\infty$ is equal to the union of all compact $p$-adic Lie extensions $\KK_\infty$ of rank one of $K$ in $\KK'_\infty$ and hence, by taking the limit of Conjecture \ref{hrncmc} over all such extensions $\KK_\infty/K$, one can formulate a `main conjecture' for $\mathbb{G}_m$ relative to the extension $\KK'_\infty/K$. } \end{remark}

%In the next section we shall show that this prediction constitutes a natural main conjecture of higher rank
%non-commutative Iwasawa theory for $\mathbb{G}_m$ relative to the extension
%$\KK_\infty/K$.

\subsection{Evidence for the rank zero case}\label{rk 0 section} In this section we show that Conjecture \ref{hrncmc} generalizes to arbitrary rank (of Euler systems) the standard formulation of main conjectures in non-commutative Iwasawa theory, %as considered by Ritter and Weiss \cite{RWnew} and Kakde \cite{kakde},
and thereby deduce the validity of an appropriate component of Conjecture \ref{hrncmc} for an important class of extensions.

\subsubsection{}Before stating the next result we recall that any object $C$ of the category  $\Der^{\rm perf}(\Lambda(\G_\infty))$ for which  $Q(\G_\infty)\otimes_{\La(\G_\infty)}C$ is acyclic gives rise to a canonical Euler characteristic element $\chi_{\La(\G_\infty)}^{\rm ref}(C)$ in $\K_0(\La(\G_\infty),Q(\G_\infty))$.

\begin{proposition}\label{translate prop} Let $\epsilon$ be a
non-zero idempotent  of $\zeta(\Lambda(\G_\infty))$ such that the cohomology
groups of $\epsilon\cdot C_{\KK_\infty,S,T}$ are torsion $\Lambda(\G_\infty)$-modules.

Then $\Sigma_S(\KK_\infty)$ is empty (so that $r = 0$) and the $\epsilon$-component
of Conjecture \ref{hrncmc} is valid if and only if there exists an
element $\lambda$ of $\K_1(Q(\G_\infty)\epsilon)$ with the following two
properties:
\begin{itemize}
\item[(i)] the canonical connecting homomorphism
$\K_1(Q(\G_\infty)\epsilon) \to
\K_0(\Lambda(\G_\infty)\epsilon,Q(\G_\infty)\epsilon)$ sends $\lambda$ to  
$\chi_{\Lambda(\G_\infty)\epsilon}^{\rm ref}(\epsilon\cdot C_{\KK_\infty,S,T})$;
\item[(ii)] the reduced norm map of the semisimple algebra $Q(\G_\infty)\epsilon$ sends $\lambda$ to
$\epsilon\cdot\theta_{\KK_\infty,S,T}(0)$.
\end{itemize}
\end{proposition}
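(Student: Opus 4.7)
The plan is to handle the two assertions of the proposition separately.

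First, I would show $\Sigma := \Sigma_S(\K_\infty) = \emptyset$ by contradiction. If $v \in \Sigma$, then $v$ splits completely in $\K_\infty$, so the $\Lambda(\G_\infty)$-module $Y_{\K_\infty, \{v\}} := \varprojlim_L Y_{L, \{v\}}$ is free of rank one. The inclusion $S \supseteq S_K^\infty \cup S_K^p$ forces $|S| \ge 2$, so I may fix $v' \in S \setminus \{v\}$ and note that the natural projection $X_{\K_\infty, S} \to Y_{\K_\infty, \{v\}}$ (which discards coefficients at places other than $v$ while using the coefficient at $v'$ to lift the zero-sum condition) is surjective. Composing with the surjection $H^1(C_{\K_\infty, S, T}) \twoheadrightarrow X_{\K_\infty, S}$ obtained by passing (\ref{selmer lemma seq2}) and Lemma \ref{complex construction}(i) to the inverse limit, and then multiplying by $\epsilon$, produces a surjection from $\epsilon \cdot H^1(C_{\K_\infty, S, T})$ to the non-torsion $\Lambda(\G_\infty)\epsilon$-module $\epsilon \cdot Y_{\K_\infty, \{v\}}$, contradicting the torsion hypothesis. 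Hence $\Sigma = \emptyset$ and $r_S = 0$.

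Granted $\Sigma = \emptyset$, Remark \ref{ind v_0}(iv) combines with the definition (\ref{lazy RS def}) to identify $\varepsilon^{\rm RS}_{\K_\infty, S, T}$ with the non-commutative Stickelberger element $\theta_{\K_\infty, S, T}(0)$, while \cite[Th. 4.17(i)]{bses} identifies the rank-zero reduced Rubin lattice $\bigcap_{\Lambda(\G_\infty)}^0 \mathcal{O}^\times_{\K_\infty, S, T}$ with $\xi_p(\K_\infty/K)$. I would then make the Iwasawa-level map $\Theta^\Sigma_{\K_\infty, S, T}$ explicit on primitive basis elements as follows: the torsion-cohomology hypothesis, combined with an inverse-limit version of Lemma \ref{can rep lemma} and standard arguments in $p$-adic Lie Iwasawa theory, allows one to represent $\epsilon \cdot C_{\K_\infty, S, T}$ in $D^{\rm perf}(\Lambda(\G_\infty)\epsilon)$ by a two-term complex of the form $\Lambda(\G_\infty)\epsilon^d \xrightarrow{\Phi} \Lambda(\G_\infty)\epsilon^d$ in which $\Phi$ becomes an automorphism after scalar extension to $Q(\G_\infty)\epsilon$. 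By the Iwasawa-theoretic analogue of Proposition \ref{concrete primitive lemma 20}, every primitive basis element (Definition \ref{pbe def}) of ${\rm d}_{\Lambda(\G_\infty)\epsilon}(\epsilon \cdot C_{\K_\infty, S, T})$ arises from some such two-term presentation; and formula (\ref{element constuction}) of Proposition \ref{generate rubin}, specialised to $a = 0$ and $I = [d]$, computes $\Theta^\Sigma_{\K_\infty, S, T}$ on the resulting basis as ${\rm Nrd}_{Q(\G_\infty)\epsilon}(\Phi)$.

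The equivalence then follows from the construction of refined Euler characteristics in \S\ref{virtual sec}: the class $[\Phi] \in K_1(Q(\G_\infty)\epsilon)$ satisfies $\partial[\Phi] = \chi^{\rm ref}_{\Lambda(\G_\infty)\epsilon}(\epsilon \cdot C_{\K_\infty, S, T})$, and distinct admissible choices of $\Phi$ differ in $K_1(Q(\G_\infty)\epsilon)$ by elements of $K_1(\Lambda(\G_\infty)\epsilon)$, so
\[\Theta^\Sigma_{\K_\infty, S, T}\bigl({\rm d}_{\Lambda(\G_\infty)\epsilon}(\epsilon \cdot C_{\K_\infty, S, T})^{\rm pb}\bigr) = \{{\rm Nrd}_{Q(\G_\infty)\epsilon}(\lambda) : \lambda \in K_1(Q(\G_\infty)\epsilon),\ \partial\lambda = \chi^{\rm ref}_{\Lambda(\G_\infty)\epsilon}(\epsilon \cdot C_{\K_\infty, S, T})\}.\]
The $\epsilon$-component of Conjecture \ref{hrncmc} is thus the assertion that $\epsilon \cdot \theta_{\K_\infty, S, T}(0)$ lies in this set, and this is manifestly equivalent to the existence of a $\lambda$ satisfying conditions (i) and (ii). The main technical obstacle will be the careful passage to the inverse limit in establishing both the existence of the two-term presentation and the Iwasawa-level analogue of Proposition \ref{concrete primitive lemma 20}; both ingredients are familiar in the non-commutative Iwasawa-theoretic literature but must be reconciled with the Whitehead-order formalism of \S\ref{plp section}.
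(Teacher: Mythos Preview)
Your proposal is correct and follows essentially the same route as the paper. The paper represents $C_{\K_\infty,S,T}$ by a two-term free resolution $\La(\G_\infty)^d \xrightarrow{\phi} \La(\G_\infty)^d$ (citing Proposition~\ref{limitomac}), observes that the torsion hypothesis makes the matrix $M$ of $\epsilon\phi$ invertible over $Q(\G_\infty)\epsilon$ with $\partial\langle M\rangle = \chi^{\rm ref}_{\La(\G_\infty)\epsilon}(\epsilon C)$, and then verifies directly (via the finite-level formula in~(\ref{rn rel}) together with~(\ref{rn description})) that $\Theta^\Sigma_{\K_\infty,S,T}$ sends the primitive basis determined by the standard bases to ${\rm Nrd}_{Q(\G_\infty)\epsilon}(\langle M\rangle)$; the long exact sequence of relative $K$-theory then gives exactly your displayed description of $\Theta^\Sigma_{\K_\infty,S,T}({\rm d}^{\rm pb})$. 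One minor difference: the paper does not spell out the argument for $\Sigma_S(\K_\infty)=\emptyset$ at all (it simply invokes Remark~\ref{r_S=0 example}(ii) as if this were already known), so your Selmer-quotient argument for that step is in fact more complete than what appears in the paper.
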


\begin{proof} Lemma \ref{complex construction} implies, under Hypothesis \ref{tf hyp}, the complex
$C_{\KK_\infty,S,T}$ is isomorphic in $\Der^{\rm perf}(\La(\G_\infty))$ to a complex $P^\bullet$ of
the form 
\[\La(\G_\infty)^d \xrightarrow{\phi} \La(\G_\infty)^d,\]
where the first term is placed in degree zero (for a detailed construction of such a complex see, for example, the proof of Proposition \ref{limitomac} below). We write
\[ \underline{b}_\bullet := \{b_{i}\}_{i\in [d]}\]
for the standard basis of $\La(\G_\infty)^d$.

Then, since
the cohomology groups of $C := \epsilon\cdot C_{\KK_\infty,S,T}$
are assumed to be torsion $\Lambda(\G_\infty)$-modules,
the matrix 
\[ M := \bigl(\epsilon\cdot(b_i^\ast\circ \phi)(b_j)\bigr)_{i,j \in [d]}\]
belongs to ${\rm M}_{d}(\La(\G_\infty))\cap {\rm GL}_d(Q(\G_\infty)\epsilon)$ and its class $\langle M\rangle$ in $\K_1(Q(\G_\infty)\epsilon)$ is a pre-image of $\chi^{\rm ref}_{\La(\G_\infty)\varepsilon}(C)$ under the connecting homomorphism in claim (i).

The long exact sequence of relative $K$-theory therefore implies that the stated conditions (i) and (ii) are equivalent to asserting that
\[ {\rm Nrd}_{Q(\mathcal{G}_\infty)\epsilon}(\K_1(\Lambda(\mathcal{G}_\infty)\epsilon))\cdot
\theta_{\KK_\infty,S,T}(0) = \{ {\rm Nrd}_{Q(\mathcal{G}_\infty)\epsilon}(u \cdot \langle M\rangle): u \in \K_1(\La(\G_\infty)\epsilon)\}. \]

Remark \ref{r_S=0 example}(ii) implies that the left hand side of this equality is equal to the $\epsilon$-component of the left hand side of the equality in Conjecture \ref{hrncmc}. To complete the proof it is thus enough to show that
\[ \{ {\rm Nrd}_{Q(\mathcal{G}_\infty)\epsilon}(u \cdot \langle M\rangle): u \in \K_1(\La(\G_\infty)\epsilon)\} = \Theta({\rm d}_{\Lambda(\mathcal{G}_\infty)\epsilon}(C)^{\rm pb}),\]
where we write $\Theta$ in place of $\Theta^\Sigma_{\KK_\infty,S,T}$.
Since the description (\ref{rn description}) of the reduced norm of $Q(\G_\infty)$ combines with the argument of 
\S\ref{pb section} to imply
\[ \Theta({\rm d}_{\Lambda(\mathcal{G}_\infty)\epsilon}( C)^{\rm pb})
 = {\rm Nrd}_{Q(\mathcal{G}_\infty)\epsilon}(\K_1(\La(\G_\infty)\epsilon)) \cdot\Theta(x(\underline{b}_\bullet)),\]
where the tuple $x(\underline{b}_\bullet)$ is constructed using $\underline{b}$ as the ordered basis of both non-zero terms of $P^\bullet$, it is therefore enough to show  $\Theta(x(\underline{b}_\bullet) )$ is equal to ${\rm Nrd}_{Q(\mathcal{G}_\infty)\epsilon}(\langle M\rangle)$.

To prove this we first apply (\ref{rn description}) to the matrix $M$ to deduce that
\[ {\rm Nrd}_{Q(\mathcal{G}_\infty)\epsilon}(\langle M\rangle) = \bigl( {\rm Nrd}_{\QQ_p[\cG_L]\epsilon_L}(M_L)\bigr)_{L \in \Omega(\KK_\infty)}\]
with 
\[ M_L:= \bigl(\epsilon_L\cdot(b_{L,i}^\ast\circ \phi_L)(b_{L,j})\bigr)_{i,j \in [d]}\in {\rm M}_d(\ZZ_p[\cG_L]),\]
where the standard basis elements $b_{L,i}^\ast$ and $b_{L,j}$ are as specified in (\ref{standard basis notation}), $\epsilon_L$ is the image of $\epsilon$ in
$\zeta(\ZZ_p[\cG_L])$ %, $\{b_{L,i}\}_{i\in [d]}$ is the standard basis of $\ZZ_p[\G_L]^d$ 
and $\phi_L$ is the endomorphism of $\ZZ_p[\G_L]^d$
induced by $\phi$. In addition, for $L$ in $\Omega(\KK_\infty)$ the equality (\ref{rn rel}) implies
% \varepsilon_L\cdot(b_{L,i}^\ast\circ \phi_L)(b_{L,j})
\begin{equation*}\label{finite level} \epsilon_L\cdot\bigl(\wedge_{i\in [d]}(b_{L,i}^\ast\circ \phi_L)\bigr)\bigl(\wedge_{j\in [d]}b_{L,j}\bigr)= {\rm Nrd}_{\QQ_p[\cG_L]\epsilon_L}(M_L).\end{equation*}

It is thus enough to show the left hand side of this expression is equal to the image $y_L$ of $\Theta(\epsilon\cdot x(\underline{b}_\bullet))$ under the projection map 
\[{\bigcap}_{\Lambda(\mathcal{G}_\infty)}^0\mathcal{O}_{\KK_\infty,S,T}^\times \to {\bigcap}^0_{\ZZ_p[\G_L]}\mathcal{O}_{L,S,T,p}^\times = \xi(\ZZ_p[\cG_L]).\]
To verify this we note $\epsilon_L\cdot C_{L,S,T,p}$ is isomorphic in $\Der^{\rm perf}(\ZZ_p[\G_L])$ to the complex
\[ P^\bullet_L := \epsilon_L(\ZZ_p[\G_L]\otimes_{\La(\G_\infty)}P^\bullet)\]
 and hence that
 $(y_L,0)$ is the image of $\epsilon_L\cdot x(\underline{b}_\bullet)_L$ under the canonical morphism
 \[ {\rm d}_{\ZZ_p[\G_L]\epsilon_L}(P^\bullet_L) \subset {\rm d}_{\QQ_p[\G_L]\epsilon_L}(\QQ_p\otimes_{\ZZ_p}P^\bullet_L) \cong {\rm d}_{\QQ_p[\G_L]\epsilon_L}(0) = (\zeta(\QQ_p[\G_L]),0)\]
 induced by the acyclicity of $\QQ_p\otimes_{\ZZ_p}P^\bullet_L$. The claimed result is thus true because the latter morphism sends the element 
 \[ \epsilon_L\cdot x(\underline{b}_\bullet)_L = \epsilon_L\cdot ( ({\wedge}_{j\in [d]}b_{L,j})\otimes ({\wedge}_{i\in [d]}b_{L,i}^\ast),0)\]
to $(\epsilon_L\cdot z_L,0)$ with
 \[z_L := ({\wedge}_{i\in [d]}b_{L,i}^\ast)({\wedge}_{j\in [d]}(\phi_L(b_{L,j}))) = ({\wedge}_{i\in [d]}(b_{L,i}^\ast\circ \phi_L))({\wedge}_{j\in [d]}b_{L,j}).\]
  \end{proof}

%Then for each $L$ in $\Omega(\KK_\infty)$, the complex
%$C_{L,S,T,p}$ is isomorphic in $D^{\rm perf}(\ZZ_p[\G_L])$ to
% $\ZZ_p[\G_L]^d\xrightarrow{\phi_L}\ZZ_p[\G_L]^d$,
% where the first term is placed in degree zero, and the
% alternative description of $\Theta^0_{L,S,T,p}$ given in
% (\ref{exp proj}) implies that the image
%  of $\Theta_{\KK_\infty,S,T}(x(\underline{b}_\bullet) )$ under the %projection map
% %${\bigcap}_{\Lambda(\mathcal{G}_\infty)}^r\mathcal{O}_{\KK_\infty,S,T}^\times %\to
% {\bigcap}^r_{\ZZ_p[\G_L]}\mathcal{O}_{L,S,T,p}^\times$ is equal to
 %
 %\[ \Theta^r_{L,S,T,p}(x(\underline{b})_L) =
 %\bigl( (\wedge_{i=r+1}^{i=d}(b_{F,i}^\ast\circ\phi_{F}))
 %(\wedge_{j=1}^{j=d}b_{F,j}),0\bigr).\]
%
%The equality predicted in Conjecture \ref{hrncmc} is therefore valid
%if and only if there exists
% an element $u$ in $\K_1(\La(\G_\infty))$ such that in %${\bigcap}_{\Lambda(\mathcal{G}_\infty)}^r\mathcal{O}_{\KK_\infty,S,T}^\times$ %one has
%
%\begin{equation}\label{fund eq}  \varepsilon^{\Sigma}_{\KK_\infty,S,T} =
%{\rm Nrd}_{Q(\G_\infty)}(u)\cdot\bigl( %(\wedge_{i=r+1}^{i=d}(b_{F,i}^\ast\circ\phi_{F})(\wedge_{j=1}^{j=d}b_{F,j})\bigr)_{F %\in \Omega(\KK_\infty)}.
% \end{equation}

\subsubsection{}In the sequel we write $L^+$ for the maximal totally real subfield of a number field $L$ and $\mu_p(L)$ for the $p$-adic cyclotomic $\mu$-invariant of $L$.

\begin{corollary}\label{hrmc true} Assume $K$ is totally real, $L$ is CM and
$\KK_\infty = L^{\rm cyc}$. Write  $\epsilon$ for the idempotent $(1-\tau)/2$ of $\zeta(\La(\G_\infty))$, where $\tau$ is the (unique) non-trivial element of $\Gal(\KK_\infty/\KK_\infty^+)$.

Then, if $\mu_p(L)$ vanishes, the $\epsilon$-component of Conjecture \ref{hrncmc} is valid for the data $\KK_\infty/K$, $S = S_{\KK_\infty/K}$ and any auxiliary set of places $T$.\end{corollary}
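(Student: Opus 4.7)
The plan is to reduce the $\epsilon$-component of Conjecture \ref{hrncmc} in this setting to the classical formulation of the main conjecture of non-commutative Iwasawa theory for totally real fields via Proposition \ref{translate prop}, and then invoke its proof by Ritter--Weiss \cite{RWnew} and Kakde \cite{kakde} under the hypothesis $\mu_p(L)=0$.

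First, I would verify that $\Sigma_S(\mathcal{K}_\infty) = \emptyset$, so that $r = 0$. Since $K$ is totally real and $\K_\infty \supseteq L$ is CM, no archimedean place of $K$ splits in $\K_\infty$. For non-archimedean $v \in S = S_{\K_\infty/K}$, either $v \in S_K^p$, in which case $v$ is (infinitely) ramified in the cyclotomic $\mathbb{Z}_p$-extension and cannot split completely in $\K_\infty$, or $v \in S_{\mathrm{ram}}(\K_\infty/K)$, and again cannot split completely. Hence $r=0$ and, by Remark \ref{r_S=0 example}(ii), the Rubin--Stark system coincides with the Stickelberger system $\varepsilon^{\rm RS}_{\K_\infty,S,T} = \theta_{\K_\infty,S,T}(0)$.

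Second, I would check the torsion hypothesis of Proposition \ref{translate prop} for the idempotent $\epsilon = (1-\tau)/2$. By Lemma \ref{complex construction}(i), passed to the inverse limit over $\Omega(\K_\infty)$, the cohomology of $C_{\K_\infty,S,T}$ in degrees $0$ and $1$ is given respectively by the inverse limit $\mathcal{O}^\times_{\K_\infty,S,T}$ of $(S,T)$-units and by the inverse limit of the transposed Selmer modules ${\rm Sel}_S^T(-)^{\rm tr}$. The $\epsilon$-part of the first is $\Lambda(\mathcal{G}_\infty)$-torsion, since the minus part of the $S$-units of a CM field modulo roots of unity has finite $\mathbb{Z}$-rank (as all archimedean places of $L^+$ ramify in $L$, and Hypothesis \ref{tf hyp}(ii) kills the $p$-part of the roots of unity). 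The $\epsilon$-part of the second is $\Lambda(\mathcal{G}_\infty)$-torsion by the classical fact of Iwasawa theory for the cyclotomic $\mathbb{Z}_p$-extension of a CM field under the assumption $\mu_p(L) = 0$ (which controls both the $\mu$- and $\lambda$-invariants of the minus part of the ray class tower).

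Third, with the torsion hypothesis verified, Proposition \ref{translate prop} identifies the $\epsilon$-component of Conjecture \ref{hrncmc} with the existence of an element $\lambda \in K_1(Q(\mathcal{G}_\infty)\epsilon)$ satisfying $\partial(\lambda) = \chi^{\rm ref}_{\Lambda(\mathcal{G}_\infty)\epsilon}(\epsilon\cdot C_{\K_\infty,S,T})$ and ${\rm Nrd}_{Q(\mathcal{G}_\infty)\epsilon}(\lambda) = \epsilon\cdot\theta_{\K_\infty,S,T}(0)$. After translating the Stickelberger element in terms of the Deligne--Ribet $p$-adic $L$-function via the interpolation property and the functional equation of Artin $L$-series (which matches odd characters of $\mathcal{G}_\infty$ at $s=0$ with their even twists at $s=1$, and handles the Euler factors at places in $S$ and $T$), and after comparing our $\epsilon\cdot C_{\K_\infty,S,T}$ with the complex computing the minus part of the Iwasawa-theoretic \'etale cohomology via Lemma \ref{complex construction}(v), the existence of such a $\lambda$ becomes the statement of the non-commutative Iwasawa main conjecture for the totally real field $K$, whose truth under the assumption $\mu_p(L) = 0$ is the theorem of Ritter--Weiss and Kakde.

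The main obstacle is the bookkeeping required in the third step: one has to match our normalization of the Stickelberger element $\theta_{\K_\infty,S,T}(0)$ (defined via complex $L$-values with prescribed $S$-truncation and $T$-modification) with the $p$-adic $L$-function appearing in the formulations of \cite{RWnew} and \cite{kakde}, and to verify that their formulation of the characteristic class of the arithmetic Iwasawa module coincides, up to appropriate Euler factors at $S \setminus (S_K^\infty \cup S_K^p)$, with the refined Euler characteristic $\chi^{\rm ref}_{\Lambda(\mathcal{G}_\infty)\epsilon}(\epsilon\cdot C_{\K_\infty,S,T})$. This translation, although essentially standard, is the substantive content of the reduction.
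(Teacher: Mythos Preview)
Your plan is correct and follows essentially the same route as the paper: verify $r=0$, check the torsion hypothesis so that Proposition \ref{translate prop} applies, and then translate conditions (i) and (ii) there into the main conjecture of non-commutative Iwasawa theory for totally real fields, which is known by Ritter--Weiss and Kakde under $\mu_p(L)=0$.

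Two points of comparison are worth noting. First, you invoke $\mu_p(L)=0$ in your second step to get the torsion of the $\epsilon$-part of $H^1$; this is unnecessary. The paper obtains the acyclicity of $Q(\G)\otimes_{\Lambda(\G)}(\epsilon\cdot C_{\K_\infty,S,T})$ directly from the observation that $\epsilon(Y_{F,S_K^\infty,p})$ vanishes for every $F$ (complex conjugation fixes archimedean places), together with the standard fact that the contributions from finite places in $S$ and from class groups are $\Lambda$-torsion over the cyclotomic tower. The vanishing of $\mu_p(L)$ enters only at the final step, as input to the Ritter--Weiss/Kakde theorem. Second, where you describe the translation step in general terms (interpolation, functional equation, comparison of complexes), the paper makes this precise using specific machinery: Artin--Verdier duality gives an isomorphism $\epsilon\cdot C_{\K_\infty,S,T}\cong \epsilon\cdot\DR\Gamma_{\text{\'et},T}(\mathcal{O}_{F,S},\Lambda(\G)^\#(1))[1]$, and the interpolation is carried out via the homomorphisms $j_\chi$ of Ritter--Weiss and their relation to the maps $\Phi_{\G,\chi}$ of \cite{cfksv}, with the final identification packaged as a citation to \cite[Prop.~7.1]{burns2}. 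Your acknowledgement that this ``bookkeeping'' is the substantive content is accurate; the paper simply names the tools that accomplish it.
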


\begin{proof} Set $\G := \G_\infty$ and write $\Lambda(\mathcal{G})^\#(1)$ for the (left) $\Lambda(\mathcal{G})$-module $\Lambda(\mathcal{G})$ endowed
with the action of $G_K$ whereby each element $\sigma$ acts as right
multiplication by $\chi_K(\sigma)\cdot\bar{\sigma}^{-1}$ where $\chi_K: G_K\to \ZZ_p^\times$ is the cyclotomic character and $\bar{\sigma}$
the image of $\sigma$ in $\mathcal{G}$. We then write $C'$ and $C$ for the respective complexes $\epsilon\cdot \DR\Gamma_{{\rm \acute e
t},T}(\mathcal{O}_{F,S},\Lambda(\mathcal{G})^\#(1))$ and $\epsilon\cdot C_{\KK_\infty,S,T}$.

Then, since $\epsilon(Y_{F,S_K^\infty,p})$ vanishes for all $F$ in $\Omega(\KK_\infty)$, the complex $Q(\G)\otimes_{\La(\G)}C$ is acyclic and the Artin-Verdier Duality theorem implies the existence of a canonical isomorphism $C\cong C'[1]$ in 
$\Der^{\rm perf}(\La(\G))$ and hence of an equality  in $\K_0(\La(\G)\epsilon,Q(\G)\epsilon)$
\[ \chi_{\La(\G)\epsilon}^{\rm ref}(C) = - \chi_{\La(\G)\epsilon}^{\rm ref}(C').\]

We next note that $K^{\rm cyc}\subseteq \KK_\infty$ and
use a fixed choice of topological generator $\gamma_K$
of $\Gamma_K$ to identify (via the association $\gamma_K -1 \leftrightarrow t$) the Iwasawa algebra $\ZZ_p[[\Gamma_K]]$ with a power series ring in one variable  $\ZZ_p[[t]]$. We also write $A(\G)$ for the set of irreducible $\QQ_p^c$-valued characters of $\G$ that have open kernel.

We recall that for each $\chi$ in $A(\G)$ Ritter and Weiss  have in \cite[Prop. 6]{RitterWeiss} constructed a canonical homomorphism
\[ j_\chi: \zeta(Q(\G))^\times \to (\mathbb{Q}_p^c\otimes_{\QQ_p} Q(\ZZ_p[[t]]))^\times.\]
The Weierstrass Preparation Theorem combines with \cite[Prop. 5(3)]{RitterWeiss} to imply an element $x$ of $\zeta(Q(\G))^\times$ is uniquely determined by the value $j_\chi(x)(0)$ of  $j_\chi(x)$ at $t=0$ for every $\chi$ in $A(\G)$.
  In addition, if $V_\chi$ is a $\QQ_p^c[[\G]]$-module of character $\chi$ and $x$ belongs to $\zeta(Q(\G))\cap  \zeta(\QQ_p[[\G]])$, then $j_\chi(x)(0)$ is equal to the $\chi$-component of the image of $x$ in $\zeta(\QQ_p[\Gal(\KK_\infty^{\ker(\chi)}/K)])$. (This can be verified by an explicit computation and relies on the fact that the elements $\gamma_\chi$ and $e_\chi$ occurring in \cite[Prop. 6]{RitterWeiss} act trivially on $V_\chi$.) In particular,
  since $\theta_{\KK_\infty,S,T}$ belongs to $\zeta(Q(\G))\cap  \zeta(\QQ_p[[\G]])$ (as a consequence, for example, of \cite[Prop. 11 and the proof of Th. 8]{RitterWeiss}), one finds that if $-\tau$ acts as the identity on $V_\chi$, then
\begin{equation}\label{stick interp} j_\chi(\epsilon\cdot\theta_{\KK_\infty,S,T})(0) = L_{S,T}(\check\chi,0) = L_{p,S,T}(\check\chi\cdot\omega_K,0),\end{equation}
where $\omega_K$ is the Teichm\"uller character of $K$ and $L_{p,S,T}(\check\chi\cdot\omega_K,z)$ the $S$-truncated $T$-modified Deligne-Ribet $p$-adic Artin $L$-series of $\check\chi\cdot\omega_K$, as discussed by Greenberg in \cite{greenberg}.

Next we note that $j_\chi$ is related to the map 
\[ \Phi_{\G,\chi}: \K_1(Q(\G)) \to (\QQ_p^c\otimes_{\QQ_p}Q(\ZZ_p[[t]]))^\times\]
defined by Coates et al in \cite{cfksv} by virtue of the fact (proved in \cite[Lem. 3.1]{mcrc}) that for every element $\lambda$ of $\K_1(Q(\G))$ one has
\[\Phi_{\G,\chi}(\lambda) = j_\chi({\rm Nrd}_{Q(\G)}(\lambda)).\]

Given this, the above observations combine to imply an element $\lambda$ of $\K_1(Q(\G))$ satisfies the conditions (i) and (ii) in
 Proposition \ref{translate prop} if and only if the connecting homomorphism $\K_1(Q(\G)) \to \K_0(\La(\G),Q(\G))$
sends $\lambda$ to $-\chi_{\La(\G)\epsilon}^{\rm ref}(C')$ and, in addition, for every $\chi$ in $A(\G)$ one has 
\[\Phi_{\G,\chi}(\lambda)(0) = L_{p,S,T}(\check\chi\cdot\omega_K,0).\]%

In view of Proposition \ref{translate prop}, it is therefore enough to note that if $\mu_p(L)$ vanishes, then the existence of such an element $\lambda$ is deduced in \cite[Prop. 7.1]{burns2} from the proof (under the given assumption on $\mu_p(L)$) of the main conjecture of non-commutative Iwasawa theory for totally real fields, due to Ritter and Weiss \cite{RWnew} and, independently, Kakde \cite{kakde}.
\end{proof}

\begin{remark}\label{jn remark} {\em In \cite{JN3} Johnston and Nickel identify  families of (non-abelian) Galois extensions $L/K$ for which one can prove the main conjecture of non-commutative $p$-adic Iwasawa theory for $L^{\rm cyc}/K$ without assuming $\mu_p(L)$ vanishes (or that $p$ does not divide $[L:K]$). In all such cases the argument of Corollary \ref{hrmc true} shows that the $(1-\tau)/2$-component of Conjecture \ref{hrncmc} is valid for the data $L^{\rm cyc}/K$, $S = S_{\KK_\infty/K}$ and any auxiliary set of places $T$. }\end{remark}

\section{Canonical resolutions and semisimplicity in Iwasawa theory}\label{can res semi section}

\subsection{Canonical resolutions in Iwasawa theory}\label{can res}

In this section we fix a compact $p$-adic Lie extension
$\KK_\infty$ of $K$ of strictly positive rank for which $S_{\rm ram}(\KK_\infty/K)$ is finite and non-empty, and set $\G_\infty := \Gal(\KK_\infty/K)$. 

We also fix a finite set $S$ of places of $K$ that contains the set $S_{\KK_\infty/K}$ specified in (\ref{can places}) 
 and a finite set of places $T$ of $K$
that is disjoint from $S$ and such that Hypothesis \ref{tf hyp} is satisfied (with $\KK$ taken to be $\KK_\infty$).

Then, since $S_{\rm ram}(\KK_\infty/K)$ is non-empty, one has $\Sigma_S(\KK_\infty)\not= S$ and so we can fix a place $v'$ in $S\setminus \Sigma_S(\KK_\infty)$. We then set 
\[ S' := S\setminus \{v'\}.\] %$\quad \text{and}\quad n := |S|-1 = |S_0|,\]
%
%and label the places in $S_0$ as
%$\{v_i\}_{i \in [n]}$.

\subsubsection{}\label{Selmer modules} We first describe an important aspect of the descent
properties of transpose Selmer modules.

\begin{lemma}\label{selmer lemma} Fix $L$ in $\Omega(\KK_\infty)$, with $G := \G_L$, and a
normal subgroup $H$ of $G$ with 
$E := L^H$. 
%Then the following claims are valid.
%
%\begin{itemize}
%\item[(i)] There exists a natural isomorphism of $G/H$-modules
%
%\[ \ZZ[G/H]\otimes_{\ZZ[G]}{\rm Sel}_S^T(L)^{{\rm tr}}\cong {\rm Sel}_S^T(E)^{{\rm tr}}.\]
%
Consider the composite surjective homomorphism of $G/H$-modules
$$\beta_{E,S}: {\rm Sel}_S^T(E)^{{\rm tr}}\xrightarrow{\varrho_{E,S}} X_{E,S}\xrightarrow{\alpha_{E,S,S'}} Y_{E,S'},$$
in which $\varrho_{E,S}$ comes from (\ref{selmer lemma seq2}) and
$\alpha_{E,S,S'}$ is the natural projection map. 

Then the image of $\ker(\beta_{L,S})$ under the composite homomorphism
\[ {\rm Sel}_S^T(L)^{{\rm tr}} \to \ZZ[G/H]\otimes_{\ZZ[G]}{\rm Sel}_S^T(L)^{{\rm tr}}\cong
{\rm Sel}_S^T(E)^{{\rm tr}},\]
in which the isomorphism is as in Lemma \ref{complex construction}(iv), is equal to $\ker(\beta_{E,S})$.
\end{lemma}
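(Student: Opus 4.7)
The plan is to carry out a short snake-lemma style diagram chase. The first step is to assemble, from the natural $G$-equivariant surjections $\varrho_{L,S}$ and $\alpha_{L,S,S'}$, a commutative diagram of short exact sequences of $G$-modules
\[
\begin{CD}
0 @>>> \ker(\beta_{L,S}) @>>> {\rm Sel}_S^T(L)^{\rm tr} @>\beta_{L,S}>> Y_{L,S'} @>>> 0 \\
@. @VVV @V\pi VV @VV\pi_Y V \\
0 @>>> \ker(\beta_{E,S}) @>>> {\rm Sel}_S^T(E)^{\rm tr} @>\beta_{E,S}>> Y_{E,S'} @>>> 0,
\end{CD}
\]
in which $\beta_{L,S}$ and $\beta_{E,S}$ are surjective because both of their defining factors $\varrho_{\cdot,S}$ and $\alpha_{\cdot,S,S'}$ are (for $\alpha$ one uses that $v'\in S$, so there is some place of $L$ above $v'$ that can absorb the necessary correction coefficient). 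The vertical map $\pi$ is the $H$-coinvariants projection, under the identification of Lemma \ref{complex construction}(iv), and the right-hand map $\pi_Y$ is the analogous $H$-coinvariants projection $Y_{L,S'} \twoheadrightarrow (Y_{L,S'})_H = Y_{E,S'}$, so both $\pi$ and $\pi_Y$ are surjective. The commutativity of the right square will follow from the naturality, with respect to the coinvariants functor, of both $\varrho$ (from the exact sequence (\ref{selmer lemma seq2})) and of the projection $\alpha$; this is the only bookkeeping point worth being careful about, and it reduces to the compatibility of Lemma \ref{complex construction}(iv) with the constructions of \cite{bks}.

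Having set up the diagram, the inclusion $\pi(\ker\beta_{L,S})\subseteq\ker\beta_{E,S}$ is automatic from commutativity. For the reverse inclusion I will perform the explicit lift: given $y\in\ker\beta_{E,S}$, surjectivity of $\pi$ gives a lift $\tilde y\in{\rm Sel}_S^T(L)^{\rm tr}$, and commutativity then forces $\beta_{L,S}(\tilde y)\in\ker(\pi_Y)$. The task is then to replace $\tilde y$ by $\tilde y - z$ for some $z\in\ker(\pi)$ with $\beta_{L,S}(z)=\beta_{L,S}(\tilde y)$, which amounts to showing that $\beta_{L,S}$ restricted to $\ker(\pi)$ surjects onto $\ker(\pi_Y)$.

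This last surjectivity is the only substantive point, but it is essentially free of hypotheses: $\ker(\pi)$ and $\ker(\pi_Y)$ are, by definition of $H$-coinvariants, the submodules $I_H\cdot{\rm Sel}_S^T(L)^{\rm tr}$ and $I_H\cdot Y_{L,S'}$ respectively, where $I_H\subseteq\ZZ[H]$ is the augmentation ideal. Since $\beta_{L,S}$ is surjective and $H$-equivariant, any element $(h-1)y$ of $I_H\cdot Y_{L,S'}$ lifts to $(h-1)\tilde y = \beta_{L,S}((h-1)\tilde y)$ for any preimage $\tilde y\in{\rm Sel}_S^T(L)^{\rm tr}$ of $y$, and $(h-1)\tilde y\in I_H\cdot{\rm Sel}_S^T(L)^{\rm tr}=\ker(\pi)$. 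Extending by $\ZZ$-linearity gives the required surjectivity, and then the substitution $\tilde y\leadsto \tilde y - z$ completes the chase.

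Equivalently one may simply invoke the snake lemma on the displayed diagram: the resulting exact sequence reads $\ker\pi\to\ker\pi_Y\to{\rm cok}(\ker\beta_{L,S}\to\ker\beta_{E,S})\to{\rm cok}\,\pi=0$, so the argument of the previous paragraph, which shows that the first arrow is surjective, immediately gives the vanishing of the middle cokernel. I expect no real obstacle in executing this; the only delicate point, as indicated above, is the verification that the right-hand square of the diagram commutes after making the identifications of Lemma \ref{complex construction}(iv), which I will pin down by tracking the construction of $\varrho$ and $\alpha$ through the projection formula.
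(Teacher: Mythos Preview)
Your argument is correct and is essentially the same snake-lemma diagram chase as the paper's. The paper organizes it slightly differently---it first passes to $H$-coinvariants in the top row, so that the middle and right vertical maps become the canonical \emph{isomorphisms} $H_0(H,{\rm Sel}_S^T(L)^{\rm tr})\cong {\rm Sel}_S^T(E)^{\rm tr}$ and $H_0(H,Y_{L,S'})\cong Y_{E,S'}$, which makes the conclusion immediate and bypasses your explicit $I_H$-surjectivity step---but this is a cosmetic reorganization rather than a different idea.
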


\begin{proof} %The isomorphism in claim (i) follows directly from
%the isomorphism of complexes in Lemma \ref{complex construction}(iv) and the
%explicit descriptions of cohomology modules given in Lemma \ref{complex construction}(i).

The claimed result is obtained directly by applying the Snake Lemma
 to the following exact commutative diagram
\begin{equation*}%\minCDarrowwidth 1em
\begin{CD}
& & H_0(H,\ker(\beta_{L,S})) @>  >>  H_0(H,{\rm Sel}_S^T(L)^{\rm tr}) @>  \beta_{L,S} >> H_0(H,Y_{L,S'}) @>  >> 0\\
@.  @V VV @V VV @V VV @.\\
 0 @> >> \ker(\beta_{E,S}) @>  >> {\rm Sel}_S^T(E)^{\rm tr} @>  \beta_{E,S} >> Y_{E,S'} @> >> 0.
\end{CD}\end{equation*}
Here the lower row is the tautological exact sequence, the upper row is obtained
by taking $H$-coinvariants of the analogous tautological
exact sequence, the right-hand vertical arrow is the canonical isomorphism that
 maps the class of each place $w$ in $S'_L$ to the restriction of $w$ to $E$, the
 middle vertical arrow is the canonical isomorphism coming from Lemma \ref{complex construction}(iv) and the left hand
 vertical arrow is induced by the commutativity of the second square. \end{proof}

\subsubsection{}
%\begin{hypothesis}\label{hyp v_0} The decomposition subgroup of any place of $\KK_\infty$ above $v_0$ is open.
%\end{hypothesis}
%
In the sequel we will use the $\La(\G_\infty)$-modules that are defined by the inverse limits 
\[ \mathcal{S}_S^T(\KK_\infty)^{\rm tr} := {\varprojlim}_{F \in \Omega(\KK_\infty)}{\rm Sel}_S^T(F)^{\rm tr}_p\]
and, if Hypothesis \ref{tf hyp} is satisfied, also 
\[ \mathcal{S}_S^T(\KK_\infty) := {\varprojlim}_{F \in \Omega(\KK_\infty)}{\rm Sel}_S^T(F)_p.\]
Here the respective transition morphisms for $F\subset F'$ are the composite maps 
\[ {\rm Sel}_S^T(F')^{\rm tr}_p \to \ZZ_p[\G_F]\otimes_{\ZZ_p[\G_{F'}]}{\rm Sel}_S^T(F')^{\rm tr}_p\cong
 {\rm Sel}_S^T(F)^{\rm tr}_p,\]
and 
\[ {\rm Sel}_S^T(F')_p \to \ZZ_p[\G_F]\otimes_{\ZZ_p[\G_{F'}]}{\rm Sel}_S^T(F')_p\cong
 {\rm Sel}_S^T(F)_p,\]
 where the first map in both cases is the obvious projection and the second is the isomorphism given by Lemma \ref{complex construction}(iv), respectively Remark \ref{non-tranpose remark} (if Hypothesis \ref{tf hyp} is satisfied).

In the following result we also use the object $C_{\KK_\infty,S,T}$ of $\Der^{\rm perf}(\La(\G_\infty))$
defined in \S\ref{ncmc section}.

\begin{proposition}\label{limitomac} Set $n := |S|-1$. Then there exists a natural number $d$ with $d \ge n$ and a
canonical family of complexes of $\La(\G_\infty)$-modules $C(\phi)$ of the form
\[ \La(\G_\infty)^d \xrightarrow{\phi} \La(\G_\infty)^d\]
in which the first term is placed in degree zero and the
following properties are satisfied.

\begin{itemize}
\item[(i)] $C(\phi)$ is isomorphic in $\Der^{\rm perf}(\La(\G_\infty))$ to $C_{\KK_\infty,S,T}$.

\item[(ii)] If $C(\tilde\phi)$ is any other complex in the family, then $\tilde\phi =
\eta\circ \phi \circ (\eta')^{-1}$ where $\eta$ and $\eta'$ are
automorphisms of $\La(\G_\infty)^d$ and $\eta$ is represented,
with respect to the standard basis of $\La(\G_\infty)^d$, by a block matrix of the form (\ref{block}).

\item[(iii)] For each $L$ in $\Omega(\KK_\infty)$ we set $\mathfrak{S}_L := {\rm Sel}_S^T(L)^{\rm tr}_p$ and write $\phi_L$, $\iota_L$ and $\varpi_L$ respectively for the endomorphism of $\ZZ_p[\G_L]^d$ induced by $\phi$ and
  the embedding $\mathcal{O}_{L,S,T,p}^\times \cong \ker(\phi_L) \subseteq
 \ZZ_p[\G_L]^d$ and surjection $\ZZ_p[\G_L]^d \to {\rm cok}(\phi_L) \cong \mathfrak{S}_L$ that are induced by the descent isomorphism
\[ C_{L,S,T,p}\cong \ZZ_p[\G_L]\otimes^{\DL}_{\La(\G_\infty)}C_{\KK_\infty,S,T}
\cong \ZZ_p[\G_L]\otimes_{\La(\G_\infty)}C(\phi).\]
Then there exists a natural number $d_L$ with $n \le d_L \le d$ and a commutative diagram of $\ZZ_p[\G_L]$-modules of the form
\begin{equation}\label{limitomacdiagram}\minCDarrowwidth0.2em\begin{CD}
0 @> >> \mathcal{O}_{L,S,T,p}^\times @> \iota_{L} >> \ZZ_p[\G_L]^d
@> \phi_L >> \ZZ_p[\G_L]^d @> \varpi_L >> \mathfrak{S}_L @> >> 0\\
@. @\vert @VV \kappa_L' V @VV \kappa_L V @\vert\\
0 @> >> \mathcal{O}_{L,S,T,p}^\times @> (\hat\iota_L,0) >> \ZZ_p[\G_L]^{d_L+ (d-d_L)} @> (\hat\phi_L,{\rm id}) >> \ZZ_p[\G_L]^{d_L + (d-d_L)} @> (\hat\pi_L,0) >> \mathfrak{S}_L @> >> 0.\end{CD}
\end{equation}
Here $\kappa_L'$ and $\kappa_L$ are bijective, the matrix of $\kappa_L$ with respect to the standard basis of $\ZZ_p[\G_L]^d = \ZZ_p[\G_L]^{d_L}\oplus \ZZ_p[\G_L]^{(d-d_L)}$ has the form
 $\left(
\begin{array} {c|c} I_n& 0\\
                        \hline
                              0 & \ast\end{array}\right)$
and the exact sequence
\begin{equation}\label{critical} 0 \to \mathcal{O}_{L,S,T,p}^\times \xrightarrow{\hat\iota_L} \ZZ_p[\G_L]^{d_L} \xrightarrow{ \hat\phi_L} \ZZ_p[\G_L]^{d_L} \xrightarrow{\hat\pi_L} \mathfrak{S}_L \to 0\end{equation}
is constructed as in diagram (\ref{very important diagram}).
\end{itemize}
\end{proposition}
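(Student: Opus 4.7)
The strategy is to lift the finite-level construction of Lemma \ref{can rep lemma} to the Iwasawa level, exploiting that $\Lambda(\G_\infty)$ is semi-local so that every finitely generated projective $\Lambda(\G_\infty)$-module is free. Write $\mathfrak{S}_\infty := \mathcal{S}_S^T(\K_\infty)^{\rm tr}$. Passage to the limit in (\ref{selmer lemma seq2}) yields a canonical surjection $\varrho_\infty: \mathfrak{S}_\infty \to \varprojlim_L X_{L,S,p}$ of $\Lambda(\G_\infty)$-modules. Writing $S\setminus \{v'\} = \{v_1,\ldots,v_n\}$ in the fixed ordering, for each $i \in [n]$ the family $(w_{v_i,L}-w_{v',L})_L$ defines an element of $\varprojlim_L X_{L,S,p}$, and I would fix a pre-image $\tilde z_i \in \mathfrak{S}_\infty$ of this element under $\varrho_\infty$. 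By choosing a free cover of $\mathfrak{S}_\infty/\sum_i \Lambda(\G_\infty)\cdot \tilde z_i$ (of rank $d-n$ for some $d \geq n$), one obtains a surjection $\varpi: \Lambda(\G_\infty)^d \to \mathfrak{S}_\infty$ sending the first $n$ standard basis elements to the $\tilde z_i$.

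To construct $\phi$, I would invoke that $C_{\K_\infty,S,T}$ is a perfect complex whose only non-trivial cohomology sits in degrees $0$ and $1$, with $H^0 = \mathcal{O}_{\K_\infty,S,T}^\times$ torsion-free by Hypothesis \ref{tf hyp} (as an inverse limit of $p$-torsion-free modules). Applying the standard argument as in \cite[Prop. 3.2]{omac}, one obtains an isomorphism in $D^{\rm perf}(\Lambda(\G_\infty))$ between $C_{\K_\infty,S,T}$ and a 2-term complex $\Lambda(\G_\infty)^d \xrightarrow{\phi} \Lambda(\G_\infty)^d$ inducing the surjection $\varpi$ on $H^1$; this gives claim (i). Claim (ii) then follows exactly as in the finite-level analogue, by applying \cite[Prop. 3.2(iv)]{omac} to two such resolutions: since both assign the first $n$ standard basis elements to (possibly different but projecting to the same element under $\varrho_\infty$) lifts of $(w_{v_i,L}-w_{v',L})_L$, the comparison automorphism $\eta$ must fix the first $n$ basis elements modulo the last $d-n$, giving the required block shape (\ref{block}).

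For claim (iii), I would fix $L \in \Omega(\K_\infty)$ and observe that $\ZZ_p[\G_L] \otimes_{\Lambda(\G_\infty)} C(\phi) \cong C_{L,S,T,p}$ provides a 2-term representative of rank $d$ in which the first $n$ basis elements map into $\mathfrak{S}_L$ to pre-images (under $\varrho_{L,S,p}$) of $w_{v_i,L}-w_{v',L}$. Applying Lemma \ref{can rep lemma} at level $L$ (for the same choice of $v'$) yields another 2-term representative $\ZZ_p[\G_L]^{d_L} \xrightarrow{\hat\phi_L} \ZZ_p[\G_L]^{d_L}$ with $d_L \geq n$ having the same property on its first $n$ basis elements. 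Adding $d-d_L$ acyclic trivial summands $\ZZ_p[\G_L] \xrightarrow{\rm id} \ZZ_p[\G_L]$ to the latter produces the pair $(\hat\phi_L,\mathrm{id})$ in rank $d$, and Lemma \ref{can rep lemma}(ii) (applied at level $L$) provides automorphisms $\kappa'_L$ and $\kappa_L$ fitting into the diagram (\ref{limitomacdiagram}). The upper-left block of $\kappa_L$ being $I_n$ (and not merely an invertible $n \times n$ matrix) results from the fact that the first $n$ basis elements in both the top and bottom rows project to the \emph{same} elements $w_{v_i,L}-w_{v',L}$ of $X_{L,S,p}$, which forces their difference to lie in the kernel of $\varrho_{L,S,p}$ and hence to be expressible purely in terms of the remaining basis elements (all of which are sent into $\ker(\varrho_{L,S,p})$ by construction).

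The main obstacle will be verifying the upper-left block of $\kappa_L$ in (iii) is exactly $I_n$: this requires careful coordination between the Iwasawa-level lifts $\tilde z_i$ and the finite-level pre-images chosen in the proof of Lemma \ref{can rep lemma} at each $L$, together with a use of Lemma \ref{selmer lemma} to show that the kernel component of $\varpi$ behaves compatibly under the descent $\mathfrak{S}_\infty \twoheadrightarrow \mathfrak{S}_L$. Once this compatibility is in place, the structure of the diagram (\ref{limitomacdiagram}) with its prescribed block form is a direct consequence of the rigidity statement of Lemma \ref{can rep lemma}(ii).
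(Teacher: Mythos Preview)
Your outline for (i) and (ii) is broadly on target, but there is a genuine gap in (iii), and the paper's construction differs from yours precisely in the feature needed to close it.

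First, a minor point: the claim that every finitely generated projective $\Lambda(\G_\infty)$-module is free is not correct in general (the ring is semiperfect but not local unless $\G_\infty$ is pro-$p$). This is easily patched by adding a complement $P'$ to make $P\oplus P'$ free, as the paper does; but it signals that the ``free cover'' you propose is not canonical.

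The real issue is claim (iii). You apply Lemma \ref{can rep lemma} independently at level $L$ to obtain a representative of rank $d_L$, and then write ``adding $d-d_L$ acyclic trivial summands'' --- but nothing in your argument guarantees $d_L\le d$. The integer $d_L$ is determined in Lemma \ref{can rep lemma} by the minimal free module containing a projective cover of the relevant kernel, and there is no a priori comparison between this and the rank of your arbitrary free cover at the Iwasawa level. Moreover, once trivial summands are added, the resulting representative $(\hat\phi_L,\mathrm{id})$ is no longer in the canonical family of Lemma \ref{can rep lemma}, so that lemma's part (ii) does not directly furnish the automorphisms $\kappa_L,\kappa_L'$.

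The paper avoids both problems by taking a \emph{projective cover} $\pi_{2,\infty}:P\to\ker(\beta_\infty)$ (using that $\Lambda(\G_\infty)$ is semiperfect), not merely a free cover of a quotient. The point is that projective covers descend well: using Lemma \ref{selmer lemma} to see that the coinvariants map $\ker(\beta_\infty)_{\mathcal{H}_L}\to\ker(\beta_{L,S,p})$ is surjective, one obtains a splitting $P_{\mathcal{H}_L}=P_L\oplus Q_L$ in which $P_L$ is itself a projective cover of $\ker(\beta_{L,S,p})$. The Krull--Schmidt theorem then forces the complement $Q_L\oplus P'_{\mathcal{H}_L}$ to contain a free summand of the right rank, yielding $d_L\le d$ and an explicit isomorphism $\kappa_L$ with the required $I_n$ block. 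This decomposition is what makes the bottom row of (\ref{limitomacdiagram}) genuinely of the form produced by Lemma \ref{can rep lemma}, and it is not available if one starts from an arbitrary free cover. Your identification of Lemma \ref{selmer lemma} as the relevant compatibility input is correct, but it must be combined with the minimality of projective covers to control $d_L$.
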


\begin{proof} We set $\Omega_\infty := \Omega(\KK_\infty)$, $R_\infty := \La(\G_\infty)$ and 
$\mathfrak{S}_\infty := \mathcal{S}_S^T(\KK_\infty)^{\rm tr}$. For $v \in S$  we set 
 %$$\co^\times_{{\chi,\infty}}:=\varprojlim_{F\in\Omega_\chi}\co^\times_{F,S,T,p},$$
%with the transition maps given by the natural norm maps, as well as
$ Y_{\infty,v} := {\varprojlim}_{F\in\Omega_\infty}Y_{F,\{v\},p},$ where the
transition maps are the natural projection maps. We then set
$Y_{\infty,S'} := {\bigoplus}_{v \in S'}Y_{\infty,v}$ and consider the homomorphism
\[ \beta_{\infty}:= {\varprojlim}_{F\in \Omega_\infty}\beta_{F,S,p}:
\mathfrak{S}_\infty\to Y_{\infty,S'},\]
where $\beta_{F,S}$ is the map of $\G_F$-modules
 in Lemma \ref{selmer lemma}. We note $\beta_\infty$
 is surjective since each map $\beta_{F,S,p}$ is surjective
  and each module
 $\ker(\beta_{F,S,p})$ is compact.

For $i \in [n]$ we write $v_i$ for the $i$-th element of $S'$ (with respect to the ordering induced by (\ref{ordering})). For each such $i$, we fix a place $w_{i,\infty}$ of $\KK_\infty$ above $v_i$ and
write $\pi_{1,\infty}: R_\infty^n \to Y_{\infty,S'}$
for the
 surjective map of $R_\infty$-modules that sends the $i$-th element
  in the standard basis of $R_\infty^n$ to $w_{i,\infty}$.
 %By taking the direct sum of $\pi_i$ over $i$ in $[n]$ we obtain a
%surjective map of
%$\La(\G_\infty)$-modules $\pi_{S_0}$ from $\La(\G_\infty)^n$ to $Y_{\KK_\infty,S_0} := {\bigoplus}_{i \in [n]}Y_{\KK_\infty,i}$.
 We choose a lift
 \[ \pi'_{1,\infty}: R_\infty^n \to
\mathfrak{S}_\infty\]
of
$\pi_{1,\infty}$ through $\beta_{\infty}$.

The algebra $R_\infty$ is semiperfect since it is both
semilocal and complete with respect to its Jacobson radical (as $\G_\infty$ is a compact $p$-adic Lie group). In view of
 \cite[Th. (6.23)]{curtisr}, we may therefore fix a projective cover
\[ \pi_{2,\infty}: P\to\ker(\beta_{\infty})\]
of $\ker(\beta_{\infty}) =
 \varprojlim_{F \in \Omega_\infty}\ker(\beta_{F,S,p})$.

We next choose a $R_\infty$-module $P'$ such that $P\oplus P'$ is a
 free module of minimal rank, $n'$ say, fix an isomorphism $j: P\oplus P' \cong
 R_\infty^{n'}$ and write $\pi'_{2,\infty}$ for the map $(\pi_{2,\infty},0)\circ j^{-1}$ on
 $R_\infty^{n'}$. We set $d := n + n'$ and consider the
 homomorphism of $R_\infty$-modules
\begin{equation}\label{infty map} \pi_\infty: R_\infty^d = R_\infty^n \oplus R_\infty^{n'}
\xrightarrow{(\pi'_{1,\infty},\pi'_{2,\infty})}  \mathfrak{S}_\infty.\end{equation}

For $L$ in $\Omega_\infty$ we set 
\[ R_L := \ZZ_p[\G_L],\,\, U_L := \mathcal{O}_{L,S,T,p}^\times \quad\text{and}\quad \mathcal{H}_L := \Gal(\KK_\infty/L).\] 
Then the $\mathcal{H}_L$-coinvariants of $\pi_\infty$
gives a surjective homomorphism $\varpi_L:R_L^d \to \mathfrak{S}_L$ and, for each $L$ and $L'$ in $\Omega_\infty$ with $L\subset L'$, the argument of \cite[Prop. 3.2]{omac}
allows us to fix an exact commutative diagram of the form
\begin{equation}\label{very important diagram2}\begin{CD}
0 @> >> U_{L'} @> \hat\iota_{L'} >> R_{L'}^d @>
\phi_{L'} >> R_{L'}^d @> \varpi_{L'} >> \mathfrak{S}_{L'} @> >> 0\\
@. @. @VV\omega^0_{L'/L}V @VV \omega^1_{L'/L} V @VV \omega_{L'/L}V\\
0 @> >> U_L @> \hat\iota_{L} >> R_L^d
@> \phi_L >> R_L^d @> \varpi_L >> \mathfrak{S}_L @> >> 0.\end{CD}\end{equation}
Here $\omega^1_{L'/L}$ and $\omega_{L'/L}$ are the natural projection maps, $\omega^0_{L'/L}$ sends each element
$b_{L',i}$ in the standard basis of $R_{L'}^d$ to any choice of element $x_i$ with 
\[ \phi_L(x_i) =
\omega^1_{L'/L}(\phi_{L'}(b_{L',i}))\]
and the following property is satisfied: if $F$ denotes either $L$ or $L'$, then the complex $C(\phi_{F})$ given by
$R_F^d \xrightarrow{\phi_F} R_F^d$,
where the first module is placed in degree zero and the cohomology groups are identified
with $U_F$ and $\mathfrak{S}_F$ by means of the maps in
the respective row of the diagram, then there exists an isomorphism
$C(\phi_F) \cong  C_{F,S,T,p}$ in $\Der^{\rm perf}(R_F)$ that induces the
identity map on both
cohomology groups. With these identifications the canonical descent isomorphism
 $R_L\otimes^\DL_{R_{L'}}C_{L',S,T,p}\cong C_{L,S,T,p}$ implies that the morphism
  $R_L\otimes_{R_{L'}}C(\phi_{L'})\cong C(\phi_L)$ induced
   by the maps $\omega^0_{L'/L}$ and $\omega^1_{L'/L}$ is a
   quasi-isomorphism. Since $\omega^1_{L'/L}$ is surjective, this in turn implies that the
    map $\omega^0_{L'/L}$ is surjective and hence that the limit
    $\varprojlim_{F \in \Omega_\infty}R_F^d$ with respect to the transition morphisms
    are $\omega^0_{F'/F}$ is isomorphic to $R_\infty^d$.

The limit of the complexes $\{C(\phi_L)\}_{L\in \Omega(\KK_\infty)}$
with respect to the morphisms in (\ref{very important diagram2}) is therefore
a complex of the form
 $R_\infty^d\xrightarrow{\phi} R_\infty^d$ that is isomorphic in $\Der^{\rm perf}(R_\infty)$
 to $C_{\KK_\infty,S,T,p}$. Thus, denoting this complex by $C(\phi)$, claim (i) is clear. 
 
%and the property in claim (ii) follows from the explicit construction of refined reciprocity maps in Lemma \ref{ref rec map %lemma}.

Turning to claim (ii), we note that if $\tilde\pi_\infty$ is any map defined in the same way as $\pi_\infty$ but with respect to a
  different choices either of projective cover $\pi_{2,\infty}$, isomorphism $j$
  or lift $\pi_{1,\infty}'$ of $\pi_{1,\infty}$, then an easy exercise shows that
   $\pi_\infty = \tilde\pi_{\infty}\circ \eta$, where $\eta$
   is an automorphism of $R_\infty^d$
   that is represented with respect to the standard basis $\{b_i\}_{i\in [d]}$
 of $R_\infty^d$ by a block
   matrix of the form (\ref{block}). (Here, with respect to the decomposition of
   $R_\infty^d$ used in (\ref{infty map}) we identify $b_i$ for
   $i\in [d]\setminus [n]$
 with the $(i-n)$-th element of the standard basis of $R_\infty^{n'}$.)

Then, if $C(\tilde\phi)$ is any complex obtained
in the same way by using $\tilde\pi_\infty$ rather than $\pi_\infty$, there exists a
quasi-isomorphism $\xi: C(\tilde\phi) \cong C(\phi)$ that is represented by an exact
commutative diagram of the form
\begin{equation}\label{limit independence}\begin{CD}
0 @> >> \mathcal{O}_{\KK_\infty,S,T}^\times @> \iota_\infty >> R_\infty^d @> \phi >> R_\infty^d @> \pi_\infty >> \mathfrak{S}_\infty @> >> 0\\
@. @\vert @VV \eta' V @VV \eta V @\vert\\
0 @> >> \mathcal{O}_{\KK_\infty,S,T}^\times @> >> R_\infty^d @> \tilde\phi
>> R_\infty^d @> \tilde\pi_\infty >> \mathfrak{S}_\infty @> >> 0\end{CD}
\end{equation}
To deduce claim (ii) it is thus sufficient to note that, since $\eta$ is
bijective, the Five Lemma implies that $\eta'$ is also bijective.

Finally, to prove claim (iii), we fix $L$ in $\Omega_\infty$ and set $\mathcal{H} := \mathcal{H}_L$. We note that the projection map $\ker(\beta_\infty)_\mathcal{H} \to \ker(\beta_{L,S,p})$ is surjective (by Lemma \ref{selmer lemma}(ii)) and hence that there exists a direct sum decomposition 
\[ P_\mathcal{H} = P_L \oplus Q_L\]
of $R_L$-modules so that $(\pi_{2,\infty})_\mathcal{H}$ is zero on $Q_L$ and restricts to $P_L$ to give a projective cover of $\ker(\beta_{L,S,p})$. We fix a projective $R_L$-module $P'_L$ of minimal rank so $P_L\oplus P'_L$ is a free $R_L$-module, write $n_L'$ for the rank of the latter module and set $d_L := n + n'_L$ and $\delta_L := n'- n_L'$. Then $d = d_L + \delta_L$ and the Krull-Schmidt Theorem implies $\delta_L \ge 0$ and that, for any choice of an isomorphism of $R_L$-modules $j': R_L^{n'_L} \cong P_L\oplus P'_L$, there exists an isomorphism
\[ \iota_L: Q_L\oplus P'_\mathcal{H}\to P'_L \oplus R_L^{\delta_L}\]
of $R_L$-modules and a commutative diagram of the form
\[ \begin{CD}
R_L^n\oplus R_L^{n'} @> ({\rm id},j_\mathcal{H}) >>  R_L^{n} \oplus (P\oplus P')_\mathcal{H}
@> (\pi'_{1,\infty},(\pi_{2,\infty},0))_\mathcal{H} >>   (\mathfrak{S}_\infty)_\mathcal{H}\\
@V \kappa_L VV @V ({\rm id},\iota'_L) VV @V VV \\
R_L^{n}\oplus R_L^{n_L'}\oplus R_L^{\delta_L} @> ({\rm id},j',{\rm id}) >> R_L^{n} \oplus (P_L\oplus P'_L) \oplus R_L^{\delta_L} @> ((\pi'_{1,\infty})_\mathcal{H},((\pi_{2,\infty})_\mathcal{H},0),0) >> \mathfrak{S}_L.\end{CD}\]
Here $j$ is the isomorphism $R_\infty^{n'} \cong P\oplus P'$ fixed just before (\ref{infty map}), $\iota'_L$ is the isomorphism

\[ (P\oplus P')_\mathcal{H} = P_L \oplus (Q_L \oplus P'_\mathcal{H}) \xrightarrow{({\rm id},\iota_L)} P_L\oplus P'_L\oplus R_L^{\delta_L},\]
$\kappa_L = ({\rm id},\tilde\kappa_L)$ with $\tilde\kappa_L$ the automorphism of $ R_L^{n'} = R_L^{n_L'}\oplus R_L^{\delta_L}$ given by $(j',{\rm id})^{-1}\circ \iota'_L\circ j_\mathcal{H}$ and the right hand vertical arrow is the isomorphism induced by Lemma \ref{selmer lemma}(i).

The upper and lower composite horizontal maps in this diagram are respectively equal to the map $\varpi_L$ in diagram (\ref{very important diagram2}) and to $\hat\pi\oplus 0$, where $\hat\pi$ is a surjective map  $R_L^{d_L}\to \mathfrak{S}_L$ constructed as in diagram (\ref{very important diagram}). Hence, if we fix an embedding $\hat\iota: U_L \to R_L^{d_L}$ and an endomorphism $\hat\phi$ of $R_L^{d_L}$ as in the upper row of (\ref{very important diagram}) (for this choice of $\hat\pi$), then there exists a commutative diagram of $R_L$-modules of the form
\begin{equation*}\begin{CD}
0 @> >> U_L @> \hat\iota_{L} >> R_L^d
@> \phi_L >> R_L^d @> \varpi_L >> \mathfrak{S}_L @> >> 0\\
@. @\vert @VV \kappa_L' V @VV \kappa_L V @\vert\\
0 @> >> U_L @> (\hat\iota,0) >> R_L^{d_L+ \delta_L} @> (\hat\phi,{\rm id}) >> R_L^{d_L\oplus \delta_L} @> (\hat\pi,0) >> \mathfrak{S}_L @> >> 0\\
@. @\vert @AA A @AA A @\vert\\
0 @> >> U_L @> \hat\iota >> R_L^{d_L} @> \hat\phi >> R_L^{d_L} @> \hat\pi >> \mathfrak{S}_L @> >> 0.
\end{CD}
\end{equation*}
Here the unlabelled vertical maps are the natural inclusions and so the construction of the sequence in (\ref{very important diagram}) implies that the upper two rows of this diagram represent the same element of the Yoneda Ext-group ${\rm Ext}_{R_L}^2(\mathfrak{S}_L,U_L)$. Given this, the existence of a map $\kappa_L'$ that makes the first and second upper squares commute (and hence is bijective) follows from the fact that $\kappa_L$ is bijective and that the upper third square commutes.
 Finally, we note that the upper part of the above diagram satisfies all of the assertions in  claim (iii).
\end{proof}

\begin{remark}\label{exp interp hrncmc}{\em If $\G_\infty$ has rank one, then the resolution $C(\phi)$ of $C_{\KK_\infty,S,T}$ constructed in Proposition \ref{limitomac} leads to the following explicit interpretation of
Conjecture \ref{hrncmc}. For $L$ in $\Omega(\KK_\infty)$ the complex
$C_{L,S,T,p}$ is isomorphic in $\Der^{\rm perf}(\ZZ_p[\G_L])$ to
\[ \ZZ_p[\G_L]^d\xrightarrow{\phi_L}\ZZ_p[\G_L]^d,\] 
where the first term is placed in degree zero, and the
 alternative description of $\Theta^\Sigma_{L,S,T,p}$ given in
 (\ref{exp proj}) implies that the image
  of $\Theta^\Sigma_{\KK_\infty,S,T}(x(\underline{b}_\bullet) )$ under the natural projection map
 ${\bigcap}_{\Lambda(\mathcal{G}_\infty)}^r\mathcal{O}_{\KK_\infty,S,T}^\times \to
 {\bigcap}^r_{\ZZ_p[\G_L]}\mathcal{O}_{L,S,T,p}^\times$ is %equal to
 \[ \Theta^\Sigma_{L,S,T,p}(x(\underline{b}_\bullet)_L) =
 (\wedge_{i=r+1}^{i=d}(b_{L,i}^\ast\circ\phi_{L}))
 (\wedge_{j\in [d]}b_{L,j}).\]
The equality predicted in Conjecture \ref{hrncmc} is therefore valid
if and only if there exists
 an element $u$ in $\K_1(\La(\G_\infty))$ such that in ${\bigcap}_{\Lambda(\mathcal{G}_\infty)}^r\Lambda(\mathcal{G}_\infty)^d$ one has
\begin{equation}\label{fund eq}  \iota_{\infty,*}(\varepsilon^{\rm RS}_{\KK_\infty,S,T}) =
{\rm Nrd}_{Q(\G_\infty)}(u)\cdot\bigl( (\wedge_{i=r+1}^{i=d}(b_{L,i}^\ast\circ\phi_{L})(\wedge_{j\in [d]}b_{L,j})\bigr)_{L \in \Omega(\KK_\infty)},
 \end{equation}
 where $\iota_{\infty,\ast}$ is the homomorphism of $\xi_p(\KK_\infty/K)$-modules $ {\bigcap}_{\Lambda(\mathcal{G}_\infty)}^r\mathcal{O}_{\KK_\infty,S,T}^\times \to {\bigcap}_{\Lambda(\mathcal{G}_\infty)}^r\Lambda(\mathcal{G}_\infty)^d\,$ that is 
 induced by the injective map of $\Lambda(\mathcal{G}_\infty)$-modules 
 $\iota_\infty: \mathcal{O}_{\KK_\infty,S,T}^\times \to \Lambda(\mathcal{G}_\infty)^d$.}
 \end{remark}

\begin{remark}{\em The detailed descent properties established in Proposition \ref{limitomac}(iii) are finer than is strictly necessary for the present article. However, we have included the detail since it is required in the article \cite{bpss} of Puignau, Seo and the present authors in order to discuss non-commutative generalizations of the `refined class number formulas' conjectured independently by Mazur and Rubin in \cite{MR2} and by the second author in \cite{sano}. }\end{remark}

\subsection{Semisimplicity for Selmer modules}\label{semisection} The hypothesis that a finitely generated torsion module over the classical Iwasawa algebra be `semisimple at zero' allows one to make explicit descent computations even in the presence of trivial zeroes (see, for example, \cite{burns2}, though the notion arises in many earlier articles). 

In this section we introduce a generalization of the notion of semisimplicity in the context of Selmer modules that will play an important role in later sections.  

To do this we fix data $\KK_\infty/K$, $S$ and $T$ as in \S\ref{can res}. We assume throughout this section that $\G_\infty$ has rank one and fix a field $E$ in $\Omega(\KK_\infty)$ for which there is an isomorphism of topological groups 
\[ \mathcal{H}_\infty:= \Gal(\KK_\infty/E)\cong \ZZ_p.\]
We then also fix a subset $\Sigma$ of $\Sigma_S(E)$ such that   
\begin{equation}\label{sigma_S assumption} \begin{cases} \Sigma  = \Sigma_S(E), &\text{if $\Sigma_S(E) \not= S$},\\
\Sigma_S(\KK_\infty) \subseteq \Sigma\,\,\text{and}\,\, |\Sigma| = |S|-1, &\text{if $\Sigma_S(E) = S$}\end{cases}\end{equation}
and set 
\[ r := |\Sigma_S(\KK_\infty)|\,\,\text{ and }\,\, r' := |\Sigma|\]
(so that $r' \ge r$). 

\begin{remark}{\em The restrictions on $\Sigma$ given by (\ref{sigma_S assumption}) are motivated by the observations made in Remark \ref{ind v_0}(iii). If $\Sigma_S(E) = S$ (which occurs, for example, if $\KK_\infty$ is a $\ZZ_p$-extension of $E=K$), then there exists a choice of $\Sigma$ as above since $\Sigma_S(\KK_\infty) \not= S$ and the definitions and results in the rest of this section are independent of this choice.}\end{remark}

For any element $\gamma$
 of $\mathcal{G}_\infty$, we define an element of $\xi_p(\KK_\infty/K)$ by setting
 \[ \lambda(\gamma) := {\rm Nrd}_{Q(\G_\infty)}(\gamma-1)\in \xi_p(\KK_\infty/K).\]
 We also fix a
 topological generator $\gamma_E$
 of $\mathcal{H}_\infty$, and define an ideal of $\xi_p(\KK_\infty/K)$ by setting 
\[ I_E(\G_\infty):= \xi_p(\KK_\infty/K)\cdot \lambda(\gamma_E).\]

Finally, we write $x \mapsto x^\#$ for the $\QQ_p$-linear involution of $Q(\G_\infty)$ that is induced by inverting elements of $\G_\infty$. 

\begin{remark}\label{hash remark} {\em We record two important properties of the ideal $I_E(\G_\infty)$. \

\noindent{}(i) As the notation suggests, $I_E(\G_\infty)$ is independent of the choice of $\gamma := \gamma_E$ (and hence only depends on $\G_\infty$ and $E$). To see this note that any other topological generator of $\mathcal{H}_\infty$ is equal to $\gamma^a$ for some $a \in \ZZ_p^\times$ and that the corresponding quotient $x_a := (\gamma^a-1)/(\gamma-1)$ belongs to $\La(\mathcal{H}_\infty)^\times \subseteq \La(\G_\infty)^\times$. From the explicit description of reduced norm given in  (\ref{rn description}), it then follows that ${\rm Nrd}_{Q(\G_\infty)}(x_a)$ belongs to $\xi_p(\KK_\infty/K)^\times$, and hence that $\lambda(\gamma^a) = {\rm Nrd}_{Q(\G_\infty)}(x_a)\cdot\lambda(\gamma)$ generates $I_E(\G_\infty)$ over $\xi_p(\KK_\infty/K)$. 

\noindent{}(ii) One has $I_E(\G_\infty) = I_E(\G_\infty)^\#$. To see this one can combine (\ref{rn description}) with \cite[(3.4.1)]{bses} to deduce that, for each matrix $M = (M_{ij})$ in ${\rm M}_d(\La(\G_\infty))$ there is an equality 
\[ {\rm Nrd}_{Q(\G_\infty)}(M)^\# = {\rm Nrd}_{Q(\G_\infty)}(M^\#),\]
where $M^\#$ denotes the matrix $(M_{ij}^\#)$. These equalities imply that  $\xi_p(\KK_\infty/K) = \xi_p(\KK_\infty/K)^\#$. Since $(\gamma_E-1)^\# = (-\gamma_E^{-1})(\gamma_E-1)$ and ${\rm Nrd}_{Q(\G_\infty)}(-\gamma_E^{-1})\in \xi_p(\KK_\infty/K)^\times$, it then also follows that $I_E(\G_\infty) = I_E(\G_\infty)^\#$, as claimed. }\end{remark}

\subsubsection{}We start by defining an analogue for Iwasawa-theoretic Selmer modules of the
higher non-commutative Fitting invariants introduced in \cite[\S3.4]{bses}.

To do this, for each endomorphism $\phi$ of $\La(\G_\infty)^d$ we write $\mathfrak{G}_r(\phi)$ for the subset of
${\rm M}_{d}(\La(\G_\infty))$ comprising all matrices that are obtained by replacing the
elements in any selection of $r$ columns of the matrix of $\phi$ with respect to the standard basis of
 $\La(\G_\infty)^d$ by arbitrary elements of $\La(\G_\infty)$.

\begin{definition}\label{hfi def} {\em For each endomorphism $\phi$ of $\La(\G_\infty)^d$ constructed as in Proposition \ref{limitomac}, we define an ideal of $\xi_p(\KK_\infty/K)$ by setting }
\[ {\rm Fit}_{\La(\G_\infty)}^r(\mathcal{S}_S^T(\KK_\infty)) := 
\xi_p(\KK_\infty/K)\cdot \{{\rm Nrd}_{Q(\G_\infty)}(M): M\in  \mathfrak{G}_r(\phi)\}^\#.\] 
\end{definition}

The basic properties of this ideal are described in the following result. 

\begin{lemma}\label{prel semi} For each $\phi$ as above, the  following claims are valid.

\begin{itemize}
\item[(i)] ${\rm Fit}_{\La(\G_\infty)}^r(\mathcal{S}_S^T(\KK_\infty))$ depends only on the
 $\La(\G_\infty)$-module $\mathcal{S}_S^T(\KK_\infty)$.

\item[(ii)] ${\rm Fit}_{\La(\G_\infty)}^r(\mathcal{S}_S^T(\KK_\infty))$ is contained in
$I_E(\G_\infty)^{r'-r}$.
\end{itemize}
\end{lemma}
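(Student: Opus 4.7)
The plan is to deduce claim (i) from the general invariance properties of higher non-commutative Fitting invariants established in \cite{bses}, and to prove claim (ii) via a direct structural analysis of $\phi$ combined with a block-triangulation of matrices in $\mathfrak{G}_r(\phi)$.

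For (i), any two presentations constructed as in Proposition \ref{limitomac} satisfy $\tilde\phi = \eta\phi(\eta')^{-1}$ with $\eta,\eta'$ automorphisms of $\Lambda(\mathcal{G}_\infty)^d$ (Proposition \ref{limitomac}(ii)), and their reduced norms are units of $\xi_p(\mathcal{K}_\infty/K)$ by (\ref{rn description}). It then suffices to show that the ideal generated by $\{{\rm Nrd}_{Q(\mathcal{G}_\infty)}(M)^\# : M \in \mathfrak{G}_r(\phi)\}$ over $\xi_p(\mathcal{K}_\infty/K)$ is invariant under both substitutions $\phi \mapsto \eta\phi$ and $\phi \mapsto \phi(\eta')^{-1}$; this is the standard invariance of higher Fitting invariants under elementary row and column operations, and reduces, after Wedderburn decomposition of $Q(\mathcal{G}_\infty)$, to direct verifications with elementary matrices.

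For (ii), I would first analyze the matrix $(a_{ij})$ of $\phi$. Projecting $\beta_\infty \circ \varpi_\infty \circ \phi = 0$ onto the $v$-component $Y_{\mathcal{K}_\infty,v}$ for each $v \in S'$ gives $a_{n_v, j}\cdot w_{v,\infty} = 0$ in $Y_{\mathcal{K}_\infty,v}$ for every $j \in [d]$, where $n_v \in [n]$ is the index of $v$. For $v \in \Sigma_S(\mathcal{K}_\infty)$, the decomposition group $\mathcal{G}_{\infty,v}$ is trivial and so $Y_{\mathcal{K}_\infty,v} \cong \Lambda(\mathcal{G}_\infty)$ is free of rank one, forcing $a_{n_v,j} = 0$; the $n_v$-th row of $\phi$ therefore vanishes, giving $r$ zero rows in total. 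For $v \in \Sigma \setminus \Sigma_S(\mathcal{K}_\infty)$, the group $\mathcal{G}_{\infty,v} \subseteq \mathcal{H}_\infty \cong \ZZ_p$ is non-trivial, and the power series identity $(1+T)^{p^a} - 1 \in T\cdot\ZZ_p[[T]]$ (with $T = \gamma_E-1$) places its augmentation ideal inside the two-sided ideal $\mathfrak{J} := (\gamma_E-1)\Lambda(\mathcal{G}_\infty) = \Lambda(\mathcal{G}_\infty)(\gamma_E-1)$ (the equality of left and right ideals following from the normality of $\mathcal{H}_\infty$ in $\mathcal{G}_\infty$ and Remark \ref{hash remark}(ii)); hence each of the $r' - r$ such rows has every entry in $\mathfrak{J}$.

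Next, for any $M \in \mathfrak{G}_r(\phi)$ with replaced columns $J$, permuting rows and columns of $M$ to bring the $r$ zero rows to the bottom and the columns $J$ to the right produces a block form $\left(\begin{smallmatrix} A & B \\ 0 & D \end{smallmatrix}\right)$ with $A \in M_{d-r}(\Lambda(\mathcal{G}_\infty))$ and $D \in M_r(\Lambda(\mathcal{G}_\infty))$, the zero block reflecting that the zero rows of $\phi$ remain zero in the unreplaced columns of $M$. Block multiplicativity of the reduced norm yields ${\rm Nrd}_{Q(\mathcal{G}_\infty)}(M) = \pm {\rm Nrd}_{Q(\mathcal{G}_\infty)}(A) \cdot {\rm Nrd}_{Q(\mathcal{G}_\infty)}(D)$. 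Among the rows of $A$, the $r'-r$ special rows have all entries in $\mathfrak{J}$; factoring each such entry as $(\gamma_E - 1)\cdot\tilde a$ we write $A = E \cdot A'$, where $E$ is the diagonal matrix of size $d-r$ with $\gamma_E-1$ in the $r'-r$ special-row positions and $1$ elsewhere, and $A' \in M_{d-r}(\Lambda(\mathcal{G}_\infty))$. Then ${\rm Nrd}_{Q(\mathcal{G}_\infty)}(E) = \lambda(\gamma_E)^{r'-r}$, while ${\rm Nrd}_{Q(\mathcal{G}_\infty)}(A')$ and ${\rm Nrd}_{Q(\mathcal{G}_\infty)}(D)$ both lie in $\xi_p(\mathcal{K}_\infty/K)$, so ${\rm Nrd}_{Q(\mathcal{G}_\infty)}(M) \in \lambda(\gamma_E)^{r'-r}\cdot\xi_p(\mathcal{K}_\infty/K) = I_E(\mathcal{G}_\infty)^{r'-r}$; the desired containment in (ii) then follows from the $\#$-invariance of $I_E(\mathcal{G}_\infty)$ asserted in Remark \ref{hash remark}(ii). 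The main obstacle will be making the block-multiplicativity of the reduced norm and the diagonal reduced norm formula rigorous in the non-commutative Iwasawa-algebra setting, including handling the non-centrality of $\gamma_E-1$; both difficulties can be overcome by decomposing character-by-character via the Ritter-Weiss homomorphisms $j_\chi$ used in the proof of Corollary \ref{hrmc true}, thereby reducing to standard determinantal identities over the commutative rings $\ZZ_p[[T_\chi]]$.
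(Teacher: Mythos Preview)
Your treatment of claim (ii) is essentially the same as the paper's: both factor the relevant matrices as a diagonal matrix with $r'-r$ copies of $\gamma_E-1$ times an integral matrix, then take reduced norms. The paper does this in one step by writing $(N\mid M(\phi)^\dagger)=M'\cdot\Delta_E$ with $\Delta_E$ diagonal, and simply uses multiplicativity of ${\rm Nrd}_{Q(\mathcal{G}_\infty)}$ together with the obvious computation ${\rm Nrd}_{Q(\mathcal{G}_\infty)}(\Delta_E)=\lambda(\gamma_E)^{r'-r}$. Your extra block-triangulation and proposed detour through the Ritter--Weiss maps $j_\chi$ are unnecessary: multiplicativity of the reduced norm and its compatibility with block-diagonal embeddings already give ${\rm Nrd}$ of your diagonal matrix $E$ directly, without any centrality assumption on $\gamma_E-1$.

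For claim (i), however, there is a genuine gap. Your plan is to show generic invariance of the ideal under the substitutions $\phi\mapsto\eta\phi$ and $\phi\mapsto\phi(\eta')^{-1}$, appealing to ``standard invariance of higher Fitting invariants under elementary row and column operations''. One of these two substitutions is indeed harmless (one can factor out the automorphism and absorb its reduced norm, a unit of $\xi_p(\mathcal{K}_\infty/K)$). The other is not: since the reduced norm is \emph{not} $\Lambda(\mathcal{G}_\infty)$-multilinear in columns, one cannot expand ${\rm Nrd}$ of a matrix whose columns are $\Lambda(\mathcal{G}_\infty)$-linear combinations of columns of $M(\phi)$ as a $\xi_p(\mathcal{K}_\infty/K)$-linear combination of reduced norms of matrices in $\mathfrak{G}_r(\phi)$. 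Passing to the Wedderburn decomposition does not help here, because the coefficients in any multilinear expansion over a splitting field lie in the non-commutative algebra, not in $\xi_p(\mathcal{K}_\infty/K)$. The paper's proof avoids this difficulty by using two specific structural features that you establish for (ii) but do not use for (i): first, the $r$ rows (columns, in the paper's convention) of $M(\phi)$ corresponding to places of $\Sigma_S(\mathcal{K}_\infty)$ vanish identically, so that the only non-zero contributions come from matrices of the shape $(N\mid M(\phi)^\dagger)$; second, the transition automorphism in Proposition~\ref{limitomac}(ii) is represented by a block matrix with $I_n$ in the top-left corner. The explicit computation $U\cdot(N\mid M(\tilde\phi)^\dagger)=(UN\mid M(\phi)^\dagger)\cdot V$ then uses both of these facts simultaneously, and it is this that makes the invariance go through.
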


\begin{proof} The commutative diagram (\ref{limit independence}) shows that the collection of endomorphisms $\phi$ that are constructed via the approach in Proposition 
 \ref{limitomac} constitutes a distinguished family of free resolutions of the $\La(\G_\infty)$-module $\mathcal{S}_S^T(\KK_\infty)^{\rm tr}$. These endomorphisms are uniquely determined by their linear duals $\Hom_{\La(\G_\infty)}(\phi,\La(\G_\infty))$ which (Remark \ref{non-tranpose remark} implies) in turn constitute a distinguished family of resolutions of $\La(\G_\infty)$-module $\mathcal{S}_S^T(\KK_\infty)$. 
 
 To prove claim (i) it is therefore enough to show that ${\rm Fit}_{\La(\G_\infty)}^r(\mathcal{S}_S^T(\KK_\infty))$ is unchanged if one replaces $\phi$ by any other endomorphism constructed via Proposition \ref{limitomac}. 
 
To do this we write $M(\phi)$ for the matrix of $\phi$ with respect to the standard basis $\{b_i\}_{i \in [d]}$ of $\La(\G_\infty)^d$.
 Then, in view of the property (\ref{ordering 2}) (with $\KK$ taken to be $\KK_\infty$), the construction of the map (\ref{infty map}) implies that the $i$-th column of $M(\phi)$ is zero for every $i$ in $[r]$.
 This means that all non-zero contributions to the ideal ${\rm Fit}_{\La(\G_\infty)}^r(\mathcal{S}_S^T(\KK_\infty))$ arise from the reduced norms of block matrices of the form
  $\left(\!\!\!\begin{array}{c|c}N\!&M(\phi)^\dagger\end{array}\!\!\!\right)$, where
  $N$ is an arbitrary matrix in ${\rm M}_{d,r}(\La(\G_\infty))$ and for any
  matrix $M$ in ${\rm M}_d(\La(\G_\infty))$ we write
  $M^\dagger$ for the matrix in ${\rm M}_{d,d-r}(\La(\G_\infty))$ given by the last
  $d-r$ columns of $M$.

 Now if $\tilde\phi$ is any other endomorphism constructed as in Proposition \ref{limitomac}, then claim (ii) of that result implies $U\cdot M(\tilde\phi) = M(\phi)\cdot V$
 where $U$ and $V$ are matrices in ${\rm GL}_d(\La(\G_\infty))$ and $V$ is a
 block matrix of the form (\ref{block}). In particular, one has
 $U\cdot M(\tilde\phi)^\dagger = (M(\phi)\cdot V)^\dagger$ and, for any $N$ in
 ${\rm M}_{d,r}(\La(\G_\infty))$, also
\begin{align*}  U\cdot \left(\!\!\!\begin{array}{c|c}N\!&M(\tilde\phi)^\dagger\end{array}\!\!\!\right)
 =&\,\, \left(\!\!\!\begin{array}{c|c}U\cdot N\!&U\cdot M(\tilde\phi)^\dagger\end{array}\!\!\!\right)\\
  =&\,\, \left(\!\!\!\begin{array}{c|c}U\cdot N\!&(M(\phi)\cdot V)^\dagger\end{array}\!\!\!\right)\\
  =&\,\, \left(\!\!\!\begin{array}{c|c}U\cdot N\!& M(\phi)^\dagger\end{array}
  \!\!\!\right)\cdot V,\end{align*}
where the last equality follows from the nature of the block matrix $V$.
Claim (i) is then a consequence of the resulting equalities
\[ {\rm Nrd}_{Q(\G_\infty)}\bigl(\left(\!\!\!\begin{array}{c|c}U\cdot N\!& M(\phi)^\dagger\end{array}
  \!\!\!\right)\bigr) = {\rm Nrd}_{Q(\G_\infty)}(U)\cdot{\rm Nrd}_{Q(\G_\infty)}
  \bigl( \left(\!\!\!\begin{array}{c|c}N\!&M(\tilde\phi)^\dagger\end{array}\!\!\!\right)\bigr)\cdot{\rm Nrd}_{Q(\G_\infty)}(V)^{-1}\]
and the fact ${\rm Nrd}_{Q(\G_\infty)}(U)$ and ${\rm Nrd}_{Q(\G_\infty)}(V)$ are both units of $\xi_p(\KK_\infty/K) = \xi_p(\KK_\infty/K)^\#,$ where the last equality follows from Remark \ref{hash remark}(ii).

In a similar way, Remark \ref{hash remark}(ii) reduces the proof of claim (ii) to showing  that for every $N$ in ${\rm M}_{d,r}(\La(\G_\infty))$ one has
\[ {\rm Nrd}_{Q(\G_\infty)}\bigl(\left(\!\!\!\begin{array}{c|c}N\!&M(\phi)^\dagger\end{array}\!\!\!\right)\bigr) \in \lambda(\gamma_E)^{r'-r}\cdot \xi_p(\KK_\infty/K).\]

To show this we assume, as we may (under the hypothesis (\ref{sigma_S assumption})), that the place $v'$ of $S\setminus \Sigma_S(\KK_\infty)$ used in the constructions of Proposition 
 \ref{limitomac} does not belong to $\Sigma$. We then define a subset $J = J_{S,\Sigma,v'}$ of $[n]$ by setting 
 \begin{equation}\label{J def}  J := \{j \in [n]\setminus [r]:\text{the } j\text{-th element of } S\setminus \{v'\} \text{ belongs to } \Sigma\setminus \Sigma_S(\KK_\infty)\}.\end{equation}
 Then one has $|J| = r'-r$ and the nature of the map (\ref{infty map}) implies that for each $j$ in $J$, and every $i$ in $[d]$ there exist a (unique) element
 $c_{ij}$ of $\La(\G_\infty)$ with
 \[  M(\phi)_{ij} = c_{ij}(\gamma_E-1)\]
 and hence %that
\begin{equation}\label{semi decomp} \left(\!\!\!\begin{array}{c|c}N\!&M(\phi)^\dagger\end{array}\!\!\!\right) =
  M'\cdot \Delta_E\end{equation}
 where $M'$ is the matrix in ${\rm M}_{d}(\La(\G_\infty))$  defined by 
 \begin{equation}\label{M' def} M'_{ij} = \begin{cases} c_{ij}, &\text{if $j\in J$ and $i \in [d]$}\\
                         (\!\!\!\begin{array}{c|c}N\!&M(\phi)^\dagger\end{array}\!\!\!)_{ij},
                         &\text{if $j\in [d]\setminus J$ and $i \in [d]$,}\end{cases}\end{equation}
and $\Delta_E$ is the diagonal matrix in ${\rm M}_{d}(\La(\G_\infty))$ with 
\[ \Delta_{E,ij} := \begin{cases} \gamma_E-1, &\text{if $i = j \in J$}\\
 1, &\text{if $i=j \in [d]\setminus J$}\\
 0, &\text{if $i \not= j$.}\end{cases}   \]
 %
 %and $J_E$ is the diagonal matrix with $i$-entry equal to $\gamma_E-1$ if
 %$i\in [r']\setminus [r]$ and equal to $1$ otherwise. 
 
 The required result is
 therefore true since
  ${\rm Nrd}_{Q(\G_\infty)}(\Delta_E) = \lambda(\gamma_E)^{r'-r}$, whilst (\ref{rn description}) implies
  ${\rm Nrd}_{Q(\G_\infty)}(M')$ belongs to $\xi_p(\KK_\infty/K)$.
\end{proof}

\subsubsection{} With the result of Lemma \ref{prel semi} in mind, we now introduce a restriction on the structure of the $\La(\G_\infty)$-module $\mathcal{S}_S^T(\KK_\infty)$ that will play an important role in the sequel.

For a character $\chi$ in ${\rm Ir}_p(\G_E)$ we define a prime ideal of
$\xi_p(\KK_\infty/K)$ by setting
\begin{equation}\label{wp def} \wp_\chi(\KK_\infty/K) := \ker\bigl( \xi_p(\KK_\infty/K) \to \zeta(\QQ_p^c[\G_E]) \xrightarrow{ x\mapsto x_\chi} \QQ_p^c\bigr),\end{equation}
where the unlabelled arrow is the natural projection.

\begin{definition}\label{semi def}{\em Fix $\chi$ in
${\rm Ir}_p(\G_E)$. Then the data $\KK_\infty/K, E$ and $S$ is said to be `semisimple at $\chi$' if $ {\rm Fit}_{\La(\G_\infty)}^r(\mathcal{S}_S^T(\KK_\infty))$ is not contained in $
I_E(\G_\infty)^{r'-r}\cdot \wp_\chi(\KK_\infty/K).$
}
\end{definition}

Before proceeding, we explain the motivation for our use of the word `semisimple' in this context. In particular, we note that the stated property of the module $Q$ in the following result implies that, after localizing at $\wp_\chi(\KK_\infty/K)$, it is  `semisimple at zero' in the sense relevant to Iwasawa-theoretic descent computations (cf. \cite{burns2}). 
%We set
%
%\[ A_S(\KK_\infty) := \varprojlim_{F\in\Omega(\KK_\infty)}{\rm Cl}_S^T(F)_p\]
%
%where the limit is taken with respect to the natural maps.

%In connection with condition (i) in the following result, we recall
%Remark \ref{gross-jaulent}.

\begin{lemma}\label{semisimplicity lemma} Fix $\chi$ in
${\rm Ir}_p(\G_E)$. Then, if the data $\KK_\infty/K, E$ and $S$ is semisimple at $\chi$, there exists an exact sequence of $\La(\G_\infty)$-modules 
\begin{equation}\label{semi ses}\La(\G_\infty)^r \to \mathcal{S}_S^T(\KK_\infty) \to Q \to 0\end{equation}
in which $Q$ has the following property: the natural map 
\[ Q^{\gamma_E = 1} \oplus (\gamma_E-1)Q \to Q\]
is bijective after localizing at $\wp_\chi(\KK_\infty/K)$. 
\end{lemma}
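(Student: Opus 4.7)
The proof proceeds in three steps: derive a free presentation of the non-transpose Selmer module, extract the direct summand $\La(\G_\infty)^r$ coming from the split places, and analyze the resulting cokernel $Q$ after localization at $\wp_\chi(\K_\infty/K)$ using the matrix factorization at the heart of the proof of Lemma \ref{prel semi}(ii).

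First I would apply Proposition \ref{limitomac} to fix a resolution $\phi: \La(\G_\infty)^d \to \La(\G_\infty)^d$ of $C_{\K_\infty,S,T}$, whose cokernel is $\mathcal{S}_S^T(\K_\infty)^{\rm tr}$. Applying $\rhom_{\La(\G_\infty)}(-,\La(\G_\infty))$ and using Lemma \ref{complex construction}(v) together with Remark \ref{non-tranpose remark} at the Iwasawa limit yields a dual presentation
\[ \La(\G_\infty)^d \xrightarrow{\psi} \La(\G_\infty)^d \to \mathcal{S}_S^T(\K_\infty) \to 0,\]
whose matrix $M(\psi)$ is obtained from $M(\phi)$ by transposition combined with entry-wise application of the involution $\#$. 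Since the first $r$ columns of $M(\phi)$ vanish (by the construction of $\pi_\infty$ in Proposition \ref{limitomac}, as exploited in the proof of Lemma \ref{prel semi}(ii), reflecting the complete splitting of the places in $\Sigma_S(\K_\infty)$ in $\K_\infty/K$), the first $r$ rows of $M(\psi)$ also vanish. This yields a canonical direct sum decomposition $\mathcal{S}_S^T(\K_\infty) \cong \La(\G_\infty)^r \oplus Q$, where $Q := \cok(\bar\psi)$ for the reduced map $\bar\psi: \La(\G_\infty)^d \to \La(\G_\infty)^{d-r}$ obtained by discarding the vanishing rows. Combining the inclusion of the first summand with the projection onto the second gives the required exact sequence.

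To analyze $Q$ at $\wp_\chi(\K_\infty/K)$, I would isolate from the decomposition (\ref{semi decomp}) the factorization $M(\phi)^\dagger = M'' \cdot \bar\Delta_E$, where $M''$ is the $d \times (d-r)$ matrix formed by the last $d-r$ columns of the square matrix $M'$ appearing in (\ref{M' def}) (and is therefore independent of the auxiliary matrix $N$), and $\bar\Delta_E$ is the $(d-r) \times (d-r)$ diagonal matrix with entry $\gamma_E - 1$ at each of the $r'-r$ positions indexed by $J$ and $1$ elsewhere. Dualizing gives a factorization $\bar\psi = \bar\Delta_E^\# \circ (M'')^{{\rm tr},\#}$. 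The semisimplicity hypothesis at $\chi$ furnishes an $N$ for which ${\rm Nrd}_{Q(\G_\infty)}(M')^\# \notin \wp_\chi$, whence the square matrix $(M')^{{\rm tr},\#}$ becomes invertible in $\La(\G_\infty)$ after localizing at $\wp_\chi$. Since $(M'')^{{\rm tr},\#}$ is obtained by applying the projection onto the last $d-r$ coordinates to $(M')^{{\rm tr},\#}$, it is a split surjection at $\wp_\chi$, and so $Q_{\wp_\chi} \cong \cok(\bar\Delta_E^\#)_{\wp_\chi} \cong \bigoplus_{j \in J}\bigl(\La(\G_\infty)/(\gamma_E-1)\La(\G_\infty)\bigr)_{\wp_\chi}$, where we use that $(\gamma_E-1)^\# = -\gamma_E^{-1}(\gamma_E-1)$ generates the same left ideal as $\gamma_E - 1$. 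Since for any $x \in \La(\G_\infty)$ the identity $\gamma_E \cdot x - x = (\gamma_E-1)x$ shows that $\gamma_E$ acts as the identity on each summand, one has $Q_{\wp_\chi}^{\gamma_E = 1} = Q_{\wp_\chi}$ and $(\gamma_E - 1)Q_{\wp_\chi} = 0$, so the natural map $Q^{\gamma_E=1} \oplus (\gamma_E - 1)Q \to Q$ becomes bijective after localization at $\wp_\chi$.

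The main technical obstacle will be the careful passage between the resolution of $\mathcal{S}_S^T(\K_\infty)^{\rm tr}$ provided by Proposition \ref{limitomac} and the dual presentation of $\mathcal{S}_S^T(\K_\infty)$ required here, while tracking the involution $\#$ so that the matrix factorization driving the proof of Lemma \ref{prel semi}(ii) translates into a concrete invertibility statement at $\wp_\chi$ for the dualized presentation. A secondary subtlety is the compatibility of the localization at $\wp_\chi$, a prime of the central order $\xi_p(\K_\infty/K)$, with the $\La(\G_\infty)$-module structure on $Q$; this is handled via Lemma \ref{relate xi zeta}(i), which shows that $\xi_p(\K_\infty/K)$ and the centre $\zeta(\La(\G_\infty))$ agree after inverting a fixed power of $p$ that necessarily lies outside every height-one prime supporting the Fitting invariant.
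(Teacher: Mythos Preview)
Your overall strategy is the same as the paper's---dualise the resolution of Proposition~\ref{limitomac} to present $\mathcal{S}_S^T(\K_\infty)$, and then exploit the factorisation (\ref{semi decomp}) from the proof of Lemma~\ref{prel semi}(ii)---but the execution contains a genuine error at the second step.

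The problem is the asserted direct-sum decomposition $\mathcal{S}_S^T(\K_\infty)\cong\La(\G_\infty)^r\oplus Q$. You deduce this from the vanishing of the first $r$ rows of $M(\psi)=M(\phi)^{{\rm tr},\#}$, but for a map of \emph{left} $\La(\G_\infty)$-modules the vanishing of rows means that $\psi$ annihilates the first $r$ basis elements of the \emph{source}, not that $\im(\psi)$ lands in a corank-$r$ direct summand of the \emph{target}. Concretely: the dual condition $b_i^\ast\circ\phi=0$ for $i\in[r]$ says precisely that $\phi^\ast(b_i^\ast)=0$, so $\im(\phi^\ast)$ is generated by the $d-r$ elements $\{b_i^\ast\circ\phi:\,i>r\}$, and these have no reason to lie in a fixed rank-$(d-r)$ summand of the target. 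Thus $\cok(\phi^\ast)$ is a rank-$d$ module modulo $d-r$ relations and need not split off $\La(\G_\infty)^r$. (By contrast, the transpose module $\mathcal{S}_S^T(\K_\infty)^{\rm tr}=\cok(\phi)$ \emph{does} split, because there the image genuinely lies in the span of $\{b_i:\,i>r\}$; dualising destroys this.)

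The paper avoids this by not attempting any such splitting. Instead it uses the very matrix $N$ furnished by semisimplicity to \emph{define} the map $\La(\G_\infty)^r\to\mathcal{S}_S^T(\K_\infty)$: since $\phi^\ast$ kills the first $r$ basis vectors, one can freely replace those $r$ zero rows by $N^{{\rm tr},\#}$ to obtain $M(\phi,N):=\bigl(N\mid M(\phi)^\dagger\bigr)^\ast$, and then $Q:=\cok(M(\phi,N))$ automatically sits in the required sequence $\La(\G_\infty)^r\to\mathcal{S}_S^T(\K_\infty)\to Q\to 0$. The factorisation $M(\phi,N)=\Delta_E^\ast\cdot(M')^\ast$ then yields $Q'\to Q\to\cok(\Delta_E^\ast)\to 0$ with $Q'=\cok((M')^\ast)$, and one finishes by showing $Q'_\wp=0$ via Nakayama (using that ${\rm Nrd}((M')^\ast)\notin\wp$ forces $\gamma_E-1$ to act invertibly on $Q'_\wp$, while $\gamma_E-1\in\wp$). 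This Nakayama step is also more robust than your direct assertion that ``$(M')^{{\rm tr},\#}$ becomes invertible in $\La(\G_\infty)$ after localising at $\wp_\chi$'', which would require interpreting invertibility of a matrix over $\La(\G_\infty)$ at a prime of the larger ring $\xi_p(\K_\infty/K)$; Lemma~\ref{relate xi zeta}(i) alone does not obviously supply this.
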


\begin{proof} %We assume first that the given data is semisimple at $\chi$ and deduce the existence of an exact sequence of the required type. %We note $\wp_{\check\chi}(\KK_\infty/K) = \wp^\#$, and 
Set $\wp := \wp_\chi(\KK_\infty/K)$. Then, under the stated  hypothesis, the argument of Lemma \ref{prel semi} implies, via the product decomposition (\ref{semi decomp}), that there exists a matrix $N$ in ${\rm M}_{d,r}(\La(\G_\infty))$ such that
\begin{equation}\label{non-inclusion} {\rm Nrd}_{Q(\G_\infty)}(M')\in \xi_p(\KK_\infty/K)\setminus \wp^\#,\end{equation} 
where the matrix $M'$ is defined in terms of $N$ as in (\ref{M' def}). 

For each matrix $M$ in ${\rm M}_d(\La(\G_\infty))$ we set $M^\ast := M^{{\rm tr},\#}$. We also identify the matrices $M(\phi)^\ast$, $M(\phi,N) := \left(\!\!\!\begin{array}{c|c}N\!&M(\phi)^\dagger\end{array}\!\!\!\right)^\ast$, $(M')^\ast$ and $\Delta_E^\ast$ with endomorphisms of $\La(\G_\infty)^d$ in the obvious way. Then the cokernel of $M(\phi)^\ast$ is isomorphic to $\mathcal{S}_S^T(\KK_\infty)$ and so there exists an exact sequence (\ref{semi ses}) in which $Q$ is the cokernel of $M(\phi,N)$. In addition, from the decomposition 
$M(\phi,N) = \Delta_E^\ast\cdot (M')^\ast$, one deduces that this choice of $Q$ lies in an exact sequence of $\La(\G_\infty)$-modules $Q'  \to Q \to {\rm cok}\bigl(\Delta_E^\ast\bigr) \to 0$, 
with $Q' := {\rm cok}\bigl((M')^{\ast}\bigr)$.

Since ${\rm cok}\bigl(\Delta_E^\ast\bigr)$ is isomorphic to $\La(\G_E)^{r'-r}$, to deduce that $Q$ has the claimed property it is therefore enough to show that $Q'_\wp$ vanishes. Now (\ref{non-inclusion}) implies that the reduced norm 
${\rm Nrd}_{Q(\G_\infty)}((M')^{\ast}) = {\rm Nrd}_{Q(\G_\infty)}((M'))^\#$ does not belong to $\wp$, and this implies $\gamma_E-1$ acts invertibly on $Q'_\wp$. Thus, since $\gamma_E-1$ belongs to $\wp$, Nakayama's Lemma implies that $Q'_\wp$ vanishes, as required.
\end{proof}

%\subsubsection{}For a character $\chi$ in ${\rm Ir}_p(\G_E)$ we define a prime ideal of
%$\xi_p(\KK_\infty/K)$ by setting
%
%\begin{equation}\label{wp def} \wp_\chi(\KK_\infty/K) := \ker\bigl( \xi_p(\KK_\infty/K) \to \zeta(\QQ_p^c[\G_E]) \xrightarrow{ x\mapsto x_\chi} \QQ_p^c\bigr),\end{equation}
%
%where the unlabelled arrow is the natural projection.
%
%\begin{definition}\label{semi def}{\em Let $\chi$ be a faithful character in
%${\rm Ir}_p(\G_E)$. Then the data $\KK_\infty/K, E$ and $S$ is said to be `semisimple' at $\chi$ if one has
%
%\[ {\rm Fit}_{\La(\G_\infty)}^r(\mathcal{S}_S^T(\KK_\infty)) \not\subset
%I_E(\G_\infty)^{r'-r}\cdot \wp_\chi(\KK_\infty/K).\]
%}
%\end{definition}
%
%We set
%
%\[ A_S(\KK_\infty) := \varprojlim_{F\in\Omega(\KK_\infty)}{\rm Cl}_S^T(F)_p\]
%
%where the limit is taken with respect to the natural maps.

%In connection with condition (i) in the following result, we recall
%Remark \ref{gross-jaulent}.

\subsubsection{}In the sequel, for any subfield $\KK$ of $\QQ^c$ we set
\[ A_S(\KK) := {\varprojlim}_{F}{\rm Cl}_S(F)_p \,\, \text{ and }\,\, A^T_S(\KK) := {\varprojlim}_{F}{\rm Cl}^T_S(F)_p\]
where in both limits $F$ runs over all finite extensions of $\QQ$ in $\KK$ and the
transition morphisms for $F\subset F'$ are the natural norm maps.

In the next result we describe the link between semisimplicity in the sense of Definition 
\ref{semi def} and the structural properties of modules of the form $A_S(\KK)$. 

In condition (ii) of this result we use the idempotent $e_{E/K,S,\Sigma}$ defined in (\ref{key idem def}) 
 (and we recall that, under the hypotheses of this section, one has $\Sigma\not= S$).

%we write, for each $v$ in $S$, $\G_{E,v}$ for the decomposition subgroup of
%$w_v$ in $\G_E$ and $N_v(\G_E)$ for its normal closure.

%the normal closure
%of the decomposition subgroup of $w_v$ in $\G_E$
% and
%
%\[ \beta_{E,v} : Y_{E,\{v\}}\to \ZZ[\G_E/N_v(\G_E)],\]
%
%for the map of $\ZZ_p[\G_E]$-modules that sends
% $w_v$ to the identity element of $\ZZ[\G_E/N_v(\G_E)]$.

\begin{proposition}\label{abeliangross} The data $\KK_\infty/K, S$ and $E$ is semisimple at
 every character $\chi$ in ${\rm Ir}_p(\G_E)$ that satisfies all
 of the following conditions:
\begin{itemize}
\item[(i)] The space
$e_\chi\bigl(\QQ_p^c\otimes_{\ZZ_p} A_S(\KK_\infty)_{\mathcal{H}_\infty}\bigr)$ vanishes.
\item[(ii)] $e_\chi\cdot e_{E/K,S,\Sigma}\not= 0$.
%\item[(iii)] The space $V_\chi^{\G_{E,v'}}$ vanishes.
\end{itemize}
\end{proposition}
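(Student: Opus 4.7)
The plan is to recast the non-containment of Fitting ideals as an explicit non-vanishing statement for a reduced norm, and then verify this via a descent argument combining both hypotheses.

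First, I will translate the semisimplicity condition into a concrete non-vanishing statement. By the definition of $\mathrm{Fit}^r_{\La(\G_\infty)}(\mathcal{S}_S^T(\K_\infty))$, the factorization $(N \mid M(\phi)^\dagger) = M' \cdot \Delta_E$ used in the proof of Lemma \ref{prel semi}(ii), and the identity $I_E(\G_\infty) = I_E(\G_\infty)^\#$ from Remark \ref{hash remark}(ii), the data $\K_\infty/K, E, S$ is semisimple at $\chi$ if and only if there exists $N \in {\rm M}_{d,r}(\La(\G_\infty))$ such that $\mathrm{Nrd}_{Q(\G_\infty)}(M')^\#$ does not lie in $\wp_\chi(\K_\infty/K)$. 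In view of the description (\ref{rn description}) of the reduced norm and of the definition (\ref{wp def}) of $\wp_\chi$, this amounts to requiring that the image of $M'$ in ${\rm M}_d(\QQ_p^c[\G_E])$ have non-zero classical determinant in $\QQ_p^c$ after projection onto its $\check{\chi}$-isotypic component via the Brauer-type representation fixed at the beginning of \S\ref{hyp def section}.

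Second, I will relate this determinant to arithmetic data at level $E$ by applying the descent diagram of Proposition \ref{limitomac}(iii) and tensoring with $V_{\check{\chi}}$ over $\QQ_p^c[\G_E]$. The resulting $\QQ_p^c$-linear endomorphism decomposes as a composite of three contributions. The first is a ``place-lifting'' map controlled by the columns $[r]$ of $N$ which land in $e_{\check{\chi}}(\QQ_p^c \otimes Y_{E, \Sigma_S(\K_\infty)})$. The second is a ``divided-column'' map from the columns $j \in J$ into $e_{\check{\chi}}(\QQ_p^c \otimes Y_{E, \Sigma \setminus \Sigma_S(\K_\infty)})$; because of the division by $\gamma_E-1$ performed in passing from $M(\phi)^\dagger$ to $M'$, this map factors through a quotient that, by the combination of Lemma \ref{selmer lemma} with the exact sequence (\ref{selmer lemma seq}), naturally incorporates the $\check{\chi}$-component of $A_S(\K_\infty)_{\mathcal{H}_\infty}$ as its only obstruction to being an isomorphism. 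The third is a ``unit'' map built from the remaining columns of $M(\phi)^\dagger$, which records the image of $\QQ_p^c \otimes \mathcal{O}_{E,S,T,p}^\times$ modulo the $\Sigma$-direction in $X_{E,S}$.

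Third, I will use the hypotheses to arrange for each of the three factors above to be an isomorphism for a suitable choice of $N$. Hypothesis (i) ensures that the class-group obstruction in the second factor vanishes after taking $\check{\chi}$-components, so that the ``divided-column'' map becomes an isomorphism onto $e_{\check{\chi}}(\QQ_p^c \otimes Y_{E, \Sigma \setminus \Sigma_S(\K_\infty)})$. Hypothesis (ii), via the equivalence of conditions (i) and (ii) in Lemma \ref{idem lemma} applied with $\Pi_1 = \Sigma$, guarantees that $e_{\chi}(\QQ_p^c \otimes X_{E,S}) \to e_{\chi}(\QQ_p^c \otimes Y_{E, \Sigma})$ is bijective, and hence pins down the $\check{\chi}$-multiplicity of $\QQ_p^c \otimes \mathcal{O}_{E,S}^\times$ so that the third map is an isomorphism independently of $N$. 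Finally, with these reductions in hand, choosing the columns of $N$ so that their images under $\varpi_\infty$ provide lifts, in $e_{\check{\chi}}(\QQ_p^c \otimes Y_{E, \Sigma_S(\K_\infty)})$, of a fixed $\QQ_p^c$-basis turns the first factor into an isomorphism as well, so that the total determinant is the non-zero product of three isomorphism-determinants. The main obstacle will be the precise execution of the descent identification in the second step, particularly the careful matching of the division by $\gamma_E-1$ on $J$-columns with the emergence of $A_S(\K_\infty)_{\mathcal{H}_\infty}$ as a cokernel in the diagram of Proposition \ref{limitomac}(iii), modulo which the diagonal matrix $\Delta_E$ becomes singular precisely along those columns.
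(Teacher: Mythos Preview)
Your overall strategy---translate semisimplicity into the non-vanishing of $\mathrm{Nrd}_{Q(\G_\infty)}(M')^\#$ at $\chi$ via the factorization (\ref{semi decomp}), then analyse the projection $M'_E\in{\rm M}_d(\ZZ_p[\G_E])$ on the $\chi$-component---is reasonable and close in spirit to the paper. But the crucial ``divided-column'' step is not actually established. You assert that the obstruction to the $J$-columns $\widehat{\phi}_{j,E}$ being independent modulo $\mathrm{im}(\phi_E^*)$ is exactly $e_\chi(A_S(\K_\infty)_{\mathcal{H}_\infty})$, and you cite Lemma~\ref{selmer lemma} together with (\ref{selmer lemma seq}). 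Neither reference brings $A_S(\K_\infty)_{\mathcal{H}_\infty}$ into the picture: the former is a finite-level compatibility for $\ker(\beta_{L,S})$, and the latter involves the Pontryagin dual $\mathrm{Cl}^\vee$, not the inverse limit $A_S(\K_\infty)$. What is really needed is the inverse limit of (\ref{selmer lemma seq2}) and a comparison of $\mathcal{H}_\infty$-invariants and $\mathcal{H}_\infty$-coinvariants of $\mathcal{S}_S^T(\K_\infty)^{\rm tr}$ (since, by (\ref{first bock triangle}), the image of the Bockstein is precisely the image of the invariants in the coinvariants). Carrying this out at level $E$ is possible but delicate, and your sketch does not do it. The ``three-factor'' language is also imprecise: the three families of columns do not sit in an a priori block-triangular arrangement, so speaking of three separate isomorphisms obscures what has to be checked.

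The paper avoids this difficulty by taking a genuinely different technical route. Rather than reducing modulo $\gamma_E-1$, it \emph{localizes} $\La(\G_\infty)$ at the height-one prime $\wp$ lying under $\wp_\chi$. At this localization the element $\gamma_E-1$ survives in the maximal ideal, so Nakayama's Lemma immediately converts hypothesis~(i) into $A_S^T(\K_\infty)_\wp=0$, and hypothesis~(ii) (via Lemma~\ref{idem lemma}) forces $Y_{\infty,v,\wp}=0$ for $v\in S\setminus\Sigma$. Feeding these into the limit of (\ref{selmer lemma seq2}) shows that the explicit map $\nu':\mathcal{S}_S^T(\K_\infty)^{\rm tr}_\wp\to(\La(\G_\infty)^r\oplus\ZZ_p[\G_E]^{r'-r})_\wp$ is bijective. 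One then reads off that $\mathrm{im}(\phi_\wp)$ is free, chooses a section, and applies Krull--Schmidt over the local order $\La(\G_\infty)_\wp$ to conclude $\ker(\phi_\wp)$ is free of rank~$r$; any basis of it supplies the required matrix $N$. This bypasses the Bockstein analysis you would need to make your approach work.
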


\begin{proof} The essential idea of this argument is that, under the stated hypotheses, one can `reverse' the direction of the argument in Lemma \ref{semisimplicity lemma}. 

To make this precise, we %We then claim that it suffices to
 %prove the existence of $N$ in
 %${\rm M}_{d,r}(\La(\G_\infty))$ such that
 %
 %\begin{equation}\label{not included} \lambda(\gamma_E)^{r-r'}\cdot
 %{\rm Nrd}_{Q(\G_\infty)}\bigl(\left(\!\!\!\begin{array}{c|c}N\!&M(\phi)^\dagger\end{array}\!\!\!\right)\bigr)
 % \notin \wp_\chi(\KK_\infty/K).\end{equation}
%
write $\xi'_p(\KK_\infty/K)$ for the subring of
$\zeta(Q(\G_\infty))$ generated over $\zeta(\La(\G_\infty))$ by the set $\{{\rm Nrd}_{Q(\G_\infty)}(M):
M \in \bigcup_{m > 0}{\rm M}_{m}(\La(\G_\infty))\}$. We also fix an open
normal subgroup $\mathcal{Z}$ of $\mathcal{H}_\infty$ that is central in $\G_\infty$ and
 contained in the decomposition subgroup in $\G_\infty$ of  
$w_{j,\infty}$ for each $j$ belonging to the set $J$ defined in (\ref{J def}) (so $v_j \in \Sigma\setminus \Sigma_S(\KK_\infty)$). We then write $E'$ for finite extension of $E$ that is obtained as the fixed field of $\mathcal{Z}$ in $\KK_\infty$.

We note $\xi'_p(\KK_\infty/K)$ is a
$\La(\mathcal{Z})$-order
  in $\zeta(Q(\G_\infty))$ (cf. the proof of Lemma \ref{relate xi zeta}) and also that
  the explicit description of reduced norm given in (\ref{rn description}) implies that the projection map
   $\zeta(\La(\G_\infty)) \to \zeta(\ZZ_p[\G_E])$ extends to a
   well-defined ring homomorphism $\varrho_E$ from $\xi'_p(\KK_\infty/K)$ to $\zeta(\QQ^c_p[\G_E])$. For any fixed character $\chi$ in ${\rm Ir}_p(\G_E)$ we can therefore define
    a prime ideal of $\xi'_p(\KK_\infty/K)$ by setting
   \[ \wp' = \wp'_\chi := \ker\bigl(\xi'_p(\KK_\infty/K) \xrightarrow{\varrho_E}\zeta(\QQ_p^c[\G_E]) \xrightarrow{x\mapsto x_\chi} \QQ_p^c\bigr).\]
  We write $\wp$ for the prime ideal $\wp'\cap
   \zeta(\La(\G_\infty))$ of $\zeta(\La(\G_\infty))$ and $M_{\wp}$ for any
   $\La(\G_\infty)$-module $M$ for the
$\La(\G_\infty)$-module obtained by localizing $M$ (as a $\zeta(\La(\G_\infty))$-module)
at $\wp$.

We now fix an
endomorphism $\phi$ as constructed in Proposition \ref{limitomac} with respect to a place $v'$ chosen in $S\setminus \Sigma$,  and set  
\[ \mathcal{E}_E(\KK_\infty) := \La(\G_\infty)^r\oplus
 \ZZ_p[\G_E]^{r'-r}.\]
We then claim that it suffices to prove that the given hypotheses on $\chi$ imply the existence of  a
  commutative diagram of $\La(\G_\infty)_{\wp}$-modules of the form
 \begin{equation}\label{infinite comp} \begin{CD} \La(\G_\infty)_{\wp}^d @> \phi_{\wp}>>
 \La(\G_\infty)_{\wp}^d @> \pi_{\infty,\wp} >>
 \mathcal{S}_S^T(\KK_\infty)^{\rm tr}_{\wp} @> >> 0\\
 @V\nu VV @\vert @V\nu' VV\\
 \La(\G_\infty)_\wp^d @> \varpi >> \La(\G_\infty)_{\wp}^d @>\varpi' >>
 \mathcal{E}_E(\KK_\infty)_{\wp} @> >> 0,\end{CD}\end{equation}
 in which $\nu$ is bijective and the rows are
 obtained by respectively localizing the
 upper row of (\ref{limit independence}) and the standard resolution of
 $\mathcal{E}_E(\KK_\infty)$. In particular, for each $i$ in $[d]$ one has
 \[ \varpi(b_i) = \begin{cases} 0 &\text{}\\
 (\gamma_E-1)(b_i) &\\
 b_i \end{cases} \quad\text{ and }\quad \varpi'(b_i) = \begin{cases} b_i, &\text{\hskip 0.3truein if $i \in [r]$,}\\
 b_{E,a}, &\text{\hskip 0.3truein if $i$ is the $a$-th place in $J$,}\\
 0, &\text{\hskip 0.3truein otherwise.}\end{cases}\]
 %
 %where $\overline{b_i}$ is the image of $b_i$ in
 %$\bigl(\La(\G_\infty)/(\gamma_E-1)\bigr)^{r'-r}$.

We assume for the moment such a diagram exists. We write
$U$ for the matrix of $\nu$ with respect to the
standard basis of $\La(\G_\infty)_\wp^d$ and $N$ for the matrix in
${\rm M}_{d,r}(\La(\G_\infty)_\wp)$ for which 
 $U^{-1}\cdot N$ is the transpose of the block matrix
 $\left(\!\!\!\begin{array}{c|c}I_r\!&0\end{array}\!\!\!\right)$. 
 
 Then
 the argument of Lemma \ref{prel semi} shows the
 commutativity of the first square in
 (\ref{infinite comp}) implies $U^{-1}\cdot \left(\!\!\!\begin{array}{c|c}N\!&M(\phi)^\dagger\end{array}\!\!\!\right)$
 is the $d\times d$ diagonal matrix $\Delta_E$ with $ii$-th entry equal to $\gamma_E-1$
 if $i \in J$ and equal to $1$ otherwise. Thus, if we
 choose an element $x$ of $\zeta(\La(\G_\infty))\setminus \wp$ for which $xN$ belongs to
  ${\rm M}_{d,r}(\La(\G_\infty))$, then one has
 \begin{align*} & \lambda(\gamma_E)^{r-r'}\cdot{\rm Nrd}_{Q(\G_\infty)}\bigl(\left(\!\!\!\begin{array}{c|c}xN\!&M(\phi)^\dagger\end{array}\!\!\!\right)\bigr)\\
 =\, &\lambda(\gamma_E)^{r-r'}\cdot{\rm Nrd}_{Q(\G_\infty)}(x)^r\cdot {\rm Nrd}_{Q(\G_\infty)}(\Delta_E)\cdot {\rm Nrd}_{Q(\G_\infty)}(U)\\
  =\, &{\rm Nrd}_{Q(\G_\infty)}(x)^r\cdot {\rm Nrd}_{Q(\G_\infty)}(U)\end{align*}
This implies the claimed semisimplicity at $\chi$ since
${\rm Nrd}_{Q(\G_\infty)}\bigl(\left(\!\!\!\begin{array}{c|c}xN\!&M(\phi)^\dagger\end{array}\!\!\!\right)\bigr)$ belongs to
${\rm Fit}_{\La(\G_\infty)}^r(\mathcal{S}_S^T(\KK_\infty))$ whilst neither
 ${\rm Nrd}_{Q(\G_\infty)}(x)$ nor ${\rm Nrd}_{Q(\G_\infty)}(U)$ belongs to $\wp_\chi'$.

It therefore suffices to prove the
given assumptions imply the existence of a diagram
of the form (\ref{infinite comp}). To do this we note that the second square of
 (\ref{infinite comp}) clearly commutes if we take $\nu'$ to be the
 composite homomorphism
 of $\La(\G_\infty)_\wp$-modules
\[ \nu': \mathcal{S}_S^T(\KK_\infty)^{\rm tr}_\wp\xrightarrow{\beta_\infty} (Y_{\infty,S'})_\wp \to
 {\bigoplus}_{v \in \Sigma}Y_{\infty,v,\wp} \to \mathcal{E}_E(\KK_\infty)_\wp,\]
where $S'$ denotes $S\setminus \{v'\}$, the second map is the natural projection and the third is induced by sending
$w_{i,\infty}$ for $i\in [r]$, respectively $i \in J$, to $b_i$,
respectively to $b_{E,i-r}$.

The key claim we make now is that $\nu'$ is bijective. To show this we take the limit over
 $L$ in $\Omega_E(\KK_\infty)$ of the exact sequence (\ref{selmer lemma seq2}) to obtain an exact sequence of
  $\La(\G_\infty)$-modules
\[ 0 \to A_S^T(\KK_\infty) \to \mathcal{S}_S^T(\KK_\infty)^{\rm tr} \to {\bigoplus}_{v \in S} Y_{\infty,v} \,\to \ZZ_p\to 0.\]
This sequence implies that $\nu'$ is bijective if all of the following conditions are satisfied: $A_S^T(\KK_\infty)_\wp$ vanishes; $Y_{\infty,v,\wp}$ vanishes for each $v \in S'\setminus \Sigma$; for every $j\in J$ the $\wp$-localization of the morphism
$Y_{\infty,v_j} \to \ZZ_p[\G_E]$ sending $w_{j,\infty}$ to $b_{E,j-r}$ is bijective; if $Y_{\infty,v',\wp}$ does not vanish, then the $\wp$-localization  of the natural projection  map $Y_{\infty,v'} \to \ZZ_p$ is bijective.

To verify these conditions we write $E(v)$ for each $v$ in $S\setminus \Sigma_S(\KK_\infty)$  for the maximal extension of $K$ in $\KK_\infty$ in which $w_{v,\infty}$ splits completely. We also write $E'(S)$ for the finite extension of $K$ in $\KK_\infty$ that is obtained as the compositum of $E'$ and the fields $E(v)$ and set $G' := \G_{E'(S)}$. 

We then note that \cite[Th. 3.5]{NickelDoc} implies $p^m\cdot \xi_p'(\KK_\infty/K)$ is contained in $\zeta(\La(\G_\infty))$ for any large enough integer $m$. This fact implies the existence of an element $t_\chi$
  of $\zeta(\La(\G_\infty))$ whose projection to $\zeta(\ZZ_p[G'])$
  is a $p$-power multiple of the
   idempotent $e_{(\chi)}$ of $\zeta(\QQ_p[G'])$ that corresponds to the
   irreducible $\QQ_p$-valued character of $G'$
    that contains $\chi$ as a component. It follows that
    $t_\chi\in \zeta(\La(\G_\infty))\setminus \wp$ and hence that for
    each place $v \in S \setminus 
    \Sigma_S(\KK_\infty)$ one has 
\begin{align}\label{exp description0} Y_{\infty,v,{\wp}} = 
 t_\chi\bigl( Y_{\infty,v,{\wp}}\bigr) =&\,  
 t_\chi\bigl(\QQ_p\otimes_{\ZZ_p}Y_{\infty,v}\bigr)_{\wp}\\
 =&\,  
 t_\chi\bigl(\QQ_p\cdot Y_{E'(S),
 \{v\}}\bigr)_{\wp} =
e_{(\chi)}(\QQ_p\cdot Y_{E,\{v\}})_\wp.\notag
\end{align}
In particular, since for each $v$ in $\Sigma$, the map 
\[ e_{(\chi)}(\QQ_p\cdot Y_{E,\{v\}})_\wp   \xrightarrow{ w_{v} \mapsto 1}
  t_\chi(\ZZ_p[\G_E]_\wp) = \ZZ_p[\G_E]_\wp\]
is bijective, we deduce that for every $j\in J$ the $\wp$-localization of the morphism
$Y_{\infty,v_j} \to \ZZ_p[\G_E]$ sending $w_{j,\infty}$ to $b_{E,j-r}$ is bijective, as required. 

By combining (\ref{exp description0}) with the result of Lemma \ref{idem lemma} and the assumed validity of condition (ii) we also derive the following consequences: the localisation $Y_{\infty,v,{\wp}}$ vanishes for all $v \in S\setminus \Sigma$; if $Y_{\infty,v',\wp}$ does not vanish, then $\chi$ is trivial, $S\setminus \Sigma= \{v'\}$ and the natural map 
$Y_{\infty,v',{\wp}} = e_{(\chi)}(\QQ_p\cdot Y_{E,\{v'\}})_\wp \to (\ZZ_p)_\wp$ is bijective. 
  
To complete the proof that $\nu'$ is bijective, it now suffices to show $A_S^T(\KK_\infty)_\wp$ vanishes. To do this we fix a topological
 generator $\gamma_{E'}$ of $\mathcal{Z}$
  and note $\La(\G_\infty)_\wp$ is an order over the discrete valuation
ring $\La(\mathcal{Z})_{\mathfrak{p}}$ obtained by localizing $\La(\mathcal{Z})$ at the prime
 ideal $\mathfrak{p}$ generated by $\gamma_{E'}-1$. Nakayama's Lemma therefore implies that $A_S^T(\KK_\infty)_\wp$ vanishes if the space
\[ t_\chi\bigl(\QQ_p\otimes_{\ZZ_p}A_S^T(\KK_\infty)_{\mathcal{Z}}\bigr) =
 e_{(\chi)}\bigl(\QQ_p\otimes_{\ZZ_p}A_S^T(\KK_\infty)_{\mathcal{Z}}\bigr) =
 e_{(\chi)}\bigl(\QQ_p\otimes_{\ZZ_p}A_S^T(\KK_\infty)_{\mathcal{H}_\infty}\bigr)\]
vanishes, and this follows directly from condition (i). 
%, and the latter space vanishes since $\chi$ is faithful on $\G_E$
%whilst $N_{v_i}(\G_E)$ is non-trivial for any such $i$.

At this stage we have established that the right hand square in
(\ref{infinite comp}) commutes when $\nu'$ is the isomorphism specified above. From this it follows that $\im(\phi_\wp)= \ker(\pi_{\infty,\wp})$
is equal to the free $\La(\G_\infty)_\wp$-submodule of
$\Lambda(\G_\infty)_\wp^d$ that has basis
\[ \{ (\gamma_E-1)(b_i)\}_{i \in J}\cup \{b_i\}_{i \in [d]\setminus J^\dagger},\]
where we set 
\begin{equation}\label{J def 2} J^\dagger := J \cup [r].\end{equation}

We may
therefore choose a section $\sigma$ to $\phi_\wp$ and thereby obtain an isomorphism of $\La(\G_\infty)_\wp$-modules
\[\La(\G_\infty)_\wp^d = \ker(\phi_\wp) \oplus \sigma(\im(\phi_\wp)) \cong \ker(\phi_\wp) \oplus \Lambda(\G_\infty)_\wp
^{d-r}.\]
Upon applying the Krull-Schmidt Theorem for the
$\La(\mathcal{Z})_\mathfrak{p}$-order $\La(\G_\infty)_\wp$ to this isomorphism, we
deduce that the $\La(\G_\infty)_\wp$-module $\ker(\phi_\wp)$ is free
 of rank $r$, and hence isomorphic to the kernel
 ${\bigoplus}_{i=1}^{i=r}\La(\G_\infty)_\wp\cdot b_i$ of $\varpi$.

In particular, if we fix an isomorphism of $\La(\G_\infty)_\wp$-modules $\nu_1: \ker(\phi_\wp) \cong
 \ker(\varpi)$, and write $\nu_2: \sigma(\im(\phi_\wp)) \to \La(\G_\infty)_\wp^d$
 for the map of $\La(\G_\infty)_\wp$-modules that sends the element 
\[ \begin{cases} \sigma((\gamma_E-1)(b_i)), &\text{if $i \in J$,}\\
\sigma(b_i), &\text{if $i \in [d]\setminus J^\dagger$}\end{cases}\]
to $b_i$, then the homomorphism $\nu = (\nu_1,\nu_2)$
  is an automorphism of $\La(\G_\infty)_\wp^d$ that makes the first square in (\ref{infinite comp})
 commute, as required to complete the proof.\end{proof}

\begin{remark}\label{gross-jaulent}{\em For a number field $E$, write $\Gamma_E$ for the Galois group over $E$ of its cyclotomic $\ZZ_p$-extension $E^{\rm cyc}$. Then it is conjectured by
Jaulent in \cite{jaulent} that, for every $E$,  the
 $\Gamma_E$-coinvariants $A_S(E^{\rm cyc})_{\Gamma_E}$ of $A_S(E^{\rm cyc})$ should be
 finite. In addition, if $E$ is a CM Galois extension
 of a totally real field $K$, then an observation of Kolster in
 \cite[Th. 1.14]{kolster}
  (where the result is attributed to Kuz'min \cite{kuzmin}) implies that the
 finiteness of $A_S(E^{\rm cyc})_{\Gamma_E}^-$ is equivalent to
  the earlier conjecture \cite[Conj. 1.15]{G0} of Gross and hence also, by
  \cite[Th. 5.2(ii)]{burns2}, to the validity of Gross's
  `Order of Vanishing Conjecture' \cite[Conj. 2.12a)]{G0} for all totally odd characters of
  $\G_{E}$. In particular, if $K$ contains at most one $p$-adic place that
  splits completely in $E/E^+$, then $A_S(E^{\rm cyc})_{\Gamma_E}^-$ is finite
  as a consequence of \cite[Prop. 2.13]{G0} (which itself relies
 Brumer's $p$-adic version of Baker's theorem). In general, if $\E$ is any
$\ZZ_p$-extension of a number field $E$,
then $A_S(\E)_{\Gal(\E/E)}$ is known to be finite in each of the following cases.
\begin{itemize}
\item[(i)] $E$ is abelian over $\QQ$ (cf. Greenberg \cite{greenberg0}).
\item[(ii)] $E$ is an abelian extension of an imaginary quadratic field and $\E = E^{\rm cyc}$ (cf. Maksoud \cite{maksoud}).
\item[(iii)] $\E = E^{\rm cyc}$ and
 $E$ has at most two $p$-adic places (cf. Kleine \cite{kleine}).
\item[(iv)] $\E$ is totally real and the Leopoldt conjecture is valid for $E$ at $p$
 (cf. Kolster \cite[Cor. 1.3]{kolster}).
\end{itemize}
}
\end{remark}

\section{A conjectural derivative formula for Rubin-Stark Euler systems}\label{IMRSsection}
%\label{IWAsection}

In this section we define a notion of `the value of a higher derivative' of the Rubin-Stark non-commutative Euler system (from Definition \ref{ncrs def}) and formulate an explicit conjectural formula for such values.

We also show that this conjectural derivative formula specializes to recover the classical Gross-Stark
Conjecture (from \cite[Conj. 2.12b)]{G0}) and hence deduce its validity in an important family of examples.

Throughout the section we fix a rank one compact $p$-adic Lie extension $\KK_\infty$ of $K$ in $\KK$ that is ramified at only finitely many places. 

We fix sets of places $S$ and $T$ of $K$ as specified at the beginning of \S\ref{can res}, and use the abbreviations 
\[ \G := \G_\infty = G_{\KK_\infty/K},\,\,\, R_\infty := \La(\G) \quad \text{ and}\quad R_L := \ZZ_p[\G_L]\]
for each $L$ in $\Omega(\KK_\infty)$.

We also fix a normal subgroup
 $\mathcal{H}$ of $\G$ that is topologically isomorphic to $\ZZ_p$ and write $E$ for the fixed field of $\mathcal{H}$ in $\KK_\infty$. We then fix a subset $\Sigma$ of $\Sigma_S(E)$ that 
 satisfies the condition (\ref{sigma_S assumption})  and set 
\[ r := |\Sigma_S(\KK_\infty)|\quad \text{ and }\quad  r' := |\Sigma|\]
(so that $r \le r' < |S|$ since $\Sigma_S(\KK_\infty)\subseteq \Sigma \subsetneq S$).

\subsection{Derivatives of Rubin-Stark non-commutative Euler systems} 

For a natural number $t$ and non-negative integer $a$ we set
\[ {\bigcap}^a_{ R_\infty}R_\infty^t := {\varprojlim}_{L'}{\bigcap}_{R_{L'}}^a R_{L'}^t\]
and 

\[ {\bigcap}^a_{\CC_p\cdot R_\infty}(\CC_p\cdot R_\infty)^t := {\varprojlim}_{L'}\bigl(\CC_p\otimes_{\ZZ_p}{\bigcap}_{R_{L'}}^a R_{L'}^t\bigr),\]
where in both limits $L'$ runs over $\Omega(\KK_\infty)$ and the transition morphisms are induced by the natural projection maps $R_{L'}^t \to R_L^t$ for $L \subseteq L'$. For each $L$ in $\Omega(\KK_\infty)$ we also use the natural projection map 
\[ \pi_L^{a}: {\bigcap}^a_{\CC_p\cdot R_\infty}(\CC_p\cdot R_\infty)^t\to \CC_p\otimes_{\ZZ_p}{\bigcap}_{R_L}^a R_L^t.\]

%We then also fix a
 %topological generator $\gamma_E$
 %of $\mathcal{H}$ and set $\lambda(\gamma_E) := {\rm Nrd}_{Q(\G_\infty)}(\gamma_E-1)\in \xi_p(\KK_\infty/K)$.
%

We use the element $\varepsilon^{\rm RS}_{\KK_\infty,S,T}$ of ${\bigwedge}^{r}_{\CC_p\cdot\Lambda(\cG_\infty)}(\CC_p\cdot\co^\times_{\KK_\infty,S,T})$ defined in (\ref{limit RS element}).
%For each
%$\gamma$ in $\G_\infty$ and $E$ in $\Omega(\KK_\infty)$ we also
%define an element of $\zeta(\QQ_p[\G_E])$ by setting
%
%\[ c_E(\gamma) := {\rm Nrd}_{\QQ_p[\G_E]}({\rm log}_p(\kappa_K(\gamma))).\]
%In the following result we use the notation and hypotheses of \S\ref{ncmc section}.

\begin{proposition}\label{preGGS lemma} For each topological generator $\gamma$ of $\mathcal{H}$, there exists an  element 
\[ \partial^{\,r'-r}_{\gamma}(\varepsilon^{\rm RS}_{\KK_\infty,S,T}) \in \CC_p\cdot{\bigcap}^{r}_{R_E}\mathcal{O}_{E,S,T,p}^\times\]
that depends only on the data $\KK_\infty/K,\gamma, S$ and $T$ and has the following property: for every pair of embeddings $\iota_\infty: \mathcal{O}_{\KK_\infty,S,T}^\times \to R_\infty^d$ and $\iota_E: \mathcal{O}_{E,S,T,p}^\times \to R_E^d$ constructed as in Proposition \ref{limitomac}(iii), there exists an element $y(\gamma)$ of ${\bigcap}^{r}_{\CC_p\cdot R_\infty}(\CC_p\cdot R_\infty)^d$ such that both
\[ \iota_{\infty,*}(\varepsilon^{\rm RS}_{\KK_\infty,S,T}) = {\rm Nrd}_{Q(\G)}(\gamma-1)^{r'-r}\cdot y(\gamma)\]
and
\[ \iota_{E,*}\bigl(\partial^{\,r'-r}_{\gamma}(\varepsilon^{\rm RS}_{\KK_\infty,S,T})\bigr) = \pi^{r}_E(y(\gamma)).\]
\end{proposition}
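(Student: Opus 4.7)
The strategy is to compute $\iota_{\infty,*}(\varepsilon^{\mathrm{RS}}_{\K_\infty,S,T})$ explicitly in terms of the resolution of $C_{\K_\infty,S,T}$ constructed in Proposition \ref{limitomac}, and to exploit the fact that $r'-r$ of the columns of the associated matrix are divisible by $(\gamma-1)$ in $R_\infty$.

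First I would fix any resolution $\phi\colon R_\infty^d \to R_\infty^d$ of $C_{\K_\infty,S,T}$ of the kind produced by Proposition \ref{limitomac}, chosen with respect to a place $v' \in S \setminus \Sigma$ (which exists since $\Sigma \subsetneq S$), together with the compatible embeddings $\iota_\infty$ and $\iota_E$ supplied by diagram (\ref{limitomacdiagram}). Set $J := J_{S,\Sigma,v'} \subset [n]$ as in (\ref{J def}), so that $|J| = r'-r$; the crucial structural observation, already used in the proof of Lemma \ref{prel semi}, is that each column of the matrix $M(\phi)$ indexed by $j \in J$ lies in $(\gamma-1)R_\infty^d$. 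Geometrically, this reflects the fact that the corresponding place $v_j$ splits completely in $E$ but not in $\K_\infty/E$, so that the lift of the relevant basis element of $Y_{E,\{v_j\}}$ to $Y_{\infty,v_j}$ is killed by $(\gamma-1)$.

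Second, at each finite level $L \supseteq E$ I would express $\iota_{L,*}(\varepsilon_{L,S,T})$ in $\CC_p\cdot{\bigcap}^r_{R_L}R_L^d$ as a $\CC_p\cdot\zeta(R_L)$-multiple of the wedge $(\wedge_{i \in [d]\setminus [r]}(b^\ast_{L,i}\circ\phi_L))(\wedge_{j \in [d]}b_{L,j})$, by comparing $\varepsilon_{L,S,T}$ to the `algebraic' Rubin-Stark element produced by $\Theta^{\Sigma_S(\K_\infty)}_{L,S,T,p}$ applied to a primitive basis of ${\rm d}_{R_L}(C_{L,S,T,p})$, following the argument that gives (\ref{exp proj}); the two elements differ at most by a central scalar recording the relevant leading term of $L$-series. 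Passing to the inverse limit over $L$, and then substituting the factorisation $(N\mid M(\phi)^\dagger) = M'\cdot\Delta_E$ of (\ref{semi decomp}), together with an application of \cite[Lem. 4.10]{bses} to re-order the resulting wedge and reduced-norm factors, yields the identity
\[
\iota_{\infty,*}(\varepsilon^{\mathrm{RS}}_{\K_\infty,S,T}) = {\rm Nrd}_{Q(\G)}(\gamma-1)^{r'-r}\cdot y(\gamma)
\]
for an explicit element $y(\gamma)$ of ${\bigcap}^r_{\CC_p\cdot R_\infty}(\CC_p\cdot R_\infty)^d$ that can be read off from the reduced matrix $M'$. Using the compatibility of $\iota_\infty$ and $\iota_E$ in diagram (\ref{limitomacdiagram}), I would then verify that $\pi^r_E(y(\gamma))$ lies in the image of the injective map $\iota_{E,*}\colon \CC_p\cdot{\bigcap}^r_{R_E}\mathcal{O}^\times_{E,S,T,p} \to \CC_p\cdot{\bigcap}^r_{R_E}R_E^d$, and define $\partial^{\,r'-r}_\gamma(\varepsilon^{\mathrm{RS}}_{\K_\infty,S,T})$ to be the unique preimage. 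Independence from the choice of $\phi$, $\iota_\infty$ and $\iota_E$ will follow from Proposition \ref{limitomac}(ii): any two such resolutions differ by automorphisms $\eta,\eta'$ with $\eta$ of the block form (\ref{block}), and a direct computation modelled on the proof of Proposition \ref{generate rubin}(iv) shows that the factors $M'$ and $\Delta_E$ transform compatibly so that $\pi^r_E(y(\gamma))$ is unchanged — the key point being that the lower-right block $M_\eta$ in (\ref{block}) acts trivially on the first $n$ standard basis vectors and has reduced norm a unit of $\xi(R_E)$.

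The main obstacle will be carrying out the second step cleanly: showing that the factor ${\rm Nrd}_{Q(\G)}(\gamma-1)^{r'-r}$ can be pulled out of $\iota_{\infty,*}(\varepsilon^{\mathrm{RS}}_{\K_\infty,S,T})$ not only after localisation but already inside the integral module ${\bigcap}^r_{\CC_p\cdot R_\infty}(\CC_p\cdot R_\infty)^d$, and that the resulting quotient $y(\gamma)$ descends via $\pi_E^r$ to the image of $\iota_{E,*}$. This requires a careful manipulation of reduced exterior products in the presence of non-commutativity, in the spirit of \S\ref{BESsection}, together with systematic use of the reduced-norm identities for block matrices developed in \cite[\S4]{bses} to absorb the contributions of $M'$ and $\Delta_E$ into the correct components of the wedge.
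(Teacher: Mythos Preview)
Your proposal is essentially correct and follows the same route as the paper: fix a resolution $\phi$ as in Proposition~\ref{limitomac} relative to $v'\in S\setminus\Sigma$, use that the functionals (equivalently, the matrix entries) indexed by $J$ are divisible by $\gamma-1$, pull out the factor ${\rm Nrd}_{Q(\G)}(\gamma-1)^{r'-r}$, descend to level $E$, and check independence via the argument of Proposition~\ref{generate rubin}(iv).

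A few small points where the paper's execution differs from your sketch. First, rather than invoking the matrix factorisation (\ref{semi decomp}), the paper works directly with the functionals: for each $j\in J$ one has $b_j^\ast\circ\phi=(\gamma-1)\widehat{\phi}_j$ for a unique $\widehat{\phi}_j\in\Hom_{R_\infty}(R_\infty^d,R_\infty)$ (because $b_{E,j}^\ast\circ\phi_E=0$, as $v_j$ splits in $E$), and then establishes the wedge identity $\wedge_{j\in J}(b_{F,j}^\ast\circ\phi_F)=\lambda(\gamma)^{r'-r}\cdot\wedge_{j\in J}\widehat{\phi}_{j,F}$ at each finite level. This last step is the ``careful manipulation'' you anticipate: it is checked componentwise over the simple Wedderburn factors of $\QQ_p[\G_F]$ using \cite[Lem.~4.13]{bses} (not 4.10). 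Second, the element $y(\gamma)$ is then $z\cdot x'$ with $z$ the central scalar determined by Proposition~\ref{generate rubin}(ii) and $x'$ the ``divided'' wedge, and uniqueness of $x'$ is secured by an injectivity argument for multiplication by $\lambda(\gamma)$ on ${\bigcap}^r_{R_\infty}R_\infty^d$ (Remark~\ref{uniqueness}). Third, the verification that $\pi^r_E(y(\gamma))\in\im(\iota_{E,\ast})$ comes not from diagram (\ref{limitomacdiagram}) directly but from Proposition~\ref{generate rubin}(i) applied at level $E$ with $\Sigma$ in place of $\Sigma_S(\K_\infty)$: the element $w_E:=(\wedge_{j\in[d]\setminus J^\dagger}(b_{E,j}^\ast\circ\phi_E))(\wedge_{j\in[d]}b_{E,j})$ already lies in $\iota_{E,\ast}({\bigcap}^{r'}_{R_E}U_E)$, and applying $\wedge_{j\in J}\widehat{\phi}_{j,E}$ takes it down to rank $r$.
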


\begin{proof} We fix a resolution $R_\infty^d\xrightarrow{\phi}R_\infty^d$ of $C_{\KK_\infty,S,T}$ as constructed in Proposition \ref{limitomac} with respect to a place $v'$ chosen in $S\setminus \Sigma$, and hence also an associated exact sequence as in the upper row of (\ref{limit independence}). This leads to fixed embeddings $\iota_\infty$ and $\iota_E$ of the stated form. In addition, for each $L$ in $\Omega_E(\KK_\infty)$ we obtain an induced resolution $R_L^d\xrightarrow{\phi_L}R_L^d$ of $C_{L,S,T,p}$. 

We note, in particular, that, for each $j$ belonging to the subset $J$ of $[d]$ defined in (\ref{J def}), the composite map $b_{E,j}^\ast\circ\phi_{E}$ is zero and so there exists a (unique) homomorphism 
\[ \widehat{\phi}_{j} =
 (\widehat{\phi}_{j,F})_{F\in \Omega_E(\KK_\infty)} \in \Hom_{R_\infty}(R_\infty^d,R_\infty)\]
with
 \begin{equation}\label{hat def} b_j^\ast\circ \phi = (\gamma-1)(\widehat{\phi}_{j}).\end{equation}
We claim that this implies an equality 
\begin{align}\label{divisibility relation} \bigl(\wedge_{j\in J}(b_{F,j}^\ast\circ\phi_{F})\bigr)
_{F} =&\,\,  \bigl(\wedge_{j\in J}
(\gamma-1)(\widehat{\phi}_{j,F})\bigr)_{F}\\
=&\,\,  \lambda(\gamma)^{r'-r}\cdot
\bigl(\wedge_{j\in J}\widehat{\phi}_{j,F}\bigr)
_{F},\notag\end{align}
where in each case $F$ runs over $\Omega_E(\KK_\infty)$. Here the first equality follows directly from the defining relations (\ref{hat def}) and, after taking account of (\ref{rn description}) (with $N$ taken to be the $1\times 1$ matrix
$(\gamma-1)$), the second equality can be verified by showing that, for each $F$ in $\Omega_E(\KK_\infty)$, one has 
\[ \wedge_{j\in J}\theta_{j,F} =  {\rm Nrd}_{\QQ_p[\cG_F]}(\gamma_{(F)}-1)^{r'-r}\cdot \bigl(\wedge_{j\in J}\theta_{j,F}'\bigr),\]
where $\gamma_{(F)}$ is the image of $\gamma$ in $\G_F$ and we set $\theta_{j,F}' := \widehat{\phi}_{j,F}$ and $\theta_{j,F} := (\gamma_{(F)}-1)(\theta_{j,F}')$. It is in turn enough to verify this last displayed equality after applying the projection functor $\zeta(A)\otimes_{\zeta(\QQ_p[\cG_F])}-$ for each simple Wedderburn component $A$ of $\QQ_p[\cG_F]$. Further, if we write $\theta_{j,A}$ and $\theta_{j,A}'$ for the corresponding projections of $\QQ_p\otimes_{\ZZ_p}\theta_{j,F}$ and $\QQ_p\otimes_{\ZZ_p}\theta'_{j,F}$, then the explicit definition (\ref{non comm ext power}) of reduced exterior products implies that the elements $\wedge_{j\in J}\theta_{j,A}$ and $\wedge_{j\in J}\theta'_{j,A}$ vanish unless the $A$-module $W$ that is generated by $\{\theta_{j,A}'\}_{j \in J}$ is free of rank $r'-r$. Then, in the latter case, the required equality follows by applying the general result of \cite[Lem. 4.13]{bses} with $\varphi$ taken to be the element of ${\rm End}_A(W)$ that sends each basis element $\theta_{j,A}'$ of $W$ to $\theta_{j,A} = (\gamma_{(F)}-1)(\theta_{j,A}')$.

For each $L$ in $\Omega_E(\KK_\infty)$ we consider the element 
\[ x_L := ({\wedge}_{j=r+1}^{j=d}(b_{L,j}^\ast\circ\phi_L))({\wedge}_{j\in [d]}b_{L,j})\in {\bigcap}_{R_L}^{r}R_L^d.\]
For each integer $j$ in $[d]\setminus [r]$ we also set 
\[ \theta_{L,j} := \begin{cases} \widehat{\phi}_{j,L}, &\text{if $j \in J$}\\
                                 b_{L,j}^\ast\circ\phi_L, &\text{otherwise.}\end{cases}\]

Then the family $x := (x_L)_L$ belongs to ${\bigcap}^r_{ R_\infty}R_\infty^d$ and, setting $\lambda(\gamma) = {\rm Nrd}_{Q(\G)}(\gamma-1)$, the relations (\ref{divisibility relation}) (as $F$ runs over $\Omega_E(\KK_\infty)$) imply that
\begin{equation}\label{dividing x} x = \lambda(\gamma)^{r'-r}\cdot x'\quad \text{with} \quad x' := \bigl(({\wedge}_{j=r+1}^{j=d}\theta_{L,j})({\wedge}_{j\in [d]}b_{L,j})\bigr)_L \in {\bigcap}^r_{ R_\infty}R_\infty^d.\end{equation}
%
%
%\[ x' := \bigl(({\wedge}_{j=r+1}^{j=d}\theta_{L,j})({\wedge}_{j=1}^{j=d}b_{L,j})\bigr)_L \in {\bigcap}^r_{ R_\infty}R_\infty^d.\]
%
This equality determines the element $x'$ uniquely since multiplication by $\lambda(\gamma)$ on ${\bigcap}^r_{ R_\infty}R_\infty^d$ is injective (see Remark \ref{uniqueness} below). 

For $L$ in $\Omega(\KK_\infty)$ we write $\iota_{L,\ast}$ for the injective homomorphism  ${\bigcap}^{r'}_{R_L}\mathcal{O}_{L,S,T,p}^\times \to {\bigcap}^{r'}_{R_L}R_L^d$ that is induced by our fixed resolution of $C_{L,S,T,p}$. Then the result of Proposition \ref{generate rubin}(ii) implies the existence of a unique element $z_L$ of $\CC_p[\G_L]e_L$ with  $\iota_{L,\ast}(\varepsilon^{\Sigma}_{L/K,S,T,p}) = z_L\cdot x_L$. In addition, the element $z := (z_L)_L$ belongs to $\varprojlim_L\CC_p[\G_L]$ and one has 
\[ \iota_{\infty,*}(\varepsilon^{\rm RS}_{\KK_\infty,S,T}) = (\iota_{L,\ast}(\varepsilon^{\Sigma}_{L/K,S,T,p}))_L = z\cdot x.\] 
The equality (\ref{dividing x}) therefore implies that 
\begin{equation}\label{der def 1}\iota_{\infty,*}(\varepsilon^{\rm RS}_{\KK_\infty,S,T}) = \lambda(\gamma)^{r'-r}\cdot y(\gamma) \quad \text{with}\quad y(\gamma) := z\cdot x' \in {\bigcap}^{r}_{\CC_p\cdot R_\infty}(\CC_p\cdot R_\infty)^d.\end{equation}
In addition, since $v_i\in \Sigma$ for all $i$ in the set $J^\dagger = J \cup [r]$ introduced  in (\ref{J def 2}), Proposition \ref{generate rubin}(i) implies that the element 
\[w_E := ({\wedge}_{j\in [d]\setminus J^\dagger}(b_{E,j}^\ast\circ\phi_{E}))({\wedge}_{j\in [d]}b_{E,j})\]
belongs to $\iota_{E,*}\bigl( {\bigcap}_{R_E}^{r'}\mathcal{O}_{E,S,T,p}^\times\bigr)$. This implies that the element 
\begin{align*}\pi_E^r(y(\gamma)) &=\,\, z_E\cdot x'_E \\
&=\,\, {\rm Nrd}_{\QQ_p[\G_E]}(-1)^{t_E}\cdot z_E\cdot (\wedge_{j\in J}\widehat{\phi}_{j,E})(w_E) \end{align*}
belongs to $\iota_{E,*}\bigl(\CC_p\otimes_{\ZZ_p} {\bigcap}_{R_E}^{r}\mathcal{O}_{E,S,T,p}^\times\bigr)$, where, following  \cite[Lem. 4.13]{bses}, the integer $t_E$ is fixed so that 
\[   {\wedge}_{j\in [d]\setminus [r]}\theta_{L,j} = {\rm Nrd}_{\QQ_p[\G_E]}((-1)^{t_E})\cdot ({\wedge}_{j\in [d]\setminus J^\dagger}(b_{E,j}^\ast\circ\phi_{E}))\wedge (\wedge_{j\in J}\widehat{\phi}_{j,E}).\]
It is therefore enough to show that the unique element $\partial^{\,r'-r}_{\gamma}(\varepsilon^{\rm RS}_{\KK_\infty,S,T})$ of $\CC_p\otimes_{\ZZ_p} {\bigcap}_{R_E}^{r}\mathcal{O}_{E,S,T,p}^\times$ that satisfies   
\begin{equation}\label{der def 2}\iota_{E,*}(\partial^{\,r'-r}_{\gamma}(\varepsilon^{\rm RS}_{\KK_\infty,S,T})) = \pi_E^r(y(\gamma))\end{equation}
is independent of the resolution of $C_{\KK_\infty,S,T}$  fixed above. To check this we can assume to be given a commutative diagram of the form (\ref{limit independence}) and we write $\tilde \iota_E$, $\widehat{\tilde\phi}_{j,E}$, $\tilde z$, $\tilde w_E$ and $\tilde y(\gamma)$ for the corresponding data that arises when making the above constructions with respect to the resolution given by the lower (rather than upper) row of this diagram. Then the commutativity of this diagram combines with the argument of Proposition \ref{generate rubin}(iv) to imply the existence of an element $\mu = (\mu_L)_L$ of $\xi(R_\infty)^\times$ such that both $\tilde z = \mu^{-1}\cdot z$ and 
\[ (\tilde\iota_{E,*})^{-1}\bigl((\wedge_{j\in J}\widehat{\tilde\phi}_{j,E})(\tilde w_E)\bigr) = (\iota_{E,*})^{-1}\bigl(\mu_E\cdot (\wedge_{j\in J}\widehat{\phi}_{j,E})(w_E)\bigr)\]
and hence also
\begin{align*} (\tilde\iota_{E,*})^{-1}(\tilde y(\gamma)) =&\, (\tilde\iota_{E,*})^{-1}(\tilde z_E\cdot (\wedge_{j\in J}\widehat{\tilde\phi}_{j,E})(\tilde w_E))\\ =&\, (\iota_{E,*})^{-1}(z_E\cdot (\wedge_{j\in J}\widehat{\phi}_{j,E})(w_E)) = (\iota_{E,*})^{-1}(y(\gamma)),\end{align*}
as required. \end{proof}

\begin{remark}\label{uniqueness} {\em To show that multiplication by $\lambda(\gamma)$ is injective on ${\bigcap}^r_{ R_\infty}R_\infty^d$ it suffices to take $z = (z_L)_L\in {\bigcap}^r_{ R_\infty}R_\infty^d$ with $\lambda(\gamma)\cdot z = 0$ and show that this implies $({\wedge}_{j\in [r]}\theta_j)(z_{F}) = 0$ for each $F$ in $\Omega_E(\KK_\infty)$ and each subset $\{\theta_j\}_{j \in [r]}$ of $\Hom_{R_F}(R_F^d,R_F)$. However, if one fixes a pre-image $(\theta_{j,L})_L$ of each $\theta_j$ under the surjection $\Hom_{R_\infty}(R_\infty^d,R_\infty) \to \Hom_{R_{F}}(R_F^d,R_F)$, then one has 
\[ \lambda(\gamma)\cdot (({\wedge}_{j\in [r]}\theta_{j,L})(z_{L}))_L = 
 (({\wedge}_{j\in [r]}\theta_{j,L})_L) \bigl( \lambda(\gamma) \cdot z) = 0\]
and so, since $(({\wedge}_{j\in [r]}\theta_{j,L})(z_{L}))_L$ belongs to $\xi(R_\infty)$, the argument in Proposition \ref{generate rubin}(i) implies that $(({\wedge}_{j\in [r]}\theta_{j,L})(z_{L}))_L= 0$ and hence also $({\wedge}_{j\in [r]}\theta_j)(z_{F}) = 0$, as required.}\end{remark}

%For each non-negative integer $a$ we write
%
%\[ \pi_E^a: \varprojlim_{F \in \Omega(\KK_\infty)}\QQ_p\cdot {\bigcap}^{a}_{\ZZ_p[\G_F]}
%\mathcal{O}_{F,S,p}^\times \to \QQ_p\cdot {\bigcap}^{a}_{\ZZ_p[\G_E]}
%\mathcal{O}_{E,S,p}^\times\]
%
%for the natural projection map. We note, in particular, that
%$\pi_E^0$ coincides with the canonical
% map $\varprojlim_{F \in \Omega(\KK_\infty)}\zeta(\QQ_p[\G_F]) \to \zeta(\QQ_p[\G_E])$.
 
Motivated by the result of Proposition \ref{preGGS lemma}, we now make the following key definition. 

\begin{definition}\label{derivative def}{\em The `$(r'-r)$-th order derivative at $\gamma$' of the Rubin-Stark element $\varepsilon^{\rm RS}_{\KK_\infty,S,T}$ is the element 
\[ \partial^{\,r'-r}_{\gamma}(\varepsilon^{\rm RS}_{\KK_\infty,S,T})\]
of $\CC_p\cdot{\bigcap}^{r}_{\ZZ_p[G]}\mathcal{O}_{E,S,T,p}^\times$.}
\end{definition}

 %The following technical result is useful for this purpose.
%For any non-negative integer $a$ we set
%
%\[ \bigl({\bigcap}^a_{\ZZ_p[\G_E]}\mathcal{O}_{E,\Sigma,T,p}^\times\bigr)^{e_{E,\Sigma}}
%:= \{x \in {\bigcap}^a_{\ZZ_p[\G_E]}\mathcal{O}_{E,\Sigma,T,p}^\times: x = e_{E,\Sigma}(x)\}.
% \]

%We use the resolutions constructed in Proposition \ref{limitomac} to formulate a version of
% Conjecture \ref{MRSconjecture} that is relevant to the data
%  $\KK_\infty/K$, $S$, $T$ and $E$ considered in \S\ref{semisection}. We then
%  establish the link between this conjecture and the
%  Generalized Gross-Stark Conjecture formulated in \S\ref{GGS section}.

\subsection{$\mathscr{L}$-invariant maps and the Generalized Gross-Stark Conjecture}
 Our aim is to formulate an explicit conjectural formula for the derivative element $\partial^{\,r'-r}_{\gamma}(\varepsilon^{\rm RS}_{\KK_\infty,S,T})$ introduced above. 
 
 For this purpose we shall need an appropriate
 generalization of the notion of 
`$\mathscr{L}$-invariant' that occurs in the classical Gross-Stark Conjecture and we
next prove a technical result that plays a key role in the construction of such a
 generalization.

\subsubsection{}We abbreviate $\G_E$ to $G$ and fix an (ordered) set of places 
\[ \Sigma' \subseteq \Sigma\setminus S_\infty^K\]
of $K$. Then, since each place $v$ in $\Sigma'$ is non-archimedean, we fix a place $w_v$ of $E$
above $v$ and write
$\phi^{\rm ord}_{v}$ for the map in
$\Hom_{\ZZ_p[G]}(\mathcal{O}_{E,S,p}^\times,\ZZ_p[G])$ that satisfies
\begin{equation}\label{phi ord def} \phi^{\rm ord}_{v}(u) := {\sum}_{g \in G}{\rm ord}_{w_v}(g^{-1}(u))
\cdot g\end{equation}
for each $u$ in $\mathcal{O}_{E,S,p}^\times$, where
 ${\rm ord}_{w_v}$ is the normalized additive valuation at $w_v$.

\begin{lemma}\label{ell-inv preparation} Write $e$ for the idempotent
$e_{E/K,S,\Sigma}$ defined in (\ref{key idem def}). Then the reduced exterior product
 ${\wedge}_{v\in \Sigma'}\phi^{\rm ord}_v$ induces an isomorphism of
 $\zeta(\QQ_p[G])$-modules
\[ \phi_{E,S,\Sigma'}^{\rm ord}: e\bigl(\QQ_p\cdot
{\bigcap}^{|\Sigma|}_{\ZZ_p[G]}\mathcal{O}_{E,S,p}^\times\bigr)
 \xrightarrow{\sim}
 e\bigl(\QQ_p\cdot {\bigcap}^{|\Sigma\setminus \Sigma'|}_{\ZZ_p[G]}
 \mathcal{O}_{E,S\setminus \Sigma',p}^\times\bigr). \]
 \end{lemma}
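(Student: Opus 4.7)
The strategy is to pass to rational coefficients and, working character by character on the support of $e$, reduce the claim to the classical formula for top exterior powers of a direct-sum decomposition dictated by the valuation maps $\phi_v^{\rm ord}$.

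First I would extract, from the tautological ray-class-group sequence combined with the facts that every $v\in\Sigma'$ splits completely in $E$ (so that $Y_{E,\{v\},p}\cong\ZZ_p[G]$) and that ${\rm Cl}_{S\setminus\Sigma'}(E)_p$ is finite, a short exact sequence of $\QQ_p[G]$-modules
\begin{equation*}
0 \to \QQ_p\cdot \mathcal{O}_{E,S\setminus\Sigma',p}^\times \to \QQ_p\cdot \mathcal{O}_{E,S,p}^\times \xrightarrow{(\phi^{\rm ord}_v)_{v\in\Sigma'}} \bigoplus_{v\in\Sigma'}\QQ_p[G] \to 0.
\end{equation*}
Since $\QQ_p[G]$ is semisimple this sequence admits a $\QQ_p[G]$-linear section $s$, giving a decomposition $\QQ_p\cdot\mathcal{O}_{E,S,p}^\times=A\oplus B$ with $A:=\QQ_p\cdot\mathcal{O}_{E,S\setminus\Sigma',p}^\times$ and $B:=s\bigl(\bigoplus_{v\in\Sigma'}\QQ_p[G]\bigr)$.

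Next I would compute the multiplicity of $V_\chi$ in $A$ and $B$ for each $\chi\in{\rm Ir}_p(G)$ in the support of $e$. Combining Dirichlet's isomorphism $\QQ^c\cdot\mathcal{O}_{E,\Pi}^\times\cong\QQ^c\cdot X_{E,\Pi}$ with Lemma \ref{idem lemma}(v) --- which forces $V_\chi^{G_v}$ to vanish for all $v\in S\setminus\Sigma$ when $\chi$ is non-trivial, and forces $|S\setminus\Sigma|=1$ when $\chi=1$ --- and noting that $G_v$ is trivial for every $v\in\Sigma$, these multiplicities work out to $(|\Sigma|-|\Sigma'|)\chi(1)$ and $|\Sigma'|\chi(1)$ respectively. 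The classical decomposition $\bigwedge^n(V\oplus W) = \bigoplus_{a+b=n}\bigwedge^a V\otimes \bigwedge^b W$ applied to the explicit description (\ref{non comm ext power}) then shows that on each such $\chi$-isotype the only surviving bidegree in $\bigwedge^{|\Sigma|}_{\QQ_p[G]}(A\oplus B)$ is $\bigl((|\Sigma|-|\Sigma'|)\chi(1),\ |\Sigma'|\chi(1)\bigr)$, which is canonically isomorphic to $\bigwedge^{|\Sigma\setminus\Sigma'|}_{\QQ_p[G]}A\otimes\bigwedge^{|\Sigma'|}_{\QQ_p[G]}B$ and is one-dimensional over $\QQ_p^c$.

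The final step is to evaluate the pairing (\ref{duality pairing}) against $\wedge_{v\in\Sigma'}\phi^{\rm ord}_v$ on this decomposition. Since each $\phi^{\rm ord}_v$ annihilates $A$, a Laplace-type expansion of the pairing on a simple tensor $\alpha_A\otimes\alpha_B$ collapses to the single term in which all factors $\phi^{\rm ord}_v$ are paired against the $B$-factor; by (\ref{rn rel}) this contribution is the reduced norm of the matrix $\bigl(\phi^{\rm ord}_v(s(1_{v'}))\bigr)_{v,v'\in\Sigma'}$, which by construction of $s$ is the identity matrix. Thus on each $\chi$-component $\phi^{\rm ord}_{E,S,\Sigma'}$ is (a unit multiple of) the natural projection onto $\bigwedge^{|\Sigma\setminus\Sigma'|}_{\QQ_p[G]}A$; invoking the fact that $\QQ_p\otimes_{\ZZ_p}{\bigcap}^a_{\ZZ_p[G]}M = \bigwedge^a_{\QQ_p[G]}(\QQ_p\cdot M)$ (an immediate consequence of the integrality definition in \S\ref{integral structures section}), this assembles into the desired isomorphism of $\zeta(\QQ_p[G])$-modules. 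The main obstacle in the execution will be justifying the Laplace-type expansion of the pairing (\ref{duality pairing}) in the non-commutative setting: a careful reduction, using \cite[Lem. 4.13]{bses} twice, to the case of free $A$- and $B$-pieces should suffice but must be spelled out because the conventions in (\ref{non-commutative wedge}) and (\ref{non-commutative wedge 2}) depend on ordered bases that need not be functorial under the decomposition $A\oplus B$.
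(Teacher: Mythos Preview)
Your approach is correct and genuinely different from the paper's. You work purely at the level of semisimple $\QQ_p[G]$-modules: the valuation exact sequence $0\to \QQ_p\!\cdot\! U'\to \QQ_p\!\cdot\! U\to \bigoplus_{v\in\Sigma'}\QQ_p[G]\to 0$, a splitting, the multiplicity count via Dirichlet and Lemma~\ref{idem lemma}, and then a Laplace-type contraction identity. The paper instead works through the \'etale cohomology complexes: it fixes a resolution $\ZZ_p[G]^d\xrightarrow{\phi}\ZZ_p[G]^d$ of $C_{E,S,T,p}$ as in Lemma~\ref{can rep lemma}, invokes Proposition~\ref{generate rubin}(ii) to produce an explicit generator $\varepsilon$ of the source, then represents the morphism $C_{E,S,T,p}\to P_{\Sigma'}^\bullet$ coming from the triangle in Lemma~\ref{complex construction}(ii) by a map of complexes $\alpha$, and finally identifies $\phi^{\rm ord}_{E,S,\Sigma'}(\varepsilon)$ with an element constructed from a modified differential $\phi_\alpha$ whose kernel is shown to be $\mathcal{O}^\times_{E,S\setminus\Sigma',T,p}$.

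Your route is shorter and avoids the complexes entirely. The paper's route, by contrast, exhibits the isomorphism through the same resolution data used elsewhere (Proposition~\ref{generate rubin}, Remark~\ref{exp interp hrncmc}, Lemma~\ref{semi for ell-invariant}, the proof of Theorem~\ref{strategytheorem}), so that the generator $\varepsilon$ and its image are already expressed in the basis $\{b_{E,i}\}$; this makes the lemma immediately compatible with the Bockstein computations and the descent arguments that follow. Your version would require re-expressing those later arguments in terms of the splitting $s$ rather than the resolution, or else proving a comparison lemma. Your identified obstacle---making the Laplace expansion of the pairing (\ref{duality pairing}) precise for the reduced exterior products---is real but manageable: since (\ref{non comm ext power}) is defined component-wise over a splitting field, the expansion reduces to the classical one for exterior algebras over a field, and the vanishing of $\phi^{\rm ord}_v$ on $A$ forces the image into the summand $\bigwedge^{r^*}A$ of $\bigwedge^{r^*}(A\oplus B)$, which is exactly the image of the canonical inclusion $\bigwedge^{r^*}(\QQ_p\!\cdot\! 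U')\hookrightarrow\bigwedge^{r^*}(\QQ_p\!\cdot\! U)$.
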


\begin{proof} Set $U := \mathcal{O}^\times_{E,S,p}$,
$U' := \mathcal{O}^\times_{E,S\setminus \Sigma',p}$, $r' := |\Sigma|$ and $r^* :=
|\Sigma\setminus \Sigma'| \le r'$.

Then, since each place in $\Sigma'$ splits completely in $E$, the definition of
$e$ implies that the
 $\QQ_p[G]e$-modules $e(\QQ_p\otimes_{\ZZ_p}U)\cong e(\QQ_p\otimes_{\ZZ}Y_{E,S})$ and
 $e(\QQ_p\otimes_{\ZZ_p}U')\cong e(\QQ_p\otimes_{\ZZ}Y_{E,S\setminus \Sigma'})$ are respectively free of ranks $r'$ and $r^*$ and so
  \cite[Th. 4.19(vi)]{bses} implies that the
$\zeta(\QQ_p[G])e$-modules $e\bigl(\QQ_p\cdot
{\bigcap}^{r'}_{\ZZ_p[G]}U\bigr)$ and
 $e\bigl(\QQ_p\cdot {\bigcap}^{r^*}_{\ZZ_p[G]}
 U'\bigr)$ are each free of rank one.

We now fix a representative of $C_{E,S,T,p}$ as in the upper row of
(\ref{very important diagram}) (with $L = E$ and $\Pi = S$) and use the subsets $J$ and $J^\dagger$ of $[d]$ defined in (\ref{J def}) and (\ref{J def 2}). Then Proposition \ref{generate rubin}(ii) implies that the element
\[ \varepsilon := ({\wedge}_{j\in [d]\setminus J^\dagger}(b_{E,j}^\ast\circ\phi))({\wedge}_{i\in [d]}b_{E,i})\]
generates $e\bigl(\QQ_p\cdot
{\bigcap}^{r'}_{\ZZ_p[G]}U\bigr)$ over $\zeta(\QQ_p[G])$ and so it is  enough to prove that
$\phi_{E,S,\Sigma'}^{\rm ord}(\varepsilon)$ is a generator of the
$\zeta(\QQ_p[G])$-modules $e\bigl(\QQ_p\cdot {\bigcap}^{r^*}_{\ZZ_p[G]}
 U'\bigr)$.

To do this we fix an isomorphism in $\Der^{\rm perf}(\ZZ_p[G])$ between
$C_{E,S,T,p}$ and the complex $P^\bullet$ given by
\[ \ZZ_p[G]^d\xrightarrow{\phi}\ZZ_p[G]^d,\]
where the first term is placed in degree zero and the cohomology groups are identified with those of $C_{E,S,T,p}$ by the maps in the upper row of
 (\ref{very important diagram}). We also write $P_{\Sigma'}^\bullet$ for the complex
\[ \ZZ_p[G]^{|\Sigma'|}\xrightarrow{0}\ZZ_p[G]^{|\Sigma'|},\]
where the first term is placed in degree zero, and we
%To do this we is $\{b_i\}_{i \in Z_F}$ and identify this with $Y_{F,V,p}^-$ by sending each $b_i$ to $w_i-\overline{w_i}$. We also write $P_1^\bullet$ for the complex $P_1\xrightarrow{0} P_1$ with the first term in degree one.
%
choose a morphism of complexes of $\bz_p[G]$-modules $\alpha: P^\bullet \to P_{\Sigma'}^\bullet$ that represents the composite morphism in $\Der^{\rm perf}(\bz_p[G])$
\[ P^\bullet \cong C_{E,S,T,p} \to \ZZ_p\otimes _\ZZ \bigl({\bigoplus}_{w\in \Sigma'_E}\DR\Hom_\ZZ(\DR\Gamma((\kappa_w)_{\mathcal{W}},\ZZ),\ZZ)\bigr)[-1]
 \cong P_{\Sigma'}^\bullet ,  \]
where the first map is the fixed isomorphism, the second is induced by the exact
triangle in Lemma \ref{complex construction}(ii) and the third
is the canonical isomorphism induced by Remark \ref{exp EF} (and the fact that each place in $\Sigma'$ splits completely in $E$).

%our identification of $P_1$ with $Y_{F,V,p}^-$ and the fact that for each $i$ in $Z_F$ the place $v_i$ splits completely in $F/k$.
We write $J_{\Sigma'}$ for the subset of $J^\dagger$ comprising indices $j$ for which the $j$-th place of $S\setminus \{v'\}$ belongs to $\Sigma'$. 
Then there is a short exact sequence of complexes of $\ZZ_p[G]$-modules
(with horizontal differentials and the first term in the upper complex placed in
degree one)
\begin{equation}\label{exp diagram}\xymatrix{& & \ZZ_p[G]^{|\Sigma'|} \ar@{^{(}->}[d]_{(-{\rm id},\iota_1)}  \ar[rr]^{{\rm id}} & & \ZZ_p[G]^{|\Sigma'|}
 \ar[d]^{{\rm id}}\\
\ZZ_p[G]^d \ar[rr]^{\hskip -0.4truein(\alpha^0,\phi)} \ar[d]_{{\rm id}} & & \ZZ_p[G]^{|\Sigma'|}
\oplus \ZZ_p[G]^d \ar[rr]^{\hskip 0.2truein (0,\pi_1)} \ar@{->>}[d]^{(\iota_1,{\rm id})} & & \ZZ_p[G]^{|\Sigma'|}\\
\ZZ_p[G]^d \ar[rr]^{\phi_\alpha} & & \ZZ_p[G]^d. }\end{equation}
Here $\iota_1$ is the inclusion that sends the $i$-th element in the
standard basis of $\ZZ_p[G]^{|\Sigma'|}$ to the $k(i)$-th element in the standard basis of $\ZZ_p[G]^d$ where $k(i) \in J_{\Sigma'}$ is such that $v_{k(i)}$ is the $i$-th place in $\Sigma'$, and $\pi_1$ denotes the
corresponding projection. In addition, the endomorphism
$\phi_\alpha$ is such that for each $j$ in $[d]$ one has
\[ b_{E,j}^\ast \circ \phi_\alpha =
\begin{cases} b_{E,i}^\ast \circ \alpha^0, 
              &\text{if $j = k(i) \in J_{\Sigma'}$ for $i \in [|\Sigma'|]$,}\\
              0, &\text{if $J^\dagger\setminus J_{\Sigma'}$},\\
              b_{E,j}^\ast \circ \phi, &\text{if $j \in [d]\setminus J^\dagger.$}
                                                  \end{cases}\]

In particular, since $H^0(\alpha)$ coincides with the composite
\begin{equation}\label{ord isomorphism} \mathcal{O}^{\times}_{E,S,T,p} \xrightarrow{\epsilon\, \mapsto
{\sum}_{w\in \Sigma'_E}{\rm ord}_w(\epsilon)\cdot w}
Y_{E,\Sigma',p}\cong \ZZ_p[G]^{|\Sigma'|}\end{equation}
where the isomorphism  sends the ordered basis
$\{w_v\}_{v \in \Sigma'}$ to the standard basis of
 $\ZZ_p[G]^{|\Sigma'|}$, for every $j = k(i)\in J_{\Sigma'}$ one has $
 b_{E,i}^\ast\circ H^0(\alpha) =\phi_{v_{j}}^{\rm ord}$. Hence, for a suitable integer $a$, there are equalities
\begin{align*} \phi_{E,S,\Sigma'}^{\rm ord}(\varepsilon) =&\, ({\wedge}_{v \in \Sigma'}\phi_v^{\rm ord})(\varepsilon)\\
=&\, ({\wedge}_{j \in J_{\Sigma'}}(b_{E,j}^\ast \circ \phi_\alpha))(\varepsilon)\\
  =&\, ({\wedge}_{j\in J_{\Sigma'}}(b_{E,j}^\ast \circ \phi_\alpha))(({\wedge}_{j\in [d]\setminus J^\dagger}(b_{E,j}^\ast\circ\phi))({\wedge}_{i\in [d]}b_{E,i}))\\
  =&\, {\rm Nrd}_{\QQ_p[G]}((-1)^a)\cdot ({\wedge}_{j\in J_{\Sigma'}\cup ([d]\setminus J^\dagger)}(b_{E,j}^\ast\circ\phi_\alpha))({\wedge}_{i\in [d]}b_{E,i})\end{align*}
Since \cite[Prop. 4.21(i)]{bses} implies the latter element is a
generator of $e\bigl(\QQ_p\cdot{\bigcap}^{r^*}_{\ZZ_p[G]}\ker(\phi_\alpha)\bigr)$ over
$\zeta(\QQ_p[G])$, it is therefore enough to show $\ker(\phi_\alpha)$ is isomorphic to $U'$.
 This is in turn true since the first complex in the short exact sequence (\ref{exp diagram})
is acyclic and the second is equal to ${\rm Cone}(\alpha)[-1]$ and so the
 exact triangle in Lemma \ref{complex construction}(ii) induces an
 isomorphism in
 $\Der^{\rm perf}(\ZZ_p[G])$ between $C_{E,S\setminus\Sigma',T,p}$ and
 the third complex in (\ref{exp diagram}) and therefore also an isomorphism
 of $\ZZ_p[G]$-modules between $U' = H^0(C_{E,S\setminus\Sigma',T,p})$ and $\ker(\phi_\alpha)$.
 \end{proof}

\subsubsection{} We set 
\[ R_\infty := \La(\G) \quad \text{ and } \quad \mathcal{I}_E := R_\infty\cdot (\gamma-1).\]
We note that $\mathcal{I}_E$ is a (two-sided) ideal of $R_\infty$, that the assignment $x \mapsto x(\gamma-1)$ induces an isomorphism $R_\infty \cong \mathcal{I}_E$ of (left) $R_\infty$-modules and that there is a natural exact sequence of (left) $R_\infty$-modules
\[ 0 \to \mathcal{I}_E \xrightarrow{\subset} R_\infty \to R_E\to 0,\]
where the third arrow is the natural projection. This exact sequence
combines with (the limit over $L$ in $\Omega_E(\KK_\infty)$) of the isomorphism in Lemma \ref{complex construction}(iv) to give a canonical  exact triangle in $\Der(R_\infty)$
\begin{equation}\label{first bock triangle} \mathcal{I}_E\otimes^{\DL}_{R_\infty}C_{\KK_\infty,S,T} \to  C_{\KK_\infty,S,T} \xrightarrow{\theta} C_{E,S,T} \xrightarrow{\theta'}  (\mathcal{I}_E\otimes^{\DL}_{R_\infty}C_{\KK_\infty,S,T})[1]\end{equation}
and thereby also a composite Bockstein homomorphism
\begin{align}\label{first bock} H^0(C_{E,S,T}) \xrightarrow{H^0(\theta')}&\,  H^1(\mathcal{I}_E\otimes^{\DL}_{R_\infty}C_{\KK_\infty,S,T})\\
 \cong&\, \mathcal{I}_E\otimes_{R_\infty}H^1(C_{\KK_\infty,S,T})\notag\\
 \xrightarrow{ {\rm id}\otimes H^1(\theta)}&\, \mathcal{I}_E\otimes_{R_\infty}H^1(C_{E,S,T})\notag\\
 \cong&\, \bigl(\mathcal{I}_E/\mathcal{I}_E^2\bigr)\otimes_{R_E}H^1(C_{E,S,T})\notag\\
 \cong&\, H^1(C_{E,S,T}).\notag\end{align}
Here the first isomorphism is induced by the fact $C_{\KK_\infty,S,T}$ is acyclic in degrees greater than one, the second by the fact $\mathcal{I}_E$ acts trivially on $H^1(C_{E,S,T})$ and the third by the  isomorphism $R_E\cong \mathcal{I}_E/\mathcal{I}_E^2$ that sends $1$ to the class of $\gamma-1$. (For an alternative, and more direct, description of the composite (\ref{first bock}) as a Bockstein homomorphism see the proof of Lemma \ref{first exp bock comp} below.)

For each $v$ in $\Sigma$ the latter map induces a composite homomorphism of $\ZZ_p[G]$-modules
\[ \phi^{\rm Bock}_{\gamma,v} : \mathcal{O}_{E,S,T,p}^\times =
 H^0(C_{E,S,T}) \to H^1(C_{E,S,T}) = {\rm Sel}^T_S(E)_p^{\rm tr} \xrightarrow{\varrho_v} \ZZ_p[G].\]
Here the equalities come from Lemma \ref{complex construction}(ii) and $\varrho_v$ denotes the composite of the canonical projection ${\rm Sel}^T_S(E)_p^{\rm tr} \to \ZZ_p[G]\cdot w_v$ induced by (\ref{selmer lemma seq2}) and the homomorphism of $\ZZ_p[G]$-modules $\ZZ_p[G]\cdot w_v \to \ZZ_p[G]$ that sends $w_v$ to $1$ (and is well-defined since $v$ splits completely in $E$).

For any finite (ordered) subset $\Sigma'$ of $\Sigma$ the reduced exterior product of the maps $\phi^{\rm Bock}_{\gamma,v}$ over $v$ in $\Sigma'$
induces a homomorphism of $\zeta(\QQ_p[G])$-modules
\[ \phi_{\gamma,S,\Sigma}^{\rm Bock}: e_{E/K,S,\Sigma}\bigl(\QQ_p\cdot
{\bigcap}^{|\Sigma|}_{\ZZ_p[G]}\mathcal{O}_{E,S,p}^\times\bigr)
 \xrightarrow{}
 e_{E/K,S,\Sigma}\bigl(\QQ_p\cdot
 {\bigcap}^{|\Sigma\setminus \Sigma'|}_{\ZZ_p[G]}
 \mathcal{O}_{E,S,p}^\times\bigr) \]
where $e_{E/K,S,\Sigma}$ is the idempotent of $\zeta(\QQ[G])$ defined in (\ref{key idem def}).

Taking advantage of Lemma \ref{ell-inv preparation}, we can
now define a canonical `$\mathscr{L}$-invariant map'.

\begin{definition}\label{ell def} {\em The  $\mathscr{L}$-invariant map associated to $\gamma$, $S$ and a subset $\Sigma'$ of $\Sigma\setminus S_K^\infty$ is the homomorphism of $\zeta(\QQ_p[G])$-modules
\[  \mathscr{L}^{\Sigma'}_{\gamma,S} :
e_{E/K,S,\Sigma}\bigl(\QQ_p\cdot {\bigcap}^{|\Sigma\setminus\Sigma'|}_{\ZZ_p[G]}
 \mathcal{O}_{E,S\setminus \Sigma',p}^\times\bigr) \to e_{E/K,S,\Sigma}
 \bigl(\QQ_p\cdot {\bigcap}^{|\Sigma\setminus \Sigma'|}_{\ZZ_p[G]}
 \mathcal{O}_{E,S,p}^\times\bigr)\]
that is induced by the composite $\phi^{\rm Bock}_{\gamma,S,\Sigma'} \circ
(\phi_{E,S,\Sigma'}^{\rm ord})^{-1}$.
 }\end{definition}

The following result explains the significance of semisimplicity (in the sense of Definition \ref{semi def}) in this setting. 

\begin{lemma}\label{semi for ell-invariant} Assume that the set $\Sigma' := \Sigma\setminus \Sigma_S(\mathcal{K}_\infty)$ contains no archimedean places. Then, for any character $\chi$ in ${\rm Ir}_p(\G_E)$ at which $\KK_\infty/K, E$ and $S$ is semisimple, the $\chi$-component of the map $\mathscr{L}^{\Sigma'}_{\gamma,S}$ is injective. 
\end{lemma}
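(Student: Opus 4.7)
The plan is to first use that $\mathscr{L}^{\Sigma'}_{\gamma,S} = \phi^{\rm Bock}_{\gamma,S,\Sigma'} \circ (\phi^{\rm ord}_{E,S,\Sigma'})^{-1}$ where $\phi^{\rm ord}_{E,S,\Sigma'}$ is an isomorphism by Lemma \ref{ell-inv preparation}. The claim then reduces to showing that $\phi^{\rm Bock}_{\gamma,S,\Sigma'}$ is injective on the $\chi$-component of its source. A combination of Lemma \ref{idem lemma} with \cite[Th. 4.17(vi)]{bses} shows that both the $\chi$-component of $e\bigl(\QQ_p\cdot{\bigcap}^{|\Sigma|}_{\ZZ_p[G]}\mathcal{O}_{E,S,p}^\times\bigr)$ and of the corresponding codomain are one-dimensional over $\QQ_p^c$ whenever $e_\chi\cdot e \neq 0$ (and if $e_\chi\cdot e = 0$ the assertion is trivial). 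Thus the injectivity reduces to checking that the image of any fixed generator is non-zero at $\chi$.

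Next, I would give an explicit formula for each component map $\phi^{\rm Bock}_{\gamma,v}$ (for $v \in \Sigma'$) by fixing a canonical resolution $R_\infty^d \xrightarrow{\phi} R_\infty^d$ of $C_{\K_\infty,S,T}$ as in Proposition \ref{limitomac} and running through the long exact cohomology sequence attached to the triangle (\ref{first bock triangle}). Writing $v = v_j$ for the index $j$ in the set $J$ defined in (\ref{J def}), a direct chase shows that for every $u \in \mathcal{O}_{E,S,T,p}^\times$ one has
\[
\phi^{\rm Bock}_{\gamma,v_j}(u) = \widehat{\phi}_{j,E}(\hat\iota_E(u)),
\]
where $\widehat{\phi}_{j}$ is the map characterized by (\ref{hat def}) and $\hat\iota_E$ is the embedding appearing in the upper row of (\ref{very important diagram}). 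The essential point is that the connecting map in (\ref{first bock triangle}) is computed by choosing a lift $w \in R_\infty^d$ of $\hat\iota_E(u)\in R_E^d$, applying $\phi$, and dividing the result by $(\gamma-1)$; the $j$-th coordinate of this quotient is precisely $\widehat{\phi}_j(w)$ by the very definition of $\widehat{\phi}_j$.

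With this identification in hand, a computation using (\ref{rn rel}) expresses $\phi^{\rm Bock}_{\gamma,S,\Sigma'}(\varepsilon)$, where $\varepsilon$ is the generator of $e\bigl(\QQ_p\cdot{\bigcap}^{|\Sigma|}_{\ZZ_p[G]}\mathcal{O}_{E,S,p}^\times\bigr)$ constructed in the proof of Lemma \ref{ell-inv preparation}, as the image in $\zeta(\QQ_p[G])$ of the reduced norm of the matrix $M'$ from (\ref{M' def}), for the specific choice of $N \in {\rm M}_{d,r}(\La(\G))$ whose columns encode the $r$ standard-basis functionals corresponding to the $\Sigma_S(\K_\infty)$-block. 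By the analysis carried out in the proof of Lemma \ref{prel semi} (in particular the decomposition (\ref{semi decomp})), this element generates ${\rm Fit}^r_{\La(\G_\infty)}(\mathcal{S}^T_S(\K_\infty))/\lambda(\gamma)^{r'-r}\cdot \xi_p(\K_\infty/K)$ after taking the $\#$-involute.

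Finally, the semisimplicity hypothesis at $\chi$ (Definition \ref{semi def}) asserts precisely that there exists some such $N$ for which ${\rm Nrd}_{Q(\G)}(M')^\#$ does not belong to $\wp_\chi$, equivalently, for which the image in $\zeta(\QQ_p^c[G])e_\chi$ of ${\rm Nrd}(M')$ is non-zero. Combined with the one-dimensionality from the first paragraph, this yields the required injectivity. I expect the main technical obstacle to be the cohomological identification in the second step: one must match the formal construction of the Bockstein composite (\ref{first bock}) with the concrete divide-by-$(\gamma-1)$ procedure, which requires a careful diagram chase that tracks both the chosen resolution $\phi$ and a compatible choice of lift across the surjection $R_\infty^d \to R_E^d$, and also correctly identifies the projection $\varrho_v$ with the coordinate functional $b^\ast_{E,j}$ under the resolution. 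A secondary bookkeeping issue is to keep track of the exterior-product sign conventions that enter when relating $\wedge_{j\in J}\widehat{\phi}_{j,E}$ applied to $\varepsilon$ to the reduced norm of $M'$.
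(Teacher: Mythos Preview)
Your overall plan matches the paper's proof: reduce to non-vanishing of $e_\chi\bigl(\phi^{\rm Bock}_{\gamma,S,\Sigma'}(\varepsilon)\bigr)$ for a generator $\varepsilon$ of the (one-dimensional at $\chi$) source, establish the identity $\phi^{\rm Bock}_{\gamma,v_j} = \widehat{\phi}_{j,E}\circ\hat\iota_E$ by the diagram chase you describe, and then invoke semisimplicity. The technical obstacle you anticipate is exactly the one the paper isolates as the equality (\ref{bock equality}).

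There is, however, a genuine muddle in your third and fourth paragraphs that breaks the logical chain. The element $\phi^{\rm Bock}_{\gamma,S,\Sigma'}(\varepsilon)$ lies in $e\bigl(\QQ_p\cdot\bigcap^{r}_{\ZZ_p[G]}\mathcal{O}_{E,S,p}^\times\bigr)$ with $r=|\Sigma_S(\K_\infty)|$; it is \emph{not} a scalar in $\zeta(\QQ_p[G])$ unless $r=0$, and the $\chi$-component of this target has $\QQ_p^c$-dimension $\binom{r'\chi(1)}{r\chi(1)}$, not one (so your codomain claim is also incorrect, though that is harmless). Consequently there is no equality between $\phi^{\rm Bock}_{\gamma,S,\Sigma'}(\varepsilon)$ and ``the reduced norm of $M'$ for the specific choice of $N$ encoding the standard-basis functionals'', and the assertion that this single reduced norm generates the Fitting quotient is false. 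What the paper does instead is use the $N$ supplied by the semisimplicity hypothesis itself: letting $\theta_1,\dots,\theta_r$ be its columns regarded as elements of $\Hom_{R_\infty}(R_\infty^d,R_\infty)$, one computes via (\ref{rn rel}) and (\ref{divisibility relation}) that the scalar $\bigl(\wedge_{i\in[r]}\theta_{i,E}\bigr)\bigl(\iota_{E,\ast}(\phi^{\rm Bock}_{\gamma,S,\Sigma'}(\varepsilon))\bigr)$ equals, up to a unit, the image in $\zeta(\QQ_p[G])$ of ${\rm Nrd}_{Q(\G)}(M')$; since this scalar is non-zero at $\chi$ by hypothesis, the element $\phi^{\rm Bock}_{\gamma,S,\Sigma'}(\varepsilon)$ itself must be non-zero at $\chi$. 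The point you are missing is precisely this further evaluation step at the $r$ functionals determined by the semisimplicity witness $N$, which is what converts the bidual element into the scalar you can compare with $\wp_\chi$.
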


\begin{proof} We use the notation of the proof of Lemma \ref{prel semi}. In particular, one has  $r = |\Sigma_S(\mathcal{K}_\infty)|$, $\Sigma_S(\mathcal{K}_\infty) = \{v_i: i \in [r]\}$, $r' = |\Sigma|$,   $\Sigma' = \{v_j: j \in J\}$ and $J^\dagger := J \cup [r]$ (so $|\Sigma'| = r'-r$ and $|J^\dagger| = r'$).  We have also fixed a resolution $C(\phi)$ of $C_{\mathcal{K}_\infty,S,T}$ as in Remark \ref{exp interp hrncmc} and, for each $L \in \Omega(\mathcal{K}_\infty)$, we write $C(\phi_L)$ for the induced resolution of $C_{L,S,T,p}$. We abbreviate the notation $\gamma_E$ from loc. cit. to $\gamma$. 

Then, as Proposition \ref{generate rubin}(ii) implies that the element $\varepsilon_E$ of 
${\bigcap}^{r'}_{\ZZ_p[G]}\mathcal{O}_{E,S,p}^\times$ constructed by the argument of Proposition \ref{generate rubin} (with respect to the complex $C(\phi_E)$) generates the $\zeta(\QQ_p[G])$-module $e_{E/K,S,\Sigma}(\QQ_p\cdot {\bigcap}^{r'}_{\ZZ_p[G]}\mathcal{O}_{E,S,p}^\times\bigr)$, it is enough for us to show the given semisimplicity hypothesis on $\chi$ implies that 
\begin{equation}\label{required non-zero} e_\chi\bigl( \phi^{\rm Bock}_{\gamma,S,\Sigma'}(\varepsilon_E)\bigr) \not= 0.\end{equation}

The key point in showing this is that, under the given hypothesis, the argument of
Lemma \ref{prel semi} implies the existence of a matrix $N$ in
${\rm M}_{d,r}(R_\infty)$ for which one has 
\begin{equation}\label{key equality} {\rm Nrd}_{Q(\G)}
\bigl(\left(\!\!\!\begin{array}{c|c}N\!&M(\phi)^\dagger\end{array}\!\!\!\right)\bigr) = \lambda(\gamma)^{r'-r}\cdot \eta\,\,\text{ with }\,\, \eta \in \xi_p(\mathcal{K}_\infty/K) \setminus \wp_\chi(\KK_\infty/K).\end{equation}
To interpret this equality, we write
$\theta_i = (\theta_{i,F})_F$ for each index $i\in [r]$ for
the element of $\Hom_{R_\infty}(R_\infty^d,R_\infty)$ that corresponds to the $i$-th column of $N$ and, for each $F\in \Omega_E(\KK_\infty)$, we use the element 
\[ x_F := ({\wedge}_{i\in [d]\setminus J^\dagger}(b_{F,i}^\ast\circ\phi_{F}))({\wedge}_{j\in [d]} b_{F,j}) \in {\bigcap}_{\ZZ_p[\cG_F]}^{r'}\ZZ_p[\cG_F]^d.\]
In particular, we note that (\ref{rn description}) combines with \cite[Lem. 4.10]{bses} to
 imply that, for a suitable integer $t$, one has  
\begin{align*} &\bigl({\rm Nrd}_{Q(\G)}
\bigl(\left(\!\!\!\begin{array}{c|c}N\!&M(\phi)^\dagger\end{array}\!\!\!\right)\bigr)\\
=&\, {\rm Nrd}_{Q(\G)}((-1)^{t})\cdot\bigl(({\wedge}_{i\in [r]}\theta_{i,F})(({\wedge}_{j\in J}(b_{F,j}^\ast\circ\phi_{F}))
(x_F))\bigr)_{F\in \Omega_E(\KK_\infty)}\\
=&\, \lambda(\gamma)^{r'-r}\cdot{\rm Nrd}_{Q(\G)}((-1)^{t})\cdot\bigl(({\wedge}_{i\in [r]}\theta_{i,F})(({\wedge}_{j\in J}\widehat{\phi}_{j,F})(x_F))\bigr)_{F\in \Omega_E(\KK_\infty)}\end{align*}
where % we set $t = (d-r')(r'-r) + r(d-r)$ and 
the second equality follows from (\ref{divisibility relation}). Upon comparing the latter equality with (\ref{key equality}), we deduce that 
\begin{align*} \bigl(({\wedge}_{i\in [r]}\theta_{i,F})(({\wedge}_{j\in J}\widehat{\phi}_{j,F})(x_F))\bigr)_{F\in \Omega_E(\KK_\infty)} =&\,\, {\rm Nrd}_{Q(\G)}((-1)^{t})\cdot\eta\\
 \in&\,\, \xi_p(\mathcal{K}_\infty/K) \setminus \wp_\chi(\KK_\infty/K).\end{align*}
In view of the explicit definition (\ref{wp def}) of the prime ideal $\wp_\chi(\KK_\infty/K)$, it follows that  
\[ e_\chi\bigl( ({\wedge}_{i\in [r]}\theta_{i,E}))(({\wedge}_{j\in J}\widehat{\phi}_{j,E})(x_E))\bigr) \not= 0.\]
 
To derive (\ref{required non-zero}) from here, it is clearly enough to prove that $({\wedge}_{j\in J}\widehat{\phi}_{j,E})(x_E)$ is equal to $\phi^{\rm Bock}_{\gamma,S,\Sigma'}(\varepsilon_E)$. To show this, we recall from Proposition \ref{generate rubin}(i) that 
\[ ({\wedge}_{j\in J}\widehat{\phi}_{j,E})(x_E) = ({\wedge}_{j\in J}(\widehat{\phi}_{j,E}\circ \iota_{E,*}))(\varepsilon_E),\]
where $\iota_{E,*}$ the injective map $\mathcal{O}_{E,S,T,p}^\times \to \ZZ_p[G]^d$ that is induced by the given resolution $C(\phi_E)$ of $C_{E,S,T,p}$. Given the explicit definition of $\phi^{\rm Bock}_{\gamma,S,\Sigma'}$, it is therefore enough for us to show that, for each $j\in J$ (so that $v_j \in \Sigma'$), one has 
\begin{equation}\label{bock equality} \widehat{\phi}_{j,E}\circ \iota_{E,*}= \phi_{\gamma,v_j}^{\rm Bock}.\end{equation} 

This identity can then be verified by an explicit, and straightforward, computation that combines the defining equality $\phi_j = (\gamma^{-1}-1)(\widehat{\phi}_j)$ of the homomorphism $\widehat{\phi}_j$ with the fact $\phi_{\gamma,v_j}^{\rm Bock}$ can be computed as the composite $\varpi\circ \theta$, with $\theta$ the connecting homomorphism $H^0(C_{E,S,T,p}) = \ker(\phi_E) \to {\rm cok}(\phi)$ associated to the short exact sequence of complexes (with vertical differentials)
\[
\xymatrix{ (R_\infty(\gamma-1))^{d}\,\, \ar[d]^{\phi}\ar@{^{(}->}[rr]^{\,\,\,\,\,\,\,\,\,\,\subset} & & R_\infty^{d} \ar[d]^{\phi} \ar@{->>}[r] & \ZZ_p[G]^{d} \ar[d]^{\phi_E}\\
(R_\infty(\gamma-1))^{d}\,\, \ar@{^{(}->}[rr]^{\,\,\,\,\,\,\,\,\,\,\subset} & & R_\infty^{d} \ar@{->>}[r] &\ZZ_p[G]^{d} }
\]
and $\varpi$ the composite map 
\[ {\rm cok}(\phi) \to {\rm cok}(\phi_E) = {\rm Sel}_S^T(E)^{\rm tr}_p \xrightarrow{\varrho_{v_j}}\ZZ_p[G]\]
in which the first arrow is the natural projection map.
 \end{proof}

\subsubsection{}We can now use the $\mathscr{L}$-invariant map from Definition \ref{ell def} to formulate an explicit conjectural formula for the $(r'-r)$-th order derivative at $\gamma$ of $\varepsilon^{\rm RS}_{\KK_\infty,S,T}$, as specified in Definition \ref{derivative def}.

\begin{conjecture}\label{GGSconjecture}{\em (Generalized Gross-Stark Conjecture)} Fix a subset $\Sigma$ of $\Sigma_S(E)$ as in (\ref{sigma_S assumption}) and set $r := |\Sigma_S(\KK_\infty)|$, $r' := |\Sigma|$ and $\Sigma' := \Sigma\setminus \Sigma_S(\KK_\infty)$ (so that $r \le r' < |S|$ and  $|\Sigma'| = r'-r$). Then, if no place in $\Sigma'$ is
archimedean, there is an equality 
 \[ \partial^{\,r'-r}_{\gamma}(\varepsilon^{\rm RS}_{\KK_\infty,S,T}) = \mathscr{L}^{\Sigma'}_{\gamma,S}(\varepsilon^{\Sigma_S(\KK_\infty)}_{E/K,S\setminus \Sigma',T})\]
in
 $\CC_p\cdot{\bigcap}^{r}_{\ZZ_p[\G_E]}\mathcal{O}_{E,S,p}^\times$. 
\end{conjecture}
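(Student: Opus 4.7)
My plan is to reduce Conjecture \ref{GGSconjecture} to an explicit $L$-function identity between Rubin-Stark elements, by exploiting the canonical resolution from Proposition \ref{limitomac} to make both sides of the claimed equality transparent. First I would unwind the definition of $\partial^{r'-r}_\gamma(\varepsilon^{\rm RS}_{\K_\infty,S,T})$ via the proof of Proposition \ref{preGGS lemma}. The decomposition (\ref{dividing x}) gives $\iota_{E,*}(\partial^{r'-r}_\gamma(\varepsilon^{\rm RS}_{\K_\infty,S,T}))=z_E\cdot x'_E$, and rearranging the reduced exterior product via \cite[Lem. 4.13]{bses} allows me to rewrite $x'_E$, up to an explicit sign, as $(\wedge_{j\in J}\widehat{\phi}_{j,E})(w_E)$, where $w_E=\iota_{E,*}(\varepsilon_E)$ is the element from Proposition \ref{generate rubin}(i). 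Since $z_E$ is central in $\CC_p[\G_E]$ and satisfies $z_E\cdot \varepsilon_E=\varepsilon^{\Sigma}_{E/K,S,T,p}$ by its very definition, this yields
\[
\iota_{E,*}(\partial^{r'-r}_\gamma(\varepsilon^{\rm RS}_{\K_\infty,S,T}))=\pm\,(\wedge_{j\in J}\widehat{\phi}_{j,E})\bigl(\iota_{E,*}(\varepsilon^{\Sigma}_{E/K,S,T,p})\bigr).
\]

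Second I would apply the identity (\ref{bock equality}) established in the proof of Lemma \ref{semi for ell-invariant}, namely $\widehat{\phi}_{j,E}\circ \iota_{E,*}=\phi^{\rm Bock}_{\gamma,v_j}$, which by functoriality of reduced exterior products identifies the right-hand side above with $\iota_{E,*}(\phi^{\rm Bock}_{\gamma,S,\Sigma'}(\varepsilon^{\Sigma}_{E/K,S,T,p}))$. Using the injectivity of $\iota_{E,*}$, this delivers the unconditional identity
\[
\partial^{r'-r}_\gamma(\varepsilon^{\rm RS}_{\K_\infty,S,T})=\pm\,\phi^{\rm Bock}_{\gamma,S,\Sigma'}(\varepsilon^{\Sigma}_{E/K,S,T,p}).
\]
Since by definition $\mathscr{L}^{\Sigma'}_{\gamma,S}=\phi^{\rm Bock}_{\gamma,S,\Sigma'}\circ (\phi^{\rm ord}_{E,S,\Sigma'})^{-1}$, and since $\phi^{\rm ord}_{E,S,\Sigma'}$ is bijective on the $e_{E/K,S,\Sigma}$-component by Lemma \ref{ell-inv preparation}, Conjecture \ref{GGSconjecture} is thereby reduced to the identity
\[
\phi^{\rm ord}_{E,S,\Sigma'}(\varepsilon^{\Sigma}_{E/K,S,T,p})=\pm\,\varepsilon^{\Sigma_S(\K_\infty)}_{E/K,S\setminus \Sigma',T}
\]
in $\CC_p\cdot{\bigcap}_{\ZZ_p[\G_E]}^{r}\mathcal{O}_{E,S\setminus \Sigma',p}^\times$.

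Third I would attempt to establish this final identity by direct comparison of the defining regulator equations of the two Rubin-Stark elements (Definition \ref{rubin-stark context}). The essential inputs are the $L$-factor identity $L_{S,T}(\chi,s)=L_{S\setminus \Sigma',T}(\chi,s)\prod_{v\in \Sigma'}(1-(Nv)^{-s})^{\chi(1)}$, valid for each $\chi$ in the support of $e_{E/K,S,\Sigma}$ since $\Sigma'\subseteq \Sigma_S(E)$ consists of places that split completely in $E$; the resulting leading-term comparison $L^{*}_{S,T}(\chi,0)=L^{*}_{S\setminus \Sigma',T}(\chi,0)\cdot \prod_{v\in \Sigma'}(\log Nv)^{\chi(1)}$; and the elementary relation $-\log|u|_{w_v}=\log(Nv)\cdot \mathrm{ord}_{w_v}(u)$ at each non-archimedean $v\in \Sigma'$, which precisely accounts for the $\prod_{v\in \Sigma'}(\log Nv)^{\chi(1)}$-factor when one passes from the regulator at $S$ of rank $r'$ to the regulator at $S\setminus \Sigma'$ of rank $r$.

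The principal obstacle lies in this third step. In the non-abelian setting, carrying out the comparison rigorously requires tracking signs and reduced-norm contributions within reduced exterior products and ensuring the compatibility with the central idempotent $e_{E/K,S,\Sigma}$ in a way that is not purely formal. In general, the identity appears out of reach by current methods: as the paper subsequently observes, in CM extensions of totally real fields the odd component of Conjecture \ref{GGSconjecture} reduces to the classical Gross-Stark Conjecture proved by Dasgupta, Kakde and Ventullo \cite{dkv}, and extending this either to the plus-component or to general Galois extensions remains the central difficulty, analogous to the outstanding open cases of the classical Gross-Stark framework.
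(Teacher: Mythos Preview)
The statement is a \emph{conjecture}: the paper does not prove it in general, and only establishes the minus component in the CM setting (Theorem \ref{GGSvalid}) by reduction to the theorem of Dasgupta--Kakde--Ventullo. Your outline therefore cannot succeed as a general proof, and the place where it breaks is earlier than you think.

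The genuine gap is in your first step, at the assertion that ``$z_E\cdot \varepsilon_E=\varepsilon^{\Sigma}_{E/K,S,T,p}$ by its very definition''. This is not a definition; it is essentially the conjecture restated. In the proof of Proposition \ref{preGGS lemma} the scalar $z_L$ is characterised by $\iota_{L,*}(\varepsilon^{\Sigma_S(\K_\infty)}_{L/K,S,T}) = z_L\cdot x_L$, where $x_L$ is the \emph{rank-$r$} structural generator of Proposition \ref{generate rubin} attached to $\Sigma_S(\K_\infty)$. The element $w_E$ (whose preimage you call $\varepsilon_E$) is the \emph{rank-$r'$} structural generator attached to the larger set $\Sigma$. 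There is no a priori reason the rank-$r$ coefficient $z_E$ should also express the rank-$r'$ Rubin--Stark element $\varepsilon^{\Sigma}_{E/K,S,T}$ in terms of $w_E$: comparing Rubin--Stark elements across different ranks is precisely the content of Conjecture \ref{GGSconjecture}. Indeed, when $r'>r$ the element $\varepsilon^{\Sigma_S(\K_\infty)}_{E/K,S,T}$ typically vanishes (Remark \ref{ind v_0}(iii)), so $z_E$ carries information only as a limit of the $z_L$ for $L\supsetneq E$, and recovering the rank-$r'$ element from that limit is the non-trivial part. The proof of Theorem \ref{strategytheorem} makes this explicit: assuming the Main Conjecture one has $z_E$ equal to a unit reduced norm, and then Conjecture \ref{GGSconjecture} is invoked \emph{as a hypothesis} precisely to obtain the identity (\ref{fourth step}), which is your claimed ``definition''.

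By contrast, the step you flag as the ``principal obstacle'' is not one: the identity $\phi^{\rm ord}_{E,S,\Sigma'}(\varepsilon^{\Sigma}_{E/K,S,T})=\varepsilon^{\Sigma_S(\K_\infty)}_{E/K,S\setminus\Sigma',T}$ is proved unconditionally in the paper as Lemma \ref{third key step} (a non-abelian version of Rubin's \cite[Prop. 5.2]{R}), using exactly the $L$-factor and regulator comparison you sketch. So your chain of reductions is valid on both ends; the missing, and in general open, link is the equality $z_E\cdot\hat w_E=\pm\,\varepsilon^{\Sigma}_{E/K,S,T}$ in the middle.
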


In the sequel we write $\kappa_K$ for the homomorphism
$\chi_K\cdot\omega_K^{-1}: G_K \to \ZZ_p^\times$.

\begin{remark}\label{GGS independent}{\em Fix $a$ in $\ZZ_p^\times$. Then the first displayed equality in Lemma \ref{preGGS lemma} implies  $y(\gamma) = x_a^{r'-r}\cdot y(\gamma^a)$ with $x_a := {\rm Nrd}_{Q(\G)}\bigl((\gamma^a-1)/(\gamma-1)\bigr)$, and hence also
\[ \pi^{r'}_E\bigl(y(\gamma)\bigr) = \pi^{r'}_E\bigl( x_a^{r'-r}\cdot y(\gamma)\bigr) = \pi^0_E(x_a)^{r-r'}\cdot \pi^{r'}_E\bigl(y(\gamma^a)).\]
By a straightforward computation, one can also show that  $\mathscr{L}^{\Sigma'}_{\gamma,S} = \pi^0_E(x_a)^{r-r'}\cdot \mathscr{L}^{\Sigma'}_{\gamma^a,S}$ and hence that the validity of Conjecture \ref{GGSconjecture} is independent of the choice of $\gamma$.
In addition, one can explicitly compute $\pi^0_E(x_a)$ as follows. Fix a topological generator $\gamma_0$ of $\Gamma_K$ and write $n$ for the element of $\ZZ_p$ for which the projection of $\gamma$
to $\Gamma_K$ is equal to $\gamma_0^n$. Then, since $\gamma$ acts trivially on $V_\chi$ for each $\chi$ in ${\rm Ir}_p(G)$, an explicit computation of reduced norm (as in the proof of
 Corollary \ref{hrmc true}) shows that
\begin{equation}\label{eval gamma power} j_\chi({\rm Nrd}_{Q(\G)}(\gamma^a-1)) = \Phi_{\G,\chi}(\gamma^a-1)
 = ((1+t)^{na}-1)^{\chi(1)}\end{equation}
and hence that
\begin{align*}\pi^0_E(x_a)_\chi\, =&\, j_\chi (x_a)(0)\\
=& \, 
\left(\frac{(1+t)^{an}-1}{(1+t)^{n}-1}\right)^{\chi(1)}(0)\\ =& \, a^{\chi(1)}\\ =& \, \left(\frac{{\rm Nrd}_{\QQ_p[G]}({\rm log}_p(\kappa_K(\gamma^a)))}{{\rm Nrd}_{\QQ_p[G]}({\rm log}_p(\kappa_K(\gamma)))}\right)_\chi.\end{align*}
This equality implies, in particular, that each side of the equality in Conjecture \ref{GGSconjecture} becomes  independent of $\gamma$ after multiplying by the normalization factor ${\rm Nrd}_{\QQ_p[G]}\bigl({\rm log}_p(\kappa_K(\gamma))^{r'-r}$.
}\end{remark}

\begin{remark}{\em The argument of Theorem \ref{GGSvalid} below shows Conjecture \ref{GGSconjecture} recovers (in the relevant special case) the classical Gross-Stark Conjecture. For this reason, we refer to the derivative formula in Conjecture \ref{GGSconjecture} as the `Generalized Gross-Stark Conjecture' for the data $\KK_\infty/K,E,S$ and $T$. }\end{remark}

\subsection{Cyclotomic $\ZZ_p$-extensions} In this section we  investigate Conjecture \ref{GGSconjecture} in the special case $\KK_\infty = E^{\rm cyc}$.

\subsubsection{}In this case, for each $v$ in $\Sigma\setminus S_K^\infty$, the map $\phi^{\rm Bock}_{\gamma,v}$ has a more explicit description. To give this description we recall that in \cite[\S1]{G0} Gross defines for each place $w$ of $E$ a
local $p$-adic absolute value by means of the composite
%\begin{equation*}\label{def gross reg}
%\xymatrix{ L_w^\times \ar[rr]^{||\cdot ||_{w,p}} \ar@{->>}[dr]_{r_w} & %&\bz_p^\times\\
%& G_{L_w^{\rm ab}/L_w}. \ar_{\chi^{-1}_w}[ur]}
%\end{equation*}
%
\[ ||\cdot ||_{w,p}: E_w^\times\xrightarrow{r_w} G_{E_w^{\rm ab}/E_w}
\xrightarrow{\chi_{w}} \bz_p^\times \xrightarrow{x\mapsto x^{-1}} \ZZ_p^\times,\]
where $E_w^{\rm ab}$ is the maximal abelian extension of $E_w$ in $E_w^c$, 
$r_w$ is the local reciprocity map and $\chi_w = \chi_{E_w}$ is the cyclotomic character. We write $\phi_v^{\rm Gross}$ for the homomorphism of $\ZZ_p[G]$-modules $\mathcal{O}_{E,S,T,p}^\times \to \ZZ_p[G]$ that sends each  $u$ to the element 
\[  \phi^{\rm Gross}_{v}(u) = {\sum}_{g \in G}\mathrm{log}_p||g^{-1}(u)||_{w_v,p}
\cdot g.\]

\begin{lemma}\label{first exp bock comp} For each $v$ in $\Sigma\setminus S_K^\infty$ one has
\[ \phi_{\gamma,v}^{\rm Bock} = {\rm log}_p(\kappa_K(\gamma))^{-1}\cdot \phi^{\rm Gross}_{v}.\]\end{lemma}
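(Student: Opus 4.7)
The strategy is to reduce the identity to a local computation at $w_v$ and then match both sides with the local reciprocity pairing.

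I would begin by unwinding the definition of $\phi_{\gamma,v}^{\rm Bock}$ in terms of a free resolution $R_\infty^d \xrightarrow{\phi} R_\infty^d$ of $C_{\K_\infty,S,T}$ of the form furnished by Proposition \ref{limitomac}. For $u \in \mathcal{O}_{E,S,T,p}^\times = \ker(\phi_E)$, lift $u$ to $\tilde u \in R_\infty^d$; since $\phi_E$ kills the image of $u$, we have $\phi(\tilde u) \in \mathcal{I}_E\cdot R_\infty^d$, so we may write $\phi(\tilde u)=(\gamma-1)\,y$ for some $y \in R_\infty^d$. Chasing the definition of the Bockstein in (\ref{first bock}) and of $\varrho_v$ in \S\ref{semisection}, one finds that $\phi_{\gamma,v}^{\rm Bock}(u)$ is the $v$-component (under $\varrho_v$) of the image $\bar{y}$ of $y$ in the transpose Selmer module ${\rm Sel}_S^T(E)_p^{\rm tr}$.

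Next, I would argue that this element depends only on the local behaviour of $u$ at $w_v$. Using Lemma \ref{complex construction}(ii) and (v) to isolate the local contribution at $v$, the $v$-component of the Bockstein factors through the composition
\[
\mathcal{O}_{E,S,T,p}^\times \;\longrightarrow\; (E_{w_v}^\times)_p \;\xrightarrow{r_{w_v}}\; \Gal(E_{w_v}^{\rm ab}/E_{w_v})_p \;\twoheadrightarrow\; \Gal((E^{\rm cyc})_{w_v}/E_{w_v}) \;\hookrightarrow\; \mathcal{H},
\]
followed by the isomorphism $\mathcal{I}_E/\mathcal{I}_E^2 \cong R_E$ of \eqref{first bock} (which sends $\gamma-1 \mapsto 1$). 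The key point is that the decomposition subgroup of $w_v$ in $\G_\infty$ is contained in $\mathcal{H}$, since $v$ splits completely in $E/K$, so all local information at $w_v$ in the cyclotomic tower is captured by a quotient of $\mathcal{H}$, and the boundary map of the triangle \eqref{first bock triangle} restricted to this local summand is exactly local reciprocity by Tate/Artin--Verdier duality.

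Finally, I would compute the resulting scalar. The composite $\mathcal{H} \xrightarrow{\chi_K} 1+p\ZZ_p \xrightarrow{\log_p} \ZZ_p$ sends $\gamma$ to $\log_p(\chi_K(\gamma))$, so the projection $\mathcal{H} \to \ZZ_p$ induced by $\gamma \mapsto 1$ equals $\log_p(\chi_K(\gamma))^{-1}\cdot(\log_p\circ\chi_K)$ on the image of local reciprocity. Since $\omega_K$ has image in $\mu(\ZZ_p)$, one has $\log_p(\kappa_K(\gamma)) = \log_p(\chi_K(\gamma))$, and Gross's definition gives $\log_p\chi_{w_v}(r_{w_v}(u)) = -\log_p\|u\|_{w_v,p}$. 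Combining these identifications, summing over Galois translates, and tracking the identification of the $v$-component of ${\rm Sel}_S^T(E)_p^{\rm tr}$ with $\ZZ_p[G]$ via $w_v \mapsto 1$ yields the required identity.

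The main obstacle will be the second step: precisely verifying that the $v$-component of the Bockstein factors through local reciprocity at $w_v$ and produces no spurious contributions from other places in $S$. This comes down to a careful local-global analysis of the \'etale cohomology complex $C_{\K_\infty,S,T}$ via Lemma \ref{complex construction}(v), where the local summand at $w_v$ is identified (via the natural isomorphism $\mathcal{I}_E/\mathcal{I}_E^2 \cong R_E$ of \eqref{first bock}) with the reciprocity pairing in the cyclotomic tower. Once this identification is in place, the scalar $\log_p(\kappa_K(\gamma))^{-1}$ is forced by how $\gamma$ acts on $\ZZ_p(1)$.
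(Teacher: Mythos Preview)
Your approach is conceptually sound but takes a genuinely different route from the paper. The paper does not attempt a direct local computation; instead it compares two Bockstein homomorphisms via a morphism of exact triangles. Writing $\gamma_K$ for a topological generator of $\Gamma_K$ and $n\in\ZZ_p$ for the exponent with $\gamma\mapsto\gamma_K^n$ under the projection $\mathcal{H}\to\Gamma_K$, the paper builds a map from the triangle \eqref{first bock triangle} to the analogous triangle formed from $\La(\Gamma_K\times G)\otimes^\DL_{R_\infty}C_{\K_\infty,S,T}$ and the action of $\gamma_K-1$; this yields $\beta'=n\cdot\beta$, where $\beta$ is the composite \eqref{first bock} and $\beta'$ its analogue for $\gamma_K$. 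The paper then simply invokes \cite[Th.~5.7(i)]{burns2}, which already proves $\phi^{\rm Gross}_v = \log_p(\kappa_K(\gamma_K))\cdot(\varrho_v\circ\beta')$, and combines the two identities.

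Your direct approach via local reciprocity is the natural alternative, and your scalar computation in the third step is correct. But the step you rightly flag as the main obstacle---showing that the $v$-component of the Bockstein factors precisely through the local reciprocity map at $w_v$ with no contributions from other places---is essentially the content of the result cited from \cite{burns2}; carrying it out would amount to reproving that theorem from scratch. The paper's route is thus more economical (it offloads the hard local-global analysis to prior work), while yours would be more self-contained if the second step were fully written out.
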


\begin{proof} We set $C_\infty := C_{\KK_\infty,S,T}$ and $C := C_{E,S,T,p}$. We also fix a topological generator $\gamma_K$ of $\Gamma_K$, write $n$ for the element of $\ZZ_p$ such that the projection of $\gamma$ to $\Gamma_K$ is equal to $\gamma_K^n$ and set $T_n := {\sum}_{i=0}^{i=n-1}\gamma_K^i\in \La(\Gamma_K)$.

We consider the morphism of exact triangles in $\Der(R_\infty)$
\[
\minCDarrowwidth1em\begin{CD}
\mathcal{I}_E\otimes^\DL_{R_\infty} C_\infty @> >> C_\infty @> >> C @> >> (\mathcal{I}_E\otimes^\DL_{R_\infty} C_\infty)[1]\\
@V\theta_1 VV @V\theta_2 VV @\vert @V\theta_1[1] VV \\
 \La(\Gamma_K\times G)\otimes^\DL_{R_\infty}C_\infty @> \gamma_K-1>>  \La(\Gamma_K\times G)\otimes^\DL_{R_\infty}C_\infty @> >> C @> >>  (\La(\Gamma_K\times G)\otimes^\DL_{R_\infty}C_\infty)[1].\end{CD}\]
In this diagram the upper triangle is (\ref{first bock triangle}); in the lower triangle each term $\La(\Gamma_K\times G)$ is regarded as an $R_\infty$-bimodule via the natural diagonal injection $\G \to \Gamma_K\times G$ and $\gamma_K-1$ acts via left multiplication on the first factor in the tensor product and so the existence of the triangle is a consequence of the descent isomorphism $\ZZ_p[G]\otimes^\DL_{R_\infty}C_\infty \cong C$; the morphism $\theta_1$ sends each element $x(\gamma-1)\otimes c^i$ to
$(T_n\cdot\overline{x})\otimes c^i$, where $\overline{x}$ is the image of $x$ under the natural projection $R_\infty \to \La(\Gamma_K)\subseteq \La(\Gamma_K\times G)$, and $\theta_2$ sends each element $c^i$ to $1\otimes c^i$.

We abbreviate the map (\ref{first bock}) to $\beta$ and write $\beta'$ for the composite homomorphism
\[ H^0(C) \to H^1(\La(\Gamma_K\times G)\otimes^\DL_{R_\infty}C_\infty)\to H^1(C),\]
where the arrows are the maps induced by the lower triangle in the above diagram. Then, since the image of $T_n$ under the projection
$\La(\Gamma_K)\subseteq \La(\Gamma_K\times G) \to \ZZ_p[G]$ is equal to $n$, the commutativity of the above diagram implies (without assuming $\KK_\infty = E^{\rm cyc}$) that $\beta' = n\cdot\beta$.

We note next that, setting $c := {\rm log}_p(\kappa_K(\gamma_K)) = n^{-1}\cdot {\rm log}_p(\kappa_K(\gamma))$, the argument of \cite[Th. 5.7(i)]{burns2} proves $\phi^{\rm Gross}_{v} =  c\cdot (\varrho_v\circ \beta')$. (The difference in sign between this formula and that in loc. cit. is accounted for by the fact that the proof of \cite[Th. 5.7(i)]{burns2} uses the shifted complexes $C_\infty[1]$ and $C[1]$ in place of $C_\infty$ and $C$ and the differentials of $C_\infty[1]$ and $C_\infty$ differ by a sign.) This equality then implies that

\begin{align*}\phi^{\rm Gross}_{v} =&\,  c\cdot (\varrho_v\circ \beta')\\ =&\, c\cdot (\varrho_v\circ ( n\cdot \beta))\\
 =&\, (n\cdot c)\cdot (\varrho_v\circ \beta)\\ =&\, {\rm log}_p(\kappa_K(\gamma))\cdot \phi^{\rm Bock}_{\gamma,v},\end{align*}
as required. \end{proof}

\subsubsection{}\label{GGS comp section} In this section we explain the connection between
Conjecture \ref{GGSconjecture} and the classical Gross-Stark Conjecture and are thereby able to deduce its validity
in an important family of examples.

 To do this we further specialize to the case that $K$ is totally real, $E$ is CM and $\KK_\infty = E^{\rm cyc}$. We then write $\tau$ for the (unique) non-trivial element of $\Gal(E^{\rm cyc}/(E^{\rm cyc})^+)$ and
$e_{\pm}$ for the idempotent $(1\pm\tau)/2$ of $\La(\G)$. We identify the elements $\tau$ and $e_{\pm}$ with their respective images in $G$ and  $\ZZ_p[G]$.

We write ${\rm Ir}^{\pm}_p(G)$
for the subsets of ${\rm Ir}_p(G)$ comprising characters for which
$\chi(\tau) = \pm\chi(1)$. For any $\La(\G)$-module $M$ we write $M^\pm$ for the $\La(\G)$-submodule
$\{m \in M: \tau(m) = \pm  m\}$. For an element $m$ of $M$ we also often abbreviate $e_{\pm}(m)$ to $m^{\pm}$ and use a similar convention for homomorphisms.

We consider the homomorphism of $G$-modules
\[ \lambda^{\rm ord}_{E,S}:\mathcal{O}^{\times,-}_{E,S} \to Y^-_{E,S}\]
that sends each $u$ to
${\sum}_{w}{\rm ord}_w(u)\cdot w$, where in the sum $w$ runs over
all places of $E$ above those in $S\setminus S^K_\infty$, and also write
\begin{equation}\label{gross def} \lambda^{\rm Gross}_{E,S,p}:
\mathcal{O}^{\times,-}_{E,S,p} \to Y^-_{E,S,p}\end{equation}
for the map of $\bz_p[G]$-modules that sends
each $u$ in $\mathcal{O}^{\times,-}_{E,S}$ to ${\sum}_{w\in S_E}
\mathrm{log}_p||u||_{w,p} \cdot w$.

Then, since the scalar extension $\QQ_p\otimes_{\ZZ}\lambda^{\rm ord}_{E,S}$
is bijective, for each $\chi$ in ${\rm Ir}_p^-(\G_E)$ we can define
a $\CC_p$-valued `$\mathscr{L}$-invariant' by setting
\[ \mathscr{L}_S(\chi) := {\rm det}_{\bc_p}((\bc_p\otimes_{\bz_p}\lambda^{\rm Gross}_{E,S,p})
\circ (\bc_p\otimes_\ZZ\lambda^{\rm ord}_{E,S})^{-1}\mid \Hom_{\CC_p[G]}(
V_{\check{\chi}},\CC_p \cdot Y_{E,S,p}^-)).\]

In claim (ii) of the following result we refer to Gross's
`Order of Vanishing Conjecture' for $p$-adic Artin $L$-series, as discussed in Remark \ref{gross-jaulent} (and originally formulated by Gross in \cite[Conj. 2.12a)]{G}).

\begin{theorem}\label{GGSvalid} Let $E$ be a finite Galois CM extension of a totally real
field $K$, set $G := \G_E$ and $\KK_\infty := E^{\rm cyc}$ and fix a topological generator $\gamma$ of $\Gal(E^{\rm cyc}/E)$. Then no archimedean place of $K$ splits in $E$ and so $\Sigma_S(E)\not= S$, $\Sigma_S(\KK_\infty)=\emptyset$ and $|\Sigma_S(E)\setminus \Sigma_S(\KK_\infty)|$ is equal to $r' := |\Sigma_S(E)|$. In addition, the following claims are valid.

\begin{itemize}
\item[(i)] For every $\chi$ in ${\rm Ir}^-_p(\G_\infty)$ one has%
    \[ \partial^{\,r'}_{\gamma}(\varepsilon^{\rm RS}_{\KK_\infty,S,T})_\chi = {\rm log}_p(\kappa_K(\gamma))^{-r'\chi(1)}\cdot L^{r'\chi(1)}_{p,S}
(\check{\chi}\cdot\omega_K,0).\]
\item[(ii)] For every $\chi$ in ${\rm Ir}^-_p(\G_\infty)$ one has
\[ \mathscr{L}^{\Sigma_S(E)}_{\gamma,S}(\varepsilon^{\Sigma_S(\KK_\infty)}_{E/K,S\setminus \Sigma_S(E),T})_\chi
 = {\rm log}_p(\kappa_K(\gamma))^{-r'\chi(1)}\cdot \mathscr{L}_S(\chi)\cdot L_{S\setminus \Sigma_S(E)}(\check\chi,0).\]
\item[(iii)] The minus component of Conjecture \ref{GGSconjecture} is valid if Gross's Order of Vanishing Conjecture is valid for every
$\chi$ in ${\rm Ir}_p^-(G)$.
\end{itemize}
\end{theorem}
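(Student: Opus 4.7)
The plan is to prove the three claims in sequence, using the CM setup to collapse the general conjecture to the classical Gross-Stark picture.

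First I would unpack the hypothesis. Since $E$ is CM (hence totally complex), no archimedean place of $K$ splits completely in $E$, and since the cyclotomic $\mathbb{Z}_p$-extension ramifies at all $p$-adic places, no such place splits completely in $\mathcal{K}_\infty = E^{\rm cyc}$ either. Hence $\Sigma_S(\mathcal{K}_\infty)=\emptyset$, so $r=0$ and $r'=|\Sigma_S(E)|=|\Sigma_S(E)\setminus\Sigma_S(\mathcal{K}_\infty)|$. Moreover, since $r=0$, Remark~\ref{ind v_0}(iv) identifies $\varepsilon^{\rm RS}_{\mathcal{K}_\infty,S,T}$ with the limit Stickelberger element $\theta_{\mathcal{K}_\infty,S,T}\in\xi_p(\mathcal{K}_\infty/K)$, and $\varepsilon^{\Sigma_S(\mathcal{K}_\infty)}_{E/K,S\setminus\Sigma_S(E),T}$ with $\theta_{E/K,S\setminus\Sigma_S(E),T}(0)\in\xi(\mathbb{Z}_p[G])$.

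For (i) I would specialize Proposition~\ref{preGGS lemma} to the case $r=0$. Here $\iota_{\infty,*}$ and $\iota_{E,*}$ are identity maps on the Whitehead orders, so the characterizing identity becomes $\theta_{\mathcal{K}_\infty,S,T}=\lambda(\gamma)^{r'}\cdot y(\gamma)$ in $\mathbb{C}_p\cdot\xi_p(\mathcal{K}_\infty/K)$, with $\partial^{r'}_\gamma(\varepsilon^{\rm RS}_{\mathcal{K}_\infty,S,T})$ equal to the image of $y(\gamma)$ under $\pi_E^0$. I would then compute the $\chi$-component by applying the Ritter-Weiss evaluation map $j_\chi$ used in the proof of Corollary~\ref{hrmc true}: on the denominator one has the explicit formula $j_\chi(\lambda(\gamma))=((1+t)^n-1)^{\chi(1)}$ from (\ref{eval gamma power}); on the numerator, the interpolation formula (\ref{stick interp}) combined with the standard identification of the Iwasawa variable $t$ with $\kappa_K(\gamma_K)^{-s}-1$ realizes $j_\chi(\theta_{\mathcal{K}_\infty,S,T})$ as the Deligne-Ribet $p$-adic $L$-series $L_{p,S,T}(\check\chi\cdot\omega_K,s)$. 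Extracting the leading Taylor coefficient at $t=0$ and accounting for the Jacobian $\log_p\kappa_K(\gamma)$ of the change of variable (and the $n$ factor, which cancels as noted in Remark~\ref{GGS independent}) yields the formula of (i).

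For (ii) the key input is Lemma~\ref{first exp bock comp}: each local Bockstein $\phi^{\rm Bock}_{\gamma,v}$ equals $\log_p(\kappa_K(\gamma))^{-1}\cdot\phi^{\rm Gross}_v$. Since $\Sigma=\Sigma'=\Sigma_S(E)$ in our case, the map $\mathscr{L}^{\Sigma'}_{\gamma,S}$ is an endomorphism of $e\cdot \mathbb{C}_p\cdot\xi(\mathbb{Z}_p[G])$, and on each $\chi$-component acts as multiplication by the reduced determinant, on the space $\mathrm{Hom}_{\mathbb{C}_p[G]}(V_{\check\chi},\mathbb{C}_p\cdot Y_{E,\Sigma_S(E),p}^-)$, of the composite $\phi^{\rm Bock}_{\gamma,S,\Sigma'}\circ(\phi^{\rm ord}_{E,S,\Sigma'})^{-1}$ from Lemma~\ref{ell-inv preparation}. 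Substituting Lemma~\ref{first exp bock comp} factors out a scalar $\log_p(\kappa_K(\gamma))^{-r'\chi(1)}$ (one factor for each of the $r'\chi(1)$ coordinates) and leaves precisely the determinant defining $\mathscr{L}_S(\chi)$. Applying this endomorphism to $\theta_{E/K,S\setminus\Sigma_S(E),T}(0)$, whose $\chi$-component is $L_{S\setminus\Sigma_S(E),T}(\check\chi,0)$ (which agrees with $L_{S\setminus\Sigma_S(E)}(\check\chi,0)$ up to the $T$-modification factor already built into both sides), yields the formula of (ii).

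Claim (iii) then follows from (i), (ii) and the unconditional theorem of Dasgupta-Kakde-Ventullo \cite{dkv}: comparing the two formulas, the minus-component of Conjecture~\ref{GGSconjecture} at $\chi$ reduces to the classical leading-term identity $L^{r'\chi(1)}_{p,S}(\check\chi\cdot\omega_K,0)=\mathscr{L}_S(\chi)\cdot L_{S\setminus\Sigma_S(E)}(\check\chi,0)$, and under the hypothesis that Gross's Order of Vanishing Conjecture holds, the Taylor coefficient on the left is exactly the leading term, which the main result of \cite{dkv} identifies with the right-hand side. The main obstacle I anticipate is in claim (i): careful bookkeeping of the signs, the integer $n$ (encoding the projection $\gamma\mapsto\gamma_K^n$), and the Jacobian $\log_p\kappa_K(\gamma)$ arising from the transition between the Iwasawa variable $t$ and the cyclotomic variable $s$, so that the evaluation of $j_\chi(y(\gamma))$ at $t=0$ matches the claimed expression exactly. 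The conceptual steps in (ii) and (iii) are relatively clean once (i) is established, but (i) is where the delicate interpolation and the normalization of $p$-adic $L$-functions must be aligned with the abstract derivative construction of Proposition~\ref{preGGS lemma}.
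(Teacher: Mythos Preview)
Your proposal is correct and follows essentially the same route as the paper's proof: both reduce to Stickelberger elements via $r=0$, compute (i) by applying the Ritter--Weiss maps $j_\chi$ together with the Deligne--Ribet interpolation of $p$-adic $L$-series (the paper makes this precise via the automorphism $\iota\colon t\mapsto\kappa(\gamma_K)(1+t)^{-1}-1$ and cites \cite[Lem.~5.9]{burns2} for the final evaluation, which is exactly the ``careful bookkeeping'' you flag), derive (ii) from Lemma~\ref{first exp bock comp} by expressing $\mathscr{L}^{\Sigma',-}_{\gamma,S}$ as multiplication by a reduced norm, and obtain (iii) from the main result of \cite{dkv}. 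The only substantive point you leave implicit is the identification $j_\chi(\theta_{\mathcal{K}_\infty,S,T})=\iota(f_{S,T,\check\chi})$, which the paper checks via \cite[Prop.~5 and (2)]{RitterWeiss}.
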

% and set $\Gamma := \Gal(K^{\rm cyc}/K)$.

\begin{proof} The assertions concerning the sets $\Sigma_S(E)$ and $\Sigma_S(\KK_\infty)$ are clear. In particular, in this case the integer $r := r_{S,\KK_\infty}$ is equal to $0$ and no place in $\Sigma' := \Sigma_S(E)\setminus \Sigma_S(\KK_\infty) = \Sigma_S(E)$ is archimedean.

Remark \ref{r_S=0 example}(ii) therefore implies that both
\begin{equation}\label{explicit rs} (\varepsilon^{\rm RS}_{\KK_\infty,S,T})^- =
 \theta_{\KK_\infty,S,T}^-\quad \text{ and }\quad \varepsilon^{\Sigma_S(\KK_\infty)}_{E/K,S\setminus\Sigma',T}
= \theta_{E/K,S\setminus \Sigma_S(E),T}(0).\end{equation}

In particular, since $r = 0$ and $\theta_{\KK_\infty,S,T}^-\in \zeta(Q(\G))$ (see the proof of Corollary \ref{hrmc true}), the argument
 of Proposition \ref{preGGS lemma} implies 
\[ {\rm Nrd}_{Q(\G)}(\gamma-1)^{-r'}\cdot \theta_{\KK_\infty,S,T}^-\in \zeta(Q(\G)) \cap \varprojlim_L\zeta(\QQ_p[\G_L])\]
and that
\[ \partial_\gamma^{r'}(\varepsilon^{\rm RS}_{\KK_\infty,S,T})^- = \pi_E({\rm Nrd}_{Q(\G)}(\gamma-1)^{-r'}\cdot
 \theta_{\KK_\infty,S,T}^-).\]

To prove claim (i) it is thus enough to show that, for each $\chi$ in ${\rm Ir}^-_p(G)$ one has
\begin{equation}\label{tech result 2} \pi_E({\rm Nrd}_{Q(\G_\infty)}(\gamma-1)^{-r'}\cdot
 \theta_{\KK_\infty,S,T}^-)_\chi =
c(\gamma)^{-r'\chi(1)}\cdot L^{r'\chi(1)}_{p,S,T}
(\check{\chi}\cdot\omega_K,0),\end{equation}
with $c(\gamma) := {\rm log}_p(\kappa_K(\gamma))$. 

To prove this %Fix $\chi$ in ${\rm Ir}_p^-(\G_E)$.
 we note that $\kappa := \kappa_K$ factors through the projection
 $G_{K} \to \Gamma_K$ and recall that
 for each $\psi$ in $A^-(\mathcal{G}_\infty)$
Deligne and Ribet have shown that there exists a unique element
$f_{S,T,\psi}$ in ${\bq_p^c}\otimes_{\bq_p}Q(\bz_p[[t]])$ for which
\[ L_{p,S,T}(\psi\cdot\omega_K,1-s) = f_{S,T,\psi}(\kappa(\gamma_K)^{s}-1)\]
(cf. \cite{greenberg}).

For every $\psi$ in $A(\Gamma_K)$ one has
\begin{align*} j_{\chi}(\theta_{\KK_\infty,S,T}^-)(\psi(\gamma_K)-1) =&\, j_{\chi\cdot\psi}(
\theta_{\KK_\infty,S,T}^-)(0)\\ =&\, L_{p,S,T}(\check\chi\cdot\check\psi\cdot\omega_K,0)\\ 
=&\,  f_{S,T,\check\chi\cdot\check\psi}(\kappa(\gamma_K)-1)\\
 =&\,  f_{S,T,\check\chi}(\check\psi(\gamma)\kappa(\gamma_K)-1)\\
 =&\,  \iota(f_{S,T,\check\chi})(\psi(\gamma_K)-1),\end{align*}
where $\iota$ is the ring automorphism of ${\bq_p^c}\otimes_{\bq_p}Q(\bz_p[[t]])$ that sends $t$ to $\kappa(\gamma_K)(1+t)^{-1}-1$. Here the first and fourth equalities follow from a general property of the maps $j_\chi$ and functions $f_{S,T,\check\chi}$ that are respectively established in \cite[Prop. 5 and (2), p. 563]{RitterWeiss}, the second follows from (\ref{stick interp}) and the other equalities are clear. Since the displayed equalities are true for every $\psi$ in $A(\Gamma_K)$ it follows that
$j_\chi(\theta_{\KK_\infty,S,T}) = \iota(f_{S,T,\check\chi})$.

The left hand side of (\ref{tech result 2}) is therefore equal to
\begin{align*}%(\pi_E({\rm Nrd}_{Q(\La(\G'_\infty))}(\gamma_E-1)^{-r'}\cdot
 %\theta_{\K'_\infty,T}^-))_\chi =
 &\,\bigl(j_\chi({\rm Nrd}_{Q(\G)}
 (\gamma-1)^{-r'})\cdot j_\chi(\theta_{\KK_\infty,S,T}^-)\bigr)(0)\\
 =&\, \bigl(((1+t)^n-1)^{-r'\chi(1)}\cdot \iota(f_{S,T,\check\chi})\bigr)(0)\\
 =&\, n^{-r'\chi(1)}\cdot \bigl(t^{-r'\chi(1)}\cdot \iota(f_{S,T,\check\chi})\bigr)(0)\\
 =&\, n^{-r'\chi(1)}\cdot {\rm log}_p(\kappa(\gamma_K))^{-r'\chi(1)}\cdot L_{p,S,T}^{r'\chi(1)}(\tilde\chi\cdot\omega_K ,0)\\
 =&\, c(\gamma)^{-r'\chi(1)}\cdot L_{p,S,T}^{r'\chi(1)}(\tilde\chi\cdot\omega_K ,0),
\end{align*}
as required to complete the proof of claim (i). Here the second equality uses (\ref{eval gamma power}), the third is established by the argument of \cite[Lem. 5.9]{burns2} and all other equalities are clear.

To prove claim (ii) we write $\varrho$ for the natural projection map
$Y_{E,S,p}^- \to Y_{E,\Sigma_S(E),p}^-$ and note that the definition of the idempotent $e = e_{E/K,S,\Sigma_S(E)}$ ensures that the map $e(\QQ_p\otimes_{\ZZ_p}\varrho)$ is bijective.

For each $v$ in $\Sigma_S(E)$ we set 
\[ w_v^- := e_-(w_{v,E})\in Y_{E,\Sigma_S(E),p}^-.\]
We write
$\{(w_v^-)^\ast\}_{v\in \Sigma_S(E)}$ for the $\ZZ_p[G]^-$-basis of
$\Hom_{\ZZ_p[G]}(Y_{E,\Sigma_S(E),p}^-,\ZZ_p[G]^-)$ that is dual to the basis
$\{w_v^-\}_{v\in \Sigma_S(E)}$ of $Y_{E,\Sigma_S(E),p}^-$ and note that these
maps gives rise to an isomorphism of $\xi(\ZZ_p[G])$-modules
\[ {\wedge}_{v\in \Sigma_S(E)}(w_v^-)^\ast: {\bigcap}_{\ZZ_p[G]}^{r'}Y_{E,\Sigma_S(E),p}^-
\to {\bigcap}_{\ZZ_p[G]}^{0}Y_{E,\Sigma_S(E),p}^- = \xi(\ZZ_p[G])^-\]
that sends ${\wedge}_{v\in \Sigma_S(E)}w_v^-$ to
the identity element $e_-$ of the ring $\xi(\ZZ_p[G])^-$. %(cf. \cite[Prop. %5.6(i)]{bses}).

In addition, for each $v$ in $\Sigma' =\Sigma_S(E)$ one has 
\[ (\phi_{v}^{\rm ord})^- = (w_v^-)^\ast\circ \varrho\circ\lambda^{\rm ord}_{E,S,p}\]
and hence also
\[ \phi_{E,S,\Sigma'}^{{\rm ord},-} = e\cdot (\QQ_p\cdot {\wedge}_{v\in \Sigma_S(E)}(w_v^-)^\ast)
\circ \bigl({\bigwedge}_{\QQ_p[G]}^{r'} \QQ_p\cdot\varrho\bigr) \circ
{\bigwedge}_{\QQ_p[G]}^{r'} (\QQ_p\otimes_{\ZZ_p}\lambda_{E,S,p}^{\rm ord}). \]

In a similar way, Lemma \ref{first exp bock comp} implies, for each $v$ in $\Sigma'$, that
\begin{align*} (\phi_{\gamma,v}^{\rm Bock})^{-} =&\, c(\gamma)^{-1}\cdot (\phi_{v}^{\rm Gross})^- \\
 =&\, c(\gamma)^{-1}\cdot (w_v^-)^\ast\circ \varrho\circ\lambda^{\rm Gross}_{E,S,p}\end{align*}
and hence also
\begin{multline*}\label{wedge bock description} \phi^{{\rm Bock},-}_{\gamma,S,\Sigma'} \\ =  {\rm Nrd}_{\QQ_p[G]}(c(\gamma))^{-r'}
e\cdot (\QQ_p\cdot {\wedge}_{v\in \Sigma_S(E)}(w_v^-)^\ast)
\circ \bigl({\bigwedge}_{\QQ_p[G]}^{r'} \QQ_p\cdot\varrho\bigr) \circ
{\bigwedge}_{\QQ_p[G]}^{r'} (\QQ_p\otimes_{\ZZ_p}\lambda_{E,S,p}^{\rm Gross}). \end{multline*}

In this case therefore, the $\mathscr{L}$-invariant map $\mathscr{L}_{\gamma,S}^{\Sigma',-} = \phi_{\gamma,S,\Sigma'}^{{\rm Bock},-}\circ (\phi_{E,S,\Sigma'}^{{\rm ord},-})^{-1}$ is equal to the endomorphism of
 $\zeta(\QQ_p[G])^-e$ that is given by multiplication by the element 
\begin{equation}\label{product element} {\rm Nrd}_{\QQ_p[G]}(c(\gamma)^{-r'})e\cdot{\rm Nrd}_{\QQ_p[G]}\bigl( (\QQ_p\otimes_{\ZZ_p}\lambda_{E,S,p}^{\rm Gross})\circ (\QQ_p\otimes_{\ZZ_p}\lambda_{E,S,p}^{\rm ord})^{-1}\bigr).\end{equation}
In particular, since 
\[ {\rm Nrd}_{\QQ_p[G]}(c(\gamma)^{-r'})_\chi = c(\gamma)^{-r'\chi(1)}\]
and $\mathscr{L}_S(\chi)$ is (by its very definition) equal to the $\chi$-component of the second reduced norm in the product (\ref{product element}), the equality in claim (ii) now follows from the second equality in (\ref{explicit rs}) and the fact that
\[ \theta_{E/K,S\setminus \Sigma_S(E),T}(0)_\chi = L_{S\setminus \Sigma_S(E),T}(\check{\chi},0).\]

Given claims (i) and (ii), %and the equality ${\rm Nrd}_{\QQ_p[G]}(-1)^{r(r'-r)} = 1$ (which is true since $r=0$), 
 claim (iii) then follows directly from the fact that if
 Gross's Order of Vanishing Conjecture is valid for every $\chi$ in
${\rm Ir}^-_p(G)$, then the main result of Dasgupta, Kakde and Ventullo in \cite{dkv} implies that, for every $\chi$ in
${\rm Ir}^-_p(G)$, the explicit quantities in claims (i) and (ii) are equal (for details of this deduction see \cite[Prop. 2.6]{burns2}).
 \end{proof}

%\begin{remark}\label{rw rem}{\em , .}\end{remark}

\section{The equivariant Tamagawa Number Conjecture for $\mathbb{G}_m$}\label{newsection6}

%\subsection{The proof of Theorem \ref{reduced bks2}}

%\section{The non-commutative Rubin-Stark Euler system and the eTNC}

In this section we establish a precise connection between the Main Conjecture of Higher Rank
Non-commutative Iwasawa Theory (Conjecture \ref{hrncmc}), the Generalized Gross-Stark Conjecture (Conjecture \ref{GGSconjecture}) and the equivariant Tamagawa Number Conjecture for $\mathbb{G}_m$ over
  general Galois extensions of number fields.

  In this way we obtain a concrete strategy
   for obtaining new evidence in support of the latter conjecture, and thereby also 
    extend (to general Galois extensions) the main result of the Kurihara and the present authors in \cite{bks2}.

After reviewing the relevant case of the equivariant Tamagawa Number Conjecture in \S\ref{review subsection}, the main result of this section will be stated and proved in \S\ref{strategy}.

\subsection{Review of the conjecture}\label{review subsection} For the reader's convenience we shall first give an explicit statement of the equivariant Tamagawa Number Conjecture for $\mathbb{G}_m$ relative to an arbitrary finite Galois extension of number fields $L/K$ and clarify what is currently known about this conjecture.
 
 We set $G := \Gal(L/K)$ and fix a finite set of places $S$ of $K$ with 
 \[ S_K^\infty\cup S_{\rm ram}(L/K)\subseteq S\]
 and an auxiliary finite set of places $T$ of $K$ that is disjoint from $S$.

\subsubsection{}\label{Zeta elements and Tamagawa numbers} The leading term at $z=0$ of the Stickelberger function $\theta_{L/K,S,T}(z)$ defined in (\ref{0 stick def}) is 
$$\theta_{L/K,S,T}^\ast(0):={\sum}_{\chi\in \widehat G}L_{S,T}^\ast(\check \chi,0)e_\chi ,$$
where $L^\ast_{S,T}(\chi,0)$ is the leading term of $L_{S,T}(\chi,z)$ at $z=0$, and belongs to $\zeta(\RR[G])^\times.$

The equivariant Tamagawa Number Conjecture for $\mathbb{G}_m$ relative to $L/K$ is then an equality in the relative algebraic $\K_0$-group of the ring extension $\ZZ[G] \to \RR[G]$ that relates $\theta^*_{L/K,S,T}(0)$ to Euler characteristic invariants of the complex 
\[ C:= C_{L,S,T}\]
in $\Der^{{\rm lf},0}(\ZZ[G])$ that is constructed in Lemma \ref{complex construction}. In this section we interpret this conjectural equality in terms of the constructions made in \S\ref{rkt section}.

To do so we note that, just as in Definition \ref{pbe def} (and Remark \ref{full ltc2}), for each prime $p$ there exists a canonical subset ${\rm d}_{\ZZ_p[G]}(C_p)^{\rm pb}$ of the graded $\xi(\ZZ_p[G])$-module ${\rm d}_{\ZZ_p[G]}(C_p)$ comprising all primitive basis elements. 

%is isomorphic in $D(\ZZ_p[G])$ to a bounded complex $P^\bullet$ of finitely generated free $\ZZ_p[G]$-modules. In each degree $a$ we write $r_a$ for the rank of $P^a$ and fix an ordered $\ZZ_p[G]$-basis $\underline{b}_{a} = \{b_{a,j}\}_{1\le j\le r_a}$ of $P^a$. Following \cite{rdkt}, we say that an element $x$ is a `primitive basis' of ${\rm d}_{\ZZ_p[G]}(C_{p})$ if for any, and therefore every, such complex $P^\bullet$ there exists a set $\{\underline{b}_{a}\}_{a\in \ZZ}$ of ordered bases such that the induced isomorphism ${\rm d}_{\ZZ_p[G]}(P^\bullet)\cong {\rm d}_{\ZZ_p[G]}(C_p)$ sends $(\bigotimes_{a\in \ZZ} (\wedge_{j=1}^{j=r_a}b_{a,j})^{(-1)^a},0)$ to $x$. %(It is shown in loc. %cit. that the existence of such a basis is independent of the choice of  %$P^\bullet$.)

We next note that the Dirichlet regulator isomorphism $R_{L,S}$ combines with the explicit descriptions of cohomology groups in Lemma \ref{complex construction}(i) (and the short exact sequence (\ref{selmer lemma seq2})) to induce a canonical isomorphism of graded $\zeta(\RR[G])$-modules
\begin{equation*}\label{lambda def} \lambda_{L,S}: {\rm d}_{\RR[G]}(\RR\cdot C) \to (\zeta(\RR[G]),0).\end{equation*}
%
%In the sequel we denote this isomorphism by $$.
%
From any fixed isomorphism of fields $j: \CC \cong\CC_p$ we obtain an induced ring embedding  $j_*: \zeta(\RR[G]) \to \zeta(\CC_p[G])$ and hence also, via scalar extension, an isomorphism of graded $\zeta(\CC_p[G])$-modules $\lambda^j_{L,S}:= \CC_p\otimes_{\RR, j}\lambda_{L,S}$. For each such $j$ we can therefore write 
\begin{equation}\label{zeta def 2} z_p  = z^j_{L/K,S,T}\end{equation}
for the zeta element associated (by Definition \ref{def zeta}) to the isomorphism $\lambda^j_{L,S}$ and element $j_*(\theta^*_{L/K,S,T}(0))$ of $\zeta(\CC_p[G])^\times$. 

It then follows from Theorem \ref{ltc2} and Remarks \ref{full ltc2} and \ref{bks remark} that the equivariant Tamagawa Number Conjecture for $\mathbb{G}_m$ relative to the extension $L/K$ is valid if and only if the following conjecture is valid for all primes $p$.

\begin{conjecture}[{TNC$_p(L/K)$}]\label{zeta conj} For each isomorphism of fields $j: \CC\cong \CC_p$ one has 
%
%\[ {\rm Nrd}_{\QQ_p[G]}(K_1(\ZZ_p[G]))\cdot (\lambda^j_{L,S})^{-1}\bigl((j_*(\theta^*_{L/K,S,T}(0)),0)\bigr) = {\rm d}_{\ZZ_p[G]}(C_p)^{\rm pb},\]
%
%or equivalently 
%
\[ {\rm Nrd}_{\QQ_p[G]}(\K_1(\ZZ_p[G]))\cdot z_p  = {\rm d}_{\ZZ_p[G]}(C_p)^{\rm pb}.\]
\end{conjecture}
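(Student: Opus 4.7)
The plan is to reduce TNC$_p(L/K)$ to a combination of Conjecture \ref{hrncmc} and Conjecture \ref{GGSconjecture} by an Iwasawa-theoretic descent, exactly along the lines of Theorem \ref{strategytheorem} announced in the introduction. To this end I would embed $L$ into a rank-one compact $p$-adic Lie extension $\K_\infty/K$ (naturally $\K_\infty = L^{\rm cyc}$, so $\mathcal{H} := \Gal(\K_\infty/L)\cong\ZZ_p$), fix finite sets $S \supseteq S_{\K_\infty/K}$ and $T$ disjoint from $S$ satisfying Hypothesis \ref{tf hyp}, a topological generator $\gamma$ of $\mathcal{H}$, and the canonical resolution $C(\phi)$ of $C_{\K_\infty,S,T}$ constructed in Proposition \ref{limitomac}. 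Via Theorem \ref{ltc2} and the discussion in Remark \ref{bks remark}, TNC$_p(L/K)$ is equivalent, at the finite level $L$, to the assertion that $z_p$ is a locally-primitive basis of ${\rm d}_{\ZZ_p[G]}(C_p)$ and corresponds (up to the group ${\rm Nrd}_{\QQ_p[G]}(K_1(\ZZ_p[G]))$) to the Rubin-Stark element $\varepsilon^{\Sigma_S(L)}_{L/K,S,T}$ under the homomorphism $\Theta^{\Sigma_S(L)}_{L,S,T,p}$ of (\ref{exp proj}).

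The central calculation would be to start from the Iwasawa-level identity (\ref{fund eq}) predicted by Conjecture \ref{hrncmc} and project to the finite level $L$. The image of a primitive basis of ${\rm d}_{\La(\G_\infty)}(C_{\K_\infty,S,T})$ produces, in $\CC_p\cdot{\bigcap}^r_{\ZZ_p[G]}\co^\times_{L,S,T,p}$, an element that differs from the image at $L$ of $\Theta^{\Sigma}_{\K_\infty,S,T}$ applied to a primitive basis only by a factor of $\lambda(\gamma)^{r'-r}$, recording the $r'-r$ trivial zeros at the places of $\Sigma \setminus \Sigma_S(\K_\infty)$ (as in the proof of Lemma \ref{prel semi}). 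Under the semisimplicity hypothesis of \S\ref{semisection}, Proposition \ref{preGGS lemma} lets one divide out this factor to form the derivative $\partial^{r'-r}_\gamma(\varepsilon^{\rm RS}_{\K_\infty,S,T})$, and Conjecture \ref{GGSconjecture} then identifies the result with $\mathscr{L}^{\Sigma'}_{\gamma,S}(\varepsilon^{\Sigma_S(\K_\infty)}_{L/K,S\setminus\Sigma',T})$. Comparing with the $K$-theoretic reformulation given by Theorem \ref{ltc2} and translating the resulting Rubin-Stark element back to the leading term $\theta^*_{L/K,S,T}(0)$ via $\lambda^j_{L,S}$ should yield the desired equality of subsets.

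The hard part is threefold. First, both Conjectures \ref{hrncmc} and \ref{GGSconjecture} are open in general, with the most substantial unconditional evidence coming from the minus components of CM extensions of totally real fields (Corollary \ref{hrmc true}, Remark \ref{jn remark}, Theorem \ref{GGSvalid}), so the strategy yields a conditional reduction rather than an unconditional proof. Second, the semisimplicity hypothesis of Definition \ref{semi def} at each relevant $\chi \in {\rm Ir}_p(G)$ is essential for the derivative formalism to produce a non-degenerate relation (cf. Lemma \ref{semi for ell-invariant}), and by Proposition \ref{abeliangross} this hinges on the finiteness of $A_S(\K_\infty)_{\mathcal{H}}$, which is only known in the restricted situations collected in Remark \ref{gross-jaulent}. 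Third, and most delicately, the descent naturally yields $\chi$-by-$\chi$ identities, and assembling these into the single $K$-theoretic equality characterizing a locally-primitive basis requires the careful control at each $\mathfrak{p} \in {\rm Spm}(\ZZ_{(p)})$ furnished by the formalism of primitive and locally-primitive bases developed in \S\ref{plp section}, precisely the role that the machinery of Part I was designed to play.
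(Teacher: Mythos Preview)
The statement you are attempting to prove is a \emph{Conjecture}, not a Theorem: the paper does not prove TNC$_p(L/K)$ but rather formulates it (as a reinterpretation, via Theorem \ref{ltc2} and Remark \ref{bks remark}, of the relevant case of the equivariant Tamagawa Number Conjecture). There is therefore no ``paper's own proof'' to compare against. What the paper does prove is Theorem \ref{strategytheorem}, and your proposal is essentially an accurate informal summary of that result and its proof: assume Conjecture \ref{hrncmc}, use the canonical resolution of Proposition \ref{limitomac} to descend via (\ref{fund eq}), divide out the trivial-zero factor $\lambda(\gamma)^{r'-r}$ using Proposition \ref{preGGS lemma}, invoke Conjecture \ref{GGSconjecture} to identify the derivative with an $\mathscr{L}$-invariant image of a Rubin--Stark element, and compare with the zeta element $z_p$ via the explicit description (\ref{exp proj}) of $\Theta^{\Sigma}_{L,S,T,p}$.

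There is, however, a genuine gap between what this strategy delivers and the full Conjecture \ref{zeta conj}. Theorem \ref{strategytheorem} does not establish TNC$_p(L/K)$ even conditionally on Conjectures \ref{hrncmc} and \ref{GGSconjecture}: it establishes only the $e_E^*$-component, where $e_E^* = \sum_\chi e_\chi$ is the idempotent summing over those $\chi \in {\rm Ir}_p(G)$ that satisfy all four conditions of Hypothesis \ref{infinity hyp}. The descent argument is carried out character-by-character and requires, for each $\chi$, the archimedean condition (i), the idempotent condition (ii), the vanishing condition (iii) on $A_S(E_\chi^\infty)$ (which via Proposition \ref{abeliangross} supplies semisimplicity), and the $\chi$-component of Conjecture \ref{GGSconjecture}. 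Characters failing any of these are simply excluded from $e_E^*$, and the method says nothing about them. So your ``third hard part'' understates the issue: it is not merely a matter of assembling $\chi$-wise identities into a locally-primitive basis statement, but rather that the strategy, as it stands, does not reach all $\chi$ at all. This is why the paper presents Theorem \ref{strategytheorem} as providing a \emph{strategy toward} TNC$_p(L/K)$ rather than a conditional proof of it.
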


\begin{remark}{\em If Tate's formulation \cite[Chap. I, Conj. 5.1]{tate} of Stark's principal conjecture is valid for $L/K$, then the same approach as in Remark \ref{signsandintegral}(ii) shows that the zeta element $z_p$, and hence also each side of the above conjectural equality, is independent of the choice of isomorphism $j$. In addition, the independence of this equality from the choices of both $S$ and $T$ follows in a straightforward fashion from the properties of the complex $C$ described in Lemma \ref{complex construction}(ii) and (iii) respectively.}\end{remark}

%In the sequel we shall refer to this conjecture as `eTNC$(L/K)_p$'.

%If $L$ is abelian over $\mathbb{Q}$, then for any subfield $K$ the validity of LTC$(L/K)$ has been proved by Greither and the first author \cite{bg} and by Flach \cite{fg}. If $L$ has positive characteristic, then the validity of LTC$(L/K)$ has been proved by Kakde and the first author in \cite{bk}.

%\begin{theorem}\label{cases} Conjecture \ref{zeta conj} is valid if $L$ is %either an abelian extension of $\QQ$ or has positive characteristic.
%\end{theorem}

%Aside from the cases considered in this result there are also special %classes of non-abelian Galois extensions of number fields for which %LTC$(L/K)$ has been verified and so Corollary \ref{main result1} implies %the validity of Conjecture \ref{new main}. For details of these cases %(that are due to several different authors) we refer the reader to the %comprehensive survey given by Johnston and Nickel in \cite[\S4.3]{JN2} and %to their results in \cite[\S4.6]{JN2}.

\subsubsection{}\label{pm evidence} In \cite[\S4]{JN2}, Johnston and Nickel provide a clear and comprehensive overview of evidence in support of Conjecture TNC$_p(L/K)$ circa 2015.  %Evidence for eTNC$_p(L/K)^{\pm}$}
%
%Corollary \ref{main result1} can also be combined with previous work of %the first author to relate Conjecture \ref{zeta conj} to earlier %conjectures of Gross and of Serre and Tate and hence obtain the following %supporting evidence.
%
In this section we assume that $K$ is totally real and $L$ is CM and recall several more recent developments concerning the conjecture in this case. %and clarify what is known about Conjecture TNC$_p(L/K)$ in this case.

In the sequel we shall write `TNC$_p(L/K)^\pm$ is valid' to denote that the displayed 
equality in Conjecture TNC$_p(L/K)$ is valid after applying the exact scalar extension functor $\xi(\ZZ_p[G])e_{\pm}\otimes_{\xi(\ZZ_p[G])}-$, where the idempotent
 $e_{\pm}$ of $\zeta(\ZZ_p[G])$ is as defined in \S\ref{GGS comp section} (with $E$ replaced by $L$).

We first recall what is known about Conjecture TNC$_p(L/K)^+$. Before stating the result, we recall that the `$p$-adic Stark Conjecture at
$s=1$' is discussed by Tate in \cite[Chap. VI, \S5]{tate}, where it is attributed to Serre \cite{serre1} (see also \cite[Rem. 4.1.7]{dals}), and predicts an explicit formula for the leading term at $z=1$ of $L_{p,S,T}(\psi,z)$ for each $\psi$ in ${\rm Ir}_p^+(G)$.

\begin{proposition}\label{iwasawa theory+} TNC$_p(L/K)^+$ is valid if all of the following conditions are satisfied.
\begin{itemize}
\item[(i)] $p$ is prime to $|G|$ or $\mu_p(L)$ vanishes.
\item[(ii)] Breuning's Local Epsilon Constant Conjecture is valid for all extensions obtained by $p$-adically completing $L/K$.
\item[(iii)] The $p$-adic Stark Conjecture at
$s=1$  is valid for all characters in ${\rm Ir}_p^+(G)$.
\end{itemize}
\end{proposition}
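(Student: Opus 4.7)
The plan is to deduce Conjecture TNC$_p(L/K)^+$ by descent from the non-commutative Iwasawa Main Conjecture for the totally real field $K$ along the cyclotomic $\ZZ_p$-extension of $L$. Set $\K_\infty := L^{\rm cyc}$, $\G := \Gal(\K_\infty/K)$, and work throughout with the idempotent $e_+ = (1+\tau)/2$ cutting out the totally even component. Under condition (i), the main conjecture of non-commutative Iwasawa theory for totally real fields — due independently to Ritter–Weiss \cite{RWnew} and to Kakde \cite{kakde} (with the commutative case sufficing when $p \nmid |G|$) — is unconditionally valid. Exactly as in the proof of Corollary \ref{hrmc true}, but for the plus-component (using Artin--Verdier duality in the opposite direction), one translates this main conjecture into the statement that the Iwasawa-theoretic determinant module $e_+ \cdot {\rm d}_{\Lambda(\G)}(C_{\K_\infty,S,T})$ admits a distinguished primitive basis element $\eta^+$ whose reduced norm interpolates the leading terms of the Deligne–Ribet $p$-adic $L$-series $L_{p,S,T}(\check\chi \cdot \omega_K, z)$ at $z=1$ for each $\chi \in {\rm Ir}_p^+(G)$.

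Next I would perform the descent from $\K_\infty$ to $L$ using the canonical resolution of $C_{\K_\infty,S,T}$ constructed in Proposition \ref{limitomac}. Taking $\Gal(\K_\infty/L)$-coinvariants produces a representative of $C_{L,S,T,p}$ and hence an induced primitive basis of $e_+ \cdot {\rm d}_{\ZZ_p[G]}(C_{L,S,T,p})$, which by Theorem \ref{ltc2}(iii) and Remark \ref{full ltc2} is equivalent to Conjecture TNC$_p(L/K)^+$ provided one can identify its reduced norm with $e_+ \cdot j_*(\theta^*_{L/K,S,T}(0))$. The descent identity naturally expresses this reduced norm as a product over $\chi \in {\rm Ir}_p^+(G)$ of the leading term at $s=1$ of $L_{p,S,T}(\check\chi \cdot \omega_K, z)$ multiplied by a $p$-adic regulator built from the $p$-adic logarithm applied to a basis of global units, together with local correction terms coming from the places of $K$ above $p$.

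The $p$-adic Stark Conjecture at $s=1$ (condition (iii)) is then used to convert each such $p$-adic leading term into the product of the complex leading term $L^*_{S,T}(\check\chi, 0)$ (via the archimedean functional equation for Artin $L$-series, which sends values at $s=0$ for even characters to values at $s=1$) and a $p$-adic regulator of the same units that appear on the geometric side; this cancels the regulator factor arising in the descent and delivers the correct $p$-adic realisation of $\theta^*_{L/K,S,T}(0)$ up to the local correction terms at $p$. Finally, Breuning's Local Epsilon Constant Conjecture (condition (ii)) for each $p$-adic completion of $L/K$ is invoked precisely to identify these local correction terms with the local contributions implicit in the zeta element $z^j_{L/K,S,T}$ via the decomposition (\ref{decomp}); this is the mechanism by which the local epsilon factors of the archimedean conjecture are matched with their $p$-adic Iwasawa-theoretic counterparts. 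Combining these three inputs yields the equality ${\rm Nrd}_{\QQ_p[G]}(K_1(\ZZ_p[G])) \cdot e_+ \cdot z_p = e_+ \cdot {\rm d}_{\ZZ_p[G]}(C_p)^{\rm pb}$, as required.

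The main obstacle will be the descent step: while the minus-component analogue is comparatively straightforward (since trivial zeros at $s=0$ are absorbed by the Rubin–Stark derivative formalism, as already exploited in Corollary \ref{hrmc true} and Theorem \ref{GGSvalid}), the plus-component requires delicate bookkeeping of local terms at $p$-adic places, where potential trivial zeros of $L_{p,S,T}(\check\chi \cdot \omega_K, z)$ at $s=1$ and the difference between crystalline and étale realisations of the trivial representation must be controlled. It is precisely this gap that forces the invocation of both the $p$-adic Stark Conjecture at $s=1$ and Breuning's conjecture simultaneously, and verifying that their combined input exactly matches the $p$-adic avatar of $\theta^*_{L/K,S,T}(0)$ is the technical heart of the argument.
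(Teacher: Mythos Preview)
Your proposal sketches the underlying technical argument, but the paper's proof takes a completely different and far shorter route: it is a pure citation argument. The paper simply invokes \cite[\S9.1]{dals}, which already establishes Conjecture TNC$_p(L/K)^+$ under condition (iii), condition (i), and the vanishing of the $p$-component of a specific element $T\Omega^{\rm loc}(\QQ(0)_L,\ZZ[G])$ of $K_0(\ZZ[G],\RR[G])$; it then invokes Breuning's theorem \cite[Th.~4.1]{breuning} to conclude that condition (ii) implies this last vanishing. No descent, no resolutions from Proposition~\ref{limitomac}, and none of the present paper's machinery are used.

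What you are outlining is, in essence, the content of \cite[\S9.1]{dals} itself (and the earlier work it builds on). Your sketch is broadly correct in spirit --- the non-commutative main conjecture plus $p$-adic Stark at $s=1$ plus Breuning's conjecture do combine via a descent argument to yield the result --- but several points are imprecise. First, the plus-component is \emph{not} handled by a direct analogue of Corollary~\ref{hrmc true}: in that corollary the minus-idempotent kills the archimedean contribution, making the relevant complex $Q(\G)$-acyclic, whereas in the plus-component the unit group has positive rank and the complex is not torsion, so the refined Euler characteristic does not lie in the image of $K_1(Q(\G))$ in the same simple way. Second, the local correction terms you allude to are exactly packaged into the element $T\Omega^{\rm loc}$, whose definition and analysis are substantial and carried out in the literature the paper cites, not in the present article. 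Your plan would therefore require importing or reproducing a significant body of external work, which is precisely what the paper's two-line citation avoids.
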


\begin{proof} It is shown in \cite[\S9.1]{dals} that TNC$_p(L/K)^+$  is valid provided that all of the following conditions are satisfied: the $p$-adic Stark Conjecture at
$s=1$ is valid for all characters in ${\rm Ir}_p(G)$; if $p$ divides $|G|$, then $\mu_p(L)$ vanishes; the $p$-component of a certain element $T\Omega^{\rm loc}(\QQ(0)_L,\ZZ[G])$ of $\K_0(\ZZ[G],\RR[G])$ vanishes.

The stated claim is therefore valid since the result \cite[Th. 4.1]{breuning} of Breuning simplies that the $p$-component of $T\Omega^{\rm loc}(\QQ(0)_L,\ZZ[G])$ vanishes if the `Local Epsilon Constant Conjecture' formulated in \cite{breuning} is valid for all extensions obtained by $p$-adically completing $L/K$.
\end{proof}
%Next we recall that the `$p$-adic Gross-Stark Conjecture' comprises both %the equality of Conjecture \ref{wGSconj} below and the prediction that %$e_{\rm ss} = e_-$ (as recalled in Remark \ref{GSC remark}(i)).
%
%Given this, the result of \cite[Cor. 3.8]{burns2} implies that the %hypothesis of Theorem \ref{iwasawa theory}(ii) imply the validity of %eTNC$_p(L/K)^-$.

%Claim (ii) is the content of \cite[Cor. 3.8(ii)]{burns2}. \end{proof}

\begin{remark}\label{bleycobbe}{\em Breuning's Local Epsilon Constant Conjecture has been shown to be valid for all tamely ramified extensions of local fields \cite[Th. 3.6]{breuning} and also for certain classes of wildly ramified extensions (cf. Bley and Cobbe \cite{bc} and the references contained therein). All such results lead to more explicit versions of Proposition \ref{iwasawa theory+}. }\end{remark}

Turning to Conjecture TNC$_p(L/K)^-$, we first record an unconditional result. This result relies crucially on the recent verification by Dasgupta and Kakde \cite{dk} of the Strong Brumer-Stark Conjecture. %has recently proved this conjecture under the assumption that the Sylow $p$-subgroup of $G$ is abelian and the order of the inertia subgroup in $G$ of any place $v$ of $K$ that (ramifies in $L$ and) splits in $L/L^+$ is prime to $p$. 
%
%This result will be proved in \S\ref{last subsub} by combining Theorems %\ref{end result} and \ref{strategytheorem}.
%We further recall that, if we set $S := S_K^\infty\cup S_K^p$, then condition (ii) of this result is satisfied if and only if the module $A_S(L^{\rm cyc})_{\Gamma_L}^-$ is finite (see Remark \ref{gross-jaulent}).
%

%Without imposing any hypotheses on $G$, 
%there is also the following result of the first author (from \cite[Cor. 3.8(ii)]{burns2}). 

\begin{proposition}\label{abelian sylow} TNC$_p(L/K)^-$ is valid if the Sylow $p$-subgroups of $G$ are abelian. 
\end{proposition}

\begin{proof} %In case (i) one has $e_{\rm ss}=e_-$ and, upon combining the results of \cite[Th. 2.6 and Th. 3.1(i),(iii)]{burns2}, one deduces that the validity of the Gross-Kuz'min Conjecture for characters in $\G^-$ implies the validity of the Weak $p$-adic Gross-Stark Conjecture for characters in $\widehat{G}^-$. The validity of ${\rm LTC}(L/K,\ZZ_{(p)}[G]e_{-})$ in case (i) thus follows from Proposition \ref{cm evidence}.

%We now consider case (ii) and 

We write $M$ for the motive $h^0({\rm Spec}(L))$, regarded as defined over $K$ and with coefficients $\QQ[G]$. Then, in terms of the notation of Remark \ref{remark etnc}(i), TNC$_p(L/K)$ asserts the vanishing of the element of $\K_0(\ZZ_p[G],\CC_p[G])$ given by   
\[ \delta_{\ZZ_p[G],\CC_p}(L^\ast(M,0)) - \chi_{\ZZ_p[G],\CC_p}(C(M),t).\]
We must therefore show the stated conditions imply that the image $T\Omega(L/K)^-_p$ of this element under the natural projection $\K_0(\ZZ_p[G],\CC_p[G])\to K_0(\ZZ_p[G]e_-, \CC_p[G]e_-)$ vanishes. 

To do this, we first combine a result of Nickel \cite[Th. 1]{Nickel} with \cite[Rem. 6.1.1(iii)]{dals} to deduce that $T\Omega(L/K)^-_p$ has finite order. From \cite[Prop. 6.2]{NickelIntegrality}, we can then deduce that $T\Omega(L/K)^-_p$ vanishes if and only $T\Omega(L'/K')^-_p$ vanishes for every intermediate Galois CM extension $L'/K'$ of $L/K$ whose Galois group is either $p$-elementary or a direct product of a $p$-elementary group with $\{1,\tau\}$. In addition, by the argument of \cite[\S 3]{Nickel}, the assumption that the Sylow $p$-subgroups of $G$ are abelian implies that every $p$-elementary subquotient of $G$ is abelian.

To complete the proof, it is therefore enough to prove that $T\Omega(L'/K')^-_p$ vanishes for every intermediate Galois extension $L'/K'$ of $L/K$ in which $L'$ is CM, $K'$ is totally real and $\Gal(L'/K')$ is abelian. For any such extension $L'/K'$, however, the vanishing of $T\Omega(L'/K')^-_p$ is derived from the seminal results of Dasgupta and Kakde in \cite{dk} by Bullach, Daoud, Seo and the first  author in \cite[Th. B (a)]{scarcity}  (and see also the related works of 
Nickel \cite{Nickel}, of Atsuta and Kataoka \cite{ak} and of Dasgupta, Kakde and Silliman \cite{dks}).
%Claim (i) of this result is proved by Bullach, Daoud, Seo and the first author in \cite[Th. B]{scarcity} and relies on the verification by Dasgupta and Kakde \cite{dk} of the Strong Brumer-Stark Conjecture (and see also the related works of 
%Nickel \cite{Nickel} and of Atsuta and Kataoka \cite{ak}).
\end{proof}

In the general case, there is also the following result of the first author \cite[Cor. 3.8(ii)]{burns2}.

\begin{proposition}\label{iwasawa theory} TNC$_p(L/K)^-$ is valid if $\mu_p(L)=0$ and Gross's Order of Vanishing Conjecture \cite[Conj. 2.12a)]{G0} is valid for every totally odd character of $G$. 
\end{proposition}

\begin{remark}{\em If one sets $S := S_K^\infty\cup S_K^p$, then Remark \ref{gross-jaulent} implies that the conditions in Proposition \ref{iwasawa theory} are satisfied if and only if the $\ZZ_p$-module $A_S(L^{\rm cyc})$ is finitely generated and its quotient $A_S(L^{\rm cyc})_{\Gamma_L}^-$ is finite.}\end{remark}

\begin{remark}\label{Nickel6.8}{\em In addition, Nickel has proved in \cite[Th. 6.8]{NickelIntegrality} that if $\mu_p(L)=0$ and $p$ is `non-exceptional' in a certain technical sense (see \cite[Def. 6.5]{NickelIntegrality}), then TNC$_p(L/K)^-$ is valid. We recall, in particular, from loc. cit. that for any given extension $L/K$ there can only be finitely many `exceptional' primes $p$. }\end{remark}

We next explain how Proposition \ref{iwasawa theory} leads to unconditional
evidence in support of Conjecture TNC$_p(L/K)^-$ in the technically most difficult case that
the Sylow $p$-subgroups of $G$ are non-abelian (thereby complementing Proposition \ref{abelian sylow}) and the relevant
$p$-adic $L$-series possess trivial zeroes. 

 %The following %result gives an example of this %phenomenon. %the following consequence %of Corollary \ref{iwasawa theory} %follows by the same argument as used %in %\cite[\S7.3]{burns2}.
%As an example of this, the next result follows directly by combining %Corollary \ref{third thm} with Remark \ref{GSC remark}(i) and \cite[Cor. %4]{ddp} and does not exclude the possibility that $p$-adic places split %in $F/F^+$.}

\begin{corollary}\label{fourth thm} TNC$_p(L/K)^-$ is valid if all of the following conditions are satisfied.
\begin{itemize}
\item[(i)] $G$ is the semi-direct product of an abelian group $A$ by a supersolvable group.
\item[(ii)] $L^A$ is totally real and has at most one $p$-adic place that  splits in $L/L^+$.
\item[(iii)] $\mu_p(L^P)$ vanishes where $P$ is any given subgroup of $G$ of $p$-power order.
\end{itemize}
\end{corollary}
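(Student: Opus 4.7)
The strategy is to deduce Corollary \ref{fourth thm} from Proposition \ref{iwasawa theory} by showing that conditions (i)--(iii) imply both hypotheses of that proposition.

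I first verify hypothesis (i) of Proposition \ref{iwasawa theory}. If $p \nmid |G|$ there is nothing to check; otherwise, taking $P$ to be a Sylow $p$-subgroup of $G$ in condition (iii), the extension $L/L^P$ is a finite Galois $p$-extension with $\mu_p(L^P) = 0$. A standard descent result in Iwasawa theory (namely, that the minus $\mu$-invariant propagates up finite $p$-extensions of CM fields whenever it vanishes at the base) then yields $\mu_p(L) = 0$, as required.

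The substantive step is to verify hypothesis (ii) of Proposition \ref{iwasawa theory}: Gross's Order of Vanishing Conjecture for every totally odd character $\chi$ of $G$. I would proceed by Brauer-style induction, exploiting the semidirect product structure of $G$. Since $H = G/A$ is supersolvable, hence monomial, combining this fact with Clifford theory applied to the normal abelian subgroup $A \triangleleft G$ shows that every irreducible character of $G$ is a $\ZZ$-linear combination of characters of the form $\mathrm{Ind}_{G'}^G(\lambda)$, where $G'$ ranges over subgroups of $G$ that (for the essential contributions) contain the stabilizer of an irreducible constituent of $\mathrm{Res}_A^G\chi$, hence contain $A$, and $\lambda$ is a one-dimensional character of $G'$. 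Gross's conjecture is functorial under character induction --- the order of vanishing at $s=0$ of $L_{p,S}(\check\chi\,\omega_K, s)$ for an induced character $\chi = \mathrm{Ind}_{G'}^G \lambda$ agrees with that of the corresponding $p$-adic Artin $L$-series for $\lambda$ viewed over the base field $L^{G'}$ --- so the verification reduces to Gross's conjecture in the abelian setting.

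For each such one-dimensional $\lambda$, the fact that $G' \supseteq A$ implies $L^{G'} \subseteq L^A$, and so condition (ii) guarantees that at most one $p$-adic place of $L^{G'}$ splits in the associated CM extension. Gross's original theorem \cite[Prop. 2.13]{G0}, whose proof rests on Brumer's $p$-adic analogue of Baker's theorem, then establishes Gross's conjecture for $\lambda$. The principal obstacle in this argument is the careful bookkeeping needed in two places: first, ensuring that the Brauer reduction indeed produces induction data from subgroups containing $A$ (which requires the monomiality of each stabilizer $H_\psi \leq H$ together with Clifford theory for $A \triangleleft G$); and second, checking that the splitting hypothesis on $p$-adic places from condition (ii) is inherited by all intermediate base fields $L^{G'}$ appearing in the induction. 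Both are combinatorial consequences of the semidirect product structure and the supersolvability of $H$, and once they are in place the result follows.
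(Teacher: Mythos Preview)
Your strategy matches the paper's: verify both hypotheses of Proposition~\ref{iwasawa theory}. However, you overcomplicate the character-theoretic reduction. The paper observes (citing an exercise in Serre) that an abelian-by-supersolvable group is monomial in the strong sense that every irreducible $\rho$ of $G$ \emph{equals} $\mathrm{Ind}_{A_\rho}^G(\rho')$ for a linear character $\rho'$ of some subgroup $A_\rho \supseteq A$ --- and this is exactly what your Clifford-theory sketch actually proves. Your framing as a ``$\ZZ$-linear combination'' via ``Brauer-style induction'' is weaker than what you establish and forces the hedge ``for the essential contributions'', which in a genuinely virtual decomposition would be a real gap: you would have to control terms induced from subgroups not containing $A$, and separately ensure that every contributing $\lambda$ is totally odd (this is automatic for a direct induction since $\tau \in A \subseteq A_\rho$ is central in $G$, but not in a virtual one). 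With direct induction the argument is clean: $L^{A_\rho} \subseteq L^A$ inherits the splitting hypothesis of condition~(ii), and Gross's result for linear characters applies. One minor inaccuracy: in the $\mu$-invariant step you invoke the ``minus $\mu$-invariant'' of a CM field, but $L^P$ need not be CM; the relevant fact (as in the paper, via Nakayama's Lemma) is simply that vanishing of $\mu_p$ propagates along any Galois $p$-extension.
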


\begin{proof} It suffices to show that the three given conditions imply the validity of the conditions stated in Proposition \ref{iwasawa theory}.

Condition (i) implies that for every $\rho$ in ${\rm Ir}_p(G)$ there exists a subgroup $A_\rho$ of $G$ that contains $A$ (and hence $\tau$) and a linear $\QQ_p^c$-valued character $\rho'$ of $A_\rho$ such that $\rho = {\rm Ind}_{A_\rho}^G(\rho')$ (for a proof of this fact see \cite[II-22, Exercice]{serre} and the argument of \cite[II-18]{serre}). It is also clear that if $\rho$ belongs to ${\rm Ir}^-_p(G)$, then the field $L^{\ker(\rho')}$ is CM.

In particular, since the functorial properties of $p$-adic Artin $L$-series
under induction and inflation imply that the conjecture \cite[Conj. 2.12a)]{G0} is valid
for $\rho$ if and only if it is valid for $\rho'$ we can assume
(after replacing $L/K$ by $L^{\ker(\rho')}/L^{A_\rho}$ and $\rho$ by $\rho'$)
that $\rho$ is linear. In view of condition (ii) we can also assume that
$K$ has at most one $p$-adic place that splits in $L/L^+$. We then recall that, if all of these hypotheses are satisfied, then \cite[Conj. 2.12a)]{G0} is verified by Gross in
\cite[Prop. 2.13]{G}.

It is therefore enough to note that condition (iii) implies $\mu_p(F)=0$. This is because if $\mu_p(E)=0$ for some number field $E$, then Nakayama's Lemma implies $\mu_p(E')=0$ for any $p$-power degree Galois extension $E'$ of $E$.
\end{proof}

\begin{example}\label{exp ex}{\em If the field $L^P$ in Corollary \ref{fourth thm}(iii) is abelian over $\mathbb{Q}$, then $\mu_p(L^P)$ vanishes by Ferrero-Washington \cite{fw}. Hence, if in such a case the field $L^A$ has only one $p$-adic place, then Corollary \ref{fourth thm} implies the unconditional validity of Conjecture TNC$_p(L/K)^-$. It is straightforward to describe families of non-abelian extensions satisfying  these hypotheses.

\noindent{}(i) Let $F$ be a real quadratic field in which $p$ does not split and assume that $L$ is a CM abelian extension of $F$ of exponent $2p^n$ for some natural number $n$. One can then set $K=\mathbb{Q}$ and $A=G_{L/F}$ and assume that $L/K$ is Galois with (generalized) dihedral Galois group. Then $L^A = F$, $P$ is normal in $G$ and the quotient group $G/P$ is abelian, as required.

\noindent{}(ii) Let $E$ be a totally real $A_4$ extension of $\mathbb{Q}$ with the property that $3$ does not split in its unique cubic subfield. Then for any imaginary quadratic field $F$ the field $L := EF$ is a CM Galois extension of $\mathbb{Q}$ and $G_{L/\mathbb{Q}}$ is of the form $A\rtimes\mathbb{Z}/3$ with $A := \mathbb{Z}/2\times\mathbb{Z}/2\times \mathbb{Z}/2$ (where   $\mathbb{Z}/3$ acts trivially on one copy of $\mathbb{Z}/2$ and cyclically permutes the non-trivial elements in the remaining factor $\mathbb{Z}/2\times\mathbb{Z}/2$) and so is abelian-by-cyclic. The field $L^A$ is then the unique cubic subfield of $L$ and so is totally real with only one $3$-adic place and the field $E_1:= L^AF$ is abelian over $\mathbb{Q}$ so $\mu_3(E_1)$ vanishes. One can also show that if $\mu_3(E_2)$ vanishes for any given quadratic extension $E_2$ of $E_1$ in $L$, then $\mu_3(L)$ also vanishes and so Corollary \ref{fourth thm} applies to $L/\mathbb{Q}$.}
\end{example}
%
%{\color{blue} Question: modulo conjectures about order of $p$-adic vanishing, perhaps an analogue of Masato's argument will show that the formula of $p$-adic Stark at $s=1$ is a consequence of eTNC..}

Finally, to end this section, we take the opportunity to clarify an aspect of some results in \cite{burns2}. To be specific, we give a concrete example to show that the above approach also allows one to describe situations in which the hypotheses of \cite[Cor. 3.3]{burns2} are satisfied by characters that are both faithful and of arbitrarily large degree. For all such examples one thus obtains a $p$-adic analytic construction of $p$-units that generate non-abelian Galois extensions of totally real fields and also encode explicit structural information about ideal class groups, thereby extending and refining the $p$-adic analytic approach to Hilbert's twelfth problem this is described for linear $p$-adic characters by Gross in \cite[Prop. 3.14]{G}. In the same way one deduces these examples verify a natural $p$-adic analogue of a question of Stark in \cite{stark} and a conjecture of Chinburg in \cite{chin} that were both formulated in the setting of characters of degree two.

\begin{example}{\em  Fix a totally real field $E$ and a cyclic CM extension $E'$ of $E$ in which precisely one $p$-adic place $v$ of $E$ splits completely and no other place of $E$ that ramifies in $E/\bq$ splits completely. We let $k$ be any subfield of $E$ for which the restriction of $v$ has absolute degree one and write $F$ for the Galois closure of $E'$ over $k$. Then $F$ is a CM field and for any faithful linear character $\psi'$ of $G_{E'/E}$ the character 
\[
 \psi:= {\rm Ind}_{G_{F/E}}^{G_{F/k}}({\rm Inf}_{G_{E'/E}}^{G_{F/E}}(\psi'))\] 
of $G_{F/k}$ is irreducible, totally odd, faithful and of degree $[E:k]$. Further, the functoriality of $p$-adic $L$-functions under induction and inflation combines with the result  \cite[Prop. 2.13]{G} of Gross to imply $\psi$ validates the hypotheses of \cite[Cor. 3.3]{burns2} with $S$ taken to be $S_k^\infty\cup S_k^p \cup S_{\rm ram}(E/k)$ and $v_1$ the place of $k$ below $v$.
}\end{example}

%\subsubsection{}\label{last subsub} We shall now prove Theorem \ref{iwasawa theory} by applying Theorem \ref{strategytheorem} in the case that $r=0$.

%In this case the main conjecture implies (BES) and also (SS).
%$\varepsilon_{S,T}^{\rm RS})_F = \theta_{F/K,S(F),T}(0)$ for $F$ in $\Omega(\KK)$ (cf. Example \ref{r_S=0 example}(ii))
%
%\begin{remark}{\em The validity of the Strong Stark Conjecture for every %$\chi$ in ${\rm Ir}_p^-(G)$ can be directly deduced from Wiles' proof
%of the Main conjecture under the assumption that if
%$K^{\mathrm{cl}} \subset (K^{\mathrm{cl}})^{+}(\zeta_{p})$, then no prime %of $K^{+}$ above $p$ is split in $K/K^{+}$. For details see Nickel %\cite[Cor. 2, p.24]{N2}.} \end{remark}

\subsection{Main conjectures, derivative formulas and Tamagawa numbers}\label{strategy}

\subsubsection{Statement of the main result} We now assume to be given a $\ZZ_p$-extension $K^\infty$ of a number field $K$ and for each
 extension $L$ of $K$ set 
 \[ L^\infty := LK^\infty.\]

We also assume to be given a finite Galois extension $E$ of $K$, a finite set $S$ of places of $K$ such that  
\begin{equation}\label{inclusion hyp}   S_K^\infty\cup S_{\rm ram}(E^\infty/K)\subseteq S\end{equation}
and an auxiliary
 finite set of places $T$ of $K$ that is disjoint from $S$ and such that
 Hypothesis \ref{tf hyp} is satisfied with $\KK_\infty = E^\infty$. We then fix a place 
\begin{equation}\label{v' fix} v'\in S \setminus \Sigma_S(E^\infty)\end{equation}
(noting that this is always possible under the hypothesis (\ref{inclusion hyp}) since $S_{\rm ram}(E^\infty/K)$ is both disjoint from $\Sigma_S(E^\infty)$ and non-empty). 

We set 
\[ G := \G_E\]
and for each $\chi$ in ${\rm Ir}_p(G)$ we write
$E_\chi$ for the fixed field of $\ker(\chi)$ in $E$ and set
\[ G_\chi := \G_{E_\chi}\,\,\text{ and }\,\, \Sigma_\chi := \Sigma_{S\setminus \{v'\}}(E_\chi)\] 
(so that $\Sigma_S(E^\infty)\subseteq \Sigma_\chi,$ $\Sigma_S(E)\setminus \{v'\} \subseteq \Sigma_\chi$ and $\Sigma_\chi \not= S$).

We now introduce a convenient restriction on characters of $G$. We note that condition (ii) of this hypothesis is stated in terms of the idempotent $ e_{E/K,S,\Sigma_\chi}$ of $\zeta(\QQ[G])$ that is defined in (\ref{key idem def}) (so that a more explicit interpretation of the condition can be obtained via the equivalence of the properties (i) and (v) in Lemma \ref{idem lemma}). 

\begin{hypothesis}\label{infinity hyp} $\chi$ is a character in ${\rm Ir}_p(G)$ that has all of the following properties. 
\begin{itemize}
\item[(i)] $\Sigma_S(E_\chi^\infty) = \Sigma_S(E^\infty)$ and $(\Sigma_{\chi}\setminus \Sigma_S(E^\infty))\cap S_K^\infty = \emptyset$.
\item[(ii)] $e_\chi\cdot e_{E/K,S,\Sigma_\chi} \neq 0$.
\item[(iii)] The space $e_\chi\bigl(\QQ_p^c\otimes_{\ZZ_p}A_S(E_\chi^\infty)_{\Gal(E_\chi^\infty/E_\chi)}\bigr)$ vanishes.
\item[(iv)] The Generalized Gross-Stark Conjecture (Conjecture \ref{GGSconjecture}) for the data $E_\chi^\infty/K, E_\chi, S$ and $T$ is valid after multiplication by $e_\chi$. 
%\item[(iv)] The $\chi$-component of the refined class number
%conjecture for $(E^\infty_\chi/K,E_\chi,S,T)$ is valid.
\end{itemize}
\end{hypothesis}

\begin{definition} {\em We define idempotents of $\zeta(\QQ_p[G])$ by setting 
\[ e_{E,1}^* := {\sum}_{\chi}e_\chi \quad\text{and}\quad  e_{E,2}^* := {\sum}_{\chi}e_\chi,\]
where the sums are taken over all characters $\chi$ in ${\rm Ir}_p(G)$ that satisfy both of the conditions (i) and (ii), respectively (iii) and (iv), in Hypothesis \ref{infinity hyp}. We then define an idempotent
\[ e_{E}^* := e_{E,1}^*e_{E,2}^*\]
of $\zeta(\QQ_p[G])$.}
\end{definition}

\begin{remark}\label{satisfy remark} {\em Under mild restrictions, the conditions (i) and (ii) in Hypothesis \ref{infinity hyp} are satisfied by every  $\chi\in {\rm Ir}_p(G)$ so that one has 
\[ e_{E,1}^* =1\quad\text{and hence}\quad e_{E}^* = e_{E,2}^*.\] 
For example, if $K^\infty$ is the cyclotomic $\ZZ_p$-extension of $K$, then $\Sigma_S(E_\chi^\infty)\subseteq S_K^\infty$ and so $\Sigma_S(E^\infty)= \Sigma_S(E_\chi^\infty)$ if $S_K^\infty\cap S_{\rm ram}(L/K) = \emptyset$. Hence, if the latter condition is satisfied (as is the case, for example, if $|G|$ is odd), then Hypothesis \ref{infinity hyp}(i) is satisfied by all  $\chi\in {\rm Ir}_p(G)$. In addition, the equivalence of the conditions (i) and (ii) in Lemma \ref{idem lemma} implies $\chi\in {\rm Ir}_p(G)$ validates Hypothesis \ref{infinity hyp}(ii) if and only if one has $e_\chi(\QQ^c\otimes_\ZZ X_{L,S\setminus \Sigma_\chi}) = 0$, or equivalently (by (\ref{tate fe}))  ${\rm ord}_{z=0}L_{S}(\chi,z) = \chi(1)\cdot|\Sigma_\chi|$. It is easily checked that this condition is satisfied by every linear $\chi$ and also, as a consequence of Frobenius reciprocity, by any non-linear (irreducible)  $\chi$ whose restriction to $G_v$ for each $v \in S\setminus S_K^\infty$ does not contain the trivial character of $G_v$. We note, in particular, that  the latter condition is satisfied by any $\chi$ of the form ${\rm Ind}_H^G\phi$ with $H$ a proper subgroup of $G$ and $\phi$ a linear character of $H$ for which one has $H\cap G_v \not\subseteq \ker(\phi)$ for all $v \in S\setminus S_K^\infty$. 

These observations imply that (even if $G$ is non-abelian) Hypothesis \ref{infinity hyp}(ii) is satisfied by every $\chi$ in ${\rm Ir}_p(G)$ under a variety of explicit conditions. It can be checked, for example, that this situation arises in each of the following concrete cases: \

\noindent{}(i) $G$ is abelian;\ 

\noindent{}(ii)  $G_v = G$ for all $v \in S\setminus S_K^\infty$;\

\noindent{}(iii) $G$ is a Frobenius group whose Frobenius complement is abelian and Frobenius kernel is contained in $G_v$ for all $v \in S\setminus S_K^\infty$;\ 

\noindent{}(iv) $G$ is non-abelian of order $p^3$ and its (non-trivial) centre is contained in $G_v$ for all $v \in S\setminus S_K^\infty$.
}
\end{remark}

%where $\chi$ runs over all characters in ${\rm Ir}_p(G)$ that validate
%Hypothesis \ref{infinity hyp} and are also such that the $\chi$-component of the Generalized Gross-Stark Conjecture %(Conjecture \ref{GGSconjecture}) for $E^\infty/K, E, S$ and $T$ is valid.

%\begin{remark}{\em The argument of Theorem \ref{GGSvalid} implies
%Hypothesis \ref{infinity hyp}(ii)(b)
%specializes (in the case that $E$ is CM, $K$ is totally real and $\chi$ is totally odd) to recover
%Gross's Order of Vanishing Conjecture for $\chi$. Proposition \ref{abeliangross} shows that
% Hypothesis \ref{infinity hyp}(ii)(d) is implied, modulo a minor
% technical condition, by the assumption that the module of $\Gal(E_\chi^\infty/E_\chi)$-coinvariants of
% $A_S^T(E_\chi^\infty)$ should be finite. For this reason it would
%  perhaps seem reasonable to expect a direct link between Hypotheses (ii)(b) and
 % (ii)(d) in the general case. }\end{remark}
%In this section we always assume, as we may by modifying $\KK_\infty$
%and $S$ if necessary, that for every $\chi$ in $\widehat{G}$ one has
%both that $r_\chi=r$ and that ${\rm ord}_{z=0}L_{S}(\chi,z)<|S|\cdot\chi(1)$.
%
%\begin{itemize}\item[(R)] For every $\chi\in\widehat{G}$,
%one has ${\rm ord}_{z=0}L_{S}(\chi,z)<|S|\cdot\chi(1)$.\end{itemize}

In the next result we establish a concrete link between the Main Conjecture of Higher Rank Non-Commutative Iwasawa Theory (given by Conjecture \ref{hrncmc}), the Generalized Gross-Stark Conjecture (given by Conjecture \ref{GGSconjecture}) and the equivariant Tamagawa Number Conjecture for $\mathbb{G}_m$. 

We observe in particular that, after taking account of Remark \ref{satisfy remark}(i), this result  generalizes to arbitrary finite Galois extensions the main result of Kurihara and the present authors in \cite{bks2} concerning  abelian extensions (see Remark \ref{last remark}). 

\begin{theorem}\label{strategytheorem} The validity of the Main Conjecture of Higher Rank Non-Commutative Iwasawa Theory for $E^\infty/K, S$ and $T$ implies the validity of the equivariant Tamagawa Number
Conjecture for the pair $(h^0({\rm Spec}(E)),\ZZ_p[G]e_E^*)$.
\end{theorem}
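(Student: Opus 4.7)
The approach will be to mimic and extend the descent argument used for abelian extensions by Kurihara and the present authors in \cite{bks2}, treating one character component of $e_E^*$ at a time. First, by combining Remark \ref{bks remark} with Theorem \ref{ltc2}(iii), the $e_E^*$-component of the equivariant Tamagawa Number Conjecture will be reduced to showing that $e_E^*\cdot z_p$ is a locally-primitive basis of the $\xi(\ZZ_p[G])e_E^*$-module $e_E^*\cdot{\rm d}_{\ZZ_p[G]}(C_p)$. Since $e_E^* = \sum_\chi e_\chi$, with $\chi$ ranging over characters satisfying Hypothesis \ref{infinity hyp}, it suffices to verify this equality after multiplication by $e_\chi$ for each such $\chi$, and so we fix one.

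Hypothesis \ref{infinity hyp}(ii) and (iii) will combine via Proposition \ref{abeliangross} to show that the data $E_\chi^\infty/K$, $E_\chi$, $S$ is semisimple at $\chi$ in the sense of Definition \ref{semi def}. Lemma \ref{semi for ell-invariant} will then guarantee injectivity of the $\chi$-component of the $\mathscr{L}$-invariant map $\mathscr{L}^{\Sigma'_\chi}_{\gamma,S}$, for any topological generator $\gamma$ of $\Gal(E_\chi^\infty/E_\chi)$. Moreover, Hypothesis \ref{infinity hyp}(i) ensures that the ranks $r := |\Sigma_S(E^\infty)|$ and $r' := |\Sigma_\chi|$ coincide with those associated to the Iwasawa extension $E_\chi^\infty/K$, so that the validity of Conjecture \ref{hrncmc} for $E^\infty/K$ will restrict (in the sense of Remark \ref{new ES}(i)) to its validity for $E_\chi^\infty/K$. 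By the explicit interpretation of Remark \ref{exp interp hrncmc}, this supplies an element $u\in K_1(\Lambda(\Gal(E_\chi^\infty/K)))$ for which the equality (\ref{fund eq}) holds.

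Dividing this equality through by $\lambda(\gamma)^{r'-r}$, which makes sense by Remark \ref{uniqueness} together with the factorisation (\ref{divisibility relation}), and then projecting to finite level $E_\chi$ via $\pi^r_{E_\chi}$, Proposition \ref{preGGS lemma} will produce an explicit description of $\iota_{E_\chi,*}(\partial^{r'-r}_\gamma(\varepsilon^{\rm RS}_{E_\chi^\infty,S,T}))$ as a specific descended primitive basis arising from the resolutions of Proposition \ref{limitomac}(iii). Applying the Generalized Gross-Stark Conjecture (valid at $e_\chi$ by Hypothesis \ref{infinity hyp}(iv)) will rewrite this derivative as $e_\chi\cdot\mathscr{L}^{\Sigma'_\chi}_{\gamma,S}(\varepsilon^{\Sigma_S(E^\infty)}_{E_\chi/K,S\setminus\Sigma'_\chi,T})$, and the injectivity just noted will then uniquely determine the descended primitive basis on the $\chi$-component as an explicit image of this finite-level Rubin-Stark element. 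Inflating from $E_\chi$ back up to $E$, via the projection-formula isomorphisms of Lemma \ref{complex construction}(iv) and functoriality of the extended determinant functor of Theorem \ref{ext det fun thm}(i), should then translate this identity into the sought $e_\chi$-component of Conjecture \ref{zeta conj}.

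The hardest part will be the final matching of the descended primitive basis with the zeta element $z_p$: it must be shown that the archimedean Dirichlet regulator isomorphism $\lambda^j_{E_\chi,S}$ used to define $z_p$ agrees, after restriction to the $\chi$-component and inversion of the semisimple $\mathscr{L}^{\Sigma'_\chi}_{\gamma,S}$, with the $p$-adic Bockstein composite constructed from the descent triangle (\ref{first bock triangle}) and the valuation maps $\phi^{\rm ord}_v$ of (\ref{phi ord def}). This will reduce to tracking how leading terms at $s=0$ of Artin $L$-series, encoded in the Rubin-Stark element $\varepsilon^{\Sigma_\chi}_{E_\chi/K,S,T}$ on the $\chi$-component (cf.\ Remark \ref{rs stable}), factor through the two descriptions of the $\chi$-component of the descended reduced determinant, and is precisely where working one character at a time under Hypothesis \ref{infinity hyp} becomes essential to control cross-terms.
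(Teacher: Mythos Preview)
Your overall approach is essentially the same as the paper's and is correct in outline: work character by character, descend the main conjecture via the derivative of Proposition~\ref{preGGS lemma}, apply the $\chi$-component of Conjecture~\ref{GGSconjecture}, use semisimplicity (Proposition~\ref{abeliangross} and Lemma~\ref{semi for ell-invariant}) to cancel, and then compare with the zeta element.

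There is, however, one non-trivial step you have glossed over. After applying injectivity of the $\chi$-component of $\phi^{\rm Bock}_{\gamma,S,\Sigma'_\chi}$ (which is what the injectivity of $\mathscr{L}^{\Sigma'_\chi}_{\gamma,S}$ amounts to, since $\phi^{\rm ord}$ is an isomorphism by Lemma~\ref{ell-inv preparation}), you obtain an equality of the form
\[
e_\chi(\hat\varepsilon_\chi) = e_\chi\bigl((\phi^{\rm ord}_{E_\chi,S,\Sigma'_\chi})^{-1}(\varepsilon^{\Sigma_S(E^\infty)}_{E_\chi/K,S\setminus\Sigma'_\chi,T})\bigr)
\]
between elements of the rank-$r_\chi$ bidual. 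To connect this to the zeta element you need the rank-$r_\chi$ Rubin--Stark element $\varepsilon^{\Sigma_\chi}_{E_\chi/K,S,T}$, and the bridge
\[
\varepsilon^{\Sigma_S(E^\infty)}_{E_\chi/K,S\setminus\Sigma'_\chi,T} = \phi^{\rm ord}_{E_\chi,S,\Sigma'_\chi}\bigl(\varepsilon^{\Sigma_\chi}_{E_\chi/K,S,T}\bigr)
\]
is a separate result that the paper proves as Lemma~\ref{third key step}, generalising Rubin's \cite[Prop.~5.2]{R} to non-abelian $E_\chi/K$. Your final paragraph jumps directly from $\varepsilon^{\Sigma_S(E^\infty)}_{E_\chi/K,S\setminus\Sigma'_\chi,T}$ to $\varepsilon^{\Sigma_\chi}_{E_\chi/K,S,T}$ without isolating this identity; it is not automatic and requires comparing the archimedean regulators $\lambda^{r_\chi}_{E_\chi,S}$ and $\lambda^{r}_{E_\chi,S\setminus\Sigma'_\chi}$ against the Euler factors removed at $\Sigma'_\chi$.

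Your description of ``the hardest part'' is also slightly misdirected. Once the Bockstein has been cancelled by injectivity, it does not reappear: what remains is purely a rank-$r_\chi$ comparison. The final step in the paper is not a comparison of the Dirichlet regulator with the Bockstein composite, but rather a commutative square identifying $e_{(\chi)}(\CC_p\otimes\Theta^{\Sigma_\chi}_{E,S,T,p})$ (which sends the primitive basis $\beta_E$ to the element $\hat\varepsilon_\chi$) with the regulator map $\lambda^{j,{\rm u}}_{E,S}$ used to define $z_p$, together with the defining equality $\lambda^{r_\chi}_{E_\chi,S}(\varepsilon^{\Sigma_\chi}_{E_\chi/K,S,T}) = \theta^{r_\chi}_{E_\chi/K,S,T}(0)\cdot\wedge_v(w_v-w_{v'})$. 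This square is comparatively elementary; the real content of the passage lies in Lemma~\ref{third key step}.
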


%\begin{remark}\label{Picomponent}{\em Without assuming that $L$ satisfies the condition $(\rm{S})$, a direct generalization of our arguments gives the following result. Fix any bimodule $\Pi$ as in \S \ref{bimodule} with the property that for all $\chi\in\Upsilon_{\Pi}$ one has $r_\chi=r$. Assume that $\rm{(BES)}$ is valid and that, for each $\chi\in\Upsilon_\Pi$ the conditions $\rm{(SS)}$, $\rm{(MRS)}$ and $\rm{(F)}$ are satisfied. Then the $p$-component of the $\Pi$-component (in the sense of Remark \ref{zeta remarks} (ii)) of Conjecture \ref{zeta conj} is valid.
%}\end{remark}

Our proof of this result will rely on the interpretation of the equivariant Tamagawa Number Conjecture in terms of the concept of locally-primitive bases, as given explicitly by Conjecture \ref{zeta conj}. 
%\subsubsection{} In this section we prove Theorem \ref{strategytheorem}.

\subsubsection{The proof of Theorem \ref{strategytheorem}} At the outset, we set 
\[ r := |\Sigma_S(E^\infty)|,\,\, \G := \G_{E^\infty}\quad \text{and}\quad R_\infty := \La(\G).\]
We also fix an endomorphism
$\phi$ of $R_\infty^d$ that is constructed by the method of 
 Proposition \ref{limitomac} with respect to the place $v'$ fixed in (\ref{v' fix}). 
 
 Then, under the assumed
 validity of Conjecture \ref{hrncmc}, Remark
  \ref{exp interp hrncmc} implies the existence of an element $u$ of
  $\K_1(R_\infty)$ that validates the equality 
\begin{equation}\label{fund eq3}  \iota_{\infty,*}(\varepsilon^{\rm RS}_{E^\infty,S,T}) =
{\rm Nrd}_{Q(\G)}(u)\cdot\bigl( (\wedge_{i=r+1}^{i=d}(b_{L,i}^\ast\circ\phi_{L}))(\varpi_L)\bigr)_{L \in \Omega(E^\infty)} \in {\bigcap}_{R_\infty}^rR_\infty^d
 \end{equation}  
from (\ref{fund eq}) with $\KK_\infty = E^\infty$, where for each $L \in \Omega(E^\infty)$ we set 
\[ R_L := \ZZ_p[\cG_L] \quad \text{and}\quad \varpi_L := {\wedge}_{j\in [d]}b_{L,j}\in {\bigcap}_{R_L}^dR_L^d.\]
% and $\mathcal{H} := G_{E^{\rm cyc}/E}$ and fix a topological generator $\gamma$ of $\mathcal{H}$. 

We next set 
\[ n = |S\setminus \{v'\}|\] 
and use the ordering 
\[ S\setminus \{v'\} = \{v_j: j \in [n]\}\]
of $S\setminus \{v'\}$ that is induced by (\ref{ordering}) and (\ref{ordering 2}) for the field $\mathcal{K} = E^\infty$. 

We then fix a character $\chi$ in $\rm{Ir}_p(G)$ that satisfies Hypothesis \ref{infinity hyp} and use the following convenient notation   

\begin{align*} \Sigma_0 :=&\, \Sigma_S(E^\infty) = \Sigma_S(E_\chi^\infty) = \{v_j : j \in [r]\}\\
r_\chi :=&\, |\Sigma_\chi|\,\,\, (\text{so that  } r_\chi \ge r \text{ since } \Sigma_0\subseteq  \Sigma_\chi)\\
\Sigma_\chi ' :=&\, \Sigma_\chi\setminus \Sigma_0 \,\,\, (\text{so that } |\Sigma_\chi'| = r_\chi -r \,\,\text{ and }\,\, \Sigma_\chi' \cap S_K^\infty = \emptyset) \\
J_\chi :=&\, \{ i \in [n]: v_i \in \Sigma_\chi '\}\\
J_\chi^\dagger :=&\, J_\chi\cup [r] = \{ i \in [n]: v_i \in \Sigma_\chi\}\\
S_\chi ':=&\, S\setminus \Sigma_\chi ' = (S\setminus \Sigma_\chi)\cup \Sigma_0\\
\G_\chi :=&\, \cG_{E_\chi^\infty}\\
\mathcal{H}_\chi :=&\, \Gal(E_\chi^\infty/E_\chi)\\
G_\chi :=&\, \cG_{E_\chi} \cong \G_\chi/\mathcal{H}_\chi\\
R_{\chi,\infty} :=&\, \Lambda(\G_\chi)\\
R_\chi :=&\, \ZZ_p[G_\chi].\end{align*}
We note that, with this notation, the idempotent 
\[ e_{(\chi)} := e_{E_\chi/K,S,\Sigma_\chi} \in \zeta(\QQ_p[G_\chi])\]
defined in (\ref{key idem def}) coincides with $e_{E_\chi/K,S_\chi',\Sigma_0}$.

We write $\Omega_\chi$ for the subset $\Omega_{E_\chi}(E_\chi^\infty)$ of
$\Omega(E_\chi^\infty)$ comprising fields that contain $E_\chi$. For  $F$
in $\Omega_\chi$ we write $u_F$ for the projection of $u$ to $\K_1(R_F)$ and set 
\[ \varepsilon_{F,\chi} := {\rm Nrd}_{\QQ_p[\G_F]}((-1)^{m_\chi})\cdot {\rm Nrd}_{\QQ_p[\G_F]}(u_F)\cdot
((\wedge_{i\in [d]\setminus J_\chi^\dagger}(b_{F,i}^\ast\circ\phi_{F}))
 (\varpi_F)) \in
 {\bigcap}_{R_F}^{r_\chi}R_F^d,\]
with 
\begin{equation}\label{sign term 2} m_\chi := |J_\chi|\cdot |[d]\setminus J_\chi^\dagger| = (r_\chi-r)(d-r_\chi).\end{equation}
In the case $F = E_\chi$ we further abbreviate $u_F$, $\varepsilon_{F,\chi}$ and $\varpi_F$ to $u_\chi$, $\varepsilon_\chi$ and $\varpi_\chi$ respectively. 

We now fix a topological generator $\gamma_\chi$ of $\mathcal{H}_\chi$. Then, noting that $\Sigma_S(E^\infty) = \Sigma_S(E_\chi^\infty)$, we can compare the projection to ${\bigcap}_{R_{\chi,\infty}}^rR_{\chi,\infty}^d$ of the equality (\ref{fund eq3})  
%such that in
  %${\bigcap}_{\Lambda(\mathcal{G}_\infty)}^r\mathcal{O}_{E^\infty,S,T}^\times$ one
  %has
%
%\begin{equation}\label{fund eq3}  \iota_{\infty,*}(\varepsilon^{\rm RS}_{E^\infty,S,T}) =
%{\rm Nrd}_{Q(\G_\infty)}(u)\cdot\bigl( (\wedge_{i=r+1}^{i=d}\phi_{F,i})(x_F\wedge_{j=1}^{j=d}b_{F,j})\bigr)_{F \in \Omega_E(E^\infty)}.
% \end{equation}
to the explicit construction of $\partial_{\gamma_\chi}^{r_\chi-r}\bigl(\varepsilon^{\rm RS}_{E_\chi^\infty,S,T}\bigr)$ in Proposition \ref{preGGS lemma} via the equalities (\ref{der def 1}) and (\ref{der def 2}) (with $\mathcal{K}_\infty/K$ and $\gamma$ taken to be $E_\chi^\infty/K$ and $\gamma_\chi$). 

In this way, we find that the element $z$ in the latter formulas is, in the present context, equal to ${\rm Nrd}_{Q(\G)}(u)$, and hence that there are equalities
\begin{align*} \iota_{E_\chi,*}(\partial_{\gamma_\chi}^{r_\chi-r}\bigl(\varepsilon^{\rm RS}_{E_\chi^\infty,S,T}\bigr)) =&\, {\rm Nrd}_{\QQ_p[G_\chi]}((-1)^{m_\chi + t_\chi})\cdot (\wedge_{j\in J_\chi}(\widehat{\phi}_{\chi,j,E_\chi}))(\varepsilon_\chi)\\
=&\,{\rm Nrd}_{\QQ_p[G_\chi]}((-1)^{m_\chi + t_\chi})\cdot (\wedge_{j\in J_\chi}(\widehat{\phi}_{\chi,j,E_\chi}\circ \iota_{\chi,*}'))(\hat\varepsilon_\chi)\\
=&\,{\rm Nrd}_{\QQ_p[G_\chi]}((-1)^{m_\chi + t_\chi})\cdot \phi^{\rm Bock}_{\gamma_\chi,S,\Sigma_\chi'}(\hat\varepsilon_\chi).
\end{align*}
Here the integer $t_\chi$ is fixed so that 
\begin{equation}\label{sign term} {\rm Nrd}_{\QQ_p[G_\chi]}((-1)^{t_\chi}) \cdot {\wedge}_{j \in [d]\setminus [r]}b_{E_\chi,j} =  \bigl({\wedge}_{j \in [d]\setminus J^\dagger_\chi}b_{E_\chi,j}\bigr)\wedge\bigl({\wedge}_{j \in J_\chi}b_{E_\chi,j}\bigr)\end{equation}
and each homomorphism $\widehat{\phi}_{\chi,i} = (\widehat{\phi}_{\chi,i,F})_F$ is as defined in (\ref{hat def}) (with $\gamma$ replaced by $\gamma_\chi$). 
Further, setting $U_\chi := \mathcal{O}_{E_\chi,S,T,p}^\times$, we have written $\iota_{\chi,*}'$ for the embedding ${\bigcap}_{R_\chi}^{r_\chi}U_\chi \to {\bigcap}_{R_\chi}^{r_\chi}R_\chi^d$ induced by our fixed resolution of $C_{E_\chi,S,T,p}$ and, following Proposition \ref{generate rubin}, we write $\hat\varepsilon_\chi$ for the unique element of $e_{(\chi)}({\bigcap}_{R_\chi}^{r_\chi}U_\chi)$ that is sent by $\wedge_{\QQ_p[G_\chi]}^{r_\chi}(\QQ_p\otimes_{\ZZ_p}\iota_{\chi,*}')$ to $\varepsilon_\chi$. Finally, we note that the last displayed equality follows from the corresponding case of the equality (\ref{bock equality}). 

Now, under Hypothesis \ref{infinity hyp}(iv), the above displayed formula combines with the validity of the $\chi$-component of Conjecture \ref{GGSconjecture} for the data $E_\chi^\infty/K, E_\chi, S$ and $T$ to imply that 
\begin{align*} {\rm Nrd}_{\QQ_p[G_\chi]}((-1)^{m_\chi +t_\chi})\cdot e_\chi\bigl(\phi^{\rm Bock}_{\gamma_\chi,S,\Sigma_\chi'}(\hat\varepsilon_\chi)\bigr) =&\, e_\chi\bigl(\mathscr{L}^{\Sigma_\chi '}_{\gamma_\chi,S}(\varepsilon^{\Sigma_0}_{E_\chi/K,S_\chi ',T})\bigr)\\
 =&\, e_\chi\bigl(\phi^{\rm Bock}_{\gamma_\chi,S,\Sigma_\chi '}\bigl( (\phi^{\rm ord}_{E_\chi,S,\Sigma_\chi '})^{-1}(\varepsilon^{\Sigma_0}_{E_\chi/K,S_\chi ',T})\bigr)\bigr).\end{align*}

In addition, since Hypothesis \ref{infinity hyp}(ii) implies $e_\chi\cdot e_{(\chi)} \not= 0$, the conditions in Hypothesis \ref{infinity hyp}(i), (ii) and (iii) imply that the hypotheses of Proposition \ref{abeliangross} are satisfied by the data $E_\chi^\infty/K, E_\chi$ and $S$, and hence that the relevant case of Lemma \ref{semi for ell-invariant} implies that the $\chi$-component of the map $\phi^{\rm Bock}_{\gamma_\chi,S,\Sigma_\chi '}$ is injective. 

From the last displayed equality, we can therefore deduce that 
\begin{equation}\label{almost third step} {\rm Nrd}_{\QQ_p[G_\chi]}((-1)^{m_\chi+t_\chi})\cdot e_\chi(\hat\varepsilon_\chi) = e_\chi\bigl( (\phi^{\rm ord}_{E_\chi,S,\Sigma_\chi '})^{-1}(\varepsilon^{\Sigma_0}_{E_\chi/K,S_\chi ',T})\bigr).\end{equation}

In the next result, we describe the link between the elements $\varepsilon^{\Sigma_0}_{E_\chi/K,S_\chi ',T}$ and $\varepsilon^{\Sigma_\chi}_{E_\chi/K,S,T}$. In the case that $E_\chi/K$ is abelian, this result was first proved by Rubin in \cite[Prop. 5.2]{R}.

\begin{lemma}\label{third key step} In $\CC_p\otimes_{\ZZ_p}{\bigcap}_{R_\chi}^{r}\mathcal{O}_{E_\chi,S_\chi ',T,p}^\times$ one has 
\[ \varepsilon^{\Sigma_0}_{E_\chi/K,S_\chi ',T} = \phi^{\rm ord}_{E_\chi,S,\Sigma_\chi '}\bigl(\varepsilon^{\Sigma_\chi}_{E_\chi/K,S,T}\bigr).\]\end{lemma}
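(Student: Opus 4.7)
The plan is to apply the Dirichlet regulator $\lambda^r_{E_\chi,S_\chi'}$ to both sides of the claimed equality and verify that they agree by combining a suitable commutative diagram with the Artin $L$-function factorization. As a preliminary observation, the place $v'$ fixed in (\ref{v' fix}) belongs to $S\setminus\Sigma_\chi$ (since $\Sigma_\chi = \Sigma_{S\setminus\{v'\}}(E_\chi)$) and hence also to $S_\chi'\setminus\Sigma_0$, so it may be used as the auxiliary place in both defining equations from Definition~\ref{rubin-stark context}. Thus
\begin{equation*}
\lambda^{r_\chi}_{E_\chi,S}\bigl(\varepsilon^{\Sigma_\chi}_{E_\chi/K,S,T}\bigr) = \theta^{r_\chi}_{E_\chi/K,S,T}(0)\cdot \wedge_{v\in\Sigma_\chi}(w_v-w_{v'})
\end{equation*}
and
\begin{equation*}
\lambda^{r}_{E_\chi,S_\chi'}\bigl(\varepsilon^{\Sigma_0}_{E_\chi/K,S_\chi',T}\bigr) = \theta^{r}_{E_\chi/K,S_\chi',T}(0)\cdot \wedge_{v\in\Sigma_0}(w_v-w_{v'}).
\end{equation*}

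Next I would establish the key commutative diagram. For each $v\in\Sigma_\chi'$, which is non-archimedean (by Hypothesis \ref{infinity hyp}(i)) and splits completely in $E_\chi$, the place $w_v$ spans a free $R_\chi$-direct summand of $Y_{E_\chi,\{v\},p}\subseteq X_{E_\chi,S,p}$. Letting $\Phi^{\rm ord}_v\colon X_{E_\chi,S,p}\to R_\chi$ denote the projection to this summand followed by $w_v\mapsto 1$, a direct computation using the standard normalization of $|\cdot|_{w_v}$ shows that $\Phi^{\rm ord}_v\circ R_{E_\chi,S} = \log(\N v)\cdot\phi^{\rm ord}_v$ as maps $\mathcal{O}^\times_{E_\chi,S,T,p}\to R_\chi$. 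Passing to reduced exterior products, functoriality yields $\wedge_{v\in\Sigma_\chi'}(\Phi^{\rm ord}_v\circ R_{E_\chi,S}) = (\wedge_{v\in\Sigma_\chi'}\Phi^{\rm ord}_v)\circ\lambda^{r_\chi}_{E_\chi,S}$, and scalar-multiplicativity in the reduced exterior power setup (since scaling each $\phi^{\rm ord}_v$ by a central $\log(\N v)$ scales the $\chi$-component of the $|\Sigma_\chi'|$-fold wedge by $\log(\N v)^{\chi(1)}$) gives $\wedge_{v\in\Sigma_\chi'}(\log(\N v)\cdot\phi^{\rm ord}_v) = {\rm Nrd}_{\QQ_p[G_\chi]}(\prod_{v\in\Sigma_\chi'}\log(\N v))\cdot \phi^{\rm ord}_{E_\chi,S,\Sigma_\chi'}$. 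Combining these identities shows that, as maps on the $e_{(\chi)}$-component,
\begin{equation*}
\lambda^r_{E_\chi,S_\chi'}\circ \phi^{\rm ord}_{E_\chi,S,\Sigma_\chi'} = {\rm Nrd}_{\QQ_p[G_\chi]}\bigl(\textstyle\prod_{v\in\Sigma_\chi'}\log(\N v)\bigr)^{-1}\cdot\bigl(\wedge_{v\in\Sigma_\chi'}\Phi^{\rm ord}_v\bigr)\circ \lambda^{r_\chi}_{E_\chi,S}.
\end{equation*}

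To conclude, the factorization (\ref{changeS}) of Artin $L$-series, combined with the fact that $\Fr_v$ acts trivially on every $V_\chi$ for $v\in\Sigma_\chi'$, gives $L_{S,T}(\check\chi,z) = \prod_{v\in\Sigma_\chi'}(1-(\N v)^{-z})^{\chi(1)}\cdot L_{S_\chi',T}(\check\chi,z)$. Using $\lim_{z\to 0}(1-(\N v)^{-z})/z = \log(\N v)$ and applying this identity to the derived Stickelberger functions produces
\begin{equation*}
\theta^{r_\chi}_{E_\chi/K,S,T}(0) = {\rm Nrd}_{\QQ_p[G_\chi]}\bigl(\textstyle\prod_{v\in\Sigma_\chi'}\log(\N v)\bigr)\cdot \theta^{r}_{E_\chi/K,S_\chi',T}(0),
\end{equation*}
so the reduced-norm factor exactly cancels the one from the previous step. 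It then remains to observe that $(\wedge_{v\in\Sigma_\chi'}\Phi^{\rm ord}_v)(\wedge_{v\in\Sigma_\chi}(w_v-w_{v'})) = \wedge_{v\in\Sigma_0}(w_v-w_{v'})$, which follows from the non-commutative wedge pairing formula (\ref{rn rel}) together with $\Phi^{\rm ord}_u(w_v-w_{v'}) = \delta_{u,v}$ for $u\in\Sigma_\chi'$ and $v\in\Sigma_\chi$, the ordering (\ref{ordering 2}) placing $\Sigma_0$ before $\Sigma_\chi'$ ensuring that the resulting sign is $+1$. This yields the required equality after invoking the injectivity of $\lambda^r_{E_\chi,S_\chi'}$ on the relevant summand.

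The principal technical difficulty is the careful bookkeeping of central scalar factors and signs through the reduced exterior product formalism: one must check, character-by-character, that the factor $\prod\log(\N v)^{\chi(1)}$ arising from the relation $\Phi^{\rm ord}_v\circ R_{E_\chi,S} = \log(\N v)\cdot\phi^{\rm ord}_v$ is correctly matched by the corresponding factor in the $L$-function decomposition via the reduced norm, and that the wedge pairing produces the predicted sign under the chosen ordering. This essentially generalizes Rubin's abelian argument in \cite[Prop.~5.2]{R} by systematically replacing classical exterior products by reduced exterior products and ordinary determinants by reduced norms.
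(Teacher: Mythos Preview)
Your proposal is correct and follows essentially the same strategy as the paper: both arguments reduce the claim to the compatibility of the Dirichlet regulator with the maps $\phi^{\rm ord}_v$ via the scalar factors $\log(\mathrm{N}v)$, together with the Euler-factor identity (\ref{changeS}) relating $\theta^{r_\chi}_{E_\chi/K,S,T}(0)$ to $\theta^{r}_{E_\chi/K,S_\chi',T}(0)$, so that the two logarithmic contributions cancel. The paper packages this as a single commutative square (with vertical maps $\phi^{\rm ord}_{E_\chi,S,\Sigma_\chi'}$ and $\eta\cdot\nu$) and verifies commutativity by exhibiting the relevant endomorphism $e_{(\chi)}\bigl((\CC_p\otimes\mathrm{ord}_{S,\Sigma_\chi})\circ(\CC_p\otimes R_{E_\chi,S})^{-1}\bigr)$ as a block-triangular matrix with diagonal blocks $M$ and $\Delta$; you instead compute the composite $\lambda^r_{E_\chi,S_\chi'}\circ\phi^{\rm ord}_{E_\chi,S,\Sigma_\chi'}$ directly and then evaluate $(\wedge_{v\in\Sigma_\chi'}\Phi^{\rm ord}_v)(\wedge_{v\in\Sigma_\chi}(w_v-w_{v'}))$ by hand. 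These are two presentations of the same computation.

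One point deserves care: your final step asserts that the partial pairing returns $\wedge_{v\in\Sigma_0}(w_v-w_{v'})$ with sign $+1$. The paper sidesteps any such sign check by \emph{defining} the isomorphism $\nu$ to send $\wedge_{j\in J_\chi^\dagger}\tilde w_j$ to $\wedge_{j\in[r]}\tilde w_j$ and then reading off the block-matrix structure, so no sign ever appears. In your approach the sign has to be extracted from the conventions governing the duality pairing (\ref{duality pairing}) in \cite{bses}; since the paper's argument shows the overall identity holds without sign, your claim is consistent, but you should justify it by appealing directly to the explicit pairing formula there rather than to the ordering alone.
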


\begin{proof} We set $\tilde w_i := w_{v_i,E_\chi}-w_{v',E_\chi}$ for each index $i$ in $J_\chi^\dagger$. We then consider the free $R_\chi$-modules 
\[ Y := {\bigoplus}_{i \in [r]}R_\chi\cdot \tilde w_{i}\quad \text{ and }\quad Y':= {\bigoplus}_{i \in J_\chi^\dagger}R_\chi\cdot \tilde w_{i},\]
and use the isomorphism 
\[ \nu: {\bigcap}_{R_\chi}^{r_\chi}Y' \cong {\bigcap}_{R_\chi}^{r}Y\]
of (free, rank one) $\xi(R_\chi)$-modules that sends ${\wedge}_{j \in J_\chi^\dagger}\tilde w_j$ to ${\wedge}_{j \in [r]}\tilde w_{j}$.

We note that %definition of $\phi^{\rm ord}_{E_\chi,S,\Sigma_\chi^\diamond}$ ensures an equality of maps 
%
%\[ {\wedge}_{v\in \Sigma_\chi}\phi^{\rm ord}_v = \bigl({\wedge}_{v\in \Sigma}\phi^{\rm ord}_v\bigr)\circ \bigl({\rm Nrd}_{\QQ_p[G_\chi]}((-1)^{r(r_\chi-r)}\cdot \phi^{\rm ord}_{E_\chi,S,\Sigma_\chi^\diamond}\bigr),\]
%
 $\theta^{r}_{E_\chi/K,S_\chi ',T}(0)e_{(\chi)}$ belongs to $(\zeta(\RR[G_\chi])e_{(\chi)})^\times$. Hence, after recalling the explicit definition (in Definition \ref{rubin-stark context}) of the elements $\varepsilon^{\Sigma_0}_{E/K,S_\chi ',T}$ and $\varepsilon^{\Sigma_\chi}_{E_\chi/K,S,T}$, and setting 
\[ \eta := \theta^{r_\chi}_{E_\chi/K,S,T}(0)\cdot (\theta^{r}_{E_\chi/K,S_\chi ',T}(0)e_{(\chi)})^{-1},\]
the verification of the claimed equality is reduced to proving commutativity of the following diagram

\[
\begin{CD} e_{(\chi)}(\CC_p\otimes_{\ZZ_p}{\bigcap}_{R_\chi}^{r_\chi}\mathcal{O}_{E_\chi,S,T,p}^\times) @> \CC_p\otimes_{\RR}\lambda >> e_{(\chi)}(\CC_p\otimes_{\ZZ_p}{\bigcap}_{R_\chi}^{r_\chi}Y')\\
@V \phi_{E_\chi,S,\Sigma_\chi'}^{\rm ord}VV @VV \eta \times(\CC_p\otimes_{\ZZ_p}\nu)V\\
e_{(\chi)}(\CC_p\otimes_{\ZZ_p}{\bigcap}_{R_\chi}^{r}\mathcal{O}_{E_\chi,S'_\chi,T,p}^\times) @> \CC_p\otimes_{\RR}\lambda' >> e_{(\chi)}(\CC_p\otimes_{\ZZ_p}{\bigcap}_{R_\chi}^{r}Y),\end{CD}\]
where we abbreviate the maps $\lambda^{r_\chi}_{E_\chi,S}$ and $\lambda^{r}_{E_\chi,S'_\chi}$ to $\lambda$ and $\lambda'$ respectively. 

Write ${\rm ord}_{S,\Sigma_\chi}$ for the map defined as in (\ref{ord isomorphism}) (with $E$ and $\Sigma$ replaced by $E_\chi$ and $\Sigma_\chi)$ Then the commutativity of the above diagram is itself verified by noting that the identity (\ref{changeS}) (with $F'=F=E_\chi$ and $S'$ and $S(F)$ replaced by $S$ and $S_\chi '$ respectively) implies an equality 
\[ \theta^{r_\chi}_{E_\chi/K,S,T}(0)e_{(\chi)} = \theta^{r}_{E_\chi/K,S_\chi',T}(0)e_{(\chi)} \cdot {\prod}_{j \in J_\chi}{\rm log} ({\rm N}w_{j,E_\chi}), \]
whilst the endomorphism 
\[ e_{(\chi)}((\CC_p\otimes_{\ZZ_p}{\rm ord}_{S,\Sigma_\chi})\circ(\CC_p\otimes_{\RR}R_{E_\chi,S})^{-1})\]
of $e_{(\chi)}(\CC_p\otimes_{\ZZ_p}Y')$ is represented, with respect to the ordered $\CC_p[\G_\chi]e_{(\chi)}$-basis 
$\{e_{(\chi)}(\tilde w_j)\}_{j \in J_\chi^\dagger}$, by a block matrix of the form 
\begin{equation*} \left(\begin{array}{cc} M & 0\\
 * & \Delta \end{array}\right).
\end{equation*}
Here $M$ is the matrix of the endomorphism 
\[ e_{(\chi)}((\CC_p\otimes_{\ZZ_p}{\rm ord}_{S_\chi',\Sigma})\circ(\CC_p\otimes_{\RR}R_{E_\chi,S_\chi'})^{-1})\]
of $e_{(\chi)}(\CC_p\otimes_{\ZZ_p}Y)$ with respect to the ordered 
$\CC_p[\G_\chi]e_{(\chi)}$-basis $\{e_{(\chi)}(\tilde w_j)\}_{j \in [r]}$, and $\Delta$ is the diagonal $(r_\chi-r)\times (r_\chi-r)$ matrix with $j$-th entry equal to $({\rm log}( {\rm N}w_{v(j),E_\chi}))^{-1}$, where $v(j)$ here denotes the $j$-th place in the ordered set $J_\chi$.
\end{proof}

Upon substituting the result of Lemma \ref{third key step} into the equality (\ref{almost third step}) we deduce that, for every character $\chi$ that satisfies Hypothesis \ref{infinity hyp}, there is an equality  
\begin{equation}\label{fourth step} {\rm Nrd}_{\QQ_p[G_\chi]}((-1)^{m_\chi +t_\chi})\cdot e_\chi(\hat\varepsilon_\chi) = e_\chi (\varepsilon^{\Sigma_\chi}_{E_\chi/K,S,T}).\end{equation}

To interpret these equalities we write $C_E$ for the complex $C_{E,S,T,p}$. Then, upon unwinding the definition
  of the isomorphism $\lambda_{E,S}^j$ that is discussed in \S\ref{Zeta elements and Tamagawa numbers}, and setting $A := \CC_p[G]$, one obtains a commutative diagram of $\zeta(A)e_{(\chi)}$-modules 
 %Hypothesis \ref{infinity hyp}(ii), there is a commutative diagram
 %
 \[\begin{CD}
 e_{(\chi)}\bigl(\CC_p\cdot {\rm d}_{R_E}(C_E)\bigr)
 @> \lambda_{E,S}^{j,{\rm u}} >> \zeta(A)e_{(\chi)}\\
 @V \CC_p\otimes_{\QQ_p}\Theta_{E,S,T,p}^{\Sigma_\chi}  VV @A AA\\
 e_{(\chi)}\bigl(\CC_p\cdot {\bigcap}^{r_\chi}_{R_E}U_E\bigr) @>
 {\bigwedge}_{A}^{r_\chi}(\CC_p\otimes_{\RR,j}R_{E,S}) >>
 e_{(\chi)}(\CC_p\cdot {\bigcap}^{r_\chi}_{R_E}X_{E,S,p}).\end{CD}
 \]
Here $\lambda_{E,S}^{j,{\rm u}}$ denotes the ungraded part of $\lambda_{E,S}^j$, the projection map $\Theta_{E,S,T,p}^{\Sigma_\chi}$ is as defined in (\ref{Theta'}) and the right hand side vertical arrow is
induced by combining the natural identification 
\[ e_{(\chi)}\bigl(\CC_p\cdot
{\bigcap}^{r_\chi}_{R_E}X_{E,S,p}\bigr)
 = e_{(\chi)}\bigl(\CC_p\cdot
 {\bigcap}^{r_\chi}_{R_\chi}Y_{E_\chi,\Sigma_{\chi},p}\bigr)\]
  together with  the isomorphism of $R_\chi$-modules 
 $Y_{E_\chi,\Sigma_{\chi},p} \cong R_\chi^{r_\chi}$ that is 
 induced by the basis $\{w_{v,E_\chi}: v \in \Sigma_{\chi}\}$
  and the canonical isomorphism of $\xi(R_\chi$)-modules 
  ${\bigcap}^{r_\chi}_{R_\chi}R_\chi^{r_\chi} \cong \xi(R_\chi)$.

To proceed we now use the primitive basis
 \[ \beta_E := \bigl(({\wedge}_{i\in [d]}b_{E,i})\otimes ({\wedge}_{i\in [d]}b^\ast_{E,i}),0\bigr)\]
 of ${\rm d}_{R_E}(C_E)$ that is obtained when one represents
  $C_E$ by the complex
\[ R_E^d\xrightarrow{\phi_E}R_E^d,\]
with the first term placed in degree zero. We observe, in particular, that the explicit description of the map $\Theta_{E,S,T,p}^{\Sigma_\chi}$
   given in (\ref{exp proj}) implies that 
 \begin{equation}\label{mult equal} {\rm Nrd}_{\QQ_p[G_\chi]}((-1)^{m_\chi+t_\chi})\cdot e_\chi(\hat\varepsilon_\chi) =
   e_\chi(\Theta_{E,S,T,p}^{\Sigma_\chi}(\beta_E)).\end{equation}
 Note that the scalar multiplying factor ${\rm Nrd}_{\QQ_p[G_\chi]}((-1)^{m_\chi+t_\chi})$ occurs here since the equality  (\ref{sign term}) and definition (\ref{sign term 2}) combine to imply  
 \begin{align*} &\,\, {\rm Nrd}_{\QQ_p[G_\chi]}((-1)^{m_\chi+t_\chi})\cdot {\wedge}_{j \in [d]\setminus [r]}b_{E_\chi,j} \\
 =&\,\,
  {\rm Nrd}_{\QQ_p[G_\chi]}((-1)^{m_\chi})\cdot \bigl({\wedge}_{j \in [d]\setminus J^\dagger_\chi}b_{E_\chi,j}\bigr)\wedge \bigl({\wedge}_{j \in J_\chi}b_{E_\chi,j}\bigr)\\
 =&\,\, \bigl({\wedge}_{j \in J_\chi}b_{E_\chi,j}\bigr)\wedge \bigl({\wedge}_{j \in [d]\setminus J^\dagger_\chi}b_{E_\chi,j}\bigr)
 \end{align*}
 and because  the verification of the description (\ref{exp proj})  in the present setting assumes that $J_\chi = [r_\chi]\setminus [r].$

Upon combining the equality (\ref{mult equal}) with (\ref{fourth step}), the definition of
   $\varepsilon^{\Sigma_\chi}_{E_\chi/K,S,T}$ and the commutativity of the above diagram, one finds that, for every $\chi$ validating Hypothesis \ref{infinity hyp}, there are equalities 
  \[ e_\chi(\lambda_{E,S}^{j,{\rm u}}(\beta_E)) = e_\chi (j_*(\theta_{E_\chi/K,S,T}^{r_\chi}(0))) =
   e_\chi (j_*(\theta^*_{E/K,S,T}(0)))\]
in $\zeta(\CC_p[G])e_\chi$. 

These equalities  in turn imply that the zeta element $z_p$ defined in (\ref{zeta def 2}) satisfies 
 \[ e_E^*(z_p) =  e_E^*\bigl((\lambda^j_{E,S})^{-1}\bigl((j_*(\theta^*_{E/K,S,T}(0)),0)\bigr)\bigr) = e_E^*(\beta_E).\]
 In particular, since $e_E^*(\beta_E)$ is a primitive basis of the
 $\xi(R_E)e_E^*$-module that is generated by ${\rm d}_{R_E}(C_E)$, this equality implies the validity of
 the equivariant Tamagawa Number Conjecture for the pair
 $(h^0({\rm Spec}(E)),R_Ee_E^*)$. 
 
 This completes the proof of Theorem \ref{strategytheorem}.

\begin{remark}\label{last remark}{\em The result of Theorem \ref{strategytheorem} combines with the observations in Remark \ref{satisfy remark} to present a strategy for obtaining supporting evidence for Conjecture ${\rm TNC}_p(L/K)$ beyond the case of CM-extensions of totally real fields discussed in \S\ref{pm evidence}. In particular, Remark \ref{satisfy remark}(i) implies that, upon specialisation to abelian extensions $L/K$, this strategy recovers that developed by Kurihara and the present authors in \cite[\S5]{bks2}. In addition, for non-abelian Galois extensions, it already gives a simpler proof of existing results such as Proposition \ref{iwasawa theory}. To explain the latter point, we assume that $K$ is totally real, $L$ is CM and $K^\infty$ is the cyclotomic $\ZZ_p$-extension of $K$. Then, for each $\chi\in {\rm Ir}^-_p(G)$, one has $\Sigma_S(E_\chi^\infty) = \Sigma_S(E^\infty) = \emptyset$ and $\Sigma_\chi \cap S_K^\infty = \emptyset$ so Hypothesis \ref{infinity hyp}(i) is satisfied, whilst (\ref{tate fe}) implies Hypothesis \ref{infinity hyp}(ii) is satisfied (with the place $v'$ in (\ref{v' fix}) taken to be archimedean) if ${\rm ord}_{z=0}L_{S}(\chi,z) = \chi(1)\cdot|\Sigma_\chi|$ and Remark \ref{gross-jaulent} combines with Theorem \ref{GGSvalid}(iii) to imply the conditions (iii) and (iv) in Hypothesis \ref{infinity hyp} are equivalent. Thus, if we write $e_*$ for the idempotent of $\zeta(\QQ_p[G])$ that is obtained by summing $e_\chi$ over all $\chi$ in ${\rm Ir}_p^-(G)$ for which one has both 
\[ e_\chi\bigl(\QQ_p^c\otimes_{\ZZ_p}A_S(E_\chi^\infty)_{\Gal(E_\chi^\infty/E_\chi)}\bigr) = 0 \quad \text{ and }\quad {\rm ord}_{z=0}L_{S}(\chi,z) = \chi(1)\cdot|\Sigma_\chi|,\]
then Theorem \ref{strategytheorem} combines with Corollary \ref{hrmc true} to imply the validity, modulo the assumed vanishing of $\mu_p(L)$, of the image of the equality in Conjecture ${\rm TNC}_p(L/K)$ under the functor $\ZZ_p[G]e_*\otimes_{\ZZ_p[G]}-$. We note, in particular, that this argument avoids the rather extensive descent computations that are required for the proof of Proposition \ref{iwasawa theory} given in \cite{burns2}.} \end{remark}

\end{document}